\def\thebibliography#1{%
  \@xp\section\@xp*\@xp{\refname}%
  \normalfont\fontsize{10pt}{12pt} 
  \@xp\labelsep\@xp=\the\labelsep\relax
  \list{\@biblabel{\@arabic\c@enumiv}}{\settowidth\labelwidth{\@biblabel{#1}}%
    \leftmargin\labelwidth \advance\leftmargin\labelsep 
    \addtolength{\rightmargin}{1cm} 
      \addtolength{\leftmargin}{-.4cm} 
    \usecounter{enumiv}%
    \let\p@enumiv\@empty
    \renewcommand\theenumiv{\@arabic\c@enumiv}}%
  \sloppy \clubpenalty\@M \widowpenalty\clubpenalty
  \sfcode`\.\@m
}
\newtheorem{theorem}{Theorem}[section]
\newtheorem{proposition}[theorem]{Proposition}
\newtheorem{lemma}[theorem]{Lemma}
\newtheorem{remark}[theorem]{Remark}
\newtheorem{definition}[theorem]{Definition}
\newtheorem*{remark*}{Remark}
\numberwithin{equation}{section}
\numberwithin{theorem}{section}
\def\a {\alpha}
\def\b {\beta}
\def\ga {\gamma}
\def\f {\frac}
\def\de {\delta}
\def\hf{\hat{f}}
\def\perpi{\Pi^{\perp}}
\def\pernabla{\nabla^{\perp}}
\def\perR{R^{\perp}}
\def\bA{\bar{A}}
\def\bB{\bar{B}}
\def\bC{\bar{C}}
\def\bS{\overline{\Sigma}}
\def\bY{\overline{Y}}
\def\bn{\bar{n}}
\def\JapD{\langle D \rangle}
\newcommand{\be}{\begin{equation}}
\newcommand{\ee}{\end{equation}}
\newcommand{\bea}{\begin{eqnarray}}
\newcommand{\eea}{\end{eqnarray}}
\def\beaa{\begin{eqnarray*}}
\def\eeaa{\end{eqnarray*}}
\newcommand{\tr}{\mathrm{tr}}
\newcommand{\pa}{\partial}
\newcommand{\cal}{\mathcal}
\newcommand{\les}{\lesssim}
\title{On well-posedness for the timelike minimal surface equation}
\author{Georgios Moschidis}
\address{EPFL\\Institute of Mathematics,
MA B1 595\\
CH-1015 Lausanne\\
Switzerland}
\email{georgios.moschidis@epfl.ch}
\author{Igor Rodnianski}
\address{Princeton University\\
Department of Mathematics\\
Fine Hall, Washington Road\\
Princeton, NJ 08544\\
United States of America}
\email{irod@math.princeton.edu}
\begin{document}

\begin{abstract}
We obtain improved local well-posedness results for the Lorentzian timelike minimal surface equation. In dimension $d=3$, for a surface of an arbitrary co-dimension, we show a gain of $1/3$ derivative regularity compared to a generic equation of this type. The first result of this kind, going substantially beyond a general Strichartz threshold for quasilinear hyperbolic equations,  was shown for minimal surfaces of 
co-dimension one with a gain of $1/4$ regularity in \cite{AIT}. We use a geometric formulation of the problem, relying on its parametric 
representation. The natural dynamic variables in this formulation are the parametrizing map, the induced metric and the second 
fundamental form of the immersion. The main geometric observation used in this paper is the Gauss (and Ricci) equation,
dictating that the Riemann curvature of the induced metric (and of the normal bundle) 
can be expressed as the wedge product of the second fundamental form with itself. The second fundamental form, in turn, satisfies a wave 
equation with respect to the induced metric. Exploiting the problem's
diffeomorphism freedom, stemming from a non-uniqueness of a parametrization, we introduce a new gauge -- a choice of a coordinate 
system on the parametrizing manifold -- in which the metric recovers full regularity of its Riemann curvature, including the 
crucial $L^1L^\infty$ estimate for the first derivatives of the metric. Analysis of minimal surfaces with co-dimension bigger than one
requires that we impose and take advantage of an additional special gauge on the normal bundle of the surface. The proof also uses both the additional structure contained in the wedge product and the Strichartz estimates with losses developed earlier in the context of a well-posedness theory for quasilinear hyperbolic equations.
  \end{abstract}
\maketitle

\tableofcontents

\section{Introduction}

We consider the local well-posedness problem for the $3+1$ Lorentzian timelike minimal surface equation which describes an immersion
of a 4-dimensional timelike surface (with the topology of ${\cal M}=I\times \mathbb{T}^3$, where $I\subset \Bbb R$ is an interval) in ${\cal N}=\mathbb{R}\times \mathbb{T}^3 \times \mathbb{R}^{n-3}$ with vanishing mean curvature. The case $n-3=1$ is the so called co-dimension $1$ case in which the surface can be 
represented, at least locally, as a graph $x^4=f(t,x^1,..,x^3)$ and the minimal surface equation takes the form of a scalar quasilinear
hyperbolic equation 
\be\label{eq:scalar}
m^{\a\b} \pa_\a\left(\frac {\pa_\b f}{\sqrt{1+m^{\mu\nu}\pa_\mu f\pa_\nu f}}\right)=0 
\ee
where $m$ is the standard Minkowski metric on $\mathbb{R}\times \mathbb{T}^3$. The condition that the surface is timelike is just 
the statement that the vector $(m^{\a\b} \pa_\b f,1)$ -- the normal to the surface -- is spacelike with respect to the Minkowski metric 
on $\mathbb{R}\times \mathbb{T}^3 \times \mathbb{R}$, i.e., $1+m^{\mu\nu}\pa_\mu f\pa_\nu f>0$.

The minimal surface equation also admits another, {\it parametric}, representation: 
\be\label{eq:param}
g^{\a\b} k_{\a\b}=0
\ee
in which $g$ and $k$ are the first and second fundamental forms of the immersion $Y:{\cal M}=I\times \mathbb{T}^3\to {\cal N}=\mathbb{R}\times \mathbb{T}^3 \times \mathbb{R}^{n-3}$. The metric $g$ is induced by the ambient Minkowski metric $m$ on ${\cal N}$ and is explicitly given by
$$
g_{\a\b}=m(\pa_\a Y,\pa_\b Y),
$$
while
$$
k_{\a\b}=\Pi^\perp\pa_\a\pa_b Y,
$$
with $\Pi^\perp$ -- the orthogonal projection on the normal bundle of $Y({\cal M})$ in ${\cal N}$. The minimal surface equation is then 
just the statement of vanishing of the mean curvature vector of the immersion.

The timelike minimal surface equation is a Lorentzian counterpart 
of the Riemannian minimal surface problem which also arises naturally, in particular, in the study of the cosmological structure formation -- {\it domain walls} or {\it membranes}
and {\it strings}, see \cite{VS} and the references therein, and is generated as Euler-Lagrange equations from the Lagrangian
$$
{\cal L} = \int_{\cal M} \sqrt{-|g|}
$$

Examining both \eqref{eq:scalar} and \eqref{eq:param}, 
we see that they are both quasilinear hyperbolic equations of the type 
\be\label{eq:type}
g^{\a\b}(\pa\phi)\pa_\a\pa_\b\phi=0
\ee
in which $g$ is a Lorentzian metric with components dependent on the {\it derivatives} of the unknown solution $\phi$. We should note
already that there is a substantial difference between \eqref{eq:scalar} and \eqref{eq:param}: the former is a scalar equation for 
an unknown function while the latter is a system of $(n-3)$-equations for $n+1$-unknown components of $Y$. The underdeterminancy 
of the parametric representation plays an important role in this paper. For now, however, we continue the discussion of equations of the 
type \eqref{eq:type} in the case when the number of equations is equal to the number of the unknowns. In addition to the case of the minimal 
surface equation, they also arise in the study of compressible fluid dynamics. In fact, \eqref{eq:scalar} also describes the propagation 
of the velocity potential for the irrotational compressible Euler equations with the linear equation of state \cite{C}.  
Equations of the type \eqref{eq:type} can be contrasted with the ones of the form 
\be\label{eq:type1}
g^{\a\b}(\phi)\pa_\a\pa_\b\phi=0
\ee
which appear for example in general relativity, \cite{Ch-Br}.

The two general classical questions arising in the study of equations of both of these types are $1)$ the long term dynamics and stability of
their stationary solutions and $2)$ the local dynamics of general data. The former question is very sensitive to the structure of the specific equation. For the 
co-dimension one minimal surface equation \eqref{eq:scalar}, the stability of the trivial solution  $u=0$ was studied in \cite{Br,Lind}. 
More recently, the stability of planar waves and of a stationary catenoid was addressed in \cite{AW,KL,DKSW,Lu}. The question of formation of singularities 
was studied in \cite{NT,BM}.

For the latter problem, that of local well-posedness, one asks for the optimal (lowest) Sobolev exponent~$s$ such that the problem 
with initial data $(\phi_0, \phi_1=\pa_t\phi|_{t=0})\in H^s\times H^{s-1}$ admits a unique solution in the space $C([0,T), H^s)$ with a length of the time interval $T$ dependent only on the norm of the data. The local well-posedness statement is also typically accompanied by 
continuous dependence on the initial data and propagation of higher regularity.

The classical result \cite{HKM}, which combines the (higher $H^s$ version of the) standard energy estimate 
$$
\|(\phi,\pa_t\phi)(t)\|_{H^1\times L^2} \les \exp{\left (\int_0^t\|\pa g\|_{L^\infty_x}\right)} \|(\phi_0,\phi_1)\|_{H^1\times L^2} 
$$
with the Sobolev embedding $L_x^\infty \subset H^{\frac 32+}$ in dimension $3$, shows that the equations \eqref{eq:type1} and \eqref{eq:type} are well-posed in $H^s$ with $s>\frac 32+1$ and $\frac 32+1+1$ respectively. The difference of one degree of 
differentiability comes from the fact that for \eqref{eq:type1}, $\pa g\sim \pa\phi$ while for \eqref{eq:type}, $\pa g\sim \pa^2\phi$. 
In dimension $d$, the same result holds with $\frac 32$ replaced by $\frac d2$.
The exponents $\frac d2$ and $\frac d2+1$ are critical and correspond to the scaling of the equations \eqref{eq:type1} and \eqref{eq:type}. 

For semilinear equations of the form
\begin{equation}\label{General semiliner equation}
\Box\phi=Q(\pa\phi,\pa\phi)
\end{equation}
where $\Box$ is the standard D'Alembert operator on Minkowski space, the critical exponents are sometimes reachable (see e.g. \cite{Tao1,RTao}) 
when 
$Q$ is a quadratic form satisfying the null condition \cite{K2}
\be\label{eq:null}
Q(\xi,\xi)=0, \quad \forall \xi: \, m(\xi,\xi)=0.
\ee
Even in the absence of the null condition, however, for equations of the form \eqref{General semiliner equation} the classical well-posedness result can be improved by replacing 
the Sobolev embedding with the {\it Strichartz} estimate which, for instance, in dimension $d=3$ takes the form 
\be\label{eq:str}
\|\pa\phi\|_{L^2_tL^\infty_x} \les \|(\phi_0,\phi_1)\|_{H^{2+}\times H^{1+}} + \|\Box\phi\|_{L^1_t H^{1+}}
\ee
Here, the $L^p_t$-norms are taken on a finite interval $[0,T]$, $2+$ and $1+$ denote exponents which are strictly larger but can be arbitrarily close 
to the indicated number, 
and the constant in the above inequality depends on $T$.
This estimate exploits the dispersive properties of the $\Box$ and improves the well-posedness result by $\frac 12$ derivative.

In the case of quasilinear equations, going beyond the classical result requires grappling with the problem of proving the analog of
\eqref{eq:str}  with $\Box$ replaced by $\Box_g=g^{\a\b}\pa_\a\pa_\b$. If $g$ is smooth or even $C^{1,1}$ the estimate 
indeed holds \cite{Ka},\cite{Sm}. However, for the problems of the form \eqref{eq:type1} or \eqref{eq:type}, $g$ depends on the solution (or its derivative) and thus, already 
in the classical case (e.\,g.~when $\phi\in H^s$ with $s>\frac 52$ for \eqref{eq:type1}), $g$ is merely $C^1$.

The revolution in the study of Strichartz estimates for linear wave equations with rough coefficients and their applications 
was initiated in \cite{B-C1} (\cite{Ta1},\cite{B-C2},\cite{Ta2}). These developments culminated in the following Strichartz estimate, proven 
in \cite{Ta},
for solutions of {\it linear} wave equations 
$$
\pa_\a\left(g^{\a\b} \pa_\b\phi\right)=f_1+f_2
$$
on $\Bbb R^{3+1}$ with a metric $g$ satisfying local uniform Lorentzian bounds in a given coordinate system as well as the 
assumption 
$$
\|\pa g\|_{L^1_t L^\infty_x} \le \mu 
$$
for some constant $\mu$. Then
\be\label{eq:loss}
\||D|^{-\frac 16-} \phi\|_{L^2_tL^\infty_x}\les \|(\phi_0,\phi_1)\|_{H^1\times L^2} + \|f_1\|_{L^1_tL^2_x}+\||D| f_2\|_{L^2_t L^1_x}
\ee
Here, $D$ denotes the spatial derivatives and the constants are allowed to depend on $\mu$. In fact, that dependence 
can be made precise and, additionally, the norms of the inhomogeneous terms can be strengthened. But this is the form of the 
estimate that we will use in this paper.

Let us note that, in applications, one typically uses this result for a solution $\phi$ localized to a dyadic spatial frequency $\lambda$ 
and a metric $g$ supported on frequencies $\le \lambda$. Under these conditions, the estimate \eqref{eq:loss} can be transformed
into a more useful form, directly applicable to nonlinear problems:
\be\label{eq:loss'}
\|\phi\|_{L^2_tW^{\sigma,\infty}_x}\les \|(\phi_0,\phi_1)\|_{H^{\sigma+1+\frac 16+}\times H^{\sigma+\frac 16+}} + \|f_1\|_{L^1_tH^{\sigma+\frac 16+}}+\|f_2\|_{L^2_t W^{1+\sigma+\frac 16+,1}}.
\ee
The same estimate also holds for the equation 
$$
g^{\a\b} \pa_\a\pa_\b\phi=f.
$$
The bound \eqref{eq:loss'} is a Strichartz estimate with $\frac 16$-derivative loss, relative to the one that holds for a wave equation with smooth 
coefficients. The assumption $\pa g\in L^1_t L^\infty_x$ is precisely what one would need to close the energy estimates for the 
wave equation and, simultaneously, what one would hope to recover for a solution of a quasilinear problem. This estimate almost immediately implies local well-posedness in $H^s$ with $s>\frac 32+\frac 12+\frac 16$ for equation \eqref{eq:type1} and 
$s>\frac 52+\frac 12+\frac 16$ for \eqref{eq:type}. Its sharpness for general metrics satisfying the assumptions of the theorem 
was shown in \cite{S-T1}.

For nonlinear problems it turned out, however, that this result is not sharp, \cite{KR}. This eventually led to the optimal (at least from 
the point of view of Strichartz technology) well-posedness results without losses, first for the Einstein equations \cite{KR} and then 
for general equations \cite{S-T2} (later also with a different proof in \cite{Wa}.) All of these results exploit the observation that the 
metric itself satisfies an equation similar to that for $\phi$:
$$
g^{\a\b}\pa_\a\pa_\b g_{\mu\nu}={\text{better}}.
$$
This has deep implications for the geometry of null hypersurfaces associated to $g$, which, in turn, play an important role 
in Strichartz estimates. 

As is well-known \cite{Lind}, going beyond the Strichartz--compatible well-posedness threshold requires the presence 
of special structures in the equation. For semilinear problems, the appropriate structure is the null condition \eqref{eq:null}. To exploit it, one has to pass from 
the Strichartz estimates to {\it bilinear} estimates for the wave equation, see e.g. \cite{KM}. 

For quasilinear problems of the form \eqref{eq:type} or 
\eqref{eq:type1}, the situation is much more complicated and largely completely open. 
The first result in this direction was shown for the Einstein equations. In a certain gauge, these equations take the form  \eqref{eq:type1},
but the classical null structure is absent in the system. Nonetheless, the work \cite{KRS} established what is known as the solution of the $L^2$ curvature 
conjecture, corresponding to the statement of $H^2$ well-posedness. This amounts to an $\epsilon$-improvement of the results for generic equations 
of the form  \eqref{eq:type1}. There are some indications that this result might be sharp, e.g. \cite{EL}.

The first, and until now the only, outstanding progress going beyond an $\epsilon$-improvement of a Strichartz compatible 
well-posedness result for a quasilinear problem was achieved very recently in \cite{AIT}. That work focused specifically 
on the 
minimal surface equation \eqref{eq:scalar} and showed that, remarkably, it is well-posed 
in $H^s$ with $s>\frac 32+1+\frac 14$ which constitutes a $\frac 14$ improvement over a generic Strichartz result. The same 
gain takes place in higher dimensions, while in dimension two the result gives the gain of $\frac 38$-derivative.

Model quasilinear elliptic-hyperbolic cubic systems in higher dimensions, using bilinear or improved Strichartz estimates, had been also previously 
considered in \cite{B-C3}.
\vskip 1pc

In this paper we re-examine the Lorentzian minimal surface problem. The purpose here is threefold.
\begin{itemize}
\item We show that the minimal surface equation in dimension $d=3$ is well posed in $H^s$ with $s>\frac d2+1+\frac 16$. In fact, the result holds for 
immersed surfaces of arbitrary co-dimension. Similar improved results also hold in higher dimensions.
\vskip .5pc

\item Instead of \eqref{eq:scalar}, we adopt the parametric point of view \eqref{eq:param} on the minimal 
surface equation which allows us
to unlock its geometric properties and structures, providing a clear reason behind the result. Rather than using the equation 
\eqref{eq:param} for the map $Y$, we derive the wave equation for the second fundamental form of the immersion $k$
\be\label{eq:wavek}
g^{\a\b}\nabla_\a\nabla_\b k= {\mathrm{Riem(g)}}\cdot k 
\ee
where ${\text{Riem}}(g)$ is the Riemann curvature tensor of $g$. The Gauss-Codazzi equations for the immersion imply that 
$$
{\text{Riem}}(g)=k\wedge k
$$
This is the first main geometric structure exploited in the paper. Its significance is not that ${\text{Riem}}$ appears on the 
right hand side of the equation \eqref{eq:wavek} but rather that it gives a measure of regularity of the metric $g$. Based on the expression for $g$:
\[
g_{\a\b}=m(\pa_\a Y,\pa_\b Y)
\]
one would expect that the Riemann curvature of $g$ depends on the 3rd derivatives of the map $Y$. Instead, since $k=\Pi^\perp\pa^2 Y$,
it is quadratic in the second derivatives of $Y$.
In the scale of the Sobolev spaces $L^\infty_t H^\sigma$, the 
Riemann curvature has exactly the same regularity one expects from the fact that the metric $g$ is expressed through the first derivatives 
of the map $Y$. However, the regularity of ${\text{Riem}}(g)$ improves significantly in the $L^1_t W^{\sigma,\infty}_x$ norm, provided one has the Strichartz estimate for 
the second fundamental form $k$. Such an improvement for the Riemann tensor should be then translated into the corresponding 
estimate for the metric $g$ which then in turn, in view of \eqref{eq:loss'}, would give us the Strichartz estimate for $k$, closing 
the loop. 

We should note that unlike the previous works on quasilinear problems, quoted above, where improvements in regularity (which took place well above the level considered here) came from the fact that the metric $g$ itself satisfies a wave equation, the gain 
here is more profound and happens through the observation that the Riemann tensor enjoys better space-time integrability properties. 

Obtaining estimates on the metric $g$ from the Riemann curvature is directly related to the previously discussed 
underdeterminancy of the parametric formulation. To find the metric from its curvature we need to fix the gauge.
\vskip .5pc

\item Finding the gauge that allows us to recover full regularity of the metric from its curvature is an aspect of this paper which may 
be of independent interest. The gauge itself and the procedure of reconstructing the metric from curvature are general and do not 
depend on the minimal surface equation. The same holds for a good portion of quantitative estimates which only require 
certain structure (better behavior) of one component of the Ricci tensor. More refined estimates, including the important 
$L^1_t W^{\sigma,\infty}_x$ estimates, employ the full structure of the minimal surface equations, reminiscent of the bilinear 
estimates for semilinear problems. These estimates address the so called ``high-high" interactions, in which the product of 
two functions with high frequencies is of a low frequency itself. The gauge is evolutionary and some of its aspects could be perhaps compared  with the {\it caloric} gauge for the wave map and Yang-Mills problems \cite{Tao,SJO} which generates a parabolic 
flow but with the help of an additional artificial time variable.  

There is actually another gauge, which can be thought of as almost independent of the parametrization gauge discussed above, that appears in this work. This gauge is used to address the problem of higher ($>1$) co-dimension minimal surfaces and is imposed on 
the connection on the normal bundle of the surface $Y({\mathcal M})$. In the co-dimension $1$ case the bundle is trivial, but in the
higher co-dimension case we need to choose a frame on the normal bundle with the property that the connection 
coefficients inherit the full regularity of the curvature tensor. Similar to the choice of the gauge on ${\cal M}$, 
it leads to an elliptic-parabolic system for the connection coefficients. The gauge is designed to handle ``high-high" interactions and 
could be again compared to the caloric gauge in some of its aspects.

\end{itemize}
We now state an informal version of the main result, for the precise version see Theorems \ref{thm:Existence} and \ref{thm:Uniqueness}, 
and then review the remaining two points described above.
\begin{theorem}\label{thm:Existence'}
Let $s>\f52+\f16$. There exist constants $0 < \epsilon <1$ and $C>1$ depending only on $s$ such that, for any initial data pair $ \bY: \mathbb T^3 \rightarrow \mathcal N = \mathbb R \times \mathbb T^3 \times \mathbb R^{n-3}$, $\bn: \mathbb T^3 \rightarrow \mathbb R^{n+1}$ satisfying
\begin{equation}\label{D}
\mathcal{D} \doteq \|\bY-\bY_0\|_{H^s}+\|\bn-\bn_0\|_{H^{s-1}} < \epsilon
\end{equation}
(where $(\bY_0,\bn_0)$ is the flat initial data set), there exists a unique (modulo reparametrizations of $[0,1]\times \mathbb{T}^3$) solution
$Y:[0,1]\times \mathbb{T}^3 \rightarrow \mathcal{N}$  such that
\begin{equation}\label{Theorem embedding'}
\sum_{l=0}^2 \| \partial^l (Y - Y_0) \|_{L^{\infty}_tH^{s-l}} \le C \mathcal{D},
\end{equation}
\begin{equation}\label{Theorem metric}
\| \partial g \|_{L^1_t W^{\delta,\infty}} +\sum_{l=1}^2\Big( \| \partial^l g \|_{L^2_t H^{2-l+\f16}} + \|  \partial^l g \|_{L^{\infty}_t H^{s-1-l}}\Big) \le C \mathcal{D}.
\end{equation}
\end{theorem}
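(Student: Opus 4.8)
The plan is to prove Theorem~\ref{thm:Existence'} by a continuity argument on the triple $(Y,g,k)$ --- the parametrizing map, the induced metric, and the second fundamental form --- using the parametric equation \eqref{eq:param} together with the wave equation \eqref{eq:wavek} for $k$. First I would fix the two gauges described in the introduction: an evolutionary, elliptic--parabolic gauge on $\mathcal M$ determining the coordinate system on the parametrizing manifold, and (only when $n-3>1$) a companion gauge on the connection of the normal bundle of $Y(\mathcal M)$. Both are engineered so that the metric coefficients, respectively the normal connection coefficients, inherit the full regularity of $\mathrm{Riem}(g)$, respectively of the normal curvature $R^{\perp}$ --- in particular so as to make available the $L^1_t W^{\delta,\infty}_x$ control of $\partial g$ from the mere $L^1_t W^{\delta,\infty}_x$ control of the curvature. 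Using the smallness \eqref{D} of the data (matched to the gauges), one then sets up bootstrap assumptions that are essentially \eqref{Theorem embedding'}--\eqref{Theorem metric} with $C$ enlarged, supplemented by a Strichartz-type bound $\|k\|_{L^2_t W^{\sigma,\infty}_x}\lesssim\mathcal D$ at the regularity dictated by $s$, and by $\|\partial g\|_{L^1_t L^\infty_x}\le\mu$ with $\mu$ small.

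The heart of the proof is to close the loop between three families of estimates. First, energy estimates for \eqref{eq:param} and for \eqref{eq:wavek}, which only require $\mu$ to be finite, propagate the $L^\infty_t H^{s-l}$ control of $\partial^l(Y-Y_0)$ for $l=0,1,2$ and of $k$, with constants $\lesssim\mathcal D$ (the forcing $\mathrm{Riem}(g)\cdot k=(k\wedge k)\cdot k$ in \eqref{eq:wavek} is cubic in $k$ and is absorbed via $\|(k\wedge k)k\|_{L^1_t L^2_x}\lesssim\|k\|_{L^2_t L^\infty_x}^2\|k\|_{L^\infty_t L^2_x}$). Second, the Strichartz estimate with $\tfrac16$-derivative loss \eqref{eq:loss'}, applied to the frequency-localized pieces of \eqref{eq:wavek} with the metric truncated to lower frequencies, upgrades the energy bound on $k$ to $\|k\|_{L^2_t W^{\sigma,\infty}_x}\lesssim\mathcal D$. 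Third, the Gauss (and Ricci) equations exhibit $\mathrm{Riem}(g)=k\wedge k$ (and $R^{\perp}=k\wedge k$), which, thanks to the Strichartz bound just obtained, lies in $L^1_t W^{\delta,\infty}_x$ (schematically $k\in L^2_t L^\infty_x$ forces $k\wedge k\in L^1_t L^\infty_x$); feeding this through the metric-from-curvature reconstruction in the chosen gauge, and the analogous reconstruction of the normal connection, recovers $\|\partial g\|_{L^1_t W^{\delta,\infty}_x}\lesssim\mathcal D$ and hence closes the loop by making $\mu$ as small as needed. Standard continuity/iteration then yields, for $\mathcal D<\epsilon$ small, a solution with all the asserted bounds, and uniqueness modulo reparametrization of $[0,1]\times\mathbb T^3$ together with continuous dependence follows by running the same estimates on suitably gauge-matched differences of two solutions.

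The step I expect to be the main obstacle is the third one, specifically the \emph{high--high interactions} concealed both in the product $k\wedge k$ and in the metric-reconstruction procedure: when two high-frequency factors combine into a low-frequency output, a crude product estimate loses derivatives precisely at the regularity $s>\tfrac52+\tfrac16$, and one must instead exploit the antisymmetry of the wedge product together with the additional structure of the minimal surface system --- a gain reminiscent of bilinear null-form estimates for semilinear equations --- to recover the needed $L^1_t W^{\delta,\infty}_x$ regularity of $\partial g$. A secondary but essential difficulty is the construction of the normal-bundle gauge in co-dimension $>1$: one must solve an elliptic--parabolic system for the connection one-form whose solution has exactly the space-time integrability of the Ricci equation, since otherwise \eqref{eq:wavek} would be written in terms of connection coefficients too rough to close the argument. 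Finally, matching the two evolutionary gauges to the prescribed data $(\bY,\bn)$ and controlling all gauge-dependent error terms by $\|\partial g\|_{L^1_t L^\infty_x}$ is the technical glue binding the three families of estimates together.
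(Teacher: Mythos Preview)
Your proposal is correct and follows essentially the same route as the paper: a continuity/bootstrap argument in the balanced gauge (plus the companion normal-bundle gauge), closing the loop between energy estimates for $k$, the Tataru Strichartz estimate with $\tfrac16$-loss, and the reconstruction of $\partial g$ from $\mathrm{Riem}=k\wedge k$ via the gauge, with the high--high interactions handled by pulling a spatial derivative out of the wedge product using the Codazzi equations. The one point you leave implicit but the paper makes explicit is that not every component of $\mathrm{Riem}$ admits the needed $L^1_tW^{-1+\delta,\infty}_x$ control---only those with at most one time index do---so the balanced gauge carries explicit renormalization terms ($\tilde{\mathcal F}^{\natural}$ in the lapse condition, $\tilde{\mathcal F}_{\perp}$ in the normal-frame condition) precisely to cancel the contribution of the uncontrollable components $R_{i0j0}$ and their $R^\perp$ analogues.
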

\begin{remark}
The same results with $s>\frac d2+1+\frac 16$ hold in dimensions $d\ge 4$.
The proof in this paper applies almost mutatis mutandis and requires a version of the Strichartz estimate \eqref{eq:loss'} 
for the appropriate dimension, also available in \cite{Ta}.
\end{remark}
\subsection{Minimal surface equation in parametric form}
We view a surface immersed in ${\cal N}=\mathbb{R}\times \mathbb{T}^3 \times \mathbb{R}^{n-3}$ as the image of a 
map $Y:{\cal M}=I\times \mathbb{T}^3\to {\cal N}$. 
The surface $Y({\cal M})$ is minimal if the trace, with respect to the metric $g_{\a\b}=m(\pa_\a Y,\pa_\b Y)$ induced by the 
Minkowski metric on ${\cal N}$, of the second fundamental form of the embedding $k_{\a\b}=\Pi^\perp\pa_\a\pa_\b Y$ 
vanishes:
\be\label{eq:param'}
g^{\a\b} k_{\a\b}=0
\ee
We note that such parametrization of $Y(M)$ is intrinsically not unique as the same surface can also be parametrized 
by $Y\circ\Phi(M)$ for any diffeomorphism $\Phi:{\cal M}\to {\cal M}$. 
 
The Gauss--Codazzi equations associated to an immersion $Y(M)$ into a {\it flat} target manifold provide relations between 
its intrinsic geometry, in the form of the metric $g$ and its Riemann curvature ${\mathrm Riem}=R$, and the second fundamental form $k$:
\bea\label{eq:Gauss}
&R_{\a\b\ga\de}=k_{\a\ga} k_{\b\de} - k_{\a\de} k_{\b\ga}=\left(k\wedge k\right)_{\a\b\ga\de},\\
&\nabla_\ga k_{\a\b}=\nabla_\a k_{\ga\b}
\eea
(in the higher codimension case, one also considers additionally the Ricci equation relating the curvature of the normal bundle to the second fundamental form; more on that later). Rather than considering the equation implied by \eqref{eq:param'} for the map $Y$, we derive equations for the geometric quantities $g$ and $k$ themselves.

The equation 
\be\label{eq:k}
g^{\a\b} \nabla_\a\nabla_b k= {\mathrm Riem}\cdot k
\ee
is the wave equation for $k$. Its derived from the Gauss-Codazzi equations in two steps: first, taking the trace and using 
\eqref{eq:param'} and, then, contracting with another derivative. Since the derivation uses \eqref{eq:param'}, this equation encodes the minimality of the immersion. The equation \eqref{eq:k} needs to be supplemented by one for $g$. The Gauss equation gives us its 
Riemann curvature in terms of $k$. To recover the metric from ${\mathrm Riem}$ we need to fix the gauge. We will address this issue 
in the next section. For the time being we will simply write schematically
\be\label{eq:Gau}
g=\pa^{-2} {\mathrm Riem}=\pa^{-2} \left(k\wedge k\right)
\ee
Let us now examine the expected regularity of $g$ and $k$. If the map $Y\in L^\infty_t H^s$ with $s>\frac 52+\frac 16$, as stated 
in Theorem \ref{thm:Existence'}, then $g\in L^\infty_t H^{s-1}$ and $k\in L^\infty H^{s-2}$. To close the ($H^{s-2}$) energy estimate 
for the wave equation for $k$ we need to be able to control $\partial g\in L^1_tL^\infty$ as well as ${\mathrm Riem}\cdot k \in L^1_tH^{s-3}$.

If ${\mathrm Riem}\in L^\infty_t H^{s-3}$ as could be expected from the fact that $g\in L^\infty_t H^{s-1}$, then $k$ would have to 
lie in the space $L^1_tL^\infty$ which, in view of the $H^{s-2}$  regularity of the initial data for $k$ (recalling that $s-2>\frac 12+\frac 16$),
is far below what one would need for such a Strichartz estimate for $k$, even if the metric $g$ was smooth.

The key point is that the Riemann curvature tensor is better than what might have been expected simply from the regularity of the metric 
$g$. Recall that, schematically, $g\sim (\pa Y)^2$ which implies that 
$$
{\mathrm Riem} \sim (\pa^3 Y) (\pa Y) + (\pa^2 Y)^2
$$
The Gauss equation, on the other hand, tells us that the first term on the right hand side above vanishes:
$$
{\mathrm Riem}=k\wedge k\sim (\pa^2 Y)^2.
$$
While from the point of view of the Sobolev space~$L^\infty_t H^{s-3}$ this structure provides only a marginal gain, it is 
the time integrated (and Strichartz related) $L^2_t H^\sigma$ and $L^1_t W^{\sigma,\infty}$ norms that exhibit a substantial advantage.
In particular, even though pointwise in time the curvature belongs to a Sobolev space with a negative ($s-3=-\frac12+\frac 16+$) index, 
it also lies in the space $L^2_t H^{\frac 16+}$. This can be compared with the geometric considerations that played a very important 
role in the solution of the $L^2$ curvature conjecture in \cite{KRS}. There, the control of the $L^\infty_t L^2$ norm
of curvature was absolutely critical. Somewhat ironically, that control was needed to prove a Strichartz estimate {\it without 
a loss}, yet, in this problem, while the curvature is only in the $L^\infty_t H^{-\frac 12+\frac 16+}$ space and the Strichartz estimate
loses $\frac 16$-derivative, it leads to a better regularity result. The key, of course, is the better space-time integrability properties 
of ${\mathrm{Riem}}$.

Let us now put it all together. We assume 
$$
k\in L^\infty_t H^{s-2}\cap L^2_t W^{-\frac 12,\infty},\qquad \pa g\in L^1_t L^\infty
$$
and check the consistency of these norms with the estimates that we can extract from the system of equations for $g$ and $k$. 
As usual, the proper set up should be a bootstrap argument with smallness coming from the size of initial data and the nonlinear terms.
Below, we will ignore various $\epsilon$-losses and work with the exact numerology, e.g. $s$ will simply be replaced by $\frac 52+\frac 16$.

The assumption that $\pa g\in L^1_t L^\infty$ allows us to bound both the energy and the Strichartz norm (with $\frac 16$-derivative loss) 
of $k$:
$$
\|k\|_{L^\infty_t H^{\frac 12+\frac 16}} + \|k\|_{L^2_t W^{-\frac 12,\infty}}\les \|k_0\|_{H^{\frac 12+\frac 16}}+ \|{\text {Riem}}\cdot k\|_{L^1_t H^{-\frac 12+\frac 16}}
$$
We should note that, in order to apply the Strichartz estimate \eqref{eq:loss'}, we first need to convert the wave equation \eqref{eq:k} to a 
scalar one. This introduces additional inhomogeneous terms of the type $g\cdot\pa g\cdot \pa k$ and $g\cdot \pa^2 g\cdot k$. 
Their treatment is not dissimilar to that one of ${\mathrm {Riem}}\cdot k$. However, some of the ``high-high" interactions require 
both the use of the second $L^2_t W^{\frac 12+\frac 16,1}$ inhomogeneous norm in \eqref{eq:loss'} and a direct analysis of the 
energy estimate. For the sake of the exposition, in the introduction we will only consider the ``high-low" interaction in the term 
${\mathrm {Riem}}\cdot k$. 
 For that, we can estimate
 $$
 \|{\mathrm {Riem}}\cdot k\|_{L^1_t H^{-\frac 12+\frac 16}}\les \|{\mathrm {Riem}}\|_{L^2_t H^{\frac 16}} \|k\|_{L^2_t W^{-\frac 12,\infty}}
 $$ 
 Here is where we use the relation ${\mathrm Riem}=k\wedge k$. Interpolating between $L^\infty_t H^{\frac 12+\frac 16}$ and 
 $L^2_tW^{-\frac 12,\infty}$ norms of $k$ shows that  $\|{\mathrm {Riem}}\|_{L^2_t H^{\frac 16}}$ is bounded. This closes estimates for $k$.
 
 It remains to recover the $L^1_tL^\infty$ estimate for $\pa g$. According to our scheme,
 $$
 \pa g \sim \pa^{-1} {\mathrm Riem} =\pa^{-1} \left(k\wedge k\right)
 $$
 where we will remain vague about the meaning of $\pa^{-1}$, simply assuming that it will gain any desired derivative. This is of course 
 far from a trivial issue but it will be part of the gauge discussion in the next section.
 
 For the ``high-low" part of the $k\wedge k$ product we can estimate 
 $$
\| \pa^{-1} \left(k_{\text high}\wedge k_{low}\right)\|_{L^1_tL^\infty}\les \|k\|^2_{L^2_tW^{-\frac12,\infty}}
 $$
 For the ``high-high" interactions the argument is more subtle and uses an additional structure present in the product $k\wedge k$,
 reminiscent of one used in bilinear estimates, e.g \cite{KM}.
 Recall that 
 $$
 \nabla_\gamma k_{\a\b}=\nabla_\a k_{\b\gamma}
 $$
 Below, we will ignore lower order terms arising from commutators. We will also let $i$ denote spatial indices and $\Delta=\nabla^i\nabla_i$  
 be a Laplace operator associated to spatial $3$-dimensional slices. We can then write
 $$
 k_{\a\b}=\frac{\nabla^i}{\Delta}  \nabla_i k_{\a\b}=\nabla_\a \frac{\nabla^i}{\Delta} k_{i\b}
 $$
 Using this repeatedly, we obtain 
 $$
 k_{\a\gamma} k_{\beta\delta} = \nabla_\a\left(\frac{\nabla^i}{\Delta} k_{i\gamma} k_{\b\delta}\right)-\nabla_\b\left(\frac{\nabla^i}{\Delta} k_{i\gamma} k_{\a\delta}\right)+k_{\b\gamma} k_{\a\delta}
 $$
 which implies that 
 $$
 R_{\a\b\gamma\delta}= \nabla_\a\left(\frac{\nabla^i}{\Delta} k_{i\gamma} k_{\b\delta}\right)-\nabla_\b\left(\frac{\nabla^i}{\Delta} k_{i\gamma} k_{\a\delta}\right).
 $$
 It is now easy to see that for the ``high-high" portion 
 $$
\| \pa^{-1} {\mathrm Riem}\|_{L^1_tL^\infty}\les \|k\|^2_{L^2_tW^{-\frac12,\infty}}.
 $$
 \subsection{Balanced gauge(s)}
 We now return to the question of recovery of the metric $g$ from its curvature tensor. They are both defined on the base manifold 
 ${\cal M}=I\times {\Bbb T}^3$ and are subject to the diffeomorphism freedom of the problem, in which the image of an 
 immersion $Y(M)$ remains invariant under maps $\Phi:{\cal M}\to {\cal M}$. 
 
 In the analysis above we schematically expressed 
 $$
 g\sim \pa^{-2} {\mathrm{Riem}}
 $$
 which allowed us to say that $\pa g\sim \pa^{-1} {\mathrm{Riem}}$ and to exploit the structure of ${\mathrm{Riem}}=k\wedge k$, effectively 
 resulting in 
 $$
 \pa g\sim \pa^{-\frac 12} k\cdot \pa^{-\frac 12} k.
 $$
 All of these need to be justified through the choice of a gauge. As should be clear already, in the chosen gauge, the metric should 
 have the full regularity of the curvature tensor with respect to {\it all} derivatives. The gauge will also be evolutionary, that is it will 
 be constructed from the Riemannian metric $\bar g$ induced on the slices $\bar\Sigma_\tau=\{x^0=\tau\}$, the lapse function $N$ and 
 the shift vector $\beta$, starting from the initial hypersurface $\bar\Sigma_0$. The metric $g$ takes the form
 $$
 g = -N^2 (dx^0)^2 +\bar{g}_{ij}(dx^i + \b^i dx^0) (dx^j + \b^j dx^0)
 $$
 and the equations associated with the geometry of the $\bar\Sigma_\tau$ foliation are as follows:
 \begin{align}
\partial_\tau \bar{g}_{ij} \doteq \mathcal L_{\partial_{x^0}} \bar{g}_{ij} &= -2N h_{ij} + \bar{\nabla}_i \b_j + \bar{\nabla}_j \b_i, \label{Variation metric'}\\
\partial_\tau h_{ij} \doteq \mathcal{L}_{\partial_{x^0}} h_{ij} &= -\bar{\nabla}_i \bar{\nabla}_j N -N h_{ik}h^k_j +N R_{i\alpha j\beta}\hat n^\a \hat n^\b +h_{kj}\bar{\nabla}_i \b^k + h_{ki}\bar{\nabla}_j \b^k + \b^k \bar{\nabla}_k h_{ij}, \label{Variation second fundamental form'}
\end{align}
 with $\hat n=N^{-1}\pa_\tau-\beta$ -- the unit normal to $\bar\Sigma_\tau$. These are the general equations relating the components 
 of the metric $g$ to the Riemann tensor but they do not yet constitute a solvable system.
 
 There are several canonical gauges that can be associated to the $3+1$ decomposition: maximal $+$ transported coordinates 
 \cite{CK} and constant mean curvature (CMC) $+$ spatially harmonic \cite{AM}, for instance. 
 None of them however satisfy our main requirement 
 that $g$ recovers the full regularity of the Riemann tensor.
 
 For instance, the maximal plus transported coordinate gauge is characterized by the conditions 
 $$
 \tr h =0,\qquad \beta=0,
 $$
where $\tr (\cdot)$ denotes the trace of a symmetric $2$-tensor with respect to $\bar g$. The lapse $N$ then satisfies an elliptic equation
 $$
 -\Delta_{\bar g} N+ N\left(R_{i0j0} \bar g^{ij} + |h|^2\right)=0.
 $$
 Ignoring the (unknown) term $|h|^2$, this looks like a very strong relation between the component of the metric $g$ -- lapse $N$ -- 
 and the Riemann tensor:
 $$
 N\sim \Delta_{\bar g}^{-1} {\mathrm{Riem}}
 $$
 The problem however is that while this relation recovers the full {\it spatial} regularity of $N$, it does not do the same for its 
 time derivatives:
 $$
 \pa_\tau N\sim \Delta_{\bar g}^{-1} \pa_\tau{\mathrm{Riem}}
 $$
 and even worse,
 $$
 \pa^2_\tau N\sim \Delta_{\bar g}^{-1} \pa^2_\tau{\mathrm{Riem}}.
 $$
 One may be tempted to remember that the Riemann tensor satisfies the Bianchi identities which, perhaps, could be used 
 to trade $\pa_\tau$ for $\bar\nabla$ derivatives, but the specific component $g^{ij} R_{i0j0}$ arising here does not enjoy 
 this property. Even in the case when this particular component vanishes, e.g. the Einstein equations, the same problem 
 would still reappear somewhere else.
 
 We now introduce the new gauge. We first impose the following conditions 
 \begin{align}\label{eq:gauge}
 &\tr h=-\Delta_{\bar g} |D|^{-1} \left(N-1\right),\\
& \Delta_{\bar g} \left(x^i\right)=\Delta_{\bar g} |D|^{-1} \left(\beta^i\right)
 \end{align}
Here $|D|$ is a an operator with the symbol $|\xi|$, defined relative to the $x^i$ coordinates on $\bar\Sigma_\tau$. Note that, setting the 
right hand side of the equations above to $0$ gives rise to the maximal plus spatially harmonic gauge. 
The conditions \eqref{eq:gauge} however are {\it not} perturbations of that gauge since the right sides above are top 
order, both in terms of regularity and size.  

Taking the trace of the $h$-equation now gives a parabolic equation for the lapse:
$$
\pa_\tau (N-1) + |D|(N-1)\sim |D|^{-1}\left(R_{i0j0} \bar g^{ij}\right).
$$
The same happens for the shift vector $\beta$:
$$
\pa_\tau \beta + |D|\beta \sim |D|^{-1}\left(R_{imjk}\right).
$$
The metric $\bar g$ still satisfies an echo of the spatially harmonic gauge:
\be\label{eq:ell}
\Delta_{\bar g} \bar g\sim R_{imjk},\qquad \Delta_{\bar g} h \sim R_{0mjk}.
\ee
We also still have the relations
\be\label{eq:tran}
\partial_\tau \bar{g} \sim h + D\b,\qquad \pa_\tau h\sim D^2 N + R_{i0j0}.
\ee
The point is that the time and spatial derivatives are much better balanced in this gauge. The parabolic equation for $\beta$ together 
with the Bianchi identities recovers the full regularity (space-time) of $\beta$. Spatial regularity of $\bar g$ follows from the 
first equation in \eqref{eq:ell}. Its mixed derivatives can be obtained from the first equation in \eqref{eq:tran} and the spatial 
regularity of $h$ implied by the second equation in \eqref{eq:ell}. Its time regularity follows from the last equation in \eqref{eq:tran}. 

The lapse $N$ however is still somewhat problematic. We have its full spatial regularity and can also control both $\pa_\tau$ and 
$\pa_\tau |D|$ derivatives. But its $\pa_\tau^2$-derivative is still of the form 
$$
|D|^{-1}\pa_\tau \left(R_{i0j0} \bar g^{ij}\right).
$$ 
If the term $R_{i0j0} \bar g^{ij}={\text{Ric}}_{00}$ vanished, as is the case for the Einstein vacuum equations, for instance, we would 
not have to do anything else. As it stands, we need to renormalize our gauge. First,
$$
R_{i0j0}=k_{ij} k_{00}-k_{i0} k_{j0}.
$$
As before, using the equations $\nabla_\a k_{\b\gamma}=\nabla_\gamma k_{\a\b}$ we can rewrite 
$$
R_{i0j0}\sim \pa_\tau \left(\frac{\nabla^m}{\Delta} k_{mj} k_{0i}\right) - \pa_i\left(\frac{\nabla^m}{\Delta} k_{mj} k_{00}\right).
$$
The second term has a spatial derivative in front which will offset the $|D|^{-1}$ appearing the lapse equation. To eliminate 
the first term we modify the gauge by setting instead:
$$
\tr h = -\Delta_{\bar g} |D|^{-1} \left(N-1\right) -{\cal F}, \qquad {\cal F}:= \bar g^{ij}\frac{\nabla^m}{\Delta} k_{mj} k_{0i}.
$$
The resulting structure of the equations for the metric $g$ (decomposed into $(N,\beta,\bar g)$) is such that we can effectively express 
$$
\pa g \sim |D|^{-1} {\text{Riem}}.
$$
Moreover, the components of ${\mathrm{Riem}}$ appearing above are such that when we express 
$$
{\mathrm{Riem}} = k\wedge k
$$
and use the equations for $k$ to pull the derivatives out, these will only be the spatial derivatives. That is to say, in this gauge we can indeed 
justify the relation 
$$
\pa g\sim |D|^{-\frac 12} k\cdot  |D|^{-\frac 12} k
$$
for the ``high-high" interactions. 

It is clear that the imposed gauge is of an elliptic-parabolic type and, importantly, enjoys a {\it lower 
triangular} structure:
\begin{equation}\label{Model system of equations'}
\begin{cases}
\partial_\tau N + |D| N +  \JapD^{-1} \big( (g-m)\cdot D\partial g\big) = \mathcal G_1,\\
\Delta_{\bar g} h +D(g\cdot D^2 N) = \mathcal G_2,\\
\partial_\tau \beta +|D| \beta +\JapD^{-1}\big(g\cdot Dh\big) + \JapD^{-1} \big( (g-m)\cdot D\partial g\big) = \mathcal G_3, \\
\Delta_{\bar g}  \bar g  + g\cdot D^2\beta = \mathcal G_4,
\end{cases}
\end{equation}

As we mentioned above, we also use a second gauge in this paper, imposed on the connection on the normal bundle 
to the surface $Y({\mathcal M})$. This issue is entirely absent in the co-dimension $1$ case but becomes important in higher 
co-dimensions. We give only a brief description here. The body of the paper treats the general case in detail.
The curvature of the normal bundle $R^\perp$ has the same bilinear wedge structure generated by the second fundamental 
form $k$ as the Riemann curvature $R$ on ${\mathcal M}$. A gauge on that bundle is fixed by a choice of the normal 
frame $e_{\bar A}$. Relative to that frame, the connection is expressed through its connection coefficients 
$$
(D_\alpha e_{\bar A},e_{\bar B}) = \omega_{\alpha \bar A\bar B}.
$$
 We fix the gauge on the normal bundle by requiring that the connection coefficients satisfy the following condition 
 $$
 \bar g^{ij} \bar \nabla_i \omega_{j\bar B}^{\bar A}=-\Delta_{\bar g} |D|^{-1} \omega_{0\bar b}^{\bar A}  + {\mathcal F}_\perp.
 $$ 
Setting the right hand side to zero above leads back to the standard Coulomb gauge. The addition of the first term on the 
right hand side, as was the case in our chosen gauge on ${\mathcal M}$, balances the spatial and time derivatives. 
The ${\mathcal F}_\perp$ term, again as in the case of the gauge on ${\mathcal M}$, renormalizes a problematic ``high-high"
interaction term related to curvature. In this gauge, $\omega$ satisfies an eilliptic-parabolic system of the form 
\begin{align*}
&\pa_\tau \omega_{0\bar B}^{\bar A} + |D|\omega_{0\bar B}^{\bar A}=...\\
&\Delta_{\bar g}\omega_{i\bar B}^{\bar A}=-\pa_i\left(\Delta_{\bar g} |D|^{-1} \omega_{0\bar B}^{\bar A}\right)+...
\end{align*}
\vskip .5pc
Together, the gauges on ${\cal M}$ and on the normal bundle $NY({\cal M})$ constitute what we call in the paper the 
{\it balanced gauge} for the minimal surface equation..
\vskip .5pc

The final word about the gauge and our notations is that {\it all} the space-times norms, e.g. $L^p_t W^{s,q}$, used in the paper are taken with respect to the coordinate system defined by the gauge. Since in all the norms time integration always goes {\it second}, below,
we will drop sub-index $t$ and will simply use the notation $L^p W^{s,q}$.

\vskip .5pc
Let us remark that the process described so far establishes a-priori estimates for solutions to the minimal surface equation that, via a standard bootstrap argument, lead to the existence part of Theorem \ref{thm:Existence'}. The geometric uniqueness of solutions is obtained as a corollary of a uniqueness statement for solutions with the balanced gauge condition imposed. Given two solutions (expressed in the balanced gauge) arising from the same initial data, the differences $(\dot k, \dot g, \dot \omega) = (k^{(1)}-k^{(2)}, g^{(1)}-g^{(2)}, \omega^{(1)}-\omega^{(2)})$ are estimated through a process analogous to that followed for the existence part, but with the norms controlling $(\dot k, \dot g, \dot\omega)$ lying at one level of differentiability below those controlling $(k, g,\omega)$ (a loss of regularity which is usual for quasilinear problems). Dealing with these rougher norms requires using the structure of the high-high interactions in the equations in a more delicate way. 

\vskip .5pc
We finish the introduction with a short discussion of the possible optimality of the results proved in this paper. The analysis here couples the 
Strichartz estimates with a $1/6$ regularity loss for a wave equation on a rough background described by the metric $g$,
with the bilinear estimates for $g$ in terms of the second fundamental form $k$ satisfying the above wave equation. It might be tempting 
to enquire whether the results can be improved by paring the loss coming from the Strichartz estimates. As we discussed earlier, 
however, the losses in Strichartz estimates are sharp for generic non-smooth metrics and the geometric mechanism responsible for 
eliminating these losses is tied to a certain null component of the Ricci tensor of $g$. It turns out that for the minimal surface equation
that relevant component is essentially 
$$
\Box_g \left(|D|^{-1} k\right)^2
$$
which lacks the requisite structure that would allow to control the geometry of null hypersurfaces of $g$ below the regularity considered 
in this paper. It is therefore possible that the regularity exponents provided in this work may be near a threshold for the well-posedness 
theory for the timelike minimal surface equation. At the moment, however, this remains an open question undoubtedly deserving further 
investigation.

\vskip 1pc
{\bf Acknowledgements.} Part of this work was completed while GM was supported by a Clay Research Fellowship. IR acknowledges support through NSF grants DMS-2405167 and a Simons Investigator Award.

\section{Notations and useful relations}

\subsection{The geometry of immersed hypersurfaces}\label{sec:Geometry}
Let $(\mathcal{N}^{n+1},m)$, $n\ge 4$, be a smooth Lorentzian manifold and let $Y:\mathcal{M}^{3+1} \rightarrow \mathcal{N}$ be an immersion. In this paper, we will only be interested in the case when the target manifold $\mathcal N$ is $\mathbb{R}\times \mathbb{T}^3 \times \mathbb{R}^{n-3}$ and the metric $m$ is the standard Minkowski metric.  We will also restrict ourselves to the case when the manifold $\mathcal{M}$ is of the form $I\times \bS^3$, where $I\subseteq \mathbb{R}$ is an interval, and the hypersurface $\Sigma = Y(\mathcal M) \subset \mathcal N$ is timelike, i.\,e.~its induced metric $g$ has Lorentzian signature.

Let $\{ y^A \}_{A=0}^{n}$ be a fixed \emph{Cartesian} coordinate chart on $\mathcal N$; in such a chart, the Minkowski metric $m$ takes the form
\[
m = m_{AB} dy^A dy^B = -(dy^0)^2 + \sum_{I=1}^n (dy^I)^2.
\]
Let also $\{x^{\a}\}_{\a=0}^3$ be a local coordinate chart on $\mathcal{M}$ and $\{ e_{\bar J}\}_{\bar j = 4}^n$ be a frame (not necessarily orthonormal) for the normal bundle $N\Sigma$ of $\Sigma = Y(\mathcal M)$. 

We will denote with  $\Pi : T_{\Sigma} \mathcal{N} \rightarrow T\Sigma$ and $\perpi: T_{\Sigma}\mathcal{N} \rightarrow N\Sigma$ the orthogonal projections on the tangent and normal bundle of $\Sigma$, respectively. Note that the components $\Pi^{\b}_{A}$ of $\Pi$ with respect to the coordinate chart $\{x^{\a}\}_{\a=0}^3$ on $\mathcal{M}$ and the fixed Cartesian coordinate system $\{ y^A \} $ on $\mathcal{N}$ can be expressed as explicit smooth functions of $\{\partial_{\alpha} Y^A\}_{\a, J}$; we will sometimes use the notation $\Pi(\partial Y)$ for $\Pi$ to highlight this fact.
 Similarly, given a frame $\{ e_{\bar{J}} \}_{\bar{J}=4}^{n}$ for the normal bundle $N\Sigma$, the components $(\perpi)^{\bar{J}}_A$ of $\perpi$ can be viewed as smooth functions of $e_{\bar{J}}$.

\begin{remark*}
Throughout this paper, we will always use capital letters to denote indices with respect to the fixed Cartesian coordinate system in the target manifold  $\mathcal{N}$. Capital letters with an overbar will be used to denote indices related to a chosen frame in the normal bundle $N\Sigma$ of $\Sigma$ (which we will sometimes also denote with $NY$). Lower-case letters will be used for indices related to a coordinate chart $(x^0,x^1,x^2,x^3)$ on $\mathcal{M}$; we will adopt the convention that Greek indices take values in $\{0,1,2,3\}$, while Latin indices range over $\{1,2,3\}$. We will also frequently identify coordinate charts on $\mathcal{M}$ with their corresponding push-forward charts on $\Sigma=Y(\mathcal{M})$. 
\end{remark*}

\noindent \textbf{The induced metric $g$ on $\mathcal M$.}
The metric $g$ induced on $\Sigma=Y(\mathcal{M})$ (and, via $Y_*$, on $\mathcal M$) can be expressed in the $x^{\a}$ coordinate chart as follows:
\begin{equation}
g_{\a\b} = m_{AB} \partial_{\a} Y^A \partial_{\b} Y^B \label{metric}
\end{equation}
We will adopt the standard definition for the Christoffel symbols of $g$ on $\Sigma$:
\begin{equation}\label{Christoffel symbols}
\Gamma_{\lambda\mu\nu} \doteq \f12\Big( \partial_{\mu} g_{\lambda\nu} + \partial_{\nu} g_{\lambda\mu} - \partial_{\lambda} g_{\mu\nu}\Big), \quad \Gamma_{\a\b}^\ga \doteq g^{\ga\delta}\Gamma_{\delta\a\b}.
\end{equation}
In terms of the immersion map $Y$, the Christoffel symbols can be expressed as
\begin{equation}\label{Projection Christoffel symbols}
\Gamma^{\a}_{\b\ga} = \Pi^{\a}_B \, \partial_{\b} \partial_{\ga} Y^B,
\end{equation}
while the covariant derivative of a tangential vector field $X\in\Gamma(T\mathcal{M})$ can be expressed as
\begin{equation}\label{Covariant derivative}
\nabla_\a X^\b = \Pi^{\a}_B \, \partial_\a \big( X^\delta \partial_\delta Y^B \big).
\end{equation}

We will adopt the following definition for the \emph{Riemann curvature} tensor $R_{\a\b\ga\delta}$:
\begin{equation*}
R_{\a\b\ga\delta} \doteq g\big( \nabla_{\a} \nabla_{\b} (\partial_{x^{\delta}}) - \nabla_{\b} \nabla_{\a} (\partial_{x^{\delta}}),  \partial_{x^{\ga}} \big).
\end{equation*}
The tensor $R_{\a\b\ga\delta}$ can be computed explicitly in terms of the Christoffel symbols as follows:
\begin{equation}\label{Riemann tensor coordinates}
R_{\a\b\ga\delta} = \partial_{\a} \Gamma_{\ga\b\delta} - \partial_{\b} \Gamma_{\ga\a\delta} -\Gamma_{\lambda \a\ga} \Gamma^{\lambda}_{\b\delta} + \Gamma_{\lambda\b\ga} \Gamma^{\lambda}_{\a\delta}. 
\end{equation}

\smallskip
\noindent \textbf{The connection $\pernabla$ on the normal bundle.}
The Minkowski metric $m$ on $\mathcal N $ induces a natural connection $\pernabla$ on the normal bundle $N\Sigma$; the derivative of a normal vector field $Z \in \Gamma(N\Sigma)$ with respect to this connection satisfies the relation:
\[
\pernabla_{\alpha} Z ^{\bA} = (\perpi)^{\bA}_B \partial_{\a} (Z^{\bB} e_{\bB}^B).
\]
The connection coefficients $\omega$ of $\pernabla$ with respect to the frame $\{ e_{\bar{J}}\}$ are defined by
\begin{equation}\label{Definition connection coefficients}
\omega^{\bar{A}}_{\a \bar{B}} \doteq (\perpi)^{\bA}_B \partial_{\a} e_{\bB}^B.
\end{equation}
The \emph{normal curvature} tensor $R^{\perp }_{\a\b\bA\bB}$ associated to $\pernabla$ is defined by:
\begin{align*}
R^{\perp}_{\a\b\bA\bB} \doteq m \big( \nabla^{\perp}_{\a}\nabla^{\perp}_{\b} e_{\bB} - \nabla^{\perp}_{\b}\nabla^{\perp}_{\a} e_{\bB}, e_{\bA} \big)
\end{align*}
and can be explicitly computed in terms of the connection coefficients via the formula
\begin{equation} \label{Normal curvature coordinates}
(R^{\perp})_{\a\b\hphantom{\bA}\bB}^{\hphantom{\a\b}\bA} = \partial_{\a} \omega_{\b\bB}^{\bA} - \partial_{\b} \omega_{\a\bB}^{\bA} +\omega^{\bA}_{\a \bar{C}} \omega^{\bar{C}}_{\b\bB} - \omega^{\bA}_{\b \bar{C}} \omega^{\bar{C}}_{\a\bB}
\end{equation}
(in the above, we used $m_{\bA\bB} = m(e_{\bA},e_{\bB})$ to raise the third index of $R^\perp$).

\smallskip
\noindent \textbf{The second fundamental form $k$.}
The \emph{second fundamental form} $k$ of $\Sigma$ is a symmetric, vector valued $2-$form on $\Sigma$ taking values in $N\Sigma$; its components with respect to the coordinate chart $\{ x^{\a}\}_{\a=0}^3$ and the frame $\{e_{\bar J}\}_{\bar J=4}^n$ satisfy
\begin{equation}
k_{\a\b}^{\bA} = (\perpi )^{\bA}_B \partial_{\a}\partial_{\b}Y^B. \label{Second fundamental form}
\end{equation}

It will be useful to  express the derivative of the components $e_{\bA}^A$ of the frame vectors $e_{\bA}$ with respect to $\{\partial_{y^A}\}_{A=0}^n$ in terms of the connection form $\omega$ and the second fundamental form $k$ as follows:
\begin{equation}\label{Relation e Omega k}
\partial_\b e_{\bA}^A = \omega^{\bB}_{\b \bA} e_{\bB}^A -  g^{\ga \delta} m(e_{\bA}, e_{\bB} )k^{\bB}_{\b\ga}\partial_{\delta} Y^A.
\end{equation}

The Gauss, Ricci and Codazzi equations take the following form:
\begin{align}
& R_{\a \b \ga \delta}  = m_{\bA \bB} \cdot \big( k^{\bA}_{\a\ga} k^{\bB}_{\b\delta} - k^{\bA}_{\a\delta} k^{\bB}_{\b\ga}\big), \label{Gauss}\\
& R_{\a \b}^{\perp \hphantom{i} \bA \bB} =  g^{\ga \delta} \cdot \big(  k^{\bB}_{\a\ga} k^{\bA}_{\b\delta} - k^{\bA}_{\a\ga} k^{\bB}_{\b\delta} \big), \label{Ricci}\\
& \nabla_{\a} k^{\bB}_{\b\ga} = \nabla_{\b} k^{\bB}_{\a\ga} ,\label{Codazzi}
\end{align}
where  
\[
m_{\bA \bB} \doteq m(e_{\bA}, a_{\bB}),
\]
\[
R_{\a \b}^{\perp \hphantom{i} \bA \bB} \doteq m^{\bA\bar{C}}m^{\bB\bar{D}}R^\perp_{\a\b\bar{C}\bar{D}}
\]
and, through some minor abuse of notation, $\nabla k$ denotes the derivative of $k$ with respect to the natural connection on $N\Sigma \otimes T^*\Sigma \otimes T^*\Sigma$, defined by the relation
\begin{align*}
m\big(\nabla_Z k(X,Y),N \big) = \hphantom{h} &  \nabla_Z \big(m( k(X,Y),n) \big) - m\big( k(\nabla_ZX,Y),N \big)\\
 &  - m\big( k(X,\nabla_ZY),N \big)-m\big( k(X,Y),\pernabla_Z N \big).  
\end{align*}
for all $X,Y,Z\in \Gamma(T\Sigma), N\in \Gamma(N\Sigma)$.

\subsection{Gauge conditions and the $3+1$ decomposition}\label{sec:Gauge}
For the rest of this section, we will assume that the base manifold $\mathcal M$ is of the form $I \times \mathbb T^3$, where $I\subseteq \mathbb{R}$ is an interval. Let $Y: \mathcal M \rightarrow \mathcal{N}$ be an immersion and let $g$ and $k$ be, respectively, the Lorentzian metric and second fundamental form induced by the Minkowski metric $m$ on $\Sigma = Y(\mathcal M)$, as in Section \ref{sec:Geometry}. 

\begin{definition}
A \textbf{gauge} $\mathcal{G}$ for the immersion $Y$ will correspond to a choice of a diffeomorphism $x: \mathcal{M} \rightarrow I\times \mathbb{T}^3$ and a choice of a frame $\{e_{\bar{A}}\}_{\bar{A}=4}^n$ for the normal bundle $NY$ of the image of the immersion.
\end{definition}

\begin{remark*}
Through a slight abuse of the standard terminology, we will frequently refer to the components $(x^0, x^1, x^2, x^3)$ of a diffeomorphism $x: \mathcal M \rightarrow I \times \mathbb T^3$ with respect to the product coordinates on $I \times \mathbb T^3$ as \textbf{coordinate functions}. Note that, in such a case, $x^0$ takes values in $\mathbb R$, while $x^1, x^2$ and $x^3$ take values in $\mathbb S^1 = \mathbb R / \mathbb Z$.
\end{remark*}

For the rest of this paper, we will only consider diffeomorphisms $x: \mathcal M \rightarrow I \times \mathbb T^3$ for which the slices $\overline \Sigma_\tau \doteq \{ x^0 = \tau\}$ are strictly spacelike hypersurfaces of $(\mathcal M, g)$. Let $x=(x^0,x^1,x^2,x^3)$ be such a diffeomorphism. 
\medskip

\noindent \textbf{The $3+1$ decomposition.}
We will denote with $\bar{g}$ the induced \emph{Riemannian} metric on $\overline \Sigma_\tau$. Fixing a time orientation on $\mathcal{M} \simeq I \times \mathbb T^3$ such that $\partial_{x^0}$ is future directed, we will denote with $\hat n$ the future directed, \emph{unit} timelike vector field on $\mathcal M$ which is normal to $\overline \Sigma_\tau$ for each $\tau$. We will also define the second fundamental form $h$ of the slices $\overline \Sigma_\tau$ by the relation
\begin{equation}\label{Definition h}
h(V,W) = g(\hat n,\nabla_V W) \quad \text{for all }V,W \in \Gamma(T \overline\Sigma_\tau).
\end{equation}

We will define the \emph{lapse} function $N>0$ and the \emph{shift} vector $\b \in \Gamma(T \overline \Sigma_\tau)$ by the relation
\begin{equation}\label{Relation lapse normal}
\partial_{x^0} = N \hat n + \beta.
\end{equation}
Note that, along each slice $\overline \Sigma_\tau$, the spacetime metric $g$ can be decomposed as
\begin{equation}
g = -N^2 (dx^0)^2 +\bar{g}_{ij}(dx^i + \b^i dx^0) (dx^j + \b^j dx^0). \label{Metric decomposition}
\end{equation}
In particular, the components $g_{\a\b}$ and $g^{\a\b}$ of $g$ and its inverse in the $(x^0, x^1, x^2, x^3)$ chart take the form:
\begin{align*}
g_{00} & =-N^2+|\b|_{\bar g}^2, \quad g_{0i} = \bar g_{il}\b^l, \quad  \hphantom{a\,\,} g_{ij} = \bar g_{ij},\\
g^{00} &=  -N^{-2}, \quad \hphantom{+|\b|\,\,} g^{0i} = N^{-2} \b^i, \quad g^{ij} =  \bar g^{ij} - N^{-2} \b^i \b^j,
\end{align*}
where $\bar g^{ij}$ are the components of the inverse matrix with elements $\bar g_{ij}$. Moreover, the Christoffel symbols of the metric $g$ (see \eqref{Christoffel symbols}) in the $(x^0,x^1,x^2,x^3)$ coordinate chart can be computed explicitly in terms of $(N, \b,\bar g)$ and $h$:
\begin{align}\label{Christoffel symbols 3+1}
\Gamma_{000} & = \f12 \partial_0 (-N^2 + |\b|^2_{\bar g}), \quad \quad\quad \quad\quad \quad  \Gamma_{00i}  = \f12 \partial_i (-N^2 + |\b|^2_{\bar g}),\\
 \Gamma_{i00} &= \partial_0 (\bar g_{il}\b^l) - \f12 \partial_i (-N^2 + |\b|^2_{\bar g}), \,\quad \Gamma_{0ij} = \f12(\partial_i \b_j + \partial_j \b_i - \partial_0 \bar g_{ij}), \nonumber  \\
 \Gamma_{i0j} &=  \f12(\partial_j \b_i - \partial_i \b_j + \partial_0 \bar g_{ij}), \quad \quad \quad \,\,\,\,  \Gamma_{ijk}  = \bar \Gamma_{ijk}.    \nonumber 
\end{align}
In what follows, we will raise and lower indices of tensors on $\overline \Sigma_\tau$ using $\bar g$.

 The Gauss--Codazzi equations yield the following relations between the spacetime curvature tensor $R$ and the pair $(\bar g, h)|_{\bar\Sigma_{\tau}}$:
\begin{align}
& \bar{R}_{ijkl}  - \big( h_{il} h_{jk} -  h_{ik} h_{jl} \big) =  R_{ijkl}, \label{Gauss 3+1}\\
& \bar{\nabla}_{i} h_{jk} - \bar{\nabla}_{j} h_{ik} = R_{ij\a k} \hat{n}^{\a},\label{Codazzi 3+1}
\end{align}
where $\bar{R}$ denotes the spatial Riemann curvature tensor associated to the metric $\bar g$.

\medskip

\noindent{\textbf{The first variation formulas for $(\bar g,h)$.}}
We can identify $\overline \Sigma_\tau$ with $\mathbb T^3$ via the coordinate chart  $\bar{x} = (x^1, x^2, x^3)$. Through this identification, we can view $(\bar{g}, h)|_{\overline\Sigma_\tau}$ as an one parameter family of tensors on $\mathbb T^3$; the variation of $(\bar{g},h)$ with respect to $\tau$ can be computed via the following structure equations:
\begin{align}
\partial_\tau \bar{g}_{ij} \doteq \mathcal L_{\partial_{x^0}} \bar{g}_{ij} &= -2N h_{ij} + \bar{\nabla}_i \b_j + \bar{\nabla}_j \b_i, \label{Variation metric}\\
\partial_\tau h_{ij} \doteq \mathcal{L}_{\partial_{x^0}} h_{ij} &= -\bar{\nabla}_i \bar{\nabla}_j N -N h_{ik}h^k_j -N R_{i\alpha j\beta}\hat n^\a \hat n^\b +h_{kj}\bar{\nabla}_i \b^k + h_{ki}\bar{\nabla}_j \b^k + \b^k \bar{\nabla}_k h_{ij}, \label{Variation second fundamental form}
\end{align}
where $\bar{\nabla}$ denotes the covariant derivative associated to $\bar{g}$. As a result, the variation in $\tau$ of the Christoffel symbols $\bar\Gamma^k_{ij}$ of $\bar g|_{\bar\Sigma_{\tau}}$ can be expressed as follows:
\begin{equation}\label{Variation Christoffel symbols}
\partial_{\tau} \bar\Gamma^k_{ij} = -\big(\bar\nabla_i   (N h_{j}^k) + \bar\nabla_j (N h_i^k)-\bar\nabla^k (Nh_{ij})\big) + \bar\nabla_i \bar{\nabla}_j \b^k + \bar g^{kl}\bar R_{imjl} \b^m.
\end{equation}

\begin{remark*}
Throughout this paper, we will frequently use the schematic notation $\partial g$ to denote either $\partial_\kappa g_{\lambda\mu}$, $\kappa,\lambda,\mu\in \{0,1,2,3\}$ or any of the terms $\partial_{\kappa} N$, $\partial_{\kappa} \b^i$, $\partial_{\kappa} \bar g_{ij}$, $\kappa \in \{0,1,2,3\}$, $i,j \in \{1,2,3\}$, possibly multiplied with an algebraic function of $g_{\kappa\lambda}$. 
\end{remark*}

\subsection{The initial value problem for the minimal surface equation}\label{subsec:IVP}
The hypersurface $\Sigma$ is called \emph{minimal} if the second fundamental form is traceless, i.e.~it satisfies 
\begin{equation}
g^{\a\b} k^{\bA}_{\a\b} = 0.  \label{Minimal surface equation}
\end{equation}
The above equation is of hyperbolic nature: Differentiating the Codazzi equations \eqref{Codazzi} and taking the trace with respect to $g$, we obtain the following wave-type equation for the second fundamental form of a minimal hypersurface:
\begin{equation} \label{Covariant wave equation k}
g^{\mu\nu} \nabla_{\mu}\nabla_{\nu} k ^{\bA}_{\a\b} =  R_{\mu \a \hphantom{\ga}}^{\hphantom{\mu \a }\ga \mu} k^{\bA}_{\ga \b}+R_{\hphantom{\mu} \a \hphantom{\ga} \b}^{\mu \hphantom{ \a }\ga} k^{\bA}_{\mu \ga} +(\perR)_{\hphantom{\mu} \a \hphantom{\bA} \bB}^{\mu \hphantom{\a} \bA} k^{\bB}_{\mu \b}. 
\end{equation} 
Equation \eqref{Covariant wave equation k} yields the following (schematic) equation for each component $k ^{\bA}_{\a\b}$ of $k$:
\[
\square_g k ^{\bA}_{\a\b} = g\cdot \big( \Gamma + \omega  \big) \partial k + g\cdot \big( \partial \Gamma + \Gamma \cdot \Gamma + \partial \omega + \omega \cdot \omega \big) k.
\]

We will be interested in the study of the initial value problem for equation \eqref{Minimal surface equation}. To this end, we will adopt the following definition of an initial data set for \eqref{Minimal surface equation}:

\begin{definition}\label{def: Initial data}
An initial data set for the minimal surface equation \eqref{Minimal surface equation} consists of an immersion $\bY: \mathbb T^3 \rightarrow \mathcal{N}$ and a map $\bar{n}: \mathbb T^3 \rightarrow \mathbb R^{n+1}$ such that
\begin{itemize}
\item $\bY (\mathbb T^3)$ is spacelike with respect to the Minkowski metric $m$ on $\mathcal{N}$,
\item For any $x\in \mathbb T^3$, $\bar{n}(x)$ is a unit timelike vector which is normal to $\bY(\mathbb T^3)$ at $\bY(x)$. 
\end{itemize}
\end{definition}
Note that the above conditions on the pair $(\bY, \bn)$ can be equivalently expressed in a local coordinate chart $(x^1, x^2, x^3)$ on $\mathbb T^3$ as
\[
m_{AB}(\bY(x)) \bn^A(x) \bn^B(x) = -1 
\]
and
\[
m_{AB}(\bY(x)) \partial_i \bY^A(x) \bn^B(x) = 0
\]
for all $i=1,2,3 $ and $x\in \mathbb T^3$.

\begin{definition}\label{def: Development}
Let $(\bY, \bn)$ be an initial data set for \eqref{Minimal surface equation} and let $T>0$. A map $Y:[0,T] \times \mathbb T^3  \rightarrow \mathcal{N}$ will be called a \textbf{development} of $(\bY, \bn)$ if
\begin{itemize}
\item $Y$ is a timelike immersion satisfying equation \eqref{Minimal surface equation},
\item $Y(0,x)=\bY(x)$ for any $x\in \mathbb T^3$,
\item $\partial_{x^0} Y(0,x) \in \mathrm{span}\big\{\bn(x), T_x \bY(\mathbb T^3)\big\}$ and $m \big(\partial_{x^0} Y(0,x), \bn(x)\big) <0$ for any $x\in \mathbb T^3$.\footnote{Here, $x^0$ corresponds to the variable ranging over the interval $[0,T]$.}
\item $\{0\} \times \mathbb T^3$ is a Cauchy hypersurface of $[0,T] \times \mathbb T^3$ with respect to the pullback metric $Y_* g$.
\end{itemize}
\end{definition}

Let us notice that the above definition of a development of an initial data set is highly non-geometric: For any non-trivial diffeomorphism $\tilde x$ of $[0,T] \times \mathbb T^3$ fixing $\{0\}\times \mathbb T^3$, the map $\tilde{Y}=Y\circ \tilde x$ will parametrize the same hypersurface in $\mathcal{N}$ as $Y$; however $\tilde{Y}$ and $Y$ are different developments of $(\bY, \bn)$ according to Definition \ref{def: Development}. We will, therefore, say that two developments $Y, \tilde{Y}$ of the same initial data set coincide \textbf{geometrically} if they are related by a diffeomorphism as above.

As a trivial example, consider the case of the flat initial data set $(\bY_0, \bn_0)$ with $\mathcal{N}=\mathbb{R}\times \mathbb{T}^3 \times \mathbb{R}^{n-3}$,
\begin{align}
\bY_0(x^1, x^2, x^3) &\doteq (0; x^1, x^2, x^3; 0, \ldots, 0),\label{Flat data}\\
\bn_0(x^1, x^2, x^3) &\doteq (1; 0,0,0; 0, \ldots, 0). \nonumber
\end{align}
The (unique, up to diffeomorphisms) development of $(\bY_0, \bn_0)$ is the map $Y_0: \mathbb{R}\times \mathbb{T}^3 \rightarrow \mathbb R \times \mathbb T \times \mathbb{R}^{n-3}$ given by
\[
Y_0(x^0, x^1, x^2, x^3) \doteq (x^0; x^1, x^2, x^3; 0, \ldots, 0).
\]

\subsection{Mixed Sobolev norms} \label{sec:Mixed Norms} 
Given a function $f: [0,T] \times \mathbb{T}^3 \rightarrow \mathbb{R}^N$, we will define its mixed $L^p W^{s,q}$ norm in the usual way: For any $p,q\in [1, +\infty]$ and $s\in \mathbb{R}$,
\[
\| f \|_{L^p W^{s,q}} \doteq \Big( \int_0^T \| f(x^0,\cdot) \|^p_{W^{s,q}(\mathbb{T}^3)}\,dx^0 \Big)^{\f1p}.
\]
For $q=2$, we will denote
\[
\| \cdot \|_{L^p H^s} \doteq \| \cdot \|_{L^p W^{s,2}}.
\]
We will also use the following notation regarding mixed norms of higher order derivatives of $f$:
\[
\| \partial^k f \|_{L^p W^{s,q}} \doteq \sum_{|\lambda|=k} \Big( \int_I \| \partial_{x^0}^{\lambda_0}\cdots \partial_{x^3}^{\lambda_3} f(x^0,\cdot) \|^p_{W^{s,q}(\mathbb{T}^3)}\,dx^0 \Big)^{\f1p}.
\]

Let $Y: \mathcal M \rightarrow \mathcal{N}$ be an immersion as in Section \ref{sec:Geometry} with $\mathcal M \simeq [0,T] \times \mathbb T^3$ and $\mathcal{N}=\mathbb{R}\times \mathbb{T}^3 \times \mathbb{R}^{n-3}$. Let also $\mathcal G = (x^{\a}, \{e_{\bA}\}_{\bA=4}^n )$ be a gauge for $Y$ (see Section \ref{sec:Gauge} for the relevant definitions). For any $NY$-valued tensor field $Q$, we will define the $L^p W^{s,q}$ norm of $Q$ in the gauge $\mathcal G$ componentwise, i.e.
\[
	\| Q \|_{L^p W^{s,q}[\mathcal G]} \doteq \sum_{\substack{\a_1, \ldots, \a_l,\\ \b_1, \ldots, \b_m }} \| Q^{\a_1 \ldots \a_l; \bar{A}}_{\b_1 \ldots \b_m}\|_{L^p W^{s,q}},
\]
where $Q^{\a_1 \ldots \a_l; \bar{A}}_{\b_1 \ldots \b_m}$ are the components of $Q$ with respect to the gauge $\mathcal{G}$.  Similarly, for higher order derivatives of $Q$:
\[
\| \partial^k Q \|_{L^p W^{s,q}[\mathcal G]} \doteq \sum_{\substack{\a_1, \ldots, \a_l,\\ \b_1, \ldots, \b_m }} \| \partial^k Q^{\a_1 \ldots \a_l; \bar{A}}_{\b_1 \ldots \b_m}\|_{L^p W^{s,q}}.
\]

In a similar way, we will define the mixed $L^p B^s_{q,r}$ Besov norm of functions and tensor fields as above using the definition \eqref{Besov norm} of $B^s_{q,r}$ on $\mathbb T^3$; see Section \ref{sec:Littlewood Paley} for more details.

\medskip
\noindent \textbf{Remark 1.} 
We will also adopt the same definition of $\| Q \|_{L^p W^{s,q}}$ when $Q$ is a standard tensor field (i.e.~taking values in $\mathbb{R}$ rather than $NY$) or simply a scalar function on $\mathcal M$. In this case, the choice of the normal frame $e_{\bA}$ for the normal bundle $NY$ is irrelevant for the definition of the norm.

\medskip
\noindent \textbf{Remark 2.} 
\emph{ A change of gauge $\mathcal G \rightarrow \mathcal G'$ has the effect of changing the expressions for the components $Q^{\a_1 \ldots \a_l; \bar{A}}_{\b_1 \ldots \b_m}$ of a tensor $Q$ as well as the foliation $\{x^0 = \text{const}\}$ with respect to which the mixed norm $L^p W^{s,q} [\mathcal G ]$ is defined. Due to the latter effect, we will in general not be able to compare $\| Q \|_{L^p W^{s,q} [\mathcal G ]}$ and $\| Q \|_{L^p W^{s,q} [\mathcal G']}$ when $p\neq q$ without some loss of derivatives or without exploiting some additional structure for $Q$ (even when $Q$ is a scalar function).  }

\medskip
\noindent \textbf{Remark 3.}
\emph{ In cases when the choice of a gauge $\mathcal G$ is clear from the context, we will frequently drop $\mathcal G$ from the notation for mixed norms, simply writing $\| Q\|_{L^p W^{s,q}}$ in place of $\| Q\|_{L^p W^{s,q} [\mathcal G ]}$. In particular, this will be the case in Sections \ref{sec:Balanced gauge}--\ref{sec:Persistence of regularity}, where we will only be interested in estimates with respect to a fixed \emph{balanced gauge} (for the definition of this gauge, see Section \ref{sec:Balanced gauge}). We will reinstate $\mathcal G$ in our norm notation in section \ref{sec:Uniqueness}, where it will be necessary for us to compare estimates for geometric quantities expressed in different gauges.
}


\subsection{Frequency localization, Besov norms and product decompositions}\label{sec:Littlewood Paley} 
In this section, we will set up the notation that will be used throughout this paper regarding the frequency decomposition with respect to the space variables of functions defined on $[0,T] \times \mathbb T^3$. We will then proceed to introduce two operators defined on the frequency domain: The operators $\mathcal{T}^{(i)}_{j}$, $j=1,2,3$, will act as an inverse to the divergence operator at frequencies $|\xi| \sim 2^j$, while the multilinear operator $\mathbb P^{\natural}[\cdot, \cdot, \cdot]$ will serve to isolate certain combinations of frequencies appearing in the products in the right hand sides of the relations \eqref{Gauss} and \eqref{Ricci} for $R$ and $R^{\perp}$, respectively. We will also introduce the extension operator $\mathbb E$.

\medskip
\noindent \textbf{The Littlewood--Paley decomposition.} For a function $f:[0,T] \times \mathbb{T}^3 \rightarrow \mathbb{C}$, we will denote with $P_j f$ the Littlewood--Paley projection at frequency $\xi \sim 2^j$ with respect to the \emph{spatial} variables. Thus, denoting with $\hat{\cdot}$ the Fourier transform on $\mathbb{T}^3$, we have
\[
\widehat{P_j f}(x^0, \xi) = \chi_j (\xi) \hat{f}(x^0, \xi), \hphantom{and} \xi \in \mathbb{Z}^3 
\]
where 
\begin{equation}\label{Littlewood Paley function}
\chi_j(\xi) = \begin{cases} \phi(2^{-j}|\xi|)-\phi(2^{-j+1} |\xi|), \hphantom{and} j>0,\\ \phi(|\xi|), \hphantom{and} j=0 \end{cases}
\end{equation}
 for some fixed function $\phi \in C_0^{\infty}[0,+\infty)$ with $\phi(z)=1$ for $z\le \f32$ and $\phi(z)=0$ for $z\ge \f74$. Note that $\chi_j(\xi) = 1$ for $|\xi| \in [\f78 2^j, \f32 2^j]$ and $\chi_j (\xi) = 0$ for $|\xi|<  \f32 2^{j-1}$ and $|\xi|> \f78 2^{j+1}$. We will also denote
\[
P_{\le j } f \doteq \sum_{j' \le j} P_{j'} f.
\]

Let $Y: \mathcal M \rightarrow \mathcal N$ be an immersion as in the previous sections, where $\mathcal M \simeq [0,T] \times \mathbb T^3$. Given a gauge $\mathcal G = (x, \{ e_{\bA}\}_{\bA} )$ for $Y$, we will define the Littlewood--Paley projection $P_j Q$ of any smooth $NY$-valued tensor $Q$ on $[0,T]\times \mathbb T^3$ componentwise, i.e.
\[
(P_j Q)_{\a_1 \ldots \a_m}^{\b_1\ldots \b_k ;\bA_1 \ldots \bA_l} \doteq P_j \big( Q _{\a_1 \ldots \a_m}^{\b_1\ldots \b_k ;\bA_1 \ldots \bA_l} \big),
\]
where the components of $Q$ are considered with respect to the coordinates $x^\a$ and the frame $\{ e_{\bA}\}_{\bA=4}^n$ associated to $\mathcal{G}$.

\medskip
\noindent \textbf{Besov norms.} We will adopt the standard definition of a Besov norm for functions $f:\mathbb T^3 \rightarrow \mathbb C$: For any $s\in \mathbb R$, $p,r\in[1,+\infty]$:
\begin{equation}\label{Besov norm}
\| f \|_{B^s_{p,r}} \doteq \Big\|\Big\{j \rightarrow 2^{sj}\|P_j f\|_{L^p}\Big\}\Big\|_{\ell^r}.
\end{equation}
We will also define mixed Besov norms of the form $\|\cdot \|_{L^p B^s_{q,r}}$ for functions and tensors on $[0,T]\times \mathbb T^3$ in a similar way as we did in Section \ref{sec:Mixed Norms} for the corresponding Sobolev-type norms.

\medskip
\noindent \textbf{Frequency decomposition of products.} We will perform the high-low frequency decomposition of a product of functions $f,h: [0,T] \times \mathbb{T}^3 \rightarrow \mathbb{C}$ in the usual way:
\begin{equation}\label{HL decomposition}
P_j (f \cdot h) = HL_j (f,h) + LH_j (f,h) + HH_j (f,h),
\end{equation}
where
\begin{align*}
HL_j (f,h) & = P_j \Big( \sum_{j'=j-2}^{j+2}, \sum_{j''\le j'+2} P_{j'} f \cdot P_{j''} h\Big),\\
LH_j (f,h) & = P_j \Big( \sum_{j'< j-2} \, \sum_{j''=j-2}^{j+2} P_{j'} f \cdot P_{j''} h\Big),\\
HH_j (f,h) & = P_j \Big( \sum_{j'>j+2} \, \sum_{l=-2}^{2} P_{j'} f \cdot P_{j'+l} h\Big).
\end{align*}
We will frequently use the following schematic notation for the decomposition \eqref{HL decomposition}:
\[
P_j (f \cdot h) \approx P_j f \cdot P_{\le j} h + P_{\le j} f \cdot P_j h + \sum_{j'>j} P_j \big( P_{j'} f \cdot P_{j'} h\big). 
\]

\medskip
\noindent \textbf{The $\mathcal T^{(i)}_{j}$ operators.} We will define the Riesz-type operators $\mathcal{T}^{(i)}_{j}$ and $\mathcal{T}^{(i)}$ as follows:

\begin{definition}\label{def:T multiplier}
For $i=1,2,3$ and any $j\ge 0$, we will define the multiplier operator $\mathcal{T}^{(i)}_{j}$ acting on functions on $f: [0,T]\times \mathbb{T}^3 \rightarrow \mathbb C$ as follows:
\begin{equation}\label{T multiplier}
\widehat{\mathcal{T}^{(i)}_{j} f}(t,\xi) = T^{(i)}_j (\xi) \cdot \hat{f}(t,\xi) \doteq  \frac{\xi^i}{|\xi|^2} \tilde{\chi}(\f{\xi}{2^{j}}) \cdot \hat{f}(t,\xi)
\end{equation}
where $\hat{\cdot}$ denotes the Fourier transform with respect to the spatial variables and
\[
\tilde\chi (s) \doteq \tilde\phi (\f s4) - \tilde\phi(4s),
\]
where $\tilde{\phi}:[0,+\infty) \rightarrow [0,1]$ is a smooth function supported in $[0,2]$ such that $\tilde{\phi} \equiv 1$ on $[0, 1]$. 

Furthermore, we will define for any $i=1,2,3$
\[
\mathcal{T}^{(i)} \doteq \sum_{j\ge 0} \mathcal{T}^{(i)}_j.
\]
\end{definition}

Note that the operators $\mathcal{T}^{(i)}_{j}$ are (renormalized) Mikhlin multipliers, since
\begin{equation}\label{Mikhlin property}
\big\| \partial^l_\xi T^{(i)}_{j} (\cdot) \big\|_{L^{\infty}_\xi} \lesssim_{l} 2^{-j-l} \hphantom{a} \text{ uniformly in } j \text{ for any } l \in \mathbb N.
\end{equation}
In particular, the operators $\mathcal{T}^{(i)}_{j}$ are of differential order $-1$. Moreover, the operators $\mathcal{T}^{(i)}_{j}$ satisfy the following divergence identity for any smooth function $f:[0,T]\times \mathbb{T}^3 \rightarrow \mathbb{R}$ and any frequency index $j\ge 1$:
\begin{equation}\label{Identity derivative 2}
\partial_i \Big( \mathcal{T}^{(i)}_{j} P_{j} f \Big) = P_{j} f
\end{equation}
(in the above, summation over $i=1,2,3$ is implicitly assumed).

\medskip
\noindent \textbf{The LHH projection operator $\mathbb P^{\natural}$.} 
The following operator will be used to ``isolate'' the ``low-high-high'' interactions in trilinear expressions:

\begin{definition}\label{def:LHH}
Given smooth functions $h, f_1, f_2: [0,T] \times \mathbb T^3 \rightarrow \mathbb C$, we will define the function $\mathbb P^{\natural} [h,f_1,f_2] : [0,T] \times \mathbb T^3 \rightarrow \mathbb R$ as follows:
\[
\mathbb P^{\natural} [h,f_1,f_2] \doteq \sum_{j}\sum_{\substack{j'>j-2,\\|j''-j'|\le 2}} (\check\chi_{j} * h) (\check\chi_{j'} * f_1)(\check\chi_{j''} * f_2),
\]
where $\check\cdot$ and $*$ denote, respectively, the inverse Fourier transform and convolution operators on $\mathbb T^3$ and $\chi_j$ denotes the Littlewood--Paley multiplier function \eqref{Littlewood Paley function}.
\end{definition}

\noindent Note that, for any $j$, we can express the $P_j$-projection of the above sum schematically as follows:
\[
P_j \mathbb P^{\natural}[h,f_1,f_2] \approx \sum_{\bar j} \sum_{j'>\max\{j,\bar j\}} P_j \big( P_{\bar j} h \cdot P_{j'} f_1 \cdot P_{j'} f_2 \big)
\]

\medskip
\noindent \textbf{The extension operator in the time variable.}
Let $\Psi : \mathbb R \rightarrow \mathbb R$ be a smooth function supported in $[-1,1]$ such that $\Psi \equiv 1$ on $[0,\f12]$. We will also set
\[
\Psi_j (t) \doteq \Psi(2^j t).
\]

\begin{definition}\label{def:Extension operator}
For any smooth function $h: \mathbb T^3 \rightarrow \mathbb R$, we will define the function $\mathbb E h : [0,T] \times \mathbb T^3 \rightarrow \mathbb R$ by the relation:
\begin{equation}\label{Extension operator}
\mathbb E h(t,x) \doteq \sum_j \Psi_j(t) \cdot P_j h(x).
\end{equation}
\end{definition}

\begin{remark*}
Note that 
\[
\mathbb E h(0,x) = h(x) \quad \text{and}\quad 
\partial_0^k \mathbb E h(0,x) =0 \quad \text{for all } k\ge 1.
\]
\end{remark*}

The extension operator  $\mathbb E$ satisfies the following estimates:

\begin{lemma}\label{lem:Estimates time operators}
For any smooth function $h:\mathbb T^3 \rightarrow \mathbb R$ and any $j,k\in \mathbb N$ and $p,q\in[1,+\infty]$, we can estimate
\begin{equation}\label{Extension bound}
\| \partial_0 ^k P_j (\mathbb E h) \|_{L^p L^q} \le C_k 2^{(k-\f1p)j} \|h\|_{L^q}
\end{equation}
for some constant $C_k>0$ depending only on $k$.
\end{lemma}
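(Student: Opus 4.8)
\textbf{Proof proposal for Lemma \ref{lem:Estimates time operators}.}

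The plan is to prove the estimate frequency-by-frequency in $t$ — that is, to further decompose $\mathbb E h = \sum_j \Psi_j(t) P_j h(x)$ and observe that, because $P_j$ and $\partial_0^k P_{j'}(\mathbb E h)$ only see a single spatial dyadic block, the spatial projection $P_j$ in the estimate forces $j' = j$ up to $O(1)$: indeed $P_j \big( \Psi_{j'}(t) P_{j'} h(x) \big) = \Psi_{j'}(t) P_j P_{j'} h(x)$ vanishes unless $|j - j'| \le 1$ (the $t$-factor $\Psi_{j'}(t)$ is independent of $x$ and commutes with the spatial projection $P_j$). So it suffices to estimate the single term $\partial_0^k \big( \Psi_j(t) P_j h(x) \big) = 2^{kj}\Psi^{(k)}(2^j t) P_j h(x)$, and then sum over the $O(1)$ surviving indices.

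First I would carry out this reduction explicitly, writing
\[
\partial_0^k P_j(\mathbb E h)(t,x) = \sum_{|j'-j|\le 1} 2^{kj'} \Psi^{(k)}(2^{j'} t)\, P_j P_{j'} h(x).
\]
Next I would take the $L^p_t L^q_x$ norm. The spatial factor $P_j P_{j'} h$ has $L^q_x$ norm bounded by $\|P_{j'} h\|_{L^q} \lesssim \|h\|_{L^q}$ by boundedness of Littlewood–Paley projections on $L^q(\mathbb T^3)$ (uniformly in the frequency index). The temporal factor $2^{kj'}\Psi^{(k)}(2^{j'} t)$ is supported in $t \in [-2^{-j'}, 2^{-j'}]$ and bounded there by $2^{kj'}\|\Psi^{(k)}\|_{L^\infty}$, so its $L^p_t$ norm over $[0,T]$ is at most $2^{kj'}\|\Psi^{(k)}\|_{L^\infty}\big( \int_0^{2^{-j'}} dt\big)^{1/p} \lesssim 2^{kj'} 2^{-j'/p} = 2^{(k-1/p)j'}$. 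Since $|j'-j|\le 1$ this is $\lesssim 2^{(k-1/p)j}$, and summing the $O(1)$ terms preserves the bound. This gives \eqref{Extension bound} with $C_k$ depending only on $k$ (through $\|\Psi^{(k)}\|_{L^\infty}$ and the implicit Littlewood–Paley constants). One should of course note the case $p = \infty$ is handled identically, the $L^\infty_t$ norm of the temporal factor being simply $2^{kj'}\|\Psi^{(k)}\|_{L^\infty}$, consistent with the stated exponent $k - 1/p$ at $p=\infty$.

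There is no real obstacle here; the only point requiring a modicum of care is the commutation/orthogonality step that collapses the sum over $j'$ to $|j'-j|\le 1$, which rests on the elementary but essential fact that the extension in time is performed frequency-by-frequency, so that each summand $\Psi_{j'}(t)P_{j'}h(x)$ is already spatially localized to frequency $\sim 2^{j'}$ and the outer $P_j$ acts as near-identity or annihilation. After that, the estimate is just Bernstein-free: one merely measures the $L^p_t$ size of a bump of height $2^{kj}$ and width $2^{-j}$. The remark $\partial_0^k \mathbb E h(0,\cdot) = 0$ for $k \ge 1$, used elsewhere, is immediate from $\Psi \equiv 1$ near $t=0$ so that all $\Psi_j^{(k)}(0) = 0$ for $k\ge1$; it plays no role in the quantitative bound itself.
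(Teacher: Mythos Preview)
Your proposal is correct and takes essentially the same approach as the paper: the paper's proof consists of the single observation that $P_j(\mathbb E h)(t,x) = \sum_{j'=j-1}^{j+1} \Psi_{j'}(t)\, P_j P_{j'} h(x)$ and declares the bound to follow directly from this expression. You have simply fleshed out the remaining routine steps (estimating the $L^p_t$ norm of the time bump and the $L^q_x$ norm of the spatial projection), which is exactly what the paper leaves implicit.
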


\begin{proof}
The bound \eqref{Extension bound} follows directly from the expression
\[
P_j (\mathbb E h) (t,x) = \sum_{j'=j-1}^{j+1} \Psi_{j'} (t) \cdot P_j P_{j'} h(x).
\]
\end{proof}

\medskip
\noindent \textbf{First order pseudo-differential operators in the spatial variables.} We will use $|D|$ and $\JapD$ to denote the Fourier multiplier operators defined by:
\begin{equation}\label{Japanese bracket}
\widehat{|D|f}(\xi) \doteq |\xi| \hat f(\xi) \quad \text{and} \quad \widehat{\JapD f}(\xi) \doteq (1+|\xi|^2)^{\f12} \hat f(\xi) \quad \text{for any } \, f\in C^\infty(\mathbb T^3).
\end{equation}
\medskip
\noindent \textbf{Remark.} Throughout this work, we will frequently encounter relations employing a schematic notation, in which the precise algebraic structure (e.g.~tensorial indices, direction of differentiation etc.) of the various terms will not be systematically kept track of. In such instances,  we will often use the schematic notation $D f$ for a given sufficiently regular $f:\mathbb T^3\rightarrow \mathbb C$ to denote any expression obtained from $f$ by applying a pseudo-differential operator of order $1$, namely an operator $f(x)\rightarrow \sum_{\xi\in \mathbb Z^3} e^{i2\pi x\xi} a(x,\xi) \hat f(\xi) $ with symbol $a: \mathbb T^3 \times \mathbb R^3 \rightarrow \mathbb C$  satisfying
\[
\big| \nabla_x^{\ell_1} \nabla_y^{\ell_2} a(x,y)\big| \lesssim_{\ell_1, \ell_2} (1+|y|)^{1-\ell_2} \quad \text{for all }\, \ell_1, \ell_2 \in \mathbb N. 
\]

\subsection{Notations on inequalities and constants}
Throughout this paper, we will frequently make use of the $\lesssim$ notation: For any two functions $h_1, h_2: V \rightarrow \mathbb R$, the expression $h_1 \lesssim h_2$ will be used to denote an inequality of the form $h_1 \le C \cdot h_2$ for some constant $C>0$ depending only on the parameter $s$ in Theorems \ref{thm:Existence} and \ref{thm:Uniqueness}. In expressions extending over multiple lines where the symbol $\lesssim$ appears repeatedly, the constant $C$ is allowed to change from line to line. If $p$ is any parameter of the problems considered, we will use $\lesssim_{p}$ for  inequalities where the implicit constant is allowed to also depend on the parameter $p$. Finally, $h_1 \sim h_2$ will be used to denote that $h_1 \lesssim h_2$ and $h_2 \lesssim h_1$.

\section{The main results}
In this section, we will state the main results obtained in this paper regarding the existence and uniqueness of solutions to the minimal surface equation \eqref{Minimal surface equation} for rough initial data.  

Before stating our results, we will need to introduce a few small parameters associated to the Sobolev exponent $s>\f52+\f16$:
\begin{definition}\label{def:Small constants}
For any $s>\f52+\f16$, we will fix a constant $\delta_0=\delta_0(s)>0$ so that 
\[
0< \delta_0 < \min\big\{ \f1{10}(s - \f52 -\f16), \f1{100}\big\}.
\]
and we will set
\[
s_0 \doteq s-\f52-\f16 -\delta_0 >0, \hphantom{and} s_1 \doteq s-\f52-\f16 - 2\delta_0>0.
\]
Note that
\[
8\delta_0 < s_1 <s_0.
\]
\end{definition}

Our first result establishes the existence of developments for initial data $(\bY, \bn)$ on $\mathbb T^3$ which are small perturbations of the flat initial data in the space $H^{s}\times H^{s-1}$ for any $s>\f52+\f16$ (see Section \ref{subsec:IVP} for the relevant definitions):

\begin{theorem}[Existence]\label{thm:Existence}
Let $s>\f52+\f16$. There exist constants $0 < \epsilon <1$ and $C>1$ depending only on $s$ such that, for any initial data pair $ \bY: \mathbb T^3 \rightarrow \mathcal N = \mathbb R \times \mathbb T^3 \times \mathbb R^{n-3}$, $\bn: \mathbb T^3 \rightarrow \mathbb R^{n+1}$ satisfying
\begin{equation}\label{D}
\mathcal{D} \doteq \|\bY-\bY_0\|_{H^s}+\|\bn-\bn_0\|_{H^{s-1}} < \epsilon
\end{equation}
(where $(\bY_0,\bn_0)$ is the flat initial data set \eqref{Flat data}), there exists a development $Y:[0,1]\times \mathbb{T}^3 \rightarrow \mathcal{N}$ and a frame $\{ e_{\bA}\}_{\bA=4}^n$ for the normal bundle $NY$ such that, in the gauge $\mathcal G \doteq (\text{Id}, e)$, we have 
\begin{equation}\label{Theorem embedding}
\sum_{l=0}^2 \| \partial^l (Y - Y_0) \|_{L^{\infty}H^{s-l}[\mathcal G]} + \| \partial^2 Y \|_{L^4 W^{\f1{12},4}[\mathcal G]} +\|\partial^2 Y\|_{L^{\f72} L^{\f{14}3}[\mathcal G]}\le C \mathcal{D},
\end{equation}
\begin{equation}\label{Theorem k}
\sum_{l=0}^2\Big( \|\partial^l k\|_{L^\infty H^{s-2-l}[\mathcal G]} + \|\partial^l k\|_{L^2 W^{-\f12-l+s_0,\infty}[\mathcal G]} \Big)\le C \mathcal D,
\end{equation}
\begin{equation}\label{Theorem metric}
\| \partial g \|_{L^1 W^{\f14\delta_0,\infty}[\mathcal G]} +\sum_{l=1}^2\Big( \| \partial^l g \|_{L^2 H^{2-l+\f16+s_1}[\mathcal G]} +\|\partial^l g\|_{L^{\f74} W^{2-l+s_1,\f73}[\mathcal G]}+ \|  \partial^l g \|_{L^{\infty} H^{s-1-l}[\mathcal G]}\Big) \le C \mathcal{D}
\end{equation}
and
\begin{align}\label{Theorem connection coefficients}
\| \omega \|_{L^1 W^{\f14\delta_0,\infty}[\mathcal G]} + & \sum_{l=0}^1\Big(\| \partial^l \omega \|_{L^2 H^{1-l+\f16+s_1}[\mathcal G]} +\|\partial^l \omega\|_{L^{\f74} W^{1-l+s_1,\f73}[\mathcal G]} + \|  \partial^l \omega \|_{L^{\infty} H^{s-2-l}[\mathcal G]}\Big)\\
&+\sum_{l=0}^1\Big( \| \partial^{1+l} e \|_{L^2 H^{1-l+\f16+s_1}[\mathcal G]} +\|\partial^{1+l} e\|_{L^{\f74} W^{1-l+s_1,\f73}[\mathcal G]}+ \|  \partial^{1+l} e \|_{L^{\infty} H^{s-2-l}[\mathcal G]}\Big) \le C \mathcal{D}     \nonumber
\end{align}
In the above, $g$ and $\omega$ are, respectively, the induced metric on $[0,1]\times \mathbb T^3$ and connection 1-form for $NY$, while $\delta_0, s_0$ and $s_1$ are defined in terms of $s$ according to Definition \ref{def:Small constants}. 
\end{theorem}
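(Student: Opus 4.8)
The plan is to set up a bootstrap argument on the space-time norms appearing in \eqref{Theorem embedding}--\eqref{Theorem connection coefficients}, seeded by the smallness $\mathcal D < \epsilon$ of the initial data. First I would construct a local-in-time smooth solution $Y$ of \eqref{Minimal surface equation} for regularized data (standard for quasilinear hyperbolic systems, since $s > 5/2$ is well above the classical threshold), together with a choice of a smooth normal frame $\{e_{\bA}\}$ obtained by solving the parabolic-elliptic \emph{balanced gauge} equations for $\omega$ off the initial data, and a diffeomorphism $x:\mathcal M \to [0,1]\times\mathbb T^3$ solving the balanced gauge conditions \eqref{eq:gauge} (with the $\mathcal F$-renormalization) off $\bar\Sigma_0$; the gauge is evolutionary and of elliptic-parabolic type with the lower-triangular structure \eqref{Model system of equations'}, so short-time solvability is available once the constraint-propagation/compatibility of the gauge with the data is checked. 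The goal is then to show that on $[0,1]\times\mathbb T^3$ the bootstrap constants can be taken of size $C\mathcal D$ uniformly, so that passing to the limit in the regularization yields the stated development.

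\textbf{The core of the argument is the closed loop between $k$ and $g,\omega$.} Assume as bootstrap hypotheses the bounds \eqref{Theorem embedding}--\eqref{Theorem connection coefficients} with $C$ replaced by a larger constant $C_\ast$. The scheme is: (i) \emph{energy estimate for $k$.} Using \eqref{Covariant wave equation k}, converting $\Box_g$ into the scalar operator $g^{\a\b}\partial_\a\partial_\b$ and treating the error terms $g\cdot(\Gamma+\omega)\partial k$ and $g\cdot(\partial\Gamma+\Gamma\cdot\Gamma+\partial\omega+\omega\cdot\omega)k$, together with $\|\partial g\|_{L^1 W^{\delta,\infty}}+\|\omega\|_{L^1W^{\delta,\infty}}$, gives $\|k\|_{L^\infty H^{s-2}}$ and its derivatives; (ii) \emph{Strichartz estimate for $k$} via \eqref{eq:loss'} (with the $\tfrac16$-loss), applied frequency-by-frequency after paradifferential truncation of the metric to frequencies $\lesssim\lambda$, producing the $L^2 W^{-1/2+s_0,\infty}$ bounds in \eqref{Theorem k}; the inhomogeneous terms (including $\mathrm{Riem}\cdot k$ and the $g\cdot\partial g\cdot\partial k$, $g\cdot\partial^2 g\cdot k$ terms from scalarization) are estimated using the Gauss/Ricci identities $R,R^\perp = k\wedge k$ and interpolation between the energy and Strichartz norms of $k$; (iii) \emph{recovery of $g$ and $\omega$ from curvature.} Feed $R,R^\perp = k\wedge k$ into the balanced gauge system \eqref{Model system of equations'} and its normal-bundle analogue. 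The elliptic equations $\Delta_{\bar g}\bar g \sim R_{imjk}$, $\Delta_{\bar g} h \sim R_{0mjk}$ and $\Delta_{\bar g}\omega_i \sim \partial_i(\cdots)$ give full spatial regularity; the parabolic equations $\partial_\tau N + |D| N \sim |D|^{-1}(\cdots)$, $\partial_\tau\beta + |D|\beta \sim |D|^{-1}(\cdots)$, $\partial_\tau\omega_0 + |D|\omega_0 = \cdots$, solved via Duhamel against the $e^{-t|D|}$ semigroup, give the mixed and time-regularity; the transport relations \eqref{eq:tran} recover the remaining components. The crucial $L^1 W^{\delta,\infty}$ bound for $\partial g$ (and $\omega$) is obtained by writing $\partial g \sim |D|^{-1}\mathrm{Riem}$ and then, using the Codazzi equation $\nabla_\a k_{\b\ga} = \nabla_\ga k_{\a\b}$ (as in the introduction's computation of $R_{\a\b\ga\de}$ and the $\mathcal F$-renormalization) to write $\mathrm{Riem} = k\wedge k$ with only \emph{spatial} derivatives pulled out, landing at $\partial g \sim |D|^{-1/2}k\cdot|D|^{-1/2}k$; the high-low pieces are bounded by $\|k\|_{L^2W^{-1/2,\infty}}^2$ and the high-high pieces by the same norm after using the $\mathbb P^\natural$/$\mathcal T^{(i)}_j$ machinery on the wedge product.

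\textbf{Closing the bootstrap} then amounts to checking that every estimate produced is of the form $\lesssim \mathcal D + (\text{polynomial in }C_\ast)\cdot\mathcal D^{1+\theta}$ with $\theta>0$, so that for $\epsilon$ small the improved constant is $\le \tfrac12 C_\ast$; continuity of the bootstrapped quantities in $\tau$ (guaranteed by the smooth approximating solution) then gives that the bounds hold on all of $[0,1]$. The $\delta_0$-room in Definition \ref{def:Small constants}, the slightly-subcritical exponents $s_0, s_1$, and the extra Strichartz pairs $L^4 W^{1/12,4}$, $L^{7/2}L^{14/3}$, $L^{7/4}W^{\cdot,7/3}$ are precisely what absorb the various $\epsilon$-losses in \eqref{eq:loss'} and in the product estimates, and are carried along for free. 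Finally the persistence-of-regularity statement (that the limiting $Y$ actually attains the $L^\infty H^{s}$ regularity rather than merely $H^{s-}$) follows by a standard continuity-in-the-high-norm argument using the already-established control of the rough norms.

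\textbf{The main obstacle} is step (iii) combined with the high-high interactions throughout: one must show that the balanced gauge genuinely produces $\partial g\in L^1 W^{\delta,\infty}$ (and not merely $\partial g\in L^2 H^{1/6+}$), which is what the $\mathcal F$-renormalization of $\tr h$ and the analogous $\mathcal F_\perp$ for $\omega$ are designed for — the delicate point is that after pulling derivatives out of $k\wedge k$ via Codazzi, the residual top-order term $\bar g^{ij}\tfrac{\nabla^m}{\Delta}k_{mj}k_{0i}$ must be exactly cancelled by the gauge modification, and one has to verify that the parabolic smoothing of $e^{-t|D|}$ is strong enough to convert the resulting $|D|^{-1}\partial_\tau(\cdots)$ structure into something in $L^1_tL^\infty_x$ without losing the derivative gain. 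Equally delicate is the circularity in step (ii): the Strichartz estimate \eqref{eq:loss'} requires $\partial g\in L^1 W^{\delta,\infty}$ on the frequency-truncated metric, which is only available \emph{inside} the bootstrap, so one must be careful that the constant $\mu$ controlling the Strichartz estimate depends only on $C_\ast\mathcal D < C_\ast\epsilon$, kept bounded throughout.
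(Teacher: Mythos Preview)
Your proposal is correct and follows essentially the same approach as the paper: a continuity/bootstrap argument seeded by smallness of the data, with the core loop being (i) energy estimates for $k$ via \eqref{Covariant wave equation k}, (ii) Strichartz estimates with $\tfrac16$-loss via \eqref{eq:loss'}, and (iii) recovery of $g,\omega$ from $R,R^\perp=k\wedge k$ through the lower-triangular parabolic-elliptic balanced gauge system, with the $\mathcal F^\natural,\mathcal F_\perp$-renormalizations and the Codazzi trick handling the high-high pieces of the crucial $L^1W^{\delta,\infty}$ bound for $\partial g$ and $\omega$. The paper organizes this into six explicit steps (local smooth existence, local existence of the balanced gauge, initial-slice estimates, the bootstrap improvement, persistence of regularity, and gauge extendibility), but the mathematical content and the obstacles you flag --- the exact cancellation in the gauge modification and the circularity between the Strichartz hypothesis on $\partial g$ and the bootstrap --- are precisely the ones the paper addresses.
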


\noindent The development $Y: [0,1] \times \mathbb T^3 \rightarrow \mathcal N$ in Theorem \ref{thm:Existence} will be constructed in a way so that the gauge $\mathcal G$ will satisfy the balanced gauge conditions of Section \ref{sec:Balanced gauge}. The proof of Theorem \ref{thm:Existence} will occupy Sections \ref{sec:Continuity}--\ref{sec:Persistence of regularity}.

Our second result will establish the geometric uniqueness of the developments presented in Theorem \ref{thm:Existence}:

\begin{theorem}[Geometric uniqueness]\label{thm:Uniqueness}
Let  $s>\f52+\f16$ and let $\bY, \bn : \mathbb{T}^{3} \rightarrow \mathcal{N}$ be an initial data pair for equation \eqref{Minimal surface equation} satisfying 
\begin{equation}
 \|\bY-\bY_0\|_{H^s}+\|\bn-\bn_0\|_{H^{s-1}} < +\infty. \label{Finite initial data uniqueness}
\end{equation}
Let also $Y: [0,T] \times \mathbb{T}^3 \rightarrow \mathcal{N}$ and $Y^\prime : [0,T'] \times \mathbb{T}^3 \rightarrow \mathcal{N}$ be two developments of $(\bY, \bn)$ and $\{ e_{\bA}\}_{\bA=4}^n$, $\{ e'_{\bA}\}_{\bA=4}^n$ be frames for $NY$, $NY'$, respectively, such that the following conditions hold:
\begin{enumerate}
\item The frames $\{ e_{\bA}\}_{\bA=4}^n$ and $\{ e'_{\bA}\}_{\bA=4}^n$ agree along the initial slice, i.e.
\[
e_{\bA} (0, x) = e'_{\bA}(0,x) \quad \text{for all } x\in \mathbb T^3, \, \bA \in \{ 4, \ldots, n\}.
\]

\item With respect to the gauge $\mathcal G \doteq (\text{Id}, e)$, the development $Y$ satifies the bounds
\begin{equation}\label{Bound Y uniqueness}
\sum_{l=0}^2 \| \partial^l (Y - Y_0) \|_{L^{\infty}H^{s-l}[\mathcal G]} + \| \partial^2 Y \|_{L^4 W^{\f1{12},4}[\mathcal G]}+\|\partial^2 Y\|_{L^{\f72} L^{\f{14}3}[\mathcal G]}< \epsilon,
\end{equation}
\begin{equation}\label{Bound k uniqueness}
\sum_{l=0}^2\Big( \|\partial^l k\|_{L^\infty H^{s-2-l}[\mathcal G]} + \|\partial^l k\|_{L^2 W^{-l-\f12+s_0,\infty}[\mathcal G]} \Big) <\epsilon,
\end{equation}
\begin{equation}
\| \partial g \|_{L^1 W^{\f14\delta_0,\infty}[\mathcal G]} +\sum_{l=1}^2\Big( \| \partial^l g \|_{L^2 H^{2-l+\f16+s_1}[\mathcal G]} +\|\partial^l g\|_{L^{\f74} W^{2-l+s_1,\f73}[\mathcal G]}+ \|  \partial^l g \|_{L^{\infty} H^{s-1-l}[\mathcal G]}\Big) < \epsilon
\end{equation}
and
\begin{align}\label{Bound omega uniqueness}
\| \omega \|_{L^1 W^{\f14\delta_0,\infty}[\mathcal G]} + & \sum_{l=0}^1\Big(\| \partial^l \omega \|_{L^2 H^{1-l+\f16+s_1}[\mathcal G]} +\|\partial^l \omega\|_{L^{\f74} W^{1-l+s_1,\f73}[\mathcal G]}+ \|  \partial^l \omega \|_{L^{\infty} H^{s-2-l}[\mathcal G]}\Big)\\
&+\sum_{l=0}^1\Big( \| \partial^{1+l} e \|_{L^2 H^{1-l+\f16+s_1}[\mathcal G]} +\|\partial^{1+l} e\|_{L^{\f74} W^{1-l+s_1,\f73}[\mathcal G]}+ \|  \partial^{1+l} e \|_{L^{\infty} H^{s-2-l}[\mathcal G]}\Big) <\epsilon    \nonumber
\end{align}
for some $\epsilon>0$ small enough in terms of $s$.

\item Similarly, with respect to the gauge $\mathcal G' \doteq (\text{Id}', e')$, the development $Y'$ satisfies the bounds \eqref{Bound Y uniqueness}--\eqref{Bound omega uniqueness}.
\end{enumerate}
Then, there there exists a $T'' <T$ and a $C^1$-diffeomorphism $x' : [0,T''] \times \mathbb{T}^3 \rightarrow \mathcal{U}^\prime$, with $\mathcal{U}^\prime$ being an open neighborhood of $\{ x^0 =0\}$ in $[0,T'] \times \mathbb{T}^3$, such that
\begin{equation}\label{Geometric uniqueness}
Y = Y^\prime \circ x^\prime \text{ on } [0,T''] \times \mathbb{T}^3.
\end{equation}
\end{theorem}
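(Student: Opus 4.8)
\textbf{Proof plan for Theorem \ref{thm:Uniqueness}.} The plan is to reduce the geometric uniqueness statement to a uniqueness statement for solutions \emph{already expressed in the balanced gauge}, and then to close a difference estimate by running the same bootstrap machinery used in the proof of Theorem \ref{thm:Existence} one derivative lower. First I would observe that the gauge conditions of Section \ref{sec:Balanced gauge} are \emph{evolutionary}: they are elliptic-parabolic equations for $(N,\beta,\bar g,\omega)$ with initial data determined by $(\bY,\bn)$ and the chosen normal frame along $\{x^0=0\}$ (which, by hypothesis (1), is the same for both developments). Since neither $Y$ nor $Y'$ is \emph{a priori} given in the balanced gauge, I would first construct, for each of $Y$ and $Y'$, a diffeomorphism of a neighbourhood of the initial slice and a change of normal frame bringing it to the balanced gauge; this is a solvability statement for the balanced-gauge system which is part of the existence proof, and it only requires the \emph{qualitative} regularity of a smooth development, together with the smallness bounds (2)--(3) to guarantee the gauge is well-defined on a uniform time interval $[0,T'']$. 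Because the balanced gauge is unique given the initial frame, it suffices to prove: two developments $(Y^{(1)},e^{(1)})$ and $(Y^{(2)},e^{(2)})$ \emph{in the balanced gauge}, arising from the same initial data and agreeing on the initial frame, and both satisfying \eqref{Bound Y uniqueness}--\eqref{Bound omega uniqueness}, must coincide as maps on a common slab $[0,T'']\times\mathbb T^3$.

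Next I would set up the difference functions $(\dot k,\dot g,\dot\omega)=(k^{(1)}-k^{(2)},g^{(1)}-g^{(2)},\omega^{(1)}-\omega^{(2)})$, as indicated in the introduction, and derive the linearized system they satisfy. The key structural input is that all the equations used in the existence proof are essentially bilinear/multilinear in $(k,g,\omega)$ with the schematic forms $\Box_g k = \text{Riem}\cdot k + \ldots$, $\text{Riem}=k\wedge k$, $\partial g \sim \JapD^{-1}(k\wedge k)$ (balanced gauge), and $R^\perp = k\wedge k$, so that the difference system is of the form $\Box_{g^{(1)}}\dot k = (\text{linear in }\dot k,\dot g,\dot\omega\text{ with coefficients built from }k^{(i)},g^{(i)},\omega^{(i)})$, together with the lower-triangular elliptic-parabolic system \eqref{Model system of equations'} for $\dot g$ and its analogue for $\dot\omega$, now with inhomogeneities linear in the differences. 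I would then estimate $(\dot k,\dot g,\dot\omega)$ in the norms of \eqref{Theorem k}--\eqref{Theorem connection coefficients} \emph{shifted down by one derivative}: $\dot k\in L^\infty H^{s-3}\cap L^2 W^{-3/2+s_0,\infty}$, $\partial\dot g\in L^1 W^{\delta_0/4,\infty}$ plus the $L^2 H^{1+1/6+s_1}$, $L^{7/4}W^{\ldots}$ and $L^\infty H^{s-2}$ bounds, and likewise for $\dot\omega$. The Strichartz estimate \eqref{eq:loss'} on the background $g^{(1)}$ (whose coefficients satisfy the required $\partial g\in L^1 L^\infty$ bound by hypothesis (2)) controls $\dot k$; the high--high interactions in the curvature-type products $k^{(i)}\wedge\dot k$ are handled exactly as in the existence proof, using the Codazzi identity \eqref{Codazzi} to convert one factor's derivative into a \emph{spatial} derivative so the $\JapD^{-1}$ in the gauge is absorbed, giving the bilinear gain $\partial\dot g\sim \JapD^{-1/2}k\cdot\JapD^{-1/2}\dot k$. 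Smallness of $\mathcal D$ (i.e. of $\epsilon$) then makes the resulting estimate a contraction-type inequality $\|(\dot k,\dot g,\dot\omega)\|_{X}\lesssim \epsilon\,\|(\dot k,\dot g,\dot\omega)\|_{X}$ on a possibly shorter slab, forcing all differences to vanish; since the initial data and initial frame agree, there is no inhomogeneous initial contribution. From $\dot k=0$, $\dot g=0$, $\dot\omega=0$ on $[0,T'']\times\mathbb T^3$ one recovers $Y^{(1)}=Y^{(2)}$ by integrating $\partial^2 Y = \Gamma[g]\partial Y + k$ (see \eqref{Projection Christoffel symbols}, \eqref{Second fundamental form}) with matching initial position and velocity.

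Finally, to deduce the stated conclusion \eqref{Geometric uniqueness} for the \emph{original} developments, I would compose the two balanced-gauge normalizing diffeomorphisms: if $\Phi^{(1)}$ and $\Phi^{(2)}$ are the diffeomorphisms with $Y\circ\Phi^{(1)}$ and $Y'\circ\Phi^{(2)}$ in the balanced gauge and equal by the above, then $x' \doteq \Phi^{(2)}\circ(\Phi^{(1)})^{-1}$ is the desired $C^1$-diffeomorphism on a neighbourhood $\mathcal U'$ of $\{x^0=0\}$ in $[0,T']\times\mathbb T^3$, with $Y=Y'\circ x'$. The loss of regularity in the difference estimates (working at level $s-1$ rather than $s$) is why $x'$ is only asserted to be $C^1$ and why $T''<T$.

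\textbf{Main obstacle.} I expect the principal difficulty to be the treatment of the high--high interactions in the \emph{difference} equations at the reduced regularity level. In the existence proof these interactions are controlled by a delicate combination of the second inhomogeneous norm $\|f_2\|_{L^2 W^{1+\sigma+1/6+,1}}$ in \eqref{eq:loss'}, a direct energy argument, and the $\JapD^{-1}$-absorbing structure coming from Codazzi; when one factor is a difference that lives only in a space one derivative rougher, the margins in these estimates shrink, and one must verify that the bilinear gain $\JapD^{-1/2}k\cdot\JapD^{-1/2}\dot k$ — together with the interpolated control of $\text{Riem}=k\wedge k$ in $L^2 H^{1/6}$ and its difference analogue — still closes with room to spare, rather than merely borderline. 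A secondary, more bookkeeping-type obstacle is checking that the balanced-gauge normalization can indeed be carried out for an arbitrary smooth development satisfying only the smallness bounds (2), on a time interval $T''$ depending only on $s$ and $\epsilon$, and that the resulting normalizing diffeomorphism is $C^1$ up to and including the initial slice where it restricts to the identity.
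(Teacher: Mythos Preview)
Your plan is correct and follows essentially the same route as the paper: reduce to uniqueness in the balanced gauge via Proposition \ref{prop:Local existence gauge}, then run a contraction estimate on the differences $(\dot k,\dot g,\dot\omega)$ in norms one derivative below those of the existence proof, exploiting the Codazzi-based structure of the high--high interactions. The paper implements the anticipated obstacle you name with two additional technical devices you should be aware of: it re-derives the frequency-projected wave and gauge equations by applying $P_j$ \emph{before} the second differentiation (so that every top-order high--high term carries a total derivative outside the product), and it splits $k_j=\partial_0 k^{(-1,j)}+k^{(0,j)}$ via an auxiliary wave equation to absorb the residual $\partial_0$-total-derivative terms; relatedly, the uniqueness norm $\mathfrak U$ controls only $\|\dot g\|_{L^1 L^\infty}$ (not $\|\partial\dot g\|_{L^1 L^\infty}$ as you wrote) and omits $\partial_0^2\dot g,\partial_0\dot\omega$ entirely, which is consistent precisely because of this re-derivation.
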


\smallskip
For the proof of Theorem \ref{thm:Uniqueness}, see Section \ref{sec:Uniqueness}.

\smallskip
\noindent \textbf{Remark 1.} \emph{
Utilizing the finite speed of propagation property for the minimal surface equation~\eqref{Minimal surface equation}, one can readily infer from Theorems \ref{thm:Existence} and \ref{thm:Uniqueness} a more general existence and uniqueness result for more general target manifolds and for $H^s$ initial data $(\bY, \bn)$ which are possibly defined on a 3-manifold $\bS$ carrying little topological structure. In particular, one can immediately deduce a similar well-posedness result for asymptotically flat initial data $(\bY, \bn): \mathbb{R}^3 \rightarrow \mathbb{R}^{n+1}$ which are small in $H^{\f52+\f16+}$. Moreover, the smallness assumption for the right hand sides of \eqref{Bound Y uniqueness}--\eqref{Bound omega uniqueness} in Theorem \ref{thm:Uniqueness} can be removed.  However, in order to keep the main ideas of our proof as clear as possible, we will avoid writing down these results in full generality.}

\smallskip
\noindent \textbf{Remark 2.} \emph{The $C^1$-diffeomorphism $x'$ appearing in the statement of Theorem \ref{thm:Uniqueness} satisfies the additional bound
\[ 
\|\partial^3 x'  \|_{L^\infty H^{s-3}}  + \|\partial^3 x'\|_{L^2 H^{\f16+s_1}} +\|\partial^3 x'\|_{L^{\f74} W^{1-l+s_1,\f73}[\mathcal G]}+ \|\partial^3 x'\|_{L^1 W^{-1+\f18 \delta_0,\infty}} + \|\partial x'\|_{L^\infty L^\infty} \lesssim \epsilon.
\]
See Section \ref{sec:Uniqueness} for more details.
}

\section{The balanced gauge condition}\label{sec:Balanced gauge}
In this section, we will introduce a set of gauge conditions for immersions $Y: [0,T]\times \mathbb T^3 \rightarrow \mathcal N $ that will allow us to recover optimal regularity estimates for the components of the induced metric $g$ and the normal connection 1-form $\omega$ in terms of regularity bounds for the second fundamental form $k$. Working with a gauge that satisfies this set of conditions will be crucial for the proof of Theorem \ref{thm:Existence}.

\subsection{Auxiliary constructions}
 Let $Y:[0,T] \times \mathbb T^3 \rightarrow \mathcal N$ be an immersion and let $g$, $k$, $R$ and $R^{\perp}$ be the associated induced metric, second fundamental form, Riemann curvature tensor and normal curvature tensor, respectively. Let us also assume that a gauge $\mathcal{G} = (x,\{ e_{\bA}\}_{\bA=4}^n)$ has been fixed for the immersion $Y$ (see Sections \ref{sec:Geometry}--\ref{sec:Gauge} for the respective definitions). 
 
 The following quantities will appear as inhomogeneous terms in the relations defining the balanced gauge condition:

\begin{definition}\label{def:First antiderivative R}
We will define for any $\a,\b \in \{0,1,2,3\}$, $l\in \{1,2,3\}$:
\begin{align}
\mathcal F^{\natural}_{\a\b} & \doteq \sum_{l=1}^3 \sum_{\bA, \bB \in \{4,\ldots,n\}} \mathbb P^{\natural}[m(e)_{\bA\bB}, k^{\bB}_{\a\b}, \mathcal{T}^{(l)} k^{\bA}_{0 l}],  \label{F natural seed}
\end{align}
and
\begin{equation}
\mathcal F_{\perp}^{\bA\bB}  = - \sum_{i,j=1}^3 \sum_{\ga,\delta=0}^3 \partial_i\Big( \bar g^{ij}   \mathbb P^{\natural} [g^{\ga\delta}, k^{\bB}_{j\ga}, \mathcal T^{(b)} k^{\bA}_{b\delta}]\Big),   \label{F perp seed}
\end{equation}
where $\mathcal{T}^{(l)}$, $l=1,2,3$, is the multiplier operator introduced in Definition \ref{def:T multiplier}. We will also set, for any $(x^0,\bar x)\in [0,T]\times \mathbb T^3$:
\begin{equation}\label{F natural}
\tilde{\mathcal F}^{\natural}_{\a\b}(x^0,\bar x) \doteq \mathcal F^{\natural}_{\a\b}(x^0,\bar x) - \mathbb E \big[\mathcal F^{\natural}_{\a\b}|_{x^0=0}\big](x^0,\bar x)
\end{equation}
and
\begin{equation}\label{F perp}
\tilde{\mathcal F} _{\perp}^{\bA\bB}(x^0,\bar x) \doteq \mathcal F_{\perp}^{\bA\bB}(x^0,\bar x) - \mathbb E \big[ \mathcal F_{\perp}^{\bA\bB}|_{x^0=0}\big](x^0,\bar x).
\end{equation}
In the above, $\mathbb E$ is the extension operator introduced by Definition \ref{def:Extension operator}.
\end{definition}

\begin{remark*}
Note that, along the initial slice $\{x^0=0\}$, we have
\begin{align}\label{Vanishing F initial}
&\tilde{\mathcal F}^{\natural}_{\a\b}|_{x^0=0} = 0, \\
&\tilde{\mathcal F} _{\perp}^{\bA\bB}|_{x^0=0} = 0   \nonumber
\end{align}
 and, for $m\ge 1$:
\begin{align}\label{Vanishing F initial 2}
&\partial_0^m \tilde{\mathcal F}^{\natural}_{\a\b}|_{x^0=0} = \partial_0^m \mathcal F^{\natural}_{\a\b}|_{x^0=0}, \\
&\partial_0^m \tilde{\mathcal F} _{\perp}^{\bA\bB}|_{x^0=0} = \partial_0^m \mathcal F _{\perp}^{\bA\bB}|_{x^0=0}. \nonumber
\end{align}
\end{remark*}

\subsection{The gauge condition}\label{subsec:The balanced gauge condition}
We are now in a position to introduce the set of relations for the coordinate expressions of the induced metric $g$ and the connection 1-form $\omega$ that we will collectively refer to as the ``balanced gauge condition''. To this end, we will make use of the notation introduced in section \ref{sec:Gauge} regarding the $3+1$ decomposition of the metric components with respect to the slices $\bar \Sigma_{\tau} = \{x^0 = \tau\}$.

\begin{definition}[The balanced gauge condition]\label{def:Balanced gauge}
 Let $Y:\mathcal M \rightarrow \mathcal N$, $\mathcal M \simeq [0,T] \times \mathbb T^3$, be an immersion and let $x: \mathcal M \rightarrow [0,T] \times \mathbb T^3$ be a diffeomorphism and $\{ e_{\bA}\}_{\bA=4}^n$ be a frame for the normal bundle $NY$. We will say that the gauge $\mathcal G = (x,\{ e_{\bar A}\}_{\bar A})$  satisfies the balanced condition if the following statements hold along any slice $\overline \Sigma_\tau = \{ x^0 = \tau\}$, $\tau \in [0,T]$:

\begin{itemize}
\item \textbf{Parabolic condition for the lapse $N$. }The mean curvature $\tr_{\bar g} h$ of $\overline \Sigma_\tau$ and the lapse function $N$ satisfy
\begin{equation}\label{Mean curvature condition} 
\begin{cases}
\tr_{\bar g} h   = \Delta_{\bar{g}} |D|^{-1} (N-1) - \f1N \bar{g}^{ij} \tilde{\mathcal{F}}^{\natural}_{ij} +\displaystyle\fint_{(\bar{\Sigma}_{\tau},\bar g)}\Big( \tr_{\bar g} h + \f1N \bar{g}^{ij}  \tilde{\mathcal{F}}^{\natural}_{ij}\Big),\\[6pt] 
\displaystyle\fint_{(\bar{\Sigma}_{\tau}, g_{\mathbb T^3})} N = 1,
\end{cases}
\end{equation}
where $ \tilde{\mathcal{F}}^{\natural}_{ij}$ was defined by \eqref{F natural}, $\fint_{(\bar \Sigma_\tau,\bar g)}$ denotes the average value over $\overline \Sigma_\tau$ with respect to the volume form $\mathrm{dVol}_{\bar g}$ of the Riemannian metric $\bar g$, $(g_{\mathbb T^3})_{ij}=\delta_{ij}$ is the flat metric on $\bar\Sigma_{\tau} $ (identified with $\mathbb T^3$ via $(x^1, x^2, x^3)$), the operator $\Delta_{\bar{g}} |D|^{-1}$ acting on scalar functions is defined by
\[
\Delta_{\bar{g}} |D|^{-1} \doteq \f1{\sqrt{-\det \bar g}} \partial_i \Big( \bar{g}^{ij} \sqrt{-\det \bar g} \f{\partial_j}{|D|}\Big)
\]
and the zeroth order non-local operators $\f{\partial_j}{|D|}$ are defined in terms of the Fourier transform $\hat \cdot$ on $\mathbb T^3$ as follows:
\[
\widehat{\Big(\f{\partial_j }{|D|} f \Big)}(\xi) \doteq \begin{cases}\f{\xi_j}{|\xi|} \hat f (\xi) , \quad \xi \in \mathbb Z^3 \setminus \{0\},\\ 0, \quad \xi = 0 \end{cases}. 
\]

\item \textbf{Gauge condition for  $\bar{g}$, $\beta$.} The map $(x^1, x^2, x^3): \overline \Sigma_\tau \rightarrow \mathbb T^3$ satisfies the following modification of the classical harmonic gauge condition:
\begin{equation}\label{Harmonic condition}
\begin{cases}
\Delta_{\bar{g}} x^i =\Delta_{\bar g}|D|^{-1} \beta^i, \\[6pt]
\displaystyle\fint_{(\bar{\Sigma}_\tau, g_{\mathbb T^3})} \beta^i = 0,
\end{cases}
\quad i=1,2,3,
\end{equation}
In the above, the operator $\Delta_{\bar g}|D|^{-1}$ acting on vector fields on $\bar{\Sigma}_{\tau}$ on the right hand side is defined in terms of the tensorial Laplacian, i.e.
\[
\Delta_{\bar g} |D|^{-1} = \bar{g}^{ij} \bar\nabla_i \bar\nabla_j |D|^{-1}.
\]
Note that, when expressed in terms of the Christoffel symbols $\bar\Gamma^k_{ij}$ of  $(\overline\Sigma_\tau, \bar g)$, \eqref{Harmonic condition} reads:
\[
\begin{cases}
\bar g^{ij} \bar\Gamma^k_{ij} = -\Delta_{\bar g}|D|^{-1} \beta^k,\\[6pt]
\displaystyle\fint_{(\bar{\Sigma}_\tau, g_{\mathbb T^3})} \beta^k = 0.
\end{cases}
\]

\item \textbf{Modified parabolic condition for the normal frame.} The components $\omega_{\alpha \bar B}^{\bar A}$ of the connection coefficients associated to the frame $\{ e_{\bar{A}} \}_{\bar A = 4}^n$ satisfy
\begin{equation}\label{Divergence condition frame}
\begin{cases}
\delta_{\bar{g}} \omega_{\bar B}^{\bar A} \doteq \bar{g}^{ij} \bar{\nabla}_i \omega_{j \bar B}^{\bar A}= 
- \Delta_{\bar g}|D|^{-1}  \omega^{\bA}_{0 \bB} + 
  \Bigg( m(e)_{\bB\bC} \tilde{\mathcal{F}}_{\perp}^{\bA\bC} - 
\displaystyle\fint_{(\bar{\Sigma}_\tau, \bar g)}\Big(m(e)_{\bB\bC} \tilde{\mathcal{F}}_{\perp}^{\bA\bC}\Big)\Bigg), \\[6pt]
\displaystyle\fint_{(\bar{\Sigma}_\tau, g_{\mathbb T^3})} \omega^{\bA}_{0 \bB} = 0,
\end{cases}
\end{equation}
where $ \tilde{\mathcal{F}}_{\perp}^{\bA\bB}$ was defined by \eqref{F perp}.

\end{itemize}
\end{definition}

\medskip
\noindent \textbf{Remark 1.} \emph{Given a smooth immersion $Y: \mathcal M \rightarrow \mathcal N$, there always exists a gauge $\mathcal G$ for $Y$, at least locally in time, satisfying the balanced gauge condition; this is established in Section \ref{sec:Existence gauge}.}

\medskip
\noindent \textbf{Remark 2.} \emph{One of the main technical challenges in the proof of Theorem \ref{thm:Existence} is the need to obtain control of the $L^1 L^\infty$ size of the components of $\partial g$ and $\omega$ even though the optimal estimates obtained for $k$ do not suffice to control the $L^1 W^{-1,\infty}$ norm of all components of the curvature tensors $R$ and $R^\perp$. The role of the source functions $\tilde{\mathcal F}^\natural$ and $\tilde{\mathcal F}_\perp$ appearing in the right hand sides of \eqref{Mean curvature condition} and \eqref{Divergence condition frame}, respectively, is to ``cancel out'' the contributions of the few ``uncontrollable'' components of $R$ and $R^\perp$ to the estimates for $\partial g$ and $\omega$; see also Section \ref{subsec:Cancellations} below. }

\subsection{Relations derived from the balanced gauge condition}\label{subsec:Equations balanced gauge}
The relations \eqref{Mean curvature condition}--\eqref{Divergence condition frame} imply the following set of parabolic and elliptic equations for the components of $g$ and $\omega$ in the $3+1$ formalism:

\begin{lemma}\label{lem:Parabolic elliptic system}
Let $Y: \mathcal M \rightarrow \mathcal N $ be a smooth immersion as in Definition \ref{def:Balanced gauge} and let $\mathcal G$ be a gauge for $Y$ satisfying the balanced gauge condition. Let us also adopt the following shorthand notations for the terms that will appear in the \textbf{right hand sides} of the equations below:
\begin{itemize}
\item $m_0$ will denote the coordinate Minkowski metric (with corresponding components $N_0=1$, $\beta_0=0$, $(\bar g_0)_{ij} = \delta_{ij}$), 
\item $R_*$ will denote any component of the Riemann curvature tensor with at most one  index equal to $0$, i.e.~a component of the form $R_{\a ijk}$, $\a \in \{0,1,2,3\}$, $i,j,k \in \{1,2,3\}$. Similarly, $R_{**}$ will denote any purely spatial component $R_{ijkl}$, $i,j,k,l\in \{1,2,3,\}$.
\item $R^{\perp}_*$ will denote any component of $R^{\perp}$ of the form $R^{\perp \bA \bB}_{ij}$ with $i,j \in \{1,2,3,\}$.
\item $D$ will denote any pseudo-differential operator of order at most $1$ in the $\mathbb T^3$ variables (see the remark below \eqref{Japanese bracket}
\item $\partial$ will denote a coordinate derivative (in any direction).
\item If $\mathcal O$ is any zeroth order pseudo-differential operator in the spatial variables and $f:\mathbb T^3 \rightarrow \mathbb R$ is a smooth function, we will denote $\mathcal O f$ simply as $f$.
\item We will use $g$ to denote any of the components of any \textbf{rational} matrix valued function of $[g]$ (e.g.~$g^{-1}$). 
\item In view of Lemma \ref{lem:Elliptic estimates model} in the appendix, we will denote $\Delta_{\bar g}^{-1}$ simply as $|D|^{-2}$ when it is applied on functions for which the only Sobolev and Besov space estimates that we will consider are  compatible with \eqref{Estimate elliptic} (so that, in those spaces, $\Delta_{\bar g}^{-1}$ indeed satisfies the same estimates as $|D|^{-2}$).
\end{itemize}
Then, the components of $g$, $\omega$ and their derivatives satisfy the following relations:
\begin{enumerate}
\item \textbf{Parabolic equations for $N$, $\beta$.} 
The lapse function $N$ satisfies the parabolic equation
\begin{equation}\label{Equation lapse}
\partial_0 (N-1) + |D| (N-1) =  \mathfrak F_{N} + \mathcal E_{N}, 
\end{equation}
while the components of the shift vector field $\beta^k$ satisfies the parabolic equations:
\begin{equation}\label{Equation shift}
\partial_0 \b^k + |D|\b^k = \mathcal E_{\beta}.
\end{equation}
In the above, the expressions $\mathfrak F_{-}$, $\mathcal E_{-}$ take the following form:
\begin{itemize}
\item In equation \eqref{Equation lapse},
\[
\mathfrak F_N \doteq |D|  \Delta_{\bar g} ^{-1} \Bigg[ \f1N \bar{g}^{ij}\big( \partial_0  \tilde{\mathcal{F}}^{\natural}_{ij} - R_{i0j0}\big)    -   \fint_{(\bar\Sigma_{\tau},\bar g)} \Big\{ \f1N \bar{g}^{ij}\big( \partial_0  \tilde{\mathcal{F}}^{\natural}_{ij} - R_{i0j0}\big) \Big\} \Bigg] 
\]
and $\mathcal E_N$ can be expressed schematically as
\begin{align*}
\mathcal E_N =|D|\Delta_{\bar g}^{-1} \Big( g \cdot \partial g \cdot \tilde{\mathcal{F}}^{\natural} + (g-m_0) \cdot D\partial g +g \cdot R_{*} + g \cdot D g \cdot \partial g +g \cdot h \cdot \partial g \Big).
\end{align*}

\item In equation \eqref{Equation shift},
\[
\mathcal E_{\beta} =|D|\Delta_{\bar g}^{-1} \Big(g \cdot Dh +  (g-m_0) \cdot D\partial g + g \cdot R_{**}  +  g \cdot D g \cdot \partial g +g \cdot h \cdot \partial g \Big).
\]
\end{itemize}

\medskip
\item \textbf{Elliptic equations for $\bar g $, $h$.} 
The components of the induced Riemannian metric $\bar g _{ij}$ on $\overline \Sigma_\tau$ satisfy the  relations
\begin{equation}\label{Equation g bar}
 \bar g^{kl}\partial_k \partial_l (\bar{g}_{ij}) =  -\Delta_{\bar g} |D|^{-1} \big( \nabla_i \beta_j + \nabla_j \beta_i \big)  +  \mathcal E_{\bar g}  
\end{equation}
and
\begin{equation}\label{Equation dt g bar}
\partial_0 \bar g_{ij} = -2Nh_{ij} + \bar\nabla_i \beta_j + \bar\nabla_j \beta_i,
\end{equation}
while the  second fundamental form $h$ of the slice $\overline \Sigma_\tau$ satisfies the elliptic equation
\begin{equation}\label{Equation h}
\Delta_{\bar g} h_{ij} =  \bar\nabla^2_{ij}\big( \Delta_{\bar g} |D|^{-1}(N-1) \big) + \mathcal E_h.
\end{equation}
In the above:
\begin{itemize}
\item In equation \eqref{Equation g bar},
\[
\mathcal E_{\bar g} = g \cdot R_{**} +g\cdot \partial g \cdot \partial g.
\]

\smallskip
\item In equation \eqref{Equation h}
\[
\mathcal E_h = D^2 (g\cdot \tilde{\mathcal{F}}^{\natural} ) + D(g \cdot R_*) + g \cdot \partial g \cdot R_*
\]
\end{itemize}

\medskip
\item \textbf{Parabolic-elliptic system for $\omega$.} The temporal components $\omega_{0\bB}^{\bA}$ of the connection $1$-form $\omega$ satisfy the parabolic equation
\begin{equation}\label{Equation omega 0}
\partial_0   \omega_{0\bB}^{\bA} +|D| \omega_{0\bB}^{\bA} = \big( \mathfrak F_{\omega_0}\big)^{\bA}_{\bB}  +  \mathcal E_{\omega_0},
\end{equation}
while the spatial components $\omega_{i\bB}^{\bA}$ ($i=1,2,3$) satisfy the elliptic relations
\begin{equation}\label{Equation omega i}
  \Delta_{\bar g} \omega_{i\bB}^{\bA} = - \partial_i \big( \Delta_{\bar g}|D|^{-1} \omega_{0\bB}^{\bA} \big)+  \mathcal E_{\omega_{\bar\Sigma}}.
\end{equation}
In the above:
\begin{itemize}
\item In equation \eqref{Equation omega 0}:
\[
\big( \mathfrak F_{\omega_0}\big)^{\bA}_{\bB} \doteq 
 |D|\Delta_{\bar g}^{-1} \Bigg[  \Big(  m_{\bB\bC} \partial_0 \tilde{\mathcal{F}}_{\perp}^{\bA\bC} -\fint_{(\bar\Sigma_{\tau},\bar g)} \big( m_{\bB\bC} \partial_0 \tilde{\mathcal{F}}_{\perp}^{\bA\bC} \Big) 
 +\bar g^{ij} \bar{\nabla}_i (R^{\perp})_{j0\hphantom{\bA}\bB}^{\hphantom{\a\b}\bA} \Bigg]
\]
and
\begin{align*}
 \mathcal E_{\omega_0} = & 
 \JapD^{-1} \big( \partial g \cdot D \omega\big) + \JapD^{-1} \big( D\partial g \cdot g\cdot \omega \big) +  \JapD^{-1} \big( \partial g \cdot \omega \cdot \omega \big)\\
&  + g\cdot \omega \cdot \omega + \JapD^{-1} \big(\partial e \cdot \tilde{\mathcal F}_{\perp} + \partial g \cdot \tilde{\mathcal F}_{\perp} \big).
\end{align*}

\item In equation \eqref{Equation omega i}, $\Delta_{\bar g} \omega_{i\bB}^{\bA}$ denotes the tensorial Laplacian (associated to $\bar g$) acting on $ \omega_{\cdot \bB}^{\bA}$ viewed as an $1$-form on $\bar\Sigma_{\tau}$ and
\begin{align*}
\mathcal E_{\omega_{\bar\Sigma}} = & D \big( g \cdot R^{\perp}_* \big) 
+ g\cdot   \partial g \cdot R^{\perp}_* 
 +D\big( m(e) \cdot \tilde{\mathcal F}_\perp \big) \\
& + D( \omega \cdot \omega) +  \partial g \cdot \omega \cdot \omega + g \cdot \omega \cdot R_*.
\end{align*}
\end{itemize}

\medskip
\item \textbf{Higher order time derivatives of $N$, $\beta$, $\bar g$.} The second order time derivatives of the components of $g$ in the $3+1$-decomposition satisfy the relations:
\begin{equation}\label{Equation dt2 lapse}
\partial_0^2 N = -|D|\partial_0 N +\mathfrak F_{\partial_0^2 N} +  \mathcal E_{\partial_0^2 N},
\end{equation}
\begin{equation}\label{Equation dt2 shift}
\partial_0^2 \b^k = -|D| (\partial_0 \beta^k)  + \mathcal E_{\partial_0^2 \beta}
\end{equation}
and 
\begin{equation}\label{Equation dt2 bar g}
\partial_0^2 \bar g_{ij} = \mathcal E_{\partial_0^2 \bar g},
\end{equation}
where:
\begin{itemize}
\item In equation \eqref{Equation dt2 lapse}:
\[
\mathfrak F_{\partial_0^2 N}  \doteq |D|  \Delta_{\bar g} ^{-1} \Bigg[ \f1N \bar{g}^{ij}\partial_0 \big( \partial_0  \tilde{\mathcal{F}}^{\natural}_{ij} - R_{i0j0}\big)    -   \fint_{(\bar\Sigma_{\tau},\bar g)} \Big\{ \f1N \bar{g}^{ij}\partial_0 \big( \partial_0  \tilde{\mathcal{F}}^{\natural}_{ij} - R_{i0j0}\big) \Big\} \Bigg] 
\]
and
\begin{align*}
\mathcal E_{\partial_0^2 N} =\JapD^{-1} \Big( &\partial g \cdot \partial g \cdot \tilde{\mathcal{F}}^{\natural} + g \cdot \partial^2 g \cdot \tilde{\mathcal{F}}^{\natural} + g \cdot \partial g \cdot \partial \tilde{\mathcal{F}}^{\natural} \\
& + (g-m_0) \cdot D\partial^2 g + g\cdot \partial R_*+\partial g \cdot \partial^2 g  \\
&  + \partial g \cdot \partial g \cdot \partial g+ D\partial g \cdot  \JapD^{-1} \big( g \cdot (\partial \tilde{\mathcal F}^{\natural}-R)\big) \Big).
\end{align*}

\item In equation \eqref{Equation dt2 shift}:
\begin{align*}
\mathcal E_{\partial^2_t \beta} 
= \JapD^{-1} \Big(&  (g-m_0) \cdot D\partial^2 g + g \cdot D^3 N +g\cdot D^2 h\\
& +g\cdot DR_*  + g\cdot  \partial g \cdot \partial^2 g \Big) 
\end{align*}

\item In equation \eqref{Equation dt2 bar g}:
\[
\mathcal E_{\partial_0^2 \bar g} \doteq g \cdot D\partial g + g \cdot \partial g \cdot \partial g + g \cdot R.
\]
\end{itemize}
\end{enumerate}

\end{lemma}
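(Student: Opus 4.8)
\medskip
\noindent\emph{Proof strategy.} The assertion is a deduction rather than an estimate: each displayed relation is obtained by inserting the balanced gauge conditions \eqref{Mean curvature condition}--\eqref{Divergence condition frame} into the geometric identities already recorded --- the first variation formulas \eqref{Variation metric}, \eqref{Variation second fundamental form}, \eqref{Variation Christoffel symbols}, the Gauss--Codazzi relations \eqref{Gauss 3+1}, \eqref{Codazzi 3+1}, the coordinate formula \eqref{Normal curvature coordinates} for $R^\perp$, the expressions \eqref{Christoffel symbols 3+1} for the Christoffel symbols of $g$ in terms of $(N,\beta,\bar g,h)$, the splitting $\partial_{0}=N\hat n+\beta$ of \eqref{Relation lapse normal}, and \eqref{Relation e Omega k} for derivatives of the frame. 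So the plan is to treat the statement equation by equation: perform the relevant substitution, isolate the principal term $\mathfrak F_{-}$, and collect everything else into the schematic remainder $\mathcal E_{-}$. The single non-algebraic input is the repeated observation that, on the Fourier side, any commutator assembled from $\partial_{0}$, $\partial_i$, $|D|$ and $\Delta_{\bar g}^{-1}$ is an operator of strictly lower order whose coefficients are rational functions of $g$ times derivatives of $g$; hence all such commutators land among the terms of the $\mathcal E_{-}$'s.

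The parabolic equations \eqref{Equation lapse}, \eqref{Equation shift}, \eqref{Equation omega 0} I would obtain by differentiating the corresponding gauge condition in $x^0$. For \eqref{Equation lapse}: rewrite \eqref{Mean curvature condition} as $N-1=|D|\Delta_{\bar g}^{-1}\big(\tr_{\bar g}h+\tfrac1N\bar g^{ij}\tilde{\mathcal F}^{\natural}_{ij}-\fint(\cdots)\big)$, with the constant mode fixed by $\fint_{(\bar\Sigma_\tau,g_{\mathbb T^3})}N=1$, apply $\partial_{0}$, and substitute the $\bar g$-trace of \eqref{Variation second fundamental form}; the $-\Delta_{\bar g}N$ there reproduces $-|D|(N-1)$ after the outer $|D|\Delta_{\bar g}^{-1}$, while $-N\bar g^{ij}R_{i\alpha j\beta}\hat n^{\alpha}\hat n^{\beta}$, via $\hat n=N^{-1}(\partial_{0}-\beta)$, produces the $-\tfrac1N\bar g^{ij}R_{i0j0}$ of $\mathfrak F_N$ up to terms $\beta\cdot R_*$ that join $\mathcal E_N$, and the pieces $N|h|^2$, $\beta^k\bar\nabla_k\tr_{\bar g}h$, $\partial_{0}$ on $\tfrac1N$ and $\bar g^{ij}$, and $[\partial_{0},|D|\Delta_{\bar g}^{-1}]$ are precisely the listed terms of $\mathcal E_N$. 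Equation \eqref{Equation shift} is handled identically, starting from $\bar g^{ij}\bar\Gamma^k_{ij}=-\Delta_{\bar g}|D|^{-1}\beta^k$ and using \eqref{Variation Christoffel symbols} (the contracted $\bar\nabla_i\bar\nabla_j\beta^k$ furnishes a $\Delta_{\bar g}\beta^k$ that becomes $|D|\beta^k$; the term $\bar g^{kl}\bar R_{imjl}\beta^m$ becomes $R_{**}$ by \eqref{Gauss 3+1}). For \eqref{Equation omega 0} I would use the mixed component of \eqref{Normal curvature coordinates}, $(R^{\perp})^{\bA}_{i0\bB}=\partial_i\omega^{\bA}_{0\bB}-\partial_{0}\omega^{\bA}_{i\bB}+\omega\cdot\omega$, contract with $\bar g^{ij}\bar\nabla_j$, commute $\partial_{0}$ through $\delta_{\bar g}=\bar g^{ij}\bar\nabla_i$, insert \eqref{Divergence condition frame}, and apply $|D|\Delta_{\bar g}^{-1}$: the leading $\Delta_{\bar g}\omega^{\bA}_{0\bB}$ becomes $|D|\omega^{\bA}_{0\bB}$, and $\mathfrak F_{\omega_0}$ carries $\bar g^{ij}\bar\nabla_i(R^{\perp})^{\bA}_{j0\bB}$ and $m_{\bB\bC}\partial_{0}\tilde{\mathcal F}_{\perp}^{\bA\bC}$ modulo mean values.

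The elliptic equations I would get by taking spatial divergences. Equation \eqref{Equation g bar} is the classical identity writing the rough Laplacian $\bar g^{kl}\partial_k\partial_l\bar g_{ij}$ through $\bar R_{ij}$ and the harmonic-gauge defect $\bar g^{pq}\bar\Gamma^k_{pq}$; substituting the gauge value $-\Delta_{\bar g}|D|^{-1}\beta^k$ of the defect, converting bare derivatives to covariant ones, and replacing $\bar R_{ij}$ via the contracted \eqref{Gauss 3+1} yields the main term $-\Delta_{\bar g}|D|^{-1}(\nabla_i\beta_j+\nabla_j\beta_i)$ and $\mathcal E_{\bar g}=g\cdot R_{**}+g\cdot\partial g\cdot\partial g$; equation \eqref{Equation dt g bar} is just \eqref{Variation metric}. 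Equation \eqref{Equation h} comes from applying $\bar\nabla^i$ to \eqref{Codazzi 3+1}: commuting $[\bar\nabla^i,\bar\nabla_j]$ by the Ricci identity produces $\bar R\cdot h$, which via \eqref{Gauss 3+1} and $h\sim\partial g$ contributes terms of the schematic type collected in $\mathcal E_h$; the $\bar g^{jk}$-contracted \eqref{Codazzi 3+1} trades $\bar\nabla^i h_{ik}$ for $\bar\nabla_k\tr_{\bar g}h$ plus curvature; and feeding in $\tr_{\bar g}h=\Delta_{\bar g}|D|^{-1}(N-1)+(\text{terms in }\tilde{\mathcal F}^{\natural})$ from \eqref{Mean curvature condition} gives the main term $\bar\nabla^2_{ij}\big(\Delta_{\bar g}|D|^{-1}(N-1)\big)$ and the $D^2(g\cdot\tilde{\mathcal F}^{\natural})$ part of $\mathcal E_h$. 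Equation \eqref{Equation omega i} follows analogously from the purely spatial component of \eqref{Normal curvature coordinates} together with \eqref{Divergence condition frame}. Finally, the higher-order relations \eqref{Equation dt2 lapse}--\eqref{Equation dt2 bar g} are produced by differentiating \eqref{Equation lapse}, \eqref{Equation shift} and \eqref{Equation dt g bar} once more in $x^0$, commuting $\partial_{0}$ past $|D|\Delta_{\bar g}^{-1}$ and $\JapD^{-1}$, and replacing each occurring $\partial_{0}h$, $\partial_{0}\beta$, $\partial_{0}\bar g$ by the already-derived equations; reorganising the resulting schematic products yields the stated expressions.

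The bulk of the work, and the only delicate point, is the bookkeeping: one must check that \emph{every} term produced belongs to an admissible schematic class, i.e.\ never carries too many derivatives and always retains a compensating factor of $(g-m_0)$, $\partial g$, $h$, $\omega$, or a curvature component. I expect the decisive feature to be the placement of the ``bad'' curvature pieces --- $R_{i0j0}$ in \eqref{Equation lapse} and \eqref{Equation dt2 lapse}, and the mixed $R^{\perp}$ in \eqref{Equation omega 0} --- which must emerge \emph{only} inside the combinations $\partial_{0}\tilde{\mathcal F}^{\natural}-R$ and $\partial_{0}\tilde{\mathcal F}_{\perp}+\bar g^{ij}\bar\nabla_i R^{\perp}$ appearing in the principal terms $\mathfrak F$; this is forced by the definitions \eqref{F natural}, \eqref{F perp} of the source functions together with Definition \ref{def:LHH}, and it is exactly this pairing that will later permit those terms to be re-expressed, via the Codazzi equation \eqref{Codazzi}, so that only spatial derivatives of $k$ remain. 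A secondary point is the zero Fourier mode: since $|D|^{-1}$ is singular at $\xi=0$, the gauge conditions carry the $\fint$-subtractions and the normalisations $\fint N=1$, $\fint\beta^i=0$, $\fint\omega^{\bA}_{0\bB}=0$, and one must verify that after these subtractions every quantity to which $\Delta_{\bar g}^{-1}$ is applied has vanishing $\bar g$-mean, so that $\Delta_{\bar g}^{-1}$ is well defined and, by Lemma \ref{lem:Elliptic estimates model}, may be treated as $|D|^{-2}$ in the relevant norms.
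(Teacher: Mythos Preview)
Your proposal is correct and follows essentially the same route as the paper: differentiate each balanced-gauge condition \eqref{Mean curvature condition}--\eqref{Divergence condition frame} in $x^0$ (for $N$, $\beta$, $\omega_0$) or take a spatial divergence (for $\bar g$, $h$, $\bar\omega$), substitute the variation formulas \eqref{Variation metric}--\eqref{Variation Christoffel symbols}, the Codazzi equation \eqref{Codazzi 3+1}, and the coordinate expression \eqref{Normal curvature coordinates} for $R^\perp$, then apply $|D|\Delta_{\bar g}^{-1}$ and collect remainders schematically. Your remarks on the bookkeeping of the bad curvature components and on the zero-mode issue are accurate and match the paper's handling.
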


\noindent \textbf{Remark.} 
 The system satisfied by the metric components $(N, \beta, \bar g)$ and the tensor $h$ is a \textbf{quasilinear parabolic-elliptic} one, the linearization of which has a lower triangular structure. More precisely, the above equations take following schematic form:
\begin{equation}\label{Model system of equations}
\begin{cases}
\partial_0 N + |D| N +  \JapD^{-1} \big( (g-m_0)\cdot D\partial g\big) = \mathcal G_1,\\
\Delta_{\bar g} h +D(g\cdot D^2 N) = \mathcal G_2,\\
\partial_0 \beta +|D| \beta +\JapD^{-1}\big(g\cdot Dh\big) + \JapD^{-1} \big( (g-m_0)\cdot D\partial g\big) = \mathcal G_3, \\
\tr_{\bar g}\big(\bar{\partial}^2  \bar g \big) + g\cdot D^2\beta = \mathcal G_4,
\end{cases}
\end{equation}
where the $\mathcal G_i$'s depend only on the auxiliary functions $\tilde{\mathcal F}$, the curvature tensor $R$ and lower order terms in $g,h$ which are at least quadratic in $\partial g, h$. The linearization of the above system around the trivial solution $(g,h)=(m_0,0)$ takes the lower triangular form form
\begin{equation}\label{Model linearized system}
\begin{cases}
\partial_0 \dot N + |D| \dot N = \dot{\mathcal G}_1,\\
\Delta_{\mathbb T^3} \dot h +m_0 \cdot D^3 \dot N = \dot{\mathcal G}_2,\\
\partial_0 \dot \beta +|D| \dot \beta +\JapD\big( m_0 \cdot D\dot h\big) = \dot{\mathcal G}_3\\
\Delta_{\mathbb T^3}\dot{\bar g} + m_0 \cdot D^2 \dot \beta = \dot{\mathcal G}_4.
\end{cases}
\end{equation}
This structure allows one to close estimates for the difference $g-m_0$ (in the context of a bootstrap argument) in terms of estimates for $\tilde{\mathcal F}$ and $R$ by closing the estimates successively for $N$, $h$, $\beta$ and $\bar g$ while using the bootstrap estimates to control the purely non-linear terms (i.e. those appearing in \eqref{Model system of equations} but not in \eqref{Model linearized system}).
\medskip

\bigskip
\begin{proof}
The derivation of equations \eqref{Equation lapse}--\eqref{Equation dt2 bar g} will be based on a series of lengthy (but straightforward computations) using the variation formulas \eqref{Variation metric}--\eqref{Variation second fundamental form} for $\partial_0 \bar g$ and $\partial_0 h$, in connection with equations \eqref{Mean curvature condition}--\eqref{Divergence condition frame} coming from the definition of the balanced gauge condition. We will also use the formulas \eqref{Christoffel symbols 3+1} to express schematically:
\[
\partial N, \partial \beta, \partial \bar g, \bar\Gamma, h \simeq g \cdot \partial g.
\]

\begin{itemize}
\item Applying a $\partial_{x^0}$ derivative to the mean curvature condition \eqref{Mean curvature condition},  a direct application of the formulas \eqref{Variation metric}--\eqref{Variation second fundamental form} for $\mathcal{L}_{\partial_{x^0}} g$ and $\mathcal{L}_{\partial_{x^0}} h$ yields
\begin{align*}
-\Delta_{\bar{g}} & (N-1) +  \b^i \bar{\nabla}_i \tr_{\bar g} h + N |h|_{\bar{g}}^2 -\f1N \Big(\bar g^{ij} R_{i0j0} - 2\bar{g}^{ij} R_{i0jk} \b^k +\bar g^{ij} R_{ikjl} \b^k \b^l \Big) \\
&= \Delta_{\bar{g}} \big( |D|^{-1} \partial_0 (N-1) \big) - \partial_0 \big( \f1N \bar{g}^{ij}  \tilde{\mathcal{F}}^{\natural}_{ij} \big)  + \partial_{\tau} \Big( \fint_{\bar \Sigma_{\tau}} \big(\tr_{\bar g} h + \f1N \bar{g}^{ij}  \tilde{\mathcal{F}}^{\natural}_{ij} \Big) \nonumber \\
& \hphantom{====} + 2\big( N h^{ij} - \bar \nabla^i \beta^j \big) \bar \nabla^2_{ij} \big( |D|^{-1}(N-1) \big)\\
& \hphantom{====} - \Big( -2\bar \nabla ^i (N h_{ik}) + \bar \nabla_k (N \tr_{\bar g} h) + \Delta_{\bar g} \beta_k +\bar g^{ij} (R_{ikjl}+h_{il} h_{kj}-h_{ij}h_{kl}) \b^l \Big)   \nonumber \\
& \hphantom{====\sum\sum\sum\sum} \times \bar \nabla^k \big( |D|^{-1} (N-1) \big).
\end{align*}
Rearranging the terms above and using the fact that $\Delta_{\bar{g}}$ is invertible when acting on the space of scalars $f:\mathbb T^3 \rightarrow \mathbb R$ satisfying $\fint_{\bar \Sigma_{\tau}} f =0 $ (recall also that $\fint_{\bar \Sigma_{\tau}} N = 1$ from \eqref{Mean curvature condition}), we deduce that
\begin{equation}\label{Parabolic lapse}
\partial_0 (N-1) + |D| (N-1) = |D| \Delta_{\bar g} ^{-1} \Big(\mathcal{G} - \fint_{\bar\Sigma_{\tau}} \mathcal{G}\Big),
\end{equation}
where
\begin{align*}
\mathcal{G} \doteq &  \f1N \bar{g}^{ij}\Big( \partial_0  \tilde{\mathcal{F}}^{\natural}_{ij}- R_{i0j0}\Big) +\partial_0 \big( \f1N \bar{g}^{ij}\big)  \tilde{\mathcal{F}}^{\natural}_{ij}  \\
 & +  \b^i \bar{\nabla}_i \tr_{\bar g} h + N |h|_{\bar{g}}^2 +\f1N \Big( 2\bar{g}^{ij} R_{i0jk} \b^k -\bar g^{ij} R_{ikjl} \b^k \b^l \Big)\\
&- 2\big( N h^{ij} - \bar \nabla^i \beta^j \big) \bar \nabla^2_{ij} \big( |D|^{-1}(N-1) \big)\\
& + \Big( -2\bar \nabla ^i (N h_{ik}) + \bar \nabla_k (N \tr_{\bar g} h) + \Delta_{\bar g} \beta_k \\
& \hphantom{-\Big( - ++}+\bar g^{ij} (R_{ikjl}+h_{il} h_{kj}-h_{ij}h_{kl}) \b^l \Big) \bar \nabla^k \big( |D|^{-1} (N-1) \big).
\end{align*}
Equation \eqref{Equation lapse} now follows directly from \eqref{Parabolic lapse}.

\item The gauge condition \eqref{Harmonic condition} for $\bar g$ can be rexpressed as 
\begin{equation}\label{Harmonic condition 2}
\bar{g}^{ij} \bar\Gamma_{ij}^k = -\Delta_{\bar g} |D|^{-1} \beta^k.
\end{equation} 
The variation of \eqref{Harmonic condition 2} with respect to the parameter $\tau$ yields
(in view of the relations \eqref{Variation metric} and \eqref{Variation Christoffel symbols} for the variation of $\bar g$ and $\bar\Gamma$):
\begin{align}\label{Equation shift once more}
\Delta_{\bar{g}}\b^k & - 2 \bar{\Gamma}_{ij}^k \bar{\nabla}^i \b^j  +  \bar{g}^{ij}\bar{g}^{lk} \bar{R}_{milj}\b^m = \\
& -\partial_0 \big( \Delta_{\bar g} |D|^{-1} \beta^k\big) +  \bar{g}^{ij}\bar{g}^{lk} \big( \bar{\nabla}_i (Nh)_{lj}  + \bar{\nabla}_j (Nh)_{il}  - \bar{\nabla}_l (Nh)_{ij} \big) - 2 \bar{\Gamma}_{ij}^k  Nh^{ij }   \nonumber 
\end{align}
where $\bar{\Gamma}$ and $\bar{R}$ are the Christoffel symbols and the Riemann curvature tensor, respectively, of $\bar{g}$, while  
\[
\Delta_{\bar{g}} \b^k \doteq \bar{g}^{ij} \bar{\nabla}_i \bar{\nabla}_j \b^k
\]
 is the tensorial Laplacian associated to $\bar g$. Rearranging the terms in \eqref{Equation shift once more} once more, we obtain:
\begin{align}\label{Equation shift once more 2}
\partial_0 \b^k + |D| \beta^k & \\
= |D|\Delta_{\bar g}^{-1} \Bigg[ &
 2 \bar{\Gamma}_{ij}^k \bar{\nabla}^i \b^j  - \bar{g}^{ij}\bar{g}^{lk} \bar{R}_{milj}\b^m - [\partial_0, \Delta_{\bar g}]|D|^{-1} \beta^k   \nonumber \\
& +  \bar{g}^{ij}\bar{g}^{lk} \big( \bar{\nabla}_i (Nh)_{lj}  + \bar{\nabla}_j (Nh)_{il}  - \bar{\nabla}_l (Nh)_{ij} \big) - 2 \bar{\Gamma}_{ij}^k  Nh^{ij }  \Bigg] \nonumber 
\end{align}
\end{itemize}

\smallskip
\begin{itemize}
\item Applying a spatial derivative to condition \eqref{Harmonic condition} for $\bar\Gamma$, we obtain the following equation for the components of the spatial metric $\bar g _{ij}$:
\begin{align}\label{Elliptic equation g}
\bar g^{kl}\partial_k \partial_l (\bar{g}_{ij}) =&- \partial_i ( \bar g_{jk}\Delta_{\bar g}|D|^{-1} \beta^k ) - \partial_j ( \bar g_{ik} \Delta_{\bar g}|D|^{-1} \beta^k ) + 2\bar \Gamma^l_{ij} \bar g_{lk}( \Delta_{\bar g}|D|^{-1} \beta^l )   \\
& -2  \bar g^{kl}  \bar{R}_{ikjl} + 2\bar g^{kl} \bar g^{cd} \Big( \bar\Gamma_{cki}\bar\Gamma_{dlj} + \bar\Gamma_{cki}\bar\Gamma_{jld} + \bar\Gamma_{ckj}\bar\Gamma_{ild}  \Big).  \nonumber
\end{align}
Equation \eqref{Equation g bar} now follows from above (using also Gauss's equation to express $\bar R = R_{**} + h \cdot h$). Equation \eqref{Equation dt g bar} is just \eqref{Variation metric}. 
\end{itemize}

\smallskip
\begin{itemize}
\item By differentiating the Codazzi equations \eqref{Codazzi 3+1} in the spatial directions, we obtain the following elliptic equation for $h$:
\begin{align}\label{Elliptic equation h}
\Delta_{\bar g} h_{ij} = \nabla^2_{ij} & \tr_{\bar g} h + \bar{g}^{kl} \bar\nabla_k \big( R_{li\a j} \hat n^{\a} \big) 
+  \bar g^{kl} \bar \nabla_i\big(  R_{lj\a k} \hat n^{\a}\big)\\
& +\bar g^{kl} \bar{R}_{kilm} h^m_j - \bar{R}_{kilj} h^{kl},   \nonumber
\end{align}
where $\Delta_{\bar{g}} h_{ij} \doteq \bar{g}^{kl} \bar{\nabla}_k \bar{\nabla}_l h_{ij}$. Equation \eqref{Equation h} now follows directly from \eqref{Elliptic equation h} by using the gauge condition \eqref{Mean curvature condition} to substitute $\tr_{\bar g} h$.
\end{itemize}

\smallskip
\begin{itemize}
\item Calculating $\bar{\nabla}^i (R^{\perp})_{i0 \hphantom{\bA}\bB}^{\hphantom{i0}\bA}$ by considering the spatial divergence of the expression \eqref{Normal curvature coordinates}, we obtain
\[
\bar\nabla^i (R^{\perp})_{i0\hphantom{\bA}\bB}^{\hphantom{\a\b}\bA}
=
\Delta_{\bar g} \omega_{0\bB}^{\bA} - \bar\nabla^i \partial_{0} \omega_{i\bB}^{\bA} +\bar\nabla^i\big(\omega^{\bA}_{i \bar{C}} \omega^{\bar{C}}_{0 \bB}\big) - \bar\nabla^i\big(\omega^{\bA}_{0 \bar{C}} \omega^{\bar{C}}_{i\bB}\big)
\]
Using the gauge condition \eqref{Divergence condition frame} to substitute for $ \partial^i \omega_{i\bB}^{\bA}$, we therefore obtain
\begin{align*}
\bar\nabla^i (R^{\perp})_{i0\hphantom{\bA}\bB}^{\hphantom{\a\b}\bA}
= & 
\Delta_{\bar g} \omega_{0\bB}^{\bA} + \partial_{0} \Delta_{\bar g}|D|^{-1}  \omega^{\bA}_{0 \bB}\\
& -\partial_0
  \Bigg( m(e)_{\bB\bC} \tilde{\mathcal{F}}_{\perp}^{\bA\bC} - 
\fint_{\bar\Sigma_{\tau}}\Big(m(e)_{\bB\bC} \tilde{\mathcal{F}}_{\perp}^{\bA\bC}\Big)\Bigg) \\
& +\bar\nabla^i\big(\omega^{\bA}_{i \bar{C}} \omega^{\bar{C}}_{0 \bB}\big) - \bar\nabla^i\big(\omega^{\bA}_{0 \bar{C}} \omega^{\bar{C}}_{i\bB}\big) + [\partial_0, \bar\nabla^i] \omega_{i\bB}^{\bA}, 
\end{align*}
from which equation \eqref{Equation omega 0} for $\omega_{0\bB}^{\bA}$ readily follows after applying the operator $|D|\Delta_{g}^{-1}$ to both sides. 
\end{itemize}

\smallskip
\begin{itemize}
\item Similarly, calculating the term $\bar g^{il} \bar{\nabla}_i  (R^{\perp})_{lj}^{\bA\bC}$ (for $j=1,2,3$) using the expression \eqref{Normal curvature coordinates}  for $R^\perp$ and using the gauge condition \eqref{Divergence condition frame} to substitute for $ \partial^i \omega_{i\bB}^{\bA}$, we  obtain:
\begin{align}\label{Elliptic equation omega i}
 \Delta_{\bar g} \omega_{j\bB}^{\bA} =
- \partial_j (\Delta_{\bar g} |D|^{-1} \omega_{0\bB}^{\bA} & ) +   \Big( \bar g^{il} \bar{\nabla}_i  (R^{\perp})_{lj\hphantom{\bA}\bB}^{\hphantom{lj}\bA}
 +\partial_j  (m_{\bB\bC} \tilde{\mathcal{F}}_{\perp}^{\bA\bC}) \Big) \\
 & + \bar g^{kl} \bar g^{im} \bar{R}_{ijmk} \omega_{l\bB}^{\bA} - \bar{\nabla}^i \big( \omega^{\bA}_{i \bar{C}} \omega^{\bar{C}}_{j\bB}\big) + \bar\nabla^i \big( \omega^{\bA}_{j \bar{C}} \omega^{\bar{C}}_{i\bB} \big), \nonumber
\end{align}
where $ \Delta_{\bar g} \omega_{j\bB}^{\bA} = \bar g^{il} \bar\nabla_i \bar\nabla_j \omega_{j\bB}^{\bA}$ is the Laplace--Beltrami operator of $\bar{g}$ acting on the 1-form $\omega_{\cdot \bB}^{\bA}$. Equation \eqref{Equation omega i} follows directly from \eqref{Elliptic equation omega i}.
\end{itemize}

\begin{itemize}
\item Equation \eqref{Equation dt2 lapse} for $\partial_0^2 N$ follows by commuting \eqref{Equation lapse} with $\partial_0$.
\end{itemize}

\smallskip
\begin{itemize}
\item Equation \eqref{Equation dt2 shift} for $\partial_0^2 \beta$ follows similarly by differentiating  \eqref{Equation shift} with respect to $x^0$ and using the Bianchi identity to express $\partial_0 R_{**} = \bar \partial R_*+\Gamma \cdot R_*$.
\end{itemize}

\smallskip
\begin{itemize}
\item Combining the variation formulas \eqref{Variation metric} and \eqref{Variation second fundamental form} for $\partial_0 \bar g$ and $\partial_0 h$, respectively, we obtain
\begin{align*}
\partial_0^2 \bar g_{ij} 
=& 2N \bar{\nabla}_i \bar{\nabla}_j N +2N^2 h_{ik}h^k_j +2N^2 R_{i\alpha j\beta}\hat n^\a \hat n^\b -2N h_{kj}\bar{\nabla}_i \b^k -2N h_{ki}\bar{\nabla}_j \b^k \\
&-2N \b^k \bar{\nabla}_k h_{ij} +\partial_0 N \cdot h_{ij} +\bar\nabla_i \partial_0 \b_j + \bar\nabla_j \partial_0 \b_i - 2 \partial_0 \bar\Gamma_{ij}^k \cdot \b_k
\end{align*}
from which equation \eqref{Equation dt2 bar g} follows readily.
\end{itemize}

\end{proof}

\subsection{Cancellations in the gauge source terms involving  $\mathcal F^{\natural}$ and $\mathcal F_{\perp}$} \label{subsec:Cancellations}
In this section, we will exhibit a number of cancellations  that appear in certain high-high interactions in the expressions defining $\mathfrak F_{N}$ and $\mathfrak F_{\omega_0}$ in equations \eqref{Equation lapse} and \eqref{Equation omega 0}, respectively. These cancellations will be play a crucial role in establishing the estimates of Section \ref{sec:Bootstrap}.

To this end, we will first need to introduce the following auxiliary quantities:

\begin{definition}
We will define for $\a,\b,\ga,\delta \in \{0,1,2,3\}$ and $\bA, \bB \in \{ 4, \ldots, n\}$:
\begin{align}
R^{\natural}_{\a\b\ga\delta} & \doteq \sum_{\bA,\bB\in \{4,\ldots,n\}} \Big(\mathbb P^{\natural} \big[ m( e)_{\bA\bB}, k^{\bB}_{\a\ga}, k^{\bA}_{\b\delta}\big] - \mathbb P^{\natural} \big[ m( e)_{\bA\bB}, k^{\bA}_{\a\delta}, k^{\bB}_{\b\ga}\big]\Big),  \label{R LHH}\\
(R^{\perp\natural})^{\bA\bB}_{\a\b} & \doteq \sum_{\ga,\delta\in\{0,\ldots,4\}} \Big(\mathbb P^{\natural} \big[ g^{\ga\delta}, k^{\bB}_{\a\ga}, k^{\bA}_{\b\delta}\big] - \mathbb P^{\natural} \big[ g^{\ga\delta}, k^{\bA}_{\a\ga}, k^{\bB}_{\b\delta}\big]\Big), \label{R perp LHH}
\end{align}
where  $\mathbb P^{\natural}$ is the ``low-high-high'' projection operator introduced in Definition \ref{def:LHH}.
\end{definition}

\begin{remark*}
 The quantities $R^{\natural}$ and $R^{\perp\natural}$ correspond to the low-high-high interaction part in the paraproduct decomposition of the right hand sides of the relations \eqref{Gauss} and \eqref{Ricci} for the tensors $R$ and $R^{\perp}$, respectively; they consist of the parts of $R$ and $R^{\perp}$ for which we will be unable to prove optimal $L^1 L^{\infty}$-type regularity estimates in terms of the bounds for the second fundamental form $k$ (see the statement of Lemma \ref{lem:Riemann estimates}). 
 \end{remark*}

We will establish the following result:

\begin{lemma}\label{lem:Cancellations}
The Littlewood--Paley projections of the quantities $\mathcal F^\natural$ and $\mathcal F_\perp$ satisfy the following schematic relations for any $j>2$:
\begin{align}\label{Calculation F natural difference}
  P_j \Big(   \partial_0 \mathcal F^{\natural}_{cd}  - R^{\natural}_{c0d0} \Big)  
 = &  
     \sum_{\substack{j_2 > j_1-2,\\|j_2-j_3|\le 2}} P_j \Big(|D| \big( P_{j_1} (m(e)) \cdot  P_{j_2} k \cdot P_{j_3} (\JapD^{-1} k)\big) \Big)  \\
&  +   \sum_{\substack{j_2 > j_1-2,\\|j_2-j_3|\le 2}} P_j \big( P_{j_1} (\partial e)  \cdot P_{j_2} k \cdot P_{j_3} (\JapD^{-1} k) \big)  \nonumber  \\
& +   \sum_{\substack{j_2 > j_1-2,\\|j_2-j_3|\le 2}} P_j \big( P_{j_1} (m(e)) \cdot (P_{j_2} ( g\cdot\partial g \cdot k + \omega \cdot k) )\cdot P_{j_3} (\JapD^{-1} k) \big),  \nonumber
\end{align}
 and
 \begin{align}\label{Computation F perp}
P_j \Big( \partial_{0} & (\mathcal F_{\perp})^{\bA\bB} +   \partial_c  (\bar g^{cd} R^{\perp\natural})_{d0}^{\bA\bB} \Big)\\
& = \sum_{\substack{j_1,j_2,j_3,j_4:\\|j_3-j_4|\le 2}}  P_j \Bigg[ 
D^2 \Big(P_{j_1} g \cdot P_{j_2} g \cdot P_{j_3} k \cdot P_{j_4}(\JapD^{-1} k) \Big)   \nonumber \\
& \hphantom{ \sum_{\substack{j_1,j_2,j_3,j_4:\\|j_3-j_4|\le 2}}  P_j \Bigg[  D}
 +D \Big(P_{j_1} ( \partial g + \partial e) \cdot P_{j_2} g \cdot P_{j_3} k \cdot P_{j_4}(\JapD^{-1} k) \Big) \Bigg].   \nonumber
\end{align}
In the case of $ \partial_0 \mathcal F^{\natural}_{cd}  - R^{\natural}_{c0d0}$, we can also ``pull out'' of the product of $k$'s one more spatial derivative compared to \eqref{Calculation F natural difference}: 
\begin{align}\label{Calculation F natural difference 2 derivatives}
  P_j \Big(   \partial_0 \mathcal F^{\natural}_{cd}  - R^{\natural}_{c0d0} \Big)  
 = &  
     \sum_{\substack{j_2 > j_1-2,\\|j_2-j_3|\le 2}} P_j \Big(|D|^2 \big[ P_{j_1} (m(e)) \cdot  P_{j_2} (\JapD^{-1}k) \cdot P_{j_3} (\JapD^{-1} k)\big] \Big)  \\
&  +   \sum_{\substack{j_2 > j_1-2,\\|j_2-j_3|\le 2}} P_j \big( P_{j_1} (\partial e)  \cdot P_{j_2} k \cdot P_{j_3} (\JapD^{-1} k) \big)  \nonumber  \\
& +   \sum_{\substack{j_2 > j_1-2,\\|j_2-j_3|\le 2}} P_j \big( P_{j_1} (m(e)) \cdot P_{j_2} ( g\cdot\partial g \cdot k + \omega \cdot k) \cdot P_{j_3} (\JapD^{-1} k) \big)  \nonumber\\
& +   \sum_{\substack{j_2 > j_1-2,\\|j_2-j_3|\le 2}} P_j \big( P_{j_1} (\partial e) \cdot P_{j_2} (\JapD^{-1}( g\cdot\partial g \cdot k + \omega \cdot k)) \cdot P_{j_3} (\JapD^{-1} k) \big)  \nonumber\\
& +   \sum_{\substack{j_2 > j_1-2,\\|j_2-j_3|\le 2}} P_j \big( P_{j_1} (D\partial e) \cdot P_{j_2} (\JapD^{-1}k) \cdot P_{j_3} (\JapD^{-1} k) \big). \nonumber 
\end{align}

In the right hand sides  of the equations above, we used the convention that $\partial$ denotes any linear combination of derivatives in the $(x^0,x^1,x^2,x^3)$ variables, $D$ denotes any first order pseudo-differential operator in the $(x^1,x^2,x^3)$ variables, while the operators  $|D|, \JapD$ are given by \eqref{Japanese bracket}. The zeroth order terms of the form $g$ denote any component of a rational matrix valued function of $g$ (such as $g^{-1}$). 
\end{lemma}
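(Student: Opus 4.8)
The plan is to prove the three schematic identities by direct computation, starting from the explicit definitions of $\mathcal F^\natural$ and $\mathcal F_\perp$ in Definition \ref{def:First antiderivative R} and exploiting the Codazzi equation \eqref{Codazzi} to trade a $\partial_0$-derivative for spatial derivatives inside the operator $\mathcal T^{(l)}$. The key algebraic input is the identity $\partial_i(\mathcal T^{(i)}_j P_j f)=P_j f$ from \eqref{Identity derivative 2}, which lets us recognize that, modulo frequency-support bookkeeping, $\mathcal T^{(l)}$ is a genuine inverse divergence. Throughout, all computations are done after a Littlewood--Paley projection $P_j$ with $j>2$, so the low-frequency caps in $\mathcal T^{(l)}$ and the subtracted extension terms $\mathbb E[\,\cdot\,|_{x^0=0}]$ are harmless (the latter because $\partial_0^m(\tilde{\mathcal F}-\mathcal F)$ is supported at frequencies $\lesssim 1$ by Lemma \ref{lem:Estimates time operators}, hence annihilated by $P_j$ for $j>2$ up to acceptable error — though in fact the statement is about $\mathcal F$, not $\tilde{\mathcal F}$, so this subtlety does not even arise).

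For \eqref{Calculation F natural difference}: I would write out $\partial_0 \mathcal F^\natural_{cd}$ using the Leibniz rule applied to $\mathbb P^\natural[m(e)_{\bA\bB}, k^{\bB}_{cd}, \mathcal T^{(l)}k^{\bA}_{0l}]$. The $\partial_0$ falls on one of the three factors. When it hits $m(e)$, using \eqref{Relation e Omega k} we get $\partial_0 e \sim \omega \cdot e + g\cdot\partial g\cdot$(something), producing the second and third lines of the right-hand side. When it hits $k^{\bB}_{cd}$, we use Codazzi $\nabla_0 k^{\bB}_{cd}=\nabla_c k^{\bB}_{0d}$ (modulo Christoffel terms, which are $g\cdot\partial g\cdot k$, again third-line type) to turn $\partial_0 k^{\bB}_{cd}$ into $\partial_c k^{\bB}_{0d}+\ldots$; the spatial derivative $\partial_c$ can be pulled outside and combined with the $\mathcal T^{(l)}$ factor after integration by parts on the Fourier side. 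When it hits $\mathcal T^{(l)}k^{\bA}_{0l}$, we write $\partial_0\mathcal T^{(l)}k^{\bA}_{0l}=\mathcal T^{(l)}\partial_0 k^{\bA}_{0l}=\mathcal T^{(l)}\nabla_l k^{\bA}_{00}+\ldots$ by Codazzi, and then $\mathcal T^{(l)}\nabla_l$ collapses (via \eqref{Identity derivative 2}) to the identity on the relevant frequency block, turning $\mathcal T^{(l)}k^{\bA}_{0l}$'s time derivative into simply $k^{\bA}_{00}$ with the $\JapD^{-1}$ moved onto the other $k$. The net effect of summing the ``main'' contributions is precisely $R^\natural_{c0d0}=\mathbb P^\natural[m(e)_{\bA\bB},k^{\bB}_{c0},k^{\bA}_{d0}]-(\ldots)$ (up to the $\JapD$ vs $|D|$ discrepancy at frequency $\sim 1$, absorbed into error terms of the stated type), so that $\partial_0\mathcal F^\natural_{cd}-R^\natural_{c0d0}$ equals exactly the listed remainder terms: one $|D|$ acting on $m(e)\cdot k\cdot\JapD^{-1}k$, one $\partial e\cdot k\cdot\JapD^{-1}k$, and one $m(e)\cdot(g\partial g\, k+\omega k)\cdot\JapD^{-1}k$.

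For \eqref{Computation F perp}: the structure is the same but $\mathcal F_\perp^{\bA\bB}$ already carries an outer $\partial_i(\bar g^{ij}\,\cdot)$, and the inner quantity is $\mathbb P^\natural[g^{\ga\delta},k^{\bB}_{j\ga},\mathcal T^{(b)}k^{\bA}_{b\delta}]$. Here $R^{\perp\natural}$ is built from $\mathbb P^\natural[g^{\ga\delta},k^{\bB}_{\a\ga},k^{\bA}_{\b\delta}]$, so after applying $\partial_0$ and using Codazzi to reduce $\partial_0 k$ and $\partial_0\mathcal T^{(b)}k$ to spatial-derivative expressions, the leading term reassembles into $\partial_c(\bar g^{cd}R^{\perp\natural})^{\bA\bB}_{d0}$ and cancels, leaving remainders in which one or two factors have picked up an extra $D$ or $\partial$; collecting them gives the $D^2(g\cdot g\cdot k\cdot\JapD^{-1}k)+D((\partial g+\partial e)\cdot g\cdot k\cdot\JapD^{-1}k)$ form (the two outer derivatives being: one from the $\partial_i$ in the definition of $\mathcal F_\perp$, one from the Codazzi-converted $\partial_0$). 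The derivative of $\bar g^{ij}$ and $g^{\ga\delta}$ contributes $\partial g\cdot g$ factors, accounting for the $\partial g$ terms; $\partial_0 g^{\ga\delta}$ is itself $g\cdot\partial g$ hence absorbed.

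Finally, \eqref{Calculation F natural difference 2 derivatives} is obtained by iterating the Codazzi trick once more on the remainder from \eqref{Calculation F natural difference}: in the first remainder term $|D|(m(e)\cdot k\cdot\JapD^{-1}k)$, I apply $k=\JapD(\JapD^{-1}k)$ and then use Codazzi + \eqref{Identity derivative 2} to write that bare $k$ as a spatial derivative of a $\JapD^{-1}k$ (paying the price of commutator/Christoffel terms of the form $g\partial g\, k+\omega k$ and extra $\partial e$-type terms when the derivative hits $m(e)$), yielding the $|D|^2(m(e)\cdot\JapD^{-1}k\cdot\JapD^{-1}k)$ leading term plus the four lower-order correction lines exactly as displayed. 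I expect the main obstacle to be the careful bookkeeping of the Littlewood--Paley frequency restrictions: the operator $\mathcal T^{(l)}$ contains its own frequency cutoff $\tilde\chi(\xi/2^j)$ which does not exactly match the Littlewood--Paley blocks, so the identity $\partial_i\mathcal T^{(i)}=\mathrm{Id}$ holds only up to harmless tails, and the equality $\JapD\leftrightarrow|D|$ used to simplify symbols must be tracked as a zeroth-order-times-$|D|$ error; verifying that all such mismatches fall into the already-present error classes (and do not generate a genuinely new structure) is the delicate part. A secondary, purely notational, difficulty is keeping the $\bar A,\bar B$ index contractions and the summation over $l$ consistent so that the reassembled leading terms are honestly $R^\natural$ and $\partial_c(\bar g^{cd}R^{\perp\natural})_{d0}$ and not merely of that schematic shape.
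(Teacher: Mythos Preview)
Your proposal is correct and follows essentially the same route as the paper: apply the Leibniz rule to $\partial_0\mathcal F^\natural$, use the frequency-projected Codazzi equation to convert $\partial_0 k$ into spatial derivatives plus $(g\cdot\partial g+\omega)\cdot k$ errors, and collapse $\mathcal T^{(l)}\partial_l$ via \eqref{Identity derivative 2}. One point you leave implicit is worth flagging: after these manipulations, the ``main'' terms that survive are \emph{not} literally $R^\natural_{c0d0}$ term by term --- you are left with a residual pair of the form $m(e)_{\bA\bB}\,P_{j'}k^{\bB}_{0d}\cdot P_{j''}k^{\bA}_{0c}$ versus $m(e)_{\bA\bB}\,P_{j'}k^{\bA}_{c0}\cdot P_{j''}k^{\bB}_{d0}$, and their cancellation relies on the symmetry of $m(e)_{\bA\bB}$ in $(\bA,\bB)$ \emph{together with} the symmetry of the summation range $\{j'>\bar j-2,\ |j''-j'|\le 2\}$ under $(j',j'')\leftrightarrow(j'',j')$. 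The same symmetrization trick is what drives the second extraction in \eqref{Calculation F natural difference 2 derivatives} (after the chain of Codazzi moves you arrive at a term identical in form to the starting expression with $j',j''$ swapped, so the sum is twice the total-derivative remainder) and the cancellation in \eqref{Computation F perp} (using symmetry of $g^{\gamma\delta}$ in place of $m(e)_{\bA\bB}$). This is the only mechanism you have not named explicitly; once you insert it, your computation is the paper's.
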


\begin{proof}

 Using the frequency-projected version of the Codazzi equation \eqref{Codazzi} for $k$, i.e.
\begin{equation}\label{Codazzi once more Paley}
\partial_\kappa P_{j'} (k^{\bar{A}}_{\lambda\mu}) - \partial_\lambda P_{j'} (k^{\bar{A}}_{\kappa \mu})=  P_{j'} \big( \partial g \cdot k + \omega\cdot k\big),
\end{equation}
as well as the divergence identity \eqref{Identity derivative 2} for $\mathcal T^{(i)}_j $, we can readily perform the following calculation for any $j>2$ when the indices $c,d$ are $\neq 0$ (see Definition \ref{def:First antiderivative R} for the definition of $\mathcal F^{\natural}_{cd}$ and \eqref{R LHH} for the definition of $R^{\natural}_{c0d0}$): Using  $\mathcal E^{\natural}[e,g,k]$ to denote terms which can be schematically represented as
\begin{align*}
\mathcal E^{\natural}[e,g,k] = 
&    \sum_{\substack{j_2 > j_1-2,\\|j_2-j_3|\le 2}} P_j \big( P_{j_1} (\partial e)  \cdot P_{j_2} k \cdot P_{j_3} (\JapD^{-1} k) \big) \\
& +    \sum_{\substack{j_2 > j_1-2,\\|j_2-j_3|\le 2}} P_j \big( P_{j_1} (m(e)) \cdot (P_{j_2} ( g\cdot\partial g \cdot k + \omega \cdot k) )\cdot P_{j_3} (\JapD^{-1} k) \big),
\end{align*}
we have:
\begin{align*}
P_j \Big( R^{\natural}_{c0d0} - \partial_0 \mathcal F^{\natural}_{cd} \Big) 
 &  \stackrel{\hphantom{\eqref{Codazzi once more Paley}}}{=} 
    \sum_{\bar j} \sum_{\substack{j'> \bar j-2 \\ |j''-j'|\le 2}}
     P_j \Big[ P_{\bar j} m(e)_{\bA\bB} \Big( P_{j'} k^{\bB}_{cd} \cdot P_{j''} k^{\bA}_{00} - P_{j'} k^{\bA}_{c0} \cdot P_{j''} k^{\bB}_{d0}\\
 & \hphantom{ =   \sum_{\bar j} \sum_{\substack{j'> \bar j-2 \\ |j''-j'|\le 2}}
  P_j \Big[ P_{\bar j} m(e)_{\bA\bB} \Big(} -\partial_0 P_{j'} k^{\bB}_{cd} \cdot P_{j''} \mathcal T^{(l)}_{j''} k^{\bA}_{0l} - P_{j'} k^{\bB}_{cd}  \cdot \partial_0 P_{j''} \mathcal T^{(l)}_{j''} k^{\bA}_{0l}\Big) \Big] \\
&\hphantom{=\sum} + \mathcal E^{\natural}[e,g,k] \\
& \stackrel{\eqref{Codazzi once more Paley}}{=}
    \sum_{\bar j} \sum_{\substack{j'> \bar j-2 \\ |j''-j'|\le 2}}
     P_j \Big[ P_{\bar j} m(e)_{\bA\bB} \Big( P_{j'} k^{\bB}_{cd} \cdot P_{j''} k^{\bA}_{00} - P_{j'} k^{\bA}_{c0} \cdot P_{j''} k^{\bB}_{d0}\\
 & \hphantom{ =   \sum_{\bar j} \sum_{\substack{j'> \bar j-2 \\ |j''-j'|\le 2}}
  P_j \Big[ P_{\bar j} m(e)_{\bA\bB} \Big(} -\partial_c P_{j'} k^{\bB}_{0d} \cdot P_{j''} \mathcal T^{(l)}_{j''} k^{\bA}_{0l} - P_{j'} k^{\bB}_{cd}  \cdot \partial_l P_{j''} \mathcal T^{(l)}_{j''} k^{\bA}_{00}\Big) \Big] \\
&\hphantom{=\sum} + \mathcal E^{\natural}[e,g,k]\\
& \stackrel{\hphantom{\eqref{Codazzi once more Paley}}}{=}   \sum_{\bar j} \sum_{\substack{j'> \bar j-2 \\ |j''-j'|\le 2}}
 P_j \Big[ P_{\bar j} m(e)_{\bA\bB} \Big( P_{j'} k^{\bB}_{cd} \cdot P_{j''} k^{\bA}_{00} - P_{j'} k^{\bA}_{c0} \cdot P_{j''} k^{\bB}_{d0}\\
 & \hphantom{ =   \sum_{\bar j} \sum_{\substack{j'> \bar j-2 \\ |j''-j'|\le 2}}
  P_j \Big[ P_{\bar j} m(e)_{\bA\bB} \Big(} + P_{j'} k^{\bB}_{0d} \cdot \partial_c P_{j''} \mathcal T^{(l)}_{j''} k^{\bA}_{0l} - P_{j'} k^{\bB}_{cd}  \cdot  \partial_l P_{j''} \mathcal T^{(l)}_{j''} k^{\bA}_{00}\\
  & \hphantom{\sum_{\bar j} \sum_{\substack{j'> \bar j-2 \\ |j''-j'|\le 2}}
   P_j \Big[ P_{\bar j} m(e)_{\bA\bB} \Big( }    - \partial_c \big( P_{j'} k^{\bB}_{0d} \cdot P_{j''} \mathcal T^{(l)}_{j''} k^{\bA}_{0l} \big)  \Big) \Big]  \\
&\hphantom{=\sum} + \mathcal E^{\natural}[e,g,k]\\
& \stackrel{\eqref{Codazzi once more Paley}}{=} 
   \sum_{\bar j} \sum_{\substack{j'> \bar j-2 \\ |j''-j'|\le 2}}
    P_j \Big[ P_{\bar j} m(e)_{\bA\bB} \Big( P_{j'} k^{\bB}_{cd} \cdot P_{j''} k^{\bA}_{00} - P_{j'} k^{\bA}_{c0} \cdot P_{j''} k^{\bB}_{d0}\\
 & \hphantom{ =  \sum_{\bar j} \sum_{\substack{j'> \bar j-2 \\ |j''-j'|\le 2}}
  P_j \Big[ P_{\bar j} m(e)_{\bA\bB} \Big(} 
 + P_{j'} k^{\bB}_{0d} \cdot \partial_l P_{j''} \mathcal T^{(l)}_{j''} k^{\bA}_{0c} - P_{j'} k^{\bB}_{cd}  \cdot  \partial_l P_{j''} \mathcal T^{(l)}_{j''} k^{\bA}_{00}\\
  & \hphantom{\sum_{\bar j} \sum_{\substack{j'> \bar j-2 \\ |j''-j'|\le 2}}
   P_j \Big[ P_{\bar j} m(e)_{\bA\bB} \Big( }    - \partial_c \big( P_{j'} k^{\bB}_{0d} \cdot P_{j''} \mathcal T^{(l)}_{j''} k^{\bA}_{0l} \big)  \Big) \Big]  \\
&\hphantom{=\sum} + \mathcal E^{\natural}[e,g,k]\\
 & \stackrel{\eqref{Identity derivative 2}}{=} 
    \sum_{\bar j} \sum_{\substack{j'> \bar j-2 \\ |j''-j'|\le 2}}
     P_j \Big[ P_{\bar j} m(e)_{\bA\bB} \Big( P_{j'} k^{\bB}_{cd} \cdot P_{j''} k^{\bA}_{00} - P_{j'} k^{\bA}_{c0} \cdot P_{j''} k^{\bB}_{d0}\\
 & \hphantom{ =    \sum_{\bar j} \sum_{\substack{j'> \bar j-2 \\ |j''-j'|\le 2}}
  P_j \Big[ P_{\bar j} m(e)_{\bA\bB} \Big(} 
 + P_{j'} k^{\bB}_{0d} \cdot  P_{j''} k^{\bA}_{0c} - P_{j'} k^{\bB}_{cd}  \cdot P_{j''} k^{\bA}_{00}\Big) \Big]\\
&\hphantom{=\sum}
- \sum_{\bar j} \sum_{\substack{j'> \bar j-2 \\ |j''-j'|\le 2}}  \partial_c \Bigg(P_j \Big[ P_{\bar j} m(e)_{\bA\bB}  \cdot  P_{j'} k^{\bB}_{0d} \cdot P_{j''} \mathcal T^{(l)}_{j''} k^{\bA}_{0l} \Big]\Bigg)\\
&\hphantom{=\sum} + \mathcal E^{\natural}[e,g,k].
\end{align*}
In view of the fact that the sum $ \sum_{\bar j} \sum_{\substack{j'> \bar j-2 \\ |j''-j'|\le 2}}$ is symmetric with respect to the indices $j',j''$ and, therefore
\[
 \sum_{\bar j} \sum_{\substack{j'> \bar j-2 \\ |j''-j'|\le 2}}
  P_j \Big[ P_{\bar j} m(e)_{\bA\bB} \Big( - P_{j'} k^{\bA}_{c0} \cdot P_{j''} k^{\bB}_{d0}    + P_{j'} k^{\bB}_{0d} \cdot  P_{j''} k^{\bA}_{0c} \Big)  \Big] = 0,
\]
the above calculation yields
\begin{equation}\label{Almost there simplifying calc F natural}
P_j \Big( R^{\natural}_{c0d0} - \partial_0 \mathcal F^{\natural}_{cd} \Big) = - \sum_{\bar j} \sum_{\substack{j'> \bar j-2 \\ |j''-j'|\le 2}}  \partial_c \Bigg(P_j \Big[ P_{\bar j} m(e)_{\bA\bB}  \cdot  P_{j'} k^{\bB}_{0d} \cdot P_{j''} \mathcal T^{(l)}_{j''} k^{\bA}_{0l} \Big]\Bigg) + \mathcal E^{\natural}[e,g,k],
\end{equation}
which implies \eqref{Calculation F natural difference}.

Applying the Codazzi equation \eqref{Codazzi once more Paley} together with the divergence identity \eqref{Identity derivative 2} repeatedly, and using $\mathcal E_{(\bar j, j', j'')}[e,g,k]$ to denote terms of the form
\begin{align*}
\mathcal E_{(\bar j, j', j'')}[e,g,k] = & 
 P_{\bar j} (m(e)) \cdot (P_{j'} \JapD^{-1}( g\cdot\partial g \cdot k + \omega \cdot k) )\cdot P_{j''} (\JapD^{-1} k)
\\
& +  P_{\bar j} (m(e)) \cdot (P_{j''} \JapD^{-1}( g\cdot\partial g \cdot k + \omega \cdot k) )\cdot P_{j'} (\JapD^{-1} k)\\
& + P_{\bar j}(\partial e) \cdot P_{j'} (\JapD^{-1} k) \cdot P_{j''} (\JapD^{-1} k),
\end{align*}
 we have:
\begin{align*}
P_{\bar j} m(e)_{\bA\bB}  \cdot  P_{j'} k^{\bB}_{0d} \cdot P_{j''} \mathcal T^{(l)}_{j''} k^{\bA}_{0l}
& = 
P_{\bar j} m(e)_{\bA\bB}  \cdot  P_{j'} \mathcal T^{(i)}_{j'} \partial_i k^{\bB}_{0d} \cdot P_{j''} \mathcal T^{(l)}_{j''} k^{\bA}_{0l}\\
& = 
P_{\bar j} m(e)_{\bA\bB}  \cdot  P_{j'} \mathcal T^{(i)}_{j'} \partial_d k^{\bB}_{0i} \cdot P_{j''} \mathcal T^{(l)}_{j''} k^{\bA}_{0l} + \mathcal E_{(\bar j, j', j'')}[e,g,k]
\\
& =  
\partial_d\Big(P_{\bar j} m(e)_{\bA\bB}  \cdot  P_{j'} \mathcal T^{(i)}_{j'}  k^{\bB}_{0i} \cdot P_{j''} \mathcal T^{(l)}_{j''} k^{\bA}_{0l} \Big)
\\
&\hphantom{==}
- P_{\bar j} m(e)_{\bA\bB}  \cdot  P_{j'} \mathcal T^{(i)}_{j'}  k^{\bB}_{0i} \cdot P_{j''} \mathcal T^{(l)}_{j''} \partial_d k^{\bA}_{0l}
+ \mathcal E_{(\bar j, j', j'')}[e,g,k]
\\
& = 
\partial_d\Big(P_{\bar j} m(e)_{\bA\bB}  \cdot  P_{j'} \mathcal T^{(i)}_{j'}  k^{\bB}_{id} \cdot P_{j''} \mathcal T^{(l)}_{j''} k^{\bA}_{0l} \Big)
\\
&\hphantom{==}
- P_{\bar j} m(e)_{\bA\bB}  \cdot  P_{j'} \mathcal T^{(i)}_{j'}  k^{\bB}_{0i} \cdot P_{j''} \mathcal T^{(l)}_{j''} \partial_l k^{\bA}_{0d}
+ \mathcal E_{(\bar j, j', j'')}[e,g,k]
\\
& = 
\partial_d\Big(P_{\bar j} m(e)_{\bA\bB}  \cdot  P_{j'} \mathcal T^{(i)}_{j'}  k^{\bB}_{id} \cdot P_{j''} \mathcal T^{(l)}_{j''} k^{\bA}_{0l} \Big)
- \partial_l \Big(P_{\bar j} m(e)_{\bA\bB}  \cdot  P_{j'} \mathcal T^{(i)}_{j'}  k^{\bB}_{0i} \cdot P_{j''} \mathcal T^{(l)}_{j''} k^{\bA}_{0d}\Big)
\\
& \hphantom{==}
+P_{\bar j} m(e)_{\bA\bB}  \cdot  P_{j'} \mathcal T^{(i)}_{j'}  \partial_l k^{\bB}_{0i} \cdot P_{j''} \mathcal T^{(l)}_{j''} k^{\bA}_{0d}
+ \mathcal E_{(\bar j, j', j'')}[e,g,k]
\\
& = 
\partial_d\Big(P_{\bar j} m(e)_{\bA\bB}  \cdot  P_{j'} \mathcal T^{(i)}_{j'}  k^{\bB}_{id} \cdot P_{j''} \mathcal T^{(l)}_{j''} k^{\bA}_{0l} \Big)
- \partial_l \Big(P_{\bar j} m(e)_{\bA\bB}  \cdot  P_{j'} \mathcal T^{(i)}_{j'}  k^{\bB}_{0i} \cdot P_{j''} \mathcal T^{(l)}_{j''} k^{\bA}_{0d}\Big)
\\
& \hphantom{==}
+P_{\bar j} m(e)_{\bA\bB}  \cdot  P_{j'} \mathcal T^{(i)}_{j'}  \partial_i k^{\bB}_{0l} \cdot P_{j''} \mathcal T^{(l)}_{j''} k^{\bA}_{0d}
+ \mathcal E_{(\bar j, j', j'')}[e,g,k]
\\
& = 
\partial_d\Big(P_{\bar j} m(e)_{\bA\bB}  \cdot  P_{j'} \mathcal T^{(i)}_{j'}  k^{\bB}_{id} \cdot P_{j''} \mathcal T^{(l)}_{j''} k^{\bA}_{0l} \Big)
- \partial_l \Big(P_{\bar j} m(e)_{\bA\bB}  \cdot  P_{j'} \mathcal T^{(i)}_{j'}  k^{\bB}_{0i} \cdot P_{j''} \mathcal T^{(l)}_{j''} k^{\bA}_{0d}\Big)
\\
& \hphantom{==}
+P_{\bar j} m(e)_{\bA\bB}  \cdot  P_{j'}  k^{\bB}_{0l} \cdot P_{j''} \mathcal T^{(l)}_{j''} k^{\bA}_{0d}
+ \mathcal E_{(\bar j, j', j'')}[e,g,k].
\end{align*}
Bringing the first term in the last line above to the left hand side and using the fact that $m(e)_{\bA\bB}$ is symmetric in $\bA,\bB$, we obtain after summing over $\bar j\in \mathbb N, j'>\bar j-1, |j''-j'|\le 2$ (note also that this range is symmetric over $j', j''$; thus, after summing, the first term in the last line above is equal to the left hand side):
\begin{align*}
 \sum_{\bar j} \sum_{\substack{j'> \bar j-2 \\ |j''-j'|\le 2}} & P_j \Big( P_{\bar j} m(e)_{\bA\bB}  \cdot  P_{j'} k^{\bB}_{0d} \cdot P_{j''} \mathcal T^{(l)}_{j''} k^{\bA}_{0l}\Big) \\
& = \sum_{\bar j} \sum_{\substack{j'> \bar j-2 \\ |j''-j'|\le 2}} \Bigg(\bar\partial P_j \Big( P_{\bar j}m(e) \cdot P_{j'}(\JapD^{-1} k) \cdot P_{j'}(\JapD^{-1} k) \Big)+  \mathcal E_{(\bar j, j', j'')}[e,g,k] \Bigg),
\end{align*}
where $\bar\partial \in \{\partial_1, \partial_2, \partial_3\}$. Note that, after applying a $\partial_c$ derivative to the expression on the left hand side above, one obtains the first term in the right hand side of \eqref{Almost there simplifying calc F natural}. Thus, substituting there, we finally infer \eqref{Calculation F natural difference 2 derivatives}.

 Similarly to the proof of \eqref{Calculation F natural difference}, we can readily calculate:
\begin{align*}
P_j \Big( \partial_{0} & (\mathcal F_{\perp})^{\bA\bB} +   \partial_c  (\bar g^{cd} R^{\perp\natural})_{d0}^{\bA\bB} \Big)\\
& \stackrel{\hphantom{\eqref{Codazzi once more Paley}}}{=}
   \sum_{\bar j} \sum_{\substack{j'> \bar j-2 \\ |j''-j'|\le 2}} 
      P_j \Bigg[  -\partial_0 \partial_c \Big(\bar g^{cd} \cdot P_{\bar j} g^{\ga\delta} \cdot P_{j'} k^{\bB}_{d\ga} \cdot P_{j''}  \mathcal T^{(b)}_{j''} k^{\bA}_{b\delta} \Big) \nonumber \\
&\hphantom{=  \sum_{\bar j} \sum_{\substack{j'> \bar j-2 \\ |j''-j'|\le 2}}
      P_j \Bigg[ \sum}
      + \partial_c  \Big(\bar g^{cd} \cdot P_{\bar j} g^{\ga\delta}  \cdot P_{j'} k^{\bB}_{d\ga} \cdot P_{j''} k^{\bA}_{0\delta} - \bar g^{cd} \cdot  P_{\bar j} g^{\ga\delta} \cdot P_{j'} k^{\bA}_{d\ga} \cdot P_{j''} k^{\bB}_{0\delta} \Big)\\
& \stackrel{\hphantom{\eqref{Codazzi once more Paley}}}{=}
   \sum_{\bar j} \sum_{\substack{j'> \bar j-2 \\ |j''-j'|\le 2}}
      \partial_c P_j \Bigg[  - \bar g^{cd} \cdot P_{\bar j} g^{\ga\delta} \cdot P_{j'} \partial_0 k^{\bB}_{d\ga} \cdot P_{j''}  \mathcal T^{(b)}_{j''} k^{\bA}_{b\delta} \\
      &\hphantom{=  \sum_{\bar j} \sum_{\substack{j'> \bar j-2 \\ |j''-j'|\le 2}}
      \partial_c P_j \Bigg[ \sum}
       - \bar g^{cd} \cdot P_{\bar j} g^{\ga\delta} \cdot P_{j'} k^{\bB}_{d\ga} \cdot P_{j''}  \mathcal T^{(b)}_{j''} \partial_0 k^{\bA}_{b\delta} \\
&\hphantom{=  \sum_{\bar j} \sum_{\substack{j'> \bar j-2 \\ |j''-j'|\le 2}}
      \partial_c P_j \Bigg[ \sum}
      + \bar g^{cd}  \cdot P_{\bar j} g^{\ga\delta}  \cdot P_{j'} k^{\bB}_{d\ga} \cdot P_{j''} k^{\bA}_{0\delta} \\
 &\hphantom{=  \sum_{\bar j} \sum_{\substack{j'> \bar j-2 \\ |j''-j'|\le 2}} 
      \partial_c P_j \Bigg[ \sum}     
      -  \bar g^{cd} \cdot  P_{\bar j} g^{\ga\delta} \cdot P_{j'} k^{\bA}_{d\ga} \cdot P_{j''} k^{\bB}_{0\delta} \Bigg] \\
& \hphantom{ =\sum }
+ \mathcal E_{\perp} [g,e,k]\\
& \stackrel{\eqref{Codazzi once more Paley}}{=}
  \sum_{\bar j} \sum_{\substack{j'> \bar j-2 \\ |j''-j'|\le 2}}
      \partial_c P_j \Bigg[  - \bar g^{cd} \cdot P_{\bar j} g^{\ga\delta} \cdot P_{j'} \partial_d k^{\bB}_{0\ga} \cdot P_{j''}  \mathcal T^{(b)}_{j''} k^{\bA}_{b\delta} \\
      &\hphantom{=  \sum_{\bar j} \sum_{\substack{j'> \bar j-2 \\ |j''-j'|\le 2}}
      \partial_c P_j \Bigg[ \sum}
       - \bar g^{cd} \cdot P_{\bar j} g^{\ga\delta} \cdot P_{j'} k^{\bB}_{d\ga} \cdot P_{j''}  \mathcal T^{(b)}_{j''} \partial_b k^{\bA}_{0\delta} \\
&\hphantom{=   \sum_{\bar j} \sum_{\substack{j'> \bar j-2 \\ |j''-j'|\le 2}}
      \partial_c P_j \Bigg[ \sum}
      + \bar g^{cd}  \cdot P_{\bar j} g^{\ga\delta}  \cdot P_{j'} k^{\bB}_{d\ga} \cdot P_{j''} k^{\bA}_{0\delta} \\
 &\hphantom{=   \sum_{\bar j} \sum_{\substack{j'> \bar j-2 \\ |j''-j'|\le 2}}
      \partial_c P_j \Bigg[ \sum}     
      -  \bar g^{cd} \cdot  P_{\bar j} g^{\ga\delta} \cdot P_{j'} k^{\bA}_{d\ga} \cdot P_{j''} k^{\bB}_{0\delta} \Bigg] \\
& \hphantom{ =\sum }
+ \mathcal E_{\perp} [g,e,k]\\
& \stackrel{\eqref{Identity derivative 2}}{=}
  \sum_{\bar j} \sum_{\substack{j'> \bar j-2 \\ |j''-j'|\le 2}}
      \partial_c P_j \Bigg[  - \partial_d \Big( \bar g^{cd} \cdot P_{\bar j} g^{\ga\delta} \cdot P_{j'} \partial_d k^{\bB}_{0\ga} \cdot P_{j''}  \mathcal T^{(b)}_{j''} k^{\bA}_{b\delta}\Big) \\
 &\hphantom{=   \sum_{\bar j} \sum_{\substack{j'> \bar j-2 \\ |j''-j'|\le 2}} 
      \partial_c P_j \Bigg[ \sum}     
       + \bar g^{cd} \cdot P_{\bar j} g^{\ga\delta} \cdot P_{j'} k^{\bB}_{0\ga} \cdot P_{j''}  \mathcal T^{(b)}_{j''} \partial_d  k^{\bA}_{b\delta} \\
      &\hphantom{=   \sum_{\bar j} \sum_{\substack{j'> \bar j-2 \\ |j''-j'|\le 2}}
      \partial_c P_j \Bigg[ \sum}
       - \bar g^{cd} \cdot P_{\bar j} g^{\ga\delta} \cdot P_{j'} k^{\bB}_{d\ga} \cdot P_{j''}  k^{\bA}_{0\delta} \\
&\hphantom{=   \sum_{\bar j} \sum_{\substack{j'> \bar j-2 \\ |j''-j'|\le 2}} 
      \partial_c P_j \Bigg[ \sum}
      + \bar g^{cd}  \cdot P_{\bar j} g^{\ga\delta}  \cdot P_{j'} k^{\bB}_{d\ga} \cdot P_{j''} k^{\bA}_{0\delta} \\
 &\hphantom{=   \sum_{\bar j} \sum_{\substack{j'> \bar j-2 \\ |j''-j'|\le 2}}
      \partial_c P_j \Bigg[ \sum}     
      -  \bar g^{cd} \cdot  P_{\bar j} g^{\ga\delta} \cdot P_{j'} k^{\bA}_{d\ga} \cdot P_{j''} k^{\bB}_{0\delta} \Bigg] \\
& \hphantom{ =\sum }
+ \mathcal E_{\perp} [g,e,k]\\
& \stackrel{\substack{\eqref{Codazzi once more Paley},\\ \eqref{Identity derivative 2}}}{=}
  \sum_{\bar j} \sum_{\substack{j'> \bar j-2 \\ |j''-j'|\le 2}}
      \partial_c P_j \Bigg[  - \partial_d \Big( \bar g^{cd} \cdot P_{\bar j} g^{\ga\delta} \cdot P_{j'} \partial_d k^{\bB}_{0\ga} \cdot P_{j''}  \mathcal T^{(b)}_{j''} k^{\bA}_{b\delta}\Big) \\
 &\hphantom{=   \sum_{\bar j} \sum_{\substack{j'> \bar j-2 \\ |j''-j'|\le 2}} 
      \partial_c P_j \Bigg[ \sum}     
       + \bar g^{cd} \cdot P_{\bar j} g^{\ga\delta} \cdot P_{j'} k^{\bB}_{0\ga} \cdot P_{j''}   k^{\bA}_{d \delta} \\
 &\hphantom{=  \sum_{\bar j} \sum_{\substack{j'> \bar j-2 \\ |j''-j'|\le 2}}
      \partial_c P_j \Bigg[ \sum}     
      -  \bar g^{cd} \cdot  P_{\bar j} g^{\ga\delta} \cdot P_{j'} k^{\bA}_{d\ga} \cdot P_{j''} k^{\bB}_{0\delta} \Bigg] \\
& \hphantom{ =\sum }
+ \mathcal E_{\perp} [g,e,k]
      \end{align*}
where $\mathcal E_{\perp} [g,e,k]$ denotes a collection of terms of the form
\begin{align*}
\mathcal E_{\perp} [g,e,k] = &
\sum_{\substack{j_1,j_2,j_3,j_4:\\|j_3-j_4|\le 2}}  P_j \Bigg[ D \Big(P_{j_1} ( \partial g + \partial e) \cdot P_{j_2} g \cdot P_{j_3} k \cdot P_{j_4}(\JapD^{-1} k) \Big) \Bigg].
\end{align*}
Note that the second and third terms in the sum above cancel out in view of the fact that $g^{\ga\delta}$ is symmetric in $\ga,\delta$ and the summation takes place over a set of indices which is symmetric with respect to $j',j''$. Therefore, we obtain \eqref{Computation F perp}.

\end{proof}

\section{Existence of solutions: The continuity argument}\label{sec:Continuity}

The proof of Theorem \ref{thm:Existence} will proceed via a standard continuity argument. The aim of this section is to lay out an outline for this argument, which will be then implemented in Sections \ref{sec: Bounds S0} and \ref{sec:Bootstrap}.

We will only prove Theorem \ref{thm:Existence} in the case of smooth initial data sets $(\bY,\bn)$ satisfying the assumption \eqref{D}. The statement regarding general initial data in the space $H^s \times H^{s-1}$ will then follow via a standard argument by approximating any general initial data set $(\bY,\bn)$ by a sequence of smooth ones $(\bY_n, \bn_n)$ and then using the fact that the estimates \eqref{Theorem embedding}--\eqref{Theorem connection coefficients} depend only on $\|(\bY,\bn)\|_{H^s \times H^{s-1}}$ to pass to a weakly convergent subsequence of developments. 

Before setting up our continuity argument, we will need to introduce some shorthand notation regarding various norms for the geometric quantities involved:

\begin{definition}\label{def:Norms 1}
Let $0<\delta_0<s_1<s_0$ be defined in terms of $s$ as in Definition \ref{def:Small constants}.\footnote{Recall that, when $s-\f52-\f16$ is small, all three of these parameters are also small.} 
 For any immersion $Y:[0,T]\times \mathbb T^3 \rightarrow \mathcal N$ and any choice of frame $\{e_{\bA}\}_{\bA=4}^n$ for the normal bundle $NY$, we will set
\begin{align}
 \mathcal{Q}_k \doteq   \sum_{l=0}^2 \Big( \|\partial^l k \|_{L^\infty H^{s-2-l}} +\|\partial^l k \|_{L^2 W^{ - \f12-l+s_0, \infty}} + \|\partial^l k \|_{L^4 W^{\f{1}{12}-l+s_0,4}}+\|\partial^l k \|_{L^{\f72} W^{-l+s_0,\f{14}3}}\Big),
\end{align}
and
\begin{align}
\mathcal{Q}_g \doteq \|g-m_0  & \|_{L^\infty L^\infty}+\| \partial g \|_{L^1 W^{\f14\delta_0,\infty}}+ \| \partial g \|_{L^{\infty} H^{s-2}}+ \| \partial g \|_{L^{2} H^{\f76+s_1}} +\|\partial g\|_{L^{\f74} W^{1+s_1,\f73}}\\
& +  \| \partial^2 g \|_{L^{\infty} H^{s-3}}+ \| \partial^2 g \|_{L^{2} H^{\f16+s_1}}+\|\partial^2 g\|_{L^{\f74} W^{s_1,\f73}}, \nonumber 
\end{align}
where $m_0$ denotes the standard Minkowski metric on $[0,T]\times \mathbb{T}^3$, which in the product coordinate system takes the form
\[
m_0 = -(dx^0)^2+(dx^1)^2+(dx^2)^2+(dx^3)^2.
\]
In the above, it is implicitly assumed that mixed Sobolev norms are defined with respect to the gauge $\mathcal G = (\text{Id}, e)$ for $Y$ (see Section \ref{sec:Mixed Norms}). We will also define:
\begin{align}
\mathcal{Q}_\perp \doteq & \| \omega \|_{L^1 W^{\f14\delta_0,\infty}}+\| \omega \|_{L^2 H^{ \f76+s_1}} +\| \omega\|_{L^{\f74} W^{1+s_1,\f73}}+ \| \omega \|_{L^{\infty} H^{s-2}}\\
& +\| \partial \omega \|_{L^2 H^{ \f16+s_1}}+\|\partial \omega\|_{L^{\f74} W^{s_1,\f73}} + \| \partial \omega \|_{L^{\infty} H^{s-3}} \nonumber \\
&+\sum_{\bA=4}^{n} \Big(\| e_{\bA} - \delta_{\bA}\|_{L^\infty L^\infty} + \sum_{l=1}^2 \big( \|\partial^l e_{\bA} \|_{L^\infty H^{s-1-l}}+\| \partial^l e_{\bA} \|_{L^2 H^{\f16+2-l+s_1}}+\|\partial^l e_{\bA}\|_{L^{\f74} W^{2-l+s_1,\f73}}\big)\Big), \nonumber
\end{align}
where $\omega$ 
is the connection form associated to the frame $\{ e_{\bar{A}}\}_{\bar{A}=4}^n$ and $\delta_{\bA}$ is the vector field in the normal bundle $NY$ with Cartesian components 
\[
\delta_{\bA}^A = 
\begin{cases}
1, \quad A = \bA,\\
0, \quad A \neq \bA.
\end{cases}
\]
\end{definition}

\medskip
\noindent \textbf{Outline of the proof of Theorem \ref{thm:Existence}. }
In order to prove Theorem \ref{thm:Existence}, it suffices to establish the following intermediate results:
\begin{itemize}
\item[\textbf{Step 1.}] \textbf{Local existence of a smooth development:} For any smooth initial data pair $\bY:\mathbb T^3 \rightarrow \mathcal N$, $\bn:\mathbb T^3 \rightarrow \mathbb R^{n+1}$, there exists a smooth development $Y:[0,T]\times \mathbb T^3 \rightarrow \mathcal N$ for some $T>0$.

\smallskip
\item[\textbf{Step 2.}] \textbf{Local existence of a balanced gauge:} For any smooth development $Y:[0,T]\times \mathbb T^3 \rightarrow \mathcal N$ of a smooth initial data pair $(\bY,\bn)$ as in the statement of Theorem \ref{thm:Existence}, there exists an open neighborhood $\mathcal U \subset [0,T]\times \mathbb T^3 $ of the slice $\{0\}\times \mathbb T^3$, a diffeomorphism $\Psi:\mathcal U \rightarrow [0,T') \times \mathbb T^3$ mapping $\{0\}\times \mathbb T^3$ to itself and a frame $\{e_{\bA}\}_{\bA=4}^n$ for $N|_{\mathcal U} Y$ such that the following conditions hold:
\begin{itemize}
\item Along the initial slice $\{0\}\times \mathbb T^3$, the diffeomorphism $\Psi$ satisfies
\begin{equation}\label{Diffeomorphism initial}
\Psi|_{\{0\}\times \mathbb T^3}= \mathrm{\text{Id}}.
\end{equation}

\smallskip
\item The gauge $\mathcal G = (\Psi,e)$ for the immersion $Y:\mathcal U \rightarrow \mathcal N$ satisfies the \textbf{balanced gauge condition} (see Definition \ref{def:Balanced gauge}).

\smallskip
\item Along the initial slice $\{0\}\times \mathbb T^3$, the frame $\{e_{\bA}\}_{\bA=4}^n$ for $N|_{\mathcal U} Y$ and the differential of the diffeomorphism $\Psi$ satisfy the bound:
\begin{align}\label{Bound slice initial}
\sum_{A=0}^n \sum_{\bA=4}^{n} \Big( \big\| e^A_{\bA}|_{\{0\}\times \mathbb T^3} -\delta^A_{\bA}\big\|_{H^{s-1}(\mathbb T^3)} 
& + \big\| \nabla^\perp_0 e^{A}_{\bA}|_{\{0\}\times \mathbb T^3} \big\|_{H^{s-2}(\mathbb T^3)}\Big) \\
&  + \sum_{\a,\b=0}^3 \big\| \partial_{\a} \Psi^{\b}|_{\{0\}\times \mathbb T^3} - \delta_{\a}^{\b} \big\|_{H^{s-1}(\mathbb T^3)} \le  C \mathcal D  \nonumber 
\end{align}
for some constant $C>0$ depending only on $s$ (recall the definition \eqref{D} of $\mathcal D$ in terms of $(\bar Y, \bar n)$).
\end{itemize}

\begin{remark*}
Note that \eqref{Diffeomorphism initial} implies that the initial data pairs $(\bar Y, \bar n):\mathbb T^3 \rightarrow \mathcal N$ associated to the immersions $Y$ and $Y\circ \Psi$ are the same (i.e.~not simply related by a diffeomorphism). We will actually show that a gauge $\mathcal G = (\Psi,e)$ as above exists for \emph{any} smooth immersion $Y$ extending an initial data pair $(\bY,\bn)$ satisfying $\eqref{D}$, i.e.~not necessarily satisfying the minimal surface equation \eqref{Minimal surface equation}.
\end{remark*}

\smallskip
\item[\textbf{Step 3.}] \textbf{Estimates along the initial hypersurface:} Let $(\bY,\bn)$ be a smooth initial data pair as in the statement of Theorem \ref{thm:Existence} and let $Y:[0,T]\times \mathbb T^3 \rightarrow \mathcal N$ be a smooth development of $(\bY,\bn)$. Assume that $\{e_{\bA}\}_{\bA=4}^n$ is a frame for $NY$ such that the following hold:
\begin{itemize}
\item The gauge $\mathcal G = (\text{Id}, e)$ satisfies the balanced gauge condition.
\item Along the initial slice $\{0\}\times \mathbb T^3$ we have 
\begin{equation}\label{Initial bound preliminary}
\| \beta^k|_{x^0=0}\|_{H^{s-1}} + \| \omega_{0\bB}^{\bA}|_{x^0=0}\|_{H^{s-1}} +  \big\| e^A_{\bA}|_{x^0=0} -\delta^A_{\bA}\big\|_{H^{s-1}} \le  C \mathcal D
\end{equation}
for some constant $C>0$ depending only on $s$.
\end{itemize}
 Then, there exists a $T'=T'\big( (\bY,\bn) \big)>0$ such that, when restricted on $[0,T']\times \mathbb T^3$, the immersion $Y$ satisfies
\begin{equation}\label{Bootstrap intro bound}
\mathcal Q_g + \mathcal Q_k +\mathcal Q_\perp + \| Y-Y_0\|_{L^\infty L^\infty} + \|\partial^2 Y\|_{L^4 W^{\f1{12},4}}+\|\partial^2 Y\|_{L^{\f72} L^{\f{14}3}}\lesssim \mathcal D
\end{equation}
(see Proposition \ref{prop: Bounds S0} and the remark below it).

\begin{remark*}
Note that the immersion $Y\circ x:[0,T')\rightarrow \mathcal N$ constructed in Step 2 satisfies \eqref{Initial bound preliminary} as a consequence of \eqref{Bound slice initial}.
\end{remark*}

\smallskip
\item[\textbf{Step 4.}] \textbf{The bootstrap argument:} For any $T\le 1$, let $Y:[0,T)\times \mathbb T^3 \rightarrow \mathcal N$ be a smooth development of $(\bY, \bn)$  as in the statement of Theorem \ref{thm:Existence} and $\{e_{\bA}\}_{\bA=4}^n$ is a frame for $NY$ such that :
\begin{itemize} 
\item[a.] The gauge $\mathcal G = (\text{Id}, e)$ satisfies the balanced gauge condition.
\item[b.] Along the initial slice $\{x^0 = 0\}$, $\mathcal G$ satisfies  \eqref{Initial bound preliminary}.
\item[c.] The immersion $Y$ satisfies the bootstrap assumption 
\[
\mathcal Q_g + \mathcal Q_k +\mathcal Q_\perp + \| Y-Y_0\|_{L^\infty L^\infty} +\|\partial^2 Y\|_{L^4 W^{\f1{12},4}}+\|\partial^2 Y\|_{L^{\f72} L^{\f{14}3}}\le C_0 \mathcal D
\] 
for some sufficiently large constant $C_0>0$ depending only on $s$.
\end{itemize}
Then, the following improved estimate holds:
\[
\mathcal Q_g + \mathcal Q_k +\mathcal Q_\perp + \| Y-Y_0\|_{L^\infty L^\infty} +\|\partial^2 Y\|_{L^4 W^{\f1{12},4}}+\|\partial^2 Y\|_{L^{\f72} L^{\f{14}3}}\le \f12 C_0 \mathcal D
\]
(see Proposition \ref{prop:Bootstrap} for a more detailed presentation of this step). 

\smallskip
\item[\textbf{Step 5.}] \textbf{Persistence of regularity:} Any development $Y:[0,T)\times \mathbb T^3 \rightarrow \mathcal N$ as in Step 4 remains uniformly smooth up to $\{x^0=T\}$.

\smallskip
\item[\textbf{Step 6.}] \textbf{Local extendibility of the balanced gauge:} Let $Y:[0,T')\times \mathbb T^3 \rightarrow \mathcal N$ be a smooth development of an initial data pair $(\bY,\bn)$ and $\{ e_{\bA}\}_{\bA=4}^n$ be a frame for the normal bundle $NY$. Assume that, for some $T<T'$, the gauge $\mathcal G = (\text{Id}, e)$ restricted to $[0,T)\times \mathbb T^3$ satisfies the balanced gauge condition. Then, there exists an open neighborhood $\mathcal U$ of  $[0,T]\times \mathbb T^3$ in  $[0,T')\times \mathbb T^3$, a diffeomorphism $\tilde x : \mathcal U \rightarrow [0,T'')\times \mathbb T^3$ (for some $T''>T$) and a frame $\{\tilde{e}_{\bA}\}_{\bA=4}^n$ for the normal bundle $N|_{\mathcal U}Y$ such that:
\begin{itemize}
\item[$\circ$] $\tilde x=\text{Id}$ and $\tilde e_{\bA} = e_{\bA}$ on $[0,T]\times \mathbb T^3$
\item[$\circ$] $\tilde{\mathcal G} = (\tilde x, \tilde e)$ satisfies the balanced condition on $\mathcal U$. 
\end{itemize}
\end{itemize} 

\medskip
\noindent Among the steps outlined above, Step 1 is trivial (as it corresponds to a classical well-posedness statement for the minimal surface equation \eqref{Minimal surface equation} with sufficiently regular data). Step 3 corresponds to estimating the quantities involved in the definition of \eqref{Bootstrap intro bound} along $\{x^0=0\}$ in terms of the geometric initial data $(\bar Y, \bar n)$ and the assumptions on the properties of the gauge $\mathcal G$ infinitesimally around $\{x^0=0\}$; this will be carried out in Section \ref{sec: Bounds S0}. The proof of Step 4 will occupy Section \ref{sec:Bootstrap}; this step constitutes by far the most challenging part of the proof of Theorem \ref{thm:Existence}. In Section \ref{sec:Existence gauge}, we will establish Steps 2 and 6 (Step 6 being essentially identical to Step 2); Step 5 will be established in Section \ref{sec:Persistence of regularity}, thus completing the proof of Theorem \ref{thm:Existence}.

\section{Step 3: Estimates along the initial hypersurface}\label{sec: Bounds S0}
Let $Y:[0,T]\times \mathbb T^3 \rightarrow \mathcal N$ be a smooth development of an initial data pair $(\bY,\bn)$. If $\{e_{\bA}\}_{\bA=4}^n$ is any frame for $NY$ such that the gauge $\mathcal G = (\text{Id}, e)$ satisfies the balanced condition, the minimal surface equation combined with the equations defining the balanced condition imply a number of estimates for the infinitesimal geometry of the immersion $Y$ along the initial slice $\{x^0=0\}$ in terms of appropriate bounds on the geometric initial data $(\bY,\bn)$ and on the gauge quantities $\beta|_{x^0=0}, \omega|_{x^0=0}$. 

In this section, we will show that the assumption \eqref{D} on the $H^s \times H^{s-1}$ size of the  initial data pair, together with the smallness assumption  \eqref{Initial bound preliminary}, imply that the immersion $Y$ satisfies ``infinitesimally'' the bound \eqref{Bootstrap intro bound}; this will correspond to establishing Step 3 of the proof of Theorem \ref{thm:Existence}, as outlined in the previous section.

\begin{proposition}\label{prop: Bounds S0}
Let $Y:[0,T)\times \mathbb T^3 \rightarrow \mathcal N$  be a smooth development of a smooth initial data pair $(\bY, \bn)$ as in the statement of Theorem \ref{thm:Existence} and $\{e_{\bA}\}_{\bA=4}^n$ be a smooth frame for the normal bundle $NY$, such that the gauge $\mathcal G = (\text{Id}, e)$ satisfies the balanced gauge condition and, along $\{ x^0=0\}$, we have
\begin{equation}\label{Boundary assumption G}
\| \beta^k|_{x^0=0}\|_{H^{s-1}} + \| \omega_{0\bB}^{\bA}|_{x^0=0}\|_{H^{s-1}} +  \big\| e^A_{\bA}|_{x^0=0} -\delta^A_{\bA}\big\|_{H^{s-1}} \le  C_1 \mathcal D
\end{equation}
where $e^A_{\bA}$ are the components of the frame vector field $e_{\bA}$ with respect to the Cartesian coordinates on $T \mathcal N$ and $C_1>0$ is a constant depending only on $s$.
Then, provided $\epsilon$ is small enough in terms of $s$, the following bounds hold along $\{ x^0 = 0\}$: 
\begin{equation}\label{Bound Y initial}
\big\| (Y - Y_0) |_{x^0=0} \big\|_{H^{s-1}}+ \big\| (\partial Y - \partial Y_0) |_{x^0=0} \big\|_{H^{s-2}} + \big\| \partial^2 Y |_{x^0=0} \big\|_{H^{s-3}} \le C\mathcal{D},
\end{equation}

\begin{equation}\label{Bound k and dk initial}
\big\| k|_{x^0=0} \big\|_{H^{s-2}} + \big\| \partial k|_{x^0=0} \big\|_{H^{s-3}} + \big\| \partial^2 k|_{x^0=0} \big\|_{H^{s-4}} \le C \mathcal D,
\end{equation}

\begin{equation}\label{Bound metric initial}
 \big\|g|_{x^0=0} - (m_0)\big\|_{H^{s-1}} 
+\big\|\partial g|_{x^0=0} \big\|_{H^{s-2}} +  \big\| \partial^2 g|_{x^0=0} \|_{H^{s-3}}+ \big\|\partial R|_{x^0=0} \big\|_{H^{s-4}} \le C \mathcal D,
\end{equation}

\begin{equation}\label{Bound connection coefficients initial}
\big\|\omega|_{x^0=0} \big\|_{H^{s-2}}+\big\|\partial \omega|_{x^0=0} \big\|_{H^{s-3}} + \big\|\partial R^{\perp}|_{x^0=0} \big\|_{H^{s-4}} \le C \mathcal D
\end{equation}
and
\begin{align}\label{Bound frame}
\sum_{\bA=4}^{n} \big\|\partial e_{\bA}|_{x^0=0} \big\|_{H^{s-2}} + \sum_{\bA=4}^{n} \big\|\partial^2 e_{\bA}|_{x^0=0} \big\|_{H^{s-3}} \le C \mathcal{D},
\end{align}
where $C>0$ is a constant depending only $s$.
\end{proposition}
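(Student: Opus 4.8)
The strategy is to treat the initial slice $\{x^0=0\}$ as a purely elliptic problem: all of the listed quantities should be estimated from the geometric data $(\bY,\bn)$ together with the balanced gauge conditions restricted to $\tau=0$, where the parabolic evolution equations of Lemma \ref{lem:Parabolic elliptic system} collapse to elliptic ones (or into relations for $\partial_0$-derivatives that can be read off from the minimal surface equation). I would proceed in the following order. First, estimate the geometry of $\bY(\mathbb T^3)$: from \eqref{metric} restricted to spatial indices, the induced Riemannian metric $\bar g|_{x^0=0}$ is a smooth function of $\partial_i\bY$, hence $\|\bar g|_{x^0=0}-\delta\|_{H^{s-1}}\lesssim \mathcal D$ by the standard product/composition estimates in $H^{s-1}$ (using $s-1>3/2$); similarly the spatial components $k_{ij}|_{x^0=0}=(\perpi)_B\partial_i\partial_j\bY^B$ are controlled in $H^{s-2}$ using the $H^{s-1}$ bound on $\bY$ and $\bn$ (and hence on $\perpi$, which depends smoothly on $\partial\bY$ and the normal). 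This gives the purely spatial parts of \eqref{Bound Y initial}, \eqref{Bound k and dk initial}, \eqref{Bound metric initial}, \eqref{Bound frame}.

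Second, recover the remaining (time-differentiated) components of $g$ and $Y$ along the slice. Here is where the balanced gauge is used: the condition $\fint N=1$ together with the mean curvature condition \eqref{Mean curvature condition} at $\tau=0$ gives an elliptic equation $\Delta_{\bar g}|D|^{-1}(N-1) = \tr_{\bar g}h + \frac1N\bar g^{ij}\tilde{\mathcal F}^\natural_{ij} - (\text{average})$ with right-hand side known in terms of $(\bar g, h)|_{x^0=0}$ (the latter already controlled) and of $\tilde{\mathcal F}^\natural|_{x^0=0}=0$ by \eqref{Vanishing F initial}; inverting $\Delta_{\bar g}|D|^{-1}$ (an operator of order $1$, invertible on mean-zero functions, with elliptic estimates as in Lemma \ref{lem:Elliptic estimates model}) yields $\|N|_{x^0=0}-1\|_{H^{s-1}}\lesssim \mathcal D$. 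Likewise the harmonic-type condition \eqref{Harmonic condition} at $\tau=0$, i.e. $\bar g^{ij}\bar\Gamma^k_{ij}=-\Delta_{\bar g}|D|^{-1}\beta^k$, combined with the smallness assumption $\|\beta^k|_{x^0=0}\|_{H^{s-1}}\lesssim \mathcal D$ from \eqref{Boundary assumption G}, is consistent with $\bar g\in H^{s-1}$ (this is really a constraint the data must satisfy, which holds by the construction in Step 2). Then the $3+1$ decomposition \eqref{Metric decomposition} assembles $g|_{x^0=0}$ from $(N,\beta,\bar g)|_{x^0=0}$, giving the first term of \eqref{Bound metric initial}. For $\partial Y|_{x^0=0}$: the spatial derivatives $\partial_i\bY$ are data; the time derivative $\partial_0 Y|_{x^0=0}$ is determined by $\partial_0 Y = N\hat n + \beta^i\partial_i Y$ via \eqref{Relation lapse normal} (unwinding $\hat n$ in terms of $(\bn, \partial_i\bY)$ and the lapse/shift), so $\|\partial Y|_{x^0=0}-\partial Y_0\|_{H^{s-2}}\lesssim \mathcal D$. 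For $\partial^2 Y|_{x^0=0}$: the spatial-spatial and mixed components are $\partial_i\partial_j\bY$ and $\partial_0\partial_i Y=\partial_i(N\hat n+\beta^j\partial_j Y)$, controlled in $H^{s-3}$, while $\partial_0^2 Y|_{x^0=0}$ is recovered from the minimal surface equation \eqref{Minimal surface equation}: writing $g^{\alpha\beta}\partial_\alpha\partial_\beta Y^A = (\text{tangential part})$ and solving for $g^{00}\partial_0^2 Y^A$ (since $g^{00}=-N^{-2}$ is bounded away from zero) expresses $\partial_0^2 Y^A|_{x^0=0}$ in terms of $g^{-1}$, $\partial g$, $\partial_0\partial_i Y$, $\partial_i\partial_j Y$ and the Christoffel symbols, all already in $H^{s-3}$. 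Then $k=\Pi^\perp\partial^2 Y$ inherits these bounds in $H^{s-2},H^{s-3},H^{s-4}$ for $k,\partial k,\partial^2 k$ (using \eqref{Variation metric}, \eqref{Variation second fundamental form} to trade $\partial_0$ on $k$ for spatial derivatives plus curvature when needed for the $\partial^2 k$ estimate), completing \eqref{Bound k and dk initial} and \eqref{Bound Y initial}.

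Third, the curvature and connection estimates. The full Riemann tensor $R|_{x^0=0}$ is given by the Gauss equation \eqref{Gauss}, $R\sim k\wedge k$, so $R|_{x^0=0}\in H^{s-3}$ (from $k\in H^{s-2}$, $s-2>3/2$), and $\partial R|_{x^0=0}\in H^{s-4}$ by differentiating; this gives the last term of \eqref{Bound metric initial}. For the normal bundle: $\omega|_{x^0=0}$ is recovered from the modified divergence condition \eqref{Divergence condition frame} at $\tau=0$ — the spatial divergence $\bar g^{ij}\bar\nabla_i\omega_{j\bB}^{\bA}$ equals $-\Delta_{\bar g}|D|^{-1}\omega_{0\bB}^{\bA} + m(e)_{\bB\bC}\tilde{\mathcal F}_\perp^{\bA\bC}-(\text{avg})$, with $\tilde{\mathcal F}_\perp|_{x^0=0}=0$ and $\omega_{0}|_{x^0=0}$ controlled by \eqref{Boundary assumption G} — together with the Ricci equation \eqref{Ricci} $R^\perp\sim k\wedge k$ which (via the $\curl$-type equation \eqref{Normal curvature coordinates}) supplies the remaining components of $d\omega$; elliptic Hodge-type estimates on $\mathbb T^3$ then yield $\|\omega|_{x^0=0}\|_{H^{s-2}}\lesssim \mathcal D$, and differentiating gives $\partial\omega|_{x^0=0}\in H^{s-3}$ and $\partial R^\perp|_{x^0=0}\in H^{s-4}$, establishing \eqref{Bound connection coefficients initial}. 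Finally $\partial e_{\bA}|_{x^0=0}$ and $\partial^2 e_{\bA}|_{x^0=0}$ follow from \eqref{Relation e Omega k}, which expresses $\partial_\beta e_{\bA}^A$ algebraically in terms of $\omega$, $k$, $g^{-1}$ and $\partial Y$, all now controlled in the required spaces, giving \eqref{Bound frame}.

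\textbf{Main obstacle.} The routine product/composition estimates are standard; the delicate point is the inversion of the modified Laplace-type operators $\Delta_{\bar g}|D|^{-1}$ (and the Hodge system for $\omega$) on $\mathbb T^3$ with the rough metric $\bar g$: one must check that these operators are invertible on the appropriate mean-zero subspaces and satisfy elliptic estimates with constants depending only on $\|\bar g-\delta\|_{H^{s-1}}$ — this is exactly what Lemma \ref{lem:Elliptic estimates model} in the appendix is designed to provide, but care is needed because $\Delta_{\bar g}|D|^{-1}$ is a nonlocal \emph{first}-order operator rather than the honest Laplacian, so its invertibility is a genuine (if mild) perturbation statement requiring $\|\bar g-\delta\|_{H^{s-1}}$, hence $\mathcal D$, to be small. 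A secondary subtlety is bookkeeping the loss of derivatives: $\partial^2 Y$ and $\partial^2 k$ sit two and four derivatives below $\bY$ respectively, so one must be careful that solving the minimal surface equation for $\partial_0^2 Y$ does not cost more than the one derivative of $g^{-1}\cdot\partial g$ that appears, which is why the $L^\infty$-control $g^{00}=-N^{-2}$ bounded away from zero (a consequence of $\|N-1\|_{H^{s-1}}$ small and $s-1>3/2$) is essential.
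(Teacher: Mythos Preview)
Your plan is essentially the paper's approach: read off the spatial geometry from $(\bY,\bn)$, use the balanced gauge constraints at $\tau=0$ to recover $N,\beta$ (and hence $g$), use the minimal surface equation to recover the missing time components of $k$, and then iterate to higher-order time derivatives via the structure equations. A few corrections and omissions are worth flagging.

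First, for $\partial_0 k|_{x^0=0}$ you cite \eqref{Variation metric}--\eqref{Variation second fundamental form}, but those are variation formulas for $\bar g$ and $h$, not for $k$. The correct route (and the one the paper takes) is the Codazzi equation \eqref{Codazzi}, which gives $\partial_0 k_{ij}=\partial_i k_{0j}+\text{l.o.t.}$ and $\partial_0 k_{0i}=\partial_i k_{00}+\text{l.o.t.}$, together with $\partial_0$ of the minimal surface equation \eqref{Reexpressed minimal surface equation} for $\partial_0 k_{00}$; for $\partial_0^2 k$ one uses the wave equation \eqref{Covariant wave equation k}.

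Second, your Hodge-system recovery of $\bar\omega$ is more work than necessary. The hypothesis \eqref{Boundary assumption G} already gives $e|_{x^0=0}\in H^{s-1}$, hence $\bar\partial e\in H^{s-2}$, and then $\bar\omega=(\Pi^\perp)\bar\partial e\in H^{s-2}$ directly from \eqref{Definition connection coefficients}; the gauge condition then yields $\omega_0$ from $\bar\omega$ (since $\tilde{\mathcal F}_\perp|_{x^0=0}=0$). No div--curl system is needed.

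Third, you do not say how to obtain $\partial_0 N$ and $\partial_0\beta$ (and, at the next order, $\partial_0^2 N$, $\partial_0^2\beta$, $\partial_0^2\bar g$). The paper reads these off from the parabolic equations \eqref{Equation lapse}, \eqref{Equation shift}, \eqref{Equation dt2 lapse}--\eqref{Equation dt2 bar g} of Lemma~\ref{lem:Parabolic elliptic system}, evaluated at $x^0=0$. This introduces a mild coupling you do not mention: the right-hand side of \eqref{Equation lapse} contains $\partial_0\tilde{\mathcal F}^\natural\sim m(e)\cdot\partial_0 k\cdot k$, so $\|\partial g|_{x^0=0}\|_{H^{s-2}}$ depends on $\|\partial_0 k|_{x^0=0}\|_{H^{s-3}}$, while the Codazzi route for $\partial_0 k$ produces terms $\partial g\cdot k$. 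The paper closes this by deriving the two coupled estimates and absorbing the cross terms using $\mathcal D\ll 1$; the same device is needed again at the level of $\partial^2 g$, $\partial\omega$, $\partial^2 k$. None of this is hard, but it is the only place beyond the operator inversion where smallness of $\epsilon$ genuinely enters, so it deserves a sentence.
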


\begin{remark*}
It follows immediately from the bounds \eqref{Bound Y initial}--\eqref{Bound frame} that, for any sufficiently regular development $Y:[0,T)\times \mathbb T^3 \rightarrow \mathcal N$ as in the statement of Proposition \ref{prop: Bounds S0}, there exists a (possibly small) $T'=T'(Y)>0$ such that, when restricted on $[0,T']\times \mathbb T^3$, the immersion $Y$ satisfies
\begin{align*}
 \mathcal{Q}_g  &+ \mathcal{Q}_k  + \mathcal{Q}_\perp + \| Y - Y_0 \|_{L^\infty L^\infty}\\
 &  + \| \partial Y - \partial Y_0 \|_{L^\infty H^{s-1}} + \|\partial^2 Y\|_{L^4 W^{\f1{12},4}}+\|\partial^2 Y\|_{L^{\f72} L^{\f{14}3}}\le C \mathcal{D},  
\end{align*}
i.e. $Y|_{[0,T']\times \mathbb T^3}$ satisfies the bootstrap assumption of Proposition \ref{prop:Bootstrap}.
\end{remark*}

\begin{proof}
The proof of Proposition \ref{prop: Bounds S0} will consist of a long series of successive estimates along $\{x^0=0\}$. We will adopt the conventions described in the statement of Lemma \ref{lem:Parabolic elliptic system} regarding schematic expressions. We will also use $\bar\partial$ to denote any coordinate derivative in the spatial directions.

\medskip
\noindent \textbf{Bounds for $g|_{x^0=0}$.} The definition \eqref{D} of the quantity $\mathcal{D}$ implies that the spatial derivatives of the immersion $Y$ satisfy
\begin{equation}\label{Bound spatial Y test}
\sum_{i,j=1}^3 \| \partial_i \partial_j Y|_{x^0=0} \|_{H^{s-2}} = \sum_{i,j=1}^3 \| \partial_i \partial_j \bY|_{x^0=0} \|_{H^{s-2}} \lesssim \mathcal{D}.
\end{equation}
In particular, since
\[
\bar g_{ij}|_{x^0=0} = m\big( \partial_i Y, \partial_j Y \big)|_{x^0 = 0},
\]
and $s-1>\f32$, we readily deduce that
\begin{equation}\label{Bound bar g initial}
\sum_{i,j=1}^3 \| (\bar g_{ij} - \delta_{ij}) |_{x^0=0}\|_{H^{s-1}} \lesssim \mathcal D.
\end{equation}

In view of the fact that the vector field $\bn$ belongs to the tangent space of the immersion $Y\big( [0,T)\times \mathbb T^3 \big)$ along $\{0\} \times \mathbb T^3$ and is normal to $\{0\} \times \mathbb T^3$, it has to coincide with the pushforward via $Y$ of the normal vector field $\hat n$ of the foliation $\bar\Sigma_{\tau}$ at $\tau=0$ (using the notation of Section \ref{sec:Gauge}), i.e.:
\begin{equation}\label{Relation n hats}
\bn^{A} = (Y^* \hat n|_{x^0=0})^A = \partial_{\a} Y^A \cdot \hat n^{\a}|_{x^0=0}.
\end{equation}
Thus, in view of the definition \eqref{Definition h} of the second fundamental form $h_{ij}|_{x^0=0}$ of $\bar\Sigma_0$, we can readily calculate
\begin{align}\label{Bound h initial}
h_{ij}|_{x^0=0} &= g_{\a\b} \, \hat n^{\a}|_{x^0=0} \big( \nabla_i \partial_j)^{\b}|_{x^0=0} \\
& = m_{AB} \, \bar n^{A} \, \partial^2_{ij} Y^{B}|_{x^0=0}      \nonumber
\end{align}
and, therefore, in view of  bound \eqref{D} for $\bn$ and the estimate \eqref{Bound spatial Y test} for $\partial^2_{ij} Y$ (and the functional inequalities provided by Lemma \ref{lem:Functional inequalities} in the Appendix):
\begin{equation}
\sum_{i,j=1}^3 \| h_{ij} |_{x^0=0}\|_{H^{s-2}} \lesssim \mathcal D.
\end{equation}

In view of the relation \eqref{Relation lapse normal} between $\hat n$ and $\partial_{x^0}$, we can rewrite \eqref{Relation n hats} as
\[
\partial_0 Y^A|_{x^0=0} = N|_{x^0=0} \bar n^A + \b^k|_{x^0=0} \partial_k \bY^A.
\]
Using the estimate \eqref{D} for $\bY-\bY_0$ and $\bn-\bn_0$,  the above expression immediately yields:
\begin{equation}\label{First bound dt Y}
\|( \partial_0 Y- \partial_0 Y_0) |_{x^0=0} \|_{H^{s-1}} \lesssim \mathcal D \big( \|N|_{x^0=0}\|_{L^\infty} + \| \beta |_{x^0=0} \|_{L^\infty}  + 1\big) +\|(N-1)|_{x^0=0}\|_{H^{s-1}}+ \| \b|_{x^0=0}\|_{H^{s-1}}.
\end{equation}

The gauge condition \eqref{Mean curvature condition} restricted at $\{x^0=0\}$ determines the lapse $N$ in terms of the extrinsic geometry of the slice $\{x^0=0\}$; recall, in view of the definition \eqref{F natural} of $\tilde{\mathcal F}^{\natural}$, that
\[
\tilde{\mathcal F}^{\natural}_{\a\b}|_{x^0=0} = 0
\]
(see also \eqref{Vanishing F initial}). In particular, \eqref{Mean curvature condition}, \eqref{Bound bar g initial}and \eqref{Bound h initial} imply that
\begin{equation}\label{First bound N initial}
\| (N-1)|_{x^0=0}\|_{H^{s-1}}  \lesssim \| h|_{x^0=0}\|_{H^{s-2}} + \| (\bar g_{ij} - \delta_{ij}) |_{x^0=0}\|_{H^{s-1}} 
  \lesssim \mathcal D . 
\end{equation}

Similarly, the gauge condition \eqref{Harmonic condition} restricted at $\{x^0=0\}$ allows us to determine the shift vector field $\beta^k|_{x^0=0}$ in terms of $\bar g|_{x^0=0}$ and its first order spatial derivatives (i.e.~$\bar\Gamma^k_{ij}|_{x^0=0}$): In view of the bound  \eqref{Bound bar g initial} for $\bar g$, \eqref{Harmonic condition} implies that:
\begin{equation}\label{Bound b initial}
\| \b|_{x^0=0} \|_{H^{s-1}} \lesssim \mathcal D.
\end{equation}
Combining \eqref{Bound bar g initial}, \eqref{First bound N initial} and \eqref{Bound b initial}, we obtain:
\begin{equation}\label{Bound g initial}
\| (g-m_0)|_{x^0=0} \|_{H^{s-1}} \lesssim \mathcal D.
\end{equation}

Returning to the bound \eqref{First bound dt Y} and using \eqref{First bound N initial} and \eqref{Bound b initial}, we infer
\begin{equation}\label{Final bound dt Y}
\| (\partial_0 Y - \partial_0 Y_0) |_{x^0=0} \|_{H^{s-1}} \lesssim \mathcal D .
\end{equation}
Combining \eqref{Bound spatial Y test} and \eqref{Final bound dt Y}, we obtain 
\begin{equation}\label{Final bound d Y}
\| ( Y -  Y_0) |_{x^0=0} \|_{H^{s-1}}+\| (\partial Y - \partial Y_0) |_{x^0=0} \|_{H^{s-2}} \lesssim \mathcal D .
\end{equation}

\medskip
\noindent \textbf{Bounds for $k|_{x^0=0}$.} 
The expression \eqref{Second fundamental form} for $k$ takes the form\footnote{In \eqref{Second fundamental form again}, the notation $(\perpi )^{\bA}_B(e)$ is used to remind the reader that the components $(\perpi )^{\bA}_B$ of $\perpi$, viewed as functions on $[0,T]\times \mathbb{T}^3$, can be expressed as explicit smooth functions of the vectors $\{ e_{\bar{A}}\}_{\bA =4}^{n}$, which themselves can be viewed as smooth functions $e_{\bA}: [0,T]\times \mathbb{T}^3 \rightarrow \mathbb{R}^{n+1}$. }
\begin{equation}
k_{\a\b}^{\bA} = (\perpi )^{\bA}_B(e) \cdot \partial_{\a}\partial_{\b}Y^B. \label{Second fundamental form again}
\end{equation}
 Combining \eqref{Second fundamental form again} for $\a=i,\b=j$ with the estimate \eqref{Bound spatial Y test} for $Y$ and the bound \eqref{Boundary assumption G} for $e_{\bA}$, we obtain
\begin{equation}\label{Bound k spatial initial}
\sum_{i,j=1}^3 \| k^{\bA}_{ij} |_{x^0=0} \|_{H^{s-2}} \lesssim \mathcal D \big( 1+ \|e|_{x^0=0}\|_{L^\infty} + \| \bar\partial e|_{x^0=0} \|_{H^{s-2}}\big) \lesssim \mathcal D.
\end{equation}
Similarly, using the estimate \eqref{Final bound d Y}, we obtain from \eqref{Second fundamental form again} for $\a=0,\b=i$:
\begin{equation}\label{Bound k 0spatial initial}
\sum_{i=1}^3\| k^{\bA}_{0i} |_{x^0=0}\|_{H^{s-2}} \lesssim 
 \mathcal D \big(  1+\|e|_{x^0=0}\|_{L^\infty} + \|\bar\partial e|_{x^0=0}\|_{H^{s-2}} \big) \lesssim \mathcal D. 
\end{equation}

The minimal surface equation\eqref{Minimal surface equation} can be reexpressed as:
\begin{equation}\label{Reexpressed minimal surface equation}
g^{00} k^{\bA}_{00} = -2g^{0i} k^{\bA}_{0i} - g^{ij} k^{\bA}_{ij}.
\end{equation}
In view of the bounds \eqref{Bound g initial} for $g|_{x^0=0}$ and \eqref{Bound k spatial initial}--\eqref{Bound k 0spatial initial} for $k^{\bA}_{\a i}$, we infer from \eqref{Reexpressed minimal surface equation} (using also the functional inequalities of Lemma \ref{lem:Functional inequalities}) that:
\begin{equation}\label{Bound k 00 initial}
\| k^{\bA}_{00} |_{x^0=0}\|_{H^{s-2}} \lesssim  \mathcal D.
\end{equation}
Combining \eqref{Bound k spatial initial}, \eqref{Bound k 0spatial initial} and \eqref{Bound k 00 initial}, we obtain
\begin{equation}\label{Bound k initial}
\| k |_{x^0=0}\|_{H^{s-2}} \lesssim \mathcal D.
\end{equation}

\medskip
\noindent \textbf{Bounds for $R|_{x^0=0}$ and $R^{\perp}|_{x^0=0}$.}
In view of the Ricci equation \eqref{Ricci}, we can express schematically:
\[
 R^\perp = g \cdot  k \cdot  k.
\]
 As a result, we can estimate using the bounds from Lemma \ref{lem:Functional inequalities}:
\begin{align*}
\|  R^\perp  |_{x^0=0}\|_{H^{s-3}} & \lesssim \| g \cdot k \cdot k |_{x^0=0}\|_{H^{s-3}} \\
& \lesssim \| g |_{x^0=0}\|_{H^{s-1}} \|k \cdot k |_{x^0=0} \|_{H^{s-3}} \\
 & \lesssim \| g |_{x^0=0}\|_{H^{s-1}} \|k |_{x^0=0} \|^2_{H^{s-2}}. \nonumber
\end{align*}
Using of the bounds \eqref{Bound g initial} for $g|_{x^0=0}$ and \eqref{Bound k initial} for $k$, we therefore obtain:
\begin{equation}\label{Bound R perp initial}
\|  R^\perp  |_{x^0=0}\|_{H^{s-3}} \lesssim \mathcal D^2.
\end{equation}
Arguing in a similar way using the Gauss equation \eqref{Gauss} for $R|_{x^0=0}$, expressed schematically as
\[
R = m(e) \cdot  k \cdot  k
\]
 we can also estimate
\begin{equation}\label{Bound R initial}
\| R|_{x^0=0} \|_{H^{s-3}} \lesssim \mathcal D^2.
\end{equation}

\medskip
\noindent \textbf{Bounds for $\partial e|_{x^0=0}$.} 
Let us adopt the shorthand notation $\omega_0$ for the temporal components $\omega^{\bA}_{0\bB}$ of $\omega$ and $\bar\omega$ for the spatial components $\omega^{\bA}_{i\bB}$, $i=1,2,3$. 

Our assumption \eqref{Boundary assumption G} implies that
\[
\big\| \bar\partial e|_{x^0=0} \big\|_{H^{s-2}} \lesssim \mathcal D,
\]
where $\bar\partial$ denotes any spatial derivative. Therefore, our definition \eqref{Definition connection coefficients} of the connection coefficients $\omega_{\a\bB}^{\bA}$ implies that the spatial components $\bar\omega$ satisfy
\begin{equation}\label{Bound omega i initial}
\| \bar\omega|_{x^0=0}\|_{H^{s-2}} \lesssim \mathcal D.
\end{equation}
The gauge condition \eqref{Divergence condition frame} allows us to determine $\omega_0|_{x^0=0}$ in terms of the spatial components $\bar\omega|_{x^0=0}$ and $\tilde{\mathcal F}_{\perp}|_{x^0=0}$. In view of \eqref{Vanishing F initial}, we have
\[
\tilde{\mathcal F}_{\perp}|_{x^0=0}=0.
\]
Therefore,  \eqref{Divergence condition frame} yields
\begin{equation}\label{Bound omega 0 initial}
\| \omega_0|_{x^0=0}\|_{H^{s-2}}  \stackrel{\hphantom{\eqref{Bound omega i initial}}}{\lesssim} \| \bar\omega|_{x^0=0}\|_{H^{s-2}}  \stackrel{\eqref{Bound omega i initial}}{\lesssim} \mathcal D.
\end{equation}
Combining \eqref{Bound omega i initial} and \eqref{Bound omega 0 initial}, we get
\begin{equation}\label{Bound omega initial}
\| \omega|_{x^0=0}\|_{H^{s-2}} \lesssim \mathcal D.
\end{equation}

In view of the relation \eqref{Relation e Omega k}, we can express schematically
\[
\partial e = \omega \cdot e + g\cdot m(e)\cdot  k \cdot \partial Y.
\]
 Thus, combining the bounds \eqref{Bound omega initial} for $\omega$, \eqref{Boundary assumption G} for $e$, \eqref{Bound g initial} for $g$ \eqref{Bound k initial} for $k$ and \eqref{Final bound d Y} for $\partial Y$,
we obtain using the functional inequalities from Lemma \ref{lem:Functional inequalities}:
\begin{equation}\label{Bound partial e initial}
\| \partial e |_{x^0=0} \|_{H^{s-2}}  \lesssim \mathcal D.  
\end{equation}

\medskip
\noindent \textbf{Bounds for $\partial g|_{x^0=0}$.}
We will now proceed to obtain an estimate for $\|\partial g|_{x^0=0}\|_{H^{s-2}}$. Since \eqref{Bound g initial} implies that 
\begin{equation}\label{Bound bar d g initial}
\|\bar \partial g|_{x^0=0}\|_{H^{s-2}} \lesssim \mathcal D,
\end{equation}
it remains to estimate  $\|\partial_0 g|_{x^0=0}\|_{H^{s-2}}$. To this end, we will use the expressions derived in Lemma \ref{lem:Parabolic elliptic system} for the time derivatives of $N$, $\beta$, $\bar g$.

We can express equation \eqref{Equation dt g bar} schematically as
\[
\partial_0 \bar g = g \cdot h + \bar\partial g + \bar{\Gamma} \cdot g,
\]
where $\bar\Gamma$ are the Christoffel symbols of $\bar g$ (i.e.~terms of the form $\bar g \cdot \bar\partial \bar g$). Using the bound \eqref{Bound g initial} to estimate  $\bar\Gamma|_{x^0=0}$ and $g|_{x^0=0}$, as well as \eqref{Bound h initial} for $h$, we infer
\begin{equation}\label{Bound dt g bar initial}
\| \partial_0 \bar g|_{x^0=0}\|_{H^{s-2}} \lesssim \mathcal D.
\end{equation} 

In view of \eqref{Equation shift}, we can express schematically
\[
\partial_0 \beta = D g + \JapD^{-1} \Big(g \cdot Dh +  (g-m_0) \cdot D\partial g + g \cdot R_*  + g \cdot D g \cdot \partial g +g \cdot h \cdot \partial g\Big).
\]
 Therefore, using the bounds \eqref{Bound g initial} for $g$, \eqref{Bound h initial} for $h$ and \eqref{Bound R initial} for $R$, we obtain
\begin{equation}\label{First bound dt shift initial}
\|\partial_0 \beta|_{x^0=0}\|_{H^{s-2}} \lesssim \mathcal D +\mathcal D \|\partial g|_{x^0=0}\|_{H^{s-2}}. 
\end{equation}

The relation \eqref{Equation lapse} can be reexpressed as
\begin{align*}
\partial_0 N = D g & +\JapD^{-1}\Big( g\cdot (\partial_0 \tilde{\mathcal F}^\natural - R)\Big) \\
& + \JapD^{-1} \Big( g \cdot \partial g \cdot \tilde{\mathcal F}^\natural + (g-m_0) \cdot D\partial g + g \cdot R_* + g \cdot D g \cdot \partial g +g \cdot h \cdot \partial g\Big).
\end{align*}
Using the fact that $\tilde{\mathcal F}^\natural|_{x^0=0}=0$ and $\partial_0 \tilde{\mathcal F}^\natural|_{x^0=0}=\partial_0 \mathcal F^\natural |_{x^0=0}$  (see \eqref{Vanishing F initial}--\eqref{Vanishing F initial 2}), the above relation yields (in view of the bound \eqref{Bound g initial} for $g$, \eqref{Bound h initial} for $h$ and \eqref{Bound R initial} for $R$):
\begin{equation}\label{First bound lapse initial}
\| \partial_0 N|_{x^0=0}\|_{H^{s-2}} \lesssim \|\partial_0 \mathcal F^\natural |_{x^0=0}\|_{H^{s-3} }+ \mathcal D +\mathcal D \|\partial g|_{x^0=0}\|_{H^{s-2}}.
\end{equation}
Combining the bounds \eqref{Bound dt g bar initial}--\eqref{First bound lapse initial} for $\partial_0 \bar g$, $\partial_0 \beta$ and $\partial_0 N$, as well as the bound \eqref{Bound bar d g initial} for $\bar\partial g$, we obtain:
\[
\|\partial g |_{x^0=0} \|_{H^{s-2}} \lesssim \|\partial_0 \mathcal F^\natural |_{x^0=0}\|_{H^{s-3} }+ \mathcal D +\mathcal D \|\partial g|_{x^0=0}\|_{H^{s-2}}.
\]
Provided $\epsilon$ in \eqref{D} is small enough in terms of $s$, the last term in the right hand side above can be absorbed into the left hand side, yielding
\begin{equation}\label{First bound dt g initial}
\|\partial g |_{x^0=0} \|_{H^{s-2}} \lesssim \|\partial_0 \mathcal F^\natural |_{x^0=0}\|_{H^{s-3} }+ \mathcal D.
\end{equation}

In order to estimate the term $\partial_0 \mathcal F^\natural |_{x^0=0}$ in the right hand side above, we will make use of the definition \eqref{F natural seed} of $\mathcal F^\natural$ to express schematically:
\[
\mathcal F^\natural = \sum_j \sum_{j'>j} P_j(m(e)) \cdot P_j \big( P_{j'} k \cdot |D|^{-1}P_{j'} k\big).
\]
We can, therefore, estimate, using the bounds \eqref{Boundary assumption G}  and \eqref{Bound partial e initial} for the frame $e$ and the bound \eqref{Bound k initial} for $k$ (as well as the functional inequalities of Lemma \ref{lem:Functional inequalities}):
\begin{align}
\|\partial_0 \mathcal F^\natural |_{x^0=0}\|_{H^{s-3} } 
& \lesssim \| \partial e |_{x^0=0}\|_{H^{s-2}} \|k|_{x^0=0}\|^2_{H^{s-2}}
+ \|m(e)|_{x^0=0}\|_{H^{s-1}} \|\partial_0 k|_{x^0=0}\|_{H^{s-3}} \|k|_{x^0=0}\|_{H^{s-2}}   \nonumber  \\
& \lesssim \mathcal D^3 + \mathcal D \|\partial_0 k|_{x^0=0}\|_{H^{s-3}}. \label{Bound dt F natural initial}
\end{align}
Returning to \eqref{First bound dt g initial}, we obtain
\begin{equation}\label{Second bound dt g initial}
\|\partial g |_{x^0=0} \|_{H^{s-2}} \lesssim \mathcal D + \mathcal D \|\partial_0 k|_{x^0=0}\|_{H^{s-3}}.
\end{equation}

\medskip
\noindent \textbf{Bound for $\partial k|_{x^0=0}$.}
The Codazzi equation  \eqref{Codazzi} and the expression \eqref{Reexpressed minimal surface equation} for the minimal surface equation  yield the following relations for $\partial_0 k$:
\begin{align*}
\partial_0 k^{\bar{A}}_{ij} &= \partial_i k^{\bar{A}}_{0j} + g\cdot \partial g \cdot k + \omega\cdot k,\\
\partial_0 k^{\bar{A}}_{0i} &= \partial_i k^{\bar{A}}_{00} + g\cdot \partial g \cdot k + \omega\cdot k,\\
\partial_0 k^{\bar{A}}_{00} &= -\partial_0 \Big( (g^{00})^{-1}\big( 2g^{0i} k^{\bar{A}}_{0i}+g^{ij} k^{\bar{A}}_{ij}\big)  \Big)\\
 &= - (g^{00})^{-1}\big( 2g^{0i} \partial_i k^{\bar{A}}_{00}+g^{ij} \partial_i k^{\bar{A}}_{0j}\big)+ g\cdot \partial g \cdot k + \omega\cdot k\\
&= g\cdot \bar{\partial} k  + g\cdot \partial g \cdot k + \omega\cdot k.
\end{align*}
 Thus, we can estimate using the functional inequalities of Lemma \ref{lem:Functional inequalities}:
\begin{align*}
\| \partial_0 k|_{x^0=0}\|_{H^{s-3}}  & \lesssim \| g \cdot \bar{\partial} k|_{x^0=0} \|_{H^{s-3}} +  \| g\cdot \partial g \cdot k|_{x^0=0} \|_{H^{s-3}} +\| \omega\cdot k|_{x^0=0} \|_{H^{s-3}}\\
 &  \lesssim \| g|_{x^0=0}\|_{H^{s-1}}  \cdot \| \bar{\partial} k|_{x^0=0} \|_{H^{s-3}} +   \| g |_{x^0=0}\|_{H^{s-1}}   \| \partial g |_{x^0=0}\|_{H^{s-2}}  \|k|_{x^0=0} \|_{H^{s-2}} \nonumber \\
 & \hphantom{\lesssim }\,
 +\| \omega|_{x^0=0}\|_{H^{s-2}} \| k|_{x^0=0} \|_{H^{s-2}}  \nonumber \\
 & \lesssim \mathcal D + \mathcal D \| \partial_0 k|_{x^0=0}\|_{H^{s-2}}, \nonumber 
\end{align*}
where, in the last line above, we made use of the bounds \eqref{Bound g initial} for $g$, \eqref{Second bound dt g initial} for $\partial g$, \eqref{Bound omega initial} for $\omega$ and \eqref{Bound k initial} for $k$. Provided $\mathcal D$ is sufficiently small in terms of $s$, we therefore obtain:
\begin{equation}\label{Bound transversal derivative k}
\| \partial_0 k|_{x^0=0}\|_{H^{s-3}} \lesssim \mathcal D.
\end{equation}
Thus, combining  \eqref{Bound transversal derivative k} and \eqref{Bound k initial}, we obtain:
\begin{equation}\label{Bound dt k initial}
\| \partial k |_{x^0=0}\|_{H^{s-3}} \lesssim \mathcal D.
\end{equation}
Returning to \eqref{Second bound dt g initial}  and using \eqref{Bound transversal derivative k} to estimate the last term in the right hand side, we also obtain:
\begin{equation}\label{Bound dt g initial}
\|\partial g |_{x^0=0} \|_{H^{s-2}} \lesssim \mathcal D.
\end{equation}

\medskip
\noindent \textbf{Bounds for $\partial R|_{x^0=0}$ and $\partial R^\perp|_{x^0=0}$.}
Differentiating \eqref{Gauss}, we obtain the schematic relation:
\[
\partial R = \partial e \cdot k \cdot k + m(e) \cdot k \cdot \partial k.
\]
Thus, we can estimate (using, again, Lemma \ref{lem:Functional inequalities}):
\begin{align*}
\| \partial R|_{x^0=0} \|_{H^{s-4}}  & \lesssim \|  \partial e \cdot k \cdot k|_{x^0=0} \|_{H^{s-4}} + \| m(e) \cdot k \cdot \partial k |_{x^0=0} \|_{H^{s-4}} \\
& \lesssim  \|  \partial e \cdot k|_{x^0=0} \|_{H^{s-3}} \| k|_{x^0=0} \|_{H^{s-2}} + \| m(e) \cdot k|_{x^0=0} \|_{H^{s-2}} \| \partial k |_{x^0=0} \|_{H^{s-3}} \\
& \lesssim  \|  \partial e |_{x^0=0} \|_{H^{s-2}}  \| k|_{x^0=0} \|^2_{H^{s-2}} + \| m(e)|_{x^0=0}\|_{H^{s-1}} \| k|_{x^0=0} \|_{H^{s-2}} \| \partial k |_{x^0=0} \|_{H^{s-3}} \\
& \lesssim \mathcal D^2,
\end{align*}
where, in the last line above, we made use of the bounds \eqref{Bound partial e initial} for $\partial e$, \eqref{Bound dt g initial} for $\partial g$, \eqref{Bound k initial} and \eqref{Bound dt k initial} for $k$.
Similarly, differentiating the relation \eqref{Ricci} for $R^\perp$ and arguing in exactly the same way, we obtain
\[
\| \partial R^\perp|_{x^0=0} \|_{H^{s-4}} \lesssim \mathcal D^2.
\]
Combining the above relations, we get
\begin{equation}\label{Bound dR initial}
\| \partial R|_{x^0=0} \|_{H^{s-4}} + \| \partial R^\perp|_{x^0=0} \|_{H^{s-4}} \lesssim \mathcal D^2.
\end{equation}

\medskip
\noindent \textbf{Bound for $\partial^2 k|_{x^0=0}$.}
Using the wave equation \eqref{Covariant wave equation k} for $k$, we can express schematically:
\begin{equation}\label{Equation for d_0^2}
\partial_0^2 k = g \cdot \bar{\partial} \partial k + g\cdot  \partial g \cdot \partial k +  (g\cdot \partial^2 g+ g\cdot (\partial g)^2) \cdot k +g\cdot \omega \cdot \partial k + g\cdot (\partial \omega+\omega\cdot\omega) \cdot k.
\end{equation}
Using the functional inequalities from Lemma \ref{lem:Functional inequalities}, we can therefore estimate
\begin{align}
\| \partial_0^2 k |_{x^0=0} \|_{H^{s-4}} \lesssim \nonumber
& \|g|_{x^0=0} \|_{H^{s-1}} \| \bar \partial \partial k|_{x^0=0} \|_{H^{s-4}}
+\| g |_{x^0=0} \|_{H^{s-1}} \| \partial g |_{x^0=0} \|_{H^{s-2}} \|\partial k|_{x^0=0} \|_{H^{s-3}} \nonumber \\
& + \| g |_{x^0=0} \|_{H^{s-1}} \|\partial^2 g|_{x^0=0} \|_{H^{s-3}} \|k|_{x^0=0} \|_{H^{s-2}}
+ \| g |_{x^0=0} \|_{H^{s-1}}  \|\partial g|_{x^0=0} \|_{H^{s-2}}^2  \|k|_{x^0=0} \|_{H^{s-2}} 
   \nonumber \\
&+  \|g|_{x^0=0} \|_{H^{s-1}}\|\omega|_{x^0=0} \|_{H^{s-2}} \|\partial k|_{x^0=0} \|_{H^{s-3}}  + \|g|_{x^0=0} \|_{H^{s-1}} \| \partial \omega |_{x^0=0} \|_{H^{s-3}} \| k|_{x^0=0} \|_{H^{s-2}} 
\nonumber \\
& +  \|g|_{x^0=0} \|_{H^{s-1}}\|\omega |_{x^0=0} \|_{H^{s-2}}^2 \| k |_{x^0=0} \|_{H^{s-2}}   \nonumber \\
& \lesssim \mathcal D + \mathcal D \big( \|\partial^2 g|_{x^0=0} \|_{H^{s-3}} +  \|\partial \omega|_{x^0=0} \|_{H^{s-3}} \big),   \label{Bound two time derivatives k initial}
\end{align}
where, in the last line above, we used the bounds \eqref{Bound g initial} for $g$, \eqref{Bound dt g initial} for $\partial g$, \eqref{Bound omega initial} for $\omega$, \eqref{Bound k initial} for $k$ and \eqref{Bound dt k initial} for $\partial k$. Combining the above with the estimate for $\bar\partial \partial k|_{x^0=0}$ obtained from \eqref{Bound dt k initial}, we therefore infer:
\begin{equation}\label{First bound dt2 k initial}
\| \partial^2 k |_{x^0=0} \|_{H^{s-4}} \lesssim \mathcal D + \mathcal D \big( \|\partial^2 g|_{x^0=0} \|_{H^{s-3}} +  \|\partial \omega|_{x^0=0} \|_{H^{s-3}} \big).
\end{equation}

\medskip
\noindent \textbf{Bounds for $\partial^2 g|_{x^0=0}$, $\partial \omega|_{x^0=0}$ and $\partial^2 e|_{x^0=0}$.}
The relation \eqref{Equation dt2 bar g} for $\partial_0^2 \bar g$ takes the form
\[
\partial_0^2 \bar g = g \cdot D \partial  g + g\cdot R + g \cdot \partial g \cdot \partial g.
\]
Thus, in view of the bounds \eqref{Bound dt g initial} and \eqref{Bound R initial} for $\| \partial g\|_{H^{s-2}}$ and $\|R\|_{H^{s-3}}$, respectively, we can readily estimate
\begin{equation}\label{Bound dt2 bar g initial}
\| \partial_0^2 \bar g |_{x^0=0}\|_{H^{s-3}} \lesssim \mathcal D.
\end{equation}

The expression \eqref{Normal curvature coordinates} for $(R^{\perp})_{0i\hphantom{\bA}\bB}^{\hphantom{\a\b}\bA}$  yields the following relation for $\partial_0 \bar \omega$:
\[
\partial_0 \bar \omega =D \omega_0 + R^\perp + \omega \cdot \omega.
\]
Therefore, using the bounds \eqref{Bound omega initial} and \eqref{Bound R perp initial} for $\| \omega\|_{H^{s-2}}$ and $\|R^\perp\|_{H^{s-3}}$, we obtain
\begin{equation}\label{Bound d bar omega initial}
\| \partial_0 \bar \omega |_{x^0=0} \|_{H^{s-3}} \lesssim \mathcal D. 
\end{equation}

Differentiating \eqref{Relation e Omega k} with respect to $\partial_0$ and using the relation
\[
\partial_\a \partial_\b Y^A = k_{\a\b}^{\bA} e_{\bA}^A + \Gamma_{\a\b}^{\ga} \partial_\ga Y^A,
\]
 we obtain the schematic expression for $\partial_0^2 e$:
 \begin{align*}
 \partial_0^2 e = & \partial_0 \omega_0 \cdot e + \omega \cdot \partial e + g \cdot \partial g \cdot e \cdot k \cdot \partial Y \\
 & + g \cdot \partial e \cdot k \cdot \partial Y + g \cdot e \cdot \partial k \cdot \partial Y + g \cdot e \cdot k \cdot k \cdot \partial Y.
 \end{align*}
 Thus, using the functional inequalities from Lemma \ref{lem:Functional inequalities} together with the bounds \eqref{Boundary assumption G},  \eqref{Bound partial e initial},  \eqref{Bound omega initial}, \eqref{Final bound d Y}, \eqref{Bound g initial}, \eqref{Bound dt g initial}, \eqref{Bound k initial} and \eqref{Bound dt k initial}, we obtain
 \begin{equation}\label{First bound dt2 e initial}
 \| \partial^2 e|_{x^0=0} \|_{H^{s-3}} \lesssim \mathcal D + \| \partial_0 \omega_0 |_{x^0=0}\|_{H^{s-3}}.
 \end{equation}

The relation \eqref{Equation dt2 shift} for $\partial_0^2 \beta$ takes the schematic form
\[
\partial_0^2 \beta =  \JapD^{-1} \Big( (g-m_0) \cdot D\partial^2 g + g \cdot D^3 N +g\cdot D^2 h +g\cdot DR  + g\cdot  \partial g \cdot \partial^2 g \Big).
\]
Using as before the functional inequalities from Lemma \ref{lem:Functional inequalities} and the bounds \eqref{Bound g initial}, \eqref{Bound dt g initial}, \eqref{Bound h initial}  and \eqref{Bound dR initial}, we obtain
\begin{align}
\|\partial_0^2 \beta |_{x^0=0}\|_{H^{s-3}} \lesssim
& \| (g-m_0)|_{x^0=0}\|_{H^{s-1}} \cdot \|\partial^2 g|_{x^0=0}\|_{H^{s-3}} + \|g|_{x^0=0}\|_{H^{s-1}} \cdot \|\partial g|_{x^0=0}\|_{H^{s-2}} \nonumber \\
&+ \| g|_{x^0=0}\|_{H^{s-1}} \cdot \| h|_{x^0=0}\|_{H^{s-2}} 
+ \|g|_{x^0=0}\|_{H^{s-1}}\cdot \|R|_{x^0=0}\|_{H^{s-3}} \nonumber \\
&+ \|g|_{x^0=0}\|_{H^{s-1}}\cdot \|\partial g|_{x^0=0}\|_{H^{s-2}} \cdot \|\partial^2 g|_{x^0=0}\|_{H^{s-3}} \nonumber \\
\lesssim & \mathcal D + \mathcal D \|\partial^2 g|_{x^0=0}\|_{H^{s-3}}. \label{First bound dt2 shift initial}
\end{align}

The relation \eqref{Equation dt2 lapse} for $\partial_0^2 N$ takes the form
\begin{align*}
\partial_0^2 N = D\partial g + \JapD^{-1}\Big( &
 g\cdot  \partial_0^2 \tilde{\mathcal F}^\natural + g \cdot  \partial R\\
 &+ \partial g \cdot \partial g \cdot \tilde{\mathcal{F}}^{\natural} + g \cdot \partial^2 g \cdot \tilde{\mathcal{F}}^{\natural} + g \cdot \partial g \cdot \partial \tilde{\mathcal{F}}^{\natural} \\
& + (g-m_0) \cdot D\partial^2 g + g\cdot \partial R_*+\partial g \cdot \partial^2 g  \\
&  + \partial g \cdot \partial g \cdot \partial g+ D\partial g \cdot  \JapD^{-1} \big( g \cdot (\partial \tilde{\mathcal F}^{\natural}-R)\big) \Big)
\end{align*}
In view of the fact that $\tilde{\mathcal F}^\natural|_{x^0=0}=0$ and $\partial_0 ^m \tilde{\mathcal F}^\natural|_{x^0=0}=\partial_0 ^m \mathcal F^\natural|_{x^0=0}$ for $m\ge 1$ (see \eqref{Vanishing F initial} and \eqref{Vanishing F initial 2}), we obtain from the expression for $\partial_0^2 N$ above using, as before, the functional inequalities from Lemma \ref{lem:Functional inequalities} and the bounds \eqref{Bound g initial}, \eqref{Bound dt g initial}, \eqref{Bound h initial} and \eqref{Bound dR initial}, as well as the bound \eqref{Bound dt F natural initial} for $\partial_0 \mathcal F^\natural|_{x^0=0}$:
\begin{align}
\|\partial_0^2 N  |_{x^0=0}\|_{H^{s-3}} \lesssim &
\|\partial g  |_{x^0=0}\|_{H^{s-2}} + \|g  |_{x^0=0}\|_{H^{s-1}} \cdot \|\partial_0^2 \mathcal F^\natural   |_{x^0=0}\|_{H^{s-4}} + \|g  |_{x^0=0}\|_{H^{s-1}}\cdot \|\partial R  |_{x^0=0}\|_{H^{s-4}} \nonumber  \\
& + \|(g-m_0  )|_{x^0=0}\|_{H^{s-1}}\cdot \|\partial^2 g  |_{x^0=0}\|_{H^{s-3}}
+ \| g   |_{x^0=0}\|_{H^{s-1}}\cdot \|\partial R  |_{x^0=0}\|_{H^{s-4}} \nonumber \\
&+\|\partial g  |_{x^0=0}\|_{H^{s-2}} \|\partial^2 g   |_{x^0=0}\|_{H^{s-3}}  + \| \partial g  |_{x^0=0}\|_{H^{s-2}}^3 \nonumber  \\
&+ \|\partial g  |_{x^0=0}\|_{H^{s-2}} \cdot \|g  |_{x^0=0}\|_{H^{s-1}} \cdot \|\partial_0 \mathcal F^\natural   |_{x^0=0}\|_{H^{s-3}}  \nonumber \\
\lesssim & \mathcal D + \mathcal D \| \partial^2 g_{x^0=0}\|_{H^{s-3}}+ \|\partial_0^2 \mathcal F^\natural   |_{x^0=0}\|_{H^{s-4}}. \label{Almost there dt2 lapse}
\end{align}
Using the fact that, in view of \eqref{F natural seed}, $\mathcal F^\natural$ takes the form
\[
\mathcal F^\natural = \sum_j \sum_{j'>j} P_j(m(e)) \cdot P_j \big( P_{j'} k \cdot \JapD^{-1}P_{j'} k\big),
\]
we can readily estimate:
\begin{align}
\|\partial_0^2 \mathcal F^\natural |_{x^0=0}\|_{H^{s-4} } 
\lesssim  & \| \partial^2 e |_{x^0=0}\|_{H^{s-3}} \|k|_{x^0=0}\|^2_{H^{s-2}}
+ \| \partial e |_{x^0=0}\|_{H^{s-2}} \|\partial k|_{x^0=0}\|^2_{H^{s-3}} \| k|_{x^0=0}\|^2_{H^{s-2}}\nonumber \\
& + \|m(e)|_{x^0=0}\|_{H^{s-1}} \|\partial^2 k|_{x^0=0}\|_{H^{s-4}} \|k|_{x^0=0}\|_{H^{s-2}}   
+ \|m(e)|_{x^0=0}\|_{H^{s-1}} \|\partial k|_{x^0=0}\|_{H^{s-3}}^2  \nonumber  \\
\lesssim &  \mathcal D^2 + \mathcal D \big( \| \partial^2 g|_{x^0=0}\|_{H^{s-3}} + \|\partial \omega |_{x^0=0}\|_{H^{s-3}} \big), \label{Bound dt2 F natural initial}
\end{align}
where, in the last line above, we used the bounds \eqref{Boundary assumption G} for $e$, \eqref{Bound partial e initial}  for $\partial e$, \eqref{First bound dt2 e initial} for $\partial^2 e$, \eqref{Bound k initial} for $k$, \eqref{Bound dt k initial} for $\partial k$ and \eqref{First bound dt2 k initial} for $\partial^2 k$. Therefore, returning to \eqref{Almost there dt2 lapse}, we obtain
\begin{equation}\label{First bound dt2 lapse}
\|\partial_0^2 N  |_{x^0=0}\|_{H^{s-3}} \lesssim \mathcal D + \mathcal D \big( \| \partial^2 g|_{x^0=0}\|_{H^{s-3}} + \|\partial \omega |_{x^0=0}\|_{H^{s-3}} \big).
\end{equation}

The relation \eqref{Equation omega 0} for $\partial_0 \omega_0$ takes the form:
\begin{align*}
\partial_0 \omega_0 = 
D\omega + \JapD^{-1} \Big( &m(e) \cdot \partial_0 \tilde{\mathcal{F}}_\perp + g \cdot \partial R^\perp \\
& +  \partial g \cdot D \omega +D\partial g \cdot g\cdot \omega + \partial g \cdot \omega \cdot \omega\\
& +D(g\cdot \omega \cdot \omega) + \partial e \cdot \tilde{\mathcal F}_{\perp} + \partial g \cdot \tilde{\mathcal F}_{\perp}\Big).
\end{align*}
Thus, using the fact that $\tilde{\mathcal F}_\perp|_{x^0=0}=0$ and $\partial_0 ^m \tilde{\mathcal F}_\perp|_{x^0=0}=\partial_0 ^m \mathcal F_\perp |_{x^0=0}$ for $m\ge 1$ (see \eqref{Vanishing F initial} and \eqref{Vanishing F initial 2}), we obtain by arguing similarly as before:
\begin{align}
\|\partial_0 \omega_0|_{x^0=0}\|_{H^{s-3}} \lesssim 
& \|\omega|_{x^0=0}\|_{H^{s-2}} + \|m(e)|_{x^0=0}\|_{H^{s-1}} \cdot \|\partial_0 \mathcal{F}_\perp|_{x^0=0}\|_{H^{s-4}} + \|g|_{x^0=0}\|_{H^{s-1}} \cdot \|\partial R^\perp|_{x^0=0}\|_{H^{s-4}} \nonumber \\
& +  \|\partial g|_{x^0=0}\|_{H^{s-2}} \cdot \|\omega|_{x^0=0}\|_{H^{s-2}} +\|\partial g|_{x^0=0}\|_{H^{s-2}} \cdot \|g|_{x^0=0}\|_{H^{s-1}}\cdot \|\omega|_{x^0=0}\|_{H^{s-2}} \nonumber \\
& + \|\partial g|_{x^0=0}\|_{H^{s-2}} \cdot \|\omega|_{x^0=0}\|_{H^{s-2}}^2 +\|g|_{x^0=0}\|_{H^{s-1}}\cdot \|\omega|_{x^0=0}\|_{H^{s-2}}^2\Big)  \nonumber \\
\lesssim & \mathcal D + \|\partial_0 \mathcal{F}_\perp|_{x^0=0}\|_{H^{s-4}} \label{Almost there dt omega 0}
\end{align}
(having used the bounds \eqref{Boundary assumption G}, \eqref{Bound g initial}, \eqref{Bound dt g initial}, \eqref{Bound omega initial} and \eqref{Bound dR initial}). Using the fact that
\[
\mathcal F_{\perp} = \sum_j \sum_{j'>j} D\Big( g \cdot P_j g \cdot P_j \big( P_{j'} k \cdot \JapD^{-1} P_{j'} k \big)\Big)
\]
and arguing similarly as for the proof of \eqref{Bound dt F natural initial}, we can estimate:
\[
 \|\partial_0 \mathcal{F}_\perp|_{x^0=0}\|_{H^{s-4}} \lesssim \mathcal D^2.
\]
Therefore, returning to \eqref{Almost there dt omega 0}, we infer:
\begin{equation}\label{First bound dt omega 0}
\|\partial_0 \omega_0|_{x^0=0}\|_{H^{s-3}} \lesssim \mathcal D.
\end{equation}

Combining the bounds \eqref{Bound dt2 bar g initial} for $\partial_0^2 \bar g$, \eqref{Bound d bar omega initial} for $\partial_0 \bar\omega$, \eqref{First bound dt2 shift initial} for $\partial_0^2 \beta$, \eqref{First bound dt2 lapse} for $\partial_0^2 N$, \eqref{First bound dt omega 0} for $\partial_0 \omega_0$ and \eqref{First bound dt2 e initial} for $\partial_0^2 e$, as well as the bounds \eqref{Bound dt g initial} for $\bar\partial \partial g$, \eqref{Bound omega initial} for $\bar\partial \omega$ and \eqref{Bound partial e initial} for $\bar\partial \partial e$, we obtain:
\[
\|\partial^2 g |_{x^0=0}\|_{H^{s-3}} + \|\partial \omega |_{x^0=0}\|_{H^{s-3}} + \|\partial^2 e |_{x^0=0}\|_{H^{s-3}} \lesssim \mathcal D + \mathcal D \Big( \|\partial^2 g |_{x^0=0}\|_{H^{s-3}} + \|\partial \omega |_{x^0=0}\|_{H^{s-3}}\Big).
\]
Provided $\epsilon$ (and thus $\mathcal D$) in \eqref{D} is smaller than a constant depending only on $s$, we thus infer that:
\begin{equation}\label{Bound dt2 g omega e initial}
\|\partial^2 g |_{x^0=0}\|_{H^{s-3}} + \|\partial \omega |_{x^0=0}\|_{H^{s-3}} + \|\partial^2 e |_{x^0=0}\|_{H^{s-3}} \lesssim \mathcal D.
\end{equation}
Returning to \eqref{First bound dt2 k initial}, we also obtain:
\begin{equation}\label{Bound dt2 k initial}
\|\partial^2 k |_{x^0=0}\|_{H^{s-4}} \lesssim \mathcal D.
\end{equation}

\medskip
\noindent \textbf{Finishing the proof of Proposition \ref{prop: Bounds S0}.}
The estimates of Proposition \ref{prop: Bounds S0} will now follow by combining the bounds we have already established:
\begin{itemize}
\item The bound \eqref{Bound Y initial} follows from the expression
\begin{align*}
\partial_{\a} \partial_{\b} Y^A 
& = \Pi_{B}^{\ga}\partial_{\a} \partial_{\b} Y^B \partial_{\ga} Y^A + (\Pi^{\perp})_{B}^{\bA}\partial_{\a} \partial_{\b} Y^B e_{\bA}^A\\
&= \Gamma^{\ga}_{\a\b} \partial_{\ga} Y^A + k^{\bA}_{\a\b} e_{\bA}^A
\end{align*}
for $\partial^2 Y$, combined with the bounds \eqref{Final bound d Y}, \eqref{Bound dt g initial}, \eqref{Bound k initial}, \eqref{Bound partial e initial} and \eqref{Boundary assumption G}. 
\item The bound \eqref{Bound k and dk initial} follows by combining \eqref{Bound k initial}, \eqref{Bound dt k initial} and \eqref{Bound dt2 k initial}.
\item The bound \eqref{Bound metric initial} follows by combining \eqref{Bound g initial}, \eqref{Bound dt g initial}, \eqref{Bound dR initial} and \eqref{Bound dt2 g omega e initial}.
\item The bound \eqref{Bound connection coefficients initial} follows by combining \eqref{Bound omega initial},  \eqref{Bound dR initial} and \eqref{Bound dt2 g omega e initial}. 
\item The bound \eqref{Bound frame} follows by combining \eqref{Bound partial e initial} and \eqref{Bound dt2 g omega e initial}.
\end{itemize}

\end{proof}

\section{Step 4: The bootstrap estimates} \label{sec:Bootstrap}

In this section, we will implement the bootstrap argument which lies at the heart of the proof of Theorem \ref{thm:Existence}. In particular, the following proposition will amount to Step 4 of our proof of Theorem \ref{thm:Existence} as outlined in Section \ref{sec:Continuity}:

\begin{proposition}\label{prop:Bootstrap}

For $T\in (0,1]$, let $Y: [0,T)\times \mathbb{T}^3 \rightarrow \mathcal{N}$ be a smooth development of an initial data pair $(\bY, \bn)$ as in the statement of Theorem \ref{thm:Existence} and $\{e_{\bA}\}_{\bA=4}^n$ be a smooth frame for the normal bundle $NY$. Let $C_0 > 1$ be a constant depending only on $s$ which is large compared to the constants $C_1$ and $C$ of Proposition \ref{prop: Bounds S0}.\footnote{The constant $C_0$ is independent, in particular, of $\mathcal{D}$ and $\epsilon$, where $\mathcal{D}$ is defined by \eqref{D}.} 

Assume that the following conditions hold:
\begin{itemize}
\item \textbf{Gauge condition:} The gauge $\mathcal G = (\text{Id}, e)$ for the immersion $Y$ satisfies the balanced gauge condition introduced in Definition \ref{def:Balanced gauge}.
\item \textbf{Initial bounds for the gauge:} Along the initial slice $\{ x^0=0\}$, the bound \eqref{Boundary assumption G} holds.
\item \textbf{Bootstrap assumption:}
 The immersion $Y$ satisfies
 \begin{align}\label{Bootstrap bound}
 \mathcal{Q}_g  &+ \mathcal{Q}_k  + \mathcal{Q}_\perp + \| Y - Y_0 \|_{L^\infty L^\infty}\\
 &  + \| \partial Y - \partial Y_0 \|_{L^\infty H^{s-1}} + \|\partial^2 Y\|_{L^4 W^{\f1{12},4}}+\|\partial^2 Y\|_{L^{\f72} L^{\f{14}3}}\le C_0 \mathcal{D} \nonumber 
\end{align}
(for the definition of the quantities $\mathcal Q_g$, $\mathcal Q_k$, $\mathcal Q_\perp$, see Definition \ref{def:Norms 1}).
\end{itemize}
Then, provided $\epsilon$ is sufficiently small with respect to $C_0$, the following improvement of \eqref{Bootstrap bound} holds:
\begin{align}\label{Bootstrap conclusion}
\mathcal{Q}_g  &+ \mathcal{Q}_k  + \mathcal{Q}_\perp + \| Y - Y_0 \|_{L^\infty L^\infty} \\
 &  + \| \partial Y - \partial Y_0 \|_{L^\infty H^{s-1}} + \|\partial^2 Y\|_{L^4 W^{\f1{12},4}}+\|\partial^2 Y\|_{L^{\f72} L^{\f{14}3}}\le \f12C_0 \mathcal{D}. \nonumber 
\end{align}
\end{proposition}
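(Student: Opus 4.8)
The plan is to run a bootstrap argument that closes the estimates for $\mathcal Q_k$, $\mathcal Q_g$, $\mathcal Q_\perp$ and the $Y$-norms successively, exploiting the lower-triangular structure of the balanced gauge system \eqref{Model system of equations}. The starting point is Step 3, i.e. Proposition \ref{prop: Bounds S0}, which supplies control of all relevant quantities along $\{x^0=0\}$ and their first few transversal derivatives; together with the vanishing of $\tilde{\mathcal F}^\natural$, $\tilde{\mathcal F}_\perp$ and their time derivatives on the initial slice (see \eqref{Vanishing F initial}--\eqref{Vanishing F initial 2}), this gives the data-level input $\lesssim \mathcal D$ that must be propagated. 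Throughout, the key principle is that smallness comes from two sources: the size $\mathcal D$ of the data entering linearly, and extra powers of $C_0\mathcal D$ (hence of $\epsilon$) from the genuinely nonlinear terms, which can be absorbed once $\epsilon$ is small relative to $C_0$.

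First I would close the estimate for $k$. Converting the covariant wave equation \eqref{Covariant wave equation k} to a scalar equation $\square_g k = g\cdot(\Gamma+\omega)\partial k + g\cdot(\partial\Gamma+\Gamma\cdot\Gamma+\partial\omega+\omega\cdot\omega)k$, one applies the energy estimate at regularity $H^{s-2-l}$, $l=0,1,2$ (closed because $\partial g\in L^1 W^{\f14\delta_0,\infty}$ is part of $\mathcal Q_g$), together with the Strichartz estimate with $\tfrac16$ loss \eqref{eq:loss'} available from \cite{Ta}, using the bootstrap control of $\partial g$ for the background metric. The inhomogeneous terms are handled by the paraproduct decomposition: the ``high-low'' pieces of $\mathrm{Riem}\cdot k$, $g\cdot\partial g\cdot\partial k$, $g\cdot\partial^2 g\cdot k$ etc. are estimated by Hölder in the space-time norms from $\mathcal Q_g$, $\mathcal Q_k$, $\mathcal Q_\perp$ (interpolating between $L^\infty H^{s-2}$ and the Strichartz spaces $L^2 W^{-1/2-l+s_0,\infty}$, $L^4 W^{1/12-l+s_0,4}$), using the Gauss and Ricci relations \eqref{Gauss}--\eqref{Ricci} to write $\mathrm{Riem}=k\wedge k$; the ``high-high'' pieces are treated by feeding the $L^2 W^{1+\sigma,1}$ inhomogeneous norm in \eqref{eq:loss'} and by a direct energy-estimate argument. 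This outputs $\mathcal Q_k \lesssim \mathcal D + (C_0\mathcal D)^2$.

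Next I would close $\mathcal Q_g$ and $\mathcal Q_\perp$ using Lemma \ref{lem:Parabolic elliptic system}. The lower-triangular system \eqref{Model linearized system} is solved in the order $N$, then $h$, then $\beta$, then $\bar g$: the parabolic equations \eqref{Equation lapse}, \eqref{Equation shift} are estimated via the parabolic smoothing of $\partial_0+|D|$ (giving $L^1 W^{\f14\delta_0,\infty}$, $L^2 H^{\f76+s_1}$, $L^{7/4}W^{1+s_1,7/3}$ control of $\partial g$), and the elliptic equations \eqref{Equation h}, \eqref{Equation g bar} via the elliptic estimate of the appendix (Lemma \ref{lem:Elliptic estimates model}); the higher time derivatives come from \eqref{Equation dt2 lapse}--\eqref{Equation dt2 bar g}. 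The crucial inputs here are the forcing terms $\mathfrak F_N$, $\mathfrak F_{\omega_0}$ built from $\partial_0\tilde{\mathcal F}^\natural - R_{i0j0}$ and $\partial_0\tilde{\mathcal F}_\perp + \partial R^{\perp\natural}$: by Lemma \ref{lem:Cancellations}, equations \eqref{Calculation F natural difference 2 derivatives} and \eqref{Computation F perp}, these differences have two spatial derivatives that can be pulled outside the bilinear product of $\JapD^{-1}k$'s, which, combined with $|D|^{-1}\Delta_{\bar g}^{-1}$ in front, reduces them to products estimable purely by $\|k\|_{L^2 W^{-1/2+s_0,\infty}}^2$ and similar. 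The remaining components of $R$, $R^\perp$ entering $\mathcal E_N,\mathcal E_\beta,\mathcal E_h,\mathcal E_{\bar g},\mathcal E_{\omega_0},\mathcal E_{\omega_{\bar\Sigma}}$ are of the ``good'' type $R_*,R_{**},R^\perp_*$ for which one does have the $L^1 W^{-1,\infty}$ bound via the high-low/low-high/high-high decomposition of $k\wedge k$ (the high-high case again using the Codazzi trick $k_{\alpha\beta}=\nabla_\alpha\tfrac{\nabla^i}{\Delta}k_{i\beta}$ to extract a spatial derivative). The $\omega$-system \eqref{Equation omega 0}--\eqref{Equation omega i} is handled identically, using the normal-bundle gauge and the cancellation \eqref{Computation F perp}; this yields $\mathcal Q_g + \mathcal Q_\perp \lesssim \mathcal D + (C_0\mathcal D)^2$. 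Finally, the bounds for $Y$ itself follow by integrating $\partial^2 Y = \Gamma\cdot\partial Y + k\cdot e$ (from \eqref{Projection Christoffel symbols}, \eqref{Second fundamental form}) in time starting from the initial data, using the already-established control of $\Gamma$, $k$, $e$; the Strichartz-type norms $L^4 W^{\f1{12},4}$ and $L^{7/2}L^{14/3}$ of $\partial^2 Y$ come from those of $k$ and $\partial g$. Adding all of this up gives a total bound $\lesssim \mathcal D + (C_0\mathcal D)^2$; choosing $C_0$ large in terms of the implicit constants and then $\epsilon$ small enough that $C_0\mathcal D < C_0\epsilon$ makes the quadratic term negligible, yielding the improvement $\le \tfrac12 C_0\mathcal D$.

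The main obstacle I expect is the third paragraph: controlling $\|\partial g\|_{L^1 W^{\f14\delta_0,\infty}}$ (and $\|\omega\|_{L^1 W^{\f14\delta_0,\infty}}$) at the top level. The optimal bounds on $k$ do \emph{not} suffice to control $\|R\|_{L^1 W^{-1,\infty}}$ for \emph{all} components of the curvature, so the entire scheme hinges on (i) the cancellations of Lemma \ref{lem:Cancellations} which kill the contribution of the uncontrollable components $R^\natural_{i0j0}$, $R^{\perp\natural}$ through the $\tilde{\mathcal F}$-terms in the gauge, and (ii) the delicate interplay between the parabolic smoothing of the $N,\beta,\omega_0$ equations and the placement of the derivatives that Lemma \ref{lem:Cancellations} manages to extract — there is essentially no room to spare, so the bookkeeping of which $\JapD^{-1}$, $|D|$, $\Delta_{\bar g}^{-1}$ factors hit which frequency-localized product, in which mixed norm, is the heart of the matter. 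A secondary difficulty is that converting \eqref{Covariant wave equation k} to scalar form and applying \eqref{eq:loss'} produces the extra terms $g\cdot\partial g\cdot\partial k$ and $g\cdot\partial^2 g\cdot k$ whose high-high parts must be routed through the second inhomogeneous norm $L^2 W^{1+\sigma,1}$ of \eqref{eq:loss'} together with a stand-alone energy argument; this is the most technically involved part of closing $\mathcal Q_k$.
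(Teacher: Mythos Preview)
Your proposal is correct and follows essentially the same approach as the paper's proof: first close $\mathcal Q_k$ via energy estimates (Lemma \ref{lem:Energy estimates}) and Tataru's Strichartz estimate with $\tfrac16$ loss (Lemma \ref{lem:Strichartz estimates}), then establish the improved curvature bounds (Proposition \ref{prop:Derivative k}, Lemma \ref{lem:Riemann estimates}) and the $\tilde{\mathcal F}$ estimates (Lemmas \ref{lem:Bounds F natural}--\ref{lem:Bounds F perp}), then close $\mathcal Q_g,\mathcal Q_\perp$ by running through the lower-triangular parabolic-elliptic system in the order $N,h,\beta,\bar g,\omega$ (Section \ref{subsec:Closing bootstrap metric}), and finally recover the $Y$-norms from $\partial^2 Y=\Gamma\cdot\partial Y+k\cdot e$ (Section \ref{subsec:End of bootstrap}). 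You have correctly identified both the central difficulty---that the $L^1W^{\delta_0/4,\infty}$ control of $\partial g$ and $\omega$ hinges on the cancellations in Lemma \ref{lem:Cancellations} removing the uncontrollable curvature components $R^\natural_{i0j0}$, $R^{\perp\natural}$---and the secondary one concerning the high-high interactions in the scalar wave equation for $k$.
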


\noindent The proof of Proposition \ref{prop:Bootstrap} will occupy the rest of this section.

\begin{remark*}
Throughout the proof of Proposition  \ref{prop:Bootstrap}, we will assume that $\epsilon$ (appearing in \eqref{D}) is sufficiently small compared to the constant $C_0$. We will also use the notation $\lesssim$ to denote an inequality where the implicit constant  is \textbf{independent} of $C_0$ and $\mathcal D$ (but is allowed to depend on $s$).
\end{remark*}

\subsection{Energy and Strichartz estimates for $k$.} \label{subsec:Strichartz}
Let $Y:[0,T)\times \mathbb T^3 \rightarrow\mathcal N$ and $\{ e_{\bA}\}_{\bA=4}^n $ be as in the statement of Proposition \ref{prop:Bootstrap}. The covariant wave equation \eqref{Covariant wave equation k} for the second fundamental form implies that the components $k^{\bar{A}}_{\a\b}$ of $k$ satisfy a system of wave equations that takes the following schematic form: 
\begin{equation} \label{Wave equation k}
\square_g k =  g\cdot \partial g \cdot \partial k + g\cdot \omega \cdot \partial k + \big( g\cdot \partial^2 g + g\cdot \partial g \cdot \partial g\big) \cdot k + g\cdot (\partial \omega+ \omega \cdot \omega) \cdot k.  
\end{equation}

\begin{remark*}
Throughout this section, we will make use of the shorthand notation $f_j$ and $f_{\le j}$ for the Littlewood--Paley projections $P_j f$ and $\sum_{j' \le j} P_{j'} f$, respectively, of a function $f:[0,T) \times \mathbb T^3 \rightarrow \mathbb R$.
\end{remark*}

The following lemma provides an energy estimate for equation \eqref{Wave equation k}, using the bounds for the initial data of $k$ established in Proposition \ref{prop: Bounds S0}:

\begin{lemma}\label{lem:Energy estimates}
The second fundamental form $k$ of $Y$ satisfies the following estimates: 
\begin{equation}\label{Energy estimate k}
\| k \|_{L^\infty H^{s-2}}+ \| \partial k \|_{L^\infty H^{s-3}}+\| \partial^2 k \|_{L^\infty H^{s-4}} \le C \mathcal{D},
\end{equation}
where $C>0$ is a constant independent of $C_0$.
\end{lemma}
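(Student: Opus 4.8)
The statement is a higher-order energy estimate for the quasilinear wave equation \eqref{Wave equation k} satisfied by the components of $k$, and the natural approach is the standard $H^\sigma$ energy method for wave equations with rough coefficients, applied at the levels $\sigma = s-2$, $\sigma = s-3$ and $\sigma = s-4$ (the latter two obtained by commuting \eqref{Wave equation k} with $\partial$ and $\partial^2$ and treating the resulting equations in the same way). First I would fix the frequency-localized pieces $k_j = P_j k$ and apply $P_j$ to \eqref{Wave equation k}, writing $\Box_g k_j = P_j(\text{RHS}) + [\,\Box_g, P_j\,] k$. The operator $\Box_g = g^{\alpha\beta}\partial_\alpha\partial_\beta$ is treated as a perturbation of $\Box_{m_0}$: decompose $g^{\alpha\beta} = m_0^{\alpha\beta} + (g^{-1} - m_0^{-1})^{\alpha\beta}$, so the wave operator is coercive (using the bootstrap bound $\|g - m_0\|_{L^\infty L^\infty} \le C_0 \mathcal D$ with $\epsilon$ small, which keeps $g$ uniformly Lorentzian and the slices $\{x^0 = \tau\}$ uniformly spacelike). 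The energy flux identity for $\Box_g$ on such a background gives, for each $j$,
\[
\frac{d}{d\tau} E_j(\tau) \lesssim \|\partial g\|_{L^\infty_x}\, E_j(\tau) + \|P_j(\text{RHS})\|_{L^2_x}\, E_j(\tau)^{1/2} + \|[\Box_g, P_j]k\|_{L^2_x}\, E_j(\tau)^{1/2},
\]
where $E_j(\tau) \sim \|\partial k_j(\tau,\cdot)\|_{L^2_x}^2 + 2^{2j}\|k_j(\tau,\cdot)\|_{L^2_x}^2$. Multiplying by $2^{2(\sigma-1)j}$, summing in $j$, and using Grönwall in $\tau$ with the factor $\exp(\int_0^T \|\partial g\|_{L^\infty_x})$, which is finite by the bootstrap control of $\|\partial g\|_{L^1 W^{\frac14\delta_0,\infty}} \le C_0 \mathcal D$ (and hence $\lesssim 1$), reduces everything to bounding the $L^1_\tau H^{\sigma-1}$ norm of the right-hand side of \eqref{Wave equation k} and of the commutator term, together with the initial data at $\{x^0=0\}$.

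The initial data contribution is exactly what Proposition \ref{prop: Bounds S0} provides: the bounds \eqref{Bound k and dk initial} give $\|k|_{x^0=0}\|_{H^{s-2}} + \|\partial k|_{x^0=0}\|_{H^{s-3}} + \|\partial^2 k|_{x^0=0}\|_{H^{s-4}} \lesssim \mathcal D$, which feeds the $E_j(0)$ terms. For the inhomogeneous terms, I would estimate each schematic product on the right of \eqref{Wave equation k} in $L^1 H^{\sigma-1}$ using the product/commutator inequalities of Lemma \ref{lem:Functional inequalities} (assumed, in the Appendix) together with the bootstrap norms $\mathcal Q_g$, $\mathcal Q_k$, $\mathcal Q_\perp$: for instance $g\cdot\partial g\cdot\partial k$ is handled by placing $\partial g$ in $L^2 H^{\frac76 + s_1}$ or $L^1 W^{\frac14\delta_0,\infty}$ and $\partial k$ in the dual Strichartz/energy space, $g\cdot\partial^2 g\cdot k$ by placing $\partial^2 g$ in $L^2 H^{\frac16 + s_1}$ and $k$ in $L^2 W^{-\frac12 + s_0,\infty}$ (interpolating with $L^\infty H^{s-2}$ as needed), and similarly for the $\omega$-terms using $\mathcal Q_\perp$. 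Because every such bilinear or trilinear expression carries at least two factors from the bootstrapped quantities, the total estimate closes with a constant times $(C_0 \mathcal D)^2$ plus the data term $C\mathcal D$; since $\epsilon$ (hence $\mathcal D$) is small compared to $C_0$, the quadratic term is absorbed and one obtains the bound $\le C\mathcal D$ with $C$ independent of $C_0$, as claimed. (Here one must be slightly careful to organize the commuted equations for $\partial k$ and $\partial^2 k$ so that the top-order terms $\partial^2 g\cdot\partial k$ and $\partial^3 g\cdot k$ etc.\ are still controlled; these are bounded the same way, using that $s > \frac52 + \frac16$ leaves enough room in the $L^\infty H$ and $L^2 H$ scales.)

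The main obstacle I anticipate is \emph{not} the dispersive/Strichartz part — that is deferred to a later lemma in the paper and is not needed here — but rather the bookkeeping of the commutator terms $[\Box_g, P_j]k$ and the high-order product estimates at the \emph{lowest} regularity level $\sigma = s - 4$, where $s - 4 < 0$ can occur (since $s$ can be just above $\frac52+\frac16$, so $s-4$ is roughly $-\frac43$). At that level the Sobolev product rule is delicate and one must genuinely exploit the frequency structure: the commutator $[g^{\alpha\beta}\partial_\alpha\partial_\beta, P_j]k$ gains one derivative off $g$ at the cost of a $\partial g \cdot \partial k$-type term at comparable frequencies, which is then estimated by the same inequalities. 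Provided one sets up the estimates so that every factor of $g$ or $\partial g$ or $\omega$ that carries the "bad" low regularity is paired against the relatively high-regularity $k$ (from $L^\infty H^{s-2}$), the negative-index products are harmless; making this pairing systematic across all three differentiated equations is the bulk of the work, but it is routine given Lemma \ref{lem:Functional inequalities} and the numerology fixed in Definition \ref{def:Small constants}.
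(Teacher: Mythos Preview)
Your overall frequency-localized energy scheme is the right one and matches the paper's, but there are two concrete points where your plan diverges from what the paper actually does, and in both cases your version does not close.

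\medskip
\textbf{The low-high interaction $(g\cdot\partial^2 g)_{\le j}\cdot k_j$.} You propose to place $\partial^2 g$ in $L^2 H^{\frac16+s_1}$ and $k$ in $L^2 W^{-\frac12+s_0,\infty}$. If you run this pairing on the term $\sum_j 2^{2(s-3)j}\int (g\cdot\partial^2 g)_{\le j}\cdot k_j\cdot \partial_0 k_j\,dx$, the weight budget comes out to $2^{(s-\frac52-s_0)j}=2^{(\frac16+\delta_0)j}$ per frequency, so the sum over $j$ diverges by exactly the $\frac16$ loss. The paper closes this term (estimate \eqref{Low high L74}) using the bootstrap norm $\|\partial^2 g\|_{L^{7/4}W^{s_1,7/3}}$, paired against an interpolated $L^{\frac73+}L^{14+}$ norm of $k_j$; this is precisely why that $L^{7/4}W^{s_1,7/3}$ norm sits inside $\mathcal Q_g$. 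It is not a cosmetic choice, and your $L^2\times L^2$ pairing cannot substitute for it.

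\medskip
\textbf{The $\|\partial^2 k\|_{L^\infty H^{s-4}}$ bound.} The paper does \emph{not} commute the wave equation with $\partial$ or $\partial^2$. Instead, once $\|\partial k\|_{L^\infty H^{s-3}}$ is known (which the single energy estimate already gives, together with $\|k\|_{L^\infty H^{s-2}}$), it uses the wave equation algebraically to express $\partial_0^2 k$ in terms of $\bar\partial\partial k$ and lower-order products, cf.\ \eqref{Equation for d_0^2}--\eqref{2 time derivatives k}. Your proposal to commute introduces, from the $g\cdot\partial^2 g\cdot k$ source term, a contribution $g\cdot\partial^3 g\cdot k$; when the outer derivative is $\partial_0$ and the inner $\partial^2 g$ is $\partial_0^2 g$, this is $g\cdot\partial_0^3 g\cdot k$. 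The bootstrap quantity $\mathcal Q_g$ controls only up to $\partial^2 g$, and the balanced-gauge relations of Lemma \ref{lem:Parabolic elliptic system} stop at $\partial_0^2 g$; deriving $\partial_0^3 g$ bounds would feed back through $\partial R$ and hence $\partial^2 k$, making your scheme circular. Your remark that ``these are bounded the same way'' is therefore not correct. Use the algebraic route instead.
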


\begin{proof}
Applying a spatial Littlewood--Paley projection $P_j$ on equation \eqref{Wave equation k} and using the shorthand notation $f_j$ and $f_{\le j}$ for $P_j f$ and $\sum_{j' \le j} P_{j'} f$, respectively, we obtain, schematically:

\begin{align}\label{Wave equation projection}
\square_{g_{\le j}} k_j =& (g-m_0)_j \partial^2 k_{\le j} +\sum_{j' > j}  P_j \big((g-m_0)_{j'} \partial^2 k_{j'}\big)+(g\cdot \partial g)_j \partial k_{\le j} +\sum_{j' > j}  P_j \big((g\cdot \partial g)_{j'} \partial k_{j'}\big)\\
&+ P_j \big( g \cdot \partial g \cdot \partial k \big) + P_j \big( g\cdot \omega \cdot \partial k \big) + P_j \big( \big( g\cdot \partial^2 g + g\cdot \partial g \cdot \partial g\big) \cdot k) + P_j \big( g\cdot(\partial\omega +\omega \cdot \omega) \cdot k\big), \nonumber
\end{align} 
where $m_0=-(dx^0)^2 + \sum_{i=1}^3 (dx^i)^2$.

We will first establish that
\begin{equation}\label{Energy estimate k lower}
\sum_j \| k_j \|^2_{L^\infty H^{s-2}}+\sum_j \| \partial k_j \|^2_{L^\infty H^{s-3}} \lesssim \mathcal{D}^2.
\end{equation}
In order to obtain \eqref{Energy estimate k lower}, we multiply \eqref{Wave equation projection} with $2^{2(s-3)j} \partial_{0} k_j$ and integrate by parts over $[0,\tau]\times \mathbb{T}^3$, $0<\tau<T$. In view of the fact that $\| g - m_0 \|_{L^\infty L^\infty} \ll 1$ as a consequence of the bootstrap assumption \eqref{Bootstrap bound}  (and thus the energy fluxes of $\square_{g_\le j}$ and $\square_{m_0}$ are comparable as quadratic forms with a constant of proportionality close to 1), we obtain via this procedure after summing over $j$:
\begin{align}\label{Error energy estimates localized}
\sum_j \| 2^{(s-3)j} \partial k_j \|^2_{L^\infty L^2} \lesssim & \sum_j \| 2^{(s-3)j} \partial k_j |_{x^0=0}\|^2_{L^2}\\
 &+\sum_j\Big| 2^{2(s-3)j} \int g_j \cdot  \partial^2  k_{\le j}\cdot  \partial k_j \, dx \Big| 
  +\sum_j \sum_{j'>j} \Big| 2^{2(s-3)j} \int P_j \big( g_{j'} \partial^2  k_{j'}\big)\cdot  \partial k_j \, dx \Big| \nonumber\\
& +\sum_j\Big| 2^{2(s-3)j} \int (g\cdot \partial g)_j \cdot  \partial k_{\le j}\cdot  \partial k_j \, dx \Big| 
  + \sum_j\sum_{j'>j} \Big| 2^{2(s-3)j} \int P_j \big((g\cdot \partial g)_{j'} \partial k_{j'}\big)\cdot  \partial k_j \, dx \Big| \nonumber\\
& +\sum_j\Big| 2^{2(s-3)j} \int P_j \big( g\cdot \partial g \cdot  \partial k\big) \cdot  \partial k_j \, dx \Big| 
+  \sum_j\Big| 2^{2(s-3)j} \int P_j \big(\omega \cdot  \partial k\big) \cdot  \partial k_j \, dx \Big|  \nonumber\\
& +\sum_j\Big| 2^{2(s-3)j} \int P_j \big( (g\cdot \partial^2 g+ g\cdot \partial g\cdot \partial g)  \cdot  k\big) \cdot  \partial k_j \, dx \Big|  \nonumber\\
&+  \sum_j\Big| 2^{2(s-3)j} \int P_j \big((\partial \omega + \omega \cdot \omega \cdot  k\big) \cdot  \partial k_j \, dx \Big|.  \nonumber
\end{align}

The estimate \eqref{Energy estimate k lower} will follow once we bound the right hand side of \eqref{Error energy estimates localized} by $C \mathcal{D}^2$.

\begin{itemize}
\item In view of the initial bound \eqref{Bound k and dk initial} for $\partial k|_{x^0=0}$, we can bound the first term of the right hand side of \eqref{Error energy estimates localized} as follows:
\[
\sum_j \| 2^{(s-3)j} \partial k_j |_{x^0=0}\|^2_{L^2} \lesssim \| \partial k|_{x^0=0} \|^2_{H^{s-3}} \lesssim \mathcal{D}^2.
\]

\item For the high-low term $ g_j \cdot \partial^2 k_{\le j}$ in \eqref{Error energy estimates localized}, we have:
\begin{align}\label{First high low}
\sum_j\Big| & 2^{2(s-3)j} \int  (g-m_0)_j \cdot \partial^2 k_{\le j}\cdot  \partial k_j \, dx \Big|  \\
& \le \sum_j\Big|  \int 2^{(s-\f12-2\delta_0)j} (g-m_0)_j \cdot  2^{(3\delta_0-\f52) j} \partial^2 k_{\le j}\cdot  2^{(s-3-\delta_0)j}\partial k_j \, dx \Big| \nonumber \\
& \lesssim \sum_j \|  (g-m_0)_j \|_{L^2 H^{s-\f12-2\delta_0}} \| \partial^2 k_{\le j} \|_{L^2 W^{3\delta_0-\f52, \infty}} 2^{-\delta_0 j}\| \partial k_j \|_{L^\infty H^{s-3\phantom{\f12}}} \nonumber \\
& \lesssim \| \partial^2 k \|_{L^2 W^{3\delta_0-\f52, \infty}}  \Big( \sum_j 2^{-\delta_0 j} \| (g-m_0)_j \|^2_{L^2 H^{s-\f12-2\delta_0}}\Big)^{\f12} \cdot \Big( \sum_j 2^{-\delta_0 j}\| \partial k_j \|^2_{L^\infty H^{s-3\phantom{\f12}}}\Big)^{\f12} \nonumber \\
& \lesssim  \| \partial^2 k \|_{L^2 W^{s_0-\f52, \infty}}  \| g-m_0\|_{L^2 H^{s-\f12-2\delta_0}}\| \partial k \|_{L^\infty H^{s-3 \phantom{\f12}}} \nonumber \\
& \lesssim C_0^3 \mathcal{D}^3.\nonumber
\end{align}
In the above, we made use of the bootstrap bound \eqref{Bootstrap bound} for $ \| g-m_0\|_{L^2 H^{2+\f16+s_1}}$, $\| k \|_{L^2 W^{-\f12, \infty}}$ and $\| \partial k \|_{L^\infty H^{s-3}}$, as well as the fact that $\delta_0$ was chosen to satisfy
\[
0 < \delta_0 < \f1{10} \min \big\{ (s - \f52 -\f16), 1\big\}
\]
in Definition \ref{def:Small constants}.

\item For the high-high term $P_j \big( (g-m_0)_{j'} \partial^2 k_{j'}\big)$ in \eqref{Error energy estimates localized}, we can estimate:
\begin{align}\label{First high high}
\sum_j \sum_{j'>j} \Big| & 2^{2(s-3)j} \int P_j \big( (g-m_0)_{j'} \partial^2 k_{j'}\big)\cdot  \partial k_j \, dx \Big| \\
& \le \sum_j \sum_{j'>j}   \Big| 2^{(s-\f52 + \f1{12}+\delta_0)j}\int P_j \big(  (g-m_0)_{j'} \cdot  \partial^2 k_{j'}\big)\cdot  2^{(s-\f72-\f1{12}-\delta_0) j}\partial k_j \, dx \Big| \nonumber \\
& \lesssim \sum_j \sum_{j'>j}  \|2^{(s-\f52 + \f1{12}+\delta_0)j} P_j \big(   (g-m_0)_{j'} \cdot  \partial^2 k_{j'}\big)\|_{L^{\f43}L^{\f43}} \| 2^{(s-\f72-\f1{12}-\delta_0) j}\partial k_j \|_{L^4 L^4} \nonumber \\
& \lesssim \sum_j \sum_{j'>j}  2^{(s-\f52 + \f1{12}+\delta_0)(j-j')}\| 2^{(s-\f12-2\delta_0)j'}(g-m_0)_{j'}\|_{L^2 L^2} \Big( 2^{-\delta_0 j'} \|  2^{(-2+\f1{12}+4\delta_0 )j'}\partial^2 k_{j'}\|_{L^4 L^4} \nonumber \\
&\hphantom{ \lesssim \sum_j \sum_{j'>j}  2^{(s-\f52 + \f1{12}+\delta_0)(j-j')}\| 2^{(s-\f12-2\delta_0)j'}(g}
\times \| 2^{(s-\f72-\f1{12}-\delta_0) j}\partial k_j \|_{L^4 L^4}\Big) \nonumber \\
& \lesssim \|  g-m_0 \|_{L^2 H^{s-\f12-2\delta_0}} \big( \| \partial^2 k\|^2_{L^4 W^{-2+4\delta_0+\f1{12},4}} + \| \partial k\|^2_{L^4 W^{s-\f72-\f1{12}-\delta_0,4}}\big) \nonumber \\
& \lesssim C_0^3 \mathcal{D}^3\nonumber 
\end{align}
where, in the last step, we made use of \eqref{Bootstrap bound}.

\item For the high-low term $(g\cdot \partial g)_j \cdot \partial k_{\le j}$ in \eqref{Error energy estimates localized}, we have, similarly as in \eqref{First high low}:
\begin{align}\label{Second high low}
\sum_j\Big| 2^{2(s-3)j} \int (g \cdot \partial g)_j \cdot  \partial k_{\le j}\cdot  \partial k_j \, dx \Big| 
&\le \sum_j\Big|  \int 2^{(s-\f32-2\delta_0)j}(g \cdot \partial g)_j \cdot  2^{(2\delta_0-\f32) j} \partial k_{\le j}\cdot  2^{(s-3)j}\partial k_j \, dx \Big| \\
& \lesssim \sum_j 2^{-\delta_0 j} \| (g \cdot \partial g)_j \|_{L^2 H^{s-\f32-2\delta_0}} \| \partial k_{\le j} \|_{L^2 W^{3\delta_0-\f32, \infty}} \| \partial k_j \|_{L^\infty H^{s-3\phantom{\f12}}} \nonumber \\
& \lesssim  \| \partial k \|_{L^2 W^{3\delta_0-\f32, \infty}} \big( \| g \cdot  \partial g\|^2_{L^2 H^{s-\f32-2\delta_0}} + \| \partial k \|^2_{L^\infty H^{s-3 \phantom{\f12}}}\big)  \nonumber \\
& \lesssim C_0^3 \mathcal{D}^3.\nonumber
\end{align}

\item For the high-high term $P_j \big((g \cdot \partial g)_{j'} \partial k_{j'}\big)$ in \eqref{Error energy estimates localized}, we have, similarly as in \eqref{First high high}:
\begin{align}\label{Second high high}
\sum_j \sum_{j'>j} \Big| & 2^{2(s-3)j} \int P_j \big((g\cdot \partial g)_{j'} \partial k_{j'}\big)\cdot  \partial k_j \, dx \Big| \\
& \le \sum_j \sum_{j'>j}  2^{-\delta_0 j'+(s-\f52+\f1{12}+\delta_0)(j-j')} \Big| \int P_j \big( 2^{(s-\f32-2\delta_0) j'} (g \cdot \partial g)_{j'} \cdot 2^{(4\delta_0-1+\f1{12})j'} \partial k_{j'}\big)\cdot  2^{(s-\f72-\f1{12}-\delta_0) j}\partial k_j \, dx \Big| \nonumber \\
& \lesssim \sum_j \sum_{j'>j}   2^{-\delta_0 j'+(s-\f52+\f1{12}+\delta_0)(j-j')}  \| (g\cdot \partial g)_{j'} \|_{L^2 H^{s-\f32-2\delta_0}} \| \partial k_{j'}\|_{L^4 W^{4\delta_0-1+\f1{12},4}} \| \partial k_j \|_{L^4 W^{s-\f72-\f1{12}-\delta_0}} \nonumber \\
& \lesssim \| g\cdot \partial g \|_{L^2 H^{s-\f32-2\delta_0}} \| \partial k\|_{L^4 W^{4\delta_0-1+\f1{12},4}} \| \partial k\|_{L^4 W^{s-\f72-\f1{12}-\delta_0}} \nonumber \\
& \lesssim C_0^3 \mathcal{D}^3.\nonumber
\end{align}

\item The term involving $P_j \big( g\cdot  \partial g \cdot \partial k \big)$ can be expanded in a high-low decomposition as follows:
\begin{align}\label{First term decomposition}
\sum_j\Big|&  2^{2(s-3)j} \int P_j \big( g\cdot  \partial g \cdot  \partial k\big) \cdot  \partial k_j \, dx \Big|  \le
\sum_j\Big| 2^{2(s-3)j} (g\cdot \partial g)_{\le j} \cdot  \partial k_j \cdot  \partial k_j \, dx \Big|\\
& +\sum_j\Big| 2^{2(s-3)j} (g\cdot \partial g)_j \cdot  \partial k_{\le j} \cdot  \partial k_j \, dx \Big|+\sum_j\sum_{j'>j} \Big| 2^{2(s-3)j} \int P_j \big( (g\cdot \partial g)_{j'} \cdot  \partial k_{j'}\big) \cdot  \partial k_j \, dx \Big|.\nonumber
\end{align}
The last two terms in the right hand side can be estimated exactly as in \eqref{Second high low} and \eqref{Second high high}, respectively, while for the first term we have
\begin{align}\label{Third low high}
\sum_j\Big| 2^{2(s-3)j} (g\cdot \partial g)_{\le j} \cdot  \partial k_j \cdot  \partial k_j \, dx \Big| 
&\le \sum_j\Big|  \int (g\cdot \partial g)_{\le j} \cdot  2^{(s-3) j} \partial k_{j}\cdot  2^{(s-3)j}\partial k_j \, dx \Big| \\
& \lesssim \sum_j \| (g\cdot \partial g)_{\le j} \|_{L^1 L^{\infty}} \| \partial k_j \|_{L^\infty H^{s-3}} \| \partial k_j \|_{L^\infty H^{s-3}} \nonumber \\
& \lesssim \| g\cdot \partial g\|_{L^1 L^{\infty}} \sum_j \| \partial k_j \|^2_{L^\infty H^{s-3}}   \nonumber \\
& \lesssim C_0 \mathcal{D} \sum_j \| \partial k_j \|^2_{L^\infty H^{s-3}}.\nonumber
\end{align}
Thus, we can bound the whole right hand side of \eqref{First term decomposition}: 
\begin{equation}\label{Third term}
\sum_j\Big| 2^{2(s-3)j} \int P_j \big( g\cdot \partial g \cdot  \partial k\big) \cdot  \partial k_j \, dx \Big| \lesssim C_0^3 \mathcal{D}^3 + C_0 \mathcal{D} \sum_j \| \partial k_j \|^2_{L^\infty H^{s-3}}.
\end{equation}
Arguing in exactly the same way (replacing the bounds for $\partial g$ by the corresponding bounds for $\omega$ from \eqref{Bootstrap bound}), we can also estimate:
\begin{equation}\label{Fourth term}
\sum_j\Big|  2^{2(s-3)j} \int P_j \big(g\cdot \omega \cdot  \partial k\big) \cdot  \partial k_j \, dx \Big| \lesssim C_0^3 \mathcal{D}^3 + C_0 \mathcal{D} \sum_j \| \partial k_j \|^2_{L^\infty H^{s-3}}.
\end{equation}

\item Let us decompose 
\begin{align}\label{Decomposition d2 g k}
P_j \big( \big( g\cdot \partial^2 g + g\cdot (\partial g)^2\big) \cdot  k\big) =  \big( g\cdot \partial^2 g & + g\cdot (\partial g)^2\big)_{\le j} \cdot  k_j +  \big( g\cdot \partial^2 g + g\cdot (\partial g)^2\big)_j \cdot  k_{\le j} \\
& + \sum_{j'>j} P_j  \big(\big( g\cdot \partial^2 g + g\cdot (\partial g)^2\big)_{j'} \cdot  k_{j'}\big).\nonumber 
\end{align}
 Using (in place of \eqref{Third low high}) the low-high estimate
\begin{align}\label{Low high L74}
\sum_j\Big| 2^{2(s-3)j} & \big(g\cdot \partial^2 g+ g\cdot (\partial g)^2\big)_{\le j} \cdot   k_j \cdot  \partial k_j \, dx \Big| \\
& \lesssim \sum_j \| \big(g\cdot \partial^2 g\big)_{\le j} \|_{L^{\f{7+12\delta_0}{4+12\delta_0}}L^{\f73+4\delta_0}}  \| 2^{(s-3)j} k_j \|_{L^{\f73+4\delta_0}L^{\f{14+24\delta_0}{1+12\delta_0}}} \| 2^{(s-3)j} \partial k_j \|_{L^\infty L^2} 
  \nonumber \\
&\hphantom{ \lesssim \sum_j}
+ \sum_j \|\big(g\cdot (\partial g)^2\big)_{\le j}\|_{L^1 L^3}  \| 2^{(s-3)j} k_j \|_{L^\infty L^6}\| 2^{(s-3)j} \partial k_j \|_{L^\infty L^2}
  \nonumber\\
& \lesssim \sum_j \| g\cdot \partial^2 g\big  \|_{L^{\f{7}{4}}W^{s_1,\f73}} \|2^{(s-3-\f16-2\delta_0)j}k_j\|^{\f6{7+6\delta_0}}_{L^2 L^\infty}  \| 2^{(s-2)j} k_j \|_{L^\infty L^2}^{\f{1+6\delta_0}{7+6\delta_0}} \| 2^{(s-3)j} \partial k_j \|_{L^\infty L^2} 
  \nonumber \\
&\hphantom{ \lesssim \sum_j}
+ \sum_j \|g\|_{L^\infty L^\infty} \|\partial g\|_{L^2 H^1}^2  \|  k_j \|_{L^\infty H^{s-2}}\| \partial k_j \|_{L^\infty H^{s-3}}
  \nonumber\\
& \lesssim \| g\cdot \partial^2 g\big  \|_{L^{\f{7}{4}}W^{s_1,\f73}} \sum_j  2^{-\delta_0 j}\|k_j\|^{\f6{7+6\delta_0}}_{L^2 W^{-\f12+s_0,\infty}}  \| k_j \|_{L^\infty H^{s-2}}^{\f{1+6\delta_0}{7+6\delta_0}} \|  \partial k_j \|_{L^\infty H^{s-3}} 
  \nonumber \\
&\hphantom{ \lesssim \sum_j}
+C_0 \mathcal D \sum_j  \|  k_j \|_{L^\infty H^{s-2}}\| \partial k_j \|_{L^\infty H^{s-3}}
  \nonumber\\
& \lesssim C_0^3 \mathcal{D}^3+ C_0 \mathcal{D}  \sum_j \big(\| \partial k_j \|^2_{L^\infty H^{s-3}} +\| k_j \|^2_{L^\infty H^{s-2}}\big)    \nonumber
\end{align}
and arguing as in the proof of \eqref{Third term} for the rest of the terms in\eqref{Decomposition d2 g k} (but following \eqref{First high low} and \eqref{First high high} in place of \eqref{Second high low} and \eqref{Second high high} for the high-low and high-high interactions, respectively), we infer that the last terms in the right hand side of \eqref{Error energy estimates localized} satisfy:
\begin{equation}
\sum_j\Big|  2^{2(s-3)j} \int P_j \big( \big( g\cdot \partial^2 g + g\cdot (\partial g)^2\big) \cdot  k\big) \cdot  \partial k_j \, dx \Big| \lesssim C_0^3 \mathcal{D}^3 + C_0 \mathcal{D} \sum_j \big(\| \partial k_j \|^2_{L^\infty H^{s-3}} + \| k_j \|^2_{L^\infty H^{s-2}}\big)
\end{equation}
and, similarly,
\begin{equation}\label{Final estimate for P j energy}
\sum_j\Big|  2^{2(s-3)j} \int P_j \big( g\cdot(\partial\omega +\omega^2) \cdot  k\big) \cdot  \partial k_j \, dx \Big| \lesssim C_0^3 \mathcal{D}^3 + C_0 \mathcal{D}  \sum_j \big(\| \partial k_j \|^2_{L^\infty H^{s-3}} +\| k_j \|^2_{L^\infty H^{s-2}}\big).
\end{equation}
\end{itemize}

Combining the above estimates to bound the right hand side of \eqref{Error energy estimates localized}, we obtain:
\begin{equation}\label{FK}
\sum_j \| \partial k_j \|^2_{L^\infty H^{s-3}} \lesssim \mathcal{D}^2 + C_0^3 \mathcal{D}^3 + C_0 \mathcal{D} \sum_j \big( \| \partial k_j \|^2_{L^\infty H^{s-3}} + \| k_j \|^2_{L^\infty H^{s-2}}\big).
\end{equation}
Using the trivial inequalities
\[
\| k_j \|_{L^\infty H^{s-3}} \lesssim \| \partial_0 k_j \|_{L^1 H^{s-3}} + \| k_j|_{x^0=0}\|_{H^{s-3}}
\]
and
\[
\| k_j \|_{L^\infty H^{s-2}} \lesssim \| \partial k_j \|_{L^\infty H^{s-3}} + \| k_j \|_{L^\infty H^{s-3}},
\]
we infer from \eqref{FK}:
\begin{equation}\label{FK 2}
\sum_j \big( \| \partial k_j \|^2_{L^\infty H^{s-3}} + \| k_j \|^2_{L^\infty H^{s-2}}) \lesssim \mathcal{D}^2 + C_0^3 \mathcal{D}^3 + C_0 \mathcal{D} \sum_j \big( \| \partial k_j \|^2_{L^\infty H^{s-3}} + \| k_j \|^2_{L^\infty H^{s-2}}\big).
\end{equation}

The bound \eqref{Energy estimate k lower} now follows immediately from \eqref{FK 2} provided $\epsilon$ has been chosen small enough in terms of $C_0$ (so that the last term of the right hand side can be absorbed into the left hand side). From \eqref{Energy estimate k lower} we immediately deduce that
\begin{equation}\label{First derivatives energy}
 \| \partial k \|_{L^\infty H^{s-3}}+\| k \|_{L^\infty H^{s-2}} \lesssim \mathcal{D}.
\end{equation}

In order to conclude the proof of Lemma \ref{lem:Energy estimates}, it remains to show that
\begin{equation}\label{Second derivatives energy}
\| \partial^2 k \|_{L^\infty H^{s-4}} \lesssim \mathcal{D}.
\end{equation}
Since \eqref{First derivatives energy} already implies that
\[
\| \bar{\partial} \partial k \|_{L^\infty H^{s-4}} \lesssim \mathcal{D}
\]
where $\bar{\partial} \in \{ \partial_1, \partial_2, \partial_3 \}$, we have to show that
\begin{equation}\label{Time derivatives energy}
\| \partial^2_0 k \|_{L^\infty H^{s-4}} \lesssim \mathcal{D}.
\end{equation}
Using the expression \eqref{Equation for d_0^2} for $\partial^2_0 k$ and repeating the steps which led us to proving the bound \eqref{Bound two time derivatives k initial} for $\partial_0^2 k |_{x^0=0}$, we can estimate for any $\tau \in [0,T]$:
\begin{equation}\label{2 time derivatives k}
\| \partial_0^2 k |_{x^0=\tau} \|_{H^{s-4}} \lesssim \| \bar{\partial} \partial k |_{x^0=\tau} \|_{H^{s-4}} + \mathcal D \lesssim \| \partial k \|_{L^\infty H^{s-3}} + \mathcal D \lesssim \mathcal{D}.
\end{equation}
Thus, we infer \eqref{Second derivatives energy}.

\end{proof}

The following lemma provides a Strichartz estimate for equation \eqref{Wave equation k}; the proof uses the results of \cite{Ta} as a black box.

\begin{lemma}\label{lem:Strichartz estimates}
Let $Y:[0,T)\times \mathbb T^3 \rightarrow \mathcal N$ be as in the statement of Proposition \ref{prop:Bootstrap}. Then 
\begin{equation}\label{Strichartz estimate k endpoint}
 \sum_{l=0}^2 \|\partial^l k \|_{L^2 W^{- \f12-l+s_0, \infty}}  \le C \mathcal{D}, 
\end{equation}
where $C>0$ is a constant independent of $C_0$. By interpolating between the energy estimate \eqref{Energy estimate k} and \eqref{Strichartz estimate k endpoint}, we also obtain:
\begin{equation}\label{Strichartz estimate k L4L4}
\sum_{l=0}^2 \Big( \|\partial^l k \|_{L^4 W^{\f{1}{12}-l+s_0,4}} + \|\partial^l k \|_{L^{\f72} W^{-l+s_0,\f{14}3}}\Big) \le C \mathcal{D}.
\end{equation}
\end{lemma}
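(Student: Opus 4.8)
\textbf{Plan of proof for Lemma \ref{lem:Strichartz estimates}.}
The plan is to apply the Strichartz estimate with $\frac16$-loss from \cite{Ta}, in the frequency-localized form \eqref{eq:loss'}, to the components $k^{\bar A}_{\a\b}$. First I would reduce the covariant wave equation \eqref{Covariant wave equation k} to the scalar form \eqref{Wave equation k} for each component, and then further reduce $\square_g$ to the operator $g^{\mu\nu}\pa_\mu\pa_\nu$ by absorbing the remaining first-order term $g^{\mu\nu}\Gamma^\la_{\mu\nu}\pa_\la k$ (which is schematically $g\cdot\pa g\cdot\pa k$) into the right-hand side, so the equation takes the form
\[
g^{\mu\nu}\pa_\mu\pa_\nu k = \underbrace{g\cdot \pa g\cdot \pa k + g\cdot \omega\cdot \pa k}_{=: f_2\text{-type}} + \underbrace{(g\cdot \pa^2 g + g\cdot \pa g\cdot \pa g)\cdot k + g\cdot (\pa\omega+\omega\cdot\omega)\cdot k}_{=: f_1\text{-type}}.
\]
The hypothesis $\|\pa g\|_{L^1 W^{\frac14\delta_0,\infty}}\le C_0\mathcal D$ from the bootstrap bound \eqref{Bootstrap bound} is precisely what is needed so that, after a standard dyadic decomposition in which $k$ is localized at frequency $\sim 2^\la$ and the metric is replaced by a version truncated at frequencies $\lesssim 2^\la$ (the error from high-frequency parts of $g$ being controlled by the energy estimate of Lemma \ref{lem:Energy estimates} and the bootstrap bounds), \eqref{eq:loss'} applies with $\mu\sim C_0\mathcal D$; I would fix the Sobolev exponent in \eqref{eq:loss'} to be $\sigma = s-2-\frac12-\frac16$ so that $W^{\sigma,\infty} = W^{-\frac12+s_0,\infty}$ after reinserting the exact numerology $s-\frac52-\frac16 = s_0+\delta_0$ and absorbing the $\delta_0$ loss into the $\frac16+$ in \eqref{eq:loss'}.

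Next I would estimate each inhomogeneous term. The $f_1$-type terms must be controlled in $L^1 H^{\sigma+\frac16+}$ and the $f_2$-type terms in $L^2 W^{1+\sigma+\frac16+,1}\hookleftarrow L^2 H^{\sigma+\frac16+}$ (after Sobolev embedding $H^{\frac32+}\hookrightarrow L^\infty$ in the spatial variables at the relevant frequency level one can also directly bound in the stated Besov-type norm). For the $f_1$-type terms, the worst piece is $g\cdot\pa^2 g\cdot k$: I would split it by paraproduct, putting $\pa^2 g$ in $L^2 H^{\frac16+s_1}$ (available from $\mathcal Q_g$) against $k$ in $L^2 W^{-\frac12+s_0,\infty}$ — but this is circular, since that norm of $k$ is what we are trying to prove. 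This is the main obstacle and is resolved, as in the introduction, by not closing the Strichartz estimate in isolation: Lemma \ref{lem:Strichartz estimates} is stated and proved as one component of the larger bootstrap of Proposition \ref{prop:Bootstrap}, so the $L^2 W^{-\frac12+s_0,\infty}$ norm of $k$ appearing on the right is already under the bootstrap bound $\le C_0\mathcal D$, and the corresponding contribution carries a factor of $\|\pa^2 g\|_{L^2 H^{\frac16+s_1}}\lesssim C_0\mathcal D$, hence is $\lesssim C_0^2\mathcal D^2$, which for $\epsilon$ small compared to $C_0$ is $\le \frac12 C \mathcal D$ — i.e.\ it is consumed by smallness, not closed against itself. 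The other pieces ($g\cdot\pa g\cdot\pa g\cdot k$, $g\cdot\pa\omega\cdot k$, $g\cdot\omega\cdot\omega\cdot k$, and the $f_2$-type terms $g\cdot\pa g\cdot\pa k$, $g\cdot\omega\cdot\pa k$) are handled the same way: distribute derivatives and Hölder exponents using the norms collected in $\mathcal Q_g$, $\mathcal Q_k$, $\mathcal Q_\perp$ (e.g.\ $\pa g\in L^{\frac74}W^{1+s_1,\frac73}$, $\pa k\in L^4 W^{\frac1{12}+s_0,4}$, $\omega\in L^2 H^{\frac76+s_1}$), always arranging that each product has at least one factor controlled by a norm that is already $\lesssim C_0\mathcal D$, so the full contribution is at least quadratic in the bootstrap quantities and hence $\lesssim C_0^N\mathcal D^2$; one also needs the initial-data bound \eqref{Bound k and dk initial} for the homogeneous term in \eqref{eq:loss'}. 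Summing the dyadic pieces (using that the loss exponents $s_0,s_1,\delta_0$ leave room for an $\ell^2$- or $\ell^1$-in-$\la$ summation) yields \eqref{Strichartz estimate k endpoint}; the $l=1,2$ cases follow by commuting $\pa$, $\pa^2$ through \eqref{Wave equation k} (using the Codazzi equation \eqref{Codazzi} and \eqref{Equation for d_0^2} to trade $\pa_0$ derivatives for spatial ones and curvature, exactly as in the proof of Proposition \ref{prop: Bounds S0}) and repeating the argument.

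Finally, \eqref{Strichartz estimate k L4L4} follows by interpolation: the norms $L^4 W^{\frac1{12}-l+s_0,4}$ and $L^{\frac72}W^{-l+s_0,\frac{14}3}$ both lie on the line segment (in the space of $(\frac1p,\frac1q,\text{regularity})$ parameters) joining the endpoint Strichartz norm $L^2 W^{-\frac12-l+s_0,\infty}$ of \eqref{Strichartz estimate k endpoint} to the energy norm $L^\infty H^{s-2-l}$ of \eqref{Energy estimate k}; applying the standard Littlewood--Paley interpolation inequality (Hölder in time together with Bernstein/Sobolev embedding on $\mathbb T^3$ at each dyadic frequency, then summing in the frequency parameter) gives the stated bounds with a constant $C$ independent of $C_0$.
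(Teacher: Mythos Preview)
Your high-level strategy---frequency-localize $k$, apply Tataru's estimate \eqref{eq:loss'} to $\square_{g_{\le j}}k_j = F_j$ under the bootstrap hypothesis $\|\partial g\|_{L^1L^\infty}\le C_0\mathcal D$, and show every source contribution is at least quadratic in the bootstrap quantities---is correct and is exactly what the paper does; the treatment of $l=2$ via \eqref{Equation for d_0^2} and the final interpolation are also the same.

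The gap is in how you split the source. Assigning \emph{entire terms} to the $L^1 H^{\sigma+\frac16+}$ norm (your $f_1$-type) or to the $L^2 W^{1+\sigma+\frac16+,1}$ norm (your $f_2$-type) does not work; the correct split, which the paper carries out, is by \emph{frequency interaction} within each term. The paper writes $F_j = F_{LH}^{(j)} + F_{HH}^{(j)}$: for every term in $F_j$ the low--high pieces (one factor at frequency $\sim 2^j$, the other at $\le j$) are placed in $L^1L^2$ with weight $2^{(s-3)j}$, while the high--high pieces $\sum_{j'>j}P_j(A_{j'}B_{j'})$ are placed in $L^2L^1$ with weight $2^{(s-2)j}$. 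Your $f_1$-type term $g\cdot\partial^2 g\cdot k$ shows why the term-type split fails: its high--high part $\sum_{j'>j}P_j\big((g\cdot\partial^2 g)_{j'}\cdot k_{j'}\big)$ \emph{cannot} be placed in $L^1 H^{s-3}$---pairing $\partial^2 g\in L^2 H^{\frac16+s_1}$ with $k\in L^2 W^{-\frac12+s_0,\infty}$ as you propose gives a $j'$-sum with exponent $\frac13-s_0-s_1$, which is positive for $s$ near $\frac52+\frac16$. It \emph{can} be placed in $L^2 L^1$ via $\|(g\cdot\partial^2 g)_{j'}\|_{L^2 L^2}\cdot\|k_{j'}\|_{L^\infty L^2}$ and H\"older in space, which is exactly the mechanism the paper uses. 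The $L^2 L^1$ route is only available for high--high pieces because both factors already live in $L^2_x$; your claimed embedding $L^2 H^{\sigma+\frac16+}\hookrightarrow L^2 W^{1+\sigma+\frac16+,1}$ is false in three space dimensions (it would require gaining a full derivative in passing from $L^2_x$ to $L^1_x$), so there is no shortcut around the LH/HH decomposition.
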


\begin{proof}
We will adopt the shorthand notation $f_j$ and $f_{\le j}$ for the Littlewood--Paley projections $P_j f$ and $P_{\le j} f$, as we did  in the proof of Lemma \ref{lem:Energy estimates}. 

We will first show that, for any $j\in \mathbb N$,
\begin{equation}\label{Strichartz first derivative}
 \| \partial k_j \|_{L^2 W^{- \f32+s_0+\f12\delta_0, \infty}}  \lesssim \mathcal{D}.
\end{equation}
Recall that the Littlewood-Paley projections $k_j$ of the components of $k$ satisfy 
\begin{equation}\label{Wave equation projection 2}
\square_{g_{\le j}} k_j = F_j, 
\end{equation} 
where
\begin{align}\label{F}
F_j \doteq &  (g-m_0)_j \partial^2  k_{\le j} +\sum_{j' > j}  P_j \big((g-m_0)_{j'} \partial^2  k_{j'}\big)+(g\cdot \partial g)_j \partial k_{\le j} +\sum_{j' > j}  P_j \big((g\cdot \partial g)_{j'} \partial k_{j'}\big)\\
& + P_j \big( ( g\cdot\partial^2 g +  g\cdot(\partial g)^2) \cdot k \big) + P_j \big(g\cdot (\partial\omega+\omega^2) \cdot k\big) + P_j \big( g\cdot \partial g \cdot \partial k \big) + P_j \big( g\cdot\omega \cdot \partial k \big).   \nonumber
\end{align}
Decomposing the terms in the second line of \eqref{F} further into low-high and high-high interactions, we can reexpress $F_j$ as
\begin{equation}\label{F again}
F_j = F^{(j)}_{LH}+F^{(j)}_{HH},
\end{equation}
where
\begin{align*}
F^{(j)}_{LH} \doteq \, & (g-m_0)_j \partial^2  k_{\le j} +(g\cdot \partial g)_j \partial k_{\le j} \\
& + \big( ( g\cdot\partial^2 g)_{\le j} +  (g\cdot(\partial g)^2)_{\le j}\big) \cdot k_j + \big( ( g\cdot\partial^2 g)_{j} +  (g\cdot(\partial g)^2)_{j}\big) \cdot k_{\le j}\\
&+ \big(g\cdot (\partial\omega+\omega^2)\big)_{\le j} \cdot k_{j} +  \big(g\cdot (\partial\omega+\omega^2)\big)_{ j} \cdot k_{\le j}\\
& + \big( g\cdot \partial g\big)_{\le j} \cdot \partial k_j + \big( g\cdot \partial g\big)_{j} \cdot \partial k_{\le j}\\
& + \big( g\cdot\omega\big)_{\le j} \cdot \partial k_j+ \big( g\cdot\omega\big)_{j} \cdot \partial k_{\le j}  
\end{align*}
and
\begin{align*}
F_{HH}^{(j)} \doteq \,  \sum_{j' > j}  \Bigg[  & P_j \big((g-m_0)_{j'} \partial^2  k_{j'}\big)+  P_j \big((g\cdot \partial g)_{j'} \partial k_{j'}\big) \\
& +  P_j \big((g\cdot (\partial\omega+\omega^2))_{j'} \cdot k_{j'}\big)  + P_j \big( (g\cdot\omega)_{j'} \cdot \partial k_{j'} \big)   \Bigg].
\end{align*}

According to the results of \cite{Ta} (see Theorems 1.2 and 1.4 therein and the remark above relation 1.10), the following statement holds for the equation $\square_{g_{\le j}} u = \mathcal F_1 + \mathcal F_2$: For any $(\rho, p ,q)$ satisfying $2\le p,q\le \infty$ and 
\[
\f1p + \f3q = \f32 - \rho, \hphantom{and} \f2p + \f2q \le 1
\]
\emph{with the exception of} $(\rho, p, q) = (1,2,\infty)$, we have
\begin{align}\label{General Strichartz from Tataru 1}
 \| \JapD^{-\rho-\frac{1}{3p}-\frac1{10}\delta_0}\partial P_j u\|_{L^p L^q} \lesssim  & \big( 1+ \|\partial g_{\le j}\|_{L^1L^\infty}\big)^{\frac2{3p}} \|\partial u\|_{L^\infty L^2}\\&  + \big( 1+ \|\partial g_{\le j}\|_{L^1L^\infty}\big)^{-\frac2{3p'}} \|\JapD^{-\f13} \mathcal F_1\|_{L^1 L^2} + \|\JapD^{\rho-\frac{1}{3p}} P_j\mathcal F_2 \|_{L^{p'}L^{q'}}.\nonumber 
\end{align}

\medskip
\noindent \textbf{Remark.} Note that, with respect to the standard Strichartz estimates on $(\mathbb R^{3+1}, \eta)$, \eqref{General Strichartz from Tataru 1} has a loss of $\f1{3p}$ derivatives on the left hand side, while there is a comparative gain of derivatives in the inhomogeneous terms on the right hand side. In what follows, we will actually not be making use of the derivative gain on the right hand side; we will only use the following weaker version of \eqref{General Strichartz from Tataru 1}:
\begin{align}\label{General Strichartz from Tataru}
 \| \JapD^{-\rho-\frac{1}{3p}-\frac1{10}\delta_0}\partial P_j u\|_{L^p L^q} \lesssim  & \big( 1+ \|\partial g_{\le j}\|_{L^1L^\infty}\big)^{\frac2{3p}} \|\partial u\|_{L^\infty L^2}\\&  + \big( 1+ \|\partial g_{\le j}\|_{L^1L^\infty}\big)^{-\frac2{3p'}} \|\mathcal F_1\|_{L^1 L^2} + \|\JapD^{\rho} P_j\mathcal F_2 \|_{L^{p'}L^{q'}}.\nonumber 
\end{align}

\medskip
Setting
$u\doteq k_j$,  $\mathcal F_1 \doteq  F^{(j)}_{LH}$ and $\mathcal F_2 = F_{HH}^{(j)} $,
 in \eqref{General Strichartz from Tataru} we obtain:
\begin{align}\label{Strichartz estimate tataru}
\|2^{-(\rho +\f1{3p}-\f1{10}\delta_0)j} \partial k_j\|_{L^p L^q} \lesssim \big( 1+ & \| \partial g_{\le j}\|_{L^1 L^\infty}\big)^{\f2{3p}}\| \partial k_j \|_{L^\infty L^2} \\
& +\big( 1+ \| \partial g_{\le j}\|_{L^1 L^\infty}\big)^{\f{2(p-1)}{3p}} \|  F_{LH}^{(j)}\|_{L^1 L^2} + \|2^{\rho j} F_{HH}^{(j)} \|_{L^{p'} L^{q'}}, \nonumber
\end{align}
or, equivalently, after multiplying both sides with $2^{(s-3)j}$:
\begin{align}\label{Strichartz estimate tataru 2}
\|2^{(s-3-\rho-\f1{3p}-\f1{10}\delta_0)j} \partial k_j\|_{L^p L^q} \lesssim & \big( 1+ \| \partial g_{\le j}\|_{L^1 L^\infty}\big)^{\f2{3p}}\| 2^{(s-3)j}\partial k_j \|_{L^\infty L^2} \\
& +\big( 1+ \| \partial g_{\le j}\|_{L^1 L^\infty}\big)^{\f{2(p-1)}{3p}} \| 2^{(s-3)j} F^{(j)}_{LH}\|_{L^1 L^2}+ \|2^{(s-3+\rho)j} F_{HH}^{(j)} \|_{L^{p'} L^{q'}}. \nonumber
\end{align}
In view of the bounds \eqref{Bootstrap bound} for $\partial g$ and the energy estimate \eqref{Energy estimate k} for $k$, it can be readily deduced that \eqref{Strichartz first derivative} will follow from \eqref{Strichartz estimate tataru 2} for $p=2, q=\frac{1}{\delta_0}$, provided we show that
\begin{equation}\label{Bound Strichartz F}
\| 2^{(s-3)j} F_{LH}^{(j)}\|_{L^1 L^2} + \|2^{(s-2)j} F_{HH}^{(j)} \|_{L^{2} L^{1}} \lesssim \mathcal{D}.
\end{equation}

In order to establish \eqref{Bound Strichartz F}, we will argue in a similar way as for the derivation of \eqref{FK}, showing that all terms in the expansion \eqref{F} of $F_j$ satisfy the bound \eqref{Bound Strichartz F}.

\begin{itemize}
\item Let us start with the high-high terms (i.e.~$F_{HH}^{(j)}$). The term $P_j ( (g-m_0)_{j'} \partial^2 k_{j'})$ in $F_{HH}^{(j)}$ can be estimated as follows:
\begin{align} \label{Second term F}
\big\| 2^{(s-2)j} & \sum_{j'>j} P_j \big(  (g-m_0)_{j'} \partial^2 k_{j'} \big)\big\|_{L^2 L^1} \\
& 
\lesssim \sum_{j'>j} 2^{(s-2)(j-j')}\big\| 2^{2j'} (g-m_0)_{j'} \cdot 2^{(s-4)j'}\partial^2 k_{j'} \big\|_{L^2 L^1}\nonumber \\
&\lesssim  \sum_{j'>j} 2^{(s-2)(j-j')}\| 2^{2j'} (g-m_0)_{j'}\|_{L^2 L^2}\cdot \|2^{(s-4)j'}\partial^2 k_{j'} \|_{L^\infty L^2} \nonumber \\
& \lesssim \Big(\| \partial^2 g \|^2_{L^2 L^2}+\| \partial^2 k \|^2_{L^\infty H^{s-4}}\Big)\nonumber \\
& \lesssim C_0^2 \mathcal{D}^2. \nonumber 
\end{align}
The rest of the terms in $F^{(j)}_{HH}$, namely $\sum_{j' > j} P_j (( g\cdot\partial g)_{j'} \partial k_{j'})$, $\sum_{j' > j}P_j \big((g\cdot (\partial\omega+\omega^2))_{j'} \cdot k_{j'}\big) $ and  $\sum_{j' > j}P_j \big( (g\cdot\omega)_{j'} \cdot \partial k_{j'} \big)$    can be estimated in exactly the same way, thus leading to 
\begin{equation}\label{Bound F HH}
\|2^{(s-2)j} F_{HH}^{(j)} \|_{L^{2} L^{1}}  \lesssim C_0^2 \mathcal{D}^2.
\end{equation}

\item We will now estimate the terms in $F^{(j)}_{LH}$. For the high-low term $ (g-m_0)_j \partial^2 k_{\le j}$, we have 
\begin{align}\label{First term F}
\| 2^{(s-3)j} (g-m_0)_j \partial^2 k_{\le j} \|_{L^1 L^2}
& \lesssim  \| 2^{(s-\f12-2\delta_0)j} (g-m_0)_j \|_{L^2 L^2} \|  2^{-(\f12+2-2\delta_0) j} \partial^2 k_{\le j}\|_{L^2 L^\infty} \\
& \lesssim  \| g-m_0 \|_{L^2 H^{s-\f12-2\delta_0}} \| \partial^2 k\|_{L^2 W^{-\f52+2\delta_0, \infty}} \nonumber \\
& \lesssim C_0^2 \mathcal{D}^2 \nonumber 
\end{align}
(where, in the last inequality, we used the bootstrap bound \eqref{Bootstrap bound}). The rest of the high-low terms $\big( ( g\cdot\partial^2 g)_{j} +  (g\cdot(\partial g)^2)_{j}\big) \cdot k_{\le j}$, $ \big(g\cdot (\partial\omega+\omega^2)\big)_{ j} \cdot k_{\le j}$, $ \big( g\cdot \partial g\big)_{j} \cdot \partial k_{\le j}$ and $ \big( g\cdot\omega\big)_{j} \cdot \partial k_{\le j}$ can be estimated in exactly the same way.

The low-high term term  $ (g\cdot\partial g)_{\le j} \partial k_j$ can be estimated as follows:
\begin{align}
\| 2^{(s-3)j} ( g\cdot\partial g)_{\le j}  \partial k_j \|_{L^1 L^2}  
& \lesssim  \|  ( g\cdot\partial g)_{\le j}\|_{L^1 L^\infty} \| 2^{(s-3)j} \partial k_j \|_{L^\infty L^2} \\
& \lesssim  \|   g\cdot\partial g\|_{L^1 L^\infty} \| \partial k \|_{L^\infty H^{s-\f32}} \nonumber \\
& \lesssim C_0^2 \mathcal{D}^2. \nonumber
\end{align}
The rest of the low-high terms $\big( ( g\cdot\partial^2 g)_{\le j} +  (g\cdot(\partial g)^2)_{\le j}\big) \cdot k_j $, $ \big(g\cdot (\partial\omega+\omega^2)\big)_{\le j} \cdot k_{j} $, $ \big( g\cdot \partial g\big)_{\le j} \cdot \partial k_j $ and $ \big( g\cdot\omega\big)_{\le j} \cdot \partial k_j$ can be estimated in exactly the same way. Thus, collecting these bounds we infer that
\begin{equation}\label{Bound F LH}
\|2^{(s-3)j} F_{LH}^{(j)} \|_{L^{1} L^{2}}  \lesssim C_0^2 \mathcal{D}^2.
\end{equation}
\end{itemize}
Combining \eqref{Bound F HH} and \eqref{Bound F LH}, we deduce \eqref{Bound Strichartz F} and, therefore, \eqref{Strichartz first derivative}. Moreover, using the low frequency estimate
\[
\| k \|_{L^2 W^{-\f12+s_0+\f14\delta_0,\infty}}  \lesssim \| \partial k \|_{L^2 W^{-\f32+s_0+\f14\delta_0,\infty}} +\| k \|_{L^\infty H^{s-2}}
\]
together with the energy estimate \eqref{Energy estimate k} for $k$, the bound \eqref{Strichartz first derivative} yields:
\begin{equation}\label{Strichartz estimate k lower}
 \| k \|_{L^2 W^{- \f12+s_0+\f14\delta_0, \infty}} + \| \partial k \|_{L^2 W^{- \f32+s_0+\f14 \delta_0, \infty}}
 \lesssim \mathcal{D}. 
\end{equation}

In order to obtain the full estimate \eqref{Strichartz estimate k endpoint} for $\partial^2 k$, 
we will start from \eqref{Strichartz estimate k lower} and use  equation \eqref{Equation for d_0^2} for $k$ as we did when deriving \eqref{Second derivatives energy} from \eqref{First derivatives energy}. Since \eqref{Strichartz estimate k lower} already provides the required estimate for $\bar{\partial} \partial k$, $\bar{\partial} \in \{ \partial_1, \partial_2, \partial_3 \}$, it only remains to show that
\begin{equation}\label{Two derivatives Strichartz}
\| \partial^2_0 k \|_{L^2 W^{- \f52+s_0, \infty}}   \lesssim \mathcal{D},
\end{equation}
Thus, it will be sufficient to estimate the right hand side of the expression \eqref{Equation for d_0^2} for $\partial_0^2 k$, \eqref{Two derivatives Strichartz} in the $L^2 W^{- \f52+s_0, \infty}$ norm:
\begin{itemize}
\item For the first term in the right hand side of \eqref{Equation for d_0^2}, we have:
\begin{align}\label{First term strichartz}
\| g \cdot \bar{\partial}   \partial & k \|_{L^2 W^{- \f52+s_0, \infty}} \\
\lesssim & \sum_j 2^{(-\f52+s_0)j} \| g_{\le j} \bar{\partial} \partial k_j \|_{L^2 L^\infty} +  \sum_j 2^{(-\f52+s_0)j}\| g_{j} \bar{\partial} \partial k_{\le j} \|_{L^2 L^\infty} \nonumber \\  
& + \sum_j \sum_{j'>j} 2^{(-\f52+s_0)j}\| P_j \big( g_{j'} \cdot \bar{\partial} \partial k_{j'}\big) \|_{L^2 L^\infty}  \nonumber\\
\lesssim &  \sum_j 2^{-\f14 \delta_0 j}\|g_{\le j}\|_{L^\infty L^\infty} \| \bar{\partial} \partial k_j \|_{L^2 W^{-\f52+s_0+\f14\delta_0,\infty}} +  \sum_j 2^{-\f14\delta_0 j} \|2^{s_0 j} g_{j}\|_{L^\infty L^\infty} \|2^{(-\f52+\f14 \delta_0)j} \bar{\partial} \partial k_{\le j} \|_{L^2 L^\infty} \nonumber \\  
& + \sum_j \sum_{j'>j} 2^{s_0j}\| P_j \big( g_{j'} \cdot \bar{\partial} \partial k_{j'}\big) \|_{L^2 L^{\f65}}  \nonumber\\
\lesssim &  \|g\|_{L^\infty L^\infty} \| \bar{\partial} \partial k \|_{L^2 W^{-\f52+s_0+\f14\delta_0,\infty}} + \| g\|_{L^\infty W^{s_0,\infty}} \| \bar{\partial} \partial k\|_{L^2 W^{-\f52+\f14 \delta_0,\infty}} \nonumber \\  
& + \sum_j \sum_{j'>j} 2^{s_0j}\| g_{j'} \|_{L^2 L^3} \| \bar{\partial} \partial k_{j'}\|_{L^\infty L^2}  \nonumber\\
\lesssim & \| g \|_{L^\infty H^{s-1}} \| \partial k \|_{L^2 W^{-\f32 + s_0 + \f14\delta_0, \infty}} +  \sum_j \sum_{j'>j} 2^{s_0(j-j')} 2^{-\delta_0 j'} \| 2^{(2+s_0)j'} g_{j'} \|_{L^2 L^2} \|2^{(-\f32+\delta_0)j'} \bar{\partial} \partial k_{j'}\|_{L^\infty L^2} \nonumber \\
\lesssim & \| g \|_{L^\infty H^{s-1}} \| \partial k \|_{L^2 W^{-\f32 + s_0 + \f14\delta_0, \infty}} +  \|g\|_{L^2 H^{2+s_0}} \|\bar{\partial} \partial k\|_{L^\infty H^{-\f32+\delta_0}} \nonumber \\
\lesssim & \mathcal{D}.\nonumber 
\end{align}
\noindent In the last line above, we made use of the estimates and \eqref{Bootstrap bound},  \eqref{Strichartz estimate k lower} and \eqref{Energy estimate k}.

\item For the second term in the right hand side of \eqref{Equation for d_0^2}, we can estimate similarly:
\begin{align}
\| g\cdot  \partial g \cdot \partial & k \|_{L^2 W^{- \f52+s_0, \infty}} \\
\lesssim & \sum_j 2^{(-\f52+s_0)j} \| ( g\cdot\partial g)_{\le j} \partial k_j \|_{L^2 L^\infty} +  \sum_j 2^{(-\f52+s_0)j}\| ( g\cdot\partial g)_{j}  \partial k_{\le j} \|_{L^2 L^\infty} \nonumber \\  
& + \sum_j \sum_{j'>j} 2^{(-\f52+s_0)j}\| P_j \big( ( g\cdot\partial g)_{j'} \cdot \partial k_{j'}\big) \|_{L^2 L^\infty}  \nonumber\\
\lesssim &  \sum_j 2^{-\f14 \delta_0 j}\|2^{-j} ( g\cdot\partial g)_{\le j}\|_{L^\infty L^\infty} \| 2^{-\f32 +s_0 +\f14\delta_0} \partial k_j \|_{L^2 L^\infty} \nonumber \\
& +  \sum_j 2^{-\f14\delta_0 j} \|2^{(-1+s_0) j} ( g\cdot\partial g)_{j}\|_{L^\infty L^\infty} \|2^{(-\f32+\f14 \delta_0)j}  \partial k_{\le j} \|_{L^2 L^\infty} \nonumber \\  
& + \sum_j \sum_{j'>j} 2^{s_0j}\| P_j \big( ( g\cdot\partial g)_{j'} \cdot \partial k_{j'}\big) \|_{L^2 L^{\f65}}  \nonumber\\
\lesssim &  \| g\cdot\partial g\|_{L^\infty W^{-1,\infty}} \| \partial k \|_{L^2 W^{-\f32+s_0+\f14\delta_0,\infty}} + \|  g\cdot\partial g\|_{L^\infty W^{-1+s_0,\infty}} \| \partial k\|_{L^2 W^{-\f32+\f14 \delta_0,\infty}} \nonumber \\  
& + \sum_j \sum_{j'>j} 2^{s_0j}\| ( g\cdot\partial g)_{j'} \|_{L^2 L^3} \| \partial k_{j'}\|_{L^\infty L^2}  \nonumber\\
\lesssim & \|  g\cdot \partial g \|_{L^\infty H^{s-2}} \| \partial k \|_{L^2 W^{-\f32 + s_0 + \f14\delta_0, \infty}} +  \sum_j \sum_{j'>j} 2^{s_0(j-j')} 2^{-\delta_0 j'} \| 2^{(1+s_0)j'} ( g\cdot\partial g)_{j'} \|_{L^2 L^2} \|2^{(-\f12+\delta_0)j'} \partial k_{j'}\|_{L^\infty L^2} \nonumber \\
\lesssim & \|  g\cdot \partial g \|_{L^\infty H^{s-2}} \| \partial k \|_{L^2 W^{-\f32 + s_0 + \f14\delta_0, \infty}} +  \| g\cdot\partial g\|_{L^2 H^{1+s_0}} \| \partial k\|_{L^\infty H^{-\f12+\delta_0}} \nonumber \\
\lesssim & \mathcal{D}\nonumber 
\end{align}
\noindent (where, again, we made use of the estimates and \eqref{Bootstrap bound},  \eqref{Strichartz estimate k lower} and \eqref{Energy estimate k}).

\item The rest of the terms in the right hand side of \eqref{Equation for d_0^2} can be similarly estimated repeating the same steps as above. Thus, we obtain \eqref{Two derivatives Strichartz}.
\end{itemize}

\noindent This completes the proof of Lemma \ref{lem:Strichartz estimates}.
\end{proof}

\subsection{The special structure of the curvature tensors}
In this section, we will uncover a cancellation in the high-high frequency interactions in the expressions  $R=m(e) \cdot k \wedge k$ and $R^\perp = g \cdot k \wedge k$ for the curvature tensors that will allow us to ``pull'' a derivative out of the product $k\wedge k$. In the following section, this structure will be used to control the $L^1 W^{-1, \infty}$ norm of the components of $R_*$ and $R^\perp_*$ (namely the components $R_{\alpha i j k}$ and $R^\perp_{ij}$ with $i,j , k\in \{1,2,3\}$), given the energy and  Strichartz estimates established  for $k$ in the previous section.  

Before proceeding, let us recall the Gauss and Ricci equations (i.e.~\eqref{Gauss} and \eqref{Ricci}):
\begin{align}
R_{\a \b \ga \delta}  &= m(e_{\bA}, e_{\bB}) \Big( k^{\bA}_{\a\ga} k^{\bB}_{\b\delta} - k^{\bA}_{\a\delta} k^{\bB}_{\b\ga} \Big), \label{Gauss 3}\\
R^{\perp \bA \bB}_{\a \b}  &= g^{\ga \delta} \Big( k^{\bB}_{\a\ga} k^{\bA}_{\b\delta}- k^{\bA}_{\a\ga} k^{\bB}_{\b\delta} \Big). \label{Ricci 3}
\end{align}
We will use the following shorthand notation for the antisymmetric product appearing in the right hand sides of the relations above:
\begin{equation}\label{Wedge product}
\big( \chi \wedge\psi \big)^{\bA \bB}_{\a \b \ga \delta} \doteq \chi^{\bB}_{\a\ga} \psi^{\bA}_{\b\delta}- \chi^{\bA}_{\a\delta} \psi^{\bB}_{\b\ga},
\end{equation}
so that the equations \eqref{Gauss 3}--\eqref{Ricci 3} can be expressed as
\begin{align*}
R_{\a\b\ga\delta} &= m_{\bA\bB} \cdot \big( k \wedge k \big)^{\bA\bB}_{\a\b\ga\delta} = m(e) \cdot k \wedge k, \\
 R^{\perp\bA\bB}_{\a\b} & = g^{\ga\delta} \cdot \big( k \wedge k)^{\bA\bB}_{\a\b\ga\delta} = g \cdot k \wedge k
\end{align*}
(here, we made use of the symmetry of $m_{\bA\bB}$ and $g^{\ga\delta}$ in $(\bA, \bB)$ and $(\ga, \delta)$, respectively).

The main result of this section is the following:
\begin{proposition}\label{prop:Derivative k}
For any $j'>2$ and $J'\in \{ -2, \ldots, 2\}$, the following (schematic) relation holds for all $a,b \in \{1,2,3\}$, $\ga,\delta \in \{ 0,1,2,3\}$ and $\bA,\bB \in \{4, \ldots, n\}$ (recall the definition  \eqref{Wedge product}  of the wedge product $\wedge$):
\begin{align}\label{Schematic extraction one derivative wedge product}
(P_{j'} & k \wedge  P_{j'+J'} k)^{\bA\bB}_{ab\ga\delta}  + (P_{j'+J'} k \wedge P_{j'} k)^{\bA\bB}_{ab\ga\delta} \\
=& 
\sum_{(j_1, j_2) \in \big\{ (j',j'+J'),\, (j'+J',j') \big\} } \Bigg[\bar\partial \Big(P_{j_1} (\JapD^{-1}k)\cdot  P_{j_2} (k)\Big) 
\nonumber \\
& \hphantom{= \sum_{(j_1, j_2)=(j',j'+J') \text{ or } (j'+J',j')} \bar{\partial}}  
+P_{j_1}\big(\JapD^{-1}(  g\cdot\partial g \cdot k + \omega\cdot k)\big)\cdot  P_{j_2}k\Bigg], \nonumber
\end{align}
where $\bar{\partial} \in \{ \partial_1, \partial_2, \partial_3\}$, i.e.~it is a spatial derivative (our assumption that $a,b\in \{1,2,3\}$ is crucial for this). Moreover, after contracting with $m_{\bA\bB}$, we can extract one more derivative from the product: For any $j_3\in \mathbb N$,

\begin{align}\label{Schematic extraction two derivatives wedge product}
P_{j_3}(m(e)_{\bA\bB}) & \cdot \Big((P_{j'} k \wedge  P_{j'+J'} k)^{\bA\bB}_{ab\ga\delta}  + (P_{j'+J'} k \wedge P_{j'} k)^{\bA\bB}_{ab\ga\delta} \Big)\\
=&
\sum_{(j_1, j_2) \in \big\{ (j',j'+J'),\, (j'+J',j') \big\} } \Bigg[ \bar\partial \partial \Big\{P_{j_3}(m(e)) \cdot P_{j_1}(\JapD^{-1} k)\cdot P_{j_2}(\JapD^{-1} k)\Big\}  \nonumber \\
& \hphantom{= \sum_{(j_1, j_2)=(j',j'+J') \text{ or } (j'+J',j')} \bar{\partial}}  
+\bar\partial \Big\{P_{j_3}(m(e)) \cdot P_{j_1}\big(\JapD^{-1}(  g\cdot\partial g \cdot k + \omega\cdot k)\big)\cdot  P_{j_2}(\JapD^{-1}k)\Big\}\nonumber \\
& \hphantom{= \sum_{(j_1, j_2)=(j',j'+J') \text{ or } (j'+J',j')} \bar{\partial}}
 + 
\bar\partial \big\{P_{j_3}(m(e))\big\} \cdot P_{j_1}(\JapD^{-1}k)\cdot  P_{j_2}(k)\nonumber \\
& \hphantom{= \sum_{(j_1, j_2)=(j',j'+J') \text{ or } (j'+J',j')} \bar{\partial}}
+ \bar\partial\partial \big\{P_{j_3}(m(e))\big\} \cdot P_{j_1}(\JapD^{-1}k)\cdot  P_{j_2}(\JapD^{-1} k)\nonumber \\
& \hphantom{= \sum_{(j_1, j_2)=(j',j'+J') \text{ or } (j'+J',j')} \bar{\partial}}
+ P_{j_3}(m(e)) \cdot P_{j_1}\big(\JapD^{-1}(  g\cdot\partial g \cdot k + \omega\cdot k)\big)\cdot  P_{j_2}k\Bigg], \nonumber
\end{align}
where $\bar{\partial} \in \{ \partial_1, \partial_2, \partial_3\}$ and $\partial\in \{\partial_0, \bar\partial\}$. When $\gamma,\delta\in \{1,2,3\}$, the $\partial$ in the terms of the right hand side above can be replaced by $\bar\partial$, i.e.
\begin{align}\label{Schematic extraction two spatial derivatives wedge product}
P_{j_3}(m(e)_{\bA\bB}) & \cdot \Big((P_{j'} k \wedge  P_{j'+J'} k)^{\bA\bB}_{abcd}  + (P_{j'+J'} k \wedge P_{j'} k)^{\bA\bB}_{abcd} \Big)\\
=&
\sum_{(j_1, j_2) \in \big\{ (j',j'+J'),\, (j'+J',j') \big\} } \Bigg[ \bar\partial^2 \Big\{P_{j_3}(m(e)) \cdot P_{j_1}(\JapD^{-1} k)\cdot P_{j_2}(\JapD^{-1} k)\Big\}  \nonumber \\
& \hphantom{= \sum_{(j_1, j_2)=(j',j'+J') \text{ or } (j'+J',j')} \bar{\partial}}  
+\bar\partial \Big\{P_{j_3}(m(e)) \cdot P_{j_1}\big(\JapD^{-1}(  g\cdot\partial g \cdot k + \omega\cdot k)\big)\cdot  P_{j_2}(\JapD^{-1}k)\Big\}\nonumber \\
& \hphantom{= \sum_{(j_1, j_2)=(j',j'+J') \text{ or } (j'+J',j')} \bar{\partial}}
 + 
\bar\partial \big\{P_{j_3}(m(e))\big\} \cdot P_{j_1}(\JapD^{-1}k)\cdot  P_{j_2}(k)\nonumber \\
& \hphantom{= \sum_{(j_1, j_2)=(j',j'+J') \text{ or } (j'+J',j')} \bar{\partial}}
+ \bar\partial^2 \big\{P_{j_3}(m(e))\big\} \cdot P_{j_1}(\JapD^{-1}k)\cdot  P_{j_2}(\JapD^{-1} k)\nonumber \\
& \hphantom{= \sum_{(j_1, j_2)=(j',j'+J') \text{ or } (j'+J',j')} \bar{\partial}}
+ P_{j_3}(m(e)) \cdot P_{j_1}\big(\JapD^{-1}(  g\cdot\partial g \cdot k + \omega\cdot k)\big)\cdot  P_{j_2}k\Bigg], \nonumber
\end{align}

As a consequence of \eqref{Schematic extraction one derivative wedge product}, the following bound holds for any $j<j'-2$:
\begin{align}\label{Improved estimate wedge infinity}
\big\| P_j \Big( (P_{j'} & k \wedge  P_{j'+J'} k)^{\bA\bB}_{ab\ga\delta} + (P_{j'+J'} k \wedge P_{j'} k)^{\bA\bB}_{ab\ga\delta}\Big) \big\|_{L^1 L^\infty} \\
& \lesssim  2^{j-(1-\delta_0)j'}  \| P_{j'} k \|_{L^2 L^\infty}\| P_{j'+J'} k \|_{L^2 L^\infty} 
 \hphantom{~} + 2^{2j-(1+2\delta_0) j'} \big( \| g\cdot \partial g\|_{L^2 L^6}+ \|\omega\|_{L^2 L^6}\big) \| k \|^2_{L^4 W^{2\delta_0,4}}. \nonumber
\end{align}
\end{proposition}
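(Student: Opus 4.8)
The plan is to prove Proposition \ref{prop:Derivative k} by using the Codazzi equation \eqref{Codazzi} to rewrite $k$ in terms of $\JapD^{-1}$ of its spatial derivatives, exactly as in the heuristic discussion in the introduction (the identity $k_{\a\b} = \frac{\nabla^i}{\Delta}\nabla_i k_{\a\b} = \nabla_\a \frac{\nabla^i}{\Delta} k_{i\b}$), but carried out carefully at the level of Littlewood--Paley pieces so that all commutators and lower order terms are tracked. The starting point is the frequency-localized Codazzi relation \eqref{Codazzi once more Paley}, $\partial_\kappa P_{j'}(k^{\bA}_{\lambda\mu}) - \partial_\lambda P_{j'}(k^{\bA}_{\kappa\mu}) = P_{j'}(\partial g\cdot k + \omega\cdot k)$, together with the fact that each $P_{j'} k$ can be written as $P_{j'}k = \sum_{i} \partial_i \mathcal{T}^{(i)}_{j'} P_{j'} k$ using the divergence identity \eqref{Identity derivative 2} (after inserting a fattened frequency cutoff $\tilde\chi$ that is $\equiv 1$ on the support of $\chi_{j'}$, so that $\mathcal{T}^{(i)}_{j'} P_{j'} = \mathcal{T}^{(i)} P_{j'}$ up to harmless rescaling). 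Since $\mathcal{T}^{(i)}_{j'}$ is a Mikhlin multiplier of order $-1$ (see \eqref{Mikhlin property}), the operator $\mathcal{T}^{(i)}_{j'} P_{j'}$ behaves like $\JapD^{-1}$ on frequency-$j'$ functions, so I will freely replace $\mathcal{T}^{(i)}_{j'} P_{j'} k$ by $\JapD^{-1}P_{j'} k$ in the schematic notation.

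The first step is to establish \eqref{Schematic extraction one derivative wedge product}. In the symmetrized product $(P_{j'} k \wedge P_{j'+J'} k) + (P_{j'+J'}k\wedge P_{j'}k)$, which by definition \eqref{Wedge product} is a sum of terms of the form $P_{j_1}k^{\bB}_{a\ga}\cdot P_{j_2}k^{\bA}_{b\delta}$ with $\{j_1,j_2\}=\{j',j'+J'\}$ (after using antisymmetry and relabeling), I will write $P_{j_1}k^{\bB}_{a\ga} = \partial_i \mathcal{T}^{(i)}_{j_1}P_{j_1} k^{\bB}_{a\ga}$, then apply Codazzi \eqref{Codazzi once more Paley} to trade the spatial index $i$ for the free \emph{spatial} index $a$ (this is where $a\in\{1,2,3\}$ is essential: we need $a$ to be a genuine spatial direction so that $\partial_a$ appears rather than $\partial_0$), getting $\partial_i \mathcal{T}^{(i)}_{j_1}P_{j_1} k^{\bB}_{a\ga} = \partial_a \mathcal{T}^{(i)}_{j_1}P_{j_1} k^{\bB}_{i\ga} + \mathcal{T}^{(i)}_{j_1}P_{j_1}(\partial g\cdot k + \omega\cdot k)$. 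Distributing the $\partial_a$ across the product by the Leibniz rule (producing the total-derivative term $\bar\partial(P_{j_1}(\JapD^{-1}k)\cdot P_{j_2}k)$ plus a term where $\partial_a$ falls on $P_{j_2}k^{\bA}_{b\delta}$), the latter is again handled by Codazzi — moving $\partial_a$ onto the $b$-index of $P_{j_2}k$ and invoking antisymmetry of the symmetrized sum to cancel it against the symmetric partner — yields \eqref{Schematic extraction one derivative wedge product}. This is essentially the computation already performed in the proof of Lemma \ref{lem:Cancellations} (the derivation of \eqref{Calculation F natural difference} and \eqref{Calculation F natural difference 2 derivatives}), so I will present it compactly and refer back there for the book-keeping of the $\mathcal{E}$-type remainder terms.

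For \eqref{Schematic extraction two derivatives wedge product} and \eqref{Schematic extraction two spatial derivatives wedge product}, I contract \eqref{Schematic extraction one derivative wedge product} with $P_{j_3}(m(e)_{\bA\bB})$ and run the same argument one more time on the remaining factor of $k$ that has not yet been hit: in the term $\bar\partial(P_{j_1}(\JapD^{-1}k)\cdot P_{j_2}(k))$ I rewrite $P_{j_2}k$ via Codazzi and $\mathcal{T}^{(i)}$, again using symmetry of $m_{\bA\bB}$ in $(\bA,\bB)$ and symmetry of the index set $\{j_1,j_2\}$ to kill the unwanted piece, producing the double-derivative term $\bar\partial\partial\{P_{j_3}(m(e))\cdot P_{j_1}(\JapD^{-1}k)\cdot P_{j_2}(\JapD^{-1}k)\}$ together with terms where a derivative falls on $P_{j_3}(m(e))$ (these are the $\bar\partial\{P_{j_3}(m(e))\}\cdots$ and $\bar\partial\partial\{P_{j_3}(m(e))\}\cdots$ contributions) or produces a Codazzi error $\JapD^{-1}(g\cdot\partial g\cdot k + \omega\cdot k)$. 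The statement about replacing $\partial$ by $\bar\partial$ when $\ga,\delta\in\{1,2,3\}$ follows because in that case the second Codazzi application moves a spatial derivative onto a \emph{spatial} index $\ga$ (or $\delta$), so only $\bar\partial$ appears — this is exactly the extra structure that will later give the improved estimate on $R_{ijkl}$ versus $R_{0ijk}$.

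Finally, \eqref{Improved estimate wedge infinity} is deduced from \eqref{Schematic extraction one derivative wedge product} by applying $P_j$ with $j<j'-2$ and estimating term by term. For the principal term, $P_j \bar\partial(P_{j_1}(\JapD^{-1}k)\cdot P_{j_2}k)$, one pulls the derivative out as a factor of $2^j$, uses $\|P_{j_1}\JapD^{-1}k\|_{L^\infty}\lesssim 2^{-j'}\|P_{j'}k\|_{L^\infty}$ and Bernstein/H\"older with the $L^2$-in-time norms to get the bound $2^{j}\cdot 2^{-j'}\|P_{j'}k\|_{L^2L^\infty}\|P_{j'+J'}k\|_{L^2L^\infty}$; a slightly more generous $2^{\delta_0 j'}$ loss absorbs the difference between $\JapD^{-1}$ and $|D|^{-1}$ and any constants, yielding the first term $2^{j-(1-\delta_0)j'}\|P_{j'}k\|_{L^2L^\infty}\|P_{j'+J'}k\|_{L^2L^\infty}$. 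For the Codazzi error term $P_j(P_{j_1}(\JapD^{-1}(g\cdot\partial g\cdot k+\omega\cdot k))\cdot P_{j_2}k)$ one puts $g\cdot\partial g$ and $\omega$ in $L^2L^6$, uses that $\JapD^{-1}$ at frequency $j'$ gains $2^{-j'}$, puts the remaining $k$'s in $L^4W^{2\delta_0,4}$ (so that H\"older in time gives $L^2$), and Bernstein in space on the $P_j$-localized output gives the extra $2^{2j}$ and a $2^{-(1+2\delta_0)j'}$ from the two $k$-factors, producing the second term. The main obstacle throughout is bookkeeping: ensuring that every commutator $[\mathcal{T}^{(i)}_{j_1}, \text{frequency cutoff}]$ and every place where Codazzi is invoked contributes only terms of the advertised schematic form $\JapD^{-1}(g\cdot\partial g\cdot k + \omega\cdot k)$ (or a derivative landing on $m(e)$), with no hidden top-order loss; but this is precisely the content of the computations already carried out in the proof of Lemma \ref{lem:Cancellations}, from which the present proof differs only in that the free indices $a,b$ are both spatial and no contraction with $\bar g^{ij}$ or extra $\partial_c$ is applied up front.
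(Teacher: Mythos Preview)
Your proposal is correct and follows essentially the same approach as the paper's proof: rewrite $P_{j'}k$ via the divergence identity $P_{j'}k = \partial_i \mathcal{T}^{(i)}_{j'} P_{j'}k$, apply the frequency-localized Codazzi equation repeatedly to swap indices (using that $a,b$ are spatial so the derivatives pulled out are $\bar\partial$), use Leibniz and the symmetry of the summation range $\{(j',j'+J'),(j'+J',j')\}$ to effect the cancellation, and then iterate once more after contracting with $P_{j_3}(m(e)_{\bA\bB})$ using the symmetry of $m_{\bA\bB}$. The paper carries out the chain of index swaps explicitly (arriving at the key identity \eqref{Codazzi k2} before feeding it into the symmetrized wedge product), whereas you describe the same computation more telegraphically and lean on the analogous manipulations already done in Lemma~\ref{lem:Cancellations}; but the logic, the use of the operators $\mathcal{T}^{(i)}_{j'}$, and the final estimate \eqref{Improved estimate wedge infinity} via the Mikhlin bounds and Bernstein are identical.
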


\medskip
\noindent \textbf{Remark 1.} For the proof of Theorem \ref{thm:Existence}, we will only make use of the simpler expression \eqref{Schematic extraction one derivative wedge product}. The ``improved''  expression \eqref{Schematic extraction two derivatives wedge product} will be necessary in the proof of Theorem \ref{thm:Uniqueness}, where we will be estimating the difference of solutions to the timelike minimal surface equation in Sobolev spaces of lower regularity compared to those used for Theorem \ref{thm:Existence}.

\medskip
\noindent \textbf{Remark 2.} In the general case when $\alpha, \beta, \gamma, \delta \in\{0,1,2,3\}$ (i.e.~$\alpha,\beta$ are not confined to be spatial, as in the statement of Proposition \ref{prop:Derivative k}), repeating the proof of \eqref{Schematic extraction one derivative wedge product} yields
\begin{align}\label{Schematic extraction one derivative wedge product time indices}
(P_{j'} & k \wedge  P_{j'+J'} k)^{\bA\bB}_{\a\b\ga\delta}  + (P_{j'+J'} k \wedge P_{j'} k)^{\bA\bB}_{\a\b\ga\delta} \\
=& 
\sum_{(j_1, j_2) \in \big\{ (j',j'+J'),\, (j'+J',j') \big\} } \Bigg[\partial \Big(P_{j_1} (\JapD^{-1}k)\cdot  P_{j_2} (k)\Big) 
\nonumber \\
& \hphantom{= \sum_{(j_1, j_2)=(j',j'+J') \text{ or } (j'+J',j')} \bar{\partial}}  
+P_{j_1}\big(\JapD^{-1}(  g\cdot\partial g \cdot k + \omega\cdot k)\big)\cdot  P_{j_2}k\Bigg], \nonumber
\end{align}
i.e.~the derivative pulled outside the product is not necessarily in a spatial direction.

\medskip
\begin{proof}
For the proof of Proposition \ref{prop:Derivative k}, we will make use of the Riesz-type multiplier operators $\mathcal{T}^{(i)}_{j'}$, $i=1,2,3$ and $j\ge 0$ (see Definition \ref{def:T multiplier}). Recall that the operators $\mathcal{T}^{(i)}_{j'}$ are (renormalized) Mikhlin multipliers of order $-1$; in particular, the the bound \eqref{Mikhlin property} implies that:
\begin{equation}\label{Mikhlin}
\| \mathcal{T}^{(i)}_{j'} f(t,\cdot )\|_{W^{s,p}} \lesssim_{s,p} 2^{-j'} \| f(t,\cdot) \|_{W^{s,p}} \hphantom{a} \text{ for any } s\in \mathbb{R}, p\in (1,+\infty)
\end{equation}
and
\begin{equation}\label{Mikhlin infinity}
\| \mathcal{T}^{(i)}_{j'} f(t,\cdot )\|_{W^{s,\infty}} \lesssim_{s} 2^{-j'} \| f(t,\cdot) \|_{W^{s+\delta_0, \infty}} \hphantom{a} \text{ for any } s\in \mathbb{R}.
\end{equation}
Moreover, the operators $\mathcal{T}^{(i)}_{j'}$ satisfy the following divergence identity for any smooth function $f:[0,T]\times \mathbb{T}^3 \rightarrow \mathbb{R}$:
\begin{equation}\label{Identity derivative}
\partial_i \Big( \mathcal{T}^{(i)}_{j'} P_{j'} f \Big) = P_{j'} f.
\end{equation}

The Codazzi equations \eqref{Codazzi} can be reexpressed as
\begin{equation}\label{Codazzi coordinates 2}
\partial_\kappa k^{\bar{A}}_{\lambda\mu} - \partial_\lambda k^{\bar{A}}_{\kappa \mu} =  \partial g \cdot k + \omega\cdot k.
\end{equation}
After applying the multiplier operators $P_{j'}$ and $\mathcal{T}_{j'}^{(i)} P_{j'}$ for $i=1,2,3$, \eqref{Codazzi coordinates 2} yields
\begin{equation}\label{Codazzi coordinates projection 1}
\partial_\kappa P_{j'} (k^{\bar{A}}_{\lambda\mu}) - \partial_\lambda P_{j'} (k^{\bar{A}}_{\kappa \mu})=  P_{j'} \big( \partial g \cdot k + \omega\cdot k\big) 
\end{equation}
and
\begin{equation}\label{Codazzi coordinates projection}
\partial_\kappa \big( \mathcal{T}_{j'}^{(i)} P_{j'} (k^{\bar{A}}_{\lambda\mu}) \big) - \partial_\lambda \big(\mathcal{T}_{j'}^{(i)} P_{j'} (k^{\bar{A}}_{\kappa \mu})\big) =  \mathcal{T}_{j'}^{(i)} P_{j'} \big( \partial g \cdot k + \omega\cdot k\big) 
\end{equation}
Using \eqref{Codazzi coordinates projection} repeatedly, we obtain for $i=1,2,3$:
\begin{align}
\partial_i \big( \mathcal{T}_{j'}^{(i)} P_{j'} (k^{\bar{B}}_{a \ga} & )\big) \cdot  P_{j' +J'} (k^{\bar{A}}_{b\delta}) 
= \partial_a \big( \mathcal{T}_{j'}^{(i)} P_{j'} (k^{\bar{B}}_{i \ga})\big) \cdot  P_{j'+J'} (k^{\bar{A}}_{b\delta}) + \mathcal{T}_{j'}^{(i)} P_{j'}\big( \partial g \cdot k + \omega\cdot k\big)\cdot  P_{j'+J'} k \nonumber \\
& = -\mathcal{T}_{j'}^{(i)} P_{j'} (k^{\bar{B}}_{i \ga})\cdot  \partial_a P_{j' +J'} (k^{\bar{A}}_{b\delta}) 
+\partial_a \Big(\mathcal{T}_{j'}^{(i)} P_{j'} (k^{\bar{B}}_{i \ga})\cdot  P_{j'+J'} (k^{\bar{A}}_{b\delta})\Big)\nonumber\\
& \hphantom{an} + \mathcal{T}_{j'}^{(i)} P_{j'}\big( \partial g \cdot k + \omega\cdot k\big)\cdot  P_{j'+J'}k  \nonumber \\
& = - \mathcal{T}_{j'}^{(i)} P_{j'} (k^{\bar{B}}_{i \ga})\cdot  \partial_b P_{j'+J'}(k^{\bar{A}}_{a\delta}) 
+\partial_a \Big(\mathcal{T}_{j'}^{(i)} P_{j'} (k^{\bar{B}}_{i \ga})\cdot  P_{j'+J'}(k^{\bar{A}}_{b\delta})\Big)\nonumber \\
&\hphantom{an} + \mathcal{T}_{j'}^{(i)} P_{j'}\big( g\cdot \partial g \cdot k + \omega\cdot k\big)\cdot  P_{j'+J'}k+ P_{j'+J'}\big( \partial g \cdot k + \omega\cdot k\big)\cdot  \mathcal{T}_{j'}^{(i)} P_{j'}k \nonumber \\
& = \partial_b \big( \mathcal{T}_{j'}^{(i)} P_{j'} (k^{\bar{B}}_{i \ga})\big)\cdot  P_{j'+J'} (k^{\bar{A}}_{a\delta})
-\partial_b \Big(\mathcal{T}_{j'}^{(i)} P_{j'} (k^{\bar{B}}_{i \ga})\cdot  P_{j'+J'} (k^{\bar{A}}_{a\delta})\Big) 
+\partial_a \Big(\mathcal{T}_{j'}^{(i)} P_{j'} (k^{\bar{B}}_{i \ga})\cdot  P_{j'+J'} (k^{\bar{A}}_{b\delta})\Big)\nonumber \\
&\hphantom{an} +  \mathcal{T}_{j'}^{(i)} P_{j'}\big(  g\cdot\partial g \cdot k + \omega\cdot k\big)\cdot  P_{j'+J'}k+ P_{j'+J'}\big(  g\cdot\partial g \cdot k + \omega\cdot k\big)\cdot  \mathcal{T}_{j'}^{(i)} P_{j'}k  \nonumber \\
& = \partial_i \big(\mathcal{T}_{j'}^{(i)} P_{j'} (k^{\bar{B}}_{b \ga})\big)\cdot  P_{j'+J'} (k^{\bar{A}}_{a\delta})
-\partial_b \Big( \mathcal{T}_{j'}^{(i)} P_{j'} (k^{\bar{B}}_{i \ga})\cdot  P_{j'+J'} (k^{\bar{A}}_{a\delta})\Big) 
+\partial_a \Big(\mathcal{T}_{j'}^{(i)} P_{j'} (k^{\bar{B}}_{i \ga})\cdot  P_{j'+J'} (k^{\bar{A}}_{b\delta})\Big)\nonumber \\
&\hphantom{an} + \mathcal{T}_{j'}^{(i)} P_{j'}\big(  g\cdot\partial g \cdot k + \omega\cdot k\big)\cdot  P_{j'+J'}k+ P_{j'+J'}\big(  g\cdot\partial g \cdot k + \omega\cdot k\big)\cdot  \mathcal{T}_{j'}^{(i)} P_{j'}k \nonumber
\end{align}
and, after moving the first term on the right hand side to the left:
\begin{align}\label{Codazzi k2 1}
\partial_i \big( \mathcal{T}_{j'}^{(i)} P_{j'} (k^{\bar{B}}_{a \ga} ) \big) & \cdot  P_{j'+J'} (k^{\bar{A}}_{b\delta}) 
-   \partial_i \big( \mathcal{T}_{j'}^{(i)} P_{j'} (k^{\bar{B}}_{b \ga})\big) \cdot P_{j'+J'} (k^{\bar{A}}_{a\delta})  \\
=&  -\partial_b \Big( \mathcal{T}_{j'}^{(i)} P_{j'} (k^{\bar{B}}_{i \ga})\cdot  P_{j'+J'} (k^{\bar{A}}_{a\delta})\Big) 
+\partial_a \Big(\mathcal{T}_{j'}^{(i)} P_{j'} (k^{\bar{B}}_{i \ga})\cdot  P_{j'+J'} (k^{\bar{A}}_{b\delta})\Big)\nonumber \\
 & +\mathcal{T}_{j'}^{(i)} P_{j'}\big(  g\cdot\partial g \cdot k + \omega\cdot k\big)\cdot  P_{j'+J'}k+ P_{j'+J'}\big( g\cdot \partial g \cdot k + \omega\cdot k\big)\cdot  \mathcal{T}_{j'}^{(i)} P_{j'}k. \nonumber 
\end{align}
 In view of the identity  \eqref{Identity derivative}, \eqref{Codazzi k2 1} yields:
\begin{align}\label{Codazzi k2}
\ P_{j'} (k^{\bar{B}}_{a \ga} ) & \cdot  P_{j'+J'} (k^{\bar{A}}_{b\delta}) 
-   P_{j'} (k^{\bar{B}}_{b \ga}) \cdot P_{j'+J'} (k^{\bar{A}}_{a\delta})  \\
= &   -\partial_b \Big( \mathcal{T}_{j'}^{(i)} P_{j'} (k^{\bar{B}}_{i \ga})\cdot  P_{j'+J'} (k^{\bar{A}}_{a\delta})\Big) 
+\partial_a \Big(\mathcal{T}_{j'}^{(i)} P_{j'} (k^{\bar{B}}_{i \ga})\cdot  P_{j'+J'} (k^{\bar{A}}_{b\delta})\Big)\nonumber \\
 &  +\mathcal{T}_{j'}^{(i)} P_{j'}\big(  g\cdot\partial g \cdot k + \omega\cdot k\big)\cdot  P_{j'+J'}k+ P_{j'+J'}\big(  g\cdot\partial g \cdot k + \omega\cdot k\big)\cdot  \mathcal{T}_{j'}^{(i)} P_{j'}k. \nonumber 
\end{align}

Going back to the expression \eqref{Wedge product} for $\big(P_{j'} k \wedge P_{j'+J'} k \big)^{\bA\bB}_{ab\ga\delta}$ and  using \eqref{Codazzi k2} twice (once with the roles of $j'$ and $j'+J'$ inverted), we infer:
\begin{align}\label{Total derivative wedge}
\big(P_{j'} k & \wedge P_{j'+J'} k \big)^{\bA\bB}_{ab\ga\delta} + \big(P_{j'+J'} k \wedge P_{j'} k \big)^{\bA\bB}_{ab\ga\delta}  \\
& \doteq  \Big( P_{j'} (k^{\bB}_{a\ga}) P_{j'+J'}  (k^{\bA}_{b\delta}) -  P_{j'} (k^{\bA}_{a\delta}) P_{j'+J'}  (k^{\bB}_{b\ga})\Big) + \Big(  P_{j'+J'} (k^{\bB}_{a\ga}) P_{j'}  (k^{\bA}_{b\delta}) -  P_{j'+J'} (k^{\bA}_{a\delta}) P_{j'}  (k^{\bB}_{b\ga})\Big)   \nonumber  \\
& = \Big( P_{j'} (k^{\bB}_{a\ga}) P_{j'+J'}  (k^{\bA}_{b\delta}) -  P_{j'+J'} (k^{\bA}_{a\delta}) P_{j'}  (k^{\bB}_{b\ga}) \Big) 
+ \Big(  P_{j'+J'} (k^{\bB}_{a\ga}) P_{j'}  (k^{\bA}_{b\delta}) -  P_{j'} (k^{\bA}_{a\delta}) P_{j'+J'}  (k^{\bB}_{b\ga})\Big)   \nonumber  \\
& = \sum_{(j_1, j_2) \in \big\{ (j',j'+J'),\, (j'+J',j') \big\} } \Bigg[-\partial_b \Big( \mathcal{T}_{j_1}^{(i)} P_{j_1} (k^{\bar{B}}_{i \ga})\cdot  P_{j_2} (k^{\bar{A}}_{a\delta})\Big) 
+\partial_a \Big(\mathcal{T}_{j_1}^{(i)} P_{j_1} (k^{\bar{B}}_{i \ga})\cdot  P_{j_2} (k^{\bar{A}}_{b\delta})\Big)\nonumber \\
& \hphantom{= \sum_{(j_1, j_2)=(j',j'+J') \text{ or } (j'+J',j')} \Bigg[ -  }  
+\mathcal{T}_{j_1} P_{j_1}\big(  g\cdot\partial g \cdot k + \omega\cdot k\big)\cdot  P_{j_2}k\Bigg]. \nonumber 
\end{align}
In particular, \eqref{Schematic extraction one derivative wedge product} holds (recalling that $a,b \in \{1,2,3\}$). 

Using the frequency-projected Codazzi equation \eqref{Codazzi coordinates projection 1} repeatedly, we can readily calculate for $(j_1, j_2) \in \big\{ (j',j'+J'),\, (j'+J',j') \big\}$:
\begin{align*}
 \mathcal{T}_{j_1}^{(i)} & P_{j_1} (k^{\bar{B}}_{i \ga})\cdot  P_{j_2} (k^{\bar{A}}_{a\delta})
= \partial_b \Big( \mathcal{T}_{j_1}^{(i)}  P_{j_1} (k^{\bar{B}}_{i \ga})\cdot \partial_\ell \big( \mathcal T_{j_2}^{(\ell)} P_{j_2} (k^{\bar{A}}_{a\delta})\big)
\\
 \stackrel{\hphantom{\eqref{Identity derivative}}}{=} &
 \mathcal{T}_{j_1}^{(i)}  P_{j_1} (k^{\bar{B}}_{i \ga})\cdot \partial_a \big( \mathcal T_{j_2}^{(\ell)} P_{j_2} (k^{\bar{A}}_{\ell \delta})\big)+ \mathcal{T}_{j_1}  P_{j_1} k \cdot \mathcal T_{j_2} P_{j_2}(\partial g\cdot k + \omega \cdot k)
\\
 \stackrel{\hphantom{\eqref{Identity derivative}}}{=} &
 \partial_a \Big(\mathcal{T}_{j_1}^{(i)}  P_{j_1} (k^{\bar{B}}_{i \ga})\cdot  \mathcal T_{j_2}^{(\ell)} P_{j_2} (k^{\bar{A}}_{\ell \delta})\Big) 
- \partial_a \Big(\mathcal{T}_{j_1}^{(i)}  P_{j_1} (k^{\bar{B}}_{i \ga})\Big)\cdot  \mathcal T_{j_2}^{(\ell)} P_{j_2} (k^{\bar{A}}_{\ell \delta})
\\
&  +  \mathcal{T}_{j_1}  P_{j_1} k \cdot \mathcal T_{j_2} P_{j_2}(\partial g\cdot k + \omega \cdot k)
\\
 \stackrel{\hphantom{\eqref{Identity derivative}}}{=} &
\partial_a \Big(\mathcal{T}_{j_1}^{(i)}  P_{j_1} (k^{\bar{B}}_{i \ga})\cdot  \mathcal T_{j_2}^{(\ell)} P_{j_2} (k^{\bar{A}}_{\ell \delta})\Big) 
- \partial_\ga \Big(\mathcal{T}_{j_1}^{(i)}  P_{j_1} (k^{\bar{B}}_{i a})\Big)\cdot  \mathcal T_{j_2}^{(\ell)} P_{j_2} (k^{\bar{A}}_{\ell \delta})
\\
&  +  \mathcal{T}_{j_1}  P_{j_1} k \cdot \mathcal T_{j_2} P_{j_2}(\partial g\cdot k + \omega \cdot k)
\\
 \stackrel{\hphantom{\eqref{Identity derivative}}}{=} & 
\partial_a \Big(\mathcal{T}_{j_1}^{(i)}  P_{j_1} (k^{\bar{B}}_{i \ga})\cdot  \mathcal T_{j_2}^{(\ell)} P_{j_2} (k^{\bar{A}}_{\ell \delta})\Big) 
- \partial_\ga \Big(\mathcal{T}_{j_1}^{(i)}  P_{j_1} (k^{\bar{B}}_{i a})\cdot  \mathcal T_{j_2}^{(\ell)} P_{j_2} (k^{\bar{A}}_{\ell \delta})\Big)
\\
& + (\mathcal{T}_{j_1}^{(i)}  P_{j_1} (k^{\bar{B}}_{i a})\cdot  \partial_{\ga}\Big(\mathcal T_{j_2}^{(\ell)} P_{j_2} (k^{\bar{A}}_{\ell \delta})\Big)
\\
&  +  \mathcal{T}_{j_1}  P_{j_1} k \cdot \mathcal T_{j_2} P_{j_2}(\partial g\cdot k + \omega \cdot k)
\\
 \stackrel{\hphantom{\eqref{Identity derivative}}}{=} &
\partial_a \Big(\mathcal{T}_{j_1}^{(i)}  P_{j_1} (k^{\bar{B}}_{i \ga})\cdot  \mathcal T_{j_2}^{(\ell)} P_{j_2} (k^{\bar{A}}_{\ell \delta})\Big) 
- \partial_\ga \Big(\mathcal{T}_{j_1}^{(i)}  P_{j_1} (k^{\bar{B}}_{i a})\cdot  \mathcal T_{j_2}^{(\ell)} P_{j_2} (k^{\bar{A}}_{\ell \delta})\Big)
\\
& + (\mathcal{T}_{j_1}^{(i)}  P_{j_1} (k^{\bar{B}}_{i a})\cdot  \partial_{\delta}\Big(\mathcal T_{j_2}^{(\ell)} P_{j_2} (k^{\bar{A}}_{\ell \ga})\Big)
\\
&  +  \mathcal{T}_{j_1}  P_{j_1} k \cdot \mathcal T_{j_2} P_{j_2}(\partial g\cdot k + \omega \cdot k)
\\
 \stackrel{\hphantom{\eqref{Identity derivative}}}{=} &
\partial_a \Big(\mathcal{T}_{j_1}^{(i)}  P_{j_1} (k^{\bar{B}}_{i \ga})\cdot  \mathcal T_{j_2}^{(\ell)} P_{j_2} (k^{\bar{A}}_{\ell \delta})\Big) 
- \partial_\ga \Big(\mathcal{T}_{j_1}^{(i)}  P_{j_1} (k^{\bar{B}}_{i a})\cdot  \mathcal T_{j_2}^{(\ell)} P_{j_2} (k^{\bar{A}}_{\ell \delta})\Big)
+ \partial_\delta \Big( (\mathcal{T}_{j_1}^{(i)}  P_{j_1} (k^{\bar{B}}_{i a})\cdot \mathcal T_{j_2}^{(\ell)} P_{j_2} (k^{\bar{A}}_{\ell \ga})\Big) 
\\
& -\partial_\delta \Big(\mathcal{T}_{j_1}^{(i)}  P_{j_1} (k^{\bar{B}}_{i a})\Big)\cdot  \mathcal T_{j_2}^{(\ell)} P_{j_2} (k^{\bar{A}}_{\ell \ga})
\\
&  +  \mathcal{T}_{j_1}  P_{j_1} k \cdot \mathcal T_{j_2} P_{j_2}(\partial g\cdot k + \omega \cdot k)
\\
\stackrel{\eqref{Identity derivative}}{=} & 
\partial_a \Big(\mathcal{T}_{j_1}^{(i)}  P_{j_1} (k^{\bar{B}}_{i \ga})\cdot  \mathcal T_{j_2}^{(\ell)} P_{j_2} (k^{\bar{A}}_{\ell \delta})\Big) 
- \partial_\ga \Big(\mathcal{T}_{j_1}^{(i)}  P_{j_1} (k^{\bar{B}}_{i a})\cdot  \mathcal T_{j_2}^{(\ell)} P_{j_2} (k^{\bar{A}}_{\ell \delta})\Big)
+ \partial_\delta \Big( (\mathcal{T}_{j_1}^{(i)}  P_{j_1} (k^{\bar{B}}_{i a})\cdot \mathcal T_{j_2}^{(\ell)} P_{j_2} (k^{\bar{A}}_{\ell \ga})\Big) 
\\
& -\partial_i \Big(\mathcal{T}_{j_1}^{(i)}  P_{j_1} (k^{\bar{B}}_{\delta a})\Big)\cdot  \mathcal T_{j_2}^{(\ell)} P_{j_2} (k^{\bar{A}}_{\ell \ga})
\\
&  +  \mathcal{T}_{j_1}  P_{j_1} k \cdot \mathcal T_{j_2} P_{j_2}(\partial g\cdot k + \omega \cdot k)
\\
 \stackrel{\hphantom{\eqref{Identity derivative}}}{=} &
\partial_a \Big(\mathcal{T}_{j_1}^{(i)}  P_{j_1} (k^{\bar{B}}_{i \ga})\cdot  \mathcal T_{j_2}^{(\ell)} P_{j_2} (k^{\bar{A}}_{\ell \delta})\Big) 
- \partial_\ga \Big(\mathcal{T}_{j_1}^{(i)}  P_{j_1} (k^{\bar{B}}_{i a})\cdot  \mathcal T_{j_2}^{(\ell)} P_{j_2} (k^{\bar{A}}_{\ell \delta})\Big)
+ \partial_\delta \Big( (\mathcal{T}_{j_1}^{(i)}  P_{j_1} (k^{\bar{B}}_{i a})\cdot \mathcal T_{j_2}^{(\ell)} P_{j_2} (k^{\bar{A}}_{\ell \ga})\Big) 
\\
& -  P_{j_1} (k^{\bar{B}}_{\delta a})\cdot  \mathcal T_{j_2}^{(\ell)} P_{j_2} (k^{\bar{A}}_{\ell \ga})
 +  \mathcal{T}_{j_1}  P_{j_1} k \cdot \mathcal T_{j_2} P_{j_2}(\partial g\cdot k + \omega \cdot k).
\\
\end{align*}
Note that the first term in the last line of the right hand side above is the same as the initial term on the left hand side, but with the roles of $j_1$ and $j_2$  and of $\bA$ and $\bB$ inverted. Moving that term to the left hand side, summing over $(j_1, j_2) \in \big\{ (j',j'+J'),\, (j'+J',j') \big\}$ and contracting with $P_{j_3}(m(e)_{\bA\bB})$, we thus obtain for any $j_3 \in \mathbb N$:
\begin{align*}
&  \sum_{(j_1, j_2) \in \big\{ (j',j'+J'),\, (j'+J',j') \big\} } P_{j_3}(m(e)_{\bA\bB}) \cdot  \mathcal{T}_{j_1}^{(i)}  P_{j_1} (k^{\bar{B}}_{i \ga})\cdot  P_{j_2} (k^{\bar{A}}_{a\delta})\\
& \hphantom{(j_1,j_2)}
= \f12 \sum_{(j_1, j_2) \in \big\{ (j',j'+J'),\, (j'+J',j') \big\} } \Bigg[P_{j_3}(m(e)_{\bA\bB})\Big(
\partial_a \big(\mathcal{T}_{j_1}^{(i)}  P_{j_1} (k^{\bar{B}}_{i \ga})\cdot  \mathcal T_{j_2}^{(\ell)} P_{j_2} (k^{\bar{A}}_{\ell \delta})\big) 
- \partial_\ga \big(\mathcal{T}_{j_1}^{(i)}  P_{j_1} (k^{\bar{B}}_{i a})\cdot  \mathcal T_{j_2}^{(\ell)} P_{j_2} (k^{\bar{A}}_{\ell \delta})\big)\\
& \hphantom{(j_1,j_2)} \hphantom{=\f12 \sum_{(j_1, j_2) \in \big\{ (j',j'+J'),\, (j'+J',j') \big\} } \Bigg[ P_{j_3}(m(e)_{\bA\bB})}
+ \partial_\delta \big( (\mathcal{T}_{j_1}^{(i)}  P_{j_1} (k^{\bar{B}}_{i a})\cdot \mathcal T_{j_2}^{(\ell)} P_{j_2} (k^{\bar{A}}_{\ell \ga})\big)\Big)\\
& \hphantom{(j_1,j_2)}\hphantom{=\f12 \sum_{(j_1, j_2) \in \big\{ (j',j'+J'),\, (j'+J',j') \big\} } \Bigg[}
 + P_{j_3}(m(e))\cdot \mathcal{T}_{j_1}  P_{j_1} k \cdot \mathcal T_{j_2} P_{j_2}(\partial g\cdot k + \omega \cdot k)
\Big].
\end{align*}
Applying a $(-\partial_b)$ derivative to the above and then subtracting the same expression but with the roles of $a$ and $b$ inverted, we obtain:
\begin{align*}
& \sum_{(j_1, j_2) \in \big\{ (j',j'+J'),\, (j'+J',j') \big\} }  P_{j_3}(m(e)_{\bA\bB}) \Bigg[-\partial_b \Big( \mathcal{T}_{j_1}^{(i)} P_{j_1} (k^{\bar{B}}_{i \ga})\cdot  P_{j_2} (k^{\bar{A}}_{a\delta})\Big) 
+\partial_a \Big(\mathcal{T}_{j_1}^{(i)} P_{j_1} (k^{\bar{B}}_{i \ga})\cdot  P_{j_2} (k^{\bar{A}}_{b\delta})\Big)\Bigg]\\
& \hphantom{(j_1,j_2)}= 
\f12 \sum_{(j_1, j_2) \in \big\{ (j',j'+J'),\, (j'+J',j') \big\} } \Bigg[ -\partial_b \Bigg(P_{j_3}(m(e)_{\bA\bB})\Big(
\partial_a \big(\mathcal{T}_{j_1}^{(i)}  P_{j_1} (k^{\bar{B}}_{i \ga})\cdot  \mathcal T_{j_2}^{(\ell)} P_{j_2} (k^{\bar{A}}_{\ell \delta})\big) 
\\
& \hphantom{(j_1,j_2)} \hphantom{(j_1,j_2)} \hphantom{=\f12 \sum_{(j_1, j_2) \in \big\{ (j',j'+J'),\, (j'+J',j') \big\} } \Bigg[ P_{j_3}(m(e)_{\bA\bB})}
- \partial_\ga \big(\mathcal{T}_{j_1}^{(i)}  P_{j_1} (k^{\bar{B}}_{i a})\cdot  \mathcal T_{j_2}^{(\ell)} P_{j_2} (k^{\bar{A}}_{\ell \delta})\big)\\
& \hphantom{(j_1,j_2)} \hphantom{(j_1,j_2)} \hphantom{=\f12 \sum_{(j_1, j_2) \in \big\{ (j',j'+J'),\, (j'+J',j') \big\} } \Bigg[ P_{j_3}(m(e)_{\bA\bB})}
+ \partial_\delta \big( (\mathcal{T}_{j_1}^{(i)}  P_{j_1} (k^{\bar{B}}_{i a})\cdot \mathcal T_{j_2}^{(\ell)} P_{j_2} (k^{\bar{A}}_{\ell \ga})\big)\Big)\Bigg)\\
& \hphantom{(j_1,j_2)} \hphantom{= 
\f12 \sum_{(j_1, j_2) \in \big\{ (j',j'+J'),\, (j'+J',j') \big\} } \Bigg[}
+ \partial_a \Bigg(P_{j_3}(m(e)_{\bA\bB})\Big(
\partial_b \big(\mathcal{T}_{j_1}^{(i)}  P_{j_1} (k^{\bar{B}}_{i \ga})\cdot  \mathcal T_{j_2}^{(\ell)} P_{j_2} (k^{\bar{A}}_{\ell \delta})\big) 
\\
& \hphantom{(j_1,j_2)} \hphantom{(j_1,j_2)} \hphantom{=\f12 \sum_{(j_1, j_2) \in \big\{ (j',j'+J'),\, (j'+J',j') \big\} } \Bigg[ P_{j_3}(m(e)_{\bA\bB})}
- \partial_\ga \big(\mathcal{T}_{j_1}^{(i)}  P_{j_1} (k^{\bar{B}}_{i b})\cdot  \mathcal T_{j_2}^{(\ell)} P_{j_2} (k^{\bar{A}}_{\ell \delta})\big)\\
& \hphantom{(j_1,j_2)} \hphantom{(j_1,j_2)} \hphantom{=\f12 \sum_{(j_1, j_2) \in \big\{ (j',j'+J'),\, (j'+J',j') \big\} } \Bigg[ P_{j_3}(m(e)_{\bA\bB})}
+ \partial_\delta \big( (\mathcal{T}_{j_1}^{(i)}  P_{j_1} (k^{\bar{B}}_{i b})\cdot \mathcal T_{j_2}^{(\ell)} P_{j_2} (k^{\bar{A}}_{\ell \ga})\big)\Big)\Bigg)
\\
& \hphantom{(j_1,j_2)} \hphantom{(j_1,j_2)}\hphantom{=\f12 \sum_{(j_1, j_2) \in \big\{ (j',j'+J'),\, (j'+J',j') \big\} } \Bigg[}
+ \bar\partial \Big(P_{j_3}(m(e))\cdot \mathcal{T}_{j_1}  P_{j_1} k \cdot \mathcal T_{j_2} P_{j_2}(\partial g\cdot k + \omega \cdot k)\Big)
\\
& \hphantom{(j_1,j_2)} \hphantom{(j_1,j_2)}\hphantom{=\f12 \sum_{(j_1, j_2) \in \big\{ (j',j'+J'),\, (j'+J',j') \big\} } \Bigg[}
+ \bar\partial\big(P_{j_3}(m(e)_{\bA\bB})\big) \mathcal T_{j_1} P_{j_1} k\cdot P_{j_2} k 
\Big]
\end{align*}
Returning to \eqref{Total derivative wedge}, contracting with $P_{j_3}(m(e)_{\bA\bB})$ and using the above expression to substitute the first line in the right hand side, we infer \eqref{Schematic extraction two derivatives wedge product}.

Finally, let show that \eqref{Total derivative wedge} implies \eqref{Improved estimate wedge infinity}. Starting from the expression \eqref{Total derivative wedge} and using the bounds \eqref{Mikhlin}--\eqref{Mikhlin infinity} for $\mathcal{T}^{(i)}_{j'}$, we can estimate:
\begin{align}
\| & P_j  \Big( \big(P_{j'} k  \wedge P_{j'+J'} k \big)^{\bA\bB}_{ab\ga\delta} + \big(P_{j'+J'} k \wedge P_{j'} k \big)^{\bA\bB}_{ab\ga\delta} \Big) \|_{L^1 L^\infty}\\
& \lesssim \sum_{(j_1, j_2) \in \big\{ (j',j'+J'),\, (j'+J',j') \big\} } \Bigg( \| P_j \big( \bar{\partial}\big( \mathcal{T}_{j_1}^{(i)} P_{j_1} k \cdot P_{j_2} k\big)\big) \|_{L^1 L^\infty}
  +\| P_j \big( \mathcal{T}_{j_1}^{(i)} P_{j_1}\big(  g\cdot\partial g \cdot k + \omega\cdot k\big)\cdot  P_{j_2}k\big) \|_{L^1 L^\infty} \nonumber \\
& \hphantom{andan \sum_{(j_1, j_2) \in \big\{ (j',j'+J'),\, (j'+J',j') \big\} }} + \| P_j \big( P_{j_1}\big(\partial g \cdot k+ \omega\cdot k\big)\cdot  \mathcal{T}_{j_2}^{(i)} P_{j_2}k\big) \|_{L^1 L^\infty} \Bigg) \nonumber \\
& \lesssim \sum_{(j_1, j_2) \in \big\{ (j',j'+J'),\, (j'+J',j') \big\} } \Bigg( 2^j \|\mathcal{T}^{(i)}_{j_1} P_{j_1}k\|_{L^2 L^\infty} \|P_{j_2} k \|_{L^2 L^\infty} + 2^{2j} \|  P_j \big( \mathcal{T}_{j_1}^{(i)} P_{j_1}\big( g\cdot \partial g \cdot k + \omega\cdot k\big)\cdot  P_{j_2}k\big) \|_{L^1 L^{\f32}} \nonumber \\
&\hphantom{andan \sum_{(j_1, j_2) \in \big\{ (j',j'+J'),\, (j'+J',j') \big\} }} + 2^{2j} \| P_j \big( P_{j_1}\big( g\cdot\partial g \cdot k+ \omega\cdot k\big)\cdot  \mathcal{T}_{j_2}^{(i)} P_{j_2}k\big)   \|_{L^1 L^{\f32}}  \nonumber \\
& \lesssim \sum_{(j_1, j_2) \in \big\{ (j',j'+J'),\, (j'+J',j') \big\} } \Bigg( 2^{j-j'} \| P_{j_1}k\|_{L^2 W^{\delta_0, \infty}} \|P_{j_2} k \|_{L^2 L^\infty} + 2^{2j} \| \mathcal{T}_{j_1}^{(i)} P_{j_1}\big(  g\cdot\partial g \cdot k + \omega\cdot k\big)\|_{L^{\f43} L^{\f{12}5}} \| P_{j_2}k \|_{L^4 L^4} \nonumber \\
& \hphantom{andan \sum_{(j_1, j_2) \in \big\{ (j',j'+J'),\, (j'+J',j') \big\} }} + 2^{2j} \| P_{j_1}\big( g\cdot \partial g \cdot k + \omega\cdot k\big)\|_{L^{\f43} L^{\f{12}5}} \| \mathcal{T}_{j_2}^{(i)} P_{j_2}k \|_{L^4 L^4} \Bigg) \nonumber \\
& \lesssim \sum_{(j_1, j_2) \in \big\{ (j',j'+J'),\, (j'+J',j') \big\} } \Bigg(  2^{j-(1-\delta_0)j'} \| P_{j_1}k\|_{L^2 L^\infty} \| P_{j_2}k\|_{L^2 L^\infty} \nonumber \\
&\hphantom{ \lesssim \sum_{(j_1, j_2) \in \big\{ (j',j'+J'),\, (j'+J',j') \big\} } \Bigg(}
+ 2^{2j-j'} \| P_{j_1}\big(  g\cdot\partial g \cdot k + \omega\cdot k\big)\|_{L^{\f43} L^{\f{12}5}} \| P_{j_2}k \|_{L^4 L^4} \Bigg) \nonumber \\
& \lesssim 2^{j-(1-\delta_0)j'} \| k_{j'}\|_{L^2 L^\infty}\| k_{j'+J'}\|_{L^2 L^\infty} + 2^{2j-(1+2\delta_0)j'} \big( \|  g\cdot\partial g\|_{L^2 L^6} + \|\omega\|_{L^2 L^6}\big) \| k \|^2_{L^4 W^{2\delta_0,4}}, \nonumber
\end{align}
which proves \eqref{Improved estimate wedge infinity}.

\end{proof}

\subsection{Improved bounds for the curvature tensors $R$, $R^\perp$}
In this section, we will use the bounds for $k$ established in Lemmas \ref{lem:Energy estimates} and \ref{lem:Strichartz estimates}, as well as Proposition \ref{prop:Derivative k}, to obtain estimates for the Riemann tensor $R_{\a \b \ga \delta}$ of the metric $g$ and the curvature tensor $R^{\perp \bA \bB}_{\a\b}$ of the normal bundle $NY$.

In particular, we will establish the following result:

\begin{lemma}\label{lem:Riemann estimates}
Let $Y:[0,T)\times \mathbb T\rightarrow \mathcal N$ and $\{ e_{\bA}\}_{\bA=4}^n$ be as in the statement of Proposition \ref{prop:Bootstrap}. Then the Riemann curvature tensor $R$ satisfies
\begin{equation}\label{Estimate R easy}
 \|R \|_{L^2 H^{\f16+s_1+\f14\delta_0}} +\|R \|_{L^{\f74} W^{s_1+\f14\delta_0,\f73}} + \|  R \|_{L^\infty H^{s-3+\delta_0}}  \lesssim C_0^2 \mathcal{D}^2,
\end{equation}
\begin{equation}\label{Estimate R LHH easy}
 \|R^{\natural} \|_{L^2 H^{\f16+s_1+\f14\delta_0}}  +\|R^\natural \|_{L^{\f74} W^{s_1+\f14\delta_0,\f73}}+ \|  R^{\natural} \|_{L^\infty H^{s-3+\delta_0}}  \lesssim C_0^2 \mathcal{D}^2
\end{equation}
and
\begin{equation}\label{Estimate d R easy}
\|  \partial R \|_{L^\infty H^{s-4+\delta_0}} + \|  \partial R^\natural \|_{L^\infty H^{s-4+\delta_0}}  \lesssim C_0^2 \mathcal{D}^2,
\end{equation}
(where $R^{\natural}$ was defined by \eqref{R LHH}), as well as the following estimates:
\begin{itemize}
\item For $i,j\in\{1,2,3\}, \, \ga, \delta \in \{0,1,2,3\}$:
\begin{equation}\label{Estimate R hard}
 \|  R_{ij\ga\delta} \|_{L^1 W^{-1+\delta_0, \infty}}   \lesssim C_0^2 \mathcal{D}^2
\end{equation}
(in view of the symmetries of the Riemann curvature tensor, the above corresponds to precisely those components  of $R$ with at most one index equal to $0$).
\item For $\a,\ga, \delta \in \{0,1,2,3\}$:
\begin{align}\label{Estimate R hard 2}
 \|  R_{\a 0\ga\delta} - R^{\natural}_{\a 0\ga\delta} \|_{L^1 W^{-1+\delta_0, \infty}} & + \|  \partial_0 R_{\a 0\ga\delta} - \partial_0 R^{\natural}_{\a 0\ga\delta} \|_{L^2 H^{-\f56+s_1+\f14\delta_0}}\\
&  +  \|  \partial_0 R_{\a 0\ga\delta} - \partial_0 R^{\natural}_{\a 0\ga\delta} \|_{L^{\f74} W^{-1+s_1+\f14\delta_0,\f73}}\lesssim C_0^2 \mathcal{D}^2.\nonumber 
\end{align}
\end{itemize}
In the above,  the constants implicit in the $\lesssim$ notation are independent of $C_0$ (but depend only on $s$). The normal curvature tensor $R^\perp$ also satisfies the same estimates, i.e.:
\begin{equation}\label{Estimate R perp easy}
  \| R^{\perp} \|_{L^2 H^{\f16+s_1+\f14\delta_0}} +\|R^\perp \|_{L^{\f74} W^{s_1+\f14\delta_0,\f73}}\| + R^{\perp} \|_{L^\infty H^{s-3+\delta_0}}  \lesssim C_0^2 \mathcal{D}^2,
\end{equation}
\begin{equation}\label{Estimate R perp LHH easy}
  \| R^{\perp\natural} \|_{L^2 H^{\f16+s_1+\f14\delta_0}} +\|R^{\perp\natural} \|_{L^{\f74} W^{s_1+\f14\delta_0,\f73}}+ \|  R^{\perp\natural} \|_{L^\infty H^{s-3+\delta_0}}  \lesssim C_0^2 \mathcal{D}^2,
\end{equation}
\begin{equation}\label{Estimate d R perp easy}
\| \partial R^\perp \|_{L^\infty H^{s-4+\delta_0}} + \| \partial R^{\perp\natural} \|_{L^\infty H^{s-4+\delta_0}}  \lesssim C_0^2 \mathcal{D}^2,
\end{equation}
as well as 
\begin{equation}\label{Estimate R perp hard}
\| (R^{\perp})_{ij}^{\bA\bB} \|_{L^1 W^{-1+\delta_0, \infty}}  \lesssim C_0^2 \mathcal{D}^2 \quad \text{for }i,j\in\{1,2,3\}
\end{equation}
and
\begin{align}\label{Estimate R perp hard 2}
\| (R^{\perp})_{\a 0}^{\bA\bB} -(R^{\perp\natural})_{\a 0}^{\bA\bB}  \|_{L^1 W^{-1+\delta_0, \infty}} &+ \|  \partial_0 (R^{\perp})_{\a 0}^{\bA\bB} - \partial_0 (R^{\perp\natural})_{\a 0}^{\bA\bB} \|_{L^2 H^{-\f56+s_1+\f14\delta_0}} \\
& + \|  \partial_0 (R^{\perp})_{\a 0}^{\bA\bB} - \partial_0 (R^{\perp\natural})_{\a 0}^{\bA\bB} \|_{L^{\f74} W^{s_1+\f14\delta_0,\f73}} \lesssim C_0^2 \mathcal{D}^2, \nonumber 
\end{align}
where $R^{\perp\natural}$ was defined by \eqref{R perp LHH}.
\end{lemma}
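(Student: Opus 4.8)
\textbf{Proof strategy for Lemma \ref{lem:Riemann estimates}.}

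The plan is to feed the energy and Strichartz estimates for $k$ from Lemmas \ref{lem:Energy estimates} and \ref{lem:Strichartz estimates} through the Gauss equation \eqref{Gauss 3}, the Ricci equation \eqref{Ricci 3}, and the extraction-of-a-derivative identities of Proposition \ref{prop:Derivative k}, together with the product estimates recorded in the functional-inequalities appendix. Since $R = m(e)\cdot k\wedge k$ and $R^\perp = g\cdot k\wedge k$ have identical bilinear structure, I would prove all the bounds for $R$ and then remark that the argument for $R^\perp$ is verbatim the same (with $m(e)$ replaced by $g$), so the exposition really only needs to treat $R$. I would also use the bootstrap bound \eqref{Bootstrap bound} throughout to control $\| g - m_0\|$, $\|\partial e\|$, $\|\omega\|$ and all the auxiliary quantities, absorbing the resulting loss into the $C_0^2$ factor.

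\emph{The easy estimates} \eqref{Estimate R easy}--\eqref{Estimate d R easy}: Here one just uses the paraproduct decomposition $P_j(k\wedge k)\approx k_j\cdot k_{\le j} + k_{\le j}\cdot k_j + \sum_{j'>j}P_j(k_{j'}\cdot k_{j'})$ and multiplies by $m(e)$. For the $L^\infty H^{s-3+\delta_0}$ bound, since $s-2 > \tfrac12$ the low-high and high-low pieces are handled by the algebra property of $H^{s-2}$ paired against $L^\infty\cap H^{s-2}$; the high-high piece is where one must be slightly careful, but the gain $2^{(s-3+\delta_0)j}\sum_{j'>j}2^{-2(s-2)j'}$ converges because $s-2>\tfrac12>\delta_0$. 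For the space-time norms $L^2 H^{\f16 + s_1 + \f14\delta_0}$ and $L^{\f74}W^{s_1+\f14\delta_0,\f73}$, the point is to place one factor of $k$ in the Strichartz space ($L^2 W^{-\f12+s_0,\infty}$ or $L^4 W^{\f1{12}+s_0,4}$ respectively) and the other in $L^\infty H^{s-2}$, then interpolate; the numerology is exactly the one recorded in $\mathcal Q_k$, and one checks that $(-\tfrac12+s_0)+(s-2) \ge \tfrac16 + s_1 + \tfrac14\delta_0$ using $s = \tfrac52+\tfrac16+\delta_0+s_0$ and $8\delta_0 < s_1 < s_0$. The bound for $R^\natural$ is the same computation restricted to the LHH piece of the paraproduct, and $\partial R$, $\partial R^\natural$ follow by commuting a derivative through (spatial derivatives land on one factor; $\partial_0 k$ is controlled by $L^\infty H^{s-3}$ from \eqref{Energy estimate k}).

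\emph{The hard estimates} \eqref{Estimate R hard}, \eqref{Estimate R hard 2} (and their $R^\perp$ analogues \eqref{Estimate R perp hard}, \eqref{Estimate R perp hard 2}): this is the genuine obstacle. The difficulty is that the $L^1 W^{-1,\infty}$ (equivalently $L^1 L^\infty$ at differential order $-1$) norm of $k\wedge k$ cannot be obtained from the Strichartz bound $k\in L^2 W^{-\f12,\infty}$ by brute force — one would need a full derivative of gain on the high-high interactions, which is unavailable termwise. The resolution for \eqref{Estimate R hard} is to split $R_{ij\gamma\delta}$ into HL $+$ LH $+$ HH pieces: the HL and LH pieces have a high frequency $\sim 2^{j'}$ carrying $\|k_{j'}\|_{L^2 W^{-\f12+s_0,\infty}}$ paired against a low factor summed in $L^2 W^{-\f12+s_0,\infty}$ as well, giving an $L^1$ product via Cauchy--Schwarz in time with a summable frequency gain (the $\delta_0$ in the exponent $-1+\delta_0$ provides the room); the HH piece is handled \emph{not} directly but through \eqref{Improved estimate wedge infinity} of Proposition \ref{prop:Derivative k}, which is available precisely because both free indices $i,j$ are spatial — this is the Codazzi-driven cancellation that lets one pull a derivative $\bar\partial$ out of the antisymmetrized product and so convert $k\wedge k$ into $\bar\partial(\JapD^{-1}k\cdot k)$, whose $W^{-1,\infty}$ norm is controlled by $\|\JapD^{-1}k\cdot k\|_{L^\infty}$, i.e.\ by $\|k\|^2_{L^2 W^{-\f12+s_0,\infty}}$ plus error terms of the schematic form $g\cdot\partial g\cdot k + \omega\cdot k$ which are lower order. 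Summing over $j'$ with the gain $2^{j-(1-\delta_0)j'}$ from \eqref{Improved estimate wedge infinity} gives geometric convergence. For \eqref{Estimate R hard 2}, the components $R_{\alpha 0\gamma\delta}$ have a time index so \eqref{Improved estimate wedge infinity} does not directly apply; this is exactly why one subtracts the LHH part $R^\natural_{\alpha 0\gamma\delta}$ — by definition (see \eqref{R LHH}) $R - R^\natural$ contains no LHH interactions, so every surviving high-high interaction is of the type HH with one low factor which, after the extraction identity \eqref{Schematic extraction one derivative wedge product time indices}, again reduces to a spatial-derivative total plus lower-order terms; here one uses that the error pulled out is $\partial$ (possibly $\partial_0$) but the $\partial_0$ can be traded via the Codazzi equation for a spatial derivative on the low-frequency factor. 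For the $\partial_0(R_{\alpha0\gamma\delta} - R^\natural_{\alpha0\gamma\delta})$ pieces in the $L^2 H^{-\f56+\dots}$ and $L^{\f74}W^{\dots}$ norms, I would differentiate and use $\partial_0 k \in L^\infty H^{s-3}$ together with a Strichartz factor, checking that $-\tfrac32 + s_0 + (s-3) \ge -\tfrac56 + s_1 + \tfrac14\delta_0$, which again holds by the numerology of Definition \ref{def:Small constants}. Finally, the identical chain of reasoning with $g^{\gamma\delta}$ in place of $m_{\bA\bB}$ gives \eqref{Estimate R perp easy}--\eqref{Estimate R perp hard 2}; the only extra input is that $\partial g$ appears in the error terms of the $R^\perp$ version, which is already controlled in $\mathcal Q_g$. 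I expect the management of the error terms $g\cdot\partial g\cdot k + \omega\cdot k$ produced by the extraction identities — making sure each lands in a space compatible with $L^1 W^{-1+\delta_0,\infty}$ after one more $\JapD^{-1}$ — to be the most delicate bookkeeping, but no new ideas beyond those already in Proposition \ref{prop:Derivative k} and the appendix inequalities are needed.
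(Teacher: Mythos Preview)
Your strategy is essentially the paper's, and the treatment of the easy estimates \eqref{Estimate R easy}--\eqref{Estimate d R easy} and of \eqref{Estimate R hard} via Proposition \ref{prop:Derivative k} is correct. There is, however, one point where your reasoning goes astray and would send you down an unnecessary detour.

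In your handling of the $L^1 W^{-1+\delta_0,\infty}$ bound for $R_{\alpha 0\gamma\delta} - R^\natural_{\alpha 0\gamma\delta}$, you propose to invoke the time-index extraction identity \eqref{Schematic extraction one derivative wedge product time indices} on the ``surviving high-high interactions'' and then trade the resulting $\partial_0$ for a spatial derivative via Codazzi. This is both unnecessary and not quite right. The whole point of subtracting $R^\natural$ is that the \emph{only} term requiring any extraction --- namely the low-$m(e)$, high-high-$k$ piece $m(e)_{\le j}\cdot HH_j(k\wedge k)$ --- is removed entirely. The decomposition \eqref{Decomposition R difference} shows that what remains in $R_j - R^\natural_j$ consists of (i) terms where at least one factor of $k$ is at low frequency, handled exactly as in your HL/LH argument, and (ii) the term $\sum_{j'>j} P_j\big(m(e)_{j'}\cdot k_{\le j'}\wedge k_{j'}\big)$ where $m(e)$ itself sits at high frequency $j'$. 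For (ii) no wedge-product cancellation is needed: one gains the derivative from $m(e)$ via Sobolev embedding, placing $2^{(1+3\delta_0)j'}\|P_{j'}m(e)\|_{L^2 L^6}\lesssim \|m(e)\|_{L^2 H^{2+3\delta_0}}$ against $\|k\|^2_{L^4 W^{\delta_0,4}}$ (this is exactly how the paper treats terms 2, 5, 6 of the full decomposition \eqref{Decomposition R}). Thus for $R - R^\natural$ the argument for \eqref{L1 estimate Rj} is simply repeated \emph{omitting} the step that used Proposition \ref{prop:Derivative k}; the time index causes no difficulty because the extraction was never needed. With this simplification in place, your outline is complete.
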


\begin{proof}
For most of the proof of Lemma \ref{lem:Riemann estimates}, we will use the shorthand notation $f_j$ and $f_{\le j}$ for the Littlewood--Paley projections $P_j f$ and $P_{\le j} f$, respectively, when no other index appears in the same expression for $f$. Recall also our convention \eqref{Wedge product} for the wedge product ($\wedge$) operation, through which  the Gauss and Ricci equations (\eqref{Gauss} and \eqref{Ricci}) can be expressed as
\begin{align}
R_{\a\b\ga\delta} &= m_{\bA\bB} \cdot \big( k \wedge k \big)^{\bA\bB}_{\a\b\ga\delta} = m(e) \cdot k \wedge k, \label{Gauss 2}\\
 R^{\perp\bA\bB}_{\a\b} & = g^{\ga\delta} \cdot \big( k \wedge k)^{\bA\bB}_{\a\b\ga\delta} = g \cdot k \wedge k \label{Ricci 2}
\end{align}
(here, we made use of the symmetry of $m_{\bA\bB}$ and $g^{\ga\delta}$ in $(\bA, \bB)$ and $(\ga, \delta)$, respectively).

We will only show how to prove the bounds \eqref{Estimate R easy}--\eqref{Estimate R hard 2} for $R$, since the analogous bounds \eqref{Estimate R perp easy}--\eqref{Estimate R perp hard 2} for $R^\perp$ follow in exactly the same way. We will make use of the following estimates, which are a consequence of the bootstrap bound \eqref{Bootstrap bound} for $Y$:
\begin{align}\label{First useful bound}
\| k \|_{L^2 W^{s_0 -\f12, \infty}}&  + \| k \|_{L^4 W^{s_0 + \f1{12},4}} + \| k \|_{L^{\f72} W^{s_0,\f{14}3}} + \| k \|_{L^\infty H^{s-2}}\\
& + \| \partial k \|_{L^2 W^{s_0 -\f32, \infty}}  + \| \partial k \|_{L^4 W^{s_0 -1+ \f1{12},4}} + \| \partial k \|_{L^{\f72} W^{s_0-1,\f{14}3}} + \| \partial k \|_{L^\infty H^{s-3}} \lesssim C_0 \mathcal{D}, \nonumber 
\end{align}

\begin{equation}\label{Second useful bound}
\| \partial g \|_{L^2 H^{1+s_1 +\f16}}+ \| \omega \|_{L^2 H^{1+s_1 +\f16}}\lesssim C_0 \mathcal{D},
\end{equation}
and
\begin{equation}
 \sum_{\bA = 4}^{n}\|e_{\bA}- \delta^{(n+1)}_{\bA} \|_{L^\infty H^{s-1}} + \|\partial e\|_{L^\infty H^{s-2}} + \| \partial e \|_{L^2 H^{1+s_1 +\f16}}   \lesssim C_0 \mathcal{D}
\end{equation}
where $\delta^{(n+1)}_{m}$ is the constant $(n+1)$-dimensional vector
\[
\big( \delta^{(n+1)}_m\big)^A =\begin{cases} 1, &\text{ if }A=m,\\ 0, & \text{ if }A \neq m. \end{cases}
\]

Using the functional inequalities from Lemma \ref{lem:Functional inequalities} to estimate products of the form $\|f_1 \cdot f_2\|_{W^{\sigma,p}(\bar\Sigma_\tau)}$, we readily obtain:
\begin{align*}
\| R\|_{L^\infty H^{s-3+\delta_0}}  & =  \| m(e)\cdot k \wedge k \|_{L^\infty H^{s-3+\delta_0}} \\
& \stackrel{Lem~\ref{lem:Functional inequalities}}{\lesssim}
 \|m(e)\|_{L^\infty H^{s-1}} \|k \wedge k \|_{L^\infty H^{s-3+\delta_0}}  \\
& \stackrel{Lem~\ref{lem:Functional inequalities}}{\lesssim}
 \|m(e)\|_{L^\infty H^{s-1}} \|k\|^2_{L^\infty H^{s-2}} \\
& \stackrel{\hphantom{Lem }\eqref{Bootstrap bound}}{\lesssim}
C_0^2 \mathcal D^2,
\end{align*}
\begin{align*}
\| R\|_{L^2 H^{\f16 + s_1 +\f14 \delta_0}}  & = \Bigg( \int_0^T  \| m(e)\cdot k \wedge k \|^2_{H^{s-\f52 -\f74\delta_0}(\bar\Sigma_{\tau})} \, d\tau\Bigg)^{\f12}\\
& \stackrel{Lem~\ref{lem:Functional inequalities}}{\lesssim}
\Bigg( \int_0^T  \| m(e)\|^2_{H^{s-1}(\bar\Sigma_{\tau})} \|k \wedge k \|^2_{H^{s-\f52 -\f74\delta_0}(\bar\Sigma_{\tau})} \, d\tau\Bigg)^{\f12}  \\
&  \stackrel{\hphantom{Lem~\ref{Lem:Functional inequalities}}}{\lesssim}
\|m(e)\|_{L^\infty H^{s-1}} \| k \wedge k\|_{L^2 H^{s-\f52 -\f74\delta_0}}\\
&  \stackrel{\hphantom{Lem~\ref{Lem:Functional inequalities}}}{\lesssim}
\|m(e)\|_{L^\infty H^{s-1}} \cdot \sum_j \Big(  2^{(s-\f52 -\f74\delta_0)j}\| P_j (k_{\le j} \wedge k_j)\|_{L^2 L^2} + 2^{(s-\f52 -\f74\delta_0)j}\| P_j \big(\sum_{j'>j} k_{j'} \wedge k_{j'}\big)\|_{L^2 L^2} \Big) \\
&  \stackrel{\hphantom{Lem~\ref{Lem:Functional inequalities}}}{\lesssim}
\|m(e)\|_{L^\infty H^{s-1}} \cdot \sum_j \Big(  \| 2^{-\f12 j}k_{\le j}\|_{L^2 L^\infty}\| 2^{(s-2-\delta_0)j} k_j\|_{L^\infty L^2} \\
&\hphantom{\stackrel{\hphantom{Lem~\ref{Lem:Functional inequalities}}}{\lesssim}
\|m(e)\|_{L^\infty H^{s-1}} \cdot \sum_j \Big(}
 + 2^{-\f34 \delta_0 j}\sum_{j'>j} 2^{-(s-\f52 -\delta_0)(j'-j)}\| 2^{(s-\f52-\f1{12}-\delta_0)j'}k_{j'}\|_{L^4 L^4} \cdot \| 2^{(\f1{12})j'} k_{j'}\|_{L^4 L^4} \Big) \\
&  \stackrel{\hphantom{Lem~\ref{Lem:Functional inequalities}}}{\lesssim}
\|m(e)\|_{L^\infty H^{s-1}} \cdot \Big(\|k\|_{L^2 W^{-\f12,\infty}}\|k\|_{L^\infty H^{s-2}} + \|k\|^2_{L^4 W^{\f1{12}+s_0,4}}\Big)
\\
& \stackrel{\hphantom{Lem~}\eqref{Bootstrap bound}}{\lesssim}
C_0^2 \mathcal D^2
\end{align*}
and
\begin{align*}
\| R\|_{L^{\f74} W^{s_1 +\f14 \delta_0,\f73}}
&  \stackrel{Lem~\ref{lem:Functional inequalities}}{\lesssim}
\|m(e)\|_{L^\infty H^{s-1}} \| k \wedge k\|_{L^{\f74} W^{s_1 +\f14 \delta_0,\f73}}\\
&  \stackrel{\hphantom{Lem~\ref{Lem:Functional inequalities}}}{\lesssim}
\|m(e)\|_{L^\infty H^{s-1}} \cdot \sum_j \Big(  2^{(s_1+\f14\delta_0)j}\| P_j (k_{\le j} \wedge k_j)\|_{L^{\f74} L^{\f73}} + 2^{(s_1+\f14\delta_0)j}\| P_j \big(\sum_{j'>j} k_{j'} \wedge k_{j'}\big)\|_{L^{\f74} L^{\f73}} \Big) \\
&  \stackrel{\hphantom{Lem~\ref{Lem:Functional inequalities}}}{\lesssim}
\|m(e)\|_{L^\infty H^{s-1}} \cdot \sum_j \Big(  \| k_{\le j}\|_{L^{\f72} L^{\f{14}3}}\| 2^{(s_1+\f14\delta_0)j} k_j\|_{L^{\f72} L^{\f{14}3}} \\
&\hphantom{\stackrel{\hphantom{Lem~\ref{Lem:Functional inequalities}}}{\lesssim}
\|m(e)\|_{L^\infty H^{s-1}} \cdot \sum_j \Big(}
 + 2^{-\f14 \delta_0 j}\sum_{j'>j} 2^{-(s_1 +\f12\delta_0)(j'-j)}\| 2^{(s_1+\f12\delta_0)j'}k_{j'}\|_{L^{\f72} L^{\f{14}3}} \cdot \|  k_{j'}\|_{L^{\f72} L^{\f{14}3}} \Big) \\
&  \stackrel{\hphantom{Lem~\ref{Lem:Functional inequalities}}}{\lesssim}
\|m(e)\|_{L^\infty H^{s-1}} \cdot  \|k\|^2_{L^{\f72} W^{s_0-\f12\delta_0,\f{14}3}}
\\
& \stackrel{\hphantom{Lem~\ref{lem:Functional inequalities}}\eqref{Bootstrap bound}}{\lesssim}
C_0^2 \mathcal D^2.
\end{align*}
Combining the above bounds, we obtain \eqref{Estimate R easy}.

The bound \eqref{Estimate d R easy} for $\partial R$ also follows in a similar way, using the schematic expression
\[
\partial R = \partial e \cdot k \wedge k + m(e) \cdot \partial k \wedge k.
\]
In particular:
\begin{align*}
\|\partial R\|_{L^\infty H^{s-4+\delta_0}}  & \lesssim  \| \partial e\cdot k \wedge k \|_{L^\infty H^{s-4+\delta_0}} +  \| m(e) \cdot \partial k \wedge k \|_{L^\infty H^{s-4+\delta_0}} \\
& \stackrel{Lem~\ref{lem:Functional inequalities}}{\lesssim}
 \| \partial e\|_{L^\infty H^{s-2}} \|k \wedge k \|_{L^\infty H^{s-3+\delta_0}} +  \| m(e)\|_{L^\infty H^{s-1}} \|\partial k \wedge k \|_{L^\infty H^{s-4+\delta_0}}  \\
& \stackrel{Lem~\ref{lem:Functional inequalities}}{\lesssim}
  \| \partial e\|_{L^\infty H^{s-2}} \|k\|^2_{L^\infty H^{s-2}} +  \| m(e)\|_{L^\infty H^{s-1}} \|\partial k\|_{L^\infty H^{s-3}} \|k \|_{L^\infty H^{s-2}}  \\
& \stackrel{\eqref{Bootstrap bound}}{\lesssim}
C_0^2 \mathcal D^2.
\end{align*}

The bounds \eqref{Estimate R LHH easy} and  \eqref{Estimate d R easy} for $R^\natural$ follows by arguing exactly in the same way as for the proof of the analogous bounds for $R$,  since $R^\natural$ has a similar schematic expression as $R$:
\[
R^\natural = \sum_{j}\sum_{j'>j} m(e)_j \cdot  k_{j'} \wedge k_{j'}.
\]

The Littlewood--Paley projection $R_j \doteq P_j R$ can be schematically decomposed into almost orthogonal pieces as follows:
\begin{align}\label{Decomposition R}
R_j = & m(  e)_j \cdot k_{\le j} \wedge k_{\le j} 
+ m(e)_j \cdot \sum_{j_0 \le j}\sum _{j'>j_0}P_{j_0} \big(k_{j'} \wedge k_{j'}\big) \\
&+ m(e)_{\le j} \cdot k_{\le j} \wedge k_{j} 
+ m(e)_{\le j} \cdot  HH_{j} (k \wedge k) \nonumber \\
&+\sum_{j'>j} P_j\big( m(e)_{j'} \cdot k_{\le j'} \wedge k_{j'} \big)
 + \sum_{j'>j} \sum_{j''>j'} P_j \big( m(e)_{j'} \cdot P_{j'} \big( k_{j''} \wedge k_{j''} \big) \big),\nonumber 
\end{align}
where
\begin{equation}\label{HH k}
\big(HH_j (k \wedge k)\big)^{\bA\bB}_{\a\b\ga\delta} \doteq  \sum_{J=-2}^{+2} \sum_{\substack{j',j''>j+J,\\|j''-j'|\le 2}} P_{j+J} \Big( \big(P_{j'} k \wedge P_{j''} k\big)^{\bA\bB}_{\a\b\ga\delta} \Big).
\end{equation}
\begin{remark*}
Let us note that, in the decomposition above, we have grouped similarly looking terms into the same schematic summand; for instance, $\sum_{j'>j} P_j\big( m(e)_{j'} \cdot k_{\le j'} \wedge k_{j'} \big)$ contains any terms in the high-low decomposition of the form $\sum_{j_1 \in J_1}P_j\big( m(e)_{j'} \cdot k_{j_1} \wedge k_{j'+c} \big)$ when $\max\{ i \in J_1 \} \le j$ and $c$ is bounded by an absolute constant.
\end{remark*}
The low-high-high interactions in the decomposition above are captured by $R^\natural$: In view of the definition \eqref{R LHH} of $R^{\natural}$, 
we have
\begin{align}\label{Decomposition R LHH}
R^{\natural}_j  = &  
\sum_{\substack{j_2\ge j_1-2\\ |j_3-j_2|\le 2}} P_j \Big( m(e)_{j_1} k_{j_2} \wedge k_{j_3} \Big)
\\
=&  m(e)_{\le j} \cdot HH_j  \big( k \wedge k \big) + \sum_{j'>j-2} \sum_{j''\ge j'} P_j \big( m(e)_{j'} \cdot P_{j'}(k_{j''}\wedge k_{j''}) \big) \nonumber 
\end{align}
and 
\begin{align}\label{Decomposition R difference}
R_j - R_j^{\natural} 
= & \sum_{j_1, j_2, j_3} P_j \Big( m(e)_{j_1} k_{j_2} \wedge k_{j_3} \Big) - \sum_{\substack{j_2\ge j_1-2\\|j_3-j_2|\le 2}} P_j \Big( m(e)_{j_1} k_{j_2} \wedge k_{j_3} \Big)
\\
 = & \sum_{j_1, j_3 }P_j \Big( m(e)_{j_1} k_{\le j_1-3} \wedge k_{j_3} \Big) + \sum_{\substack{j_2 \ge j_1 -2 \\|j_3-j_2|\ge 3}} P_j \Big( m(e)_{j_1} k_{j_2} \wedge k_{j_3} \Big) \nonumber \\[5pt]
 = &  
m(e)_{\le j} \cdot k_{\le j} \wedge k_j + m(e)_j \cdot k_{\le j}\wedge k_{\le j}  + \sum_{j'>j} P_j \Big( m(e)_{j'} \cdot k_{\le j'} \wedge k_{j'} \Big).    \nonumber
\end{align}
Note that the difference $R_j - R^{\natural}_j$ does \textbf{not} contain any ``low-high-high'' interactions.

In order to prove \eqref{Estimate R hard}, we will establish the $L^1 L^\infty$ estimate
\begin{equation}\label{L1 estimate Rj}
\|2^{(-1 + 2 \delta_0)j}P_j R_{ab\ga\delta} \|_{L^1 L^\infty} \lesssim C_0^2 \mathcal{D}^2 \quad \text{for } a,b\in \{1,2,3\}, \ga, \delta \in \{0,1,2,3\}.
\end{equation}
\begin{remark*}
For the rest of the proof, we will revert to our use of $P_j f$ instead of $f_j$ for denoting the Littlewood--Paley projections of tensor components in the case of expressions that also involve tensorial indices.
\end{remark*}
\noindent  In order to establish \eqref{L1 estimate Rj}, we will have to make use of Proposition  \ref{prop:Derivative k}, which utilizes the special structure of the term $k\wedge k$, in  order to deal with certain parts of of the high-high interaction terms in the decomposition \eqref{Decomposition R}. 

In particular, the bound \eqref{L1 estimate Rj} for each term in the decomposition \eqref{Decomposition R} for $P_j R_{ab\ga\delta}$ is deduced as follows:
\begin{itemize}
\item The first and the third term in the right hand side of \eqref{Decomposition R} can be estimated simultaneously by
\begin{align}\label{First L1}
2^{(-1 + 2 \delta_0)j} \| P_{\le j}m(e) \cdot P_{\le j}k \wedge P_{\le j} k \|_{L^1 L^\infty} 
& \lesssim \| P_{\le j}m(e) \cdot 2^{(-\f12 + \delta_0)j} P_{\le j} k \wedge 2^{(-\f12 + \delta_0)j} P_{\le j} k \|_{L^1 L^\infty} \\
& \lesssim \| P_{\le j} m(e) \|_{L^\infty L^\infty} \| P_{\le j} k\|^2_{L^2 W^{-\f12+\delta_0,\infty}} \nonumber \\
&\lesssim \| m(e) \|_{L^\infty H^{s-1}} \|k\|^2_{L^2 W^{s_0-\f12,\infty}} \nonumber \\
& \lesssim C_0^2 \mathcal{D}^2. \nonumber
\end{align}

\item The second, fifth and sixth term in the right hand side of \eqref{Decomposition R} can be estimated using the following bound which holds for any triplet of indices $j_1, j_2, j_3$ with $j_1 \ge j$:
\begin{align}
2^{(-1 + 2 \delta_0)j} \| P_j \big( P_{j_1}m(e) \cdot & P_{j_2} k \wedge P_{j_3} k \big)\|_{L^1 L^\infty}\\
& \lesssim  2^{(1 + 2 \delta_0)j_1} \|  P_{j_1} m(e) \cdot P_{j_2} k \wedge P_{j_3} k \|_{L^1 L^{\f32}} \nonumber \\
& \lesssim 2^{\delta_0 (j-j_1-j_2-j_3)}\|2^{(1+3\delta_0)j_1} P_{j_1} m(e)\|_{L^2 L^6} \| 2^{\delta_0 j_2} P_{j_2} k \|_{L^4 L^4} \| 2^{\delta_0 j_3} P_{j_3} k \|_{L^4 L^4} \nonumber \\
& \lesssim 2^{\delta_0 (j-j_1-j_2-j_3)} \|m(e)\|_{L^2 W^{1+3\delta_0,6}} \|k\|^2_{L^4 W^{\delta_0,4}} \nonumber \\
& \lesssim 2^{\delta_0 (j-j_1-j_2-j_3)}  \|m(e)\|_{L^2 H^{2+3\delta_0}} \|k\|^2_{L^4 W^{\delta_0,4}} \nonumber \\
& \lesssim 2^{\delta_0 (j-j_1-j_2-j_3)} C_0^2 \mathcal{D}^2. \nonumber 
\end{align}
Note than the last line above we made use of the estimate \eqref{Second useful bound}.

\item For the fourth term in the right hand side of \eqref{Decomposition R} for $P_j R_{ab\ga\delta}$, we will  make use of Proposition \ref{prop:Derivative k}. To this end, we will make use of the fact that the indices  $a,b$ are $\neq 0$ and that the expression \eqref{HH k} for $HH_j (k \wedge k)$ is symmetric with respect to the Littlewood--Paley indices appearing in it, in the sense that it can be reexpressed as follows:
\[
(HH_{j}( k \wedge k )\big)^{\bA\bB}_{a b \ga \delta} = \sum_{J,J'=-2}^2 \sum_{j'> \max\{j+J, j+J+J'\}} P_{j+J} \Big( \big( P_{j'} k \wedge P_{j'+J'} k \big)^{\bA\bB}_{a b \ga \delta} + \big( P_{j'+J'} k \wedge P_{j'} k \big)^{\bA\bB}_{a b \ga \delta} \Big).
\]
In particular, we can estimate
\begin{align}\label{Last L1}
& 2^{(-1 + 2 \delta_0)j}  \| P_{\le j} m(e)_{\bA\bB} \cdot (HH_{j}( k \wedge k )\big)^{\bA\bB}_{a b \ga \delta} \|_{L^1 L^\infty}\\
& \hphantom{2}  \stackrel{\hphantom{\tiny{\text{Prop.} \ref{prop:Derivative k}}}} {\lesssim} 
\| m(e)\|_{L^\infty L^\infty} \sum_{J,J'=-2}^2 \Bigg( \sum_{j'>\max\{j+J,j+J+J'\}}2^{(-1 + 2 \delta_0)j} \times \nonumber\\
& \hphantom{ \hphantom{2}  \stackrel{\hphantom{\tiny{\text{Prop.} \ref{prop:Derivative k}}}} {\lesssim} 
\| m(e)\|_{L^\infty L^\infty} \sum_{J,J'=-2}^2 \Bigg( \sum}
\times \sum_{\bA, \bB \in \{4, \ldots, n\}}\| P_{j+J} \Big(\big( P_{j'} k \wedge P_{j'+J'} k \big)^{\bA\bB}_{a b \ga \delta} + \big( P_{j'+J'} k \wedge P_{j'} k \big)^{\bA\bB}_{a b \ga \delta} \Big) \|_{L^1 L^\infty} \Bigg) \nonumber \\
& \hphantom{2}  \stackrel{\tiny{\text{Prop.} \ref{prop:Derivative k}}} {\lesssim} 
\| m(e)\|_{L^\infty H^{s-1}} \cdot \sum_{J'=-2}^2 \sum_{j'>j} 2^{(-1+2 \delta_0) j}\Big( 2^{j-(1-\delta_0)j'} \| k_{j'}\|_{L^2 L^\infty}\| k_{j'+J'}\|_{L^2 L^\infty} \nonumber \\
&\hphantom{\hphantom{2}  \stackrel{\tiny{\text{Prop.} \ref{prop:Derivative k}}} {\lesssim} 
\| m(e)\|_{L^\infty H^{s-1}} \cdot \sum_{J'=-2}^2 \sum_{j'>j} 2^{(-1+2 \delta_0) j}\Big(}
+ 2^{2j- (1+2\delta_0)j'} \big( \| \partial g\|_{L^2 L^6} + \|\omega\|_{L^2 L^6}\big) \| k \|^2_{L^4 W^{2\delta_0,4}}\Big) \nonumber \\
& \hphantom{2}  \stackrel{\hphantom{\tiny{\text{Prop.} \ref{prop:Derivative k}}}} {\lesssim}  
\| m(e)\|_{L^\infty H^{s-1}} \cdot \sum_{j'>j} 2^{2 \delta_0(j-j')}\Big( \|k\|^2_{L^2 W^{-\f12+2\delta_0,\infty}} + \big( \| \partial g \|_{L^2 L^6}+ \|\omega\|_{L^2 L^6} \big) \| k \|^2_{L^4 W^{2\delta_0, 4}}\Big) \nonumber \\
& \hphantom{2}  \stackrel{\hphantom{\tiny{\text{Prop.} \ref{prop:Derivative k}}}} 
{\lesssim} C_0^2 \mathcal{D}^2. \nonumber
\end{align}
\end{itemize}
This completes the proof of \eqref{L1 estimate Rj}.

Finally, in order to establish the bound \eqref{Estimate R hard 2} (and thus complete the proof of Lemma \ref{lem:Riemann estimates}), it suffices to prove that
\begin{equation}\label{L1 estimate Rj difference}
2^{(-1 + 2 \delta_0)j}\|P_j R -P_j R^\natural\|_{L^1 L^\infty} \lesssim C_0^2 \mathcal{D}^2
\end{equation}
and
\begin{equation}\label{L2 estimate d Rj difference}
2^{(-\f56+s_1+\delta_0)j}\| \partial_0 (P_j R)-\partial_0 (P_j R^{\natural})\|_{L^2 L^2}  \|  +2^{(-1+s_1+\f12\delta_0)j} \|\partial_0 (P_j R)-\partial_0 (P_j R^{\natural}) \|_{L^{\f74} L^{\f73}} \lesssim C_0^2 \mathcal{D}^2.
\end{equation}
As we already remarked, the terms contained in the right hand side of the decomposition \eqref{Decomposition R difference} for $P_j R - P_j R^{\natural}$ are similar to the ones in the decomposition \eqref{Decomposition R} for $P_j R$, with the exception of the ``low-high-high''  term $m(e)_{\le j} \sum_{j'>j} P_j (k_{j'} \wedge k_{j'})$ (which appears in \eqref{Decomposition R} but not in \eqref{Decomposition R difference}). Therefore, the proof of \eqref{L1 estimate Rj difference} follows by repeating the same steps as for the proof of \eqref{L1 estimate Rj} but without the need to appeal to Proposition \ref{prop:Derivative k} for the low-high-high terms. For the proof of \eqref{L2 estimate d Rj difference}, we can similarly compute (recall that $s_1  =s-\f52-\f16-2\delta_0$, $s_0=s_1+\delta_0$):
\begin{align*}
2^{(-\f56+s_1+\delta_0)j} \|& \partial_0 (  P_j R) -\partial_0 (P_j R^{\natural})\|_{L^2 L^2}\\  \lesssim 
 &2^{(-\f56+s_1+\delta_0)j} \Bigg[
\|\partial e_{\le j} \cdot k_{\le j} \wedge k_j\|_{L^2 L^2} + \| m(e)_{\le j} \cdot \partial k_{\le j} \wedge k_j\|_{L^2 L^2} + \| m(e)_{\le j} \cdot k_{\le j} \wedge \partial k_j\|_{L^2 L^2} \\
 &\hphantom{2^{(-\f56+s_1+\delta_0)j} \Bigg[}
+ \|\partial e_j \cdot k_{\le j}\wedge k_{\le j}\|_{L^2 L^2}
+\|m(e)_j \cdot \partial k_{\le j}\wedge k_{\le j}\|_{L^2 L^2}\\
 & +2^{(-\f56+s_1+\delta_0)j} \Bigg[ + \Big\|\sum_{j'>j} P_j \Big( \partial e_{j'} \cdot k_{\le j'} \wedge k_{j'} + m(e)_{j'} \cdot \partial k_{\le j'} \wedge k_{j'} + m(e)_{j'} \cdot k_{\le j'} \wedge \partial k_{j'}\Big)\Big\|_{L^2L^2}   \Bigg]\\
\lesssim 
& 2^{(-1+\f16)j}\|\partial e_{\le j}\|_{L^\infty L^\infty}  \cdot \| k_{\le j}\|_{L^4 L^4}  \cdot 2^{s_0 j}\|k_j\|_{L^4 L^4}\\
& +\|m(e)_{\le j}\|_{L^\infty L^\infty}  \cdot 2^{(-1+\f1{12})j}\| \partial k_{\le j}\|_{L^4 L^4}  \cdot 2^{(s_0 +\f1{12}) j}\|k_j\|_{L^4 L^4}
\\ 
& +\|m(e)_{\le j}\|_{L^\infty L^\infty}  \cdot 2^{(s_0 -1+\f1{12})j}\| k_{\le j} \wedge \partial k_j\|_{L^2 L^{\f{36}{19}}}\\
& +  2^{(s-\f72-\delta_0)j}\|\partial e_j\|_{L^\infty L^\infty} \cdot \| k_{\le j}\|^2_{L^4 L^4} \\
&  + 2^{(s-\f52-\delta_0)j)} \|m(e)_j\|_{L^\infty L^\infty} \cdot 2^{-j}\|\partial k_{\le j}\|_{L^4 L^4} \|k_{\le j} \|_{L^4 L^4} 
\\
&+ \sum_{j'>j} \Big( 2^{(s_0+\f16)j}\|P_j \big( \partial e_{j'} \cdot k_{\le j'} \wedge k_{j'}\big) \|_{L^2 L^{\f65}} + 2^{(s_0+\f16)j}\|P_j \big( m(e)_{j'} \cdot \partial k_{\le j'} \wedge k_{j'} \big)\|_{L^2 L^{\f65}} \\
& \hphantom{+ \sum_{j'>j}++}
+ 2^{(s_0+\f16)j}\| P_j \big( m(e)_{j'} \cdot k_{\le j'} \wedge \partial k_{j'}\big)\|_{L^2 L^{\f65}}\Big)\\
\lesssim &
\|\partial e\|_{L^\infty W^{s-\f72-\delta_0,\infty}} \cdot \|k\|_{L^4 W^{s_0,4}}^2
\\
& + \|m(e)\|_{L^\infty W^{s-\f72-\delta_0,\infty}} \cdot \|\partial k\|_{L^4 W^{s_0-1+\f1{12},4}} \cdot   \| k\|_{L^4 W^{s_0+\f1{12},4}} \\
& + \|m(e)\|_{L^\infty L^\infty} \|k\|_{L^4 L^{\f{18}5}} \|\partial k\|_{L^4 W^{s_0-1+\f1{12},4}}\\
& + \sum_{j'>j}2^{-\delta_0 j'}\Big( \|\partial e_{j'}\|_{L^\infty W^{s_0+\f1{6},3}} \|k\|_{L^4 W^{\delta_0,4}}^2 \\
& \hphantom{+ \sum_{j'>j}2^{-\delta_0 j'}\Big(}
+ \|m(e)_{j'}\|_{L^\infty W^{s_0+1+\f1{6},3}} \|\partial k\|_{L^2 W^{-\f32+\delta_0,\infty}} \|k\|_{L^\infty H^{\f12+\delta_0}} \\
& \hphantom{+ \sum_{j'>j}2^{-\delta_0 j'}\Big(}
+ \|m(e)_{j'}\|_{L^\infty W^{s_0+1+\f1{6},3}} \| k\|_{L^2 W^{-\f12+\delta_0,\infty}} \|\partial k\|_{L^\infty H^{-\f12+\delta_0}}
 \Big)\\
\lesssim &
\|\partial e\|_{L^\infty H^{s-2}} \|k\|^2_{L^4 W^{s_0+\f1{12},4}} + \|m(e)\|_{L^\infty H^{s-1}} \|k\|_{L^4 W^{s_0+\f1{12},4}}\|\partial k\|_{L^4 W^{s_0-1+\f1{12},4}} \\
& + \|m(e)\|_{L^\infty H^{s-1}} \Big( \|k\|_{L^\infty H^{s-2}}\|\partial k\|_{L^2 W^{s_0-\f32,\infty}} 
+\|\partial k\|_{L^\infty H^{s-3}}\|k\|_{L^2 W^{s_0-\f12,\infty}} \Big)\\
\lesssim & C_0^2 \mathcal D^2
\end{align*}
and, similarly,
\begin{align*}
2^{(-1+s_1+\f12\delta_0)j} \|& \partial_0 (  P_j R) -\partial_0 (P_j R^{\natural})\|_{L^{\f74} L^{\f73}}\\  \lesssim 
 &2^{(-1+s_1+\f12\delta_0)j} \Bigg[
\|\partial e_{\le j} \cdot k_{\le j} \wedge k_j\|_{L^{\f74} L^{\f73}} + \| m(e)_{\le j} \cdot \partial k_{\le j} \wedge k_j\|_{L^{\f74} L^{\f73}} + \| m(e)_{\le j} \cdot k_{\le j} \wedge \partial k_j\|_{L^{\f74} L^{\f73}} \\
 &\hphantom{2^{(-\f56+s_1+\delta_0)j} \Bigg[}
+ \|\partial e_j \cdot k_{\le j}\wedge k_{\le j}\|_{L^{\f74} L^{\f73}}
+\|m(e)_j \cdot \partial k_{\le j}\wedge k_{\le j}\|_{L^{\f74} L^{\f73}}\\
 &+2^{(-1+s_1+\f12\delta_0)j} \Bigg[ + \Big\|\sum_{j'>j} P_j \Big( \partial e_{j'} \cdot k_{\le j'} \wedge k_{j'} + m(e)_{j'} \cdot \partial k_{\le j'} \wedge k_{j'} + m(e)_{j'} \cdot k_{\le j'} \wedge \partial k_{j'}\Big)\Big\|_{L^{\f74} L^{\f73}}   \Bigg]\\
\lesssim 
& 2^{-j}\|\partial e_{\le j}\|_{L^\infty L^\infty}  \cdot \| k_{\le j}\|_{L^{\f72} L^{\f{14}3}}  \cdot 2^{(s_0-\f12\delta_0) j}\|k_j\|_{L^{\f72} L^{\f{14}3}}\\
& +\|m(e)_{\le j}\|_{L^\infty L^\infty}  \cdot 2^{-j}\| \partial k_{\le j}\|_{L^{\f72} L^{\f{14}3}}  \cdot 2^{(s_0 -\f12\delta_0) j}\|k_j\|_{L^{\f72} L^{\f{14}3}}
\\ 
& +\|m(e)_{\le j}\|_{L^\infty L^\infty}  \cdot \| k_{\le j}\|_{L^{\f72} L^{\f{14}3}} \cdot 2^{(-1+s_0-\f12\delta_0)j}\| \partial k_j\|_{L^{\f72} L^{\f{14}3}}\\
& +  2^{(s-\f72-\delta_0)j}\|\partial e_j\|_{L^\infty L^\infty} \cdot \| k_{\le j}\|^2_{L^{\f72} L^{\f{14}3}} \\
&  + 2^{(s-\f52-\delta_0)j)} \|m(e)_j\|_{L^\infty L^\infty} \cdot 2^{-j}\|\partial k_{\le j}\|_{L^{\f72} L^{\f{14}3}} \|k_{\le j} \|_{L^{\f72} L^{\f{14}3}} 
\\
&+ \sum_{j'>j} \Big( 2^{(s_1+\f12\delta_0)j}\|P_j \big( \partial e_{j'} \cdot k_{\le j'} \wedge k_{j'}\big) \|_{L^{\f74} L^{\f{21}{16}}} + 2^{(s_1+\f12\delta_0)j}\|P_j \big( m(e)_{j'} \cdot \partial k_{\le j'} \wedge k_{j'} \big)\|_{L^{\f74} L^{\f{21}{16}}} \\
& \hphantom{+ \sum_{j'>j}++}
+ 2^{(s_1+\f12\delta_0)j}\| P_j \big( m(e)_{j'} \cdot k_{\le j'} \wedge \partial k_{j'}\big)\|_{L^{\f74} L^{\f{21}{16}}}\Big)\\
\lesssim &
\|\partial e\|_{L^\infty W^{(s-\f72-\delta_0)j,\infty}} \cdot \|k\|_{L^{\f72} W^{s_0-\f12\delta_0,\f{14}3}}^2
\\
& + \|m(e)\|_{L^\infty L^\infty} \cdot \|\partial k\|_{L^{\f72} W^{-1+s_0-\f12\delta_0,\f{14}3}} \cdot   \| k\|_{L^{\f72} W^{s_0-\f12\delta_0,\f{14}3}} \\
& + \sum_{j'>j}2^{-\delta_0 j'}\Big( \|\partial e_{j'}\|_{L^\infty W^{s-\f52-\delta_0,3}} \|k\|_{L^{\f72} L^{\f{14}3}}^2 \\
& \hphantom{+ \sum_{j'>j}2^{-\delta_0 j'}\Big(}
+ \|m(e)_{j'}\|_{L^\infty W^{s-\f32-\delta_0,3}} \|\partial k\|_{L^{\f72} W^{-1,\f{14}3}} \|k\|_{L^{\f72} L^{\f{14}3}} \\
& \hphantom{+ \sum_{j'>j}2^{-\delta_0 j'}\Big(}
+ \|m(e)_{j'}\|_{L^\infty W^{s-\f32-\delta_0,3}} \| k\|_{L^{\f72} L^{\f{14}3}} \|\partial k\|_{L^{\f72} W^{-1,\f{14}3}}
 \Big)\\
\lesssim &
\|\partial e\|_{L^\infty H^{s-2}} \|k\|^2_{L^{\f72} W^{s_0-\f12\delta_0,\f{14}3}} + \|m(e)\|_{L^\infty H^{s-1}} \|k\|_{L^{\f72} W^{s_0-\f12\delta_0,\f{14}3}}\|\partial k\|_{L^{\f72} W^{s_0-1-\f12\delta_0,\f{14}3}} \\
\lesssim & C_0^2 \mathcal D^2.
\end{align*}
Therefore, \eqref{L2 estimate d Rj difference} holds.

The bounds \eqref{Estimate R perp easy}--\eqref{Estimate R perp hard 2} for $R^\perp$ follow in exactly the same way as \eqref{Estimate R easy}--\eqref{Estimate R hard 2} . Thus, the proof of Lemma \ref{lem:Riemann estimates} is complete.

\end{proof}

\subsection{Bounds for $\mathcal F^\natural$ and $\mathcal F_\perp$}
In this section, we will establish a number of estimates for the source functions $\tilde{\mathcal F}^\natural$ and $\tilde{\mathcal F}_\perp$ appearing in the right hand side of equations \eqref{Mean curvature condition} and \eqref{Divergence condition frame} defining the balanced gauge condition. To this end, we will use  the bounds for $k$ provided by Lemmas \ref{lem:Energy estimates} and \ref{lem:Strichartz estimates}, as well as the estimates established in the previous section regarding the  curvature tensors $R$ and $R^\perp$.

\begin{lemma}\label{lem:Bounds F natural}
The following estimates hold for $\tilde{\mathcal F}^{\natural}$:
\begin{equation}\label{Bounds F natural easy}
\| \partial \tilde{\mathcal F}^{\natural}  \|_{L^\infty H^{s-3}} + \| \partial \tilde{\mathcal F}^{\natural}  \|_{L^2 H^{\f16+s_1}} + \| \partial \tilde{\mathcal F}^{\natural}  \|_{L^{\f74} W^{s_1+\f14\delta_0,\f73}}+ \| \tilde{\mathcal F}^{\natural}  \|_{L^1 W^{s_0, \infty}} \lesssim \mathcal D
\end{equation}
and
\begin{equation} \label{Bound F natural L1Linfty}
 \sum_{c,d=1}^3 \| \partial_0 \tilde{\mathcal F}^{\natural}_{cd} - R^{\natural}_{c0d0}  \|_{L^1 W^{-1+s_0, \infty}} \lesssim \mathcal D.
 \end{equation}
Moreover, 
\begin{equation}\label{Bound F natural higher order}
\sum_{c,d=1}^3 \Big( \| \partial_0 \big( \partial_0 \tilde{\mathcal F}^{\natural}_{cd} - R^{\natural}_{c0d0} \big) \|_{L^\infty H^{s-4}} + \| \partial_0 \big( \partial_0 \tilde{\mathcal F}^{\natural}_{cd} - R^{\natural}_{c0d0} \big) \|_{L^2 H^{-\f56+s_1}} +  \| \partial_0 \big( \partial_0 \tilde{\mathcal F}^{\natural}_{cd} - R^{\natural}_{c0d0} \big) \|_{L^{\f74} W^{-1+s_1+\f14\delta_0,\f73}}\Big) \lesssim \mathcal D.
\end{equation}
\end{lemma}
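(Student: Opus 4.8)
The quantity $\tilde{\mathcal F}^\natural$ was defined in \eqref{F natural} as $\mathcal F^\natural - \mathbb E[\mathcal F^\natural|_{x^0=0}]$, where $\mathcal F^\natural$ is built out of an $\mathbb P^\natural$-type trilinear combination of $m(e)$, $k$ and $\mathcal T^{(l)} k$ (see \eqref{F natural seed}). Since the extension operator $\mathbb E$ contributes only terms that are controlled by $\|\mathcal F^\natural|_{x^0=0}\|_{H^{\cdot}}$ via Lemma \ref{lem:Estimates time operators}, and those boundary norms are bounded by $\mathcal D$ using the initial-data estimates of Proposition \ref{prop: Bounds S0} (precisely \eqref{Bound k and dk initial}, \eqref{Bound metric initial}, \eqref{Bound frame}) together with the functional inequalities of Lemma \ref{lem:Functional inequalities}, the estimates for $\tilde{\mathcal F}^\natural$ reduce to the corresponding estimates for $\mathcal F^\natural$ itself, modulo these harmless lower-order contributions. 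First I would record this reduction once and for all, so that the remaining work only concerns $\mathcal F^\natural$ (and $\partial_0\mathcal F^\natural - R^\natural$, etc.).

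For the first batch of bounds \eqref{Bounds F natural easy}: write $\mathcal F^\natural$ schematically as $\sum_j \sum_{j'>j} P_j(m(e)) \cdot P_j(P_{j'} k \cdot \JapD^{-1} P_{j'} k)$, apply $\partial$ (a coordinate derivative), and distribute it by the Leibniz rule over the three factors; in the case $\partial = \partial_0$, use the Codazzi relation in the frequency-localized form \eqref{Codazzi once more Paley} whenever a $\partial_0 k$ is produced, to convert it into $\bar\partial k + (g\cdot\partial g\cdot k + \omega\cdot k)$. Each resulting trilinear term is then estimated by the same mechanism used in the proof of Lemma \ref{lem:Riemann estimates}: separate into low-high-high versus the remaining paraproduct pieces, place the low-frequency factor of $m(e)$ (or $\partial e$) in $L^\infty$-based Sobolev spaces, and the two high-frequency copies of $k$ (or $\partial k$, or $\JapD^{-1} k$) in the appropriate $L^2 W^{\cdot,\infty}$, $L^4 W^{\cdot,4}$ and $L^{7/2} W^{\cdot,14/3}$ Strichartz norms furnished by Lemmas \ref{lem:Energy estimates} and \ref{lem:Strichartz estimates}; finally sum the resulting geometric series in the Littlewood-Paley indices. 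The $L^1 W^{s_0,\infty}$ bound for $\mathcal F^\natural$ itself uses the extra $\JapD^{-1}$ gain on one copy of $k$ to get two factors in $L^2 W^{-1/2+s_0,\infty}$-type norms whose product lies in $L^1 W^{s_0,\infty}$ after the $\JapD^{-1}$.

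The crucial estimate is \eqref{Bound F natural L1Linfty}, and its proof must invoke the cancellation identity of Lemma \ref{lem:Cancellations}, namely \eqref{Calculation F natural difference 2 derivatives}: the difference $\partial_0 \mathcal F^\natural_{cd} - R^\natural_{c0d0}$ is, after using Codazzi repeatedly, equal (frequency-localized) to a sum of terms each carrying \emph{two} outer derivatives hitting a product of two $\JapD^{-1} k$ factors (plus $\partial e$- and $\omega,\partial g$-corrected terms). Applying $\JapD^{-2}$ worth of smoothing supplied by those two derivatives to the product of two $\JapD^{-1} k$'s — i.e. essentially bounding $\|\partial^2 (\JapD^{-1}k\cdot\JapD^{-1}k)\|_{L^1 W^{-1+s_0,\infty}}$ by interpolating the $L^2 W^{-1/2+s_0,\infty}$ and $L^4 W^{1/12+s_0,4}$ Strichartz norms for $k$ — closes \eqref{Bound F natural L1Linfty}; the term with $D\partial e$ is handled using $\partial e \in L^2 H^{1+1/6+s_1}$ from \eqref{Bootstrap bound}. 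This is the step I expect to be the main obstacle, because it is precisely where the high-high interaction structure of $k\wedge k$ must be exploited (just as in the proof of \eqref{Estimate R hard}); without the Lemma \ref{lem:Cancellations} identity the raw expression $\partial_0 \mathcal F^\natural$ only lies in a space with a negative $L^1 W^{-1,\infty}$ index that is too weak.

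Finally, for \eqref{Bound F natural higher order} I would commute one more $\partial_0$ through the identity \eqref{Calculation F natural difference} (or \eqref{Calculation F natural difference 2 derivatives}), again using Codazzi \eqref{Codazzi once more Paley} and the Bianchi identity to trade any $\partial_0 k$, $\partial_0 R$ for spatial derivatives plus $\Gamma\cdot R$, $\partial g\cdot k$, $\omega\cdot k$ corrections, and then estimate the resulting trilinear (and quadrilinear, once $\partial e$, $\partial g$, $\omega$ factors appear) expressions in the three target norms $L^\infty H^{s-4}$, $L^2 H^{-5/6+s_1}$, $L^{7/4} W^{-1+s_1+\delta_0/4,7/3}$. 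For the $L^\infty H^{s-4}$ piece one uses the energy bounds $\|\partial^2 k\|_{L^\infty H^{s-4}}\lesssim \mathcal D$ from \eqref{Energy estimate k} together with $\|\partial e\|_{L^\infty H^{s-2}}$, $\|\partial g\|_{L^\infty H^{s-2}}$, $\|\omega\|_{L^\infty H^{s-2}}$ from \eqref{Bootstrap bound}; for the two space-time-integrated norms one uses the $L^2$- and $L^{7/4}$-based Strichartz bounds for $k$ and $\partial k$ from Lemma \ref{lem:Strichartz estimates} on the two high-frequency copies, with the low factor of $m(e)$ (or its derivatives) in the corresponding $L^\infty$-Sobolev or $L^2 H^{\cdot}$ norms, summing the Littlewood-Paley series exactly as before. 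All the constants produced are of the form $C_0^k\mathcal D^k$ with $k\ge 2$, which after absorbing the smallness of $\mathcal D$ (via $\epsilon$ small relative to $C_0$, as in the standing conventions of Section \ref{sec:Bootstrap}) gives the stated bound $\lesssim \mathcal D$.
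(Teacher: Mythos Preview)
Your plan is correct and matches the paper's proof in overall structure: reduce via the extension operator to $\mathcal F^\natural$, estimate the easy norms in \eqref{Bounds F natural easy} by direct Littlewood--Paley trilinear bounds, invoke the cancellation identity from Lemma~\ref{lem:Cancellations} for \eqref{Bound F natural L1Linfty}, and differentiate that identity once more for \eqref{Bound F natural higher order}. Two minor deviations worth flagging: (i) for \eqref{Bound F natural L1Linfty} you reach for the \emph{two-derivative} identity \eqref{Calculation F natural difference 2 derivatives}, but the paper uses only the one-derivative version \eqref{Calculation F natural difference}, which already suffices here (one outer $|D|$ plus one $\JapD^{-1}$ on a $k$ factor balances against the target space $W^{-1+s_0,\infty}$) and spares you the extra $D\partial e$ and related error terms --- the two-derivative form is reserved in the paper for the uniqueness argument where the lower regularity forces it; (ii) for both \eqref{Bounds F natural easy} and \eqref{Bound F natural higher order} you propose converting $\partial_0 k$ via Codazzi, but the paper simply keeps $\partial k$ and uses the $\|\partial k\|_{L^\infty H^{s-3}}$, $\|\partial k\|_{L^4 W^{s_0-1+\frac{1}{12},4}}$, $\|\partial k\|_{L^{7/2} W^{s_0-1,\frac{14}{3}}}$ bounds from Lemmas~\ref{lem:Energy estimates}--\ref{lem:Strichartz estimates} directly, which is cleaner.
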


\begin{proof}
Recall that $\tilde{\mathcal F}^\natural$ was defined by the relation:
\begin{equation}\label{test relationF natural}
\tilde{\mathcal F}^{\natural}_{\a\b}(x^0,\bar x) \doteq \mathcal F^{\natural}_{\a\b}(x^0,\bar x) - \mathbb E \big[\mathcal F^{\natural}_{\a\b}|_{x^0=0}\big](x^0,\bar x),
\end{equation}
where $\mathcal F^{\natural}$ is given by the expression \eqref{F natural seed} and $\mathbb E$ is the extension operator defined by \eqref{Extension operator}. In view of Lemma \ref{lem:Estimates time operators} for the operator $\mathbb E$, we can readily estimate:
\begin{align}\label{First  bound Extension F natural}
\big\| & \partial \big( \mathbb E \big[\mathcal F^{\natural}|_{x^0=0}\big] \big)  \big\|_{L^\infty H^{s-3}} 
+ \big\| \partial \big( \mathbb E \big[\mathcal F^{\natural}|_{x^0=0}\big] \big)  \big\|_{L^2 H^{\f16+s_1}} \\
&+  \big\| \partial \big( \mathbb E \big[\mathcal F^{\natural}|_{x^0=0}\big] \big)  \big\|_{L^{\f74} W^{s_1+\f14\delta_0,\f73}}  + \big\| \partial \big( \mathbb E \big[\mathcal F^{\natural}   |_{x^0=0}\big] \big)  \big\|_{L^1 W^{-1+s_0, \infty}}
 \lesssim \| \mathcal F^{\natural}|_{x^0=0} \|_{H^{s-2}}.   \nonumber 
\end{align}
For the initial data term $\mathcal F^{\natural}|_{x^0=0}$, we can estimate:
\begin{align}
\| \mathcal F^{\natural}|_{x^0=0}\|_{H^{s-2}} 
& \stackrel{\hphantom{\eqref{Codazzi}}}{\lesssim} 
\sum_j  2^{(s-2)j}\| P_j \mathcal F^{\natural} |_{x^0=0}\|_{L^2}   \\
&  \stackrel{\hphantom{\eqref{Codazzi}}}{\lesssim}
 \sum_j \sum_{\bar j} \sum_{\substack{j_1,j_2 \ge \{\max j, \bar j \}-2,\\|j_1-j_2|\le 2}}
2^{(s-2)j} \Big\| P_j \Big(  P_{\bar j} m(e) \cdot P_{j_1} k \cdot P_{j_2} \mathcal T^{(b)}_{j_2}  k \Big) |_{x^0=0} \Big\|_{L^2}      \nonumber \\
&  \stackrel{\hphantom{\eqref{Codazzi}}}{\lesssim}
 \sum_j \sum_{\bar j} \sum_{\substack{j_1,j_2 \ge \{\max j, \bar j \}-2,\\|j_1-j_2|\le 2}}
2^{(s-\f12)j} \Big\| P_j \Big(  P_{\bar j} m(e) \cdot P_{j_1} k \cdot P_{j_2} \mathcal T^{(b)}_{j_2}  k \Big) |_{x^0=0} \Big\|_{L^1}      \nonumber \\
&  \stackrel{\hphantom{\eqref{Codazzi}}}{\lesssim}
 \sum_j \sum_{\bar j} \sum_{\substack{j_1,j_2 \ge \{\max j, \bar j \}-2,\\|j_1-j_2|\le 2}}
2^{(s-\f12)(j-j_1)+\f32(j_1-j_2)-\delta_0 (\bar j+j_2)}\cdot 2^{\delta_0 \bar j}\| P_{\bar j} m(e)|_{x^0=0}\|_{L^\infty} \nonumber \\
& \hphantom{\stackrel{\hphantom{\eqref{Codazzi}}}{\lesssim}
 \sum_j \sum_{\bar j} \sum_{\substack{j_1,j_2 \ge \{\max j, \bar j \}-2,\\|j_1-j_2|\le 2}}
2}
 \times  2^{(s-2)j_1}\| P_{j_1} k|_{x^0=0}\|_{L^2} \cdot  2^{(\f32+\delta_0)j_2}\| P_{j_2} \mathcal T^{(b)}_{j_2}  k  |_{x^0=0} \|_{L^2}      \nonumber \\
& \stackrel{\eqref{Mikhlin property}}{\lesssim} 
\|m(e)|_{x^0=0}\|_{L^\infty} \| k |_{x^0=0}\|_{H^{s-2}}^2   \nonumber \\
& \stackrel{\hphantom{\eqref{Codazzi}}}{\lesssim} \mathcal D^2,  \nonumber 
\end{align}
where, in the last line above, we made use of the bounds provided by Proposition \ref{prop: Bounds S0} for $e|_{x^0=0}$ and $k|_{x^0=0}$. Thus, returning to \eqref{First bound Extension F natural}, we obtain:
\begin{align}\label{Bound Extension F natural}
\big\| \partial \big( & \mathbb E \big[\mathcal F^{\natural}|_{x^0=0}\big] \big)  \big\|_{L^\infty H^{s-3}} 
+  \big\| \partial \big( \mathbb E \big[\mathcal F^{\natural}|_{x^0=0}\big] \big)  \big\|_{L^2 H^{\f16+s_1}} \\
& + \big\| \partial \big( \mathbb E \big[\mathcal F^{\natural}|_{x^0=0}\big] \big)  \big\|_{L^{\f74} W^{s_1+\f14\delta_0,\f73}}+ \big\| \partial \big( \mathbb E \big[\mathcal F^{\natural}   |_{x^0=0}\big] \big)  \big\|_{L^1 W^{-1+s_0, \infty}}
 \lesssim \mathcal D^2. \nonumber
\end{align}

In view of the bound \eqref{Bound Extension F natural} and the expression \eqref{test relationF natural} for $\tilde{\mathcal F}^{\natural}$, in order to prove the bounds \eqref{Bounds F natural easy} and \eqref{Bound F natural L1Linfty} it suffices to show that
\begin{equation}\label{Bounds seed F natural easy}
\| \partial \mathcal F^{\natural}  \|_{L^\infty H^{s-3}} + \| \partial \mathcal F^{\natural}  \|_{L^2 H^{\f16+s_1}} + \| \partial \mathcal F^{\natural}  \|_{L^{\f74} W^{s_1+\f14\delta_0,\f73}}+ \| \mathcal F^{\natural}  \|_{L^1 W^{s_0, \infty}} \lesssim \mathcal D
\end{equation}
and
\begin{equation} \label{Bound seed F natural L1Linfty}
 \sum_{c,d=1}^3 \| \partial_0 \mathcal F^{\natural}_{cd} - R^{\natural}_{c0d0}  \|_{L^1 W^{-1+s_0, \infty}} \lesssim \mathcal D
 \end{equation}
 We can readily calculate (using the expression \eqref{F natural seed}):
 \begin{align}\label{Seed F natural LinftyL2}
\| \partial & \mathcal F^{\natural}\|_{L^\infty H^{s-3}} 
 \stackrel{\hphantom{\eqref{Codazzi}}}{\lesssim} 
\sum_j  2^{(s-3)j}\|\partial P_j \mathcal F^{\natural} \|_{L^\infty L^2}   \\
&  \stackrel{\hphantom{\eqref{Codazzi}}}{\lesssim}
 \sum_j \sum_{\bar j} \sum_{\substack{j_1,j_2 \ge \{\max j, \bar j \}-2,\\|j_1-j_2|\le 2}}
2^{(s-3)j} \Big\| \partial P_j \Big(  P_{\bar j} m(e) \cdot P_{j_1} k \cdot P_{j_2} \mathcal T^{(b)}_{j_2}  k \Big)  \Big\|_{L^\infty L^2}      \nonumber \\
&  \stackrel{\hphantom{\eqref{Codazzi}}}{\lesssim}
 \sum_j \sum_{\bar j} \sum_{\substack{j_1,j_2 \ge \{\max j, \bar j \}-2,\\|j_1-j_2|\le 2}}\Bigg( 
2^{(s-3)j} \Big\| P_j \Big(  P_{\bar j} \partial e \cdot P_{j_1} k \cdot P_{j_2} \mathcal T^{(b)}_{j_2}  k \Big)  \Big\|_{L^\infty L^2}      \nonumber \\
& \hphantom{\stackrel{\hphantom{\eqref{Codazzi}}}{\lesssim}
 \sum_j \sum_{\bar j} \sum_{\substack{j_1,j_2 \ge \{\max j, \bar j \}-2,\\|j_1-j_2|\le 2}}\Bigg( }
 +2^{(s-3)j} \Big\| P_j \Big(  P_{\bar j} m(e) \cdot P_{j_1} \partial k \cdot P_{j_2} \mathcal T^{(b)}_{j_2}  k \Big)  \Big\|_{L^\infty L^2}     \nonumber \\
 & \hphantom{\stackrel{\hphantom{\eqref{Codazzi}}}{\lesssim}
 \sum_j \sum_{\bar j} \sum_{\substack{j_1,j_2 \ge \{\max j, \bar j \}-2,\\|j_1-j_2|\le 2}}\Bigg( }
 +2^{(s-3)j} \Big\| P_j \Big(  P_{\bar j} m(e) \cdot P_{j_1}  k \cdot P_{j_2} \mathcal T^{(b)}_{j_2}  \partial k \Big) \Big\|_{L^\infty L^2}  \Bigg)   \nonumber \\
&  \stackrel{\hphantom{\eqref{Codazzi}}}{\lesssim}
 \sum_j \sum_{\bar j} \sum_{\substack{j_1,j_2 \ge \{\max j, \bar j \}-2,\\|j_1-j_2|\le 2}}\Bigg( 
2^{(s-\f52)j} \Big\| P_j \Big(  P_{\bar j} \partial e \cdot P_{j_1} k \cdot P_{j_2} \mathcal T^{(b)}_{j_2}  k \Big)  \Big\|_{L^\infty L^{\f32}}      \nonumber \\
& \hphantom{\stackrel{\hphantom{\eqref{Codazzi}}}{\lesssim}
 \sum_j \sum_{\bar j} \sum_{\substack{j_1,j_2 \ge \{\max j, \bar j \}-2,\\|j_1-j_2|\le 2}}\Bigg( }
 +2^{(s-\f32)j} \Big\| P_j \Big(  P_{\bar j} m(e) \cdot P_{j_1} \partial k \cdot P_{j_2} \mathcal T^{(b)}_{j_2}  k \Big)  \Big\|_{L^\infty L^1}     \nonumber \\
 & \hphantom{\stackrel{\hphantom{\eqref{Codazzi}}}{\lesssim}
 \sum_j \sum_{\bar j} \sum_{\substack{j_1,j_2 \ge \{\max j, \bar j \}-2,\\|j_1-j_2|\le 2}}\Bigg( }
 +2^{(s-\f32)j} \Big\| P_j \Big(  P_{\bar j} m(e) \cdot P_{j_1}  k \cdot P_{j_2} \mathcal T^{(b)}_{j_2}  \partial k \Big) \Big\|_{L^\infty L^1}  \Bigg)   \nonumber \\
&  \stackrel{\hphantom{\eqref{Codazzi}}}{\lesssim}
 \sum_j \sum_{\bar j} \sum_{\substack{j_1,j_2 \ge \{\max j, \bar j \}-2,\\|j_1-j_2|\le 2}}\Big( 
2^{(s-\f52)(j-j_2)+(j_1-j_2)  -\delta_0(\bar j + 2j_1-j_2)}  \nonumber \\
& \hphantom{\stackrel{\hphantom{\eqref{Codazzi}}}{\lesssim}
 \sum_j \sum_{\bar j} \sum_{\substack{j_1,j_2 \ge \{\max j, \bar j \}-2,\\|j_1-j_2|\le 2}}\Bigg( 222}
 \times 2^{\delta_0 \bar j} \| P_{\bar j} \partial e\|_{L^\infty L^3} \cdot 2^{(-1+2\delta_0)j_1} \| P_{j_1} k\|_{L^\infty L^\infty} \cdot 2^{(s-\f32-\delta_0)j_2}\| P_{j_2} \mathcal T^{(b)}_{j_2}  k \|_{L^\infty L^3}  \Big)    \nonumber \\
 & \hphantom{\stackrel{\hphantom{\eqref{Codazzi}}}{\lesssim}}
 +\sum_j \sum_{\bar j} \sum_{\substack{j_1,j_2 \ge \{\max j, \bar j \}-2,\\|j_1-j_2|\le 2}}\Big( 
 +2^{(s-\f32)(j-j_1) +\f32(j_1-j_2)- \delta_0 (\bar j +j_2)}    \nonumber \\
& \hphantom{\stackrel{\hphantom{\eqref{Codazzi}}}{\lesssim}
 \sum_j \sum_{\bar j} \sum_{\substack{j_1,j_2 \ge \{\max j, \bar j \}-2,\\|j_1-j_2|\le 2}}\Bigg( 222}  
  \times 2^{\delta_0 \bar j}\| P_{\bar j} m(e)\|_{L^\infty L^\infty} \cdot 2^{(s-3)j_1}\| P_{j_1} \partial k\|_{L^\infty L^2} \cdot 2^{(\f32+\delta_0)j_2} \| P_{j_2} \mathcal T^{(b)}_{j_2}  k \|_{L^\infty L^2} \Big)    \nonumber \\
 & \hphantom{\stackrel{\hphantom{\eqref{Codazzi}}}{\lesssim}}
 +\sum_j \sum_{\bar j} \sum_{\substack{j_1,j_2 \ge \{\max j, \bar j \}-2,\\|j_1-j_2|\le 2}}\Big( 
 +2^{(s-\f32)(j-j_1) +\f12(j_1-j_2)-\delta_0(\bar j + j_2)} \nonumber \\  
& \hphantom{\stackrel{\hphantom{\eqref{Codazzi}}}{\lesssim}
 \sum_j \sum_{\bar j} \sum_{\substack{j_1,j_2 \ge \{\max j, \bar j \}-2,\\|j_1-j_2|\le 2}}\Bigg( 222}
 \times 2^{\delta_0 \bar j} \| P_{\bar j} m(e)\|_{L^\infty L^\infty} \cdot 2^{(s-2)j_1}\| P_{j_1}  k\|_{L^\infty L^2} \cdot 2^{(\f12+\delta_0) j_2 }\| P_{j_2} \mathcal T^{(b)}_{j_2}  \partial k \|_{L^\infty L^2}  \Big)   \nonumber \\
& \stackrel{\eqref{Mikhlin property}}{\lesssim}
\| \partial e\|_{L^\infty W^{\delta_0,3}} \|k\|_{L^\infty W^{-1+2\delta_0,\infty}} \| k\|_{L^\infty W^{s-\f52-\delta_0,3}} 
+ \|m(e)\|_{L^\infty W^{\delta_0,\infty}} \|\partial k\|_{L^\infty H^{s-3}} \| k \|_{L^\infty H^{\f12+\delta_0}}  \nonumber \\
&\hphantom{\sum\sum}
+ \|m(e)\|_{L^\infty W^{\delta_0,\infty}} \|k\|_{L^\infty H^{s-2}} \|\partial k \|_{L^\infty H^{-\f12+\delta_0}}    \nonumber \\
&  \stackrel{\hphantom{\eqref{Codazzi}}}{\lesssim}
 \| \partial e \|_{L^\infty H^{s-2}} \|k\|^2_{L^\infty H^{s-2}} + \|m(e)\|_{L^\infty H^{s-1}} \|k\|_{L^\infty H^{s-2}} \|\partial k\|_{L^\infty H^{s-3}} \nonumber \\
& \stackrel{\hphantom{\eqref{Codazzi}}}{\lesssim} C_0^2 \mathcal D^2,  \nonumber 
\end{align}
where, in the last line above, we made use of the bootstrap bound \eqref{Bootstrap bound}.

Similarly, we can estimate:
 \begin{align}\label{Seed F natural L2L2}
\| \partial & \mathcal F^{\natural}\|_{L^2 H^{\f16+s_1}} 
 \stackrel{\hphantom{\eqref{Codazzi}}}{\lesssim} 
\sum_j  2^{(\f16+s_1)j}\|\partial P_j \mathcal F^{\natural} \|_{L^2 L^2}   \\
&  \stackrel{\hphantom{\eqref{Codazzi}}}{\lesssim}
 \sum_j \sum_{\bar j} \sum_{\substack{j_1,j_2 \ge \{\max j, \bar j \}-2,\\|j_1-j_2|\le 2}}
2^{(\f16+s_1)j} \Big\| \partial P_j \Big(  P_{\bar j} m(e) \cdot P_{j_1} k \cdot P_{j_2} \mathcal T^{(b)}_{j_2}  k \Big)  \Big\|_{L^2 L^2}      \nonumber \\
&  \stackrel{\hphantom{\eqref{Codazzi}}}{\lesssim}
 \sum_j \sum_{\bar j} \sum_{\substack{j_1,j_2 \ge \{\max j, \bar j \}-2,\\|j_1-j_2|\le 2}}\Bigg( 
2^{(\f16+s_1)j} \Big\| P_j \Big(  P_{\bar j} \partial e \cdot P_{j_1} k \cdot P_{j_2} \mathcal T^{(b)}_{j_2}  k \Big)  \Big\|_{L^2 L^2}      \nonumber \\
& \hphantom{\stackrel{\hphantom{\eqref{Codazzi}}}{\lesssim}
 \sum_j \sum_{\bar j} \sum_{\substack{j_1,j_2 \ge \{\max j, \bar j \}-2,\\|j_1-j_2|\le 2}}\Bigg( }
 +2^{(\f16+s_1)j} \Big\| P_j \Big(  P_{\bar j} m(e) \cdot P_{j_1} \partial k \cdot P_{j_2} \mathcal T^{(b)}_{j_2}  k \Big)  \Big\|_{L^2 L^2}     \nonumber \\
 & \hphantom{\stackrel{\hphantom{\eqref{Codazzi}}}{\lesssim}
 \sum_j \sum_{\bar j} \sum_{\substack{j_1,j_2 \ge \{\max j, \bar j \}-2,\\|j_1-j_2|\le 2}}\Bigg( }
 +2^{(\f16+s_1)j} \Big\| P_j \Big(  P_{\bar j} m(e) \cdot P_{j_1}  k \cdot P_{j_2} \mathcal T^{(b)}_{j_2}  \partial k \Big) \Big\|_{L^2 L^2}  \Bigg)   \nonumber \\
&  \stackrel{\hphantom{\eqref{Codazzi}}}{\lesssim}
 \sum_j \sum_{\bar j} \sum_{\substack{j_1,j_2 \ge \{\max j, \bar j \}-2,\\|j_1-j_2|\le 2}}\Big( 
2^{(\f16+s_1)(j-j_1)+(j_1-j_2)  -\delta_0(\bar j + j_2)}  \nonumber \\
& \hphantom{\stackrel{\hphantom{\eqref{Codazzi}}}{\lesssim}
 \sum_j \sum_{\bar j} \sum_{\substack{j_1,j_2 \ge \{\max j, \bar j \}-2,\\|j_1-j_2|\le 2}}\Bigg( 222}
 \times 2^{\delta_0 \bar j} \| P_{\bar j} \partial e\|_{L^2 L^6} \cdot 2^{(s_1+\f16)j_1} \| P_{j_1} k\|_{L^\infty L^3} \cdot 2^{\delta_0 j_2}\| P_{j_2} \mathcal T^{(b)}_{j_2}  k \|_{L^\infty L^\infty}  \Big)    \nonumber \\
 & \hphantom{\stackrel{\hphantom{\eqref{Codazzi}}}{\lesssim}}
 +\sum_j \sum_{\bar j} \sum_{\substack{j_1,j_2 \ge \{\max j, \bar j \}-2,\\|j_1-j_2|\le 2}}\Big( 
 +2^{(\f16+s_1)(j-j_1) +\f12(j_1-j_2)- \delta_0 (\bar j +j_1)}    \nonumber \\
& \hphantom{\stackrel{\hphantom{\eqref{Codazzi}}}{\lesssim}
 \sum_j \sum_{\bar j} \sum_{\substack{j_1,j_2 \ge \{\max j, \bar j \}-2,\\|j_1-j_2|\le 2}}\Bigg( 222}  
  \times 2^{\delta_0 \bar j}\| P_{\bar j} m(e)\|_{L^\infty L^\infty} \cdot 2^{(-\f12+\f16+s_1+\delta_0)j_1}\| P_{j_1} \partial k\|_{L^\infty L^2} \cdot 2^{\f12 j_2} \| P_{j_2} \mathcal T^{(b)}_{j_2}  k \|_{L^2 L^\infty} \Big)    \nonumber \\
 & \hphantom{\stackrel{\hphantom{\eqref{Codazzi}}}{\lesssim}}
 +\sum_j \sum_{\bar j} \sum_{\substack{j_1,j_2 \ge \{\max j, \bar j \}-2,\\|j_1-j_2|\le 2}}\Big( 
 +2^{(s-\f32)(j-j_1) +\f12(j_1-j_2)-\delta_0(\bar j + j_2)} \nonumber \\  
& \hphantom{\stackrel{\hphantom{\eqref{Codazzi}}}{\lesssim}
 \sum_j \sum_{\bar j} \sum_{\substack{j_1,j_2 \ge \{\max j, \bar j \}-2,\\|j_1-j_2|\le 2}}\Bigg( 222}
 \times 2^{\delta_0 \bar j} \| P_{\bar j} m(e)\|_{L^\infty L^\infty} \cdot 2^{-\f12 j_1}\| P_{j_1}  k\|_{L^2 L^\infty} \cdot 2^{(\f12+\f16+s_1+\delta_0) j_2 }\| P_{j_2} \mathcal T^{(b)}_{j_2}  \partial k \|_{L^\infty L^2}  \Big)   \nonumber \\
& \stackrel{\eqref{Mikhlin property}}{\lesssim}
\| \partial e\|_{L^2 W^{\delta_0,6}} \|k\|_{L^\infty W^{\f16+s_1,3}} \| k\|_{L^\infty W^{-1+\delta_0,\infty}} 
+ \|m(e)\|_{L^\infty W^{\delta_0,\infty}} \|\partial k\|_{L^\infty H^{s-3}} \| k \|_{L^2 W^{-\f12+\delta_0,\infty}}  \nonumber \\
&\hphantom{\sum\sum}
+ \|m(e)\|_{L^\infty W^{\delta_0,\infty}} \|k\|_{L^2 W^{-\f12,\infty}} \|\partial k \|_{L^\infty H^{s-3}}    \nonumber \\
&  \stackrel{\hphantom{\eqref{Codazzi}}}{\lesssim}
 \| \partial e \|_{L^2 H^{1+\f16}} \|k\|^2_{L^\infty H^{s-2}} + \|m(e)\|_{L^\infty H^{s-1}} \|k\|_{L^2 W^{-\f12+s_0,\infty}} \|\partial k\|_{L^\infty H^{s-3}} \nonumber \\
& \stackrel{\hphantom{\eqref{Codazzi}}}{\lesssim} C_0^2 \mathcal D^2,  \nonumber 
\end{align}
where, again, in the last line above, we made use of the bootstrap bound \eqref{Bootstrap bound}. Similarly:
 \begin{align}\label{Seed F natural L2-L2+}
\| \partial & \mathcal F^{\natural}\|_{L^{\f74} W^{s_1+\f14\delta_0,\f73}} 
 \stackrel{\hphantom{\eqref{Codazzi}}}{\lesssim} 
\sum_j  2^{(s_1+\f14\delta_0) j}\|\partial P_j \mathcal F^{\natural} \|_{L^{\f74} L^{\f73}}   \\
&  \stackrel{\hphantom{\eqref{Codazzi}}}{\lesssim}
 \sum_j \sum_{\bar j} \sum_{\substack{j_1,j_2 \ge \{\max j, \bar j \}-2,\\|j_1-j_2|\le 2}}
2^{(s_1+\f14\delta_0) j} \Big\| \partial P_j \Big(  P_{\bar j} m(e) \cdot P_{j_1} k \cdot P_{j_2} \mathcal T^{(b)}_{j_2}  k \Big)  \Big\|_{L^{\f74} L^{\f73}}      \nonumber \\
&  \stackrel{\hphantom{\eqref{Codazzi}}}{\lesssim}
 \sum_j \sum_{\bar j} \sum_{\substack{j_1,j_2 \ge \{\max j, \bar j \}-2,\\|j_1-j_2|\le 2}}\Bigg( 
2^{(s_1 +\f14\delta_0)j} \Big\| P_j \Big(  P_{\bar j} \partial e \cdot P_{j_1} k \cdot P_{j_2} \mathcal T^{(b)}_{j_2}  k \Big)  \Big\|_{L^{\f74} L^{\f73}}      \nonumber \\
& \hphantom{\stackrel{\hphantom{\eqref{Codazzi}}}{\lesssim}
 \sum_j \sum_{\bar j} \sum_{\substack{j_1,j_2 \ge \{\max j, \bar j \}-2,\\|j_1-j_2|\le 2}}\Bigg( }
 +2^{(s_1+\f14\delta_0) j} \Big\| P_j \Big(  P_{\bar j} m(e) \cdot P_{j_1} \partial k \cdot P_{j_2} \mathcal T^{(b)}_{j_2}  k \Big)  \Big\|_{L^{\f74} L^{\f73}}     \nonumber \\
 & \hphantom{\stackrel{\hphantom{\eqref{Codazzi}}}{\lesssim}
 \sum_j \sum_{\bar j} \sum_{\substack{j_1,j_2 \ge \{\max j, \bar j \}-2,\\|j_1-j_2|\le 2}}\Bigg( }
 +2^{(s_1+\f14\delta_0) j} \Big\| P_j \Big(  P_{\bar j} m(e) \cdot P_{j_1}  k \cdot P_{j_2} \mathcal T^{(b)}_{j_2}  \partial k \Big) \Big\|_{L^{\f74} L^{\f73}}  \Bigg)   \nonumber \\
&  \stackrel{\hphantom{\eqref{Codazzi}}}{\lesssim}
 \sum_j \sum_{\bar j} \sum_{\substack{j_1,j_2 \ge \{\max j, \bar j \}-2,\\|j_1-j_2|\le 2}}\Big( 
2^{(s_1+\f14\delta_0) (j-j_1)+(\bar j-j_2)  -\delta_0(\bar j + j_2)}  \nonumber \\
& \hphantom{\stackrel{\hphantom{\eqref{Codazzi}}}{\lesssim}
 \sum_j \sum_{\bar j} \sum_{\substack{j_1,j_2 \ge \{\max j, \bar j \}-2,\\|j_1-j_2|\le 2}}\Bigg( 222}
 \times 2^{(-1+\delta_0) \bar j} \| P_{\bar j} \partial e\|_{L^\infty L^\infty} \cdot 2^{(s_1+\f14\delta_0) j_1} \| P_{j_1} k\|_{L^{\f72} L^{\f{14}3}} \cdot 2^{(1+\delta_0) j_2}\| P_{j_2} \mathcal T^{(b)}_{j_2}  k \|_{L^{\f72} L^{\f{14}3}}  \Big)    \nonumber \\
 & \hphantom{\stackrel{\hphantom{\eqref{Codazzi}}}{\lesssim}}
 +\sum_j \sum_{\bar j} \sum_{\substack{j_1,j_2 \ge \{\max j, \bar j \}-2,\\|j_1-j_2|\le 2}}\Big( 
 +2^{(s_1+\f14\delta_0)(j-j_1) +(j_1-j_2)- \delta_0 (\bar j +j_2)}    \nonumber \\
& \hphantom{\stackrel{\hphantom{\eqref{Codazzi}}}{\lesssim}
 \sum_j \sum_{\bar j} \sum_{\substack{j_1,j_2 \ge \{\max j, \bar j \}-2,\\|j_1-j_2|\le 2}}\Bigg( 222}  
  \times 2^{\delta_0 \bar j}\| P_{\bar j} m(e)\|_{L^\infty L^\infty} \cdot 2^{(-1+s_1+\f14\delta_0)j_1}\| P_{j_1} \partial k\|_{L^{\f72} L^{\f{14}3}} \cdot 2^{(1+\delta_0) j_2} \| P_{j_2} \mathcal T^{(b)}_{j_2}  k \|_{L^{\f72} L^{\f{14}3}} \Big)    \nonumber \\
 & \hphantom{\stackrel{\hphantom{\eqref{Codazzi}}}{\lesssim}}
 +\sum_j \sum_{\bar j} \sum_{\substack{j_1,j_2 \ge \{\max j, \bar j \}-2,\\|j_1-j_2|\le 2}}\Big( 
 +2^{(s_1+\f14\delta_0)(j-j_1) -\delta_0(\bar j + j_2)} \nonumber \\  
& \hphantom{\stackrel{\hphantom{\eqref{Codazzi}}}{\lesssim}
 \sum_j \sum_{\bar j} \sum_{\substack{j_1,j_2 \ge \{\max j, \bar j \}-2,\\|j_1-j_2|\le 2}}\Bigg( 222}
 \times 2^{\delta_0 \bar j} \| P_{\bar j} m(e)\|_{L^\infty L^\infty} \cdot 2^{(s_1+\f14\delta_0) j_1}\| P_{j_1}  k\|_{L^{\f72} L^{\f{14}3}} \cdot 2^{\delta_0 j_2 }\| P_{j_2} \mathcal T^{(b)}_{j_2}  \partial k \|_{L^{\f72} L^{\f{14}3}}  \Big)   \nonumber \\
& \stackrel{\eqref{Mikhlin property}}{\lesssim}
\| \partial e\|_{L^\infty W^{-1+\delta_0,\infty}} \|k\|^2_{L^{\f72} W^{s_1+\f12\delta_0,\f{14}3}} 
+ \|m(e)\|_{L^\infty W^{\delta_0,\infty}} \|\partial k\|_{L^{\f72} W^{-1+s_1+\f12\delta_0,\f{14}3}} \| k \|_{L^{\f72} W^{s_1,\f{14}3}}  \nonumber \\
&\hphantom{\sum\sum}
+ \|m(e)\|_{L^\infty W^{\delta_0,\infty}} \|k\|_{L^{\f72} W^{s_1+\f12\delta_0,\f{14}3}} \|\partial k \|_{L^{\f72} W^{-1+s_1,\f{14}3}}    \nonumber \\
& \stackrel{\hphantom{\eqref{Codazzi}}}{\lesssim} C_0^2 \mathcal D^2.  \nonumber 
\end{align}
We also have: 
 \begin{align}\label{Seed F natural L1Linfty}
\|  \mathcal F^{\natural} & \|_{L^1 W^{s_0, \infty}} 
 \stackrel{\hphantom{\eqref{Codazzi}}}{\lesssim} 
\sum_j  2^{s_0 j}\| P_j \mathcal F^{\natural} \|_{L^1 L^\infty}   \\
&  \stackrel{\hphantom{\eqref{Codazzi}}}{\lesssim}
 \sum_j \sum_{\bar j} \sum_{\substack{j_1,j_2 \ge \{\max j, \bar j \}-2,\\|j_1-j_2|\le 2}}
2^{s_0 j} \Big\| \partial P_j \Big(  P_{\bar j} m(e) \cdot P_{j_1} k \cdot P_{j_2} \mathcal T^{(b)}_{j_2}  k \Big)  \Big\|_{L^1 L^\infty}      \nonumber \\
&  \stackrel{\hphantom{\eqref{Codazzi}}}{\lesssim}
 \sum_j \sum_{\bar j} \sum_{\substack{j_1,j_2 \ge \{\max j, \bar j \}-2,\\|j_1-j_2|\le 2}}
2^{s_0 (j-j_1) + \f12(j_1-j_2)-\delta_0 (\bar j+2j_2-j_1)}   \nonumber \\
& \hphantom{  \stackrel{\hphantom{\eqref{Codazzi}}}{\lesssim}
 \sum_j \sum_{\bar j} \sum_{\substack{j_1,j_2 \ge \{\max j, \bar j \}-2,\\|j_1-j_2|\le 2}} \sum}
\times 2^{\delta_0 \bar j} \|  P_{\bar j} m(e) \|_{L^\infty L^\infty} \cdot 2^{(-\f12+s_0-\delta_0) j_1}\| P_{j_1} k\|_{L^2 L^\infty} \cdot 2^{(\f12+2\delta_0) j_2} \| P_{j_2} \mathcal T^{(b)}_{j_2}  k \|_{L^2 L^\infty}      \nonumber \\
&  \stackrel{\eqref{Mikhlin property}}{\lesssim}
 \|m(e)\|_{L^\infty W^{\delta_0,\infty}} \|k\|_{L^2 W^{-\f12+s_0-\delta_0,\infty}} \|k\|_{L^2 W^{-\f12+2\delta_0,\infty}}  \nonumber \\
&  \stackrel{\hphantom{\eqref{Mikhlin property}}}{\lesssim}
 \|m(e)\|_{L^\infty H^{s-1}} \|k\|^2_{L^2 W^{-\f12+s_0,\infty}}   \nonumber \\
  &  \stackrel{\hphantom{\eqref{Mikhlin property}}}{\lesssim}
  \mathcal C_0^2 D^2.   \nonumber
\end{align}
Combining \eqref{Seed F natural LinftyL2}--\eqref{Seed F natural L1Linfty}, we obtain \eqref{Bounds seed F natural easy}.

Establishing the bound \eqref{Bound seed F natural L1Linfty} requires exploiting the cancellations emerging in the expression for $\partial_0 \mathcal F^{\natural}_{cd}-R^{\natural}_{c0d0}$. In particular, using Lemma \ref{lem:Cancellations}, we obtain the following schematic expression for $P_j \Big( \partial_0 \mathcal F^{\natural}_{cd}  -  R^{\natural}_{c0d0} \Big)$:
\begin{align*}
P_j \Big( \partial_0 \mathcal F^{\natural}_{cd}  -  R^{\natural}_{c0d0}  \Big)  = 
 &  
     \sum_{\substack{j_2 > j_1-2,\\|j_2-j_3|\le 2}} P_j \Big(D \big( P_{j_1} (m(e)) \cdot  P_{j_2} k \cdot P_{j_3} (\JapD^{-1} k)\big) \Big)\\
&  +   \sum_{\substack{j_2 > j_1-2,\\|j_2-j_3|\le 2}} P_j \big( P_{j_1} (\partial e)  \cdot P_{j_2} k \cdot P_{j_3} (\JapD^{-1} k) \big)  \nonumber  \\
& +   \sum_{\substack{j_2 > j_1-2,\\|j_2-j_3|\le 2}} P_j \big( P_{j_1} (m(e)) \cdot (P_{j_2} ( g\cdot\partial g \cdot k + \omega \cdot k) )\cdot P_{j_3} (\JapD^{-1} k) \big).   \nonumber
\end{align*}
Therefore, we can estimate for any $c,d \in \{1,2,3\}$:
\begin{align}
\| \partial_0 \mathcal F^{\natural}_{cd} & - R^{\natural}_{c0d0}  \|_{L^1 W^{-1+s_0, \infty}}
\lesssim 
\sum_j 2^{(-1+s_0) j} \Big\| P_j \Big( \partial_0 \mathcal F^{\natural}_{cd}  -  R^{\natural}_{c0d0}  \Big) \Big\|_{L^1 L^\infty } \\
& \lesssim
\sum_j  \sum_{\substack{j_2 > j_1-2,\\|j_2-j_3|\le 2}}
\Bigg( 2^{(-1+s_0) j} \Big\|   P_j \Big(|D| \big( P_{j_1} (m(e)) \cdot  P_{j_2} k \cdot P_{j_3} (\JapD^{-1} k)\big) \Big)   \Big\|_{L^1 L^\infty }   \nonumber \\
& \hphantom{\lesssim
\sum_j  \sum_{\substack{j_2 > j_1-2,\\|j_2-j_3|\le 2}} 
\Bigg( }
+2^{(-1+s_0) j} \Big\|   P_j \big( P_{j_1} (\partial e)  \cdot P_{j_2} k \cdot P_{j_3} (\JapD^{-1} k) \big)    \Big\|_{L^1 L^\infty }  \nonumber \\
& \hphantom{\lesssim
\sum_j  \sum_{\substack{j_2 > j_1-2,\\|j_2-j_3|\le 2}}
\Bigg( }
+2^{(-1+s_0) j} \Big\|    P_j \big( P_{j_1} (m(e)) \cdot (P_{j_2} ( g\cdot\partial g \cdot k + \omega \cdot k) )\cdot P_{j_3} (\JapD^{-1} k) \big)    \Big\|_{L^1 L^\infty }  \Bigg) \nonumber \\
& \lesssim
\sum_j  \sum_{\substack{j_2 > \max\{j_1-2,j-8\},\\|j_2-j_3|\le 2}}
\Bigg( 2^{s_0 j} \Big\|   P_j \big(P_{j_1} (m(e)) \cdot P_{j_2} k \cdot P_{j_3} (\JapD^{-1} k)\big)   \Big\|_{L^1 L^\infty }   \nonumber \\
& \hphantom{\lesssim
\sum_j  \sum_{\substack{j_2 > j_1-2,\\|j_2-j_3|\le 2}} 
\Bigg( }
+2^{s_0 j} \Big\|   P_j \big( P_{j_1} (\partial e)  \cdot P_{j_2} k \cdot P_{j_3} (\JapD^{-1} k) \big)    \Big\|_{L^1 L^3 }  \nonumber \\
& \hphantom{\lesssim
\sum_j  \sum_{\substack{j_2 > j_1-2,\\|j_2-j_3|\le 2}}
\Bigg( }
+2^{(1+s_0) j} \Big\|    P_j \big( P_{j_1} (m(e)) \cdot (P_{j_2} ( g\cdot\partial g \cdot k + \omega \cdot k) )\cdot P_{j_3} (\JapD^{-1} k) \big)    \Big\|_{L^1 L^{\f32} }  \Bigg) \nonumber \\
& \lesssim
\sum_j  \sum_{\substack{j_2 > \max\{j_1-2,j-8\},\\|j_2-j_3|\le 2}}
\Bigg( 2^{s_0 (j-j_2) +\f12(j_3-j_2) -\delta_0(j_1+j_3)}    \nonumber \\
& \hphantom{\lesssim
\sum_j  \sum_{\substack{j_2 > \max\{j_1-2,j-8\},\\|j_2-j_3|\le 2}}
\Bigg( 2}
\times 2^{\delta_0 j_1} \| P_{j_1} (m(e))\|_{L^\infty L^\infty} \cdot 2^{(-\f12+s_0)j_2} \| P_{j_2} k\|_{L^2 L^\infty} \cdot 2^{(-\f12+\delta_0) j_3} \| P_{j_3} k\|_{L^2 L^\infty}   \nonumber \\
& \hphantom{\lesssim
\sum_j  \sum_{\substack{j_2 > j_1-2,\\|j_2-j_3|\le 2}} 
\Bigg( }
+2^{s_0 (j-j_2) +\f12 (j_2-j_3)-\delta_0 j_1-\delta_0 j_3}    \nonumber \\
& \hphantom{\lesssim
\sum_j  \sum_{\substack{j_2 > \max\{j_1-2,j-8\},\\|j_2-j_3|\le 2}}
\Bigg( 2}
\times 2^{\delta_0j_1}\| P_{j_1} (\partial e)\|_{L^\infty L^3}  \cdot 2^{(-\f12+s_0)j_2}\| P_{j_2} k\|_{L^2 L^\infty} \cdot 2^{(-\f12+\delta_0)j_3}\| P_{j_3}  k\|_{L^2 L^\infty}   \nonumber \\
& \hphantom{\lesssim
\sum_j  \sum_{\substack{j_2 > j_1-2,\\|j_2-j_3|\le 2}}
\Bigg( }
+2^{(1+s_0)( j-j_3)-\delta_0 (j_1+j_3)}     \nonumber \\
& \hphantom{\lesssim
\sum_j  \sum_{\substack{j_2 > \max\{j_1-2,j-8\},\\|j_2-j_3|\le 2}}
\Bigg( 2}
\times  2^{\delta_0 j_1} \| P_{j_1} (m(e))\|_{L^\infty L^\infty} \cdot \| P_{j_2} ( g\cdot\partial g \cdot k + \omega \cdot k) \|_{L^{\f43}L^{\f{12}5}}\cdot 2^{(s_0+\delta_0) j_3}\| P_{j_3}  k \|_{L^4 L^4}  \Bigg) \nonumber \\
&\lesssim
\| m(e)\|_{L^\infty W^{\delta_0, \infty}} \|k\|_{L^2 W^{-\f12+s_0,\infty}} \|k\|_{L^2 W^{-\f12+\delta_0,\infty}}  \nonumber \\
& \hphantom{\lesssim \sum} +  \| \partial e\|_{L^\infty W^{\delta_0,3}} \|k\|_{L^2 W^{-\f12+s_0,\infty}} \|k\|_{L^2 W^{-\f12+\delta_0,\infty}}  \nonumber \\
& \hphantom{\lesssim \sum} + \| m(e)\|_{L^\infty W^{\delta_0, \infty}} \big( \| g\cdot\partial g \|_{L^2 L^6} + \| \omega \|_{L^2 L^6} \big) \| k \|_{L^4 L^4} \| k\|_{L^4 W^{s_0+\delta_0,4}}   \nonumber \\
&\lesssim \big(\|e\|_{L^\infty H^{s-1}} + \|\partial e\|_{L^\infty H^{s-2}} \big) \big( \|  g\cdot\partial g \|_{L^2 H^1} + \| \omega \|_{L^2 H^1}\big) \big( \| k\|_{L^2 W^{-\f12+s_0,\infty}}^2 +\| k\|_{L^4 W^{\f1{12},4}}^2 \big)   \nonumber \\
& \lesssim C_0^2 \mathcal D^2 \nonumber
\end{align}
(in the above, while passing from the second to the third inequality, we used the fact that, when $j_1, j_2, j_3 <j-4$, the projection $P_j (P_{j_1} f_1 \cdot P_{j_2} f_2 \cdot P_{j_3} f_3)$ vanishes identically). The bound \eqref{Bound seed F natural L1Linfty} now follows after summing over $c,d \in \{1,2,3\}$. 

The higher order bound \eqref{Bound F natural higher order} follows by arguing in a similar way: Starting from the schematic expression (obtained by differentiating \eqref{Calculation F natural difference})
\begin{align}\label{Expression higher order F natural}
P_j \Big( \partial_0 \big( \partial_0 \mathcal F^{\natural}_{cd}  -  R^{\natural}_{c0d0}  \big)\Big)  = 
 &  
     \sum_{\substack{j_2 > j_1-2,\\|j_2-j_3|\le 2}} \partial_0 P_j \Big(D \big( P_{j_1} (m(e)) \cdot  P_{j_2} k \cdot P_{j_3} (\JapD^{-1} k)\big) \Big)\\
&  +   \sum_{\substack{j_2 > j_1-2,\\|j_2-j_3|\le 2}} \partial_0 P_j \big( P_{j_1} (\partial e)  \cdot P_{j_2} k \cdot P_{j_3} (\JapD^{-1} k) \big)  \nonumber  \\
& +   \sum_{\substack{j_2 > j_1-2,\\|j_2-j_3|\le 2}} \partial_0 P_j \big( P_{j_1} (m(e)) \cdot (P_{j_2} ( g\cdot\partial g \cdot k + \omega \cdot k) )\cdot P_{j_3} (\JapD^{-1} k) \big), \nonumber
\end{align}
we can estimate each term in the right hand side of \eqref{Expression higher order F natural} as follows:
\begin{itemize}
\item For the first term in the right hand side of \eqref{Expression higher order F natural}, we can bound (using the functional inequalities of Lemma \ref{lem:Functional inequalities}):
\begin{align*}
\sum_j \big\|   \sum_{\substack{j_2 > j_1-2,\\|j_2-j_3|\le 2}} & \partial_0 P_j \Big(|D| \big( P_{j_1} (m(e)) \cdot  P_{j_2} k \cdot P_{j_3} (\JapD^{-1} k)\big) \Big)   \big\|_{L^\infty H^{s-4+\delta_0}} \\
& \lesssim  \sum_j \big\|   \sum_{\substack{j_2 > j_1-2,\\|j_2-j_3|\le 2}} \partial_0 P_j \Big( \big( P_{j_1} (m(e)) \cdot  P_{j_2} k \cdot P_{j_3} (\JapD^{-1} k)\big) \Big)   \big\|_{L^\infty H^{s-3+\delta_0}} \\
& \lesssim \|\partial e \|_{L^\infty H^{s-2}} \|k\|_{L^\infty H^{s-2}}\|\JapD^{-1} k\|_{L^\infty H^{s-1}} \\
& \hphantom{\lesssim}
+ \|m(e)\|_{L^\infty H^{s-1}} \|\partial k \|_{L^\infty H^{s-3}} \|\JapD^{-1} k\|_{L^\infty H^{s-1}} \\
& \hphantom{\lesssim}
+ \|m(e)\|_{L^\infty H^{s-1}} \|k \|_{L^\infty H^{s-2}} \|\JapD^{-1}\partial k\|_{L^\infty H^{s-2}}  \\
& \lesssim C_0^2 \mathcal D^2.
\end{align*}
Similarly:
\begin{align*}
\sum_j \big\|   \sum_{\substack{j_2 > j_1-2,\\|j_2-j_3|\le 2}} & \partial_0 P_j \Big(|D| \big( P_{j_1} (m(e)) \cdot  P_{j_2} k \cdot P_{j_3} (\JapD^{-1} k)\big) \Big)   \big\|_{L^2 H^{-\f56+s_1}} \\
& \lesssim  \sum_j \big\|   \sum_{\substack{j_2 > j_1-2,\\|j_2-j_3|\le 2}} \partial_0 P_j \Big( \big( P_{j_1} (m(e)) \cdot  P_{j_2} k \cdot P_{j_3} (\JapD^{-1} k)\big) \Big)   \big\|_{L^2 H^{\f16+s_1}} \\
& \lesssim \|\partial e \|_{L^\infty W^{-1,\infty}} \|k\|^2_{L^4 W^{s_0+\f1{12},4}} 
+ \|m(e)\|_{L^\infty L^\infty} \|\partial k \|_{L^4 W^{s_0-1+\f1{12},4}} \| k \|_{L^4 W^{s_0+\f1{12},4}} \\
& \lesssim \|\partial e\|_{L^\infty H^{s-2}}  \|k\|^2_{L^4 W^{s_0+\f1{12},4}} 
+ \|m(e)\|_{L^\infty H^{s-1}} \|\partial k \|_{L^4 W^{s_0-1+\f1{12},4}} \| k \|_{L^4 W^{s_0+\f1{12},4}} \\ 
& \lesssim C_0^2 \mathcal D^2
\end{align*}
and
\begin{align*}
\sum_j \big\|   \sum_{\substack{j_2 > j_1-2,\\|j_2-j_3|\le 2}} & \partial_0 P_j \Big(|D| \big( P_{j_1} (m(e)) \cdot  P_{j_2} k \cdot P_{j_3} (\JapD^{-1} k)\big) \Big)   \big\|_{L^{\f74} W^{-1+s_1+\f14\delta_0,\f73}} \\
& \lesssim  \sum_j \big\|   \sum_{\substack{j_2 > j_1-2,\\|j_2-j_3|\le 2}} \partial_0 P_j \Big( \big( P_{j_1} (m(e)) \cdot  P_{j_2} k \cdot P_{j_3} (\JapD^{-1} k)\big) \Big)   \big\|_{L^{\f74} W^{s_1+\f14\delta_0,\f73}} \\
& \lesssim \|\partial e \|_{L^\infty W^{-1,\infty}} \|k\|^2_{L^{\f72} W^{s_0,\f{14}3}} 
+ \|m(e)\|_{L^\infty L^\infty} \|\partial k \|_{L^{\f72} W^{-1+s_0,\f{14}3}} \| k \|_{L^{\f72} W^{s_0,\f{14}3}} \\
& \lesssim \|\partial e\|_{L^\infty H^{s-2}}  \|k\|^2_{L^{\f72} W^{s_0,\f{14}3}} 
+ \|m(e)\|_{L^\infty H^{s-1}} \|\partial k \|_{L^{\f72} W^{-1+s_0,\f{14}3}} \| k \|_{L^{\f72} W^{s_0,\f{14}3}} \\ 
& \lesssim C_0^2 \mathcal D^2.
\end{align*}

\smallskip
\item For the second term in the right hand side of \eqref{Expression higher order F natural}, we can bound:
\begin{align*}
\sum_j \big\|   \sum_{\substack{j_2 > j_1-2,\\|j_2-j_3|\le 2}} & \partial_0 P_j \Big( P_{j_1} ( \partial e) \cdot  P_{j_2} k \cdot P_{j_3} (\JapD^{-1} k) \Big)   \big\|_{L^\infty H^{s-4+\delta_0}} \\
& \lesssim \|\partial^2 e \|_{L^\infty H^{s-3}} \|k\|_{L^\infty H^{s-2}}\|\JapD^{-1} k\|_{L^\infty H^{s-1}} \\
& \hphantom{\lesssim}
+ \|\partial e\|_{L^\infty H^{s-2}} \|\partial k \|_{L^\infty H^{s-3}} \|\JapD^{-1} k\|_{L^\infty H^{s-1}} \\
& \hphantom{\lesssim}
+ \|\partial e\|_{L^\infty H^{s-2}} \|k \|_{L^\infty H^{s-2}} \|\JapD^{-1}\partial k\|_{L^\infty H^{s-2}}  \\
& \lesssim C_0^2 \mathcal D^2,
\end{align*}

\smallskip
\begin{align*}
\sum_j \big\|   \sum_{\substack{j_2 > j_1-2,\\|j_2-j_3|\le 2}} & \partial_0 P_j \Big( P_{j_1} (\partial e) \cdot  P_{j_2} k \cdot P_{j_3} (\JapD^{-1} k) \Big)   \big\|_{L^2 H^{-\f56+s_1}}
\\ 
& \lesssim
\sum_j \big\|   \sum_{\substack{j_2 > j_1-2,\\|j_2-j_3|\le 2}} \partial_0 P_j \Big( P_{j_1} (\partial e) \cdot  P_{j_2} k \cdot P_{j_3} (\JapD^{-1} k) \Big)   \big\|_{L^2 W^{\f16+s_1,\f65}} \\ 
& \lesssim \|\partial^2 e \|_{L^\infty W^{-1,3}} \|k\|^2_{L^4 W^{s_0+\f1{12},4}} 
+ \|\partial e\|_{L^\infty L^3} \|\partial k \|_{L^4 W^{s_0-1+\f1{12},4}} \| k \|_{L^4 W^{s_0+\f1{12},4}} \\
& \lesssim \|\partial^2 e\|_{L^\infty H^{s-3}}  \|k\|^2_{L^4 W^{s_0+\f1{12},4}} 
+ \|\partial e\|_{L^\infty H^{s-2}} \|\partial k \|_{L^4 W^{s_0-1+\f1{12},4}} \| k \|_{L^4 W^{s_0+\f1{12},4}} \\ 
& \lesssim C_0^2 \mathcal D^2
\end{align*}
and
\begin{align*}
\sum_j \big\|   \sum_{\substack{j_2 > j_1-2,\\|j_2-j_3|\le 2}} & \partial_0 P_j \Big( P_{j_1} (\partial e) \cdot  P_{j_2} k \cdot P_{j_3} (\JapD^{-1} k) \Big)   \big\|_{L^{\f74} W^{-1+s_1+\f14\delta_0,\f73}}
\\ 
& \lesssim
\sum_j \big\|   \sum_{\substack{j_2 > j_1-2,\\|j_2-j_3|\le 2}} \partial_0 P_j \Big( P_{j_1} (\partial e) \cdot  P_{j_2} k \cdot P_{j_3} (\JapD^{-1} k) \Big)   \big\|_{L^{\f74} W^{s_1+\f14\delta_0,\f{21}{16}}} \\ 
& \lesssim \|\partial^2 e \|_{L^\infty W^{-1,3}} \|k\|^2_{L^{\f72} W^{s_0,\f{14}3}} 
+ \|\partial e\|_{L^\infty L^3} \|\partial k \|_{L^{\f72} W^{-1+s_0,\f{14}3}} \| k \|_{L^{\f72} W^{s_0,\f{14}3}} \\
& \lesssim \|\partial^2 e\|_{L^\infty H^{s-3}}  \|k\|^2|_{L^{\f72} W^{s_0,\f{14}3}}
+ \|\partial e\|_{L^\infty H^{s-2}} \|\partial k \|_{L^{\f72} W^{-1+s_0,\f{14}3}} \| k \|_{L^{\f72} W^{s_0,\f{14}3}} \\ 
& \lesssim C_0^2 \mathcal D^2.
\end{align*}

\smallskip
\item For the third term in the right hand side of \eqref{Expression higher order F natural}, we can bound:
\begin{align*}
\sum_j \big\|   \sum_{\substack{j_2 > j_1-2,\\|j_2-j_3|\le 2}} & \partial_0 P_j \Big( P_{j_1} ( m(e)) \cdot  P_{j_2} \big( ( g\cdot\partial g + \omega)\cdot k\big) \cdot P_{j_3} (^{-1} k) \Big)   \big\|_{L^\infty H^{s-4+\delta_0}} \\
& \lesssim \|\partial e \|_{L^\infty H^{s-2}} \| ( g\cdot\partial g + \omega) k\|_{L^\infty H^{s-3}}\|\JapD^{-1} k\|_{L^\infty H^{s-1}} \\
& \hphantom{\lesssim}
+ \| m(e)\|_{L^\infty H^{s-1}} \|( g\cdot\partial^2 g + \partial \omega) k \|_{L^\infty H^{s-4}} \|\JapD^{-1} k\|_{L^\infty H^{s-1}} \\
& \hphantom{\lesssim}
+ \| m(e)\|_{L^\infty H^{s-1}} \|( g\cdot\partial g + \omega)\partial k \|_{L^\infty H^{s-4}} \|\JapD^{-1} k\|_{L^\infty H^{s-1}} \\
& \hphantom{\lesssim}
+ \|m(e)\|_{L^\infty H^{s-1}} \|( g\cdot\partial g + \omega)k \|_{L^\infty H^{s-3}} \|\JapD^{-1}\partial k\|_{L^\infty H^{s-2}}  \\
& \lesssim \|\partial e \|_{L^\infty H^{s-2}} \big(\| g\cdot\partial g\|_{L^\infty H^{s-2}} + \|\omega\|_{L^\infty H^{s-2}}\big) \|k\|_{L^\infty H^{s-2}} \|\JapD^{-1} k\|_{L^\infty H^{s-1}} \\
& \hphantom{\lesssim}
+ \| m(e)\|_{L^\infty H^{s-1}} \big(\| g\cdot\partial^2 g\|_{L^\infty H^{s-3}} + \|\partial \omega\|_{L^\infty H^{s-3}}\big) \|k\|_{L^\infty H^{s-2}} \|\JapD^{-1} k\|_{L^\infty H^{s-1}} \\
& \hphantom{\lesssim}
+ \| m(e)\|_{L^\infty H^{s-1}} \big(\| g\cdot\partial g\|_{L^\infty H^{s-2}} + \|\omega\|_{L^\infty H^{s-2}}\big) \|\partial k\|_{L^\infty H^{s-3}} \|\JapD^{-1} k\|_{L^\infty H^{s-1}} \\
& \hphantom{\lesssim}
+ \|m(e)\|_{L^\infty H^{s-1}} \big(\| g\cdot\partial g\|_{L^\infty H^{s-2}} + \|\omega\|_{L^\infty H^{s-2}}\big) \|k\|_{L^\infty H^{s-2}} \|\JapD^{-1}\partial k\|_{L^\infty H^{s-2}}  \\
& \lesssim C_0^2 \mathcal D^2,
\end{align*}

\smallskip
\begin{align*}
\sum_j \big\|    \sum_{\substack{j_2 > j_1-2,\\|j_2-j_3|\le 2}} & \partial_0 P_j \Big( P_{j_1} ( m(e)) \cdot  P_{j_2} \big( ( g\cdot\partial g + \omega)\cdot k\big) \cdot P_{j_3} (\JapD^{-1} k) \Big)     \big\|_{L^2 H^{-\f56+s_1}}
\\ 
& \lesssim
\sum_j \big\|    \sum_{\substack{j_2 > j_1-2,\\|j_2-j_3|\le 2}} \partial_0 P_j \Big( P_{j_1} ( m(e)) \cdot  P_{j_2} \big( ( g\cdot\partial g + \omega)\cdot k\big) \cdot P_{j_3} (\JapD^{-1} k) \Big)     \big\|_{L^2 W^{\f16+s_1,\f65}} \\ 
& \lesssim \|\partial e \|_{L^\infty L^3} \|( g\cdot\partial g + \omega)\cdot k\|_{L^\infty H^{s-3}} \| \JapD^{-1}k\|_{L^2 W^{\f12+\delta_0,\infty}} \\
&\hphantom{\lesssim}
+ \|m(e)\|_{L^\infty L^\infty} \|( g\cdot\partial^2 g + \partial \omega)\cdot k\|_{L^2 W^{\f16,\f65}} \| \JapD^{-1}k\|_{L^\infty W^{s_1+\delta_0,\infty}} \\
&\hphantom{\lesssim}
+ \|m(e)\|_{L^\infty L^\infty} \|( g\cdot\partial g + \omega)\cdot \partial k\|_{L^2 H^{-\f56+s_1}} \| \JapD^{-1}k\|_{L^\infty W^{1+\delta_0,3}} \\
& \hphantom{\lesssim}
+ \|m(e)\|_{L^\infty L^\infty}  \|( g\cdot\partial g + \omega)\cdot k\|_{L^2 H^{\f16+s_1}} \|\partial \JapD^{-1}k\|_{L^\infty W^{\delta_0,3}} \\
&\lesssim 
\|\partial e\|_{L^\infty H^{s-2}} \big(\| g\cdot\partial g\|_{L^\infty H^{s-2}} + \|\omega\|_{L^\infty H^{s-2}}\big) \|k\|_{L^\infty H^{s-2}} \|k\|_{L^2 W^{-\f12+\delta_0,\infty}} \\
& \hphantom{\lesssim}
+\|m(e)\|_{L^\infty H^{s-1}}  \big(\| g\cdot\partial^2 g\|_{L^2 H^{\f16}} + \|\partial \omega\|_{L^2 H^{\f16}} \big) \|k\|_{L^\infty W^{\f16,3}} \|k\|_{L^\infty H^{s-2}}\\
& \hphantom{\lesssim}
+\|m(e)\|_{L^\infty H^{s-1}}  \big(\| g\cdot\partial g\|_{L^2 H^{s_1+\f76}} + \| \omega\|_{L^2 H^{s_1+\f76}} \big) \|\partial k\|_{L^\infty H^{s-3}} \|k\|_{L^\infty H^{s-2}}\\
& \hphantom{\lesssim}
+\|m(e)\|_{L^\infty H^{s-1}}  \big(\| g\cdot\partial g\|_{L^2 H^{s_1+\f76}} + \| \omega\|_{L^2 H^{s_1+\f76}} \big) \| k\|_{L^\infty H^{s-2}}^2\\
& \lesssim C_0^2 \mathcal D^2.
\end{align*}
Using Lemma \ref{lem:Functional inequalities} to estimate $\| (g\cdot \partial g+\omega) \cdot k\|_{L^{\f72} W^{-1,\f{14}3}}\lesssim\|g\cdot \partial g +\omega\|_{L^\infty H^{s-2}} \|k\|_{L^{\f72} W^{\f{14}3}}$,  we also have:
\begin{align*}
\sum_j \big\|    \sum_{\substack{j_2 > j_1-2,\\|j_2-j_3|\le 2}} & \partial_0 P_j \Big( P_{j_1} ( m(e)) \cdot  P_{j_2} \big( ( g\cdot\partial g + \omega)\cdot k\big) \cdot P_{j_3} (\JapD^{-1} k) \Big)     \big\|_{L^{\f74} W^{-1+s_1+\f14\delta_0,\f73}}
\\ 
& \lesssim
\sum_j \big\|    \sum_{\substack{j_2 > j_1-2,\\|j_2-j_3|\le 2}} \partial_0 P_j \Big( P_{j_1} ( m(e)) \cdot  P_{j_2} \big( ( g\cdot\partial g + \omega)\cdot k\big) \cdot P_{j_3} (\JapD^{-1} k) \Big)     \big\|_{L^{\f74} W^{s_1+\f14\delta_0,\f{21}{16}}} \\ 
& \lesssim \|\partial e \|_{L^\infty L^3} \|( g\cdot\partial g + \omega)\cdot k\|_{L^{\f72} W^{-1,\f{14}3}} \| \JapD^{-1}k\|_{L^{\f72} W^{1+s_0,\f{14}3}} \\
&\hphantom{\lesssim}
+ \|m(e)\|_{L^\infty L^\infty} \|( g\cdot\partial^2 g + \partial \omega)\cdot k\|_{L^{\f74} W^{\delta_0,\f{21}{16}}} \| \JapD^{-1}k\|_{L^\infty W^{s_1,\infty}} \\
&\hphantom{\lesssim}
+ \|m(e)\|_{L^\infty L^\infty} \|( g\cdot\partial g + \omega)\cdot \partial k\|_{L^{\f74} W^{-1+\delta_0,\f73}} \| \JapD^{-1}k\|_{L^\infty W^{1+s_1,3}} \\
& \hphantom{\lesssim}
+ \|m(e)\|_{L^\infty L^\infty}  \|( g\cdot\partial g + \omega)\cdot k\|_{L^{\f74} W^{s_1,\f73}} \|\partial \JapD^{-1}k\|_{L^\infty W^{s_0,3}} \\
&\lesssim 
\|\partial e\|_{L^\infty H^{s-2}} \big(\| g\cdot\partial g\|_{L^\infty H^{s-2}} + \|\omega\|_{L^\infty H^{s-2}}\big) \|k\|^2_{L^{\f72} W^{s_0,\f{14}3}} \\
& \hphantom{\lesssim}
+\|m(e)\|_{L^\infty H^{s-1}}  \big(\| g\cdot\partial^2 g\|_{L^{\f74} W^{\delta_0,\f73}} + \|\partial \omega\|_{L^{\f74} W^{\delta_0,\f73}} \big) \|k\|_{L^\infty W^{\delta_0,3}} \| k\|_{L^{\infty} W^{-1+s_1,\infty}}\\
& \hphantom{\lesssim}
+\|m(e)\|_{L^\infty H^{s-1}}  \big(\| g\cdot\partial g\|_{L^{\f74} W^{1+\delta_0,\f73}} + \| \omega\|_{L^{\f74} W^{1+\delta_0, \f73}} \big) \|\partial k\|_{L^\infty W^{-1+\delta_0,3}}  \| k\|_{L^{\infty} W^{s_1,3}}\\
& \hphantom{\lesssim}
+\|m(e)\|_{L^\infty H^{s-1}}  \big(\| g\cdot\partial g\|_{L^{\f74} W^{\delta_0, \f1{\f37-\f13}}} + \| \omega\|_{L^{\f74} W^{\delta_0, \f1{\f37-\f13}}} \big) \|k\|_{L^\infty W^{\delta_0, 3}}\| \partial k\|_{L^\infty W^{-1+s_0, 3}}\\
& \lesssim C_0^2 \mathcal D^2.
\end{align*}
In the above, we made use of the functional inequalities:
\[
\|f_1 \cdot f_2 \|_{L^{\f74} W^{\delta_0, \f73}} \lesssim \|f_1\|_{L^{\f74} W^{\delta_0, \f1{\f37-\f13}}} \|f_2\|_{L^\infty W^{\delta_0, 3}}.
\]
and
\[
\|f_1 \cdot f_2 \|_{L^{\f74} W^{-1+\delta_0, \f73}} \lesssim \|f_1\|_{L^{\f74} W^{1+\delta_0, \f73}} \|f_2\|_{L^\infty W^{-1+\delta_0, 3}}
\]
(readily derived via a high-low decomposition of the product).

\end{itemize}
Combining the above bounds, we infer that:
\[
\Big\|  \partial_0 \big( \partial_0 \mathcal F^{\natural}_{cd}  -  R^{\natural}_{c0d0}  \big)\Big\|_{L^\infty H^{s-4+\delta_0}} +
\Big\| \partial_0 \big( \partial_0 \mathcal F^{\natural}_{cd}  -  R^{\natural}_{c0d0}  \big)\Big\|_{L^2 H^{-\f56+s_1}} +
\Big\| \partial_0 \big( \partial_0 \mathcal F^{\natural}_{cd}  -  R^{\natural}_{c0d0}  \big)\Big\|_{L^{\f74} W^{-1+s_1+\f14\delta_0, \f73}}\lesssim  C_0^2 \mathcal D^2.
\]
The desired estimate  \eqref{Bound F natural higher order}  for $ \partial_0^2  \tilde{\mathcal F}^{\natural}_{cd} -\partial_0 R^{\natural}_{c0d0}$ follows from the above bound, combined with the estimate
\[
\|\partial_0^2 \mathbb E\big(\mathcal F^\natural |_{x^0=0} \big)\|_{L^\infty H^{s-4}} +
\|\partial_0^2 \mathbb E\big(\mathcal F^\natural |_{x^0=0} \big)\|_{L^2 H^{-\f56+s_1}}++
\|\partial_0^2 \mathbb E\big(\mathcal F^\natural |_{x^0=0} \big)\|_{L^{\f74} W^{-1+s_1,\f73}}
\lesssim \| \mathcal F^\natural |_{x^0=0}\|_{H^{s-2}}\lesssim \mathcal D 
\]
following from Lemma \ref{lem:Estimates time operators} (recall also that $\partial_0^2 \mathbb E\big(\mathcal F^\natural |_{x^0=0} \big) = \partial_0^2 \tilde{\mathcal F}^\natural - \partial_0^2 \mathcal F^\natural$).

\end{proof}

\begin{lemma}\label{lem:Bounds F perp}
The following bounds hold for $\tilde{\mathcal F}_\perp$:
\begin{equation}\label{Bounds F perp easy}
\| \partial \tilde{\mathcal F}_\perp  \|_{L^\infty H^{s-4}} + \| \partial \tilde{\mathcal F}_\perp  \|_{L^2 H^{-\f56+s_1}} + \| \partial \tilde{\mathcal F}_\perp  \|_{L^{\f74} W^{-1+s_1+\f14\delta_0,\f73}}+ \| \tilde{\mathcal F}_\perp  \|_{L^1 W^{-1+s_0, \infty}}  \lesssim \mathcal D
\end{equation}
 and 
 \begin{equation} \label{Bound F perp L1Linfty}
 \sum_{\bA,\bB=4}^n \| \partial_{0} (\tilde{\mathcal F}_{\perp})^{\bA\bB} +   \partial_c  (\bar g^{cd} R^{\perp\natural})_{d0}^{\bA\bB}   \|_{L^1 W^{-2+s_0, \infty}} \lesssim \mathcal D.
 \end{equation}
\end{lemma}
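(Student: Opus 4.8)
\textbf{Proof strategy for Lemma \ref{lem:Bounds F perp}.}
The plan is to mirror, essentially verbatim, the structure of the proof of Lemma \ref{lem:Bounds F natural}. Recall that $\tilde{\mathcal F}_\perp^{\bA\bB} = \mathcal F_\perp^{\bA\bB} - \mathbb E[\mathcal F_\perp^{\bA\bB}|_{x^0=0}]$, where $\mathcal F_\perp$ is given by \eqref{F perp seed}. First I would dispose of the extension term: by Lemma \ref{lem:Estimates time operators}, each of the mixed norms of $\partial(\mathbb E[\mathcal F_\perp|_{x^0=0}])$ appearing on the left of \eqref{Bounds F perp easy} is controlled by $\|\mathcal F_\perp|_{x^0=0}\|_{H^{s-3}}$ (note the presence of the extra spatial derivative $\partial_i$ in \eqref{F perp seed} compared to \eqref{F natural seed}, which costs one derivative and is exactly why the exponents in \eqref{Bounds F perp easy} are shifted down by one relative to \eqref{Bounds F natural easy}). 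To bound $\|\mathcal F_\perp|_{x^0=0}\|_{H^{s-3}}$ one argues as in the estimate for $\|\mathcal F^\natural|_{x^0=0}\|_{H^{s-2}}$ inside the proof of Lemma \ref{lem:Bounds F natural}, using the Mikhlin property \eqref{Mikhlin property} for the $\mathcal T^{(b)}$ operators together with the initial-data bounds from Proposition \ref{prop: Bounds S0} for $g|_{x^0=0}$, $e|_{x^0=0}$ and $k|_{x^0=0}$; this yields a bound by $\mathcal D^2$. Consequently it suffices to prove the analogues of \eqref{Bounds F perp easy} and \eqref{Bound F perp L1Linfty} with $\tilde{\mathcal F}_\perp$ replaced by $\mathcal F_\perp$ and $R^{\perp\natural}$ unchanged (since \eqref{Vanishing F initial 2} gives $\partial_0^m\tilde{\mathcal F}_\perp|_{x^0=0}=\partial_0^m\mathcal F_\perp|_{x^0=0}$ for $m\ge1$).

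For the ``easy'' bounds \eqref{Bounds F perp easy} I would Littlewood--Paley decompose $\mathcal F_\perp^{\bA\bB}$ using its schematic form
\[
\mathcal F_\perp = \sum_j \sum_{j'>j} D\Big( g\cdot P_j g\cdot P_j\big(P_{j'}k\cdot \JapD^{-1}P_{j'}k\big)\Big)
\]
(as already used in the derivation of \eqref{Bound dt F natural initial}), distribute the derivative $\partial$ onto the three factors by the Leibniz rule, and in each case run a high-low/high-high trichotomy estimating the top-frequency factor in an $L^p L^q$ Strichartz norm and the remaining factors in energy or Strichartz norms. The numerology is identical to that of \eqref{Seed F natural LinftyL2}--\eqref{Seed F natural L1Linfty} after shifting all Sobolev exponents down by one (to account for the extra $\partial_i$ in $\mathcal F_\perp$); the required inputs are the energy and Strichartz bounds \eqref{First useful bound}, \eqref{Second useful bound} for $k$, $\partial g$, $\omega$, $\partial e$ from Lemmas \ref{lem:Energy estimates}--\ref{lem:Strichartz estimates}, the functional inequalities of Lemma \ref{lem:Functional inequalities}, and \eqref{Mikhlin property}. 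Each term closes with a bound $\lesssim C_0^2\mathcal D^2 \lesssim \mathcal D$ (absorbing a factor of $\mathcal D$ via smallness of $\epsilon$, as is done throughout Section \ref{sec:Bootstrap}; note the implicit constants here are independent of $C_0$).

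The genuinely structural part — and the main obstacle — is \eqref{Bound F perp L1Linfty}, which requires the cancellation identity \eqref{Computation F perp} from Lemma \ref{lem:Cancellations}. That identity expresses $P_j\big(\partial_0(\mathcal F_\perp)^{\bA\bB} + \partial_c(\bar g^{cd}R^{\perp\natural})_{d0}^{\bA\bB}\big)$ as a sum of terms of the schematic form $D^2\big(P_{j_1}g\cdot P_{j_2}g\cdot P_{j_3}k\cdot P_{j_4}(\JapD^{-1}k)\big)$ and $D\big(P_{j_1}(\partial g+\partial e)\cdot P_{j_2}g\cdot P_{j_3}k\cdot P_{j_4}(\JapD^{-1}k)\big)$ with $|j_3-j_4|\le 2$ — crucially, the two top derivatives have been pulled outside a product in which the two $k$-factors sit at comparable frequencies and one of them already carries a $\JapD^{-1}$. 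I would then estimate $\|P_j(\cdots)\|_{L^1 L^\infty}$ weighted by $2^{(-2+s_0)j}$: the two derivatives $D^2$ contribute $2^{2j}$, which against the $2^{(-2+s_0)j}$ weight leaves $2^{s_0 j}$, and since the low-frequency constraint forces $j\lesssim\max\{j_1,\dots,j_4\}$ one gains summability in the frequency gaps; the inner product is placed in $L^1 L^{3/2}$ and estimated by Hölder as in the last displayed computation of the proof of Lemma \ref{lem:Bounds F natural}, with $k$-factors in $L^4 W^{2\delta_0,4}$ and the $g,\partial g,\partial e$ factors in $L^2 L^6\subset L^2 H^1$, i.e. using \eqref{Second useful bound} and the $L^4$ Strichartz bounds \eqref{Strichartz estimate k L4L4}. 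Summing over $\bA,\bB$ gives the claimed bound $\lesssim \mathcal D$. The delicate points will be bookkeeping the eight-index frequency sums so that every geometric series converges (one cannot afford a $\delta_0$-loss without the corresponding gain from the low-frequency localization), and verifying that the extra $\JapD^{-1}$ on one $k$-factor genuinely compensates the second derivative $D^2$ — this is precisely the role played by the cancellation in \eqref{Computation F perp}, which is why the naive estimate on $\partial_0\mathcal F_\perp$ and on $\partial_c(\bar g^{cd}R^{\perp\natural})_{d0}$ separately would fail but their sum does not.
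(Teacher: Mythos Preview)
Your proposal is correct and follows essentially the same route as the paper: reduce to $\mathcal F_\perp$ via the extension bound, carry over the ``easy'' estimates \eqref{Bounds F perp easy} by the same Littlewood--Paley decomposition as in Lemma \ref{lem:Bounds F natural} with all Sobolev exponents shifted down by one, and use the cancellation identity \eqref{Computation F perp} for the $L^1 L^\infty$ bound \eqref{Bound F perp L1Linfty}.

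One small point of divergence worth flagging: for the principal $D^2$-term in \eqref{Computation F perp}, the paper does \emph{not} embed into $L^1 L^{3/2}$ or place the $k$-factors in $L^4 L^4$. After absorbing the $D^2$ into the weight (leaving $2^{s_0 j}$), it stays in $L^1 L^\infty$ and bounds the quadrilinear product directly by $\|g\|_{L^\infty H^{s-1}}^2 \,\|k\|_{L^2 W^{-\f12+s_0,\infty}}^2$, using the endpoint Strichartz norm for both $k$-factors and the $\JapD^{-1}$ on one of them to supply the needed $2^{-j_4}$ decay. Your description, which places the product in $L^1 L^{3/2}$ with $k\in L^4 W^{2\delta_0,4}$ and the zeroth-order $g$-factors in $L^2 L^6$, does not close as written for the $D^2$-term (the time H\"older exponents do not add up to $1$); that splitting is really the one for the lower-order $D$-term with $\partial g,\partial e$ factors, where the paper indeed passes to $L^1 L^3$ and uses $\|\partial g\|_{L^2 W^{s_0,6}}$, $\|k\|_{L^2 W^{-\f12+s_0,\infty}}$, $\|k\|_{L^\infty W^{-\f12+s_0,6}}$. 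So the strategy is right, but keep the two terms separate in the final H\"older step.
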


\begin{proof}

The proof of Lemma \ref{lem:Bounds F perp} will follow very closely that of Lemma \ref{lem:Bounds F natural} above.  Recall that $\tilde{\mathcal F}_\perp$ is given by
\begin{equation}\label{F perp again}
\tilde{\mathcal F} _{\perp}^{\bA\bB}(x^0,\bar x) \doteq \mathcal F_{\perp}^{\bA\bB}(x^0,\bar x) - \mathbb E \big[ \mathcal F_{\perp}^{\bA\bB}|_{x^0=0}\big](x^0,\bar x),
\end{equation}
where $\mathcal F_\perp$ was defined by \eqref{F perp seed}.

Arguing similarly as for the proof of \eqref{Bound Extension F natural}, we can readily estimate:
\begin{align}\label{Bound Extension F perp}
\big\| \partial \big( & \mathbb E \big[ \mathcal F_\perp|_{x^0=0}\big] \big)  \big\|_{L^\infty H^{s-4}} 
+  \big\| \partial \big( \mathbb E \big[ \mathcal F_\perp|_{x^0=0}\big] \big)  \big\|_{L^2 H^{-\f56+s_1}}\\
&  + \big\| \partial \big( \mathbb E \big[ \mathcal F_\perp|_{x^0=0}\big] \big)  \big\|_{L^{\f74} W^{-1+s_1+\f14\delta_0,\f73}}
+ \big\| \partial \big( \mathbb E \big[ \mathcal F_\perp|_{x^0=0}\big] \big)  \big\|_{L^1 W^{-2+s_0, \infty}} 
 \lesssim \mathcal D^2. \nonumber
\end{align}
Thus, in order to establish \eqref{Bounds F perp easy}--\eqref{Bound F perp L1Linfty} for $\tilde{\mathcal F}_\perp$, it suffices to establish the same bounds for $\mathcal F_\perp$, i.e.
\begin{equation}\label{Bounds F perp easy n}
\| \partial \mathcal F_\perp  \|_{L^\infty H^{s-4}} + \| \partial \mathcal F_\perp  \|_{L^2 H^{-\f56+s_1}} + \| \partial \mathcal F_\perp  \|_{L^{\f74} W^{-1+s_1+\f14\delta_0,\f73}}+ \| \mathcal F_\perp  \|_{L^1 W^{-1+s_0, \infty}}  \lesssim \mathcal D
\end{equation}
 and 
 \begin{equation} \label{Bound F perp L1Linfty n}
 \sum_{\bA,\bB=4}^n \| \partial_{0} (\mathcal F_{\perp})^{\bA\bB} +   \partial_c  (\bar g^{cd} R^{\perp\natural})_{d0}^{\bA\bB}   \|_{L^1 W^{-2+s_0, \infty}} \lesssim \mathcal D.
 \end{equation}

Using the schematic expression
\[
 \mathcal F_{\perp} = \bar\partial  \Big( g \cdot  \sum_{\bar j} \sum_{\substack{j_1,j_2 \ge \bar j - 2\\|j_1-j_2|\le 2}} \big( P_{\bar j} g \cdot P_{j_1} k \cdot \JapD^{-1}P_{j_2}k \big) \Big)
\]
and using exactly the same arguments as in the proof of the bound 
 \eqref{Bounds seed F natural easy} for $\mathcal F^{\natural}$ (where we only used the fact that $\mathcal F^\natural$ can be expressed schematically as $m(e) \cdot k \cdot \JapD^{-1} k$), we obtain the bound \eqref{Bounds F perp easy n}.

In order to establish the $L^1 L^\infty$-type bound \eqref{Bound F perp L1Linfty n}, we will have to exploit the structure of $\partial_0 \mathcal F_\perp$. Using the expression \eqref{Computation F perp}, we have:
\begin{align*}
P_j \Big( \partial_{0} & (\mathcal F_{\perp})^{\bA\bB} +   \partial_c  (\bar g^{cd} R^{\perp\natural})_{d0}^{\bA\bB} \Big)\\
& = \sum_{\substack{j_1,j_2,j_3,j_4:\\|j_3-j_4|\le 2}}  P_j \Bigg[ 
D^2 \Big(P_{j_1} g \cdot P_{j_2} g \cdot P_{j_3} k \cdot P_{j_4}(\JapD^{-1} k) \Big)   \nonumber \\
& \hphantom{ \sum_{\substack{j_1,j_2,j_3,j_4:\\|j_3-j_4|\le 2}}  P_j \Bigg[  D}
 +D \Big(P_{j_1} (\partial g + \partial e) \cdot P_{j_2} g \cdot P_{j_3} k \cdot P_{j_4}(\JapD^{-1} k) \Big) \Bigg].   \nonumber
\end{align*}
Therefore, we can bound:
\begin{align*}
\big\| \partial_{0} & (\mathcal F_{\perp})^{\bA\bB} +   \partial_c  (\bar g^{cd} R^{\perp\natural})_{d0}^{\bA\bB} \big\|_{L^1 W^{-2+s_0,\infty}} \\
& \lesssim 
\sum_j \sum_{\substack{j_1,j_2,j_3,j_4:\\|j_3-j_4|\le 2}}\Bigg[ 
2^{(-2+s_0)j} \Big\| P_j \Big( D^2 \Big(P_{j_1} g \cdot P_{j_2} g \cdot P_{j_3} k \cdot P_{j_4}(\JapD^{-1} k) \Big)\Big) \Big\|_{L^1L^\infty}   \nonumber \\
& \hphantom{ \sum_j \sum_{\substack{j_1,j_2,j_3,j_4:\\|j_3-j_4|\le 2}} 
 P_j \Bigg[  D}
 + 2^{(-2+s_0)j} \Big\| P_j \Big( D \big(P_{j_1} (\partial g + \partial e) \cdot P_{j_2} g \cdot P_{j_3} k \cdot P_{j_4}(\JapD^{-1} k) \big)\Big) \Big\|_{L^1 L^\infty} \Bigg]  \\
 & \lesssim 
\sum_j \sum_{\substack{j_1,j_2,j_3,j_4:\\|j_3-j_4|\le 2}}\Bigg[ 
2^{s_0 j} \Big\| P_j \Big(P_{j_1} g \cdot P_{j_2} g \cdot P_{j_3} k \cdot P_{j_4}(\JapD^{-1} k) \Big) \Big\|_{L^1L^\infty}   \nonumber \\
& \hphantom{ \sum_j \sum_{\substack{j_1,j_2,j_3,j_4:\\|j_3-j_4|\le 2}} 
 P_j \Bigg[  D}
 + 2^{s_0 j} \Big\| P_j \Big(  P_{j_1} (\partial g + \partial e) \cdot P_{j_2} g \cdot P_{j_3} k \cdot P_{j_4}(\JapD^{-1} k) \Big) \Big\|_{L^1 L^3} \Bigg]  \\
 & \lesssim \|g\|_{L^\infty H^{s-1}} \| k\|^2_{L^2 W^{-\f12+s_0,\infty}} + \|g\|_{L^\infty H^{s-1}} \big(\|\partial g\|_{L^2 W^{s_0,6}} + \|\partial e\|_{L^2 W^{s_0,6}} \big) \| k\|_{L^2 W^{-\f12+s_0,\infty}}\| k\|_{L^\infty W^{-\f12+s_0,6}} \\
 &   \lesssim \|g\|_{L^\infty H^{s-1}} \| k\|^2_{L^2 W^{-\f12+s_0,\infty}} + \|g\|_{L^\infty H^{s-1}} \big(\|\partial g\|_{L^2 H^{\f76}} + \|\partial e\|_{L^2 H^{\f76}} \big) \| k\|_{L^2 W^{-\f12+s_0,\infty}}\| k\|_{L^\infty H^{s-2}} \\
 & \lesssim C_0^2 \mathcal D^2,
\end{align*}
thus obtaining \eqref{Bound F perp L1Linfty} (provided $\epsilon$ is small enough in terms of $C_0$).

\end{proof}

\subsection{Closing the estimates for $g$ and $\omega$}\label{subsec:Closing bootstrap metric}
In this section, we will show that the balanced gauge condition ensures that the components of the metric $g$ and the connection 1-form $\omega$ satisfy nearly optimal regularity bounds compared to those satisfied by the curvature tensors $R$ and $R^\perp$. In particular, we will show that the estimates obtained in the previous section for $R$ and $R^\perp$ allow us to infer the bounds for $g$ and $\omega$ which are needed to close the bootstrap estimate \eqref{Bootstrap conclusion}. To this end, we will make use of the fact that, in the balanced gauge, the components of $g$ and $\omega$ satisfy the parabolic-elliptic system derived in Lemma \ref{lem:Parabolic elliptic system}.

Throughout this section, we will assume that $Y:[0,T)\times \mathbb T^3 \rightarrow \mathcal N$ has been fixed as in the statement of Proposition \ref{prop:Bootstrap}.

\medskip
\subsubsection{\textbf{Estimates for the lapse $N$.}}
The following result is the first step towards improving the bootstrap assumption \eqref{Bootstrap bound} for the coefficients of the metric $g$:

\begin{lemma}\label{lem:Estimates N}
The lapse function $N$ satisfies the following estimate for any $p\in (1,+\infty)$:
\begin{align}\label{Bound d N}
\|\partial N \|_{L^\infty H^{s-2}} + \|\partial N\|_{L^2 H^{\f76 + s_1}} & +\|\partial N\|_{L^{\f74} B^{1+s_1+\f18\delta_0}_{\f73, \f{7}4}} + \| \partial N\|_{L^1 B^{\f12 \delta_0}_{p,1}} \\
&  + \| N-1\|_{L^\infty L^\infty} \le C_{(p)} \big( \mathcal D + \mathcal D^{\f12} \|\|\partial g \|_{L^{\f74} B^{1+s_1+\f18\delta_0}_{\f73, \f{7}4}}+  \partial g\|_{L^1 B^{\f12 \delta_0}_{p,1}}\big), \nonumber 
\end{align}
where $C_{(p)}>0$ is a constant depending only on $p,s$  and which is independent of $C_0$.
\end{lemma}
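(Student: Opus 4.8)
The plan is to treat the lapse $N$ via the parabolic equation \eqref{Equation lapse}, namely $\partial_0(N-1)+|D|(N-1)=\mathfrak F_N+\mathcal E_N$, using the standard smoothing/maximal-regularity properties of the parabolic operator $\partial_0+|D|$ together with the initial bound $\|(N-1)|_{x^0=0}\|_{H^{s-1}}\lesssim\mathcal D$ from Proposition~\ref{prop: Bounds S0}. The crucial point is that, in the balanced gauge, the ``dangerous'' part of the source $\mathfrak F_N$ involves the combination $\partial_0\tilde{\mathcal F}^\natural_{ij}-R_{i0j0}$, and the cancellation relations of Lemma~\ref{lem:Cancellations} (specifically \eqref{Calculation F natural difference} and its differentiated form) show that $\partial_0\tilde{\mathcal F}^\natural_{ij}-R^\natural_{i0j0}$ gains a derivative; after subtracting the LHH piece $R^\natural$ one is left, by Lemma~\ref{lem:Riemann estimates} \eqref{Estimate R hard}--\eqref{Estimate R hard 2}, with exactly the components of $R$ that enjoy $L^1W^{-1+\delta_0,\infty}$ control. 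The operator $|D|\Delta_{\bar g}^{-1}$ in front of $\mathfrak F_N$ is of order $-1$, so applying it to an $L^1W^{-1+s_0,\infty}$ function produces something in $L^1W^{s_0,\infty}\subset L^1 B^{\frac12\delta_0}_{p,1}$, which is precisely the $L^1$-type estimate we need.

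First I would record, for each of the four mixed norms appearing on the left of \eqref{Bound d N}, the relevant linear parabolic estimate for $u\mapsto(\partial_0+|D|)^{-1}F$ with data on $\{x^0=0\}$: (i) the $L^\infty H^{s-2}$ and $L^2 H^{\frac76+s_1}$ energy-type bounds follow from multiplying \eqref{Equation lapse} frequency-localised by $|D|^{2\sigma}\partial_0 N_j$ or by standard Littlewood--Paley--parabolic arguments; (ii) the $L^{\frac74}B^{1+s_1+\frac18\delta_0}_{\frac73,\frac74}$ and $L^1B^{\frac12\delta_0}_{p,1}$ bounds follow from the smoothing estimate $\|\partial_0 P_j u\|_{L^rL^q}+2^j\|P_j u\|_{L^rL^q}\lesssim 2^{(1-\frac1r)j}\|P_j u(0)\|_{L^q}+\|P_j F\|_{L^rL^q}$ (for $u=N-1$), plus the trivial $\|N-1\|_{L^\infty L^\infty}\lesssim\|N-1\|_{L^\infty H^{s-1}}$ via Sobolev and $\|N-1\|_{L^\infty H^{s-1}}\lesssim \|\partial N\|_{L^\infty H^{s-2}}+\mathcal D$. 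Then I would bound $\mathfrak F_N$: split it as $|D|\Delta_{\bar g}^{-1}(\frac1N\bar g^{ij}(\partial_0\tilde{\mathcal F}^\natural_{ij}-R^\natural_{i0j0}))$ plus $|D|\Delta_{\bar g}^{-1}(\frac1N\bar g^{ij}R^\natural_{i0j0})$ (minus averages), feed the first piece through \eqref{Bound F natural L1Linfty}, \eqref{Bounds F natural easy}, \eqref{Bound F natural higher order} and \eqref{Estimate R hard}, \eqref{Estimate R hard 2}, and the second through \eqref{Estimate R LHH easy}, \eqref{Estimate d R easy}; all products with $\frac1N\bar g^{ij}$ are handled by the functional inequalities of Lemma~\ref{lem:Functional inequalities} using $\|g-m_0\|_{L^\infty L^\infty}\ll1$. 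The output is $\lesssim C_0^2\mathcal D^2$ in all four norms (hence $\lesssim \mathcal D$ after absorbing one power of $\mathcal D\lesssim\epsilon$). Finally I would bound $\mathcal E_N$, whose schematic form is $|D|\Delta_{\bar g}^{-1}(g\cdot\partial g\cdot\tilde{\mathcal F}^\natural+(g-m_0)\cdot D\partial g+g\cdot R_*+g\cdot Dg\cdot\partial g+g\cdot h\cdot\partial g)$: the term $(g-m_0)\cdot D\partial g$ is the one responsible for the self-improving structure on the right-hand side of \eqref{Bound d N} — since $g-m_0$ is small and $D\partial g$ has two derivatives, $|D|\Delta_{\bar g}^{-1}$ of it is roughly $\partial g$ times a small factor, producing the $\mathcal D^{\frac12}\|\partial g\|_{\cdots}$ terms; the remaining terms are genuinely quadratic-or-higher in the bootstrap quantities $\mathcal Q_k,\mathcal Q_g,\mathcal Q_\perp$ and give $C_0^{O(1)}\mathcal D^2\lesssim\mathcal D$, using $R_*\in L^1W^{-1+\delta_0,\infty}\cap L^2 H^{\frac16+s_1+\cdots}$ from Lemma~\ref{lem:Riemann estimates} and the $\tilde{\mathcal F}^\natural$ bounds from Lemma~\ref{lem:Bounds F natural}.

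The main obstacle I anticipate is the $L^1 B^{\frac12\delta_0}_{p,1}$ estimate, i.e.\ controlling $\partial N$ in an $L^1_t$-in-time norm at $L^\infty$-ish spatial regularity. This is precisely the kind of estimate that the whole balanced gauge was engineered to make possible: a naive treatment would require $L^1W^{-1,\infty}$ control of \emph{all} components of $R$, which we do not have for $R_{\alpha0\gamma0}$; the source-term subtraction $\tilde{\mathcal F}^\natural$ exists exactly to remove the uncontrollable LHH piece $R^\natural_{\alpha0\gamma0}$, so the estimate goes through only after carefully matching the cancellation in Lemma~\ref{lem:Cancellations} against the estimates in \eqref{Estimate R hard 2}. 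A secondary subtlety is that the $L^1$-in-time endpoint forces one to work with Besov spaces $B^{\frac12\delta_0}_{p,1}$ (rather than Sobolev $W^{\frac12\delta_0,p}$) so that frequency sums of the form $\sum_j 2^{\frac12\delta_0 j}\|P_j\partial N\|_{L^1 L^p}$ can be closed; keeping the time-integrability exactly at $L^1$ and absorbing the borderline term $(g-m_0)\cdot D\partial g$ into the left side (which is why the statement is phrased with $\|\partial g\|$ on the right) requires that the implicit constant $C_{(p)}$ be independent of $C_0$, so one must be careful that no step silently uses the bootstrap constant in an essential way beyond producing an extra power of $\mathcal D$.
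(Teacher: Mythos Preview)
Your proposal is correct and follows essentially the same route as the paper: reduce to the linear parabolic estimates (the paper's Lemma~\ref{lem:Parabolic estimates model}) using the initial bound from Proposition~\ref{prop: Bounds S0}, then control $\mathfrak F_N$ via the $\tilde{\mathcal F}^\natural$ bounds of Lemma~\ref{lem:Bounds F natural} and the curvature bounds of Lemma~\ref{lem:Riemann estimates}, and control $\mathcal E_N$ term-by-term (the paper isolates this as a separate Lemma~\ref{lem:Bound EN}), with the $(g-m_0)\cdot D\partial g$ term producing exactly the $\mathcal D^{\frac12}\|\partial g\|$ contributions on the right. One small slip: when you split $\partial_0\tilde{\mathcal F}^\natural_{ij}-R_{i0j0}$, the two pieces should be $(\partial_0\tilde{\mathcal F}^\natural_{ij}-R^\natural_{i0j0})$ and $-(R_{i0j0}-R^\natural_{i0j0})$, not $R^\natural_{i0j0}$ alone; it is precisely the \emph{difference} $R-R^\natural$ that carries the $L^1W^{-1+\delta_0,\infty}$ bound \eqref{Estimate R hard 2} you cite, while $R^\natural$ by itself does not.
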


\begin{proof}
The function $N$ satisfies the following parabolic equation:
\begin{equation}\label{Equation lapse again}
\partial_0 (N-1) + |D| (N-1) =  \mathcal G_N, 
\end{equation}
where 
\[
\mathcal G_N = \mathfrak F_{N} + \mathcal E_{N},
\]
\[
\mathfrak F_N \doteq |D|  \Delta_{\bar g} ^{-1} \Bigg[ \f1N \bar{g}^{ij}\big( \partial_0  \tilde{\mathcal{F}}^{\natural}_{ij} - R_{i0j0}\big)    -   \fint_{(\bar\Sigma_{\tau},\bar g)} \Big\{ \f1N \bar{g}^{ij}\big( \partial_0  \tilde{\mathcal{F}}^{\natural}_{ij} - R_{i0j0}\big) \Big\} \Bigg] 
\]
and
\[
\mathcal E_N =\JapD^{-1} \Big( g \cdot \partial g \cdot \tilde{\mathcal{F}}^{\natural} + (g-m_0) \cdot D\partial g +g \cdot R_{*} + g \cdot \partial g \cdot \partial g  \Big),
\]
where $R_*$ denotes any component of the Riemann tensor $R$ with at most one non zero index (see Lemma \ref{lem:Parabolic elliptic system}).
Using the facts that
\begin{itemize}
\item $\fint_{(\bar\Sigma_{\tau},g_{\mathbb T^3})} N=1$ (in view of condition \eqref{Mean curvature condition}) and
\item $\| (N-1)|_{x^0=0} \|_{H^{s-1}} \lesssim \mathcal D $ (as a consequence of Proposition \ref{prop: Bounds S0}),
\end{itemize}
The bound \eqref{Bound d N} will follow immediately from Lemma \ref{lem:Parabolic estimates model} from the Appendix,  provided
\begin{equation}\label{Necessary bound lapse}
\| \mathcal G_N  \|_{L^\infty H^{s-2}} + \| \mathcal G_N  \|_{L^2 H^{\f76+s_1}} +\|\mathcal G_N \|_{L^{\f74} B^{1+s_1+\f18\delta_0}_{\f73, \f{7}4}} +  \| \mathcal G_N  \|_{L^1 B^{\f12 \delta_0}_{p,1}} \lesssim_p \mathcal D + \mathcal D^{\f12} \Big(\|\partial g \|_{L^{\f74} B^{1+s_1+\f18\delta_0}_{\f73, \f{7}4}} + \| \partial g \|_{L^1 B^{\f12 \delta_0}_{p,1}}\Big).
\end{equation}
The bound  \eqref{Necessary bound lapse} for the term $\mathfrak F_N$ follows by combining the estimates for $R$ and $R^\natural$ of Lemma \ref{lem:Riemann estimates} with those for $\partial \mathcal F^{\natural}$ from Lemma \ref{lem:Bounds F natural} (without the need for the extra term $\mathcal D^{\f12}\Big(\|\partial g \|_{L^{\f74} B^{1+s_1+\f18\delta_0}_{\f73, \f{7}4}}+ \| \partial g \|_{L^1 B^{\f12 \delta_0}_{p,1}}\Big)$ in the right hand side); the same bound for the term $\mathcal E_N$ follows from Lemma  \ref{lem:Bound EN} below.

\end{proof}

\begin{lemma}\label{lem:Bound EN}
Let $\mathcal E_N$ be a collection of terms of the form
\begin{equation}\label{Expression EN2}
\mathcal E_N =\JapD^{-1} \Big( g \cdot \partial g \cdot \tilde{\mathcal{F}}^{\natural} + (g-m_0) \cdot D\partial g + g \cdot \partial g \cdot \partial g    +   g \cdot R_{*}  \Big),
\end{equation}
where $m_0= -(dx^0)^2+\sum_{l=1}^3 (dx^l)^2$ and $R_*$ denotes any component of the Riemann tensor $R$ with at most one non zero index.
Then, for any $p\in (1,+\infty)$, we can estimate:
\begin{align}\label{Necessary bound lapse 2}
\| \mathcal E_N  \|_{L^\infty H^{s-2}} +&  \| \mathcal E_N \|_{L^2 H^{\f76+s_1}}  + \| \mathcal E_N \|_{L^{\f74} B^{1+s_1+\f18\delta_0}_{\f73, \f{7}4}}+  \| \mathcal E_N  \|_{L^1 B^{\f12 \delta_0}_{p,1}} \\
& \lesssim_p \mathcal D + \mathcal D^{\f12} \Big(\|\partial g \|_{L^{\f74} B^{1+s_1+\f18\delta_0}_{\f73, \f{7}4}}+ \| \partial g \|_{L^1 B^{\f12 \delta_0}_{p,1}}\Big).\nonumber 
\end{align}
\end{lemma}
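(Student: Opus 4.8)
The plan is to estimate each of the four types of terms in the expression \eqref{Expression EN2} for $\mathcal E_N$ separately, using in each case the extra smoothing provided by the operator $\JapD^{-1}$. The norms appearing on the left-hand side of \eqref{Necessary bound lapse 2} are (up to the $\JapD^{-1}$ gain) one spatial derivative above those that have already been controlled for the curvature $R$ in Lemma \ref{lem:Riemann estimates}, for the source term $\tilde{\mathcal F}^{\natural}$ in Lemma \ref{lem:Bounds F natural}, and for $g$, $k$ through the bootstrap assumption \eqref{Bootstrap bound}; so the strategy for most terms is a routine Littlewood--Paley product estimate (high-low, low-high, high-high decomposition as in \eqref{HL decomposition}) combined with the functional inequalities of Lemma \ref{lem:Functional inequalities} and the interpolation inequalities between the energy and Strichartz norms for $k$ used in Section \ref{subsec:Strichartz}. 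The only term that cannot be treated by brute force is the ``genuinely quasilinear'' term $(g-m_0)\cdot D\partial g$, whose $L^1 B^{\frac12\delta_0}_{p,1}$ and $L^{\frac74}B^{1+s_1+\frac18\delta_0}_{\frac73,\frac74}$ norms cannot be bounded solely by the already-established quantities; this is the reason for the appearance of the last term $\mathcal D^{\frac12}\big(\|\partial g\|_{L^{\frac74}B^{1+s_1+\frac18\delta_0}_{\frac73,\frac74}}+\|\partial g\|_{L^1 B^{\frac12\delta_0}_{p,1}}\big)$ in the conclusion, which will later be absorbed into the left-hand side of \eqref{Bound d N} (and ultimately into the bootstrap) since $\mathcal D^{\frac12}$ is small.

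First I would handle the term $\JapD^{-1}(g\cdot R_*)$. Since $R_*$ (components of $R$ with at most one index equal to $0$) satisfies, by \eqref{Estimate R easy}--\eqref{Estimate R hard} and \eqref{Estimate R hard 2} of Lemma \ref{lem:Riemann estimates}, the bounds $\|R_*\|_{L^1 W^{-1+\delta_0,\infty}}+\|R_*\|_{L^2 H^{\frac16+s_1+\frac14\delta_0}}+\|R_*\|_{L^{\frac74}W^{s_1+\frac14\delta_0,\frac73}}+\|R_*\|_{L^\infty H^{s-3+\delta_0}}\lesssim C_0^2\mathcal D^2$, applying $\JapD^{-1}$ gains one derivative; multiplying by $g$ (which is bounded in $L^\infty H^{s-1}$ with $g-m_0$ small in $L^\infty L^\infty$) and using the paraproduct estimates, the $L^\infty H^{s-2}$, $L^2 H^{\frac76+s_1}$, $L^{\frac74}B^{1+s_1+\frac18\delta_0}_{\frac73,\frac74}$ and $L^1 B^{\frac12\delta_0}_{p,1}$ norms of $\JapD^{-1}(g\cdot R_*)$ are all controlled by $C_0^2\mathcal D^2\lesssim \mathcal D$ (recalling $\epsilon$, hence $\mathcal D$, is small relative to $C_0$). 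Here the crucial point is the $L^1 L^\infty$ bound for the purely-spatial and mixed components of $R$, which is exactly what the balanced gauge was designed to provide; the time--time component $R_{i0j0}$ does not appear here (it has been placed into $\mathfrak F_N$), so no subtlety arises. Next, the cubic term $\JapD^{-1}(g\cdot\partial g\cdot\partial g)$: I would bound $\|g\cdot\partial g\cdot\partial g\|$ in each relevant norm by putting one factor of $\partial g$ in $L^\infty H^{s-2}$ (or in a Strichartz-type space) and the other in $L^2 H^{\frac76+s_1}$ resp.\ $L^{\frac74}W^{1+s_1,\frac73}$, which are part of $\mathcal Q_g$ in \eqref{Bootstrap bound}, obtaining a bound of the form $(C_0\mathcal D)^3\lesssim\mathcal D$; the $\JapD^{-1}$ gain is more than enough to pass from $\partial^2 g$-type regularity back to $\partial g$-type regularity. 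The term $\JapD^{-1}(g\cdot\partial g\cdot\tilde{\mathcal F}^{\natural})$ is handled identically, using the bounds $\|\tilde{\mathcal F}^\natural\|_{L^1 W^{s_0,\infty}}$, $\|\partial\tilde{\mathcal F}^\natural\|_{L^2 H^{\frac16+s_1}}$, $\|\partial\tilde{\mathcal F}^\natural\|_{L^{\frac74}W^{s_1+\frac14\delta_0,\frac73}}$, $\|\partial\tilde{\mathcal F}^\natural\|_{L^\infty H^{s-3}}$ from \eqref{Bounds F natural easy}.

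Finally, for the quasilinear term $\JapD^{-1}\big((g-m_0)\cdot D\partial g\big)$ I would perform a high-low/low-high/high-high decomposition. In the low-high piece $(g-m_0)_{\le j}\cdot (D\partial g)_j$, the factor $g-m_0$ sits in low frequency so $\JapD^{-1}$ acts essentially on $(D\partial g)_j\sim \partial^2 g$ at frequency $2^j$, recovering the $\partial g$ regularity at that frequency; taking $g-m_0$ in $L^\infty L^\infty$ (small, $\lesssim\mathcal D$) and $\partial g$ in the target Besov spaces produces precisely a contribution $\lesssim \mathcal D\cdot\big(\|\partial g\|_{L^{\frac74}B^{1+s_1+\frac18\delta_0}_{\frac73,\frac74}}+\|\partial g\|_{L^1 B^{\frac12\delta_0}_{p,1}}\big)$, which fits inside $\mathcal D^{\frac12}(\cdots)$ since $\mathcal D\le\mathcal D^{\frac12}$ for $\mathcal D<1$. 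For the high-low piece $(g-m_0)_j\cdot(D\partial g)_{\le j}$ and the high-high piece $\sum_{j'>j}(g-m_0)_{j'}\cdot(D\partial g)_{j'}$, one can instead afford to place $D\partial g\sim\partial^2 g$ in $L^\infty H^{s-3}$ or $L^2 H^{\frac16+s_1}$ (controlled by $\mathcal Q_g$), put $g-m_0$ in the higher-regularity space $L^2 H^{\frac76+s_1}$ or $L^\infty H^{s-2}$, and use the $\JapD^{-1}$ gain together with the frequency gap to sum the geometric series; these contributions are $\lesssim C_0^2\mathcal D^2\lesssim\mathcal D$. The main obstacle, and the place where care is genuinely needed, is the bookkeeping in this last term: one must verify that in the low-high regime the $\JapD^{-1}$ gain combined with the smallness of $\|g-m_0\|_{L^\infty L^\infty}$ really does reproduce the \emph{same} Besov norms of $\partial g$ appearing on the right of \eqref{Necessary bound lapse 2} (with a small constant), since otherwise the subsequent absorption step in Lemma \ref{lem:Estimates N} would fail; getting the indices $(\frac74, 1+s_1+\frac18\delta_0, \frac73, \frac74)$ and $(1,\frac12\delta_0,p,1)$ to close exactly is the delicate part, but it is a matter of careful Littlewood--Paley arithmetic rather than a conceptual difficulty.
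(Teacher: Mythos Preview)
Your proposal is correct and follows essentially the same approach as the paper: treat each of the four summands separately via Littlewood--Paley paraproduct decompositions, invoking the curvature bounds of Lemma~\ref{lem:Riemann estimates} for $g\cdot R_*$, the bounds of Lemma~\ref{lem:Bounds F natural} for the $\tilde{\mathcal F}^\natural$ term, and the bootstrap \eqref{Bootstrap bound} elsewhere, with the quasilinear term $(g-m_0)\cdot D\partial g$ singled out as the source of the non-closable $\mathcal D^{1/2}\|\partial g\|$ contribution. The only minor technical difference is that for the $L^1 B^{-1+\frac12\delta_0}_{p,1}$ estimate of $(g-m_0)\cdot D\partial g$, the paper uses the Leibniz rewriting $(g-m_0)\cdot D\partial g = D\big((g-m_0)\cdot\partial g\big)+\partial g\cdot\partial g$ (so that $\JapD^{-1}D$ cancels and one works directly with $(g-m_0)\cdot\partial g$), whereas you propose a direct high-low/low-high/high-high split; both routes yield the same bound.
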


\begin{proof}
We will treat each summand in the expression \eqref{Expression EN2} separately.

\smallskip 
\noindent \textbf{The term $g \cdot \partial g \cdot \tilde{\mathcal F}^\natural$.} 
We can readily estimate:
\begin{align*}
\| g \cdot \partial g \cdot  \tilde{\mathcal F}^\natural \|^2_{L^\infty H^{s-3}} & 
\lesssim \sup_{\tau \in [0,T)}\sum_j   2^{2(s-3)j} \| P_j \big( g \cdot \partial g \cdot  \tilde{\mathcal F}^\natural \big)|_{x^0=\tau} \|^2_{L^2} \\
& \lesssim \sup_{\tau \in [0,T)} \sum_j \Bigg(  \sum_{\max\{j_1,j_2,j_3\}>j-8} 2^{(s-3)j} \| P_j \big( P_{j_1} g \cdot P_{j_2} \partial g \cdot  P_{j_3} \tilde{\mathcal F}^\natural \big)|_{x^0=\tau} \|_{L^{2}}  \Bigg)^2 \nonumber \\
& \lesssim \sup_{\tau \in [0,T)} \sum_j  \Bigg( \sum_{\substack{j_1>j-8\\j_2 < j_1}} 2^{(s-2)j} \| P_j \big( P_{j_1} g \cdot P_{j_2} \partial g \cdot  P_{j_3} \tilde{\mathcal F}^\natural \big)|_{x^0=\tau} \|_{L^{\f65}}\Bigg)^2   \nonumber \\
& \hphantom{\lesssim} 
+ \sup_{\tau \in [0,T)} \sum_j   \Bigg(   \sum_{\max\{j_2,j_3\}>j-8} 2^{(s-\f52)j} \| P_j \big( P_{j_1} g \cdot P_{j_2} \partial g \cdot  P_{j_3} \tilde{\mathcal F}^\natural \big)|_{x^0=\tau} \|_{ L^{\f32}}\Bigg)^2   \nonumber \\[5pt]
&  \lesssim \sup_{\tau \in [0,T)} \sum_j  \Bigg( \sum_{\substack{j_1>j-8\\j_2 < j_1}} 
2^{(s-1)(j-j_1) -(j-j_2) -\delta_0 (j_2+j_3)}  \nonumber \\
& \hphantom{\lesssim \sum_j  \sum_{>j-8} 
2} 
\times 2^{(s-1) j_1}\|P_{j_1} g|_{x^0=\tau}\|_{L^2} \cdot 2^{(-1+\delta_0) j_2}\|P_{j_2} \partial g\|_{L^\infty L^\infty} \cdot  2^{\delta_0j_3}\|P_{j_3} \tilde{\mathcal F}^\natural  \|_{L^\infty L^3} \Bigg)^2   \nonumber \\[5pt]
&  + \sup_{\tau \in [0,T)} \sum_j  \Bigg( \sum_{\max\{j_2,j_3\}>j-8} 
2^{(s-\f52)(j-j_2-j_3)-\delta_0 j_1}   \nonumber \\
& \hphantom{\lesssim \sum_j  \sum_{\max\{j_2,j_3\}>j-8} 
2} 
\times 2^{\delta_0 j_1}\|P_{j_1} g\|_{L^\infty L^\infty} \cdot 2^{(s-\f52)j_2}\|P_{j_2} \partial g|_{x^0=\tau}\|_{ L^3} \cdot  2^{(s-\f52)j_3}\|P_{j_3} \tilde{\mathcal F}^\natural |_{x^0=\tau} \|_{L^3} \Bigg)^2  \nonumber \\[5pt]
& \lesssim \| g \|^2_{L^\infty H^{s-1}} \| \partial g\|^2_{L^\infty W^{-1+\delta_0, \infty}} \| \tilde{\mathcal F}^{\natural} \|^2_{L^\infty W^{\delta_0,3}}   \nonumber \\
& \hphantom{\lesssim} \quad \quad \quad\quad + \| g \|^2_{L^\infty W^{\delta_0,\infty}} \| \partial g\|^2_{L^\infty W^{s-\f52, 3}} \| \tilde{\mathcal F}^{\natural} \|^2_{L^\infty W^{s-\f52,3}} \nonumber \\
& \lesssim    \| g \|^2_{L^\infty H^{s-1}} \| \partial g\|^2_{L^\infty H^{s-2}} \| \tilde{\mathcal F}^{\natural} \|^2_{L^\infty H^{s-2}} \nonumber \\
& \lesssim C_0^4 \mathcal D^4,
\end{align*}
where, in the last line above, we made use of the bootstrap bound \eqref{Bootstrap bound} for $g$, $\partial g$ and the bound \eqref{Bounds F natural easy} for $\tilde{\mathcal F}^\natural$.

Similarly, we can bound
\begin{align*}
\| g \cdot \partial g \cdot  \tilde{\mathcal F}^\natural \|^2_{L^2 H^{\f16+s_1}} & 
\lesssim \sum_j   2^{2(\f16+s_1)j} \| P_j \big( g \cdot \partial g \cdot  \tilde{\mathcal F}^\natural \big) \|^2_{L^2 L^2} \\
& \lesssim \sum_j  \Bigg( \sum_{\max\{j_1,j_2,j_3\}>j-8} 2^{(\f16+s_1)j} \| P_j \big( P_{j_1} g \cdot P_{j_2} \partial g \cdot  P_{j_3} \tilde{\mathcal F}^\natural \big) \|_{L^2 L^{2}} \Bigg)^2  \nonumber \\
& \lesssim \sum_j   \Bigg(  \sum_{\substack{j_1>j-8\\j_2 < j_1}} 2^{(\f16+s_1)j} \| P_j \big( P_{j_1} g \cdot P_{j_2} \partial g \cdot  P_{j_3} \tilde{\mathcal F}^\natural \big) \|_{L^2 L^{2}}\Bigg)^2   \nonumber \\
& \hphantom{\lesssim} 
+ \sum_j  \Bigg(   \sum_{\max\{j_2,j_3\}>j-8} 2^{(\f16+s_1)j} \| P_j \big( P_{j_1} g \cdot P_{j_2} \partial g \cdot  P_{j_3} \tilde{\mathcal F}^\natural \big) \|_{L^2 L^2} \Bigg)^2  \nonumber \\[5pt]
&  \lesssim \sum_j \Bigg( \sum_{\substack{j_1>j-8\\j_2 < j_1}} 
2^{(\f16+s_1)(j-j_1) -(j-j_2) -\delta_0 (j_1+j_2+j_3)}  \nonumber \\
& \hphantom{\lesssim \sum_j  \sum_{>j-8} 
2} 
\times 2^{(1+\f16+s_1+\delta_0) j_1}\|P_{j_1} g\|_{L^\infty L^3} \cdot 2^{(-1+\delta_0) j_2}\|P_{j_2} \partial g\|_{L^\infty L^\infty} \cdot  2^{\delta_0j_3}\|P_{j_3} \tilde{\mathcal F}^\natural  \|_{L^2 L^6}  \Bigg)^2 \nonumber \\[5pt]
&  + \sum_j  \Bigg( \sum_{\max\{j_2,j_3\}>j-8} 
2^{(\f16+s_1)(j-j_2-j_3)-\delta_0( j_1+j_2)}   \nonumber \\
& \hphantom{\lesssim \sum_j  \sum_{\max\{j_2,j_3\}>j-8} 
2} 
\times 2^{\delta_0 j_1}\|P_{j_1} g\|_{L^\infty L^\infty} \cdot 2^{(\f16+s_1+\delta_0)j_2}\|P_{j_2} \partial g\|_{L^\infty L^3} \cdot  2^{(\f16+s_1) j_3}\|P_{j_3} \tilde{\mathcal F}^\natural  \|_{L^2 L^6}  \Bigg)^2 \nonumber \\[5pt]
& \lesssim \| g \|^2_{L^\infty W^{\f76+s_1+\delta_0,3}} \| \partial g\|^2_{L^\infty W^{-1+\delta_0, \infty}} \| \tilde{\mathcal F}^{\natural} \|^2_{L^2 W^{\delta_0,6}}   \nonumber \\
& \hphantom{\lesssim} \quad \quad \quad\quad + \| g \|^2_{L^\infty W^{\delta_0,\infty}} \| \partial g\|^2_{L^\infty W^{\f16+s_1+\delta_0, 3}} \| \tilde{\mathcal F}^{\natural} \|^2_{L^2 W^{\f16+s_1,6}} \nonumber \\
& \lesssim    \| g \|^2_{L^\infty H^{s-1}} \| \partial g\|^2_{L^\infty H^{s-2}} \| \tilde{\mathcal F}^{\natural} \|^2_{L^2 H^{\f76+s_1}} \nonumber \\
& \lesssim C_0^4 \mathcal D^4,  \nonumber
\end{align*}
where, again, in the last step above, we made use of the bootstrap bound \eqref{Bootstrap bound} for $g$, $\partial g$ and the bound \eqref{Bounds F natural easy} for $\tilde{\mathcal F}^\natural$. The bound
\[
\|g \cdot \partial g \cdot  \tilde{\mathcal F}^\natural \|_{L^{\f74} B^{s_1+\f18\delta_0}_{\f73, \f{7}4}} \lesssim C_0^2 \mathcal D^2
\]
follows in a completely analogous way (using the estimate for $\|\tilde{\mathcal F}^\natural\|_{L^{\f74} W^{1+s_1+\f14\delta_0,\f73}}$ from \eqref{Bounds F natural easy}). We can also estimate:
\begin{align*}
\| g \cdot \partial g \cdot  \tilde{\mathcal F}^\natural \|_{L^1 B^{-1+\f12 \delta_0}_{\infty,1}} & 
\lesssim \sum_j   2^{(-1+\f12\delta_0)j} \| P_j \big( g \cdot \partial g \cdot  \tilde{\mathcal F}^\natural \big) \|_{L^1 L^\infty} \\
& \lesssim \sum_j  \sum_{\max\{j_1,j_2,j_3\}>j-8} 2^{(-1+\f12\delta_0)j} \| P_j \big( P_{j_1} g \cdot P_{j_2} \partial g \cdot  P_{j_3} \tilde{\mathcal F}^\natural \big) \|_{L^1 L^\infty}  \nonumber \\
& \lesssim \sum_j    \sum_{\substack{j_1>j-8\\j_2<j_1}} 2^{\f12\delta_0 j} \| P_j \big( P_{j_1} g \cdot P_{j_2} \partial g \cdot  P_{j_3} \tilde{\mathcal F}^\natural \big) \|_{L^1 L^3}   \nonumber \\
& \hphantom{\lesssim} 
+ \sum_j    \sum_{\max\{j_2,j_3\}>j-8} 2^{\f12\delta_0j} \| P_j \big( P_{j_1} g \cdot P_{j_2} \partial g \cdot  P_{j_3} \tilde{\mathcal F}^\natural \big) \|_{L^1 L^3} \nonumber \\[5pt]
&  \lesssim \sum_j \sum_{\substack{j_1>j-8\\j_2<j_1}} 
2^{-(j_1-j_2)-\f12\delta_0 (j_1+j_2+j_3-j)}  \nonumber \\
& \hphantom{\lesssim \sum_j  \sum_{>j-8} 
2} 
\times 2^{(1+\f12 \delta_0) j_1}\|P_{j_1} g\|_{L^2 L^6} \cdot 2^{(-1+\f12\delta_0) j_2}\|P_{j_2} \partial g\|_{L^\infty L^\infty} \cdot  2^{\f12\delta_0 j_3}\|P_{j_3} \tilde{\mathcal F}^\natural  \|_{L^2 L^6}   \nonumber \\[5pt]
&  +\sum_{\max\{j_2,j_3\}>j-8}  
2^{-\f12\delta_0(j_2+j_3-j)-(j-j_3)-\f12 \delta_0( j_1+j_2+j_3)}   \nonumber \\
& \hphantom{\lesssim \sum_j  \sum_{\max\{j_2,j_3\}>j-8} 
2} 
\times 2^{\f12 \delta_0 j_1}\|P_{j_1} g\|_{L^\infty L^\infty} \cdot 2^{\delta_0 j_2}\|P_{j_2} \partial g\|_{L^2 L^6} \cdot  2^{\delta_0 j_3}\|P_{j_3} \tilde{\mathcal F}^\natural  \|_{L^2 L^6}  \nonumber \\[5pt]
& \lesssim \| g \|_{L^2 W^{1+\f12\delta_0,6}} \| \partial g\|_{L^\infty W^{-1+\delta_0, \infty}} \| \tilde{\mathcal F}^{\natural} \|^2_{L^2 W^{\f12\delta_0,6}}   \nonumber \\
& \hphantom{\lesssim} \quad \quad \quad\quad + \| g \|_{L^\infty W^{\f12\delta_0,\infty}} \| \partial g\|_{L^2 W^{\delta_0, 6}} \| \tilde{\mathcal F}^{\natural} \|_{L^2 W^{\delta_0,6}} \nonumber \\
& \lesssim    \big( \| g \|_{L^\infty H^{s-1}} +\| \partial g \|_{L^2 H^{\f76}}\big) \big( \| \partial g\|_{L^\infty H^{s-2}}+\| \partial g \|_{L^2 H^{\f76}}\big)  \| \tilde{\mathcal F}^{\natural} \|^2_{L^2 H^{\f76}} \nonumber \\
& \lesssim C_0^2 \mathcal D^2.  \nonumber 
\end{align*}
Combining the above bounds, we obtain, provided $\epsilon$ is sufficiently small (so that $C_0 \mathcal D^{\f12} \ll 1$):
\begin{equation}\label{Bound for dg F natural}
\| g \cdot \partial g \cdot  \tilde{\mathcal F}^\natural \|_{L^\infty H^{s-3}}  + \| g \cdot \partial g \cdot  \tilde{\mathcal F}^\natural \|_{L^2 H^{\f16+s_1}}  + \| g \cdot \partial g \cdot  \tilde{\mathcal F}^\natural \|_{L^1 B^{-1+\f12 \delta_0}_{\infty,1}} \le \mathcal D.
\end{equation}

\smallskip 
\noindent \textbf{The term $(g-m_0) \cdot D\partial g$.} 
We can readily estimate:
\begin{align*}
\| (g-m_0) \cdot D \partial g  \|^2_{L^\infty H^{s-3}} & 
\lesssim \sup_{\tau \in [0,T)}\sum_j   2^{2(s-3)j} \| P_j \big( (g-m_0) \cdot D \partial g \big)|_{x^0=\tau} \|^2_{L^2} \\
& \lesssim \sup_{\tau \in [0,T)} \sum_j \Bigg(  2^{2(s-3)j} \| P_{\le j}(g-m_0) \cdot P_j D\partial g |_{x^0=\tau} \|^2_{L^2}  \nonumber \\
& \hphantom{\lesssim \sup_{\tau \in [0,T)} \sum_j \Bigg(  }
+  2^{2(s-3)j} \| P_j(g-m_0) \cdot P_{\le j} D\partial g |_{x^0=\tau} \|^2_{L^2}   \nonumber \\
& \hphantom{\lesssim \sup_{\tau \in [0,T)} \sum_j \Bigg(  }
+ \Big(\sum_{j'>j} 2^{(s-3)j} \|P_j \big( P_{j'}(g-m_0) \cdot P_{j'} D\partial g\big)|_{x^0=\tau}\|_{L^2} \Big)^2 \Bigg) \nonumber \\
& \lesssim \sup_{\tau \in [0,T)} \sum_j \Bigg(   \| g-m_0\|^2_{L^\infty L^\infty} \cdot 2^{2(s-3)j} \| P_j D\partial g |_{x^0=\tau} \|^2_{L^2} \nonumber \\
& \hphantom{\lesssim \sup_{\tau \in [0,T)} \sum_j \Bigg(  }
+  2^{2(s-1)j} \| P_j (g-m_0)|_{x^0=\tau}\|_{L^2} \cdot 2^{-4j} \| P_{\le j} D\partial g \|^2_{L^\infty L^\infty}   \nonumber \\
& \hphantom{\lesssim \sup_{\tau \in [0,T)} \sum_j \Bigg(  }
+ \Big(\sum_{j'>j} 2^{(s-\f52)j} \|P_j \big( P_{j'} (g-m_0) \cdot P_{j'} D\partial g\big)|_{x^0=\tau}\|_{L^{\f32}} \Big)^2 \Bigg) \nonumber \\
& \lesssim \| g-m_0\|^2_{L^\infty L^\infty} \| D\partial g \|^2_{L^\infty H^{s-3}} +  \|g-m_0\|_{L^\infty H^{s-1}} \cdot  \| D\partial g \|^2_{L^\infty W^{-2,\infty}}  + \nonumber \\
& \hphantom{\lesssim }
+ \sum_j \Big(\sum_{j'>j} 2^{(s-\f52)(j-j')-\delta_0 j'} 2^{(s-\f32)j'}\| P_{j'} (g-m_0)\|_{L^\infty L^3} \cdot 2^{(-1+\delta_0)j'}\|P_{j'} D\partial g\|_{L^\infty L^3} \Big)^2 \nonumber \\
& \lesssim \| g-m_0 \|^2_{L^\infty H^{s-1}} \| \partial g \|^2_{L^\infty H^{s-2}} \nonumber \\
& \lesssim C^2_0 \mathcal D^4 \nonumber 
\end{align*}
(where, in the last line above, we made use of the bootstrap estimate \eqref{Bootstrap bound}) and
\begin{align*}
\| (g-m_0) \cdot D \partial g  \|^2_{L^2 H^{\f16+s_1}} & 
\lesssim \sum_j   2^{2(\f16+s_1)j} \| P_j \big( (g-m_0) \cdot D \partial g \big) \|^2_{L^2 L^2} \\
& \lesssim  \sum_j \Bigg(  2^{2(\f16+s_1)j} \| P_{\le j}(g-m_0) \cdot P_j D\partial g  \|^2_{L^2 L^2} \nonumber \\
& \hphantom{\lesssim \sum_j \Bigg(  }
+  2^{2(\f16+s_1)j} \| P_j(g-m_0) \cdot P_{\le j} D\partial g  \|^2_{L^2 L^2}  \nonumber \\
& \hphantom{\lesssim  \sum_j \Bigg(  }
+ \Big(\sum_{j'>j} 2^{(\f16+s_1)j} \|P_j \big( P_{j'}(g-m_0) \cdot P_{j'} D\partial g\big)\|_{L^2 L^2} \Big)^2 \Bigg) \nonumber \\
& \lesssim  \sum_j \Bigg(  \| P_{\le j}(g-m_0)\|^2_{L^\infty L^\infty} \cdot 2^{2(\f16+s_1)j}  \| P_j D\partial g  \|^2_{L^2 L^2} \nonumber \\
& \hphantom{\lesssim  \sum_j \Bigg(  }
+  2^{2(\f16+s_1)j} \| P_j(g-m_0)\|^2_{L^\infty L^\infty} \cdot \| P_{\le j} D\partial g  \|^2_{L^2 L^2}  \nonumber \\
& \hphantom{\lesssim  \sum_j \Bigg(  }
+ \Big(\sum_{j'>j} 2^{(\f16+s_1)(j-j')-\delta_0 j'} 2^{\delta_0 j'}\|P_{j'}(g-m_0)\|_{L^\infty L^\infty} \cdot 2^{(\f16+s_1)j'}\| P_{j'} D\partial g\|_{L^2 L^2} \Big)^2 \Bigg) \nonumber \\
& \lesssim \| g-m_0 \|^2_{L^\infty H^{s-1}} \| \partial g \|^2_{L^2 H^{\f76+s_1}}   \nonumber \\
& \lesssim C^2_0 \mathcal D^4.   \nonumber 
\end{align*}
Following the same steps, we also have:
\[
\|(g-m_0)\cdot D\partial g\|_{L^{\f74} B^{s_1+\f18\delta_0}_{\f73, \f{7}4}} \lesssim  C_0\mathcal D \|\partial g \|_{L^{\f74} B^{1+s_1+\f18\delta_0}_{\f73, \f{7}4}} + C_0^2 \mathcal D^2.
\]
Moreover, using the schematic relation
\[
(g-m_0) \cdot D\partial g = D \big( (g-m_0) \cdot \partial g \big) + \partial g \cdot \partial g,
\]
together with the bounds
\begin{align*}
\| D ( f_1 \cdot f_2 \big) \|_{L^1 B^{-1+\f12 \delta_0}_{p,1}} & \lesssim \|  f_1 \cdot f_2 \|_{L^1B^{\f12 \delta_0}_{p,1}} \\
& \lesssim  \|  f_1\|_{L^\infty L^\infty} \cdot \| f_2 \|_{L^1 B^{\f12 \delta_0}_{p,1}} + \|f_1 \|_{L^1 B^{1+\f12 \delta_0}_{p,1}} \| f_2\|_{L^\infty W^{-1,\infty}}
\end{align*}
and 
\begin{align*}
\| f_1 \cdot f_2 \|_{L^1 B^{-1+\f12 \delta_0}_{p,1}} & \lesssim \| f_1 \cdot f_2 \|_{L^1 B^0_{\max\{1,\f{3}{(1-\f12\delta_0)+\f3p}\},1}}\\
&  \lesssim \| f_1\|_{L^2 H^{1+\f16}} \| f_2\|_{L^2 H^{1+\f16}},
\end{align*}
we can readily show that, for any $p\in (1,+\infty)$:
\begin{align*}
\| (g-m_0) \cdot D \partial g&   \|_{L^1 B^{-1+\f12 \delta_0}_{p,1}} \\
&
\lesssim \| D \big( (g-m_0) \cdot \partial g \big)  \|_{L^1 B^{-1+\f12 \delta_0}_{p,1}} + \|\partial g \cdot  \partial g  \|_{L^1 B^{-1+\f12 \delta_0}_{p,1}}   \\
& \lesssim \| g-m_0\|_{L^\infty L^\infty} \| \partial g  \|_{L^1 B^{\f12 \delta_0}_{p,1}} +  \|g-m_0 \|_{L^1 B^{1+\f12 \delta_0}_{p,1}} \| \partial g \|_{L^\infty W^{-1,\infty}} + \| \partial g \|^2_{L^2 H^{1+\f16}}  \\
& \lesssim \| g-m_0\|_{L^\infty L^\infty} \| \partial g  \|_{L^1 B^{\f12 \delta_0}_{p,1}} +  \|g-m_0 \|_{L^1 B^{1+\f12 \delta_0}_{p,1}} \| \partial g \|_{L^\infty H^{s-2}} + \| \partial g \|^2_{L^2 H^{1+\f16}} \\
& \lesssim C_0 \mathcal D \| \partial g \|_{L^1 B^{\f12 \delta_0}_{p,1}}+   C_0^2 \mathcal D^2 
\end{align*}
(where, in the last line above, we used the bootstrap bound \eqref{Bootstrap bound}.

Combining the above bounds, we obtain for any $p\in (1,+\infty)$, provided $\epsilon$ is sufficiently small so that $C_0 \mathcal D^{\f12} \ll 1$:
\begin{align}\label{Bound for g Ddg}
\| (g-m_0) \cdot D \partial g  & \|_{L^\infty H^{s-3}} + \| (g-m_0) \cdot D \partial g  \|_{L^2 H^{\f16+s_1}} \\
& 
 + \| (g-m_0) \cdot D \partial g  \|_{L^1 B^{-1+\f12 \delta_0}_{p,1}} \le \mathcal D + \mathcal D^{\f12}\Big(\|\partial g \|_{L^{\f74} B^{1+s_1+\f18\delta_0}_{\f73, \f{7}4}}+  \| \partial g  \|_{L^1 B^{\f12 \delta_0}_{p,1}}\Big).    \nonumber
\end{align}

\smallskip 
\noindent \textbf{The term $g \cdot \partial g \cdot \partial g$.}
The bootstrap assumption \eqref{Bootstrap bound} implies that the term $\partial g$ satisfies similar $L^\infty L^2$, $L^2 L^2$ and $L^{\f74} L^{\f73}$--type estimates as the term $\tilde{\mathcal F}^{\natural}$, i.e.
\[
\| \partial g \|_{L^\infty H^{s-2}} + \| \partial g\|_{L^2 H^{\f76+s_1}} + \|\partial g \|_{L^{\f74} B^{1+s_1+\f18\delta_0}_{\f73, \f{7}4}} \lesssim C_0 \mathcal D
\]
(compare the above with \eqref{Bounds F natural easy}) Therefore, by arguing exactly as in the case of the term $g \cdot \partial g \cdot \tilde{\mathcal F}^\natural$, we obtain:
\[
\| g \cdot \partial g \cdot \partial g\|_{L^\infty H^{s-3}}  + \| g \cdot \partial g \cdot \partial g\|_{L^2 H^{\f16+s_1}} + \|g\cdot \partial g \cdot \partial g \|_{L^{\f74} B^{s_1+\f18\delta_0}_{\f73, \f{7}4}} \lesssim C_0^2 \mathcal D^2.
\]
In order to obtain the $L^1 L^\infty$-type bound for $g \cdot \partial g \cdot \partial g$, we argue as follows:
\begin{align*}
\| g \cdot \partial g  \cdot \partial g\|_{L^1 B^{-1+\f12 \delta_0}_{\infty,1}} & 
\lesssim     \| g \cdot \partial g  \cdot \partial g\|_{L^1 B^0_{\f{3}{(1-\f12\delta_0)},1}} \\
& \lesssim     \| g\|_{L^\infty L^\infty} \| \partial g \|^2_{L^2 B^0_{\f{6}{(1-\f12\delta_0)},1}} \\
& \lesssim \|g\|_{L^\infty L^\infty} \| \partial g \|^2_{L^2 H^{1+\f16}} \\
& \lesssim C_0^2 \mathcal D^2.
\end{align*}
Combining the above bounds, we obtain for any $p\in (1,+\infty)$, provided $C_0 \mathcal D^{\f12} \ll 1$:
\begin{align}\label{Bound for dgdg}
\| g \cdot  & \partial g  \cdot \partial g  \|_{L^\infty H^{s-3}}  + \| g \cdot \partial g  \cdot \partial g  \|_{L^2 H^{\f16+s_1}} \\
& 
+\|g\cdot \partial g \cdot \partial g \|_{L^{\f74} B^{s_1+\f18\delta_0}_{\f73, \f{7}4}} + \| g \cdot \partial g  \cdot \partial g  \|_{L^1 B^{-1+\f12 \delta_0}_{\infty,1}} \le \mathcal D.    \nonumber
\end{align}

\smallskip 
\noindent \textbf{The term $g \cdot R_*$.}  
The bounds for $R$ from Lemma \ref{lem:Riemann estimates} imply that
\begin{equation}\label{Bound R_*}
\| R_* \|_{L^\infty H^{s-3+\delta_0}} + \| R_*\|_{L^2 H^{\f16+s_1+\f14\delta_0}}+ \|R_*\|_{L^{\f74} W^{s_1+\f14\delta_0, \f73}} + \|R_*\|_{L^1 W^{-1+\delta_0,\infty}} \lesssim C_0^2 \mathcal D^2.
\end{equation}
Therefore, we can readily estimate:
\begin{align*}
\| g \cdot R_* \|_{L^\infty H^{s-3}} & \lesssim  \| g \|_{L^\infty H^{s-1}} \|R_* \|_{L^\infty H^{s-3}} \\
& \lesssim C_0^2 \mathcal D^2,
\end{align*}

\begin{align*}
\| g \cdot R_* \|_{L^2 H^{\f16+s_1}} & 
\lesssim \| g \|_{L^\infty L^\infty} \| R_*\|_{L^2 H^{\f16+s_1}} + \| g\|_{L^\infty W^{\f16+s_1,\infty}} \|R_{*}\|_{L^2 L^2} \\
& \lesssim \|g \|_{L^\infty H^{s-1}} \| R_*\|_{L^2 H^{\f16+s_1}}\\
& \lesssim C_0^2 \mathcal D^2,
\end{align*}
\[
\| g \cdot R_*\|_{L^{\f74} W^{s_1+\f14\delta_0, \f73}} \lesssim C_0^2 \mathcal D^2
\]
and
\begin{align*}
\| g \cdot R_* \|_{L^1 B^{-1+\f12 \delta_0}_{\infty,1}} & 
\lesssim \sum_j 2^{(-1+\f12\delta_0)j} \| P_j (g \cdot R_*) \|_{L^1 L^\infty}\\
& \lesssim \sum_j \Bigg( 2^{(-1+\f12\delta_0)j} \| P_j g \cdot P_{\le j} R_* \|_{L^1 L^\infty} 
+ 2^{(-1+\f12\delta_0)j} \| P_{\le j} g \cdot P_{ j} R_* \|_{L^1 L^\infty}  \\
& \hphantom{\sum_j \Bigg( }
+\sum_{j'>j} 2^{\f12\delta_0j} \| P_j ( P_{j'} g \cdot P_{j'} R_*) \|_{L^1 L^3} \Bigg) \\
& \lesssim \sum_j \Bigg( \| P_j g\|_{L^\infty L^\infty} \cdot 2^{(-1 +\f12\delta_0) j}\| P_{\le j} R_* \|_{L^1 L^\infty} \\
& \hphantom{\sum_j \Bigg(}
+  \| P_{\le j} g\|_{L^\infty L^\infty} \cdot 2^{(-1+\f12\delta_0)j}\| P_j R_* \|_{L^1 L^\infty}  \\
& \hphantom{\sum_j \Bigg( }
+\sum_{j'>j} 2^{\f12\delta_0 (j-j')-\f12\delta_0 j'} 2^{ j'}\| P_{j'} g\|_{L^2 L^6} \cdot 2^{(-1+\delta_0)j'}\| P_{j'} R_* \|_{L^2 L^6} \Bigg) \\
& \lesssim \| g \|_{L^\infty W^{\delta_0,\infty}} \|R_* \|_{L^1 W^{-1+\delta_0,\infty}} + \| g\|_{L^2 W^{1+\delta_0,6}} \|R_*\|_{L^2 W^{-1+\delta_0,6}} \\
& \lesssim \| g \|_{L^\infty H^{s-1}} \|R\|_{L^1 W^{-1+\delta_0,\infty}}  + \| g\|_{L^2 H^{2+\f16 +\delta_0}} \|R_*\|_{L^2 H^{\delta_0}} \\
& \lesssim C_0^2 \mathcal D^2.
\end{align*}
Combining the above bounds, we obtain (provided $C_0 \mathcal D^{\f12} \ll 1$):
\begin{equation}\label{Bound for g R*}
\| g \cdot R_* \|_{L^\infty H^{s-3}}+\| g \cdot R_* \|_{L^2 H^{\f16+s_1}}+ \|g\cdot R_*\|_{L^{\f74} B^{s_1+\f18\delta_0}_{\f73,\f74}}+ \| g \cdot R_* \|_{L^1 B^{-1+\f12 \delta_0}_{\infty,1}} \le \mathcal D.
\end{equation}

\medskip
\noindent Adding \eqref{Bound for dg F natural}, \eqref{Bound for g Ddg}, \eqref{Bound for dgdg} and \eqref{Bound for g R*}, we finally obtain \eqref{Necessary bound lapse 2}.

\end{proof}

\medskip
\subsubsection{\textbf{Estimates for the second fundamental form $h$.}}
We will now proceed to obtain the following estimates for the second fundamental form  $h$ of the foliation $\bar\Sigma_\tau$ using the fact that $h$ satisfies an elliptic equation along $\bar\Sigma_{\tau}$ (see Lemma \ref{lem:Parabolic elliptic system}):
\begin{lemma}\label{lem:Estimates h}
The tensor $h$ satisfies for any $p\in (1,+\infty)$:
\begin{align}\label{Bound h}
\|\bar\partial h \|_{L^\infty H^{s-3}} + \|\bar\partial h\|_{L^2 H^{\f16 + s_1}}& + \|\bar \partial h\|_{L^{\f74} B^{s_1+\f18\delta_0}_{\f73,\f74}}+ \|\bar\partial h\|_{L^1 B^{-1+\f12 \delta_0}_{p,1}}\\
&    \le C_{(p)} \Big( \mathcal D + \mathcal D^{\f12} \big(\|\partial g\|_{L^{\f74} B^{1+s_1+\f18\delta_0}_{\f73,\f74}}+ \| \partial g\|_{L^1 B^{\f12 \delta_0}_{p,1}}\big)\Big),
\end{align}
where $C_{(p)}>0$ is a constant depending only on $p,s$  and which is independent of $C_0$.
\end{lemma}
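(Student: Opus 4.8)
The plan is to treat the elliptic equation \eqref{Equation h} for $h$, namely
\[
\Delta_{\bar g} h_{ij} = \bar\nabla^2_{ij}\big( \Delta_{\bar g}|D|^{-1} (N-1)\big) + \mathcal E_h, \qquad \mathcal E_h = D^2(g\cdot \tilde{\mathcal F}^\natural) + D(g\cdot R_*) + g\cdot\partial g\cdot R_*,
\]
as a first-order estimate for $\bar\partial h$ in the spirit of Lemma \ref{lem:Estimates N}. Applying the elliptic estimate from the appendix (Lemma \ref{lem:Elliptic estimates model}, so that $\Delta_{\bar g}^{-1}$ behaves like $|D|^{-2}$ in the relevant Sobolev and Besov spaces, using $\|g-m_0\|_{L^\infty L^\infty}\ll 1$ from \eqref{Bootstrap bound}) reduces the claim to bounding the right hand side with two fewer derivatives, i.e.~establishing
\[
\| \tfrac\|_{\text{(skip)}}
\]
— more precisely, that $\bar\nabla^2\big(\Delta_{\bar g}|D|^{-1}(N-1)\big) + \mathcal E_h$ is controlled in $L^\infty H^{s-4}$, $L^2 H^{-\f56+s_1}$, $L^{\f74} B^{-1+s_1+\f18\delta_0}_{\f73,\f74}$ and $L^1 B^{-2+\f12\delta_0}_{p,1}$ by the right hand side of \eqref{Bound h}. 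The first term is handled directly by Lemma \ref{lem:Estimates N}: $\bar\nabla^2(\Delta_{\bar g}|D|^{-1}(N-1))$ loses only one effective derivative over $\partial N$ (the operator $\Delta_{\bar g}|D|^{-1}$ is of order $1$), so its norms are bounded by $\|\partial N\|$ in the appropriate spaces plus commutator terms of the form $\partial g\cdot\partial(N-1)$ which are lower order and quadratically small, and Lemma \ref{lem:Estimates N} already gives $\|\partial N\|$ in terms of $\mathcal D + \mathcal D^{\f12}(\|\partial g\|_{L^{\f74}B^{\cdots}}+\|\partial g\|_{L^1 B^{\cdots}})$.

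For the source term $\mathcal E_h$ I would proceed term by term, exactly as in the proof of Lemma \ref{lem:Bound EN}. The term $D^2(g\cdot\tilde{\mathcal F}^\natural)$ is estimated by pulling out the two derivatives and invoking the bounds \eqref{Bounds F natural easy} for $\tilde{\mathcal F}^\natural$ together with the bootstrap bound for $g$: in $L^\infty H^{s-4}$ this is $\|g\|_{L^\infty H^{s-1}}\|\tilde{\mathcal F}^\natural\|_{L^\infty H^{s-2}}$, in $L^2 H^{-\f56+s_1}$ it reduces to $\|g\cdot\tilde{\mathcal F}^\natural\|_{L^2 H^{\f76+s_1}}$ via the functional inequalities of Lemma \ref{lem:Functional inequalities}, and in the Besov spaces one splits the product into high-low/low-high/high-high pieces. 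The terms $D(g\cdot R_*)$ and $g\cdot\partial g\cdot R_*$ are controlled using the curvature bounds \eqref{Estimate R easy} and, crucially, \eqref{Estimate R hard} (the $L^1 W^{-1+\delta_0,\infty}$ bound for $R_*$, which is where Proposition \ref{prop:Derivative k} on the wedge structure enters): for the $L^1 B^{-2+\f12\delta_0}_{p,1}$ norm one writes $D(g\cdot R_*)$ and estimates $\|g\cdot R_*\|_{L^1 B^{-1+\f12\delta_0}_{p,1}}$ by a high-low decomposition, the high-low piece absorbing a derivative onto $g$ (using $\|\partial g\|_{L^2 H^{1+\f16}}$) and the low-high piece using $\|g\|_{L^\infty H^{s-1}}\|R_*\|_{L^1 W^{-1+\delta_0,\infty}}$.

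The main obstacle will be bookkeeping: ensuring that every one of the four norm families (in particular the endpoint $L^1$ Besov norm $B^{-1+\f12\delta_0}_{p,1}$, which is the most delicate because it must ultimately feed back into the parabolic estimate for $\beta$ in the lower-triangular system \eqref{Model system of equations}) closes with the correct numerology, and that wherever a top-order term appears it is either genuinely quadratically small (so it can be absorbed after choosing $\epsilon$, hence $\mathcal D$, small) or carries the factor $\mathcal D^{\f12}$ multiplying $\|\partial g\|$ as allowed in \eqref{Bound h}. There are no new ideas needed beyond those in Lemmas \ref{lem:Estimates N} and \ref{lem:Bound EN}, Lemma \ref{lem:Riemann estimates}, and Lemma \ref{lem:Bounds F natural}; the proof is a careful repetition of the paraproduct estimates already carried out there, applied now to $\mathcal E_h$ and to the one derivative gain coming from the elliptic operator $\Delta_{\bar g}$.
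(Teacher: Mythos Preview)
Your proposal is correct and follows essentially the same approach as the paper: apply the elliptic estimate of Lemma~\ref{lem:Elliptic estimates model} to equation~\eqref{Equation h}, reduce to bounding the right hand side in the four target norms shifted down by two derivatives, handle the $\bar\nabla^2(\Delta_{\bar g}|D|^{-1}(N-1))$ term via Lemma~\ref{lem:Estimates N}, and treat $\mathcal E_h = D^2(g\cdot\tilde{\mathcal F}^\natural) + D(g\cdot R_*) + g\cdot\partial g\cdot R_*$ term by term using the curvature bounds of Lemma~\ref{lem:Riemann estimates} (in particular the $L^1 W^{-1+\delta_0,\infty}$ bound on $R_*$) and the $\tilde{\mathcal F}^\natural$ bounds from Lemma~\ref{lem:Bounds F natural}. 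One small point you should make explicit: for the $L^1 B^{-2+\f12\delta_0}_{p,1}$ norm you are in the regime $\sigma < -\bar s$ of Lemma~\ref{lem:Elliptic estimates model}, so the elliptic estimate~\eqref{Very low regularity elliptic estimate} carries the extra term $\mathfrak B_{\bar g}\cdot\|\bar\partial h\|_{W^{-1,\cdots}}$, which the paper dispatches via the bootstrap bound $\|g-m_0\|_{L^2 H^{2+\f16+s_1}}\|\partial^2 g\|_{L^2 L^2}\lesssim C_0^2\mathcal D^2$.
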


\begin{proof}
In view of Lemma \ref{lem:Parabolic elliptic system}, $h$ satisfies the following elliptic equation:
\begin{equation}\label{Equation h again}
\Delta_{\bar g} h_{ij} =  \bar\nabla^2_{ij}\big( \Delta_{\bar g} |D|^{-1}(N-1) \big) + \mathcal E_h,
\end{equation}
where
\[
\mathcal E_h = D^2 (g\cdot \tilde{\mathcal{F}}^{\natural} ) + D(g \cdot R_*) + g \cdot \partial g \cdot R_*.
\]
Therefore, the elliptic estimates from Lemma \ref{lem:Elliptic estimates model}, together with the bound
\[
\int_0^T \| g-m_0\|_{H^{s-\f12}(\bar\Sigma_\tau)}\cdot \|\bar\partial h\|_{W^{-1,\f{6}{1-\delta_0}}(\bar\Sigma_\tau)} \,d\tau \lesssim \|g-m_0\|_{L^2 H^{2+\f16+s_1}} \|\partial^2 g\|_{L^2 L^2} \lesssim C_0^2 \mathcal D^2
\]
obtained from the bootstrap assumption \eqref{Bootstrap bound} (and which can be used to control the last term in the right hand side of \eqref{Very low regularity elliptic estimate} for $f=h_{ij}$, $\bar s= s-1$ and $\sigma' = -2+\f12\delta_0$), imply that \eqref{Bound h} will follow once we establish the following bounds for the right hand side of \eqref{Equation h again}:
\begin{align}\label{Bound to show N for elliptic}
\|\bar\nabla^2_{ij}\big( & \Delta_{\bar g} |D|^{-1}(N-1) \big)  \|_{L^\infty H^{s-4}} + \|\bar\nabla^2_{ij}\big( \Delta_{\bar g} |D|^{-1}(N-1) \big)\|_{L^2 H^{-\f56 + s_1}}
\\
+&\|\bar\nabla^2_{ij} \big(\Delta_{\bar g}|D|^{-1} (N-1)\big)\|_{L^{\f74} B^{-1+s_1+\f18\delta_0}_{\f73,\f74}} + \|\bar\nabla^2_{ij}\big( \Delta_{\bar g} |D|^{-1}(N-1) \big)\|_{L^1 B^{-2+\f12 \delta_0}_{p,1}}  \nonumber \\
& \le C_{(p)} \Big( \mathcal D + \mathcal D^{\f12}\big( \|\partial g\|_{L^{\f74} B^{1+s_1+\f18\delta_0}_{\f73,\f74}}+\| \partial g\|_{L^1 B^{\f12 \delta_0}_{p,1}}\big)\Big)\nonumber
\end{align}
and
\begin{equation}\label{Bound to show Eh for elliptic}
\|\mathcal E_h \|_{L^\infty H^{s-4}} + \|\mathcal E_h\|_{L^2 H^{-\f56 + s_1}} + \|\mathcal E_h\|_{L^{\f74} B^{-1+s_1+\f18\delta_0}_{\f73,\f74}}+ \|\mathcal E_h\|_{L^1 B^{-2+\f12 \delta_0}_{p,1}}   \le C \mathcal D.
\end{equation}

The bound \eqref{Bound to show N for elliptic} follows directly from the estimates \eqref{Bound d N} for $N$ (together with the bootstrap assumption \eqref{Bootstrap bound} for the Riemannian metric $\bar g$). The bound \eqref{Bound to show Eh for elliptic} will follow once we establish a similar bound for each one of the summands $D^2 (g\cdot \tilde{\mathcal{F}}^{\natural} )$,  $D(g \cdot R_*)$ and $g \cdot \partial g \cdot R_*$ of $\mathcal E_h$; we will use very similar arguments as for the proof of Lemma \ref{lem:Bound EN} (and, thus, we will omit some details).
\begin{itemize}
\item For the term $D^2 (g\cdot \tilde{\mathcal{F}}^{\natural} )$, we obtain by combining the bootstrap bounds \eqref{Bootstrap bound} for $g$ with the estimates $\eqref{Bounds F natural easy}$ for $\tilde{\mathcal{F}}^{\natural}$:
\begin{align}
\|D^2 (g\cdot \tilde{\mathcal{F}}^{\natural} )&  \|_{L^\infty H^{s-4}} + \| D^2 (g\cdot \tilde{\mathcal{F}}^{\natural} )\|_{L^2 H^{-\f56 + s_1}} + \|D^2 (g\cdot \tilde{\mathcal F}^\natural)\|_{L^{\f74} B^{-1+s_1+\f18\delta_0}_{\f73,\f74}} + \| D^2 (g\cdot \tilde{\mathcal{F}}^{\natural} )\|_{L^1 B^{-2+\f12 \delta_0}_{p,1}} \nonumber \\
& \lesssim
\|\ g\cdot \tilde{\mathcal{F}}^{\natural}  \|_{L^\infty H^{s-2}} + \| g\cdot \tilde{\mathcal{F}}^{\natural} \|_{L^2 H^{\f76 + s_1}}+\|g\cdot \\tilde{mathcal F}^\natural\|_{L^{\f74} B^{1+s_1+\f18\delta_0}_{\f73,\f74}}+ \|g\cdot \tilde{\mathcal{F}}^{\natural} \|_{L^1 B^{\f12 \delta_0}_{p,1}} \nonumber \\
&\lesssim \mathcal D. \label{Bound D g F natural}
\end{align}

\smallskip
\item For the term $D(g \cdot R_*)$, we can show using the bound \eqref{Bound for g R*} established already in the proof of Lemma \ref{lem:Estimates N}:
\begin{align}
\|D (g\cdot R_* )&  \|_{L^\infty H^{s-4}} + \| D (g\cdot R_* )\|_{L^2 H^{-\f56 + s_1}} \|D(g\cdot R_*)\|_{L^{\f74} B^{-1+s_1+\f18\delta_0}_{\f73,\f74}}+ \| D (g\cdot R_* )\|_{L^1 B^{-2+\f12 \delta_0}_{p,1}} \nonumber \\
& \lesssim
\|\ g\cdot R_*   \|_{L^\infty H^{s-3}} + \| g\cdot R_*  \|_{L^2 H^{\f16 + s_1}} +\|g\cdot R_*\|_{L^{\f74} B^{s_1+\f18\delta_0}_{\f73,\f74}}+ \|g\cdot R_*  \|_{L^1 B^{-1+\f12 \delta_0}_{p,1}} \nonumber \\
&\lesssim \mathcal D. \label{Bound D g R}
\end{align}

\smallskip
\item For the term $g \cdot \partial g \cdot R_*$, we can similarly argue (using the functional inequalities of Lemma \ref{lem:Functional inequalities}):
\begin{align*}
\| g \cdot \partial g \cdot R_* \|_{L^\infty H^{s-4}} 
& \lesssim  \| g \|_{L^\infty H^{s-1}} \|\partial g\|_{L^\infty H^{s-2}} \|R_* \|_{L^\infty H^{s-3}} \\
& \lesssim C_0^2 \mathcal D^2,
\end{align*}

\begin{align*}
\| g \cdot \partial g \cdot R_* \|_{L^2 H^{-\f56+s_1}} 
& =   \| g \cdot \partial g \cdot R_* \|_{L^2 H^{s-\f72-2\delta_0}} \\
& = \Big( \int_{0}^T \| g \cdot \partial g \cdot R_* \|^2_{H^{s-\f72-2\delta_0}(\bar\Sigma_{\tau})} \, d\tau\Big)^{\f12}\\
& \lesssim \Big( \int_{0}^T \| g \cdot \partial g\|^2_{H^{s-\f32-2\delta_0}(\bar\Sigma_{\tau})} \| R_* \|^2_{H^{s-\f52-2\delta_0}(\bar\Sigma_{\tau})} \, d\tau\Big)^{\f12} \\
& \lesssim \Big( \int_{0}^T \| g\|^2_{H^{s-1}(\bar\Sigma_{\tau})} \|\partial g\|^2_{H^{s-2}(\bar\Sigma_{\tau})} \| R_* \|^2_{H^{\f16+s_1}(\bar\Sigma_{\tau})} \, d\tau\Big)^{\f12}\\
& 
\lesssim \| g \|_{L^\infty H^{s-1}} \|\partial g\|_{L^\infty H^{s-2}} \| R_*\|_{L^2 H^{\f16+s_1}} \\
& \lesssim C_0^2 \mathcal D^2,
\end{align*}
\begin{align*}
\|g\cdot \partial g \cdot R_*\|_{L^{\f74} B^{-1+s_1+\f18\delta_0}_{\f73,\f74}} 
& \lesssim \|g\cdot \partial g \cdot R_*\|_{L^{\f74}W^{-1+s_1+\f3{16}\delta_0,\f73}}\\
& \lesssim \|g\|_{L^\infty H^{s-1}} \|\partial g\|_{L^\infty H^{s-2}} \|R_*\|_{L^{\f74} W^{s_1+\f14\delta_0,\f73}}\\
& \lesssim C_0^2 \mathcal D^2
\end{align*}
and
\begin{align*}
\| g \cdot \partial g \cdot R_* \|_{L^1 B^{-2+\f12 \delta_0}_{\infty,1}} & 
\lesssim \sum_j 2^{(-2+\f12\delta_0)j} \| P_j (g \cdot \partial g \cdot R_*) \|_{L^1 L^\infty}\\
& \lesssim \sum_j \Bigg( 2^{(-2+\f12\delta_0)j} \| P_j (g\cdot \partial g) \cdot P_{\le j} R_* \|_{L^1 L^\infty} 
+ 2^{(-2+\f12\delta_0)j} \| P_{\le j} (g\cdot \partial g) \cdot P_{ j} R_* \|_{L^1 L^\infty}  \\
& \hphantom{\sum_j \Bigg( }
+ \sum_{j'>j} 2^{\f12\delta_0j} \| P_j \big( P_{j'} (g\cdot \partial g) \cdot P_{j'} R_* \big) \|_{L^1 L^{\f32}} \Bigg) \\
& \lesssim \sum_j \Bigg( \| P_j (g\cdot \partial g)\|_{L^\infty W^{-1,\infty}} \cdot 2^{(-1 +\f12\delta_0) j}\| P_{\le j} R_* \|_{L^1 L^\infty} \\
& \hphantom{\sum_j \Bigg(}
+  \| P_{\le j} (g\cdot \partial g)\|_{L^\infty W^{-1,\infty}} \cdot 2^{(-1+\f12\delta_0)j}\| P_j R_* \|_{L^1 L^\infty}  \\
& \hphantom{\sum_j \Bigg( }
+\sum_{j'>j} 2^{\f12\delta_0(j-j')-\f12\delta_0 j'} 2^{ j'}\| P_{j'} (g\cdot \partial g)\|_{L^2 L^2} \cdot 2^{(-1+\delta_0)j'}\| P_{j'} R_* \|_{L^2 L^6} \Bigg) \\
& \lesssim \| g\cdot \partial g \|_{L^\infty W^{-1+\delta_0,\infty}} \|R_* \|_{L^1 W^{-1+\delta_0,\infty}} + \| g\cdot \partial g\|_{L^2 H^{1+\delta_0}} \|R_*\|_{L^2 W^{-1+\delta_0,6}} \\
& \lesssim \| g \|_{L^\infty H^{s-1}} \|\partial g\|_{L^\infty H^{s-2}} \|R_*\|_{L^1 W^{-1+\delta_0,\infty}}  + \|g\|_{L^\infty H^{s-1}}\| \partial^2 g\|_{L^2 H^{\f16 +\delta_0}} \|R_*\|_{L^2 H^{\delta_0}} \\
& \lesssim C_0^2 \mathcal D^2.
\end{align*}
Combining the above bounds, we obtain (provided $C_0 \mathcal D^{\f12} \ll 1$):
\begin{equation}\label{Bound g dg R}
\|g \cdot \partial g \cdot R_*  \|_{L^\infty H^{s-4}} + \| g \cdot \partial g \cdot R_*\|_{L^2 H^{-\f56 + s_1}}+ \|g\cdot \partial g \cdot R_*\|_{L^{\f74} B^{-1+s_1+\f18\delta_0}_{\f73,\f74}}+ \| g \cdot \partial g \cdot R_*\|_{L^1 B^{-2+\f12 \delta_0}_{p,1}} \lesssim \mathcal D.
\end{equation}
\end{itemize}
The bounds \eqref{Bound D g F natural}--\eqref{Bound g dg R} finally yield \eqref{Bound to show Eh for elliptic}.

\end{proof}

\medskip
\subsubsection{\textbf{Estimates for the shift $\beta$.}}
We will obtain the necessary estimates for the shift vector field $\beta$ using similar arguments as for the proof of the analogous estimates for $N$ (see Lemma \ref{lem:Estimates N}); both quantities satisfy a similar parabolic equation (see Lemma \ref{lem:Parabolic elliptic system}), a crucial difference being that the equation for $\beta$ contains a linear top order term involving $h$. 

\begin{lemma}\label{lem:Estimates b}
The shift vector field $\beta$ satisfies the following estimate for any $p\in (1,+\infty)$:
\begin{align}\label{Bound d b}
\|\partial \beta \|_{L^\infty H^{s-2}} + \|\partial \b\|_{L^2 H^{\f76 + s_1}}+&   \|\partial \beta \|_{L^{\f74} B^{1+ s_1+\f18\delta_0}_{\f73,\f74}}+ \| \partial \b\|_{L^1 B^{\f12 \delta_0}_{p,1}}\\
&   + \| \b\|_{L^\infty L^\infty} \le C_{(p)} \Big( \mathcal D + \mathcal D^{\f12} \big(\|\partial g\|_{L^{\f74} B^{1+s_1+\f18\delta_0}_{\f73,\f74}}+\| \partial g\|_{L^1 B^{\f12 \delta_0}_{p,1}}\big)\Big),\nonumber 
\end{align}
where $C_{(p)}>0$ is a constant depending only on $p,s$  and which is independent of $C_0$.
\end{lemma}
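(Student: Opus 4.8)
The plan is to run the same scheme as in the proof of Lemma~\ref{lem:Estimates N}, now applied to the parabolic equation \eqref{Equation shift} for the components of the shift,
\[
\partial_0 \beta^k + |D|\beta^k = \mathcal E_\beta, \qquad \mathcal E_\beta = |D|\Delta_{\bar g}^{-1}\big(g\cdot Dh + (g-m_0)\cdot D\partial g + g\cdot R_{**} + g\cdot Dg\cdot\partial g + g\cdot h\cdot\partial g\big),
\]
supplemented by the gauge normalization $\fint_{(\bar\Sigma_\tau,g_{\mathbb T^3})}\beta^k=0$ from \eqref{Harmonic condition} and the initial bound $\|\beta^k|_{x^0=0}\|_{H^{s-1}}\lesssim\mathcal D$ coming from the hypothesis \eqref{Boundary assumption G} (equivalently Proposition~\ref{prop: Bounds S0}). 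By the abstract parabolic estimate of Lemma~\ref{lem:Parabolic estimates model} in the Appendix, the bound \eqref{Bound d b} follows once we show
\[
\|\mathcal E_\beta\|_{L^\infty H^{s-2}}+\|\mathcal E_\beta\|_{L^2 H^{\f76+s_1}}+\|\mathcal E_\beta\|_{L^{\f74}B^{1+s_1+\f18\delta_0}_{\f73,\f74}}+\|\mathcal E_\beta\|_{L^1 B^{\f12\delta_0}_{p,1}}\lesssim_p \mathcal D+\mathcal D^{\f12}\Big(\|\partial g\|_{L^{\f74}B^{1+s_1+\f18\delta_0}_{\f73,\f74}}+\|\partial g\|_{L^1 B^{\f12\delta_0}_{p,1}}\Big).
\]

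Since $|D|\Delta_{\bar g}^{-1}$ is an operator of order $-1$ that, on the function spaces appearing here, obeys the estimates of Lemma~\ref{lem:Elliptic estimates model}, it suffices to bound each summand of the parenthesis in $\mathcal E_\beta$ one level of regularity lower. The terms $(g-m_0)\cdot D\partial g$, $g\cdot Dg\cdot\partial g$ and $g\cdot h\cdot\partial g$ are of exactly the schematic types $(g-m_0)\cdot D\partial g$ and $g\cdot\partial g\cdot\partial g$ treated in Lemma~\ref{lem:Bound EN} (for the last two, use $Dg\simeq\partial g$ and $h\simeq g\cdot\partial g$ from \eqref{Christoffel symbols 3+1}), so the bounds \eqref{Bound for g Ddg} and \eqref{Bound for dgdg} apply essentially verbatim; the term $g\cdot R_{**}$ is of the type $g\cdot R_*$ treated there, and \eqref{Bound for g R*} applies once we invoke the estimates \eqref{Estimate R easy}--\eqref{Estimate R hard} of Lemma~\ref{lem:Riemann estimates} for the purely spatial components of $R$. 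All of these contribute only the $\mathcal D$-part of the right-hand side.

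The one genuinely new term is $|D|\Delta_{\bar g}^{-1}(g\cdot Dh)$, the top-order linear coupling to $h$ that is absent in the equation for $N$. Writing $g\cdot Dh = D(g\cdot h) - Dg\cdot h$ --- the second piece being again of the already-treated form $g\cdot\partial g\cdot\partial g$ via $h\simeq g\cdot\partial g$ --- the operator $|D|\Delta_{\bar g}^{-1}D$ is of order $0$, so it remains to control $g\cdot h$ in the norms $L^\infty H^{s-2}$, $L^2 H^{\f76+s_1}$, $L^{\f74}B^{1+s_1+\f18\delta_0}_{\f73,\f74}$ and $L^1 B^{\f12\delta_0}_{p,1}$. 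A high--low paraproduct decomposition of $g\cdot h$, combined with the functional inequalities of Lemma~\ref{lem:Functional inequalities}, the bootstrap bounds $\|g-m_0\|_{L^\infty H^{s-1}}+\|h\|_{L^\infty L^\infty}\lesssim C_0\mathcal D$ from \eqref{Bootstrap bound}, and the estimates \eqref{Bound h} of Lemma~\ref{lem:Estimates h} for $\bar\partial h$ in precisely these four spaces, yields the required bound; crucially, the factor $\mathcal D^{\f12}\big(\|\partial g\|_{L^{\f74}B^{1+s_1+\f18\delta_0}_{\f73,\f74}}+\|\partial g\|_{L^1 B^{\f12\delta_0}_{p,1}}\big)$ on the right is inherited directly from \eqref{Bound h}, and no circularity arises because the lower-triangular structure has already furnished the estimates for $N$ and $h$ before we reach $\beta$.

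The main technical point --- and the only place beyond routine repetition of Lemmas~\ref{lem:Estimates N} and \ref{lem:Bound EN} --- is the bookkeeping for the term $g\cdot Dh$: one must verify that $|D|\Delta_{\bar g}^{-1}D$ acts boundedly on the relevant (mixed) Besov spaces, which is where the compatibility hypotheses of Lemma~\ref{lem:Elliptic estimates model} are used since $\Delta_{\bar g}^{-1}$ is not the flat inverse Laplacian, and that the high--high interactions $\sum_{j'>j}P_j(P_{j'}g\cdot P_{j'}\bar\partial h)$ can be summed in the endpoint space $L^1 B^{\f12\delta_0}_{p,1}$ using the $\delta_0$-room together with a Bernstein/H\"older argument, exactly as in the corresponding step of Lemma~\ref{lem:Bound EN}. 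Provided $\epsilon$ is small enough that $C_0\mathcal D^{\f12}\ll 1$, the various $C_0\mathcal D^{\f12}\|\partial g\|$ contributions that appear can be kept on the right-hand side in the stated form, and the bound $\|\beta\|_{L^\infty L^\infty}\lesssim\mathcal D$ follows from $\|\partial\beta\|_{L^1 B^{\f12\delta_0}_{p,1}}$ together with the vanishing mean of $\beta$ and $\|\beta|_{x^0=0}\|_{L^\infty}\lesssim\mathcal D$.
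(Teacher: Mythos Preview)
Your approach is essentially the same as the paper's: apply the parabolic estimates of Lemma~\ref{lem:Parabolic estimates model} to \eqref{Equation shift}, reduce to bounding the right-hand side, treat the terms $(g-m_0)\cdot D\partial g$, $g\cdot R_{**}$, $g\cdot\partial g\cdot\partial g$ exactly as in Lemma~\ref{lem:Bound EN}, and handle the new linear coupling $g\cdot Dh$ using the already-established bound \eqref{Bound h} for $h$.

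One small point: your detour $g\cdot Dh = D(g\cdot h) - Dg\cdot h$ is unnecessary and introduces a false intermediate claim. The paper simply observes that $\JapD^{-1}(g\cdot Dh)$ is one derivative smoother than $g\cdot Dh = g\cdot\bar\partial h$, and then uses \eqref{Bound h} for $\bar\partial h$ directly together with the bootstrap control on $g$ to place $g\cdot\bar\partial h$ in $L^\infty H^{s-3}$, $L^2 H^{\f16+s_1}$, $L^{\f74}B^{s_1+\f18\delta_0}_{\f73,\f74}$, $L^1 B^{-1+\f12\delta_0}_{p,1}$. Your route instead requires controlling $g\cdot h$ and, for this, you invoke a bootstrap bound $\|h\|_{L^\infty L^\infty}\lesssim C_0\mathcal D$ --- but this is not available: $h\simeq g\cdot\partial g$ and $\partial g$ is only in $L^\infty H^{s-2}$ with $s-2<\tfrac32$, so $h\notin L^\infty L^\infty$ in general. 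The fix is simply to follow the paper's direct route, which never needs pointwise control of $h$ and avoids any zero-mode issue for $h$ as well.
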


\begin{proof}
The function $\b$ satisfies the following parabolic equation (see Lemma \ref{lem:Parabolic elliptic system}):
\begin{equation}\label{Equation shift again}
\partial_0 \b + |D|\b = \JapD^{-1}(g\cdot Dh) + \mathcal E^{\prime}_{\beta},
\end{equation}
where
\[
\mathcal E^{\prime}_{\beta} =\JapD^{-1} \Big(  (g-m_0) \cdot D\partial g + g \cdot R_*  +  g \cdot \partial g \cdot \partial g \Big).
\]
Using the facts that
\begin{itemize}
\item $\fint_{(\bar\Sigma_{\tau},g_{\mathbb T^3})} \beta=0$ (in view of condition \eqref{Harmonic condition}) and
\item $\| \beta |_{x^0=0} \|_{H^{s-1}} \lesssim \mathcal D $ (as a consequence of Proposition \ref{prop: Bounds S0}),
\end{itemize}
the bound \eqref{Bound d b} will follow immediately from Lemma \ref{lem:Parabolic estimates model} from the Appendix,  provided
\begin{align}\label{Necessary bound shift h}
\|\JapD^{-1} (g \cdot Dh)  & \|_{L^\infty H^{s-2}} + \| \JapD^{-1} (g \cdot Dh)  \|_{L^2 H^{\f76+s_1}}+ \| \JapD^{-1} (g \cdot Dh)  \|_{L^{\f74} B^{1+s_1+\f18\delta_0}_{\f73,\f74}} \\
&+  \| \JapD^{-1} (g \cdot Dh)  \|_{L^1 B^{\f12 \delta_0}_{p,1}} \lesssim_p \mathcal D + \mathcal D^{\f12}\big(\|\partial g\|_{L^{\f74} B^{1+s_1+\f18\delta_0}_{\f73,\f74}} \| \partial g \|_{L^1 B^{\f12 \delta_0}_{p,1}}\big) \nonumber 
\end{align}
and
\begin{align}\label{Necessary bound shift error}
\| \mathcal E^{\prime}_\beta  \|_{L^\infty H^{s-2}} &+ \| \mathcal E^{\prime}_\beta  \|_{L^2 H^{\f76+s_1}}+ \|\mathcal E^\prime_\beta\|_{L^{\f74} B^{1+s_1+\f18\delta_0}_{\f73,\f74}} \\&+  \| \mathcal E^{\prime}_\beta  \|_{L^1 B^{\f12 \delta_0}_{p,1}} \lesssim_p \mathcal D + \mathcal D^{\f12} \big(\|\partial g \|_{L^{\f74} B^{1+s_1+\f18\delta_0}_{\f73,\f74}}+\| \partial g \|_{L^1 B^{\f12 \delta_0}_{p,1}}\big).\nonumber
\end{align}
The bound \eqref{Necessary bound shift h} follows directly from the estimate \eqref{Bound h} for $h$, together with the bootstrap bounds \eqref{Bootstrap bound} for $g$. The bound \eqref{Necessary bound shift error} for $\mathcal E^{\prime}_\beta$ follows by arguing exactly as for the proof of the bound \eqref{Necessary bound lapse 2} for $\mathcal E_N$.

\end{proof}

\medskip
\subsubsection{\textbf{Estimates for the induced metric $\bar g$.}}
Let us turn our attention to the components of the Riemannian metric $\bar g$ on the slices $\bar\Sigma_{\tau}$. We will obtain the following estimates using the fact that $\bar g$ satisfies an elliptic system along $\bar\Sigma_{\tau}$ (see Lemma \ref{lem:Parabolic elliptic system}):
\begin{lemma}\label{lem:Estimates bar g}
The Riemannian metric $\bar g$ satisfies for any $p\in (1,+\infty)$:
\begin{align}\label{Bound bar g}
\|\partial  \bar g \|_{L^\infty H^{s-2}} &  + \|\partial \bar g\|_{L^2 H^{\f76 + s_1}}+ \|\partial \bar g \|_{L^{\f74} B^{1+s_1+\f18\delta_0}_{\f73,\f74}}\\
& + \|\partial \bar g\|_{L^1 B^{\f12 \delta_0}_{p,1}}  + \|\bar g - \delta_E\|_{L^\infty L^\infty} \le C_{(p)} \Big( \mathcal D + \mathcal D^{\f12} \big(\|\partial g \|_{L^{\f74} B^{1+s_1+\f18\delta_0}_{\f73,\f74}}+\| \partial g\|_{L^1 B^{\f12 \delta_0}_{p,1}}\big)\Big),\nonumber 
\end{align}
where $(\delta_E)_{ij} = \delta_{ij}$ are the components of the Euclidean metric in Cartesian coordinates and $C_{(p)}>0$ is a constant depending only on $p,s$  and which is independent of $C_0$.
\end{lemma}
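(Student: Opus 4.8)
The plan is to derive the estimate \eqref{Bound bar g} for $\bar g$ from the elliptic equations \eqref{Equation g bar} and \eqref{Equation dt g bar} of Lemma \ref{lem:Parabolic elliptic system}, mirroring the structure of the proofs of Lemmas \ref{lem:Estimates N}, \ref{lem:Estimates h} and \ref{lem:Estimates b}. Recall that \eqref{Equation g bar} reads $\bar g^{kl}\partial_k\partial_l(\bar g_{ij}) = -\Delta_{\bar g}|D|^{-1}(\bar\nabla_i\beta_j + \bar\nabla_j\beta_i) + \mathcal E_{\bar g}$ with $\mathcal E_{\bar g} = g\cdot R_{**} + g\cdot\partial g\cdot\partial g$, while \eqref{Equation dt g bar} gives $\partial_0\bar g_{ij} = -2Nh_{ij} + \bar\nabla_i\beta_j + \bar\nabla_j\beta_i$. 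The first equation is an elliptic equation on each slice $\bar\Sigma_\tau$ for the spatial derivatives $\bar\partial\bar g$, and the second recovers the time derivative $\partial_0\bar g$ directly in terms of $N$, $h$ and $\beta$, all of which have already been estimated.

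First I would apply the elliptic estimates of Lemma \ref{lem:Elliptic estimates model} (together with the auxiliary control of the $\int_0^T\|g-m_0\|_{H^{s-1/2}}\cdot\|\bar\partial^2\bar g\|_{\ldots}\,d\tau$-type term, coming from the bootstrap bound \eqref{Bootstrap bound}, needed to absorb the variable-coefficient part of the operator $\bar g^{kl}\partial_k\partial_l$) to equation \eqref{Equation g bar}, reducing the bound for $\bar\partial\bar g$ in the norms $L^\infty H^{s-2}$, $L^2 H^{7/6+s_1}$, $L^{7/4}B^{1+s_1+\delta_0/8}_{7/3,7/4}$ and $L^1 B^{\delta_0/2}_{p,1}$ to corresponding bounds (one derivative lower) for the two terms in the right-hand side. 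For the term $\Delta_{\bar g}|D|^{-1}(\bar\nabla\beta)$, which is a zeroth-order operator applied to $\bar\partial\beta$, I would invoke the estimate \eqref{Bound d b} for $\partial\beta$ from Lemma \ref{lem:Estimates b}; this is precisely where the extra term $\mathcal D^{1/2}(\|\partial g\|_{L^{7/4}B^{1+s_1+\delta_0/8}_{7/3,7/4}} + \|\partial g\|_{L^1 B^{\delta_0/2}_{p,1}})$ enters, propagated from the lapse through $h$ through $\beta$. For the error term $\mathcal E_{\bar g} = g\cdot R_{**} + g\cdot\partial g\cdot\partial g$, I would argue exactly as in Lemma \ref{lem:Bound EN}: the piece $g\cdot R_{**}$ is handled using the curvature estimates \eqref{Estimate R easy} and \eqref{Estimate R hard} of Lemma \ref{lem:Riemann estimates} (note $R_{**}$ is a purely spatial component, so it enjoys the good $L^1 W^{-1+\delta_0,\infty}$ bound), combined with the functional inequalities of Lemma \ref{lem:Functional inequalities} and a high-low decomposition; the piece $g\cdot\partial g\cdot\partial g$ is identical to the term treated in the proof of Lemma \ref{lem:Bound EN}.

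For the time derivative, I would read off $\|\partial_0\bar g\|$ in the four relevant norms directly from \eqref{Equation dt g bar}: the term $Nh$ is controlled by combining the bound \eqref{Bound d N} for $N$ (in particular $\|N-1\|_{L^\infty L^\infty}\lesssim \mathcal D + \ldots$, so multiplication by $N$ is harmless) with the bound \eqref{Bound h} for $\bar\partial h$ of Lemma \ref{lem:Estimates h}, while the term $\bar\nabla\beta = \partial\beta + \bar\Gamma\cdot\beta$ is controlled by \eqref{Bound d b} (using also the schematic relation $\bar\Gamma\simeq g\cdot\partial g$ and the bootstrap bound). Finally, the low-frequency $L^\infty L^\infty$ bound on $\bar g - \delta_E$ follows by integrating $\partial_0\bar g$ in time from the initial-slice estimate \eqref{Bound metric initial} of Proposition \ref{prop: Bounds S0}, combined with the just-obtained bound on $\partial_0\bar g$. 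The main obstacle, as in the previous lemmas of this subsection, is bookkeeping the precise Besov exponents so that the derivative losses match across the chain $N \to h \to \beta \to \bar g$ and the coefficient $\mathcal D^{1/2}$ on the top-order terms stays below $1$ after $\epsilon$ is taken small; the elliptic step itself requires some care because the operator $\bar g^{kl}\partial_k\partial_l$ has rough coefficients, but Lemma \ref{lem:Elliptic estimates model} is designed precisely to absorb this given the bootstrap control of $\bar g - \delta_E$.
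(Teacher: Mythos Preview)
Your approach is the same as the paper's: apply the elliptic estimates of Lemma \ref{lem:Elliptic estimates model} to equation \eqref{Equation g bar} for the spatial derivatives of $\bar g$, read off $\partial_0\bar g$ directly from \eqref{Equation dt g bar}, and feed in the already-established bounds for $N$, $h$, $\beta$ and the curvature.

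There is one point you gloss over which the paper treats explicitly. Lemma \ref{lem:Estimates h} only controls $\bar\partial h$, not $h$ itself; the zero-frequency mode of $h$ is not captured by \eqref{Bound h}. When you estimate $\partial_0\bar g = -2Nh + \bar\nabla\beta$ in the norms $L^\infty H^{s-2}$, $L^1 B^{\delta_0/2}_{p,1}$, etc., you therefore pick up an extra $\|h\|_{L^\infty L^2}$ on the right-hand side which you have not yet controlled. The paper closes this by a separate low-frequency argument (their relation \eqref{To show zero frequency estimate g h}): from the variation formula \eqref{Variation second fundamental form}, which gives schematically $\partial_0 h = D^2 N + g\cdot\partial g\cdot\partial g + g\cdot R$, together with the initial bound from Proposition \ref{prop: Bounds S0}, one integrates in time to get $\|h\|_{L^\infty L^2} \lesssim \mathcal D$. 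This step is coupled with the analogous zero-frequency estimate for $\bar g - \delta_E$ (which you do mention), since $\|\partial_0\bar g\|_{L^1 L^2}$ itself involves $\|h\|_{L^1 L^2}$; the two have to be handled together. Without this step your argument leaves an uncontrolled $\|h\|_{L^\infty L^2}$ on the right-hand side of \eqref{Bound bar g}.
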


\begin{proof}
According to Lemma \ref{lem:Parabolic elliptic system}, the components of $\bar g$ and $\partial_0 \bar g$ satisfy the system
\begin{align}
\bar{g}^{kl} \partial_k \partial_l (\bar g_{ij}) & = g \cdot D \partial \beta + g\cdot R_{**} + g\cdot \partial g \cdot \partial g,\label{Elliptic bar g again}\\
\partial_0 \bar g & = g \cdot h + g \cdot D \beta + (g-m_0)\cdot \partial g.   \label{Dt bar g again}
\end{align}
In view of our bootstrap assumption \eqref{Bootstrap bound}, the elliptic operator $\bar{g}^{kl} \partial_k \partial_l $ satisfies the requirements of Lemma \ref{lem:Elliptic estimates model}; thus, using the elliptic estimates from Lemma \ref{lem:Elliptic estimates model} to estimate the spatial derivatives $\bar\partial \bar g$ and equation \eqref{Dt bar g again} to estimate $\partial_0 \bar g$ directly, we obtain
\begin{align}\label{Auxiliary bound d bar g}
\|\partial \bar g \|_{\mathfrak N_p} \lesssim_p &  \Big\| \JapD^{-1}\Big( g \cdot D \partial \beta + g\cdot R_* + g\cdot \partial g \cdot \partial g \Big) \Big\|_{\mathfrak N_p} \\
& + \Big\| g \cdot h + g \cdot D \beta + (g-m_0)\cdot \partial g  \Big\|_{\mathfrak N_p},   \nonumber
\end{align}
where the norm $\|\cdot\|_{\mathfrak N_p}$ is defined by
\[
\| f \|_{\mathfrak N_p} \doteq \|f \|_{L^\infty H^{s-2}} + \|f\|_{L^2 H^{\f76 + s_1}}+\|f\|_{L^{\f74} B^{1+s_1+\f18\delta_0}_{\f73,\f74}}+ \|f\|_{L^1 B^{\f12 \delta_0}_{p,1}}.
\]

The bound \eqref{Bound d b} for $\partial \beta$ together with the bootstrap bound \eqref{Bootstrap bound} for $g$ 
imply that
\[
\big\|   \JapD^{-1}(g \cdot D \partial \beta)\big\|_{\mathfrak N_p}  + \big\| g \cdot h\big\|_{\mathfrak N_p} + \big\|g \cdot D \beta  \big\|_{\mathfrak N_p} \lesssim_p \mathcal D + \mathcal D^{\f12}\big( \|\partial g\|_{L^{\f74} B^{1+s_1+\f18\delta_0}_{\f73,\f74}}+\|\partial g\|_{L^1 W^{\f12\delta_0,p}} \big)+ \| h \|_{L^\infty L^2}.
\]
Moreover, the bounds \eqref{Bound for g R*} for $g\cdot R_*$, \eqref{Bound for dgdg} for $g\cdot \partial g \cdot \partial g$ and \eqref{Bound for g Ddg} for $(g-m_0)\cdot \partial g$ can be combined into
\[
\big\|   \JapD^{-1}(g \cdot R_*)\big\|_{\mathfrak N_p} + \big\| |D|^{-1}(g \cdot \partial g \cdot \partial g)\big\|_{\mathfrak N_p} + \big\| (g-m_0)\cdot \partial g\big\|_{\mathfrak N_p} \lesssim_p \mathcal D + \mathcal D^{\f12} \big(\|\partial g\|_{L^{\f74} B^{1+s_1+\f18\delta_0}_{\f73,\f74}}+\|\partial g\|_{L^1 W^{\f12\delta_0,p}}\big).
\]
Combining the above bounds, we infer from \eqref{Auxiliary bound d bar g}:
\begin{equation}\label{First bound d bar g}
\|\partial \bar g \|_{\mathfrak N_p} \lesssim_p \mathcal D + \mathcal D^{\f12} \big(\|\partial g\|_{L^{\f74} B^{1+s_1+\f18\delta_0}_{\f73,\f74}}+\|\partial g\|_{L^1 B^{\f12 \delta_0}_{p,1}} \big)+ \| h \|_{L^\infty L^2}.
\end{equation}
Adding to \eqref{First bound d bar g} the zero-frequency estimate
\begin{align*}
\|\bar g - \delta_{E} \|_{L^\infty L^\infty} &  \lesssim \|\partial \bar g \|_{L^\infty H^{\f12+\delta_0}} + \|\bar g - \delta_{E}\|_{L^\infty L^2} \\
& \lesssim \|\partial \bar g \|_{\mathfrak N_p} + \|\bar g - \delta_{E}\|_{L^\infty L^2}, 
\end{align*}
we infer
\begin{align}\label{Second bound d bar g}
\|\partial  \bar g \|_{L^\infty H^{s-2}} & + \|\partial \bar g\|_{L^2 H^{\f76 + s_1}}+ \|\partial \bar g\|_{L^1 B^{\f12 \delta_0}_{p,1}}  + \|\bar g - \delta_E\|_{L^\infty L^\infty} \\
&  \lesssim_p \mathcal D + \mathcal D^{\f12}\big(\partial g\|_{L^{\f74} B^{1+s_1+\f18\delta_0}_{\f73,\f74}}+ \|\partial g\|_{L^1 B^{\f12 \delta_0}_{p,1}}\big) + \| h \|_{L^\infty L^2} + \|\bar g - \delta_{E}\|_{L^\infty L^2}.
\end{align}

In order to obtain the estimate \eqref{Bound bar g} for $\partial g$ and complete the proof of the Lemma \ref{lem:Estimates bar g}, it suffices to establish the following low frequency estimate for the last two terms in the right hand side of \eqref{Second bound d bar g}:
\begin{equation}\label{To show zero frequency estimate g h}
\| h \|_{L^\infty L^2} + \|\bar g - \delta_{E}\|_{L^\infty L^2} \lesssim \mathcal D.
\end{equation}
In view of the fact that, as a consequence of Proposition \ref{prop: Bounds S0}, the above bound is satisfied at $x^0=0$, i.e.
\[
\| h|_{x^0=0} \|_{L^2} + \|(\bar g - \delta_{E})|_{x^0=0}\|_{L^2} \lesssim \mathcal D
\]
(see \eqref{Bound metric initial}), we can estimate:
\begin{align}\label{Low frequency estimate h bar g}
\|\bar g - \delta_{E}\|_{L^\infty L^2} &\lesssim \mathcal D + \|\partial_0 \bar g\|_{L^1 L^2}\\
\| h \|_{L^\infty L^2} &\lesssim \mathcal D + \| \partial_0 h \|_{L^1 L^2}. \nonumber 
\end{align}
Using the expressions
\begin{align*}
\partial_0 \bar g & = g \cdot h + g \cdot D\beta + (g-m_0) \cdot \partial g,\\
\partial_0 h & = D^2 N + g \cdot \partial g \cdot \partial g + g \cdot R 
\end{align*}
(following from \eqref{Variation metric}--\eqref{Variation second fundamental form}), we can immediately bound (using the estimates \eqref{Bound d N}, \eqref{Bound d b} and \eqref{Estimate R easy} established already for $N$, $\beta$ and $R$, respectively, as well as the bootstrap bound \eqref{Bootstrap bound} for $g$):
\begin{align*}
 \| \partial_0 h \|_{L^1 L^2} &  \lesssim  \mathcal D,\\
\|\partial_0 \bar g\|_{L^1 L^2} & \lesssim \|h\|_{L^1 L^2} + \mathcal D.
\end{align*}
Returning to \eqref{Low frequency estimate h bar g} and using the trivial bound $\|h\|_{L^1 L^2}  \lesssim \|h\|_{L^\infty L^2} $, we obtain \eqref{To show zero frequency estimate g h}. Using \eqref{To show zero frequency estimate g h} to bound the last two terms in the right hand side of \eqref{Second bound d bar g}, we obtain \eqref{Bound bar g}.

\end{proof}

\medskip
\subsubsection{\textbf{Combining the bounds for $g$ and $\partial g$.}}
The following result provides an improvement of the bootstrap bound \eqref{Bootstrap bound} for the components of $g$ and $\partial g$:

\begin{lemma}\label{Estimate g and dg}
The components of the metric $g$ satisfy the bound
\begin{equation}\label{Bound g and d g}
\|\partial  g \|_{L^\infty H^{s-2}} + \|\partial  g\|_{L^2 H^{\f76 + s_1}}+\|\partial g\|_{L^{\f74} W^{1+s_1,\f73}}+ \|\partial  g\|_{L^1 W^{\f14\delta_0,\infty}}  + \|g - m_0\|_{L^\infty L^\infty} \le C \mathcal D
\end{equation}
for a constant $C>0$ independent of $C_0$.
\end{lemma}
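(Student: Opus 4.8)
The plan is to assemble Lemma \ref{Estimate g and dg} from the component estimates already established in Lemmas \ref{lem:Estimates N}, \ref{lem:Estimates h}, \ref{lem:Estimates b} and \ref{lem:Estimates bar g}. Recall that, in the $3+1$ decomposition, the metric $g$ is determined by the triple $(N,\beta,\bar g)$ via \eqref{Metric decomposition}, so that $\partial g$ is schematically $g\cdot(\partial N + \partial \beta + \partial \bar g)$ (see the schematic remark after \eqref{Variation Christoffel symbols}). Consequently, using the algebra properties of the mixed Sobolev and Besov norms from Lemma \ref{lem:Functional inequalities} together with the bootstrap bound $\|g-m_0\|_{L^\infty L^\infty}\ll 1$, it suffices to control $\partial N$, $\partial\beta$, $\partial\bar g$ in the four norms appearing in \eqref{Bound g and d g}, plus the low-frequency quantities $\|N-1\|_{L^\infty L^\infty}$, $\|\beta\|_{L^\infty L^\infty}$, $\|\bar g-\delta_E\|_{L^\infty L^\infty}$. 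Each of Lemmas \ref{lem:Estimates N}, \ref{lem:Estimates b}, \ref{lem:Estimates bar g} provides exactly these bounds (in slightly stronger Besov form), and Lemma \ref{lem:Estimates h} supplies the bound for $\bar\partial h$ needed as an input to the $\beta$ and $\bar g$ estimates.

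The key structural point — and the only nontrivial step — is that all four of these lemmas bound the relevant norm of the quantity by $C_{(p)}\big(\mathcal D + \mathcal D^{1/2}(\|\partial g\|_{L^{7/4}B^{1+s_1+\f18\delta_0}_{7/3,7/4}} + \|\partial g\|_{L^1 B^{\f12\delta_0}_{p,1}})\big)$, i.e. the right hand sides are not closed but contain a top-order term in $\partial g$ itself with a small coefficient. The plan is therefore: first, fix $p\in(1,\infty)$ (say $p=\infty$ is not allowed, so fix $p$ large but finite, noting $B^{\f12\delta_0}_{p,1}\hookrightarrow W^{\f14\delta_0,\infty}$ by Besov embedding once $p$ is large enough depending on $\delta_0$); second, combine the four lemmas through the schematic identity $\partial g \sim g\cdot(\partial N + \partial\beta + \partial\bar g)$ and the Besov algebra estimates to obtain
\[
\|\partial g\|_{\mathfrak N_p} \le C_{(p)}\Big(\mathcal D + \mathcal D^{1/2}\|\partial g\|_{\mathfrak N_p}\Big),
\]
where $\mathfrak N_p$ denotes the combined norm $\|\cdot\|_{L^\infty H^{s-2}} + \|\cdot\|_{L^2 H^{7/6+s_1}} + \|\cdot\|_{L^{7/4}B^{1+s_1+\f18\delta_0}_{7/3,7/4}} + \|\cdot\|_{L^1 B^{\f12\delta_0}_{p,1}}$ (as in the proof of Lemma \ref{lem:Estimates bar g}); third, since $\epsilon$ — hence $\mathcal D$ — is assumed sufficiently small in terms of $C_0$ and $s$, absorb the term $C_{(p)}\mathcal D^{1/2}\|\partial g\|_{\mathfrak N_p}$ into the left hand side, yielding $\|\partial g\|_{\mathfrak N_p}\lesssim \mathcal D$. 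The low-frequency bounds $\|N-1\|_{L^\infty L^\infty}$, $\|\beta\|_{L^\infty L^\infty}$, $\|\bar g-\delta_E\|_{L^\infty L^\infty}$ close simultaneously by the same absorption, completing $\|g-m_0\|_{L^\infty L^\infty}\lesssim \mathcal D$.

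Finally, to deduce the stated norms from $\mathfrak N_p$-control I would use the elementary embeddings $B^{1+s_1+\f18\delta_0}_{7/3,7/4}\hookrightarrow W^{1+s_1,7/3}$ and $B^{\f12\delta_0}_{p,1}\hookrightarrow W^{\f14\delta_0,\infty}$ (valid since $\f12\delta_0 - \f3p > \f14\delta_0$ for $p$ large), which convert the Besov bounds in $\mathfrak N_p$ into the Sobolev bounds appearing in \eqref{Bound g and d g}; the $L^\infty H^{s-2}$ and $L^2 H^{7/6+s_1}$ pieces are already in the right form. The main obstacle I anticipate is purely bookkeeping: one must verify that the schematic manipulation $\partial g \sim g\cdot(\partial N + \partial\beta + \partial\bar g)$ — and the reverse, writing $\partial N,\partial\beta,\partial\bar g$ back in terms of $\partial g$ when feeding the bootstrap bound into the error-term estimates inside Lemmas \ref{lem:Estimates N}--\ref{lem:Estimates bar g} — is compatible with the Besov algebra estimates of Lemma \ref{lem:Functional inequalities} at the borderline regularity $B^{\f12\delta_0}_{p,1}$, and that no circular dependence arises among the four component lemmas (this is guaranteed by the lower triangular structure \eqref{Model linearized system}: $N$ closes first, then $h$ from $N$, then $\beta$ from $h$, then $\bar g$ from $\beta$ and $h$). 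No genuinely new estimate is needed beyond what is already proved; the step is the assembly and the smallness absorption.
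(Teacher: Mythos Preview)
Your proposal is correct and essentially identical to the paper's proof: the paper simply combines the bounds \eqref{Bound d N}, \eqref{Bound d b} and \eqref{Bound bar g} into a single inequality for $\|\partial g\|_{\mathfrak N_p}+\|g-m_0\|_{L^\infty L^\infty}$, fixes $p=14/\delta_0$, and absorbs the $\mathcal D^{1/2}$-term. One small simplification relative to your write-up is that, by the paper's standing convention (see the remark after \eqref{Variation Christoffel symbols}), the notation $\partial g$ already \emph{means} any of $\partial N,\partial\beta,\partial\bar g$ up to multiplication by an algebraic function of $g$, so the ``schematic manipulation'' you flag as a potential bookkeeping obstacle is built into the definitions and requires no separate verification.
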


\begin{proof}
Combining the estimates \eqref{Bound d N}, \eqref{Bound d b} and \eqref{Bound bar g}, we infer that, for any $p\in (1,+\infty)$:
\begin{align*}
\|\partial  g \|_{L^\infty H^{s-2}} + \|\partial  g\|_{L^2 H^{\f76 + s_1}} &+\|\partial g\|_{L^{\f74} B^{1+s_1+\f18\delta_0}_{\f73,\f74}}+ \|\partial  g\|_{L^1 B^{\f12 \delta_0}_{p,1}}  + \|g - m_0\|_{L^\infty L^\infty}\\& \lesssim_p \Big( \mathcal D + \mathcal D^{\f12}\big(\|\partial g\|_{L^{\f74} B^{1+s_1+\f18\delta_0}_{\f73,\f74}}+ \| \partial g\|_{L^1B^{\f12 \delta_0}_{p,1}}\big)\Big).
\end{align*}
Fixing $p=\frac{14}{\delta_0}$ and assuming that $\epsilon$ (and thus $\mathcal D$) in \eqref{D} is small enough in terms of $s$ so that the last term in the right hand side above can be absorbed into the left hand side, we infer \eqref{Bound g and d g}.
\end{proof}

\medskip
\subsubsection{\textbf{Estimates for $\partial^2 g$.}}
We are now in a position where, using the relations provided by Lemma \ref{lem:Parabolic elliptic system}, we can improve the bootstrap estimates \eqref{Bootstrap bound} for $\partial^2 g$:

\begin{lemma}\label{Estimate d2 g}
The second order derivatives $\partial^2 g$ of the metric $g$ satisfy the bound
\begin{equation}\label{Bound d2 g}
\|\partial^2  g \|_{L^\infty H^{s-3}} + \|\partial^2  g\|_{L^2 H^{\f16 + s_1}}+\|\partial^2 g\|_{L^{\f74} W^{s_1,\f73}}\le C \mathcal D
\end{equation}
for a constant $C>0$ independent of $C_0$.
\end{lemma}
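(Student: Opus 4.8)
The plan is to derive equations \eqref{Equation dt2 lapse}--\eqref{Equation dt2 bar g} (from Lemma \ref{lem:Parabolic elliptic system}) for the second order time derivatives $\partial_0^2 N$, $\partial_0^2 \beta$, $\partial_0^2 \bar g$, and to combine them with the already-established first order estimates from Lemmas \ref{lem:Estimates N}, \ref{lem:Estimates h}, \ref{lem:Estimates b}, \ref{lem:Estimates bar g}, \ref{Estimate g and dg} to obtain the claimed control of $\partial^2 g$. Since $\eqref{Bound g and d g}$ already gives the full $L^\infty H^{s-2}\cap L^2 H^{\f76+s_1}\cap L^{\f74}W^{1+s_1,\f73}$ bounds for $\partial g$, applying a further spatial derivative $\bar\partial \in \{\partial_1,\partial_2,\partial_3\}$ immediately yields the bounds \eqref{Bound d2 g} for $\bar\partial \partial g$. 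Thus the entire content of the lemma is to control the purely temporal component $\partial_0^2 g$, decomposed as $(\partial_0^2 N, \partial_0^2 \beta^k, \partial_0^2 \bar g_{ij})$, in $L^\infty H^{s-3}$, $L^2 H^{\f16+s_1}$ and $L^{\f74}W^{s_1,\f73}$.

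First I would handle $\partial_0^2 \bar g$: by \eqref{Equation dt2 bar g}, $\partial_0^2 \bar g = g\cdot D\partial g + g\cdot \partial g\cdot \partial g + g\cdot R$. The term $g\cdot D\partial g$ is controlled by the bound \eqref{Bound g and d g} for $\partial g$ (pulling one derivative onto $\partial g$ to produce $D\partial g$ costs one degree of regularity, landing precisely in $L^\infty H^{s-3}$, etc., via the functional inequalities of Lemma \ref{lem:Functional inequalities} and a high-low decomposition), the cubic term $g\cdot \partial g\cdot \partial g$ by \eqref{Bound for dgdg} from the proof of Lemma \ref{lem:Bound EN}, and $g\cdot R$ by the curvature estimates \eqref{Estimate R easy} of Lemma \ref{lem:Riemann estimates} together with the product inequalities. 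Next, for $\partial_0^2 N$, I would use \eqref{Equation dt2 lapse}: $\partial_0^2 N = -|D|\partial_0 N + \mathfrak F_{\partial_0^2 N} + \mathcal E_{\partial_0^2 N}$. The term $|D|\partial_0 N$ is controlled by the already-proved estimate \eqref{Bound d N} for $\partial N$ (again costing one derivative). The source term $\mathfrak F_{\partial_0^2 N}$, which has the schematic form $|D|\Delta_{\bar g}^{-1}\big(\tfrac1N \bar g^{ij}\partial_0(\partial_0\tilde{\mathcal F}^\natural_{ij} - R_{i0j0})\big)$ minus its spatial average, is the crucial one: here I would invoke the higher-order cancellation bound \eqref{Bound F natural higher order} of Lemma \ref{lem:Bounds F natural}, which is exactly tailored to control $\partial_0(\partial_0 \tilde{\mathcal F}^\natural_{cd} - R^\natural_{c0d0})$ in $L^\infty H^{s-4}\cap L^2 H^{-\f56+s_1}\cap L^{\f74}W^{-1+s_1+\f14\delta_0,\f73}$, together with the estimates \eqref{Estimate R hard 2}, \eqref{Estimate d R easy} for the difference $R_{\a 0\ga\delta}-R^\natural_{\a 0\ga\delta}$ and for $\partial R$; the extra derivative gained from $\Delta_{\bar g}^{-1}$ applied to $|D|$ converts this into the required $\partial_0^2 N$ bounds (and the $\Delta_{\bar g}^{-1}$ can be replaced by $|D|^{-2}$ on these spaces as noted in Lemma \ref{lem:Parabolic elliptic system}, using Lemma \ref{lem:Elliptic estimates model}). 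The error term $\mathcal E_{\partial_0^2 N}$, collecting terms like $(g-m_0)\cdot D\partial^2 g$, $g\cdot \partial R_*$, $\partial g\cdot \partial^2 g$, etc., is estimated by the product inequalities combined with \eqref{Bound g and d g}, the $\bar\partial\partial g$ bound just obtained, and the curvature bounds, closely following the pattern of the proof of Lemma \ref{lem:Bound EN}. Finally, $\partial_0^2 \beta$ is handled analogously via \eqref{Equation dt2 shift}: $|D|\partial_0\beta$ comes from \eqref{Bound d b}, and $\mathcal E_{\partial_0^2\beta}$ — containing $g\cdot D^3 N$, $g\cdot D^2 h$, $g\cdot DR_*$ — is controlled by the second-derivative bound on $N$ already obtained, the $\bar\partial h$ estimate \eqref{Bound h} plus the elliptic equation \eqref{Equation h} for $h$ to get $D^2 h$, and \eqref{Estimate d R easy}.

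The main obstacle is the treatment of $\mathfrak F_{\partial_0^2 N}$ (and the corresponding care needed wherever the problematic component $R_{i0j0}$ of curvature enters with a $|D|^{-1}$ in front): naively $\partial_0^2 N \sim |D|^{-1}\partial_0 R_{i0j0}$, and $\partial_0 R_{i0j0}$ is not controllable in any good space-time norm without exploiting structure. The resolution is entirely encoded in the design of the balanced gauge and in Lemma \ref{lem:Cancellations}: the renormalization $\mathcal F^\natural$ cancels the bad part so that only $\partial_0(\partial_0\tilde{\mathcal F}^\natural_{cd} - R^\natural_{c0d0})$ survives, and Lemma \ref{lem:Bounds F natural}\eqref{Bound F natural higher order} shows this survivor has enough space-time integrability (the $L^2 H^{-\f56+s_1}$ and $L^{\f74}W^{-1+s_1+\f14\delta_0,\f73}$ bounds) that the $|D|^{-2}|D| = |D|^{-1}$ smoothing from $\Delta_{\bar g}^{-1}$ (applied to $\partial_0$ of it, but the $|D|$ prefactor then compensates one of those) yields exactly the target regularity $L^2 H^{\f16+s_1}$, $L^{\f74}W^{s_1,\f73}$ for $\partial_0^2 N$. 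Once this is in place, every other term is a routine consequence of the already-established first-order estimates and the paraproduct/functional inequality machinery, with each application of a derivative to an already-controlled quantity costing precisely one degree of Sobolev regularity — which is consistent with the statement since \eqref{Bound d2 g} sits one level below \eqref{Bound g and d g}. Throughout, one must be careful to use the bootstrap bound \eqref{Bootstrap bound} only for genuinely lower-order nonlinear factors and the improved estimates for the top-order ones, exactly as in the lower-triangular scheme described after \eqref{Model linearized system}.
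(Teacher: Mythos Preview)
Your proposal is correct and follows essentially the same route as the paper: reduce to $\partial_0^2 g$ via the already-established bound \eqref{Bound g and d g} on $\partial g$, then use the explicit expressions \eqref{Equation dt2 lapse}--\eqref{Equation dt2 bar g} for $\partial_0^2 N,\partial_0^2\beta,\partial_0^2\bar g$, with the crucial source term handled by \eqref{Bound F natural higher order} and \eqref{Estimate R hard 2}. One point deserves to be made explicit: in your treatment of the error term $\mathcal E_{\partial_0^2 N}$ (and similarly for $\mathcal E_{\partial_0^2\beta}$), the summand $(g-m_0)\cdot D\partial^2 g$ contains the full $\partial^2 g$, not just $\bar\partial\partial g$, so it cannot be bounded by ``the $\bar\partial\partial g$ bound just obtained'' alone. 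The paper handles this by estimating $\|\JapD^{-1}((g-m_0)\cdot D\partial^2 g)\|\lesssim \|g-m_0\|\cdot\|\partial^2 g\|\lesssim \mathcal D\,\|\partial^2 g\|$ in each of the three target norms, arriving at an inequality of the form $\|\partial^2 g\|\lesssim \mathcal D+\mathcal D\,\|\partial^2 g\|$ and then absorbing the last term into the left-hand side for $\mathcal D$ small. This absorption step is the natural closing move for a quasilinear estimate and should be stated rather than subsumed under the lower-triangular remark.
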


\begin{proof}
The bound \eqref{Bound g and d g} implies \eqref{Bound d2 g} for terms of the form $\bar\partial \partial g$, i.e.
\begin{equation}\label{Bound easy Ddg}
\|\bar\partial \partial  g \|_{L^\infty H^{s-3}} + \|\bar\partial \partial  g\|_{L^2 H^{\f16 + s_1}}+\|\bar\partial\partial g\|_{L^{\f74} W^{s_1,\f73}}\lesssim \mathcal D.
\end{equation}
The relations \eqref{Equation dt2 lapse}--\eqref{Equation dt2 bar g} for $\partial_0^2 N$, $\partial_0^2 \beta$ and $\partial_0^2 \bar g$ can be schematically reexpressed as
\begin{equation}\label{Schematic dt2 g}
\partial_0^2 g = D\partial g +\JapD^{-1}\big( g \cdot \mathfrak F^{(1)}_{\partial_0^2 g}\big)+\mathfrak F^{(2)}_{\partial_0^2 g} + \mathcal E_{\partial_0^2 g},
\end{equation}
where
\[
\big(\mathfrak F^{(1)}_{\partial_0^2 g}\big)_{cd} = \partial^2_0 \tilde{\mathcal F}^\natural_{cd} - \partial_0 R_{c0d0},
\]
\[
\mathfrak F_{\partial_0^2 g} = \JapD^{-1}\Big( g\cdot \partial g \cdot \partial \tilde{\mathcal F}^\natural + \partial g \cdot \partial g \cdot \tilde{\mathcal F}^\natural  \Big)
\]
and
\begin{align*}
\mathcal E_{\partial_0^2 g} = & |D|^{-1} \Big((g-m_0)\cdot D\partial^2 g+g \cdot \partial g \cdot \partial^2 g + g\cdot D^2 \partial g + g\cdot \partial R_* + \partial g \cdot \partial g \cdot \partial g \Big) \\
& +g\cdot D\partial g + g \cdot R + g\cdot \partial g \cdot \partial g.
\end{align*}
Therefore, \eqref{Bound easy Ddg} and \eqref{Schematic dt2 g} imply that
\begin{align}\label{Almost there d2 g}
\|\partial^2  g \|_{L^\infty H^{s-3}} + \| \partial^2 g\|_{L^2 H^{\f16 + s_1}}
+\|\partial^2 g\|_{L^{\f74} W^{s_1,\f73}}\lesssim \mathcal D & + \|\mathfrak F^{(1)}_{\partial_0^2 g}  \|_{L^\infty H^{s-4}} + \| \mathfrak F^{(1)}_{\partial_0^2 g} \|_{L^2 H^{-\f56 + s_1}} +\|\mathfrak F^{(1)}_{\partial_0^2 g}\|_{L^{\f74} W^{-1+s_1,\f73}} \\
& + \|\mathfrak F^{(2)}_{\partial_0^2 g}  \|_{L^\infty H^{s-3}} + \| \mathfrak F^{(2)}_{\partial_0^2 g} \|_{L^2 H^{\f16 + s_1}}+\|\mathfrak F^{(2)}_{\partial_0^2 g}\|_{L^{\f74} W^{s_1,\f73}}  \nonumber \\
& + \|\mathcal E_{\partial_0^2 g} \|_{L^\infty H^{s-3}} + \| \mathcal E_{\partial_0^2 g}\|_{L^2 H^{\f16 + s_1}}+\|\mathfrak E_{\partial_0^2 g}\|_{L^{\f74} W^{s_1,\f73}}.\nonumber
\end{align}

We will estimate the terms in the right hand side of \eqref{Almost there d2 g} as follows:
\begin{itemize}
\item The terms in the first line of the right hand side of \eqref{Almost there d2 g} have also been already estimated in Lemmas \ref{lem:Bounds F natural} and \ref{lem:Riemann estimates} (see \eqref{Bound F natural higher order} and \eqref{Estimate R hard 2}):
\[
\|\mathfrak F^{(1)}_{\partial_0^2 g}  \|_{L^\infty H^{s-4}} + \| \mathfrak F^{(1)}_{\partial_0^2 g} \|_{L^2 H^{-\f56 + s_1}}+\| \mathfrak F^{(1)}_{\partial_0^2 g}\|_{L^{\f74} W^{-1+s_1,\f73}} \lesssim \mathcal D.
\]
\item In view of the estimates \eqref{Bounds F natural easy} for $\partial \tilde{\mathcal F}^\natural$ and $\tilde{\mathcal F}^\natural $ as well as the bound \eqref{Bound g and d g} for $g$ and $\partial g$, we can readily bound:
\[
\|\mathfrak F^{(2)}_{\partial_0^2 g}  \|_{L^\infty H^{s-3}} + \| \mathfrak F^{(2)}_{\partial_0^2 g} \|_{L^2 H^{\f16 + s_1}}+\| \mathfrak F^{(2)}_{\partial_0^2 g}\|_{L^{\f74} W^{s_1,\f73}}  \lesssim \mathcal D.
\]

\item Using the bound \eqref{Bound g and d g} for $g$ and $\partial g$, the bound \eqref{Estimate R easy} for $R$ and the expression (following from the Bianchi identity for $R$)
\[
\partial R_* = \bar\partial R + g \cdot \partial g \cdot R,
\] 
the functional inequalities from Lemma \ref{lem:Functional inequalities} imply that
\begin{align*}
\|\mathcal E_{\partial_0^2 g}  \|_{L^\infty H^{s-3}} &+ \| \mathcal E_{\partial_0^2 g} \|_{L^2 H^{\f16 + s_1}} +\|\mathcal E_{\partial_0^2 g} \|_{L^{\f74} W^{s_1,\f73}}\\& \lesssim \mathcal D+ \mathcal D \Big( \| \partial^2 g  \|_{L^\infty H^{s-3}} + \| \partial^2 g \|_{L^2 H^{\f16 + s_1}} +\|\partial^2 g\|_{L^{\f74} W^{s_1,\f73}} \Big).
\end{align*}
\end{itemize}

Combining the above bounds, we therefore obtain
\[
\|\partial^2  g \|_{L^\infty H^{s-3}} + \|\partial^2  g\|_{L^2 H^{\f16 + s_1}}+\|\partial^2 g\|_{L^{\f74} W^{s_1,\f73}}\lesssim \mathcal D + \mathcal D  \Big( \| \partial^2 g  \|_{L^\infty H^{s-3}} + \| \partial^2 g \|_{L^2 H^{\f16 + s_1}}+\|\partial^2 g\|_{L^{\f74} W^{s_1,\f73}}  \Big)
\]
in which, provided $\epsilon$ (and thus $\mathcal D$) in \eqref{D} is small enough in terms of $s$, we can absorb the last term in the right hand side into the left hand side, thus obtaining \eqref{Bound d2 g}.

\end{proof}

\medskip
\subsubsection{\textbf{Estimates for the connection coefficients $\omega$.}}
We are now able to improve the bootstrap assumption \eqref{Bootstrap bound} for the connection coefficients $\omega$ and their derivatives:

\begin{lemma}\label{lem:Estimates omega}
The connection coefficients $\omega$ satisfy:
\begin{align}\label{Bound omega d omega}
\|\partial \omega\|_{L^\infty H^{s-3}} + \|\partial \omega\|_{L^2 H^{\f16 +s_1}} 
+\|\partial\omega\|_{L^{\f74} W^{s_1,\f73}}+ \|\omega\|_{L^1 W^{\f14\delta_0,\infty}} \le C \mathcal D
\end{align}
for some constant $C>0$ independent of $C_0$.
\end{lemma}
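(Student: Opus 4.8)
The plan is to mimic the argument used for the metric components $N, \beta, \bar g$ in Lemmas \ref{lem:Estimates N}--\ref{lem:Estimates bar g}, now applied to the parabolic-elliptic system \eqref{Equation omega 0}--\eqref{Equation omega i} for $\omega$. Recall from Lemma \ref{lem:Parabolic elliptic system} that the temporal components $\omega_{0\bB}^{\bA}$ satisfy a parabolic equation of the form $\partial_0 \omega_0 + |D|\omega_0 = \mathfrak F_{\omega_0} + \mathcal E_{\omega_0}$, with $\fint_{(\bar\Sigma_\tau, g_{\mathbb T^3})} \omega_0 = 0$ from \eqref{Divergence condition frame}, while the spatial components $\omega_{i\bB}^{\bA}$ satisfy the elliptic equation $\Delta_{\bar g}\omega_i = -\partial_i(\Delta_{\bar g}|D|^{-1}\omega_0) + \mathcal E_{\omega_{\bar\Sigma}}$. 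First I would record the initial-slice bounds: by Proposition \ref{prop: Bounds S0} (estimates \eqref{Bound connection coefficients initial} and \eqref{Bound frame}) we have $\|\omega|_{x^0=0}\|_{H^{s-2}} + \|\partial\omega|_{x^0=0}\|_{H^{s-3}} \lesssim \mathcal D$, so the initial data contributions are harmless.

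The main work is estimating the inhomogeneous terms. For the parabolic equation for $\omega_0$, I would invoke Lemma \ref{lem:Parabolic estimates model} from the appendix (exactly as in the proof of Lemma \ref{lem:Estimates N}): it reduces \eqref{Bound omega d omega} to showing that $\mathfrak F_{\omega_0}$ and $\mathcal E_{\omega_0}$ satisfy the appropriate $L^\infty H^{s-3}$, $L^2 H^{\frac16+s_1}$, $L^{\frac74} B^{s_1+\frac18\delta_0}_{\frac73,\frac74}$ and $L^1 B^{-1+\frac12\delta_0}_{p,1}$ bounds (all $\lesssim \mathcal D + \mathcal D^{\frac12}(\cdots)$, the extra term being absorbable for $\epsilon$ small). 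The term $\mathfrak F_{\omega_0}$ involves $\partial_0\tilde{\mathcal F}_\perp$ and $\bar g^{ij}\bar\nabla_i (R^\perp)_{j0\bB}^{\bA}$; the key point is that the combination $\partial_0\tilde{\mathcal F}_\perp^{\bA\bB} + \partial_c(\bar g^{cd} R^{\perp\natural})_{d0}^{\bA\bB}$ is estimated in Lemma \ref{lem:Bounds F perp} (the cancellation from Lemma \ref{lem:Cancellations}), while $R^{\perp\natural}$ and the ``good'' components $R^\perp_*$ are controlled by Lemma \ref{lem:Riemann estimates}; the remaining $\tilde{\mathcal F}_\perp$ bounds come from \eqref{Bounds F perp easy}. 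For $\mathcal E_{\omega_0}$, which is schematically $\JapD^{-1}(\partial g\cdot D\omega + D\partial g\cdot g\cdot\omega + \partial g\cdot\omega^2) + g\cdot\omega^2 + \JapD^{-1}(\partial e\cdot\tilde{\mathcal F}_\perp + \partial g\cdot\tilde{\mathcal F}_\perp)$, I would run a paraproduct/high-low-high decomposition entirely analogous to the treatment of $\mathcal E_N$ in Lemma \ref{lem:Bound EN}, using the bootstrap bounds on $\partial g, \partial e$ (via \eqref{Bound g and d g} and \eqref{Bootstrap bound}) and the $\tilde{\mathcal F}_\perp$ bounds; note we now get to use the already-improved estimates for $g$ from Lemma \ref{Estimate g and dg}, so no circularity arises. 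This yields the estimates for $\omega_0$.

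Having closed $\omega_0$, I would turn to the spatial components via the elliptic equation \eqref{Equation omega i}, using the elliptic estimates of Lemma \ref{lem:Elliptic estimates model} (the operator $\Delta_{\bar g}$ satisfies the hypotheses by the bootstrap bound on $\bar g$). The source $-\partial_i(\Delta_{\bar g}|D|^{-1}\omega_0)$ is controlled directly by the $\omega_0$-bounds just obtained, and $\mathcal E_{\omega_{\bar\Sigma}} = D(g\cdot R^\perp_*) + g\cdot\partial g\cdot R^\perp_* + D(m(e)\cdot\tilde{\mathcal F}_\perp) + D(\omega\cdot\omega) + \partial g\cdot\omega^2 + g\cdot\omega\cdot R_*$ is estimated term by term with the same techniques as $\mathcal E_h$ in Lemma \ref{lem:Estimates h}, again using Lemma \ref{lem:Riemann estimates} for $R^\perp_*, R_*$, Lemma \ref{lem:Bounds F perp} for $\tilde{\mathcal F}_\perp$, and the bootstrap bounds for the quadratic $\omega$-terms. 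Finally I would add a low-frequency estimate $\|\omega\|_{L^\infty L^2} \lesssim \mathcal D$ (integrating $\partial_0\omega$ in time from the initial-slice bound), recover $\|\omega\|_{L^1 W^{\frac14\delta_0,\infty}}$ from the $\partial_0\omega_0$ parabolic estimate plus the elliptic bound on $\omega_i$, and assemble all the pieces. The main obstacle is purely bookkeeping: getting the numerology to close in the $L^1 W^{\frac14\delta_0,\infty}$ (equivalently $L^1 B^{\frac12\delta_0}_{\infty,1}$) norm for the high-high interactions in $\mathcal E_{\omega_0}$ and $\mathcal E_{\omega_{\bar\Sigma}}$, which requires the Besov refinements and the $\frac16$-derivative room in $\|\partial g\|_{L^2 H^{1+s_1+\frac16}}$, exactly parallel to what was needed for the metric.
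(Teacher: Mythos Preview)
Your proposal is correct and follows essentially the same approach as the paper: treat $\omega_0$ via the parabolic estimate (Lemma \ref{lem:Parabolic estimates model}) with source terms controlled by Lemmas \ref{lem:Riemann estimates} and \ref{lem:Bounds F perp}, then $\bar\omega$ via the elliptic estimate (Lemma \ref{lem:Elliptic estimates model}), recover $\partial_0\bar\omega$ from the curvature expression $\partial_0\bar\omega = \bar\partial\omega_0 + R^\perp + \omega\cdot\omega$, and finish with a low-frequency estimate. The only minor presentational difference is that the paper estimates $\mathcal E_{\omega_0}$ and $\mathcal E_{\omega_{\bar\Sigma}}$ directly via Lemma \ref{lem:Functional inequalities} rather than by explicit paraproduct decomposition, and (as you correctly anticipate) uses the already-improved metric bounds from Lemma \ref{Estimate g and dg} so that the error terms are $\lesssim C_0^2\mathcal D^2 \lesssim \mathcal D$ with no absorption step needed.
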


\begin{proof}
The temporal components $\omega_0$ of $\omega$ satisfy the parabolic equation (see Lemma \ref{lem:Parabolic elliptic system}):
\begin{equation}\label{Equation omega 0 again}
\partial_0   \omega_{0\bB}^{\bA} +|D| \omega_{0\bB}^{\bA} = \big( \mathfrak F_{\omega_0}\big)^{\bA}_{\bB}  +  \mathcal E_{\omega_0},
\end{equation}
where
\[
\big( \mathfrak F_{\omega_0}\big)^{\bA}_{\bB} \doteq 
 |D|\Delta_{\bar g}^{-1} \Bigg[  \Big(  m_{\bB\bC} \partial_0 \tilde{\mathcal{F}}_{\perp}^{\bA\bC} -\fint_{(\bar\Sigma_{\tau}, \bar g)} \big( m_{\bB\bC} \partial_0 \tilde{\mathcal{F}}_{\perp}^{\bA\bC} \Big) 
 +\bar g^{ij} \bar{\nabla}_i (R^{\perp})_{j0\hphantom{\bA}\bB}^{\hphantom{\a\b}\bA} \Bigg]
\]
and
\begin{align*}
 \mathcal E_{\omega_0} = & 
 \JapD^{-1} \big( \partial g \cdot D \omega\big) + \JapD^{-1} \big( D\partial g \cdot g\cdot \omega \big) +  \JapD^{-1} \big( \partial g \cdot \omega \cdot \omega \big)\\
&  + g\cdot \omega \cdot \omega + \JapD^{-1} \big(\partial e \cdot \tilde{\mathcal F}_{\perp} + \partial g \cdot \tilde{\mathcal F}_{\perp} \big).
\end{align*}
Using the facts that
\begin{itemize}
\item $\fint_{(\bar\Sigma_{\tau}, g_{\mathbb T^3})} \omega_0 =0$ (in view of condition \eqref{Divergence condition frame}) and
\item $\| \omega_0 |_{x^0=0} \|_{H^{s-2}} \lesssim \mathcal D $ (as a consequence of Proposition \ref{prop: Bounds S0}),
\end{itemize}
the estimates provided by Lemma \ref{lem:Parabolic estimates model} from the Appendix imply that, for any $p\in (1,+\infty)$:
\begin{align*}
\|\partial \omega_0\|_{L^\infty H^{s-3}} + \|\partial \omega_0 & \|_{L^2 H^{\f16 +s_1}} 
+\|\partial\omega_0\|_{L^{\f74} W^{s_1,\f73}}+ \|\omega_0\|_{L^1 B^{ \f34\delta_0}_{p,1}}\\
 \lesssim_p \mathcal D & + \|\mathfrak F_{\omega_0}\|_{L^\infty H^{s-3}} + \|\mathfrak F_{\omega_0}\|_{L^2 H^{\f16 +s_1}} +\|\mathfrak F_{\omega_0}\|_{L^{\f74} W^{s_1,\f73}}
+ \|\mathfrak F_{\omega_0}\|_{L^1 B^{-1+ \f34\delta_0}_{p,1}}  \nonumber    \\
& +\|\mathcal E_{\omega_0}\|_{L^\infty H^{s-3}} + \|\mathcal E_{\omega_0}\|_{L^2 H^{\f16 +s_1}} +\|\mathcal E_{\omega_0}\|_{L^{\f74} W^{s_1,\f73}}
+ \|\mathcal E_{\omega_0}\|_{L^1 B^{-1+\f34\delta_0}_{p,1}}.    \nonumber 
\end{align*}
Therefore, fixing $p$ large enough in term $\delta_0$, we have:
\begin{align}\label{Bound to show omega 0}
\|\partial \omega_0\|_{L^\infty H^{s-3}} + \|\partial \omega_0 & \|_{L^2 H^{\f16 +s_1}} +\|\partial\omega_0\|_{L^{\f74} W^{s_1,\f73}}
+ \|\omega_0\|_{L^1 W^{ \f12\delta_0, \infty}}\\
 \lesssim_p \mathcal D & + \|\mathfrak F_{\omega_0}\|_{L^\infty H^{s-3}} + \|\mathfrak F_{\omega_0}\|_{L^2 H^{\f16 +s_1}} +\|\mathfrak F_{\omega_0}\|_{L^{\f74} W^{s_1,\f73}}
+ \|\mathfrak F_{\omega_0}\|_{L^1 W^{ \delta_0, \infty}}  \nonumber    \\
& +\|\mathcal E_{\omega_0}\|_{L^\infty H^{s-3}} + \|\mathcal E_{\omega_0}\|_{L^2 H^{\f16 +s_1}} +\|\mathcal E_{\omega_0}\|_{L^{\f74} W^{s_1,\f73}}
+ \|\mathcal E_{\omega_0}\|_{L^1 W^{ \delta_0, \infty}}.    \nonumber 
\end{align}

The term $\mathfrak F_{\omega_0}$ in the right hand side of \eqref{Bound to show omega 0} has already been estimated in Lemma \ref{lem:Bounds F perp}: As a consequence of the  bound \eqref{Bound F perp L1Linfty}, together with the bootstrap assumption \eqref{Bootstrap bound} for $g$, we can estimate
\begin{equation}\label{Estimate to show F omega 0}
 \|\mathfrak F_{\omega_0}\|_{L^\infty H^{s-3}} + \|\mathfrak F_{\omega_0}\|_{L^2 H^{\f16 +s_1}} +\|\mathfrak F_{\omega_0}\|_{L^{\f74} W^{s_1,\f73}}
+ \|\mathfrak F_{\omega_0}\|_{L^1 W^{-1+\delta_0,\infty}} \lesssim \mathcal D.
\end{equation}
For the $\mathcal E_{\omega_0}$ term in the right hand side of \eqref{Bound to show omega 0}, we will argue as follows:
\begin{itemize}
\item Using the functional inequalities from Lemma \ref{lem:Functional inequalities}, we can readily estimate:
\begin{align*}
\| \JapD^{-1} \big( \partial g \cdot D \omega\big) & \|_{L^\infty H^{s-3}} +\| \JapD^{-1} \big( D\partial g \cdot g\cdot \omega \big) \|_{L^\infty H^{s-3}} + \|  \JapD^{-1} \big( \partial g \cdot \omega \cdot \omega \big)\|_{L^\infty H^{s-3}} \\
 & \hphantom{\|_{L^\infty H^{s-3}} +\| |D|^{-1} \big(}
+\| g\cdot \omega \cdot \omega \|_{L^\infty H^{s-3}}  + \|\JapD^{-1} \big(\partial e \cdot \tilde{\mathcal F}_{\perp} + \partial g \cdot \tilde{\mathcal F}_{\perp} \big) \|_{L^\infty H^{s-3}} \\
\lesssim & \|\partial g\|_{L^\infty H^{s-2}} \|D\omega\|_{L^\infty H^{s-3}}
+ \|D\partial g\|_{L^\infty H^{s-3}} \|g\|_{L^\infty H^{s-1}} \|\omega\|_{L^\infty H^{s-2}} + \|\partial g\|_{L^\infty H^{s-2}} \|\omega\|^2_{L^\infty H^{s-2}} \\
& \hphantom{\|_{L^\infty H^{s-3}} +\| |D|^{-1} \big(}
+ \|g\|_{L^\infty H^{s-1}} \|\omega\|^2_{L^\infty H^{s-2}} + \big( \|\partial e \|_{L^\infty H^{s-2}} + \|\partial g \|_{L^\infty H^{s-2}} \big) \| \tilde{\mathcal F}_{\perp}\|_{L^\infty H^{s-3}} \\
\lesssim & C_0^2 \mathcal D^2,
\end{align*}
where, in passing to the last line above, we used the estimate \eqref{Bound g and d g} for $g$ and $\partial g$, the bootstrap assumption \eqref{Bootstrap bound} for $\omega$ and $\partial e$, as well as  the estimates \eqref{Bounds F perp easy} for $\tilde{\mathcal F}_\perp$.

\item Similarly, we can estimate (recall that $\f16 + s_1 = s-\f52-2\delta_0$):
\begin{align*}
\| \JapD^{-1} \big( \partial g \cdot & D \omega\big)  \|_{L^2 H^{\f16+s_1}} +\| \JapD^{-1} \big( D\partial g \cdot g\cdot \omega \big) \|_{L^2 H^{\f16+s_1}} + \|  \JapD^{-1} \big( \partial g \cdot \omega \cdot \omega \big)\|_{L^2 H^{\f16+s_1}} \\
 & \hphantom{\|_{L^\infty H^{s-3}} +\|}
+\| g\cdot \omega \cdot \omega \|_{L^2 H^{\f16+s_1}}  + \|\JapD^{-1} \big(\partial e \cdot \tilde{\mathcal F}_{\perp} + \partial g \cdot \tilde{\mathcal F}_{\perp} \big) \|_{L^2 H^{\f16+s_1}} \\
\lesssim & \|\partial g\|_{L^2 H^{s-\f32-2\delta_0}} \|D\omega\|_{L^\infty H^{s-3}}
+ \|D\partial g\|_{L^\infty H^{s-3}} \|g\|_{L^\infty H^{s-1}} \|\omega\|_{L^2 H^{s-\f32-2\delta_0}} + \|\partial g\|_{L^2 H^{s-\f32-2\delta_0}} \|\omega\|^2_{L^\infty H^{s-2}} \\
& \hphantom{\|_{L^\infty H^{s-3}} +\|}
+ \|g\|_{L^\infty H^{s-1}} \|\omega\|_{L^\infty H^{s-2}} \|\omega\|_{L^2 H^{s-\f32-2\delta_0}}+ 
\big( \|\partial e \|_{L^\infty H^{s-2}} + \|\partial g \|_{L^\infty H^{s-2}} \big) \| \tilde{\mathcal F}_{\perp}\|_{L^2 H^{s-\f52-2\delta_0}} \\
\lesssim & C_0^2 \mathcal D^2,
\end{align*}
where, again, in passing to the last line above we made use of the estimate \eqref{Bound g and d g} for $g$ and $\partial g$, the bootstrap assumption \eqref{Bootstrap bound} for $\omega$ and $\partial e$, as well as  the estimates \eqref{Bounds F perp easy} for $\tilde{\mathcal F}_\perp$. Similarly, we also have:
\begin{align*}
\| \JapD^{-1} \big( \partial g \cdot & D \omega\big)  \|_{L^{\f74} W^{s_1,\f73}} +\| \JapD^{-1} \big( D\partial g \cdot g\cdot \omega \big) \|_{L^{\f74} W^{s_1,\f73}} + \|  \JapD^{-1} \big( \partial g \cdot \omega \cdot \omega \big)\|_{L^{\f74} W^{s_1,\f73}} \\
 & \hphantom{\|_{L^\infty H^{s-3}} +\|}
+\| g\cdot \omega \cdot \omega \|_{L^{\f74} W^{s_1,\f73}}  + \|\JapD^{-1} \big(\partial e \cdot \tilde{\mathcal F}_{\perp} + \partial g \cdot \tilde{\mathcal F}_{\perp} \big) \|_{L^{\f74} W^{s_1,\f73}} \\
\lesssim & \|\partial g\|_{L^\infty H^{s-2}} \|D\omega\|_{L^{\f74} W^{s_1,\f73}}
+ \|D\partial g\|_{L^{\f74} W^{s_1,\f73}} \|g\|_{L^\infty H^{s-1}} \|\omega\|_{L^\infty H^{s-2}} + \|\partial g\|_{L^{\f74} W^{1+s_1,\f73}} \|\omega\|^2_{L^\infty H^{s-2}} \\
& \hphantom{\|_{L^\infty H^{s-3}} +\|}
+ \|g\|_{L^\infty H^{s-1}} \|\omega\|_{L^\infty H^{s-2}} \|\omega\|_{L^{\f74} W^{1+s_1,\f73}}+ 
\big( \|\partial e \|_{L^\infty H^{s-2}} + \|\partial g \|_{L^\infty H^{s-2}} \big) \| \tilde{\mathcal F}_{\perp}\|_{L^{\f74} W^{s_1,\f73}} \\
\lesssim & C_0^2 \mathcal D^2,
\end{align*}

\item Similarly:
\begin{align*}
\| \JapD^{-1} \big( \partial g \cdot & D \omega\big)  \|_{L^1 W^{-1+\delta_0, \infty}} +\| \JapD^{-1} \big( D\partial g \cdot g\cdot \omega \big) \|_{L^1 W^{-1+\delta_0, \infty}} + \|  \JapD^{-1} \big( \partial g \cdot \omega \cdot \omega \big)\|_{L^1 W^{-1+\delta_0, \infty}} \\
 & \hphantom{\|_{L^\infty H^{s-3}} +\|}
+\| g\cdot \omega \cdot \omega \|_{L^1 W^{-1+\delta_0, \infty}}  + \|\JapD^{-1} \big(\partial e \cdot \tilde{\mathcal F}_{\perp} + \partial g \cdot \tilde{\mathcal F}_{\perp} \big) \|_{L^1 W^{-1+\delta_0, \infty}} \\
\lesssim & \|\partial g \cdot  D \omega  \|_{L^1 W^{2\delta_0, \f32}} +\|  D\partial g \cdot g\cdot \omega  \|_{L^1 W^{2\delta_0, \f32}} + \|  \partial g \cdot \omega \cdot \omega \|_{L^1 W^{2\delta_0, \f32}} \\
 & \hphantom{\|_{L^\infty H^{s-3}} +\|}
+\| g\cdot \omega \cdot \omega \|_{L^1 W^{2\delta_0, 3}}  + \|\partial e \cdot \tilde{\mathcal F}_{\perp} + \partial g \cdot \tilde{\mathcal F}_{\perp}  \|_{L^1 W^{2\delta_0, \f32}} \\
\lesssim & \|\partial g\|_{L^2 W^{2\delta_0, 6}} \|D\omega\|_{L^2 H^{2\delta_0}}
+ \|D\partial g\|_{L^2 H^{2\delta_0}} \|g\|_{L^\infty H^{\f32+2\delta_0}} \|\omega\|_{L^2 W^{2\delta_0, 6}} + \|\partial g\|_{L^2 W^{2\delta_0, 6}} \|\omega\|_{L^2 W^{2\delta_0, 6}}\|\omega\|_{L^\infty H^{2\delta_0}} \\
& \hphantom{\|_{L^\infty H^{s-3}} +\|}
+ \|g\|_{L^\infty H^{\f32+2\delta_0}} \|\omega\|^2_{L^2 W^{2\delta_0, 6}}+ 
\big( \|\partial e \|_{L^2 W^{2\delta_0, 6}} + \|\partial g \|_{L^2 W^{2\delta_0, 6}} \big) \| \tilde{\mathcal F}_{\perp}\|_{L^2 H^{2\delta_0}} \\
\lesssim & \|\partial g\|_{L^2 H^{\f76+s_1}} \|D\omega\|_{L^2 H^{\f16+s_1}}
+ \|D\partial g\|_{L^2 H^{\f16+s_1}} \|g\|_{L^\infty H^{s-1}} \|\omega\|_{L^2 H^{\f76+s_1}} + \|\partial g\|_{L^2 H^{\f76+s_1}} \|\omega\|_{L^2 H^{\f76+s_1}}\|\omega\|_{L^\infty H^{s-2}} \\
& \hphantom{\|_{L^\infty H^{s-3}} +\|}
+ \|g\|_{L^\infty H^{s-1}} \|\omega\|^2_{L^2 H^{\f76+s_1}}+ 
\big( \|\partial e \|_{L^2 H^{\f76+s_1}} + \|\partial g \|_{L^2 H^{\f76+s_1}} \big) \| \tilde{\mathcal F}_{\perp}\|_{L^2 H^{\f16+s_1}} \\
\lesssim & C_0^2 \mathcal D^2.
\end{align*}
\end{itemize}
Combining these estimates, we obtain:
\begin{equation}\label{Estimate to show E omega 0}
 \|\mathcal E_{\omega_0}\|_{L^\infty H^{s-3}} + \|\mathcal E_{\omega_0}\|_{L^2 H^{\f16 +s_1}} +\|\mathcal E_{\omega_0}\|_{L^{\f74} W^{s_1,\f73}}
+ \|\mathcal E_{\omega_0}\|_{L^1 W^{-1+\delta_0,\infty}} \lesssim C_0^2 \mathcal D^2 \lesssim \mathcal D
\end{equation}
(provided $\epsilon$ in \eqref{D} is sufficiently small in terms of $C_0$). Thus, returning to \eqref{Bound to show omega 0} and using \eqref{Estimate to show E omega 0} and \eqref{Estimate to show F omega 0}, we infer:
\begin{equation}\label{Bound omega 0 final}
\|\partial \omega_0\|_{L^\infty H^{s-3}} + \|\partial \omega_0  \|_{L^2 H^{\f16 +s_1}} +\|\partial\omega_0\|_{L^{\f74} W^{s_1,\f73}}
+ \|\omega_0\|_{L^1 W^{\f12\delta_0,\infty} }
 \lesssim \mathcal D.
\end{equation}

According to Lemma \ref{lem:Parabolic elliptic system}, the spatial components $\bar\omega$ of $\omega$ (i.e. $\omega_{i}^{\bA}$  for $i=1,2,3$)  satisfy the elliptic system
\begin{equation}\label{Equation omega i}
  \Delta_{\bar g} \bar\omega = - \bar\nabla \big( \Delta_{\bar g}|D|^{-1} \omega_0 \big)+  \mathcal E_{\bar\omega},
\end{equation}
where
\begin{align*}
\mathcal E_{\bar\omega} = & D \big( g \cdot R^{\perp}_* \big) 
+ g\cdot   \partial g \cdot R^{\perp}_* 
 +D\big( m(e) \cdot \tilde{\mathcal F}_\perp \big) \\
& + D( \omega \cdot \omega) +  \partial g \cdot \omega \cdot \omega + g \cdot \omega \cdot R_*.
\end{align*}
Using the elliptic estimates from Lemma \ref{lem:Elliptic estimates model} together with the bootstrap assumption \eqref{Bootstrap bound} for $\bar g$
and the bound
\[
\int_0^T \| g-m_0\|_{H^{s-\f12}(\bar\Sigma_\tau)}\cdot \|\bar\partial \bar\omega\|_{W^{-1,\f{6}{1-\delta_0}}(\bar\Sigma_\tau)} \,d\tau \lesssim \|g-m_0\|_{L^2 H^{2+\f16+s_1}} \|\partial \omega\|_{L^2 L^2} \lesssim C_0^2 \mathcal D^2
\]
(obtained from the bootstrap assumption \eqref{Bootstrap bound} and used to control the last term in the right hand side of \eqref{Very low regularity elliptic estimate} for $f=\bar\omega$, $\bar s= s-1$ and $\sigma = -2+\f12\delta_0$), we therefore obtain for any $p\in (1,+\infty)$:
\begin{align*}
\|\bar\partial \bar\omega  \|_{L^\infty H^{s-3}} & + \|\bar\partial \bar\omega\|_{L^2 H^{\f16+s_1}} + \|\bar\partial\bar\omega\|_{L^{\f74} W^{s_1,\f73}}+ \|\bar\partial \bar\omega\|_{L^1 B^{-1+\f12 \delta_0}_{p,1}} \\
\lesssim _{p} &  \|\omega_0 \|_{L^\infty H^{s-2}} + \|\omega_0 \|_{L^2 H^{\f76+s_1}}+\|\omega_0\|_{L^{\f74} W^{1+s_1,\f73}} +  \|\omega_0 \|_{L^1 B^{\f12 \delta_0}_{p,1}}    \nonumber \\
& + \|\mathcal E_{\bar\omega} \|_{L^\infty H^{s-4}} + \|\mathcal E_{\bar\omega} \|_{L^2 H^{-\f56+s_1}} + \|\mathcal E_{\bar\omega}\|_{L^{\f74} W^{-1+s_1,\f73}} \|\mathcal E_{\bar\omega}\|_{L^1 B^{-2+\f12 \delta_0}_{p,1}},  \nonumber
\end{align*}
where $\bar\partial$ denotes differentiation in the spatial coordinates. Fixing a sufficiently large $p$ (in terms of $\delta_0$), the above estimate yields:
\begin{align}\label{Preliminary bound bar omega}
\|\bar\partial \bar\omega  \|_{L^\infty H^{s-3}} & + \|\bar\partial \bar\omega\|_{L^2 H^{\f16+s_1}} + \|\bar\partial\bar\omega\|_{L^{\f74} W^{s_1,\f73}}+\|\bar\partial \bar\omega\|_{L^1 W^{-1+\f14\delta_0,\infty}} \\
\lesssim  &  \|\omega_0 \|_{L^\infty H^{s-2}} + \|\omega_0 \|_{L^2 H^{\f76+s_1}} +\|\omega_0\|_{L^{\f74} W^{-1+s_1,\f73}}+  \|\omega_0 \|_{L^1 W^{\f12 \delta_0,\infty}}    \nonumber \\
& + \|\mathcal E_{\bar\omega} \|_{L^\infty H^{s-4}} + \|\mathcal E_{\bar\omega} \|_{L^2 H^{-\f56+s_1}}+\|\mathcal E_{\bar\omega}\|_{L^{\f74} W^{-1+s_1,\f73}} +  \|\mathcal E_{\bar\omega}\|_{L^1 W^{-2+\f12 \delta_0,\infty}},  \nonumber
\end{align}

For the terms involving $\omega_0$ in the right hand side of \eqref{Preliminary bound bar omega}, the bound \eqref{Bound omega 0 final} implies that
\[
\|\omega_0 \|_{L^\infty H^{s-2}} + \|\omega_0 \|_{L^2 H^{\f76+s_1}} +\|\omega_0\|_{L^{\f74} W^{1+s_1,\f73}}+  \|\omega_0 \|_{L^1 W^{\f12 \delta_0,\infty}} \lesssim \mathcal D.
\]
For the terms involving $\mathcal E_{\bar\omega}$, on the other hand, we will argue similarly as for the proof of \eqref{Estimate to show E omega 0}:
\begin{itemize}
\item Using the functional inequalities of Lemma \ref{lem:Functional inequalities}, we have:
\begin{align*}
\|D \big( g \cdot R^{\perp}_* \big)&  \|_{L^\infty H^{s-4}}
+ \|g\cdot   \partial g \cdot R^{\perp}_*  \|_{L^\infty H^{s-4}} 
 + \| D\big( m(e) \cdot \tilde{\mathcal F}_\perp \big) \|_{L^\infty H^{s-4}}  \\
& + \|D( \omega \cdot \omega)  \|_{L^\infty H^{s-4}} + \| \partial g \cdot \omega \cdot \omega  \|_{L^\infty H^{s-4}} + \| g \cdot \omega \cdot R_* \|_{L^\infty H^{s-4}}\\
\lesssim &
\|g\|_{L^\infty H^{s-1}} \|R^{\perp}_* \|_{L^\infty H^{s-3}}
+ \|g\|_{L^\infty H^{s-1}} \| \partial g\|_{L^\infty H^{s-2}} \|R^{\perp}_*  \|_{L^\infty H^{s-3}}
 + \| m(e)\|_{L^\infty H^{s-1}} \| \tilde{\mathcal F}_\perp \|_{L^\infty H^{s-3}}  \\
& + \|\omega \|^2_{L^\infty H^{s-2}}  + \| \partial g\|_{L^\infty H^{s-2}}\|\omega \|^2_{L^\infty H^{s-2}} + \| g\|_{L^\infty H^{s-1}} \|\omega \|_{L^\infty H^{s-2}} \|R_* \|_{L^\infty H^{s-3}}\\
\lesssim &  C_0^2 \mathcal D^2,
\end{align*}
where, in passing to the last line above, we made use of the estimate \eqref{Bound g and d g} for $g$ and $\partial g$, \eqref{Estimate R perp easy} for $R^\perp$, the bootstrap assumption \eqref{Bootstrap bound} for $\omega$ and $\partial e$, as well as  the estimates \eqref{Bounds F perp easy} for $\tilde{\mathcal F}_\perp$.

\item Similarly, we can estimate (recalling that $-\f56+s_1 = s-\f72-2\delta_0$):
\begin{align*}
\|D \big( g \cdot R^{\perp}_* \big)&  \|_{L^2 H^{-\f56+s_1}}
+ \|g\cdot   \partial g \cdot R^{\perp}_*  \|_{L^2 H^{-\f56+s_1}} 
 + \| D\big( m(e) \cdot \tilde{\mathcal F}_\perp \big) \|_{L^2 H^{-\f56+s_1}}  \\
& + \|D( \omega \cdot \omega)  \|_{L^2 H^{-\f56+s_1}} + \| \partial g \cdot \omega \cdot \omega  \|_{L^2 H^{-\f56+s_1}} + \| g \cdot \omega \cdot R_* \|_{L^2 H^{-\f56+s_1}}\\
\lesssim &
\|g\|_{L^\infty H^{s-1}} \|R^{\perp}_* \|_{L^2 H^{\f16+s_1}}
+ \|g\|_{L^\infty H^{s-1}} \| \partial g\|_{L^2 H^{\f76+s_1}} \|R^{\perp}_*  \|_{L^\infty H^{s-3}}
 + \| m(e)\|_{L^\infty H^{s-1}} \| \tilde{\mathcal F}_\perp \|_{L^2 H^{\f16+s_1}}  \\
& + \|\omega \|_{L^\infty H^{s-2}}  \|\omega \|_{L^2 H^{\f76+s_1}} + \| \partial g\|_{L^2 H^{\f76+s_1}}\|\omega \|^2_{L^\infty H^{s-2}} + \| g\|_{L^\infty H^{s-1}} \|\omega \|_{L^\infty H^{s-2}} \|R_* \|_{L^2 H^{\f16+s_1}}\\
\lesssim &  C_0^2 \mathcal D^2
\end{align*}
and
\begin{align*}
\|D \big( g \cdot R^{\perp}_* \big)&  \|_{L^{\f74} W^{-1+s_1,\f73}}
+ \|g\cdot   \partial g \cdot R^{\perp}_*  \|_{L^{\f74} W^{-1+s_1,\f73}} 
 + \| D\big( m(e) \cdot \tilde{\mathcal F}_\perp \big) \|_{L^{\f74} W^{-1+s_1,\f73}}  \\
& + \|D( \omega \cdot \omega) \|_{L^{\f74} W^{-1+s_1,\f73}} + \| \partial g \cdot \omega \cdot \omega  \|_{L^{\f74} W^{-1+s_1,\f73}} + \| g \cdot \omega \cdot R_* \|_{L^{\f74} W^{-1+s_1,\f73}}\\
\lesssim &
\|g\|_{L^\infty H^{s-1}} \|R^{\perp}_* \|_{L^{\f74} W^{s_1,\f73}}
+ \|g\|_{L^\infty H^{s-1}} \| \partial g\|_{L^{\f74} W^{1+s_1,\f73}} \|R^{\perp}_*  \|_{L^\infty H^{s-3}}
 + \| m(e)\|_{L^\infty H^{s-1}} \| \tilde{\mathcal F}_\perp \|_{L^{\f74} W^{s_1,\f73}}  \\
& + \|\omega \|_{L^\infty H^{s-2}}  \|\omega \|_{L^{\f74} W^{1+s_1,\f73}} + \| \partial g\|_{L^{\f74} W^{1+s_1,\f73}}\|\omega \|^2_{L^\infty H^{s-2}} + \| g\|_{L^\infty H^{s-1}} \|\omega \|_{L^\infty H^{s-2}} \|R_* \|_{L^{\f74} W^{1+s_1,\f73}}\\
\lesssim &  C_0^2 \mathcal D^2
\end{align*}
\item Using the fact that any two smooth functions $h_1, h_2 :\mathbb T^3 \rightarrow \mathbb R$ satisfy
\begin{align*}
\| h_1 \cdot h_2 \|_{W^{-1+\delta_0,\infty}} 
\lesssim & \sum_j \Big(  2^{(-1+\delta_0)j}\| P_{\le j} h_1 \cdot P_j h_2\|_{L^\infty}
+  2^{(-1+\f12\delta_0)j} \| P_{j} h_1 \cdot P_{\le j} h_2\|_{L^\infty}\\
& \hphantom{\sum_j \Big(}
+\sum_{j'>j} \big( 2^{(-1+\delta_0)j}\| P_j \big(P_{j'} h_1 \cdot P_{j'} h_2\big)\|_{L^\infty}\big) \Big)
\\
\lesssim &   \|h_1\|_{L^\infty} \cdot \sum_j 2^{(-1+\delta_0)j}\|P_j h_2\|_{L^\infty}
+  \|h_2\|_{W^{-\f12,\infty}} \sum_j 2^{\delta_0 j}\| P_{j} h_1\|_{L^\infty}\\
& \hphantom{\sum_j \Big(}
+\sum_{j'>j} \big( 2^{\delta_0 j}\| P_j \big(P_{j'} h_1 \cdot P_{j'} h_2\big)\|_{L^3}\big) \Big)
\\
\lesssim & 
\|h_1\|_{W^{2\delta_0,\infty}}\| h_2\|_{W^{-1+\delta_0,\infty}} + \|h_1\|_{W^{1+\delta_0,3}} \|h_2\|_{W^{-1+\delta_0,\infty}}  \\
\lesssim & \|h_1\|_{H^{\f32+3\delta_0}} \|h_2\|_{W^{-1+\delta_0,\infty}},
\end{align*}
we can also bound:
\begin{align*}
\|D \big( g \cdot R^{\perp}_* \big)&  \|_{L^1 W^{-2+\f12\delta_0,\infty}}
+ \|g\cdot   \partial g \cdot R^{\perp}_*  \|_{L^1 W^{-2+\f12\delta_0,\infty}}
 + \| D\big( m(e) \cdot \tilde{\mathcal F}_\perp \big) \|_{L^1 W^{-2+\f12\delta_0,\infty}}  \\
& + \|D( \omega \cdot \omega)\|_{L^1 W^{-2+\f12\delta_0,\infty}} + \| \partial g \cdot \omega \cdot \omega \|_{L^1 W^{-2+\f12\delta_0,\infty}} + \| g \cdot \omega \cdot R_* \|_{L^1 W^{-2+\f12\delta_0,\infty}}\\
\lesssim & \|g \cdot R^{\perp}_* \|_{L^1 W^{-1+\delta_0,\infty}}
+ \|g\cdot   \partial g \cdot R^{\perp}_*  \|_{L^1 W^{\delta_0,\f32}}
 + \| m(e) \cdot \tilde{\mathcal F}_\perp \|_{L^1 W^{-1+\delta_0,\infty}}  \\
& + \| \omega \cdot \omega\|_{L^1 W^{\delta_0,3}} + \| \partial g \cdot \omega \cdot \omega \|_{L^1 W^{\delta_0,\f32}} + \| g \cdot \omega \cdot R_* \|_{L^1 W^{\delta_0,\f32}}\\ 
\lesssim &
\|g\|_{L^\infty H^{s-1}} \|R^{\perp}_* \|_{L^1 W^{-1+\delta_0,\infty}}
+ \|g\|_{L^\infty H^{s-1}} \| \partial g\|_{L^2 W^{\delta_0, 6}} \|R^{\perp}_*  \|_{L^2 H^{\delta_0}}
 + \| m(e)\|_{L^\infty H^{s-1}} \| \tilde{\mathcal F}_\perp \|_{L^1 W^{-1+\delta_0,\infty}}  \\
& + \|\omega \|^2_{L^\infty W^{\delta_0, 6}}  + 
\| \partial g\|_{L^2 W^{\delta_0,6}}\|\omega \|_{L^2 W^{\delta_0,6}}\|\omega\|_{L^\infty W^{\delta_0, 3}} + \| g\|_{L^\infty H^{s-1}} \|\omega \|_{L^2 W^{\delta_0,6}} \|R_* \|_{L^2 H^{\delta_0}}\\
\lesssim & 
\|g\|_{L^\infty H^{s-1}} \|R^{\perp}_* \|_{L^1 W^{-1+\delta_0,\infty}}
+ \|g\|_{L^\infty H^{s-1}} \| \partial g\|_{L^2 H^{\f76+s_1}} \|R^{\perp}_*  \|_{L^2 H^{\f16+s_1}}
 + \| m(e)\|_{L^\infty H^{s-1}} \| \tilde{\mathcal F}_\perp \|_{L^1 W^{-1+\delta_0,\infty}}  \\
& + \|\omega \|^2_{L^\infty H^{\f76+s_1}}  + 
\| \partial g\|_{L^2 H^{\f76+s_1}}\|\omega \|_{L^2 H^{\f76+s_1}}\|\omega\|_{L^\infty H^{s-2}} + \| g\|_{L^\infty H^{s-1}} \|\omega \|_{L^2 H^{\f76+s_1}} \|R_* \|_{L^2 H^{\f16+s_1}}\\
\lesssim &  C_0^2 \mathcal D^2
\end{align*}
(note that, in this case, we also used the $L^1 L^\infty$ bound \eqref{Estimate R perp hard} for $R^\perp_*$).
\end{itemize}
Combining the above bounds, we obtain:
\[
\|\mathcal E_{\bar \omega} \|_{L^\infty H^{s-4}} + \|\mathcal E_{\bar \omega}  \|_{L^2 H^{-\f56+s_1}} +\|\mathcal E_{\bar \omega}\|_{L^{\f74} W^{-1+s_1,\f73}}+  \|\mathcal E_{\bar \omega}  \|_{L^1 W^{-2+\f12 \delta_0,\infty}} \lesssim C_0^2 \mathcal D^2 \lesssim \mathcal D.
\]
Returning to \eqref{Preliminary bound bar omega}, we thus infer:
\begin{equation}\label{Second reliminary bound bar omega}
\|\bar\partial \bar\omega  \|_{L^\infty H^{s-3}} + \|\bar\partial \bar\omega\|_{L^2 H^{\f16+s_1}}+\bar\partial\bar\omega\|_{L^{\f74} W^{s_1,\f73}} + \|\bar\partial \bar\omega\|_{L^1 W^{-1+\f14\delta_0,\infty}} \lesssim \mathcal D.
\end{equation}

Using \eqref{Normal curvature coordinates} to express
\[
\partial_0 \bar\omega = \bar\partial \omega_0 + R^\perp + \omega\cdot \omega,
\]
we can estimate using the bounds \eqref{Bound omega 0 final} for  $\omega_0$, \eqref{Estimate R perp easy} for $R^\perp$ and \eqref{Bootstrap bound} for $\omega\cdot \omega$:
\begin{equation}\label{Dt bar omega final bound}
\|\partial_0 \bar\omega  \|_{L^\infty H^{s-3}} + \|\partial_0 \bar\omega\|_{L^2 H^{\f16+s_1}} +\|\partial_0 \bar\omega\|_{L^{\f74} W^{s_1,\f73}}\lesssim \mathcal D.
\end{equation}
Adding \eqref{Dt bar omega final bound} to \eqref{Second reliminary bound bar omega}, we obtain:
\begin{equation}\label{Third reliminary bound bar omega}
\|\partial \bar\omega  \|_{L^\infty H^{s-3}} + \|\partial \bar\omega\|_{L^2 H^{\f16+s_1}} +\|\partial \bar\omega\|_{L^{\f74} W^{s_1,\f73}}+ \|\bar\partial \bar\omega\|_{L^1 W^{-1+\f14\delta_0,\infty}} \lesssim \mathcal D.
\end{equation}

Using the trivial low frequency estimate
\begin{align*}
\|\bar\omega\|_{L^1 W^{\f14\delta_0,\infty}} \lesssim & 
\|\bar\partial \bar\omega\|_{L^1 W^{-1+\f14\delta_0,\infty}} + \|\bar\omega\|_{L^1 L^2} \\
\lesssim & \|\bar\partial \bar\omega\|_{L^1 W^{-1+\f14\delta_0,\infty}} 
+ \|\partial_0 \bar\omega\|_{L^2 L^2} + \| \bar\omega|_{x^0=0}\|_{L^2} \\
 \lesssim & \mathcal D
\end{align*}
(where, in passing to the last line above, we used the bounds \eqref{Third reliminary bound bar omega} for $\bar\partial \bar\omega$, \eqref{Dt bar omega final bound} for $\partial_0 \bar\omega$ and the bound provided by Proposition \ref{prop: Bounds S0} for $\bar\omega|_{x^0=0}$), we finally obtain from \eqref{Third reliminary bound bar omega}: 
\begin{equation}\label{Final bound bar omega}
\|\partial \bar\omega  \|_{L^\infty H^{s-3}} + \|\partial \bar\omega\|_{L^2 H^{\f16+s_1}} + \|\partial \bar\omega\|_{L^{\f74} W^{s_1,\f73}}+\| \bar\omega\|_{L^1 W^{-1+\f14\delta_0,\infty}} \lesssim \mathcal D.
\end{equation}
Combining \eqref{Bound omega 0 final} and \eqref{Final bound bar omega}, we deduce \eqref{Bound omega d omega}.

\end{proof}

\medskip
\subsection{Completing the proof of Proposition \ref{prop:Bootstrap}} \label{subsec:End of bootstrap}
We will now proceed to collect all the estimates that we established in the previous sections and show that, provided $\epsilon$ is small enough compared to $C_0$, we have
\begin{align}\label{Bootstrap conclusion almost}
\mathcal{Q}_g  + \mathcal{Q}_k  + \mathcal{Q}_\perp + \| Y - Y_0 \|_{L^\infty L^\infty} + \| \partial Y - \partial Y_0 \|_{L^\infty H^{s-1}} + \|\partial^2 Y\|_{L^4 W^{\f1{12},4}}+ \|\partial^2 Y\|_{L^{\f72} L^{\f{14}3}}\le C \mathcal{D} 
\end{align}
for a constant $C>0$ independent of $C_0$; this will imply the improved bootstrap estimate \eqref{Bootstrap conclusion}, thus completing the proof of Proposition \ref{prop:Bootstrap}.

In particular, combining the bounds:
\begin{itemize}
\item \eqref{Energy estimate k}, \eqref{Strichartz estimate k endpoint} and \eqref{Strichartz estimate k L4L4} for $k$, 
\item \eqref{Bound g and d g} \eqref{Estimate d2 g} for $g$ and
\item \eqref{Bound omega d omega} for $\omega$,
\end{itemize}
we obtain:
\begin{equation}\label{Final bound bootstrap almost}
 \mathcal{Q}_k   + \mathcal{Q}_g  + \|\partial \omega \|_{L^{\infty} H^{s-3}} +\| \partial \omega \|_{L^2 H^{ \f16+s_1}} +\|\partial \omega\|_{L^{\f74} W^{s_1,\f73}}+ \| \omega \|_{L^1 W^{\f14\delta_0,\infty}} \le C \mathcal D.
\end{equation}
Thus, in order to establish \eqref{Bootstrap conclusion almost}, it remains to show that
\begin{equation}\label{Almost bound frame final}
\sum_{\bA=4}^{n} \Big(\|e_{\bA} - \delta_{\bA}\|_{L^\infty L^\infty} + \|\partial^2 e_{\bA} \|_{L^\infty H^{s-3}}+\| \partial^2 e_{\bA} \|_{L^2 H^{\f16+s_1}}+\|\partial^2 e_{\bA}\|_{L^{\f74} W^{s_1,\f73}}\Big) \le C \mathcal D
\end{equation}
and
\begin{equation}\label{Bound Y final}
\| Y - Y_0 \|_{L^\infty L^\infty}  + \| \partial Y - \partial Y_0 \|_{L^\infty H^{s-1}} + \|\partial^2 Y\|_{L^4 W^{\f1{12},4}}+\|\partial^2 Y\|_{L^{\f72} L^{\f{14}3}}\le C \mathcal{D}.
\end{equation}
Note that, in view of the low frequency bound
\begin{align*}
\|e_{\bA} - \delta_{\bA}\|_{L^\infty L^\infty} 
\lesssim & \|\partial e_{\bar A}\|_{L^\infty H^{s-2}} + \|e_{\bA} - \delta_{\bA}\|_{L^\infty L^2} \\
\lesssim & \|\partial e_{\bar A}\|_{L^\infty H^{s-2}} + \|\partial_0 e_{\bA}\|_{L^\infty L^2} + \|(e_{\bA} - \delta_{\bA})|_{x^0=0}\|_{L^2} \\
\lesssim &  \|\partial^2 e_{\bar A}\|_{L^\infty H^{s-3}} + \mathcal D
\end{align*}
(the last line being a consequence of the estimates provided by Proposition \ref{prop: Bounds S0} for $e_{\bA}|_{x^0=0}$), the bound \eqref{Almost bound frame final} will follow from the simpler estimate
\begin{equation}\label{Bound frame final}
\sum_{\bA=4}^{n} \Big(\|\partial^2 e_{\bA} \|_{L^\infty H^{s-3}}+\| \partial^2 e_{\bA} \|_{L^2 H^{\f16+s_1}}+\|\partial^2 e_{\bA}\|_{L^{\f74} W^{s_1,\f73}}\Big) \le C \mathcal D
\end{equation}

The bound \eqref{Bound frame final} follows readily from \eqref{Final bound bootstrap almost} and the bootstrap assumption \eqref{Bootstrap bound} for $\partial Y$ using the relation \eqref{Relation e Omega k} for $\partial e$, which can be expressed schematically as
\[
\partial e = \omega \cdot e + g \cdot m(e) \cdot k \cdot \partial Y.
\]

Combining the relations \eqref{Projection Christoffel symbols} and \eqref{Second fundamental form}, we obtain:
\begin{align*}
\partial_\a\partial_\b Y^A & = (\Pi)_B^{\ga}\partial_\a\partial_\b Y^B \partial_{\ga} Y^A +(\Pi^\perp)_{B}^{\bA}\partial_\a\partial_\b Y^B e_{\bA}^A \\
& = \Gamma^{\ga}_{\a\b} \partial_{\ga} Y^A + k^{\bA}_{\a\b} e_{\bA}^A.
\end{align*}
Therefore, using the bound \eqref{Final bound bootstrap almost} for $\Gamma\simeq g\cdot \partial g$ and $k$,  as well as the bootstrap assumption \eqref{Bootstrap bound} for $\partial Y$ and $e$, we obtain:
\begin{align}\label{L4L4 bound Y final}
\| \partial^2 Y\|_{L^4 W^{\f1{12},4}} & \le C (1+C_0 \mathcal D) \Big( \| \partial g\|_{L^4 W^{\f1{12},4}} + \| k \|_{L^4 W^{\f1{12},4}} \Big) \\
& \le C (1+C_0 \mathcal D) \Big( \| \partial g\|^{\f12}_{L^2 W^{\f1{12},6}}\| \partial g\|^{\f12}_{L^\infty W^{\f1{12},3}} + \| k \|_{L^4 W^{\f1{12},4}} \Big) \nonumber \\
& \le C (1+C_0 \mathcal D) \Big( \| \partial g\|^{\f12}_{L^2 H^{1+\f1{12}}}\| \partial g\|^{\f12}_{L^\infty H^{s-2}} + \| k \|_{L^4 W^{\f1{12},4}} \Big) \nonumber \\
& \le C (1+C_0 \mathcal D) \mathcal D \nonumber \\
& \le C \mathcal D
\end{align}
(assuming, as earlier, that $\epsilon$ is sufficiently small in terms of $C_0$ so that $C_0 \mathcal D \ll 1$) and, similarly:
\begin{align}\label{LinftyL2 bound Y final}
\| \partial^2 Y\|_{L^\infty H^{s-1}} & \le C (1+C_0 \mathcal D) \Big( \| \partial g\|_{L^\infty H^{s-1}} + \| k \|_{L^\infty H^{s-1}} \Big) \\
& \le C (1+C_0 \mathcal D) \mathcal D \nonumber \\
& \le C \mathcal D \nonumber
\end{align}
and
\begin{equation}\label{L4-L4+ bound Y final}
\|\partial^2 Y\|_{L^{\f72} L^{\f{14}3}}\le C\mathcal D.
\end{equation}
Moreover, after integrating from the initial data and using the bound \eqref{LinftyL2 bound Y final} for $\partial_0 (\partial Y)$ and the initial data bound \eqref{Bound Y initial} for $(Y-Y_0)|_{x^0=0}$ and $(\partial Y- \partial Y_0)|_{x^0=0}$, 
we can trivially get the following low-frequency estimate:
\begin{align}\label{Low frequency bound Y final}
\| \partial Y- \partial Y_0\|_{L^\infty L^2}&  + \| Y-Y_0\|_{L^\infty L^2} \\
& \le C \cdot \Bigg( \|  (\partial Y- \partial Y_0)|_{x^0=0}\|_{L^\infty L^2} + \| (Y-Y_0)|_{x^0=0}\|_{L^\infty L^2} +\| \partial^2 Y\|_{L^1 L^2}  \Bigg)   \nonumber \\
& \le C \mathcal D.   \nonumber
\end{align}
Combining the bounds \eqref{L4L4 bound Y final}, \eqref{LinftyL2 bound Y final}, \eqref{L4-L4+ bound Y final} and \eqref{Low frequency bound Y final}, we obtain \eqref{Bound Y final}. Thus, the proof of Proposition \ref{prop:Bootstrap} is complete.

\section{Steps 2 and 6: Local existence of a gauge satisfying the balanced condition}\label{sec:Existence gauge}

In this section, we will establish that any smooth immersion $Y:[0,T]\times \mathbb T^3 \rightarrow \mathcal N$ extending an initial data set $(\bY,\bn)$ satisfying \eqref{D} admits (at least locally in time) a gauge $\mathcal G = (x,e)$ obeying the balanced condition and satisfying the initial bounds \eqref{Diffeomorphism initial}--\eqref{Bound slice initial}. In particular, Proposition \ref{prop:Local existence gauge} below will cover both Steps 2 and 6 in our outline of the proof of Theorem \ref{thm:Existence} (see Section \ref{sec:Continuity}). 

\begin{remark*}
The results of this section will apply to immersions $Y$ which are not necessarily solutions of \eqref{Minimal surface equation}.
\end{remark*}

The following auxiliary result will provide a convenient frame $\{e^\prime_{\bA'}\}_{\bA'=4}^n$ for the normal bundle of the immersion $Y$ that will simplify our choice of initial data for the gauge transformation along $\{x^0=0\}$.

\begin{lemma}\label{lem:Initial auxiliary frame}
Let $(\bY,\bn)$ be as in the statement of Theorem \ref{thm:Existence} and let $Y:[0,T]\times \mathbb T^3 \rightarrow \mathcal N$ be a smooth immersion such that 
\begin{equation}\label{Immersion initially agrees}
Y|_{x^0=0} = \bY, \quad \partial_0 Y|_{x^0=0} \in \mathrm{span} \big\{\bn, \partial_i \bY, \, i=1,2,3 \big\}.
\end{equation}
Then, there exists a smooth frame $\{e^\prime_{\bA'}\}_{\bA'=4}^n$ for the normal bundle $NY$ of the immersion satisfying on $\{x^0=0\}$:
\begin{equation}
\sum_{\bA'=4}^n \big\| e^\prime_{\bA'}|_{x^0=0} - \delta_{\bA'} \big\|_{H^{s-1}} \le C \mathcal D,
\end{equation}
where $C>0$ is a constant depending only on $s$; in the above, $\delta_{\bA}$ is the constant vector with components
\[
\delta_{\bA}^A = 
\begin{cases}
1, \quad A=\bA,\\
0, \quad A\neq \bA.
\end{cases}
\]
\end{lemma}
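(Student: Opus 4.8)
The statement is essentially a quantitative Gram--Schmidt (or more precisely, orthogonalization followed by a controlled rotation to the standard basis) carried out along the initial slice $\{x^0=0\}$, so the plan is to construct $\{e'_{\bA'}\}$ in two stages and keep track of Sobolev norms at each step. First, I would produce \emph{some} smooth orthogonal frame for $N\bY(\mathbb T^3)$ adapted to the flat reference. Since $(\bY,\bn)$ is a small perturbation of $(\bY_0,\bn_0)$ in $H^s\times H^{s-1}$ and $s-1>3/2$, the map $\partial_i\bY$ is $H^{s-1}$-close to $\partial_i\bY_0=\delta_i^{(n+1)}$ (the $i$-th coordinate vector) and $\bn$ is $H^{s-1}$-close to $\bn_0=\delta_0^{(n+1)}$; hence the tangent space $T_x\bY(\mathbb T^3)\oplus \mathbb R\bn(x)$ is, pointwise, an $H^{s-1}$-small rotation of $\mathrm{span}\{\delta_0^{(n+1)},\ldots,\delta_3^{(n+1)}\}$. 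Its orthogonal complement $N\bY$ is therefore an $H^{s-1}$-small rotation of $\mathrm{span}\{\delta_4^{(n+1)},\ldots,\delta_n^{(n+1)}\}$, and the orthogonal projection $\Pi^\perp(x):\mathbb R^{n+1}\to N\bY(x)$ has components which are explicit smooth (in fact rational-algebraic) functions of $\{\partial_i\bY^A,\bn^A\}$ and satisfies $\|\Pi^\perp - \Pi^\perp_0\|_{H^{s-1}}\lesssim \mathcal D$ by the functional inequalities of Lemma \ref{lem:Functional inequalities} (composition with smooth functions and products preserve $H^{s-1}$ bounds when $s-1>3/2$).

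Second, having this, I would simply \emph{define} $e'_{\bA'}\doteq \Pi^\perp \delta_{\bA'}$ for $\bA'=4,\ldots,n$, i.e. project the standard constant vectors onto the normal bundle. For $\mathcal D$ small enough the vectors $\{\Pi^\perp\delta_{\bA'}\}_{\bA'=4}^n$ remain linearly independent (the Gram matrix $m(e'_{\bA'},e'_{\bB'})$ is $H^{s-1}$- and hence $L^\infty$-close to the identity, since $m(\Pi^\perp\delta_{\bA'},\Pi^\perp\delta_{\bB'}) = m(\delta_{\bA'},\delta_{\bB'}) + O(\mathcal D) = \delta_{\bA'\bB'}+O(\mathcal D)$ using $\Pi^\perp$ self-adjoint and idempotent), so they do form a frame for $N\bY$ — note the statement does \emph{not} require the frame to be orthonormal, only close to the standard basis. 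The bound is then immediate:
\[
\big\| e'_{\bA'}|_{x^0=0}-\delta_{\bA'}\big\|_{H^{s-1}} = \big\|(\Pi^\perp-\Pi^\perp_0)\delta_{\bA'}\big\|_{H^{s-1}} \lesssim \|\Pi^\perp-\Pi^\perp_0\|_{H^{s-1}}\lesssim \mathcal D,
\]
summing over $\bA'$. Finally, I would extend $e'_{\bA'}$ off $\{x^0=0\}$ by an arbitrary smooth choice of frame for $N Y$ agreeing with the above on the initial slice — for instance $e'_{\bA'}(x^0,\bar x)\doteq \Pi^\perp(x^0,\bar x)\delta_{\bA'}$, which is smooth on all of $[0,T]\times\mathbb T^3$ once we know (for $T$ small, or uniformly if $Y$ is a development as in the theorem) that $\{\Pi^\perp\delta_{\bA'}\}$ stays a frame; since the lemma's conclusion only concerns the restriction to $\{x^0=0\}$, any smooth extension works.

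\textbf{Main obstacle.} There is no serious obstacle here: the only points requiring care are (i) verifying that $\Pi^\perp$, as a function on $\mathbb T^3$, genuinely depends on $(\partial\bY,\bn)$ through a \emph{fixed smooth} (rational in the entries of the Gram-type matrix) expression — this is exactly the observation recorded in Section \ref{sec:Geometry} that $(\perpi)^{\bA}_A$ are smooth functions of $e_{\bA}$ and $\partial Y$ — and (ii) invoking the algebra/composition estimates in $H^{s-1}(\mathbb T^3)$ with $s-1>3/2$, together with the smallness of $\mathcal D$, to pass from $\|(\bY,\bn)-(\bY_0,\bn_0)\|_{H^s\times H^{s-1}}<\epsilon$ to $\|\Pi^\perp-\Pi^\perp_0\|_{H^{s-1}}\lesssim\mathcal D$. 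Both are routine applications of Lemma \ref{lem:Functional inequalities} (the Moser-type and Schauder-type estimates collected in the Appendix). The whole argument is purely algebraic on the initial hypersurface and does not use the minimal surface equation, consistent with the remark preceding the lemma.
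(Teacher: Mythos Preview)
Your proposal is correct and follows essentially the same approach as the paper: define $e'_{\bA'}\doteq \Pi^\perp \delta_{\bA'}$ on the initial slice and then extend smoothly. The paper simply writes this projection out explicitly as $(e'_{\bA'})^A = \delta_{\bA'}^A - m_{\bA' B}\bn^B\bn^A - m_{\bA' B}\bar g^{ij}\partial_i\bY^B\partial_j\bY^A$ (subtracting the $\bn$- and $T\bY$-components of $\delta_{\bA'}$), from which the $H^{s-1}$ bound is immediate; your abstract formulation via $\|\Pi^\perp - \Pi^\perp_0\|_{H^{s-1}}\lesssim\mathcal D$ amounts to the same computation.
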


\begin{remark*}
In fact, the proof of Lemma \ref{lem:Initial auxiliary frame} below shows that if the immersion $Y$ satisfies the additional bound
\[
\|Y-Y_0\|_{L^\infty H^{s}} \le \epsilon_1 \ll 1,
\]
 then $e'_{A'}$ can be chosen so that
\[
\| (e'_{\bA'})^A - \delta_{\bA'}^A \|_{L^\infty H^{s-1}} \lesssim \epsilon_1.
\]
\end{remark*}

\begin{proof}
It suffices to construct the frame $\{e'_{\bA'}\}_{\bA'=4}^n$ along the slice $\{x^0=0\}$, since, then, any smooth extension of this frame will satisfy the statement of the lemma. To this end, let us choose $e'_{\bA'}: \mathbb T^3 \rightarrow \mathbb R^{n+1}$ by the relation:
\[
(e'_{\bA'})^A|_{x^0=0} = \delta_{\bA'}^A - m_{\bA' B} \bn^B \bn^A -  m_{\bA' B} \bar g^{ij} \partial_i \bY^B \partial_j \bY^A.
\]
Note that the vector fields $e'_{\bA'}$ defined above are indeed orthogonal to both $\bn$ and $\partial_i \bY$, $i=1,2,3$, since $\bar g_{ij} = m\big( \partial_i \bY, \partial_j \bY \big)$). Thus, \eqref{Immersion initially agrees} implies that $e'_{\bA'}$ belongs to the normal bundle of the immersion $Y$ at $\{x^0=0\}$. Moreover, in view of our assumption that 
\[
\| (\bY,\bn)-(\bY_0,\bn_0)\|_{H^s \times H^{s-1}} \le \mathcal D,
\]
we can readily calculate that 
\begin{equation}\label{Auxiliary frame bound}
\| (e'_{\bA'})^A|_{x^0=0} - \delta_{\bA'}^A \|_{H^{s-1}} \lesssim \mathcal D.
\end{equation}

\end{proof}

The main result of this section is the following:
\begin{proposition}\label{prop:Local existence gauge}
For $s>\f52+\f16$ and $\epsilon>0$ small enough in terms of $s$, let $(\bY,\bn)$ be as in the statement of Theorem \ref{thm:Existence} (i.e. satisfying \eqref{D}) and let $Y:[0,T]\times \mathbb T^3 \rightarrow \mathcal N$ be a smooth immersion such that 
\begin{equation}\label{Immersion initially agrees 2}
Y|_{x^0=0} = \bY, \quad \partial_0 Y|_{x^0=0} \in \mathrm{span} \big\{\bn, \partial_i \bY, \, i=1,2,3 \big\}
\end{equation}
and\footnote{Note that, in the case when $Y$ solves the minimal surface equation\eqref{Minimal surface equation}, the bound \eqref{Initial bound k general Y} follows from our assumption that $(\bY, \bn)$ satisfy \eqref{D}; see Proposition \eqref{prop: Bounds S0}.}
\begin{equation}\label{Initial bound k general Y}
\| \partial^2 Y|_{x^0=0}\|_{H^{s-2}} + \|Y|_{x^0=0}-Y_0|_{x^0=0}\| <\epsilon.
\end{equation}
Then, there exists a $T'>0$, a diffeomorphism $\Psi:[0,T']\times \mathbb T^3 \rightarrow \mathcal V \subset [0,T]\times \mathbb T^3$ fixing $\{0\}\times \mathbb T^3$ and a frame $\{e_{\bA}\}_{\bA=4}^n$ for the normal bundle $N (Y\circ \Psi)$ of the immersion such that:
\begin{itemize}
\item Along the initial slice $\{0\}\times \mathbb T^3$, the diffeomorphism $\Psi$ satisfies
\begin{equation}\label{Diffeomorphism initial prop}
\Psi|_{x^0=0}= \mathrm{\text{Id}}.
\end{equation}

\smallskip
\item The gauge $\mathcal G = (\Psi,e)$ for the immersion $Y:\mathcal V \rightarrow \mathcal N$ satisfies the \textbf{balanced gauge condition} (see Definition \ref{def:Balanced gauge}).

\smallskip
\item Along the initial slice $\{0\}\times \mathbb T^3$, the frame $\{e_{\bA}\}_{\bA=4}^n$ for $N (Y \circ \Psi)$ and the differential $\Psi^*$ of the diffeomorphism $\Psi$ satisfy the bound:
\begin{align}\label{Bound slice initial prop}
\sum_{A=0}^n \sum_{\bA=4}^{n} \Big( \big\| e^A_{\bA}|_{x^0=0} -\delta^A_{\bA}\big\|_{H^{s-1}(\mathbb T^3)} 
& + \big\| \nabla^\perp_0 e^{A}_{\bA}|_{x^0=0} \big\|_{H^{s-2}(\mathbb T^3)}\Big) \\
&  + \sum_{\a,\b=0}^3 \big\| \partial_{\a} \Psi^{\b}|_{x^0=0} - \delta_{\a}^{\b} \big\|_{H^{s-1}(\mathbb T^3)} \le  C \mathcal D  \nonumber 
\end{align}
for some constant $C>0$ depending only on $s$. 
\end{itemize}

Assume, in addition, that the immersion $Y$ equipped with a given frame $\{e'_{\bA'}\}_{\bA'=4}^n$ for the normal bundle $NY$ as in the statement of Lemma \ref{lem:Initial auxiliary frame} satisfies the following bound:
\begin{equation}\label{Additional bound immersion}
\mathcal Q_g + \mathcal Q_k + \mathcal Q_\perp + \|Y-Y_0\|_{L^\infty L^\infty} + \|\partial^2 Y\|_{L^4 W^{\f1{12},4}} +\|\partial^2 Y\|_{L^{\f74} L^{\f73}}+ \|\partial^2 Y\|_{L^\infty H^{s-2}} \le \epsilon_2
\end{equation}
where  is the induced metric on $[0,T]\times \mathbb T^3$ and $k$ the associated second fundamental form (see Definition \ref{def:Norms 1} for the definition of the quantities $\mathcal Q_g, \mathcal Q_k$ and $\mathcal Q_\perp$).
Then, provided $\epsilon_2$ is small enough in terms of $s$, the time of existence $T'\le 1$ of the gauge transformation (appearing in the domain $[0,T']\times \mathbb T^3$ of $(\Psi, U)$) depends only on $s$; moreover, the pair $(\Psi, U)$ satisfies the bound
\begin{align}\label{Bound under extra regularity assumptions}
\|\partial \Psi & \|_{L^\infty L^\infty} + \sum_{l=0}^2 \Big( \|\partial^{1+l} \Psi\|_{L^\infty H^{s-1-l}}  + \|\partial^{1+l} \Psi\|_{L^2 H^{2-l+\f16+s_1}} \\
& \hphantom{ \|_{L^\infty L^\infty} + \sum_{l=0}^2 \Big( \|\partial^{1+l}} 
+\|\partial^{1+l} \Psi\|_{L^{\f74} W^{2-l+s_1,\f73}}+ \|\partial^{1+l} \Psi\|_{L^1 W^{1-l+\f18 \delta_0,\infty}} \Big)\nonumber  \\
& +  \|U-\mathbb I\|_{L^\infty L^\infty} + \sum_{l=1}^2\Big( \|\partial^l U\|_{L^\infty H^{s-1-l}}  + \|\partial^l U\|_{L^2 H^{2-l+\f16+s_1}} \nonumber \\
&\hphantom{+  \|U-\mathbb I\|_{L^\infty L^\infty} + \sum_{l=1}^2\Big( \|\partial^l}
+\|\partial^l U\|_{L^{\f74} W^{2-l+s_1,\f73}}+ \|\partial^l U\|_{L^1 W^{1-l+\f18\delta_0,\infty}} \Big) 
\lesssim_{s} \epsilon_2 \nonumber
\end{align}
(where $\delta_0, s_1$ are chosen as  in Definition \ref{def:Small constants}).
\end{proposition}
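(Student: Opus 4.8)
The plan is to set up the gauge transformation as an evolutionary (elliptic-parabolic) system for $(\Psi, U)$ driven by the fixed immersion $Y$, and to solve it locally in time by a contraction/continuity argument, with the quantitative bound \eqref{Bound under extra regularity assumptions} extracted from the same fixed-point scheme. First I would decompose the unknown gauge data into the diffeomorphism $\Psi$ of $[0,T]\times\mathbb{T}^3$ and the change-of-frame matrix $U$ relating the auxiliary frame $\{e'_{\bA'}\}$ of Lemma \ref{lem:Initial auxiliary frame} to the desired frame $\{e_{\bA}\}$; writing $\Psi = (\Psi^0,\Psi^i)$ with the convention that $\bar\Sigma_\tau = \{\Psi^0 = \tau\}$, the balanced gauge condition \eqref{Mean curvature condition}, \eqref{Harmonic condition}, \eqref{Divergence condition frame} translates, after the variation formulas \eqref{Variation metric}--\eqref{Variation second fundamental form}, into a system of exactly the same schematic shape as the lower-triangular model system \eqref{Model system of equations'} / \eqref{Model linearized system}, but now with $(g,h,k,\omega, R, R^\perp)$ \emph{given} (computed from the fixed $Y$ in arbitrary background coordinates) and with $(\Psi,U)$ the genuine unknowns. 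The key point is that $\Psi^0$ plays the role of the lapse-type variable (satisfying a parabolic equation $\partial_0 \Psi^0 + |D|\Psi^0 = \ldots$), $\Psi^i$ the shift-type variable (also parabolic), the induced slice metric $\bar g$ enters elliptically through the harmonic condition, and $U$ (equivalently the new connection coefficients $\omega$) satisfies the parabolic-elliptic pair displayed after \eqref{Divergence condition frame}; the initial data are fixed by \eqref{Diffeomorphism initial prop}, i.e. $\Psi|_{x^0=0}=\mathrm{Id}$ and $U|_{x^0=0}=\mathbb I$, which forces $\partial_0^k$ of the $\mathbb E$-extended source terms to behave as in the remarks following Definition \ref{def:Extension operator}.

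The second step is the short-time solvability. I would run a contraction argument in a ball of the norm appearing on the left side of \eqref{Bound under extra regularity assumptions}, on a time interval $[0,T']$ with $T'$ to be chosen: the parabolic equations for $\Psi^0,\Psi^i$ and for $\omega_0$ are solved by the smoothing estimates of Lemma \ref{lem:Parabolic estimates model} (Appendix), the elliptic equations for $\bar g$ and for $\bar\omega$ by Lemma \ref{lem:Elliptic estimates model}, exactly as in Section \ref{subsec:Closing bootstrap metric} — indeed the nonlinear source terms $\mathcal G_i$ are the same kinds of expressions ($g\cdot\partial g\cdot\tilde{\mathcal F}$, $(g-m_0)\cdot D\partial g$, $g\cdot R_*$, $g\cdot\partial g\cdot\partial g$, etc.) that were estimated in Lemmas \ref{lem:Bound EN}, \ref{lem:Estimates h}, \ref{lem:Estimates b}, \ref{lem:Estimates bar g}, \ref{lem:Estimates omega}, now evaluated on the \emph{fixed} $Y$ whose geometric quantities are controlled by the hypothesis \eqref{Additional bound immersion}. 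Because every source term is at least quadratic (schematically $\lesssim \epsilon_2 \cdot (\text{norm of }(\Psi,U)-\text{flat}) + \epsilon_2^2$ plus a small factor of $T'$ coming from the time integration in the $L^1$- and $L^2$-type norms and from the $\mathbb E$-extension estimate of Lemma \ref{lem:Estimates time operators}), the map is a contraction on a ball of radius $\sim\epsilon_2$ once $\epsilon_2$ is small enough in terms of $s$; crucially, since the smallness is measured by $\epsilon_2$ rather than by shrinking $T'$, one may take $T'\le 1$ \emph{depending only on $s$}. The output $(\Psi,U)$ of the fixed point automatically satisfies \eqref{Bound under extra regularity assumptions} and, restricting to $[0,T']$, defines the required diffeomorphism onto an open neighbourhood $\mathcal V$ of $\{x^0=0\}$ (the Jacobian of $\Psi$ stays close to the identity in $L^\infty L^\infty$ because of the $\|\partial\Psi\|_{L^\infty L^\infty}\lesssim\epsilon_2$ bound, so $\Psi$ is a $C^1$-diffeomorphism onto its image).

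The third step verifies the initial-slice bounds \eqref{Bound slice initial prop}. Along $\{x^0=0\}$ the balanced conditions \eqref{Mean curvature condition}, \eqref{Harmonic condition}, \eqref{Divergence condition frame} degenerate — since $\tilde{\mathcal F}^\natural|_{x^0=0}=\tilde{\mathcal F}_\perp|_{x^0=0}=0$ by \eqref{Vanishing F initial} — into elliptic relations that determine $\partial_0\Psi^0|_{x^0=0}$, $\partial_0\Psi^i|_{x^0=0}$ and $\omega_0|_{x^0=0}$ purely in terms of the geometric initial data $(\bY,\bn)$ and the auxiliary frame $\{e'_{\bA'}\}$, exactly as in the proof of Proposition \ref{prop: Bounds S0}; combining $\Psi|_{x^0=0}=\mathrm{Id}$ and $U|_{x^0=0}=\mathbb I$ with the estimate $\sum_{\bA'}\|e'_{\bA'}|_{x^0=0}-\delta_{\bA'}\|_{H^{s-1}}\lesssim\mathcal D$ of Lemma \ref{lem:Initial auxiliary frame} and with \eqref{Initial bound k general Y} then yields \eqref{Bound slice initial prop}. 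For the qualitative statement (before \eqref{Additional bound immersion} is assumed), one runs the same contraction but now allowing $T'$ to shrink with the size of the background $Y$: the bounds on a general smooth $Y$ over $[0,T]$ still give finite geometric norms, and picking $T'$ small enough makes the time-factors beat the (possibly not small) constants. This last, purely-local part is where I expect the genuine work to be minimal; the \textbf{main obstacle} is bookkeeping the precise correspondence between the balanced-gauge PDEs and the triangular system \eqref{Model system of equations}--\eqref{Model linearized system} when $(\Psi,U)$ rather than $(g,\omega)$ is the unknown — in particular tracking how $\bar g$, $h$, and the slice Laplacian $\Delta_{\bar g}$ depend on $\Psi$ through $g_{\a\b}=m(\partial_\a Y,\partial_\b Y)$ pulled back by $\Psi$, and checking that this dependence is still of the quasilinear, lower-order-perturbative type so that Lemmas \ref{lem:Parabolic estimates model} and \ref{lem:Elliptic estimates model} apply with constants independent of $C_0$. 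Once that identification is made, every estimate is a rerun of Section \ref{subsec:Closing bootstrap metric} with $Y$ frozen.
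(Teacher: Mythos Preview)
Your approach is essentially the same as the paper's: reduce the balanced gauge conditions to a quasilinear parabolic system for $(\Psi,U)$, solve it by a contraction/iteration scheme using the parabolic and elliptic estimates of Lemmas~\ref{lem:Parabolic estimates model}--\ref{lem:Elliptic estimates model}, and read off the quantitative bounds from the fixed-point norms.

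A few implementation points worth flagging, where the paper's execution is more specific than your sketch. First, the paper does \emph{not} treat $\Psi^0$ and $\Psi^i$ as separate lapse/shift-type variables with their own equations; instead it derives a \emph{unified} parabolic equation for all four components $\Psi^\alpha$ at once, via the identity $\partial_0\Psi^\alpha = N\hat n^\alpha + \beta^k\partial_k\Psi^\alpha$ combined with the geometric formula $\bar g^{ij}\partial_i\partial_j\Psi^\alpha = -\tr_{\bar g}h\,\hat n^\alpha + \bar g^{ij}\bar\Gamma^k_{ij}\partial_k\Psi^\alpha + \bar g^{ij}(\Gamma^{(0)})^\lambda_{ij}\partial_\lambda\Psi^\alpha$. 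Substituting the balanced conditions \eqref{Mean curvature condition}--\eqref{Harmonic condition} for $\tr_{\bar g}h$ and $\bar g^{ij}\bar\Gamma^k_{ij}$ into this and using the ansatz $\Psi^\alpha = x^\alpha + \JapD^{-1}\Phi^\alpha$ produces $\partial_0\Phi^\alpha + |D|\Phi^\alpha = \JapD\,\mathscr N^\alpha[\Phi,|D|\Phi]$ directly. In particular, $\bar g$ is \emph{not} a separate elliptic unknown here---it is determined algebraically from $\Psi$ and the background metric $g$, so your remark that ``$\bar g$ enters elliptically'' is slightly off. Second, for the frame transformation $U$, the paper splits $U = \mathbb I + \mathbb M(x^0) + \tfrac{|D|}{1+|D|}V$, where the spatially constant matrix $\mathbb M$ is introduced precisely to enforce the zero-average condition $\fint\omega_0 = 0$ in \eqref{Divergence condition frame}; this leads to a coupled parabolic equation for $V$ and an ODE for $\mathbb M$. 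Your sketch does not account for this zero-mode, which must be handled separately. These are bookkeeping refinements rather than conceptual gaps, and your identification of the ``main obstacle'' is accurate.
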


\begin{remark*}
The second part of Proposition \ref{prop:Local existence gauge} (i.e.~the statement about immersions $Y$ satisfying the additional bound \eqref{Additional bound immersion}) is not needed in Theorem \ref{thm:Existence}, since both Steps 2 and 6 of the proof do not require a quantitative estimate for the time of existence of the gauge transformation. This part is only relevant for the proof of Theorem  \ref{thm:Uniqueness} on the geometric uniqueness of rough solutions   to \eqref{Minimal surface equation}.
\end{remark*}

\begin{proof}
Before proceeding to the proof of Proposition\ref{prop:Local existence gauge}, we will first need to set up some notations regarding certain operators acting on $C^\infty (\mathbb T^3)$ that will be used in the proof. To this end, we will assume that the parameters $s$ and $\delta_0, s_1, s_0$ are as in Definition \ref{def:Small constants}.

\medskip
\noindent \textbf{Pseudodifferential operators and norms on $\mathbb T^3$.}
Let $\bar g$ be a Riemannian metric on $\mathbb T^3$. For any $f\in C^\infty \big(\mathbb T^3\big)$, we will define
\begin{align}
\mathcal P_{\bar g} f & = f - \frac{1}{\mathrm{Vol}_{\bar g}(\mathbb T^3)}\int_{\mathbb T^3} f \, \mathrm{dVol}_{\bar g}, \label{Projection operator away 0 g}\\
\mathcal P_{\mathbb T^3} f  & = \mathcal P_{g_{\mathbb T^3}} f,
\end{align}
where $g_{\mathbb T^3}$ is the flat metric on $\mathbb T^3$. We wil also denote, as usual, with $\Delta_{\bar g}$ the Laplace--Beltrami operator on $(\mathbb T^3, \bar g)$ acting on functions, i.e.
\[
\Delta_{\bar g} = \frac{1}{\sqrt{\det(\bar g)}} \partial_i \big( \sqrt{\det(\bar g)} \bar g^{ij} \partial_j \big)
\]
(note that $\mathrm{Im}(\Delta_{\bar g}) = \mathrm{Im}(\mathcal P_{\bar g})$). 
We will also define for any $f_1, f_2 \in C^\infty(\mathbb T^3)$:
\[
\mathfrak D^{(-1)} [f] \doteq |D|\Delta_{\bar g}^{-1} (\mathcal P_{\bar g} f\big)
\]
and
\[
\mathfrak I[f_1, f_2]  \doteq \mathfrak D^{(-1)} [f_1 \cdot f_2] -  \big(\mathfrak D^{(-1)} [f_1]\big)\cdot f_2,
\]
where $\bar\Gamma^k_{ij}$ are the Christoffel symbols of $\bar g$ and 
\[
\widehat{|D|f}(\xi) = |\xi| \hat f(\xi).
\]
Note that
\[
\mathcal P_{\mathbb T^3} |D| = |D| \quad \text{and} \quad \mathcal P_{\bar g} \Delta_{\bar g} = \Delta_{\bar g}.
\]

Let us now consider functions defined on $[0,\tau]\times \mathbb T^3$ for some $\tau\in(0,1)$ to be determined later. Given any $p\in (1,+\infty)$, we can define the norm $\|\cdot \|_{\mathfrak V_p}$ for functions on $[0,\tau]\times \mathbb T^3$ by the relation
\begin{equation}\label{V norm}
\|f \|_{\mathfrak V_p} \doteq \|f\|_{L^\infty H^{s-1}} + \|f\|_{L^2 H^{2+s_1}} 
+\|f\|_{L^{\f74} W^{2+s_1,\f73}}+\| f\|_{L^1 B^{1+\f14\delta_0}_{p,1}}
\end{equation}
(where $\delta_0 = \delta_0(s)$ is chosen as  in Definition \ref{def:Small constants}). Note that 
\[
\|f_1 \cdot f_2\|_{\mathfrak V_p} \lesssim_{p,s} \|f_1 \|_{\mathfrak V_p}\|f_2\|_{\mathfrak V_p}.
\]\
Assuming that $\bar g$ satisfies the smallness bound
\begin{equation}\label{Smallness g bar}
c[\bar g] \doteq \big\|\bar g_{ij} - \delta_{ij} \big\|_{L^\infty H^{s-1}} + \big\| \partial_k \bar g_{ij}\big\|_{L^2 H^{s-\f32-2\delta_0}} \le \epsilon_0
\end{equation}
for some constant $\epsilon_0 \in (0,1)$ small enough in terms of $p,s$, the elliptic estimates for $\Delta_{\bar g}$ from Lemma \ref{lem:Elliptic estimates model} imply that the operator $D^{(-1)}$ satisfies
\begin{equation}\label{Bound pseudodifferential operators}
 \|\JapD \mathfrak D^{(-1)} [f]\|_{\mathfrak V_p} \lesssim_{p,s} \|f\|_{\mathfrak V_p}.
\end{equation}
For this reason, we will sometimes write schematically $\mathfrak D^{(-1)} \sim \JapD^{-1}$. 

Let us use the decomposition
\[
\Delta_{\bar g} = \Delta_{\mathbb T^3} + (\bar g^{ij} - \delta^{ij})\partial_i \partial_j - \bar\Gamma_{ij}^k \partial_k
\]
to express
\begin{equation}\label{Operator O}
\mathfrak D^{(-1)} = -|D|^{-1} \mathcal P_{\mathbb T^3} + \JapD^{-1}\mathcal O_{\bar g}
\end{equation}
for some operator $\mathcal O_{\bar g}$ satisfying
\[
\|\mathcal O_{\bar g} f\|_{\mathfrak V_p} \lesssim_{s,p} c[\bar g] \|f\|_{\mathfrak V_p}
\]
(where $c[\bar g]$ was defined in \eqref{Smallness g bar}). Then, using a paraproduct decomposition of $f_1\cdot f_2$ into high-low interactions, together with the trivial observation about the Fourier transform of functions on $\mathbb T^3$:
\begin{align*}
\mathcal F \big[|D|^{-1} (f_1 \cdot f_2)\big] (\xi) - & \mathcal F \big[ \big(|D|^{-1} f_1\big) \cdot f_2\big](\xi) \\
& = \sum_{\xi_2 \in \mathbb Z^3} m(\xi,\xi_2)f_1(\xi-\xi_2) f_2(\xi_2)\quad \text{with} \quad |m(\xi,\xi_2)| \sim \f{|\xi_2|}{|\xi||\xi-\xi_2|}
\end{align*}
we can also estimate (arguing similarly as for the proof of Lemma \ref{lem:Elliptic estimates model}):
\begin{align}\label{Bound commutator operator}
\|\mathfrak I[f_1, f_2]\|_{\mathfrak V_p} \lesssim_{s,p} \big\| & \JapD^{-2}( f_1\cdot |D|f_2)\big\|_{\mathfrak V_p} \\
&+ c[\bar g]\Big(\big\| \JapD^{-1} (f_1 \cdot f_2)\big\|_{\mathfrak V_p}  +\| \JapD^{-1}f_1\|_{\mathfrak V_p} \cdot \|f_2\|_{\mathfrak V_p} \Big). \nonumber 
\end{align}

\medskip
\noindent \textbf{The geometric setup.} Let us now return to the setting of Proposition \ref{prop:Local existence gauge} and let us set
\[
\mathcal M = [0,T]\times \mathbb T^3.
\]
Let $g$ and $k$ be the induced metric and second fundamental form, respectively, for the immersion $Y:\mathcal M \rightarrow \mathcal N$ (see Section \ref{sec:Geometry}). Let also  $\{e^\prime_{\bA'}\}_{\bA'=4}^n$ be the frame for $NY$ provided by Lemma \ref{lem:Initial auxiliary frame}. We will sometimes use the superscript $^{(0)}$ to denote quantities on $\mathcal M$ associated to $g,k$ and $e^\prime$ expressed in the product coordinate system of $\mathcal M = [0,T]\times \mathbb T^3$. By possibly shrinking the time interval $[0,T]$, we will assume without loss of generality that the hypersurfaces 
\[
\Sigma^{(0)}_{t} = \{t\}\times \mathbb T^3 \subset  \mathcal M
\]
are spacelike. We will denote with $(\Gamma^{(0)})^k_{ij}$ the Christoffel symbols of $g$ in the product coordinates, while $\hat n^{(0)}$ will denote the timelike unit normal to $\Sigma^{(0)}$ and $\bar g^{(0)}$ the induced Riemannian metric on $\Sigma^{(0)}$.

For a $\tau>0$ to be determined later, let $\Psi:[0,\tau]\times \mathbb T^3 \rightarrow \mathcal V \subset \mathcal M$. Setting
\[
\bar\Sigma_t = \{t\}\times \mathbb T^3,
\]
we will assume that $\Psi$ satisfies
\[
\Psi \big(\bar\Sigma_0\big)=\Sigma^{(0)}_0
\]
 and
\[
\Psi\big(\bar\Sigma_t \big) \, \text{ is a spacelike hypersurface of }\, (\mathcal M, g) \, \text{ for all }\, t\in [0,\tau].
\]
 Let also $e_{\bA}^A: [0,\tau]\times \mathbb T^3 \rightarrow \mathbb R$ for $\bA=4, \ldots, n$ and $A=0, \ldots, n$ be functions so that the vector fields $\{e_{\bA}\}_{\bA=4}^n$ constitute a frame for the normal bundle $ N (Y\circ \Psi)$. Let $e_{\bA}$ and $e_{\bA'}^{\prime}\circ \Psi$ be related by
\[
e_{\bA} = U_{\bA}^{\bA'} \cdot \big(e_{\bA'}^{\prime}\circ \Psi\big)
\]
for some $U:[0,\tau]\times \mathbb T^3 \rightarrow GL_{n-3}$ (for the rest of this proof, we will be using primed indices $\bA'$ for components associated to the $e_{\bA'}^{\prime A}$ frame). Then, the connection coefficients $\omega$ and $\omega'$ associated to $e_{\bA}$ and $e^\prime_{\bA'}\circ \Psi$, respectively, are related by the transformation formula:
\begin{equation}\label{Transformation connection coefficients}
\partial_\alpha U_{\bA}^{\bA'} = \omega_{\alpha \bA}^{\bB} \cdot U_{\bB}^{\bA'} -  \omega_{\alpha \bB'}^{\prime \bA'} \cdot U_{\bA}^{\bB'}.
\end{equation}

The induced metric and second fundamental form of the immersion $Y\circ\Psi:[0,\tau] \times  \mathbb T^3 \rightarrow \mathcal N$ are
\begin{equation}\label{Transformation metric and k}
\Psi^* g_{\alpha \beta} = \big(g_{\gamma\delta}\circ \Psi\big) \cdot  \partial_\alpha \Psi^\gamma \cdot  \partial_\beta\Psi^\delta   \quad \text{and} \quad   \Psi^* k^{\bA}_{\alpha \beta} = U_{\bA'}^{\bA} \cdot \big( k^{\bA'}_{\gamma\delta}\circ \Psi \big) \cdot \partial_\alpha \Psi^\gamma \cdot  \partial_\beta\Psi^\delta.
\end{equation}
 Considering the spacelike foliation $\Psi(\bar\Sigma_t)$ of $\mathcal V$, the tangent space of $\Psi(\bar\Sigma_t)$ is spanned by $\{\partial_i \Psi\}_{i=1}^3$. If we denote by $\bar g$ the (Riemannian) metric induced by $g$ on $\Psi(\bar\Sigma_t)$ (or, equivalently, by $\Psi_* g$ on $\bar\Sigma_t$), then,
\[
\bar g_{ij} = g(\partial_i \Psi, \partial_j \Psi) = \big(g_{\gamma\delta}\circ \Psi\big) \cdot  \partial_i \Psi^\gamma \cdot  \partial_j \Psi^\delta.
\]
Let $\hat{n}:[0,\tau]$ be the future directed unit normal vector field to $\bar\Sigma_t$ with respect to $\Psi_* g$; we will use the same notation $\hat n$ to also denote the push-forward vector field $\Psi^* \hat n$ which is normal to $\Psi(\bar\Sigma_t)$. Note that the components $\hat n^{\alpha}$ of $\hat n$ can be explicitly expressed as an algebraic function of $g\circ \Psi$ and $\partial \Psi$; for this reason, we will frequently use the notation $\hat n = \hat n[\Psi, \partial\Psi]$. Note that $\hat n =\hat n^{(0)}$ in the special case when $\Psi=\mathrm{Id}$.

 Let $\Pi^{\top}_{\Psi}$ denote the orthogonal projection onto the tangent space of $\bar\Sigma_t$ i.e., for any $Z \in \Gamma (T_{\Psi(\bar\Sigma_t)} \mathcal V)$, $\Pi^{\top}_{\Psi}(Z):\{t\}\times \mathbb T^3 \rightarrow \mathbb R^3$ is defined by
\[
Z - \big(\Pi^{\top}_{\Psi}(Z)\big)^i \partial_i \Psi \, \parallel \, \hat n.
\]
Note that any vector field $Z \in \Gamma (T_{\bar\Sigma_t} \mathcal V)$ can be decomposed as
\[
Z^\alpha = -\big( g_{\beta\gamma} Z^\beta \hat n^\gamma \big) \hat n^{\alpha} \, + \, \big(\Pi^{\top}_{\Psi}\big)^i_\beta Z^\beta \partial_i \Psi^\alpha.
\]
In view of the fact that the Christoffel symbols $\bar\Gamma^k_{ij}$ of $\bar g$ and the second fundamental form $h = -g(\nabla_{\cdot} \hat n, \cdot)$ of $\Psi(\bar\Sigma_s)$ satisfy
\[
\bar\Gamma_{ij}^k = \big(\Pi^{\top}_{\Psi}(\nabla_i \nabla_j \Psi)\big)^k
= \big(\Pi^{\top}_{\Psi}(\partial_i \partial_j \Psi)\big)^k - \big(\Pi^{\top}_{\Psi}(\big( \Gamma^{(0)}\big)_{ij}^{\lambda} \partial_\lambda \Psi)\big)^k
\]
and
\[
 h_{ij} = g_{\beta\gamma} \cdot \big( \nabla_i \nabla_j \Psi^\beta\big) \cdot \hat n^{\gamma}  = g_{\beta\gamma} \partial_i \partial_j \Psi^\beta\hat n^{\gamma} - g_{\beta\gamma} \big(\Gamma^{(0)}\big)^{\lambda}_{ij} \partial_\lambda \Psi^\beta\hat n^{\gamma} ,
\]
the above decomposition yields the following formula for the map $\Psi$:
\[
\partial_i \partial_j \Psi^{\alpha} - \big(\Gamma^{(0)}\big)^{\lambda}_{ij} \partial_\lambda \Psi^\alpha  = -h_{ij} \hat n^\alpha + \bar\Gamma_{ij}^k \partial_k \Psi^\alpha.
\]
As a consequence:
\begin{equation}\label{Formula Laplacian Psi}
\bar g^{ij} \partial_i \partial_j \Psi^{\alpha} = -\tr_{\bar g} h \, \hat n^\alpha + \bar g^{ij} \bar\Gamma_{ij}^k \partial_k \Psi^\alpha + \bar g^{ij} \big(\Gamma^{(0)}\big)^{\lambda}_{ij} \partial_\lambda \Psi^\alpha.
\end{equation}
Adopting the notation of Section \ref{sec:Gauge} regarding the $3+1$ decomposition of the metric $g$ with respect to the foliation $\Psi(\bar\Sigma_t)$, we can also express
\begin{equation}\label{Dt derivative Psi}
\partial_0 \Psi^\alpha = N \hat n^\alpha + \beta^k \partial_k \Psi^\alpha,
\end{equation}
where $N$, $\beta$ are the lapse function and shift vector field, respectively.

\medskip
\noindent \textbf{The initial value problem for $\Psi$.} We will now consider the case when the gauge $\mathcal G = (\Psi,e)$ on $\mathcal V$ satisfies the balanced conditions \eqref{Mean curvature condition}--\eqref{Divergence condition frame}, subject to the initial conditions
\begin{equation}\label{Initial conditions transformation}
\Psi^0(0,x) = 0, \quad \Psi^i (0,x) = x^i \quad \text{and}\quad  U_{\bA}^{\bA'}(0,x)=\delta_{\bA}^{\bA'}, \quad x\in \mathbb T^3.
\end{equation}
The conditions \eqref{Mean curvature condition}--\eqref{Harmonic condition} take the form:
\begin{equation}\label{Mean curvature condition again} 
\begin{cases}
\tr_{\bar g} h   = \Delta_{\bar{g}} |D|^{-1} (N-1) - \f1N \bar{g}^{ij} \tilde{\mathcal{F}}^{\natural}_{ij} +\displaystyle\fint_{(\bar{\Sigma}_{t}, \bar g)}\Big( \tr_{\bar g} h + \f1N \bar{g}^{ij}  \tilde{\mathcal{F}}^{\natural}_{ij}\Big),\\[6pt] 
\displaystyle\fint_{(\bar{\Sigma}_{t},  g_{\mathbb T^3})} N = 1
\end{cases}
\end{equation}
and 
\begin{equation}\label{Harmonic condition again}
\begin{cases}
\bar g^{ij} \bar\Gamma^k_{ij} = - \Delta_{\bar g}|D|^{-1} \beta^k, \\[6pt]
\displaystyle\fint_{(\bar{\Sigma}_{t},  g_{\mathbb T^3})} \beta^k = 0,
\end{cases}
\quad k=1,2,3,
\end{equation}
where 
\[
\tilde{\mathcal{F}}^{\natural}_{ij} = P^{\natural}[m(e)_{\bA\bB}, \Psi_* k^{\bB}_{ij}, \mathcal{T}^{(l)} (\Psi_* k^{\bA}_{0 l})] - \mathbb E \Big[ P^{\natural}[m(e)_{\bA\bB}, \Psi_* k^{\bB}_{ij}, \mathcal{T}^{(l)} (\Psi_* k^{\bA}_{0 l})]\Big|_{x^0=0}\Big].
\]
In view of the relations \eqref{Dt derivative Psi} and \eqref{Formula Laplacian Psi} for $\Psi$, the conditions \eqref{Mean curvature condition again}--\eqref{Harmonic condition again} yield the following identity:
\begin{align*}
\partial_0 \Psi^\alpha
& = (N -1) \hat n^\alpha + \beta^k \partial_k \Psi^\alpha + \hat n^\alpha \\
& = \Big[ \mathfrak D^{(-1)}\Big( \tr_{\bar g} h + \f{1}{N}\bar{g}^{ij} \tilde{\mathcal F^\natural}_{ij}\Big)\Big] \hat n^\alpha  - \Big[ \mathfrak D^{(-1)} \big( \bar g^{ij} \bar\Gamma^k_{ij}\big)\Big] \partial_k \Psi^\alpha   + \hat n^\alpha \\
& = \mathfrak D^{(-1)} \Bigg[ \tr_{\bar g} h \hat n^\alpha - \bar g^{ij} \bar\Gamma^k_{ij} \partial_k \Psi^\alpha \Bigg] 
- \mathfrak I\big[\tr_{\bar g} h,\, \hat n^\alpha\big] +\mathfrak I\big[ \bar g^{ij} \bar\Gamma^k_{ij},\, \partial_k \Psi^\alpha \big]
 +   \Big[ \mathfrak D^{(-1)}\Big(\f{1}{N}\bar{g}^{ij} \tilde{\mathcal F^\natural}_{ij}\Big)\Big] \hat n^\alpha + \hat n^\alpha \\
& = \mathfrak D^{(-1)} \Bigg[ -\bar g^{ij} \partial_i \partial_j \Psi^{\alpha}  +  \bar g^{ij} \big(\Gamma^{(0)}\big)^{\lambda}_{ij} \partial_\lambda \Psi^\alpha \Bigg] \\
& \hphantom{= \mathfrak D^{(-1)} \Bigg[ -\Delta}
- \mathfrak I\big[\tr_{\bar g} h,\, \hat n^\alpha\big] +\mathfrak I\big[ \bar g^{ij} \bar\Gamma^k_{ij},\, \partial_k \Psi^\alpha \big]
 +   \Big[ \mathfrak D^{(-1)}\Big(\f{1}{N}\bar{g}^{ij} \tilde{\mathcal F^\natural}_{ij}\Big)\Big] \hat n^\alpha + \hat n^\alpha \\
& = \mathfrak D^{(-1)} \Bigg[ -\Delta_{\bar g} \Psi^{\alpha} -\bar g^{ij} \Big( \bar\Gamma_{ij}^k \partial_k \Psi^\alpha -\big(\Gamma^{(0)}\big)^{\lambda}_{ij}\partial_\lambda \Psi^\alpha \Big)  \Bigg] \\
& \hphantom{= \mathfrak D^{(-1)} \Bigg[ -\Delta}
- \mathfrak I\big[\tr_{\bar g} h,\, \hat n^\alpha\big] +\mathfrak I\big[ \bar g^{ij} \bar\Gamma^k_{ij},\, \partial_k \Psi^\alpha \big]
 +   \Big[ \mathfrak D^{(-1)}\Big(\f{1}{N}\bar{g}^{ij} \tilde{\mathcal F^\natural}_{ij}\Big)\Big] \hat n^\alpha + \hat n^\alpha \\
& = -|D| \Psi^\alpha -\mathfrak D^{(-1)} \Bigg[\bar g^{ij} \Big( \bar\Gamma_{ij}^k \partial_k \Psi^\alpha -\big(\Gamma^{(0)}\big)^{\lambda}_{ij}\partial_\lambda \Psi^\alpha \Big)  \Bigg] \\
& \hphantom{= \mathfrak D^{(-1)} \Bigg[ -\Delta}
- \mathfrak I\big[\tr_{\bar g} h,\, \hat n^\alpha\big] +\mathfrak I\big[ \bar g^{ij} \bar\Gamma^k_{ij},\, \partial_k \Psi^\alpha \big]
 +   \Big[ \mathfrak D^{(-1)}\Big(\f{1}{N}\bar{g}^{ij} \tilde{\mathcal F^\natural}_{ij}\Big)\Big] \hat n^\alpha + \hat n^\alpha.
\end{align*}
As a result, if we use the following ansatz for $\Psi(x)$, $x\in [0,\tau]\times \mathbb T^3$:
\begin{equation}\label{Decomposition Psi}
\Psi^\alpha (x) = x^\alpha +\JapD^{-1}\Phi^{\alpha}(x)
\end{equation}
for some $\Phi:[0,\tau]\times \mathbb T^3 \rightarrow \mathbb R^{3+1}$, then the gauge conditions \eqref{Mean curvature condition again}--\eqref{Harmonic condition again} together with the initial condition $\Psi|_{x^0=0} = \mathrm{Id}$ are formally equivalent to the statement that $\Phi$ satisfies the following non-linear parabolic initial value problem:
\begin{equation}\label{Parabolic IVP Phi}
\begin{cases}
\partial_0 \Phi^\alpha + |D| \Phi^\alpha = \JapD\mathscr N^\alpha[\Phi, |D| \Phi; g, \tilde{\mathcal F}^\natural],\\
\Phi^\alpha|_{x^0=0} = 0,
\end{cases}
\end{equation}
where $\mathscr N[\Phi, |D| \Phi; g, \tilde{\mathcal F}^\natural]$ takes the following form in terms of $\Psi = x + \JapD^{-1} \Phi$:
\begin{align*}
\mathscr N^\alpha[\Phi, |D| \Phi; g, \tilde{\mathcal F}^\natural]
\doteq & 
 -\mathfrak D^{(-1)} \Bigg[\bar g^{ij} \Big( \bar\Gamma_{ij}^k \partial_k \Psi^\alpha -\big(\Gamma^{(0)}\big)^{\lambda}_{ij}\partial_\lambda \Psi^\alpha \Big)  \Bigg] 
- \mathfrak I\big[\tr_{\bar g} h,\, \hat n^\alpha\big] +\mathfrak I\big[ \bar g^{ij} \bar\Gamma^k_{ij},\, \partial_k \Psi^\alpha \big]\\
 &\hphantom{\sum}
 +   \Big[ \mathfrak D^{(-1)}\Big(\f{1}{N}\bar{g}^{ij} \tilde{\mathcal F}^\natural_{ij}\Big)\Big] \hat n^\alpha + \hat n^\alpha-\delta_0^\alpha.
\end{align*}
Let us list here a schematic expression of the terms on the right hand side above in terms of differential operators applied to $\Psi$ and the normal frame transformation matrix $U_{\bA}^{\bA'}$:
\begin{itemize}
\item The induced Riemannian metric $\bar g$ and the timelike normal $\hat n$ satisfy
\[
\bar g \sim g[\Psi] \cdot \bar\partial \Psi \cdot \bar\partial \Psi \quad \text{and} \quad \hat n = \hat n[\Psi, \partial \Psi]
\]
\item The Christoffel symbols $\bar\Gamma_{ij}^k$ and the second fundamental form $h_{ij}$ satisfy
\[
\bar\Gamma = \mathcal G_1 \big[ g [\Psi], \bar\partial \Psi\big] \cdot  \bar \partial^2 \Psi + (\partial g)[\Psi] \cdot \mathcal G_2 [\partial \Psi], \quad h = \mathcal G_3 \big[g[\Psi], \bar\partial \Psi\big] \cdot  \bar \partial^2 \Psi + (\partial g)[\Psi] \cdot \mathcal  G_4 [\partial \Psi]
\]
\item The background Christoffel symbols $\big(\bar\Gamma^{(0)}\big)_{ij}^\lambda$ satisfy
\[
\big(\bar\Gamma^{(0)}\big)_{ij}^\lambda = (\partial g) [\Psi] 
\] 
\item The term $\tilde{\mathcal F}^\natural$ satisfies
\[
\tilde{\mathcal F}^\natural = \mathbb P^{\natural} \big[ U e^\prime, \, \big( U \cdot k \cdot \bar\partial \Psi)^2 \big) ,\, \JapD^{-1}\big( U \cdot k \cdot \bar\partial \Psi)^2 \big)\big].
\]
\end{itemize}

Let us introduce the following quantity measuring the size of the background tensors $g,k$ and the background normal frame components $e^{\prime A}_{\bA'}$ for any $1\ll p <+\infty$:
\[
\mathcal C[Y] \doteq \|g-m_0\|_{\mathfrak V_p} + \sum_{A=0}^n\sum_{\bA'=4}^n\|e^{\prime A}_{\bA'} - \delta^A_{\bA'}\|_{\mathfrak V_p} + \|k\|_{L^\infty H^{s-2}} + \|k\|_{L^2 W^{-\f12+s_0,+\infty}}, 
\]
where the norm $\mathfrak V_p$ was defined by \eqref{V norm}. Then, assuming a priori that 
\begin{equation}\label{A priori smallness existence}
\mathcal C[Y] + \| \JapD^{-1}\partial \Phi \|_{\mathfrak V_p} + \| U - \mathrm{I}\|_{\mathfrak V_p} < \epsilon_1
\end{equation}
where $\epsilon_1>0$ is a fixed (small) constant depending only on $p,s$, the parabolic estimates of Lemma \ref{lem:Parabolic estimates model} and the estimates \eqref{Bound pseudodifferential operators} and \eqref{Bound commutator operator} for the operators $\mathfrak D^{(-1)}$ and $\mathfrak I[\cdot, \cdot]$ imply that any solution $\Psi$ of the initial value problem \eqref{Parabolic IVP Phi} satisfies the following \emph{a priori} bound:
\begin{equation}\label{A priori bound Phi}
\| \JapD^{-1}  \partial \Phi \|_{\mathfrak V_p} \lesssim_{p,s} \mathcal C[Y] + \epsilon_1 \Big( \big( \| \JapD^{-1}\partial \Phi \|_{\mathfrak V_p} + \| U - \mathrm{I}\|_{\mathfrak V_p}  \Big).
\end{equation}

\medskip
\noindent \textbf{The initial value problem for $U$.}
Let us assume that the transformation matrix $U^{\bA'}_{\bA}$ has been chosen so that the gauge condition \eqref{Divergence condition frame} is satisfied; recall that this condition takes the form
\begin{equation}\label{Divergence condition frame again}
\begin{cases}
\delta_{\bar{g}} \omega_{\bar B}^{\bar A} \doteq \bar{g}^{ij} \bar{\nabla}_i \omega_{j \bar B}^{\bar A}= 
- \Delta_{\bar g}|D|^{-1}  \omega^{\bA}_{0 \bB} + 
  \Bigg( m(e)_{\bB\bC} \tilde{\mathcal{F}}_{\perp}^{\bA\bC} - 
\displaystyle\fint_{\bar\Sigma_{t}}\Big(m(e)_{\bB\bC} \tilde{\mathcal{F}}_{\perp}^{\bA\bC}\Big)\Bigg), \\[6pt]
\displaystyle\fint_{(\bar{\Sigma}_t, g_{\mathbb T^3})} \omega^{\bA}_{0 \bB} = 0,
\end{cases}
\end{equation}
where, in our case, 
\[
\tilde{\mathcal F}_\perp^{\bB\bC} = -  \partial_i\Big( \bar g^{ij}   \mathbb P^{\natural} [\Psi_*g^{\ga\delta}, \Psi_*k^{\bB}_{j\ga}, \mathcal T^{(b)} \Psi_* k^{\bA}_{b\delta}]\Big) + \mathbb E \Bigg[  \partial_i\Big( \bar g^{ij}   \mathbb P^{\natural} [\Psi_*g^{\ga\delta}, \Psi_*k^{\bB}_{j\ga}, \mathcal T^{(b)} \Psi_* k^{\bA}_{b\delta}]\Big)\Big|_{x^0=0}\Bigg].
\] 
Considering the spatial divergence of equation \eqref{Transformation connection coefficients} for the transformation matrix $U_{\bA}^{\bA'}$, \eqref{Divergence condition frame again} implies that
\begin{align}\label{Equation for U}
\Delta_{\bar g} U_{\bA}^{\bA'} =
& \delta_{\bar g} \omega_{\bA}^{\bB} \cdot U_{\bB}^{\bA'} + \bar g^{ij} \omega_{i\bA}^{\bB} \cdot \partial_j U_{\bB}^{\bA'}  -  \delta_{\bar g} \omega_{\bB'}^{\prime\bA'} \cdot U_{\bA}^{\bB'} - \bar g^{ij}\omega_{ i\bB'}^{\prime\bA'} \cdot \partial_jU_{\bA}^{\bB'} \\
= & -U_{\bB}^{\bA'}  \cdot \Delta_{\bar g}|D|^{-1}\mathcal P_{\mathbb T^3}\Big( (U^{-1})^{\bB}_{\bC'} \cdot \partial_0 U_{\bA}^{\bC'} + (U^{-1})^{\bB}_{\bB'} \cdot \omega^{\prime \bB'}_{0 \bC'} \cdot U^{\bC'}_{\bA}  \Big) 
\\
& \hphantom{\sum\sum}
+ U^{\bA'}_{\bB} \cdot \Bigg( m(e)_{\bA\bC} \tilde{\mathcal{F}}_{\perp}^{\bB\bC} - 
\displaystyle\fint_{(\bar\Sigma_{t}, \bar g)}\Big(m(e)_{\bA\bC} \tilde{\mathcal{F}}_{\perp}^{\bB\bC}\Big)\Bigg)  \nonumber
\\
 & \hphantom{\sum\sum}
+ \bar g^{ij} \omega_{i\bA}^{\bB} \cdot \partial_j U_{\bB}^{\bA'}  -  \delta_{\bar g} \omega_{\bB'}^{\prime\bA'} \cdot U_{\bA}^{\bB'} - \bar g^{ij}\omega_{ i\bB'}^{\prime\bA'} \cdot \partial_jU_{\bA}^{\bB'}  \nonumber 
\end{align}
and
\begin{equation}\label{Zero average condition U}
\fint_{(\bar\Sigma_t, g_{\mathbb T^3})} \Big( (U^{-1})^{\bA}_{\bC'} \cdot \partial_0 U_{\bB}^{\bC'} + (U^{-1})^{\bA}_{\bB'} \cdot \omega^{\prime \bB'}_{0 \bC'} \cdot U^{\bC'}_{\bB}  \Big)  = 0 \quad \text{for} \quad t\in [0,\tau],
\end{equation}
where $\omega^\prime_i$ are simply the pull-backs $\Psi_* \omega^\prime_i$ of the connection 1-forms associated to the frame $e^\prime$.

We will use the following ansatz for the transformation matrix $U$:
\begin{equation}\label{Decomposition U}
U^{\bA'}_{\bA}(x^0;\bar x) = \mathbb{I} + \mathbb M^{\bA'}_{\bA}(x^0) + \frac{|D|}{1+|D|} V^{\bA'}_{\bA}(x^0;\bar x)
\end{equation}
(where we used the notation $x=(x^0; \bar x)$, $x^0\in [0,\tau]$, $\bar x \in \mathbb T^3$).  Note that, since $ \mathbb M^{\bA'}_{\bA}(x^0)$ is constant in the spatial directions,
\[
\mathcal P_{\mathbb T^3} U^{\bA'}_{\bA} = \frac{|D|}{1+|D|}  V^{\bA'}_{\bA}. 
\] 
The matrix $\mathbb M$ introduced in \eqref{Decomposition U} will be used to guarantee the zero-average condition \eqref{Zero average condition U} (which is equivalent to $\fint_{\mathbb T^3} \omega_0 =0$).

We can now readily see that the gauge condition \eqref{Divergence condition frame again} together with the initial condition $U|_{x^0=0} = \mathbb{I}$ is formally equivalent to the following initial value problems for $\mathbb M^{\bA'}_{\bA}(x^0)$ and $V^{\bA'}_{\bA}(x^0;\bar x)$:
\begin{equation}\label{IVP V}
\begin{cases}
\partial_0 V + |D| V = \mathcal P_{\mathbb T^3} \Bigg[(1+|D|) \Delta_{\bar g}^{-1} \Big(\mathcal P_{\bar g} \mathscr G [V, \partial V, \mathbb M, \frac{d\mathbb M}{dx^0};\bar g, \omega^\prime, \tilde{\mathcal F}_{\perp}]\Big)\Bigg],\\[6pt]
V|_{x^0=0} = 0
\end{cases}
\end{equation}
and
\begin{equation}\label{ODE M}
\begin{cases}
\frac{d \mathbb M^{\bA'}_{\bA}}{dx^0}(x^0)  = -
\Big(\fint_{(\bar\Sigma_{x^0},g_{\mathbb T^3})} \delta^{\bC'}_{\bA}\omega^{\prime\bB'}_{0\bA'} \Big)+ \mathscr{F}^{\bA'}_{\bA}[V, \mathbb M;\omega_0^\prime](x^0),\\[6pt]
\mathbb M^{\bA'}_{\bA}(0) = 0,
\end{cases}
\end{equation}
where
\begin{align*}
\mathscr G^{\bA'}_{\bA} [V, & \partial V, \mathbb M, \frac{d\mathbb M}{dx^0};\bar g, \omega^\prime, \tilde{\mathcal F}_{\perp}]\\
= & - U^{\bA'}_{\bB}\Delta_{\bar g}|D|^{-1}\mathcal P_{\mathbb T^3}\Big( (U^{-1})^{\bB}_{\bC'} \cdot \frac{d\mathbb M_{\bA}^{\bC'}}{dx^0} + \big( (U^{-1})^{\bB}_{\bC'} - \delta^{\bB}_{\bC'}\big) \partial_0 \big(\frac{|D|}{1+|D|}  V _{\bA}^{\bC'}\big)  + (U^{-1})^{\bB}_{\bB'} \cdot \omega^{\prime \bB'}_{0 \bC'} \cdot U^{\bC'}_{\bA}  \Big) 
\\
& \hphantom{\sum\sum}
- \big(U^{\bA'}_{\bB}- \delta^{\bA'}_{\bB}\big)\Delta_{\bar g}|D|^{-1}\mathcal P_{\mathbb T^3}\Big( (U^{-1})^{\bB}_{\bC'} \cdot \partial_0 U_{\bA}^{\bC'}  + (U^{-1})^{\bB}_{\bB'} \cdot \omega^{\prime \bB'}_{0 \bC'} \cdot U^{\bC'}_{\bA}  \Big) 
\\
& \hphantom{\sum\sum}
+ U^{\bA'}_{\bB} \cdot \Bigg( m(e)_{\bA\bC} \tilde{\mathcal{F}}_{\perp}^{\bB\bC} - 
\displaystyle\fint_{(\bar\Sigma_{t}, \bar g)}\Big(m(e)_{\bA\bC} \tilde{\mathcal{F}}_{\perp}^{\bB\bC}\Big)\Bigg)  \nonumber
\\
 & \hphantom{\sum\sum}
+ \bar g^{ij} \partial_i U_{\bB}^{\bA'}\cdot \Big( (U^{-1})^{\bB}_{\bC'} \cdot \partial_j U_{\bA}^{\bC'} + (U^{-1})^{\bB}_{\bB'} \cdot \omega^{\prime \bB'}_{j \bC'} \cdot U^{\bC'}_{\bA}  \Big)  
\\
 &\hphantom{\sum\sum}
-  \delta_{\bar g} \omega_{\bB'}^{\prime\bA'} \cdot U_{\bA}^{\bB'} - \bar g^{ij}\omega_{ i\bB'}^{\prime\bA'} \cdot \partial_jU_{\bA}^{\bB'}  \nonumber 
\end{align*}
and
\begin{align*}
\mathscr{F}^{\bA'}_{\bA}[V, & \mathbb M;\omega_0^\prime](x^0) \\
& = 
-\Big[\Big(\fint_{(\bar\Sigma_{x^0},g_{\mathbb T^3})}(U^{-1})\Big)^{-1} \Big]^{\bA'}_{\bB} \Bigg[\fint_{(\bar\Sigma_{x^0}, g_{\mathbb T^3})} \Big( (U^{-1})^{\bB}_{\bC'} \cdot \partial_0 (\frac{|D|}{1+|D|} V_{\bA}^{\bC'}) + \big( (U^{-1})^{\bB}_{\bB'} \cdot U^{\bC'}_{\bA}- \delta_{\bB'}^{\bB} \delta_{\bA}^{\bC'}\big)\cdot \omega^{\prime \bB'}_{0 \bC'}   \Big)\Bigg] \\
& \hphantom{\sum\sum}
+ \Bigg(\Big[\Big(\fint_{(\bar\Sigma_{x^0},g_{\mathbb T^3})}(U^{-1})\Big)^{-1} \Big]^{\bA'}_{\bB}-  \delta^{\bA'}_{\bB}\Bigg) \Big(\fint_{(\bar\Sigma_{x^0},g_{\mathbb T^3})} \delta^{\bB}_{\bC'}\delta^{\bC'}_{\bA}\omega^{\prime\bB'}_{0\bC'} \Big).
\end{align*}

Let us assume \emph{a priori} that \eqref{A priori smallness existence} holds for some $\epsilon_1>0$ small enough in terms of $p,s$. Applying the parabolic estimates of Lemma \ref{lem:Parabolic estimates model} for the initial value problem \eqref{IVP V} for $V$ and  a straightforward $L^1_{x^0}$ estimate for the ODE \eqref{ODE M} satisfied by $\mathbb M$, using also:
\begin{itemize}
\item The ansatz \eqref{Decomposition U} for $U$ and \eqref{Decomposition Psi} for $\Psi$ for the terms on the right hand sides of \eqref{IVP V}--\eqref{ODE M},
\item The schematic expression
\[
\Psi^* \omega^\prime = \partial \Psi \cdot \partial e',
\]
for $\omega'$,
\end{itemize}
we readily obtain the following \emph{a priori} bounds for $V,\mathbb M$:
\begin{equation}\label{A priori bound V}
\| \JapD^{-1}  \partial V \|_{\mathfrak V_p} \lesssim_{p,s} \mathcal C[Y] + \epsilon_1 \Big( \big( \| \JapD^{-1}\partial \Phi \|_{\mathfrak V_p} + \|\JapD^{-1}\partial U\|_{\mathfrak V_p}+ \| U - \mathrm{I}\|_{L^\infty L^\infty}  \Big).
\end{equation}
(where $\|\cdot \|_{\mathfrak V_p}$ was defined in \eqref{V norm}) and 
\begin{equation}\label{A priori bound M}
\sup_{x^0\in [0,\tau]} \Big( \Big| \frac{d\mathbb M}{dx^0}(x^0)\Big| + \big| \mathbb M(x^0) \big|\Big) \lesssim_{p,s}  \mathcal C[Y] + \epsilon_1 \Big( \big( \| \JapD^{-1}\partial \Phi \|_{\mathfrak V_p} + \|\JapD^{-1}\partial U\|_{\mathfrak V_p}+ \| U - \mathrm{I}\|_{L^\infty L^\infty}  \Big).
\end{equation}

\medskip
\noindent \textbf{Existence of the gauge transformation $(\Psi, U)$.} Let us consider the combined initial value problem \eqref{Parabolic IVP Phi}, \eqref{IVP V} and \eqref{ODE M} for $(\Phi, V, \mathbb M)$; note that this is a quasilinear parabolic system for $(\Phi,V)$ coupled with an ODE for $\mathbb M$. In view of the a priori bounds \eqref{A priori bound Phi}, \eqref{A priori bound V} and \eqref{A priori bound M}, it can be inferred using standard arguments (e.g.~using an iteration scheme) that, if $T_0$ is chosen so that the immersion $Y$ and the frame $e'$ satisfy
\begin{equation}\label{Smallness assumption for existence}
c[\bar g] < \epsilon_0 \quad \text{and} \quad C[Y] <  \epsilon_0^\prime \quad \text{when restricted to } \, [0,T_0]\times \mathbb T^3
\end{equation}
(for some small constants $\epsilon_0, \epsilon_0^\prime$ depending only on $p,s$), then there exists a unique smooth solution $(\Phi, V, \mathbb M)$ of \eqref{Parabolic IVP Phi}, \eqref{IVP V} and \eqref{ODE M} on a time interval $0\le t \le T'$ with $T'$ depending only on the initial data parameter $\epsilon$ (see \eqref{D}), the background parameter $T_0$ and the precise choice of $\epsilon_0, \epsilon^\prime_0$ above and such that
\begin{equation}\label{Final estimate gauge transformation}
\| \JapD^{-1}\partial \Phi \|_{\mathfrak V_p} + \| V - \mathrm{I}\|_{\mathfrak V_p}
+\sup_{x^0\in [0,\tau]} \Big( \Big| \frac{d\mathbb M}{dx^0}(x^0)\Big| + \big| \mathbb M(x^0) \big|\Big) \lesssim_{p,s}   C[Y] +\epsilon
\end{equation}
(note, in particular, that the above bound implies that the gauge transformation $(\Psi, U)$ defined from $(\Phi, V, \mathbb M)$ through the relations \eqref{Decomposition Psi} and \eqref{Decomposition U} satisfies $\|\partial \Psi\|_{L^\infty L^\infty} \ll 1$, i.e.~the foliation $\Psi(\Sigma_{t})$, $t\in [0,T']$ is strictly spacelike). 

In the case when $Y$ and $e'$ satisfy the additional bound \eqref{Additional bound immersion}, then \eqref{Smallness assumption for existence} is satisfied for $T_0 = T$ (provided $\epsilon_2$ in \eqref{Additional bound immersion} is sufficiently small in terms $p,s$). Thus, in this case, fixing $p$ large enough in terms of $\delta_0(s)$, we infer that the time of existence $T'$ of the gauge transformation depends only on $s$. Moreover, commuting equations  \eqref{Parabolic IVP Phi}, \eqref{IVP V} and \eqref{ODE M}  twice with $\partial$ and repeating the steps that lead to the lower order estimates \eqref{A priori bound Phi}, \eqref{A priori bound V} and \eqref{A priori bound M}, we obtain
\begin{align}\label{Commuted bound existence gauge}
\sum_{l=0}^2 \Big(\|\JapD^{-l} & \partial^{1+l} \Psi  \|_{\mathfrak V_p}  + \|\JapD^{-l}\partial^l (U-\mathbb I)\|_{\mathfrak V_p}\Big)\\
&  \lesssim_{p,s} 
 \sum_{l=0}^2 \Big( \|\JapD^{-l}\partial^l (g-m_0)\|_{\mathfrak V_p} + \|\JapD^{-l}\partial^l (e'_{\bA} - \delta_{\bA})\|_{\mathfrak V_p}\nonumber \\
 & \hphantom{\lesssim_{p,s} 
 \sum_{l=0}^2 \Big( \|(1+|D|}
+ \|\partial^l k\|_{L^\infty H^{s-2-l}} + \|\partial^l k\|_{L^2 W^{-\f12-l+s_0,+\infty}}\Big) . \nonumber 
\end{align}
Therefore, using our assumption that $g$, $k$ and $e'$ satisfy \eqref{Additional bound immersion}, we infer  \eqref{Bound under extra regularity assumptions}.

\end{proof}

\section{The final step: Persistence of regularity}\label{sec:Persistence of regularity}
In this section, we will establish the persistence of regularity in the balanced gauge  for solutions $Y$ to the minimal surface equation \eqref{Minimal surface equation} satisfying the bootstrap assumption of Proposition \ref{prop:Bootstrap}. This result will amount to Step 5 in the outline of the proof of Theorem \ref{thm:Existence} presented in Section \ref{sec:Continuity}. Combined with the results of Sections \ref{sec: Bounds S0}--\ref{sec:Existence gauge}, this will complete the proof of Theorem \ref{thm:Existence}.

The main result of this section will be the following:

\begin{lemma}\label{lem:Regularity}
Let $Y: [0,T)\times \mathbb{T}^3 \rightarrow \mathcal{N}$ be a $C^\infty$ development of a smooth initial data pair $(\bY, \bn)$ as in the statement of Theorem \ref{thm:Existence} and $\{e_{\bA}\}_{\bA=4}^n$ be a $C^\infty$ frame for the normal bundle $NY$. 
Assume also that the following conditions hold:
\begin{itemize}
\item The gauge $\mathcal G = (\text{Id}, e)$ for the immersion $Y$ satisfies the balanced gauge condition.
\item The immersion $Y$ satisfies
 \begin{align}\label{Bootstrap bound again}
 \mathcal{Q}_g  + \mathcal{Q}_k  + \mathcal{Q}_\perp + \| Y - Y_0 \|_{L^\infty L^\infty}
 + \| \partial Y - \partial Y_0 \|_{L^\infty H^{s-1}} \le \epsilon_1 
\end{align}
for some constant $\epsilon_1>0$ sufficiently small in terms of $s$ (the quantities $\mathcal Q_g$, $\mathcal Q_k$ and $\mathcal Q_\perp$ were introduced in Definition \ref{def:Norms 1}).
\end{itemize}
Then, the function $Y$ extends on the whole of  $[0,T]\times \mathbb T^3$ as a $C^\infty$ immersion.
\end{lemma}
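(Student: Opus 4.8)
\textbf{Proof strategy for Lemma \ref{lem:Regularity}.}
The plan is to run a higher-order bootstrap argument: for each integer $m \ge 3$, one shows that if $(\bY, \bn) \in H^{m} \times H^{m-1}$, then the immersion $Y$, the frame $e$, and the associated geometric quantities $(g, k, \omega)$ satisfy uniform higher-order estimates on $[0,T')\times \mathbb T^3$ of the same schematic type as in Theorems \ref{thm:Existence}, with every Sobolev index shifted up by $m - s$. Since $(\bY, \bn)$ is assumed smooth, this then shows that $Y$ extends as a $C^\infty$ immersion up to $\{x^0 = T\}$. The base case $m$ close to $s$ is precisely the content of Proposition \ref{prop:Bootstrap} combined with Lemma \ref{lem:Energy estimates}, Lemma \ref{lem:Strichartz estimates}, and the closing arguments in Section \ref{subsec:Closing bootstrap metric}; the inductive step is where the real work lies.

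The key steps, in order, would be the following. First I would set up the higher-order norms $\mathcal Q_g^{(m)}, \mathcal Q_k^{(m)}, \mathcal Q_\perp^{(m)}$, obtained from Definition \ref{def:Norms 1} by replacing $s$ with $m$ and by adding the relevant extra derivatives; I would similarly define $\mathcal D_m \doteq \|\bY - \bY_0\|_{H^m} + \|\bn - \bn_0\|_{H^{m-1}}$. Second, using the already-established low-regularity bounds \eqref{Bootstrap bound again} as a fixed background, I would re-derive the estimates along $\{x^0 = 0\}$ exactly as in Proposition \ref{prop: Bounds S0}, but in the $H^{m}$-scale: this is purely algebraic, using the minimal surface equation, the balanced gauge conditions, and the functional inequalities of Lemma \ref{lem:Functional inequalities}, and yields $\|k|_{x^0=0}\|_{H^{m-2}} + \|g|_{x^0=0} - m_0\|_{H^{m-1}} + \ldots \lesssim_m \mathcal D_m$. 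Third, I would commute the wave equation \eqref{Wave equation k} for $k$ with $m - 2$ spatial derivatives and run the energy estimate of Lemma \ref{lem:Energy estimates} at this higher level; crucially, at this stage the coefficient $g$ and its low derivatives are \emph{already controlled} by the low-regularity bootstrap, so all the ``rough coefficient'' terms like $\|\partial g\|_{L^1 L^\infty}$ that need smallness are handled by $\epsilon_1$, and the top-order term $\|\partial^{m-1} g\|$ appears linearly and can be absorbed or treated via Gronwall. Fourth, I would upgrade the Strichartz estimate of Lemma \ref{lem:Strichartz estimates}: here one can simply apply the Strichartz estimate \eqref{General Strichartz from Tataru} to $\partial^{m-2} k$ localized in frequency, since the background metric $g$ has $\|\partial g\|_{L^1 L^\infty}$ bounded by the low-regularity bound and no new smallness is required. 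Fifth, the bounds for $R$, $R^\perp$, $\mathcal F^\natural$, $\mathcal F_\perp$, and then for $g, \omega, e$ in the $H^m$-scale follow by repeating Lemmas \ref{lem:Riemann estimates}--\ref{lem:Estimates omega} verbatim with the index shift, using the elliptic and parabolic estimates of the Appendix at the higher regularity level and the fact that the quadratic ``error'' terms always contain at least one low-regularity factor controlled by $\epsilon_1$.

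The main obstacle is the inductive step's bookkeeping of top-order terms: when one commutes with $m-2$ derivatives, the worst terms are those in which all derivatives land on a single factor (e.g.\ $g \cdot \partial^{m-1} g \cdot k$ in the wave equation for $k$, or $g \cdot \partial^{m} g$ hidden inside $R_*$), and one must verify that these enter \emph{linearly} in the highest-order unknown and are multiplied either by a small low-regularity factor (so they can be absorbed into the left side) or by an $L^1_t L^\infty_x$-controlled coefficient (so a Gronwall argument closes them). This is exactly the structure already exploited at the level $s$ in Sections \ref{subsec:Strichartz}--\ref{subsec:Closing bootstrap metric}, and the triangular structure \eqref{Model system of equations} of the balanced-gauge system is what makes it work: one closes $\partial^{m-2}$-estimates for $N$, then $h$, then $\beta$, then $\bar g$, then $\partial^{m-1} g$, then $\omega$, then $e$, in that order, feeding each bound into the next. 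Once the induction is complete for all $m$, a standard argument (weak-$*$ compactness of the bounded higher-order norms, together with the bound $\|\partial^\ell(Y - Y_0)\|_{L^\infty H^{m-\ell}} \lesssim_m \mathcal D_m$ for all $m$) shows that $Y$ and all its derivatives extend continuously to $\{x^0 = T\}$, so $Y \in C^\infty([0,T]\times \mathbb T^3)$, completing the proof of Step 5 and hence of Theorem \ref{thm:Existence}.
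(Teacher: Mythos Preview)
Your approach is correct and is essentially the same idea as the paper's: commute with higher derivatives, exploit that the low-regularity bound \eqref{Bootstrap bound again} already controls all the ``coefficient-type'' quantities (in particular $\|\partial g\|_{L^1 L^\infty}$ and $\|\omega\|_{L^1 L^\infty}$) with smallness $\epsilon_1$, so the higher-order estimates become effectively linear in the top-order unknown and close by absorption. The triangular structure you describe for closing $N$, $h$, $\beta$, $\bar g$, $\omega$, $e$ in order is exactly what the paper relies on.

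The paper's execution differs from yours in the choice of norms. Rather than shifting all the Sobolev indices in $\mathcal Q_g, \mathcal Q_k, \mathcal Q_\perp$ from $s$ up to $m$, the paper defines a single quantity
\[
\mathscr B_{(m)}(\tau) = \|\partial^{m+1} k\|_{L^\infty_{[0,\tau)}H^{-1+\f14\delta_0}} + \|\partial^{m+1} k\|_{L^{\f73}_{[0,\tau)}W^{-2+\f14\delta_0,14}} + \|\partial^{m+1} g\|_{L^\infty_{[0,\tau)} H^{\f18\delta_0}} + \ldots
\]
with \emph{fixed} low spatial Sobolev indices and only the number of total derivatives increasing with $m$. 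This buys two things. First, the delicate structural arguments (cancellations in $R^\natural$, the $k\wedge k$ divergence structure, the $\tilde{\mathcal F}^\natural$ renormalization) never have to be re-run at the higher level: they were only needed to obtain the $L^1 L^\infty$ control on $\partial g$ and $\omega$, which is now fixed background data. Second, the paper only works out $m=1$ in detail and observes (in a footnote) that this already yields uniform $L^\infty H^{3+}$ control on $(Y,\partial Y)$, from which higher regularity can be obtained either by the obvious induction or by invoking classical well-posedness results for quasilinear wave equations. Your full replication of the bootstrap at every level $m$ would work, but is more laborious than necessary once one realizes that only a single step above the $s$-threshold is required to reach the classical regime.
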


\begin{proof}
In view of the fact that that higher order derivatives of $Y$ can be computed in terms of derivatives of $k$, $g$ and $e$ via the formula
\begin{align}\label{Schematic expression d2 Y again}
\partial_\a\partial_\b Y^A & = (\Pi)_B^{\ga}\partial_\a\partial_\b Y^B \partial_{\ga} Y^A +(\Pi^\perp)_{B}^{\bA}\partial_\a\partial_\b Y^B e_{\bA}^A \\
& = \Gamma^{\ga}_{\a\b} \partial_{\ga} Y^A + k^{\bA}_{\a\b} e_{\bA}^A,\nonumber
\end{align}
it suffices to show that the components of $k$, $g$ and $e$ can be extended as $C^\infty$ functions across $\{T\}\times \mathbb T^3$. To this end, let us define for any integer $m \ge 1$ and any $\tau \in [0,T)$:
\begin{align*}
\mathscr B_{(m)}(\tau) \doteq  & \|\partial^{m+1} k\|_{L^\infty_{[0,\tau)}H^{-1+\f14\delta_0}} + \|\partial^{m+1} k\|_{L^{\f73}_{[0,\tau)}W^{-2+\f14\delta_0,14}} \\
& + \|\partial^{m+1} g\|_{L^\infty_{[0,\tau)} H^{\f18\delta_0}} + \|\partial^{m+1} g\|_{L^4_{[0,\tau)}W^{\f18\delta_0,\f{12}5}} \\
& +\|\partial^m \omega\|_{L^\infty_{[0,\tau)} H^{\f18\delta_0}} + \|\partial^m \omega\|_{L^4_{[0,\tau)}W^{\f18\delta_0,\f{12}5}}\\
& +\| \partial^{m+1} e \|_{L^\infty_{[0,\tau)} H^{\f18\delta_0}}, 
\end{align*}
where $L^p_{[0,\tau)} W^{s,q}$ denotes the $L^p W^{s,q}$ mixed Sobolev space over the domain $[0,\tau) \times \mathbb T ^3$ and $\delta_0=\delta_0(s)$ is fixed as in Definition \ref{def:Norms 1}, so that, in particular
\[
0< \delta_0 < \min\big\{ \f1{10}(s - \f52 -\f16), \f1{100}\big\}.
\]
Then, the proof of Lemma \ref{lem:Regularity} will follow once we show that, for any $m\ge 1$:
\begin{equation}\label{Bound to show regularity}
\sup_{\tau \in [0,T)} \mathscr B_{(m)}(\tau) <+\infty.
\end{equation}
We will only establish \eqref{Bound to show regularity} for $m=1$, since the bound for higher $m$ follows by arguing inductively on $m$  and using the same reasoning (together with the inductive bound on $\mathscr B_{(m)}$) after commuting the equations with one additional coordinate derivative at each step of the induction.\footnote{Note that the bound  \eqref{Bound to show regularity} for $m=1$ already implies uniform $L^\infty H^{3+\f18\delta_0}\times L^\infty H^{2+\f18\delta_0}$ bounds for $(Y,\partial Y)$ on $\tau \in (0,T)$. From this, one could alternatively infer the propagation of higher order regularity using already established well-posedness results for equation \eqref{Minimal surface equation}.} By a slight abuse of notation, we will also denote
\[
\mathscr B_{(m)}(0) \doteq  \|\partial^{m+1} k|_{x^0=0}\|_{H^{-1+\f14\delta_0}} 
+ \|\partial^{m+1} g|_{x^0=0}\|_{H^{\f18\delta_0}} 
 +\|\partial^m \omega|_{x^0=0}\|_{H^{\f18\delta_0}}+\| \partial^{m+1} e|_{x^0=0} \|_{H^{\f18\delta_0}}.
\]

In order to establish \eqref{Bound to show regularity} for $m=1$, we will start from the wave equation \eqref{Covariant wave equation k} for $k$, which, when expressed as a wave equation for the components of $k$ in the gauge $\mathcal G$, takes the following schematic form:
\begin{equation}\label{Wave equation k once more}
\square_g k = g \cdot\partial g \cdot \partial k + g\cdot \omega \cdot \partial k+  (g\cdot \partial^2 g + g\cdot \partial g \cdot \partial g) \cdot k + g\cdot (\partial\omega + \omega \cdot \omega)\cdot k.
\end{equation}
Our assumption that the immersion $Y$ satisfies the bound \eqref{Bootstrap bound again} allows us to obtain Strichartz estimates (with a loss of $1/6$ derivatives) for equation \eqref{Wave equation k once more}: Arguing exactly as in the proof of Lemmas \ref{lem:Energy estimates} and \ref{lem:Strichartz estimates}, we obtain for any $\tau \in [0,T)$:
\begin{align}
\| \partial^2 k & \|_{L^\infty_{[0,\tau)} H^{-1+\f14\delta_0}} + \|  \partial^2 k \|_{L^{\f73}_{[0,\tau)} W^{-2+\f14\delta_0,14}}  \nonumber \\
& \lesssim \| \partial k |_{x^0=0}\|_{H^{\f14\delta_0}} + \Big\|   g\cdot \partial g\cdot  \partial k + g\cdot \omega \cdot \partial k+  (g\cdot\partial^2 g + g\cdot \partial g \cdot \partial g) \cdot k + g\cdot (\partial \omega + \omega \cdot \omega)\cdot k  \Big\|_{L^1_{[0,\tau)} H^{\f14\delta_0}}   \nonumber \\
& \lesssim \mathscr B_{(1)}(0) + \|g\|_{L^\infty W^{\f14\delta_0, \infty}} \cdot \Bigg\{\Big( \|\partial g\|_{L^1 W^{\f14\delta_0,\infty}} + \|\omega\|_{L^1W^{\f14\delta_0,\infty}} \Big)
\| \partial k \|_{L^\infty_{[0,\tau)} H^{\f14\delta_0}} \nonumber \\
&\hphantom{\lesssim \epsilon + (1+\|g\|} 
+ \Big( \|\partial^2 g\|_{L^{\f74} W^{\f14\delta_0,\f73}} + \|\partial \omega\|_{L^{\f74} W^{\f14\delta_0,\f73}} + \|\partial g\|_{L^\infty W^{\f14\delta_0,3}} \|\partial g\|_{L^{\f74} W^{\f14\delta_0,\f{21}2}} +\|\omega\|_{L^\infty W^{\f14\delta_0,3}} \|\omega\|_{L^{\f74} W^{\f14\delta_0,\f{21}2}} \Big) \| k \|_{L^{\f73}_{[0,\tau)} W^{\f14\delta_0,14}}
\Bigg\} \nonumber \\
& \lesssim \mathscr B_{(1)}(0) + \epsilon_1 \Big( \| \partial k \|_{L^\infty_{[0,\tau)} H^{\f14\delta_0}} +  \| k \|_{L^{\f73}_{[0,\tau)} W^{\f14\delta_0,14}} \Big) \nonumber  \\
& \lesssim \mathscr B_{(1)}(0) + \epsilon_1 \mathscr B_{(1)}(\tau),   \label{Bound k regularity}
\end{align}
where, in the last line above, we made use of the bound \eqref{Bootstrap bound again} for the coefficients of $k$ and $\partial k$ in the right hand side, as well as the fact that$H^{s-2} \hookrightarrow W^{\f14\delta_0,3}$ and $W^{1+s_1, \f73}\hookrightarrow W^{\f14\delta_0,\f{21}2}$. 

The relations \eqref{Gauss} and \eqref{Ricci} for $R$ and $R^\perp$, respectively, can be schematically expressed as
\[
R = m(e)\cdot  k \cdot k \quad \text{and} \quad R^\perp = g \cdot k \cdot k.
\]
Therefore, we can estimate:
\begin{align*}
\| \partial R \|_{L^\infty_{[0,\tau)} W^{\f14\delta_0,\f65}} 
& \lesssim \| \partial e \cdot k \cdot k \|_{L^\infty_{[0,\tau)} W^{\f14\delta_0,\f65}} + \| m(e) \cdot k \cdot \partial k \|_{L^\infty_{[0,\tau)} W^{\f14\delta_0,\f65}} \\
& \lesssim  \| \partial e\|_{L^\infty_{[0,\tau)} W^{\f14\delta_0,3}}\cdot \| k \|_{L^\infty_{[0,\tau)} W^{\f14\delta_0,3}} \cdot \| k \|_{L^\infty_{[0,\tau)} W^{\f14\delta_0,6}} \\
&\hphantom{ \lesssim  \| \partial e\|_{L^\infty_{[0,\tau)} W^{\f14\delta_0,3}}}
+ \| m(e)\|_{L^\infty_{[0,\tau)} W^{\f14\delta_0,\infty}} \| k\|_{L^\infty_{[0,\tau)} W^{\f14\delta_0,3}} \| \partial k \|_{L^\infty_{[0,\tau)} H^{\f14\delta_0}} \\
& \lesssim \big( \| m(e)\|_{L^\infty W^{\f14\delta_0,\infty}}  + \| \partial e\|_{L^\infty H^{s-2}}  \big) \cdot \| k \|_{L^\infty H^{s-2}}  \cdot  \| \partial k \|_{L^\infty_{[0,\tau)} H^{\f14\delta_0}} \\
& \lesssim  \mathscr B_{(1)}(0) + \epsilon_1 \mathscr B_{(1)}(\tau) 
\end{align*}
(where, in passing to the last line above, we used \eqref{Bootstrap bound again} to control $\big( \| m(e)\|_{L^\infty W^{\f14\delta_0,\infty}}  + \| \partial e\|_{L^\infty H^{s-2}}  \big) \cdot \| k \|_{L^\infty H^{s-2}}$ and \eqref{Bound k regularity} for the term $\| \partial k \|_{L^\infty_{[0,\tau)} L^2} $) and
\begin{align*}
\| \partial R \|_{L^4_{[0,\tau)} W^{\f14\delta_0,\f43}} 
& \lesssim \| \partial e \cdot k \cdot k \|_{L^4_{[0,\tau)} W^{\f14\delta_0,\f43}} + \| m(e) \cdot k \cdot \partial k \|_{L^4_{[0,\tau)} W^{\f14\delta_0,\f43}} \\
& \lesssim  \| \partial e\|_{L^4_{[0,\tau)} W^{\f14\delta_0,4}}\cdot \|k\|_{L^\infty_{[0,\tau)} W^{\f14\delta_0,3}} \cdot\| k \|_{L^\infty_{[0,\tau)} W^{\f14\delta_0,6}}\\
&\hphantom{ \lesssim  \| \partial e\|_{L^4_{[0,\tau)} W^{\f14\delta_0,4}}}
 + \| m(e)\|_{L^\infty_{[0,\tau)} W^{\f14\delta_0,\infty}} \cdot \| k\|_{L^4_{[0,\tau)} W^{\f14\delta_0,4}} \cdot \| \partial k \|_{L^\infty_{[0,\tau)} H^{\f14\delta_0}} \\
& \lesssim  \Big(\big(\| \partial e\|_{L^2 W^{\f14\delta_0,6}} + \|\partial e\|_{L^\infty W^{\f14\delta_0,3}}\big) \|k\|_{L^\infty H^{s-2}}  \Big) \|\partial  k \|_{L^\infty_{[0,\tau)} H^{\f14\delta_0}}\\
&\hphantom{ \lesssim  \| \partial e\|_{L^4_{[0,\tau)} W^{\f14\delta_0,4}}}
+ \Big( \| m(e)\|_{L^\infty W^{\f14\delta_0,\infty}} \| k\|_{L^4 W^{\f14\delta_0,4}} \Big)\| \partial k \|_{L^\infty_{[0,\tau)} H^{\f14\delta_0}} \\
& \lesssim \mathscr B_{(1)}(0) + \epsilon_1 \mathscr B_{(1)}(\tau).
\end{align*}
Similarly,
\[
\| \partial R^{\perp} \|_{L^\infty W^{\f14\delta_0,\f65}}  + \| \partial R^\perp \|_{L^4 W^{\f14\delta_0,\f43}}  \lesssim \mathscr B_{(1)}(0) + \epsilon_1 \mathscr B_{(1)}(\tau)  .
\]
Combining the above bounds, we obtain
\begin{equation}\label{Bounds curvature regularity}
\| \partial R \|_{L^\infty W^{\f14\delta_0,\f65}}   + \| \partial R \|_{L^4 W^{\f14\delta_0,\f43}} + \| \partial R^\perp \|_{L^\infty W^{\f14\delta_0,\f65}}   + \| \partial R^\perp \|_{L^4 W^{\f14\delta_0,\f43}} \lesssim \mathscr B_{(1)}(0) + \epsilon_1 \mathscr B_{(1)}(\tau) .
\end{equation}

The quantities $\tilde{\mathcal F}^\natural$ and $\tilde{\mathcal F}_\perp$ (see \eqref{F natural} and \eqref{F perp}, respectively, for their definition) satisfy the following schematic relations:
\begin{align*}
\tilde{\mathcal F}^{\natural} & = \sum_{\bar j }\sum_{j' \ge \bar j} \Bigg(  P_{\bar j} m(e) \cdot P_{j'} k \cdot P_{j'}(|D|^{-1} k) - \mathbb E \big[  \big( P_{\bar j} m(e) \cdot P_{j'}k \cdot P_{j'}(|D|^{-1} k)\big)|_{x^0=0}\big] \Bigg), \\
\tilde{\mathcal F}_\perp & = \sum_{\bar j }\sum_{j' \ge \bar j} \Bigg( \bar \partial \Big( g \cdot P_{\bar j} m(e) \cdot P_{j'} k \cdot P_{j'}(|D|^{-1} k) \Big)- \mathbb E \big[  \bar \partial \big( g \cdot P_{\bar j} m(e)) \cdot P_{j'}k \cdot P_{j'}(|D|^{-1} k)\big)|_{x^0=0}\big] \Bigg),
\end{align*}
where $\bar\partial$ denotes a derivative in a spatial direction.
Arguing similarly as for the proof of \eqref{Bounds curvature regularity} and using the bound \eqref{Extension bound} for the extension operator $\mathbb E$ and the fact that $j'\ge \bar j$ in the above summands, we can readily show that, for any $\tau \in [0,T)$:
\begin{align*}
\| \partial^2 \tilde{\mathcal F}^{\natural} \|_{L^\infty_{[0,\tau)} W^{\f14\delta_0,\f65}} 
& \lesssim \| \partial^2 e \|_{L^\infty_{[0,\tau)} L^{2} } \| |D|^{-\f12} k \|^2_{L^\infty W^{\f14\delta_0,6}}  \\
& \hphantom{\lesssim \| \partial^2 e  }
+ \| \partial e \|_{L^\infty L^3}  \| \partial |D|^{-1} k \|_{L^\infty W^{\f14\delta_0,3}}\| k \|_{L^\infty_{[0,\tau)} W^{\f14\delta_0,6}} \\
& \hphantom{\lesssim \| \partial^2 e}
+ \| m(e)\|_{L^\infty L^\infty}  \| k\|_{L^\infty W^{\f14\delta_0,3}}  \| \partial^2 |D|^{-1} k\|_{L^\infty_{[0,\tau)} H^{\f14\delta_0}}\\
& \lesssim \mathscr B_{(1)} (0)+ \epsilon_1  \mathscr B_{(1)}(\tau) 
\end{align*}
(where, in passing to the last line above, we made use of the bounds   \eqref{Bootstrap bound again} and \eqref{Bound k regularity}) and
\begin{align*}
\| \partial^2 \tilde{\mathcal F}^{\natural} \|_{L^4_{[0,\tau)} W^{\f14\delta_0,\f43}} 
& \lesssim \| \partial^2 e \|_{L^\infty_{[0,\tau)} L^{2} } \|k\|_{L^4 W^{\f14\delta_0,4}} \| |D|^{-1} k \|_{L^\infty W^{\f14\delta_0,\infty}}  \\
& \hphantom{\lesssim \| \partial^2 e }
+ \| \partial e \|_{L^{\infty} L^{3}}  \| \partial |D|^{-1} k \|_{L^4 W^{\f14\delta_0,4}}\| k \|_{L^{\infty}_{[0,\tau)} W^{\f14\delta_0,6}} \\
& \hphantom{\lesssim \| \partial^2 e }
+ \| m(e)\|_{L^\infty L^\infty}  \| k\|_{L^4 W^{\f14\delta_0,4}}  \| \partial^2 |D|^{-1} k\|_{L^\infty_{[0,\tau)} H^{\f14\delta_0}}\\
& \lesssim \mathscr B_{(1)} (0)+ \epsilon_1  \mathscr B_{(1)}(\tau) 
\end{align*}
Similarly,
\[
\| \partial \tilde{\mathcal F}_\perp\|_{L^\infty_{[0,\tau)} W^{\f14\delta_0,\f65}}+\| \partial \tilde{\mathcal F}_\perp\|_{L^4_{[0,\tau)} W^{\f14\delta_0,\f43}}  \lesssim  \mathscr B_{(1)} (0)+ \epsilon_1  \mathscr B_{(1)}(\tau).
\]
Combining the above bounds, we obtain  for any $\tau \in [0,T)$:
\begin{align}\label{Bounds F reqularity}
 \| \partial^2 \tilde{\mathcal F}^\natural \|_{L^\infty_{[0,\tau)} W^{\f14\delta_0,\f65}}  
+ \| \partial^2 \tilde{\mathcal F}^\natural \|_{L^4_{[0,\tau)} W^{\f14\delta_0,\f43}}  
 + \| \partial \tilde{\mathcal F}_\perp \|_{L^\infty_{[0,\tau)} W^{\f14\delta_0,\f65}}  
+ \| \partial \tilde{\mathcal F}_\perp \|_{L^4_{[0,\tau)} W^{\f14\delta_0,\f43}}   \lesssim    \mathscr B_{(1)} (0)+ \epsilon_1  \mathscr B_{(1)}(\tau). 
\end{align}

The components of the metric $g$ and the connection 1-form $\omega$ satisfy the parabolic-elliptic system \eqref{Equation lapse}--\eqref{Equation omega i}. Therefore, using the bound \eqref{Bounds F reqularity} for $\tilde{\mathcal F}^\natural$ and $\tilde{\mathcal F}_\perp$ and the estimates from Lemmas \ref{lem:Parabolic estimates model}--\ref{lem:Elliptic estimates model}, we can readily obtain following the same line of arguments as in Section \ref{subsec:Closing bootstrap metric}:
\begin{equation}\label{Estimates g regularity}
\|\partial^2 g\|_{L^\infty_{[0,\tau)} H^{\f18\delta_0}} + \|\partial^2 g\|_{L^4_{[0,\tau)}W^{\f18\delta_0,\f{12}5}} \lesssim 
B_{(1)} (0)+ \epsilon_1  \mathscr B_{(1)}(\tau)
\end{equation}
and
\begin{equation}\label{Estimates omega regularity}
\|\partial \omega\|_{L^\infty_{[0,\tau)} H^{\f18\delta_0}} + \|\partial \omega\|_{L^4_{[0,\tau)}W^{\f18\delta_0,\f{12}5}} \lesssim
 B_{(1)} (0)+ \epsilon_1  \mathscr B_{(1)}(\tau)
\end{equation}
(note the $\f18\delta_0$-loss compared to the analogous estimates for $R$ and $R^\perp$ in \eqref{Bounds curvature regularity}).

In view of the relation \eqref{Relation e Omega k} between $\partial e$, $\omega$ and $k$ and the assumption \eqref{Bootstrap bound again}, we can estimate
\begin{align}\label{Bound e regularity infty}
\| \partial^2 e \|_{L^\infty_{[0,\tau)} H^{\f18\delta_0}} 
\lesssim  &  \| e \|_{L^\infty W^{\f18\delta_0,\infty} }\| \partial \omega \|_{L^\infty_{[0,\tau)} H^{\f18\delta_0}} + \|\partial e\|_{L^\infty W^{\f18\delta_0,3} } \|\omega\|_{L^\infty_{[0,\tau)} W^{\f18\delta_0,6}} \\
& + \| g \cdot m(e) \cdot \partial Y \|_{L^\infty W^{\f18\delta_0,\infty}} \| \partial k \|_{L^\infty_{[0,\tau)} H^{\f18\delta_0}} 
+ \| m(e) \cdot \partial Y \|_{L^\infty W^{\f18\delta_0,\infty}} \|\partial g \|_{L^\infty W^{\f18\delta_0,3}} \|k\|_{L^\infty_{[0,\tau)} W^{\f18\delta_0,6}} \nonumber \\
& + \|g \cdot m(e)\|_{L^\infty W^{\f18\delta_0,\infty}} \|\partial^2 Y\|_{L^\infty W^{\f18\delta_0,3}} \|k\|_{L^\infty_{[0,\tau)} W^{\f18\delta_0,6}}\nonumber\\
\lesssim &  B_{(1)} (0)+ \epsilon_1  \mathscr B_{(1)}(\tau)
\end{align}
(where, in the last line above, we made use of \eqref{Bootstrap bound again}, \eqref{Bound k regularity} and \eqref{Estimates omega regularity}, as well as the expression \eqref{Schematic expression d2 Y again} for $\partial^2 Y$).

Combining the bounds \eqref{Bound k regularity}, \eqref{Estimates g regularity}, \eqref{Estimates omega regularity} and \eqref{Bound e regularity infty}, we obtain for any $\tau \in [0,T)$:
\[
\mathscr B_{(1)} (\tau) \lesssim \mathscr B_{(1)} (0) + \epsilon_1 \mathscr B_{(1)}(\tau) ,
\]
from which we can immediately infer that 
\[
\sup_{\tau \in [0,T)} \mathscr B_{(1)}(\tau) <+\infty,
\]
 provided $\epsilon_1$ is smaller than some constant depending only on $s$. 

The analogous estimate for $ \mathscr B_{(m)}(\tau) $ for $m>1$ follows in a similar way, after commuting the equations for $k$, $g$ and $\omega$ with coordinate derivatives and arguing inductively on $m$.

\end{proof}

\section{Geometric uniqueness: Proof of Theorem \ref{thm:Uniqueness}}\label{sec:Uniqueness}

In this section, we will establish the geometric uniqueness of the solutions constructed in Theorem \ref{thm:Existence} within the class of (not necessarily smooth) developments satisfying the bounds \eqref{Bound Y uniqueness}--\eqref{Bound omega uniqueness}. 

Let $Y:[0,T]\times \mathbb T^3 \rightarrow \mathcal N$ and $Y': [0,T']\times \mathbb T^3 \rightarrow \mathcal N$ be two immersions as in the statement of Theorem \ref{thm:Uniqueness}. The bounds \eqref{Bound Y uniqueness}--\eqref{Bound omega uniqueness} for $Y,Y'$ allow us to apply Proposition \ref{prop:Local existence gauge} for both $Y$ and $Y'$, with the given normal frames $e$ and $e'$ already satisfying the conditions of Lemma \ref{lem:Initial auxiliary frame} (in view of the bound \eqref{Bound omega uniqueness} for $e$ and $e'$).\footnote{Even though the statement of Proposition \ref{prop:Local existence gauge} assumes a smooth immersion, they do not assume that said immersion is a solution of \eqref{Minimal surface equation}; therefore, in our case, we can obtain the result of Proposition \ref{prop:Local existence gauge} for $Y$ and $Y'$ by applying the same proposition to a sequence of smooth approximations of $Y$ and $Y'$ in the topology defined by the bounds \eqref{Bound Y uniqueness}--\eqref{Bound omega uniqueness}.} Thus, we infer that there exist open neighborhoods $\mathcal U \subset [0,T]$ and $\mathcal U' \subset [0,T']$ of $\{0\}\times \mathbb T^3$ and gauges $\mathcal G = (\Psi,\tilde e)$ for $Y|_{\mathcal U}$ and $\mathcal G' = (\Psi',\tilde e')$ for $Y'|_{\mathcal U'}$ such that:
\begin{itemize}
\item The gauges $\mathcal G$ and $\mathcal G'$ satisfy the balanced condition. 
\item In view of the bounds \eqref{Bound Y uniqueness}--\eqref{Bound omega uniqueness}, Proposition \ref{prop:Local existence gauge} implies that the gauge $\mathcal G$ satisfies
\begin{align}\label{Bound under extra regularity assumptions uniqueness}
 \|\partial \Psi & \|_{L^\infty L^\infty} +  \sum_{l=0}^2\Big( \|\partial^{1+l} \Psi  \|_{L^\infty H^{s-1-l}}  + \|\partial^{1+l} \Psi\|_{L^2 H^{\f16+2-l+s_1}}\\
&\hphantom{\|_{L^\infty L^\infty} +  \sum_{l=0}^2\Big( \|\partial^{1+l}}
+\|\partial^{1+l} \Psi\|_{L^{\f74} W^{2-l+s_1, \f73}}+ \|\partial^{1+l} \Psi\|_{L^1 W^{1-l+\f18 \delta_0,\infty}} \nonumber \\
& + \sum_{\bA=4}^n \sum_{l=1}^2\Big( \|\partial^l (\tilde e_{\bA} - \delta_{\bA})\|_{L^\infty H^{s-1-l}}  + \|\partial^l(\tilde e_{\bA} - \delta_{\bA})\|_{L^2 H^{2-l+\f16+s_1}} \nonumber\\
&\hphantom{+ \sum_{\bA=4}^n \sum_{l=1}^2 \|\partial^l}
 + \|\partial^l(\tilde e_{\bA} - \delta_{\bA})\|_{L^{\f74} W^{2-l+s_1,\f73}}+ \|\partial^l (\tilde e_{\bA} - \delta_{\bA})\|_{L^1 W^{1-l+\f18\delta_0,\infty}}\Big)
\lesssim \epsilon \nonumber
\end{align}
and similarly for $\mathcal G'$.
\end{itemize}
Therefore, Theorem \ref{thm:Uniqueness} will follow as a corollary of the following result:

\begin{proposition}\label{prop:Uniqueness parabolic}
Let $s>\f52+\f16$ and let $Y^{(1)}:[0,T^{(1)}]\times \mathbb T^3$ and $Y^{(2)}:[0,T^{(2)}]\times \mathbb T^3$ be two solutions of the minimal surface equation \eqref{Minimal surface equation} which are developments of the same initial data set $(\bY,\bn)$ as in the statement of Theorem \ref{thm:Uniqueness}. Let also $\{e^{(1)}_{\bA}\}_{\bA=4}^n$ and $\{e^{(2)}_{\bA}\}_{\bA=4}^n$ be frames for the normal bundles of $Y^{(1)}$ and $Y^{(2)}$, respectively, such that the following conditions are satisfied:
\begin{enumerate}
\item The gauges $\mathcal G^{(1)} = \{\mathrm{\text{Id}}, e^{(1)}\}$ and $\mathcal G^{(2)} = \{\mathrm{\text{Id}}, e^{(2)}\}$ for $Y^{(1)}$ and $Y^{(2)}$, respectively, satisfy the balanced condition (see Definition \ref{def:Balanced gauge}).
\item Along the initial slice $\{0\}\times \mathbb T^3$, we have $e^{(1)}_{\bA} = e^{(2)}_{\bA}$ for all $\bA=4, \ldots, n$.
\item The immersions $Y^{(1)}$ and $Y^{(2)}$ satisfy the bounds \eqref{Bound Y uniqueness}--\eqref{Bound omega uniqueness} for some $\epsilon>0$ small enough in terms of $s$.
\end{enumerate}
Then $Y^{(1)}=Y^{(2)}$ and $e^{(1)}=e^{(2)}$ on $[0,\min\{T^{(1)},T^{(2)}\}]\times \mathbb T^3$.
\end{proposition}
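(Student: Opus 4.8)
The plan is to set up a difference estimate for the two solutions, working entirely within the balanced gauge, and to close a Gr\"onwall-type inequality for the differences $(\dot k, \dot g, \dot\omega) \doteq (k^{(1)}-k^{(2)}, g^{(1)}-g^{(2)}, \omega^{(1)}-\omega^{(2)})$ at one level of regularity below that used in Theorem \ref{thm:Existence}. Since both developments arise from the same initial data $(\bY,\bn)$ and satisfy $e^{(1)}_{\bA} = e^{(2)}_{\bA}$ on $\{x^0=0\}$, the arguments of Section \ref{sec: Bounds S0} (Proposition \ref{prop: Bounds S0}), applied to the difference, show that all of $(\dot k, \partial \dot k, \dot g, \partial \dot g, \dot\omega, \partial \dot e)$ vanish along $\{x^0=0\}$; this is the analogue of the initial-data bounds, but now with $\mathcal D$ replaced by $0$. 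The goal is then to show that the difference stays zero on a common slab $[0,T'']\times \mathbb T^3$, which by a standard open-closed (continuity) argument gives uniqueness up to $\min\{T^{(1)},T^{(2)}\}$ after also undoing the gauge to recover the diffeomorphism $x'$ with $Y = Y' \circ x'$.

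\textbf{Key steps, in order.} First I would fix the lowered-regularity norms: set $\mathcal{Q}_{\dot k}$, $\mathcal{Q}_{\dot g}$, $\mathcal{Q}_{\dot\perp}$ to be copies of the quantities in Definition \ref{def:Norms 1} but with every Sobolev/Besov exponent decreased by one (e.g.\ $\dot k \in L^\infty H^{s-3}\cap L^2 W^{-\f32-l+s_0,\infty}$, $\partial\dot g \in L^1 W^{-1+\f14\delta_0,\infty}$, etc.), and take as the bootstrap quantity $\mathcal B(\tau) \doteq \big(\mathcal Q_{\dot k}+\mathcal Q_{\dot g}+\mathcal Q_{\dot\perp}\big)\big|_{[0,\tau]\times\mathbb T^3}$. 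Second, subtract the wave equations \eqref{Wave equation k} for $k^{(1)}$ and $k^{(2)}$ to get $\square_{g^{(1)}} \dot k = (\square_{g^{(2)}} - \square_{g^{(1)}})k^{(2)} + (\text{difference of RHS})$, which is schematically $\square_{g^{(1)}}\dot k = g\cdot\partial g\cdot\partial\dot k + g\cdot\omega\cdot\partial\dot k + \dot g\cdot\partial^2 k + (\text{lower order in } \dot k,\dot g,\dot\omega)$, and run the energy estimate of Lemma \ref{lem:Energy estimates} together with the Strichartz estimate of Lemma \ref{lem:Strichartz estimates} (the latter using \cite{Ta} as a black box, exactly as before, since $g^{(1)}$ still satisfies $\partial g^{(1)}\in L^1 L^\infty$). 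The crucial point here is that terms like $\dot g \cdot \partial^2 k^{(2)}$ must be handled by the high-high structure: one writes $\partial^2 k^{(2)}$ via the Codazzi/wedge machinery of Proposition \ref{prop:Derivative k}, using the \emph{improved} expression \eqref{Schematic extraction two derivatives wedge product} (this is precisely why Remark 1 after that proposition flags it as needed for uniqueness), so that a spatial derivative is pulled off the high-frequency product and absorbed. Third, subtract the Gauss and Ricci equations \eqref{Gauss}, \eqref{Ricci} to express $\dot R = \partial e \cdot k \cdot \dot k + \ldots$ and $\dot R^\perp = \ldots$, and the difference of the source terms $\dot{\tilde{\mathcal F}}^\natural$, $\dot{\tilde{\mathcal F}}_\perp$, re-deriving the analogues of Lemmas \ref{lem:Riemann estimates}, \ref{lem:Bounds F natural}, \ref{lem:Bounds F perp} at the lowered regularity, again invoking the cancellations of Lemma \ref{lem:Cancellations} for the problematic components of $\dot R_{i0j0}$ and $\dot R^\perp$. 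Fourth, subtract the parabolic-elliptic system of Lemma \ref{lem:Parabolic elliptic system} for the two metrics (and for $\omega$), obtaining the same lower-triangular structure \eqref{Model linearized system} for $(\dot N, \dot h, \dot\beta, \dot{\bar g})$ and $\dot\omega$ with source terms controlled by $\mathcal B$ and by $\dot R, \dot R^\perp, \dot{\tilde{\mathcal F}}$, and apply the parabolic/elliptic estimates of the Appendix (Lemmas analogous to \ref{lem:Parabolic estimates model}, \ref{lem:Elliptic estimates model}) exactly as in Section \ref{subsec:Closing bootstrap metric}. Collecting everything yields $\mathcal B(\tau) \lesssim \epsilon\,\mathcal B(\tau) + \mathcal B(0)$ with $\mathcal B(0)=0$, so $\mathcal B\equiv 0$ for $\epsilon$ small; hence $(\dot k,\dot g,\dot\omega)\equiv 0$, which forces $Y^{(1)}=Y^{(2)}$ and $e^{(1)}=e^{(2)}$. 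Finally, composing with the gauge-reconstruction maps from Proposition \ref{prop:Local existence gauge} produces the $C^1$-diffeomorphism $x'$ with $Y = Y'\circ x'$ and the bound quoted in Remark 2 after Theorem \ref{thm:Uniqueness}.

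\textbf{Main obstacle.} The hard part will be the high-high interactions in the difference estimate, just as in the existence proof, but now sharpened because the difference quantities live one derivative below the existence-level norms, so the ``marginal'' room exploited in Lemma \ref{lem:Riemann estimates} and Section \ref{subsec:Closing bootstrap metric} is tighter. Concretely, terms of the form $\dot g\cdot\partial^2 k$ in the $\dot k$-equation and $\dot k\cdot k$ inside $\dot{\tilde{\mathcal F}}^\natural$, $\dot{\tilde{\mathcal F}}_\perp$ cannot be estimated by crude product inequalities; they require the full strength of the wedge-product extraction \eqref{Schematic extraction two derivatives wedge product}--\eqref{Schematic extraction two spatial derivatives wedge product} together with the delicate paraproduct bookkeeping (Littlewood--Paley splitting into LH/HH pieces, using the $\mathcal T^{(i)}_j$ operators to trade a $|D|^{-1}$ against a spatial derivative, and the Besov-space refinements $B^{s_1+\f18\delta_0}_{\f73,\f74}$ used throughout Section \ref{sec:Bootstrap}). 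A secondary technical point is that one must verify the difference of the balanced-gauge source functions still vanishes to the right order along $\{x^0=0\}$ (using \eqref{Vanishing F initial}--\eqref{Vanishing F initial 2} and the fact that the two gauges agree initially), so that no spurious initial contribution enters the Gr\"onwall inequality; this is routine but needs care because $\mathbb E$ mixes frequencies. Everything else is a mechanical, if lengthy, repetition of the estimates already carried out for Theorem \ref{thm:Existence} with exponents shifted down by one.
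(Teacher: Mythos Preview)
Your overall strategy is right, but there is a genuine structural gap in how you propose to set up both the difference norms and the difference equations, and the paper handles this quite differently.

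First, the norm: you include $\partial\dot g\in L^1 W^{-1+\frac14\delta_0,\infty}$ in your bootstrap quantity. The paper explicitly does \emph{not} control this; its difference norm $\|\cdot\|_{\mathfrak U}$ controls only $\|\dot g\|_{L^1 W^{\frac14\delta_0,\infty}}$ (zeroth order in $\dot g$), together with $L^\infty H^{s-3}$, $L^2 H^{\frac16+s_1}$ and $L^{\frac74}W^{s_1,\frac73}$ bounds on $\partial\dot g$ and $\dot\omega$. There is no way to recover an $L^1L^\infty$ bound on $\JapD^{-1}\partial\dot g$ or $\JapD^{-1}\dot\omega$ from the parabolic--elliptic system at this regularity. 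Consequently, the low--high terms in the energy estimate for $\dot k$ that look like $(g\cdot\partial\dot g)_{\le j}\cdot\partial k^{(-1,j)}$ or $\dot\omega_{\le j}\cdot k_j$ cannot be treated the way their undotted counterparts were in Lemma~\ref{lem:Energy estimates}; instead the paper uses the $L^{\frac74}W^{s_1,\frac73}$ norm on $\partial\dot g,\dot\omega$ paired with a mixed $L^{\frac73+}L^{14-}$ Strichartz norm on $k$, exactly as in the estimate \eqref{Low high L74}. Your plan to simply shift every exponent down by one would miss this.

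Second, the equations: you propose to subtract the second-order wave equation \eqref{Wave equation k} directly. The paper instead applies $P_j$ to the \emph{first-order} system (the minimal surface equation \eqref{Minimal surface equation j} and Codazzi \eqref{Codazzi j}) and only then differentiates to obtain \eqref{Wave equation k j careful}. The point is that this puts a \emph{total} derivative outside every top-order high--high product (e.g.\ $\partial^2[P_j(\sum_{j'>j} g_{j'}k_{j'})]$ rather than $\sum_{j'>j}P_j(g_{j'}\partial^2 k_{j'})$), which is what makes the high--high terms tractable at one derivative below. Moreover, one of those total derivatives is a $\partial_0$, which cannot be absorbed by the $L^2L^1$ Strichartz dual norm; the paper handles this by the auxiliary splitting $k_j=\partial_0 k^{(-1,j)}+k^{(0,j)}$ with $k^{(-1,j)}$ solving \eqref{First system k -1}, so that the $\partial_0$(high--high) piece is pushed into a separate wave equation. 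The same ``project-then-differentiate'' rewriting is done for the elliptic equations for $h$ and $\bar\omega$ (yielding \eqref{Littlewood Paley Elliptic h} and \eqref{Gauge condition omega bar j}) and ensures that no $\partial_0^2 g$ or $\partial_0\omega$ terms appear anywhere in the system for $(\dot k,\dot g,\dot\omega)$, which is essential since $\|\cdot\|_{\mathfrak U}$ does not control them. Your proposal does not flag this restructuring, and without it the high--high interactions in the $\dot k$ equation at regularity $H^{s-3}$ do not close. The wedge extraction \eqref{Schematic extraction two derivatives wedge product} is indeed used, but for the curvature source terms in the $\dot g$ equations (in particular to get the $L^1L^\infty$ bound on $\dot g$ from $\dot R_{**}$), not to handle $\dot g\cdot\partial^2 k$ as you suggest; that term is dealt with directly via $\|\dot g\|_{L^1L^\infty}\cdot\|\partial^2 k^{(-1,j)}\|_{L^\infty L^2}$ once the $k^{(-1,j)}$ decomposition is in place.
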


The rest of this section we will be devoted to the proof of Proposition \ref{prop:Uniqueness parabolic}.

\bigskip
\noindent \textbf{Remark 1.}
For the rest of this section, we will adopt some notational conventions in addition to the ones outlined in the previous sections:

We will use the superscript $\dot{~}$ to denote the difference between quantities associated to the immersions $Y^{(1)}$ and $Y^{(2)}$; for instance, we will denote
\[
 \dot k = k^{(1)}-k^{(2)}, \quad \dot g = g^{(1)}-g^{(2)}, \quad \dot\omega = \omega^{(1)}-\omega^{(2)}, \quad \text{etc.}
\]
We will also use the notation $Y, g, k, \omega$~(i.e.~without the superscripts $\,^{(1)}, \,^{(2)}$) to denote either of $Y^{(1)}, g^{(1)}, k^{(1)}, \omega^{(1)}$ or $Y^{(2)}, g^{(2)}, k^{(2)}, \omega^{(2)}$ when no confusion arises (and similarly for other objects associated to the immersions $Y^{(1)}$ and $Y^{(2)}$. 
We will also set 
\[
T \doteq \min\big\{ T^{(1)}, T^{(2)} \big\}.
\]

\medskip
\noindent \textbf{Remark 2.} 
In view of our assumption that $Y^{(1)}$ and $Y^{(2)}$ are developments of the same initial data pair $(\bY, \bn)$ at $\{x^0=0\}$ and $e|_{x^0=0} = e'|_{x^0=0}$, we can calculate using the minimal surface equation \eqref{Minimal surface equation} and the fact that both  $\mathcal G^{(1)} = \{\mathrm{\text{Id}}, e^{(1)}\}$ and $\mathcal G^{(2)} = \{\mathrm{\text{Id}}, e^{(2)}\}$  satisfy the balanced gauge condition:
\begin{align}\label{Same initial data}
\partial^j \dot Y|_{x^0=0} = 0 \, \text{for } j=1,2, \quad \partial^l \dot k|_{x^0=0} = 0 \, \text{for } l=0,1,2, \\
\partial^l \dot g|_{x^0=0} = 0 \, \text{for } l=0,1,2, \quad \partial^j \dot \omega|_{x^0=0} = 0  \, \text{for } j=1,2  \nonumber
\end{align}
(calculating the above quantities in terms of $(\bY, \bn)$ can be done using the process carried out in the proof of Proposition \ref{prop: Bounds S0}).

\subsection{Equations satisfied by $(k,g,\omega)$: Revisited}
In this section, we will  carefully derive the frequency projected versions of the equations satisfied by the tensor fields $(k, g, \omega)$ and $(\dot k, \dot g, \dot \omega)$. 

For the tensor fields $(k, g, \omega)$, we will perform this process in a different way than in the proof of Theorem \ref{thm:Existence}, since we will need to then estimate the differences $(\dot k , \dot g, \dot\omega)$ in spaces of lower regularity compared to $(k, g, \omega)$ and, thus, the high-high frequency interactions in the equations for  $(\dot k , \dot g, \dot\omega)$ will pose a bigger hurdle. In order to overcome the issues related to the high-high interactions, we will start with the first order equations implied by \eqref{Minimal surface equation} for $k$ and the balanced gauge condition (see Definition \ref{def:Balanced gauge}) for $(g,\omega)$, apply a Littlewood--Paley projection and \emph{then} differentiating in order to end up with the second order equations (as opposed to applying the frequency projection on the second order equation directly, as we did, for instance, in deriving equation \eqref{Wave equation projection} for $P_j k$).

Recall that $k$ satisfies the minimal surface equation \eqref{Minimal surface equation}, namely
\begin{equation*}
g^{\a\b} k^{\bA}_{\a\b} = 0,  
\end{equation*}
together with the Codazzi equations
\[
\partial_\a k^{\bA}_{\b\ga} = \partial_\b k^{\bA}_{\a\ga} + (g \cdot \partial g + \omega) \cdot k.
\]
For any $j\in \mathbb N$, we therefore infer that, schematically:
\begin{equation}\label{Minimal surface equation j}
P_{\le j} g^{\a\b} \cdot P_j k^{\bA}_{\a\b} = P_j g \cdot P_{\le j} k + P_j \Big( \sum_{j'>j} P_{j'} g \cdot P_{j'}k\Big)
\end{equation}
and 
\begin{equation}\label{Codazzi j}
\partial_\a P_j k^{\bA}_{\b\ga} = \partial_\b P_j k^{\bA}_{\a\ga} + P_j \big((g \cdot \partial g + \omega) \cdot k\big).
\end{equation}
Differentiating \eqref{Codazzi j} and contracting with $P_{\le j} g$, we therefore infer
\begin{align*}
P_{\le j} g^{\a\delta} \partial_\a \partial_{\delta}  P_j k^{\bA}_{\b\ga} 
& \stackrel{\hphantom{\eqref{Codazzi j}}}{=}
 P_{\le j} g^{\a\delta} \partial_{\delta} \partial_\b P_j k^{\bA}_{\a\ga} + P_{\le j} g \cdot \partial \Big(P_j \big((g \cdot \partial g + \omega) \cdot k\big)\Big) 
\\
& \stackrel{\hphantom{\eqref{Codazzi j}}}{=}
 \partial_\b \big(P_{\le j} g^{\a\delta} \partial_{\delta} P_j k^{\bA}_{\a\ga} \big) + (\partial P_{\le j} g)\cdot \partial P_j k + P_{\le j} g \cdot \partial \Big(P_j \big((g \cdot \partial g + \omega) \cdot k\big)\Big) 
\\
& \stackrel{\eqref{Codazzi j}}{=}
 \partial_\b \big(P_{\le j} g^{\a\delta} \partial_{\ga} P_j k^{\bA}_{\a\delta} \big) + (\partial P_{\le j} g)\cdot \partial P_j k + P_{\le j} g \cdot \partial \Big(P_j \big((g \cdot \partial g + \omega) \cdot k\big)\Big) 
\\
& \stackrel{\hphantom{\eqref{Codazzi j}}}{=}
 \partial_\b  \partial_{\ga}\big(P_{\le j} g^{\a\delta} P_j k^{\bA}_{\a\delta} \big)
+\partial \big( \partial P_{\le j} g \cdot P_j k\big) + (\partial P_{\le j} g)\cdot \partial P_j k + P_{\le j} g \cdot \partial \Big(P_j \big((g \cdot \partial g + \omega) \cdot k\big)\Big) 
\\
& \stackrel{\eqref{Minimal surface equation j}}{=}
\partial^2 \big(P_{j} g P_{\le j} k\big)+ \partial^2 \Big[P_j\Big(\sum_{j'>j} P_{j'} g \cdot P_{j'} k \Big) \Big]
+\partial \big( \partial P_{\le j} g \cdot P_j k\big) \\
& \hphantom{=\sum\sum}+ (\partial P_{\le j} g)\cdot \partial P_j k + P_{\le j} g \cdot \partial \Big(P_j \big((g \cdot \partial g + \omega) \cdot k\big)\Big),
\end{align*}
from which we infer that, using the shorthand notation $f_j$ for $P_j f$: 
\begin{align}\label{Wave equation k j careful}
\square_{g_{\le j}} k_j = & g_{\le j} \cdot \partial g_{\le j} \cdot \partial k_{j}   
+ \partial^2 \big(g_j \cdot k_{\le j}\big)+ \partial^2 \Big[P_j\Big(\sum_{j'>j} g_{j'} \cdot k_{j'} \Big) \Big]
 \\
& \hphantom{=\sum\sum}
+\partial \big( \partial g_{\le j} \cdot k_{j}\big) + g_{\le j} \cdot \partial \Big(P_j \big((g \cdot \partial g + \omega) \cdot k\big)\Big)
\nonumber\\
= & g_{\le j} \cdot \partial g_{\le j} \cdot \partial k_{j}   
+ \partial^2 \big(g_j \cdot k_{\le j}\big)+ \partial \big( \partial g_{\le j} \cdot k_{j}\big) 
+ \partial(g_{\le j}) \cdot P_j \big((g \cdot \partial g + \omega) \cdot k\big)
 \nonumber \\
& \hphantom{=\sum\sum}
+ \partial^2 \Big[P_j\Big(\sum_{j'>j} g_{j'} \cdot k_{j'} \Big) \Big] + \partial \Big(g_{\le j} \cdot P_j \big((g \cdot \partial g + \omega) \cdot k\big)\Big)
\nonumber\\
= & g_{\le j} \cdot \partial g_{\le j} \cdot \partial k_{j}   
+ \partial \bar\partial \big(g_j \cdot k_{\le j}\big)+ \bar \partial \big( \partial g_{\le j} \cdot k_{j}\big) 
+ \partial(g_{\le j}) \cdot P_j \big((g \cdot \partial g + \omega) \cdot k\big)
 \nonumber \\
& \hphantom{=\sum\sum}
+ \bar\partial\partial \Big[P_j\Big(\sum_{j'>j} g_{j'} \cdot k_{j'} \Big) \Big] + \bar\partial \Big(g_{\le j} \cdot P_j \big((g \cdot \partial g + \omega) \cdot k\big)\Big)
\nonumber\\
& \hphantom{=}
+ \partial_0 \Big\{\partial \Big[P_j\Big(\sum_{j'>j} g_{j'} \cdot k_{j'} \Big) \Big] + g_{\le j} \cdot P_j \big((g \cdot \partial g + \omega) \cdot k\big)+ \partial\big(g_j \cdot k_{\le j}\big)+ \partial g_{\le j} \cdot k_{j} \Big\},
\nonumber
\end{align}
where $\square_{g_{\le j}}$ is applied component-wise to $k_j$ on the left hand side above and $\bar\partial \in \{\partial_1, \partial_2, \partial_3\}$ denotes a derivative in a spatial direction. Note that, in \eqref{Wave equation k j careful}, all the top order high-high interaction terms have a total derivative outside of the corresponding product (as opposed to \eqref{Wave equation projection}); this is a consequence of the fact that, in deriving  \eqref{Wave equation k j careful}, we applied the Littlewood--Paley projection before the differentiation of  \eqref{Codazzi}.

Let us define $\mathcal F^{(j)}$ as the term inside the $\partial_0$ derivative in the last line of \eqref{Wave equation k j careful}, i.e.~$\mathcal F^{(j)}$ is of the form
\[
\mathcal F^{(j)} = \partial \Big[P_j\Big(\sum_{j'>j} g_{j'} \cdot k_{j'} \Big) \Big] + g_{\le j} \cdot P_j \big((g \cdot \partial g + \omega) \cdot k\big) + \partial\big(g_j \cdot k_{\le j}\big)+ \partial g_{\le j} \cdot k_{j} .
\]

 For each $j\in \mathbb N$, we will decompose $k_j$ as
\begin{equation}\label{Decomposition k j}
k_j = \partial_0 k^{(-1, j)}+k^{(0,j)},
\end{equation}
by the condition that $k^{(-1, j)}$ solves 
\begin{equation}\label{First system k -1}
\begin{cases}
(g_{\le j})^{\alpha\beta} \partial_\alpha \partial_\beta k^{(-1,j)} & = \mathcal F^{(j)} \\
& = \partial \Big[P_j\Big(\sum_{j'>j} g_{j'} \cdot k_{j'} \Big) \Big] + g_{\le j} \cdot P_j \big((g \cdot \partial g + \omega) \cdot k\big) \\
& \hphantom{=}+ \partial\big(g_j \cdot k_{\le j}\big)+ \partial g_{\le j} \cdot k_{j},\\[10pt]
\Big(k^{(-1,j)}|_{x^0=0}, & \hspace{-3em} \partial_0 k^{(-1,j)}|_{x^0=0} \Big)  = \big(0,0\big).
\end{cases}
\end{equation}
This implies, in view of the expression
\begin{equation}\label{Expansion wave operator}
\square_{g_{\le j}} = (g_{\le j})^{\alpha \beta} \partial_\alpha \partial_\beta - (g_{\le j})^{\alpha \beta}\Gamma[g_{\le j}]_{\alpha\beta}^\gamma \partial_\gamma
\end{equation}
for the wave operator, that $k^{(0,j)}$ solves
\begin{equation}\label{First system k 0}
\begin{cases}
(g_{\le j})^{\alpha\beta} \partial_\alpha \partial_\beta k^{(0,j)} = 
 & g_{\le j} \cdot \partial g_{\le j} \cdot \partial k_{j}   
+ \partial\bar\partial \big(g_j \cdot k_{\le j}\big)+ \bar\partial \big( \partial g_{\le j} \cdot k_{j}\big) 
+ \partial(g_{\le j}) \cdot P_j \big((g \cdot \partial g + \omega) \cdot k\big)
 \\
& \hphantom{=\sum\sum}
+ \bar\partial\partial \Big[P_j\Big(\sum_{j'>j} g_{j'} \cdot k_{j'} \Big) \Big] + \bar\partial \Big(g_{\le j} \cdot P_j \big((g \cdot \partial g + \omega) \cdot k\big)\Big)
\\
& \hphantom{=\sum\sum}
+ \partial g_{\le j} \cdot \partial^2 k^{(-1,j)}+  \partial g_{\le j} \cdot \partial k_j,
\\[10pt]
\Big(k^{(0,j)}|_{x^0=0},  & \hspace{-4em} \partial_0 k^{(0,j)}|_{x^0=0}\Big)  =\Big(k_j|_{x^0=0},k_j|_{x^0=0}- ((g_{\le j})^{00})^{-1} \cdot \mathcal F^{(j)}|_{x^0=0}\Big).
\end{cases}
\end{equation}
 Note that $k^{(-1, j)}$ and $k^{(0,j)}$ are not necessarily supported on frequencies $\|\xi\|\sim 2^j$, unlike $k_j$; however, we expect $k^{(-1, j)}$ and $k^{(0,j)}$ to satisfy similar estimates as $\JapD^{-1} k_j$ and $k_j$, respectively. Moreover, using the expression \eqref{Expansion wave operator} for the wave operator, we can reexpress \eqref{First system k -1}--\eqref{First system k 0} as
\begin{equation}\label{Wave system k -1}
\begin{cases}
\square_{g_{\le j}} k^{(-1,j)} & =  \partial \Big[P_j\Big(\sum_{j'>j} g_{j'} \cdot k_{j'} \Big) \Big] + g_{\le j} \cdot P_j \big((g \cdot \partial g + \omega) \cdot k\big) + \partial\big(g_j \cdot k_{\le j}\big) \\
& \hphantom{=\sum\sum}
+ \partial g_{\le j} \cdot k_{j} + \partial g_{\le j} \cdot \partial k^{(-1,j)},\\[10pt]
\Big(k^{(-1,j)}|_{x^0=0}, & \hspace{-0.5em} \partial_0 k^{(-1,j)}|_{x^0=0} \Big)  = \big(0,0\big)
\end{cases}
\end{equation}
and 
\begin{equation}\label{Wave system k 0}
\begin{cases}
\square_{g_{\le j}} k^{(0,j)} & = 
  g_{\le j} \cdot \partial g_{\le j} \cdot \partial k_{j}   
+ \partial\bar\partial \big(g_j \cdot k_{\le j}\big)+ \bar\partial \big( \partial g_{\le j} \cdot k_{j}\big) 
+ \partial(g_{\le j}) \cdot P_j \big((g \cdot \partial g + \omega) \cdot k\big)
 \\
& \hphantom{=\sum\sum}
+ \bar\partial\partial \Big[P_j\Big(\sum_{j'>j} g_{j'} \cdot k_{j'} \Big) \Big] + \bar\partial \Big(g_{\le j} \cdot P_j \big((g \cdot \partial g + \omega) \cdot k\big)\Big)
\\
& \hphantom{=\sum\sum}
+ \partial g_{\le j} \cdot \partial^2 k^{(-1,j)}+  \partial g_{\le j} \cdot \partial k^{(0,j)},
\\[10pt]
\Big(k^{(0,j)}|_{x^0=0}, & \hspace{-0.5em} \partial_0 k^{(0,j)}|_{x^0=0}\Big)  =\Big(k_j|_{x^0=0},k_j|_{x^0=0}- ((g_{\le j})^{00})^{-1} \cdot \mathcal F^{(j)}|_{x^0=0}\Big).
\end{cases}
\end{equation}

\medskip
\noindent \textbf{Remark.} Note that the equations \eqref{Wave system k -1}--\eqref{Wave system k 0} \emph{do not} contain second order time derivatives of the metric coefficients and first order time derivatives of the normal connection form (i.e.~terms of the form $\partial_0^2 g$ and $\partial_0 \omega$).
\medskip

Let us now collect the gauge equations satisfied by $(g, \omega)$. We will follow a similar process of frequency localization before the second differentiation for the elliptic equations implied by the balanced gauge condition for $h$ and $\bar\omega$.
\begin{itemize}
\item The metric components $N$, $\beta$ and $\bar g$ satisfy \eqref{Equation lapse}, \eqref{Equation shift} and \eqref{Equation g bar} respectively, namely
\begin{align}\label{Equation lapse once again}
\partial_0 (N-1) + |D| (N-1) = &  |D|  \Delta_{\bar g} ^{-1} \Bigg[ \f1N \bar{g}^{ab}\big( \partial_0  \tilde{\mathcal{F}}^{\natural}_{ab} - R_{a0b0}\big)\Bigg]\\
& + |D|\Delta_{\bar g}^{-1} \Bigg[ g \cdot R_{*} + g \cdot \partial g \cdot \tilde{\mathcal{F}}^{\natural} + (g-m_0) \cdot D\partial g  + g \cdot \partial g \cdot \partial g \Bigg],
\nonumber 
\end{align}
\begin{equation}
\partial_0 \b^k + |D|\b^k = |D|\Delta_{\bar g}^{-1} \Bigg[g \cdot R_*+ g \cdot Dh +  (g-m_0) \cdot D\partial g  +  g \cdot \partial g \cdot \partial g  \Bigg]
\end{equation}
and
\begin{equation}
\bar g^{ab}\partial_a \partial_b (\bar{g}) =  g \cdot R_{**} + g \cdot D^2\beta  +g\cdot \partial g \cdot \partial g + D \big( g \cdot \partial g\big),   
\end{equation}
where $R_{**}$ denotes any the purely spatial components of $R$. Using the expression \eqref{Calculation F natural difference 2 derivatives} for $ \partial_0  \tilde{\mathcal{F}}^{\natural}_{ab} - R^\natural_{a0b0}$, the expression \eqref{Schematic extraction two derivatives wedge product} for the low-high-high components $R^\natural_*$ (see also\eqref{Schematic extraction two spatial derivatives wedge product} for the case of $R^\natural_{**}$), as well as the expression \eqref{Decomposition R difference} for the components of $R-R^{\natural}$, the above relations yield:
\begin{align}\label{Equation lapse once more}
\partial_0 (N-1) + |D| (N-1) = &  
\partial_0 \big( \mathfrak F_{N}^{(0)} +|D|\Delta_{\bar g}^{-1} [(g-m_0)\cdot D g]\big) + \mathfrak F_{N}^{(1)}+|D|\Delta_{\bar g}^{-1}\Big[  (g-m_0) \cdot D^2 g + g \cdot \partial g \cdot \partial g \Big],
\end{align}

\begin{align}\label{Equation shift once more}
\partial_0 \b^k + |D|\b^k =&  \partial_0 \big(\mathfrak F_{\beta}^{(0)} +|D|\Delta_{\bar g}^{-1} [(g-m_0)\cdot D g]\big)+ \mathfrak F_{\beta}^{(1)} + |D|\Delta_{\bar g}^{-1} \Big[g \cdot D h +  (g-m_0) \cdot D^2 g  +  g \cdot \partial g \cdot \partial g \Big]
\end{align}
and
\begin{equation}\label{Equation g bar once more}
\bar g^{ab}\partial_a \partial_b (\bar{g}) =  g \cdot D^2 \beta+ (g-m_0)\cdot D^2 g + \mathfrak F_{\bar g},
\end{equation}
where
\begin{equation}\label{F N 0}
\mathfrak F_{N}^{(0)} =  |D|  \Delta_{\bar g} ^{-1} \sum_{j\in \mathbb N} \sum_{\substack{j_2 > j_1-2,\\|j_2-j_3|\le 2}} g \cdot  D  P_j \Big( P_{j_1} (m(e)) \cdot  P_{j_2} (\JapD^{-1}k) \cdot P_{j_3} (\JapD^{-1} k) \Big)
\end{equation}
(i.e.~the total time derivative term $\partial_0 \mathfrak F^{(0)}$  in the equation for $N$ arises from the term $\partial_0 D  P_j \Big( \big( P_{j_1} (m(e)) \cdot  P_{j_2} (\JapD^{-1}k) \cdot P_{j_3} (\JapD^{-1} k)\big) \Big) $ in  \eqref{Schematic extraction two derivatives wedge product}), 
\begin{align}\label{F N 1}
\mathfrak F_{N}^{(1)} =
& |D|  \Delta_{\bar g} ^{-1} \sum_{j\in \mathbb N} \Bigg[  \sum_{\substack{j_2 > j_1-2,\\|j_2-j_3|\le 2}} g \cdot  D^2  P_j \Big( P_{j_1} (m(e)) \cdot  P_{j_2} (\JapD^{-1}k) \cdot P_{j_3} (\JapD^{-1} k)\Big)  
 \\
& \hphantom{|D|  \Delta_{\bar g} ^{-1} \sum_{j\in \mathbb N} \Bigg[}
+ \sum_{\substack{j_2 > j_1-2,\\|j_2-j_3|\le 2}} \partial g \cdot  D  P_j \Big( P_{j_1} (m(e)) \cdot  P_{j_2} (\JapD^{-1}k) \cdot P_{j_3} (\JapD^{-1} k) \Big) \nonumber \\
& \hphantom{|D|  \Delta_{\bar g} ^{-1} \sum_{j\in \mathbb N} \Bigg[}
+ \sum_{\substack{j_2 > j_1-2,\\|j_2-j_3|\le 2}} D\partial g \cdot   P_j \Big(  P_{j_1} (m(e)) \cdot  P_{j_2} (\JapD^{-1}k) \cdot P_{j_3} (\JapD^{-1} k)\Big) \nonumber \\
& \hphantom{|D|  \Delta_{\bar g} ^{-1} \sum_{j\in \mathbb N} \Bigg[}
 +   \sum_{\substack{j_2 > j_1-2,\\|j_2-j_3|\le 2}} g\cdot P_j \big( P_{j_1} (\partial e)  \cdot P_{j_2} k \cdot P_{j_3} (\JapD^{-1} k) \big)  
\nonumber  \\
& \hphantom{|D|  \Delta_{\bar g} ^{-1} \sum_{j\in \mathbb N} \Bigg[}
+   \sum_{\substack{j_2 > j_1-2,\\|j_2-j_3|\le 2}} g \cdot P_j \big( P_{j_1} (m(e)) \cdot P_{j_2} ( g\cdot\partial g \cdot k + \omega \cdot k) \cdot P_{j_3} (\JapD^{-1} k) \big)  
\nonumber\\
& \hphantom{|D|  \Delta_{\bar g} ^{-1} \sum_{j\in \mathbb N} \Bigg[}
+   \sum_{\substack{j_2 > j_1-2,\\|j_2-j_3|\le 2}} g\cdot  P_j \big( P_{j_1} (\partial e) \cdot P_{j_2} (\JapD^{-1}( g\cdot\partial g \cdot k + \omega \cdot k)) \cdot P_{j_3} (\JapD^{-1} k) \big) 
 \nonumber\\
& \hphantom{|D|  \Delta_{\bar g} ^{-1} \sum_{j\in \mathbb N} \Bigg[} 
+   \sum_{\substack{j_2 > j_1-2,\\|j_2-j_3|\le 2}} g \cdot P_j \big( P_{j_1} (D\partial e) \cdot P_{j_2} (\JapD^{-1}k) \cdot P_{j_3} (\JapD^{-1} k) \big)
 \nonumber\\
& \hphantom{|D|  \Delta_{\bar g} ^{-1} \sum_{j\in \mathbb N} \Bigg[}
+ g \cdot P_j \Big(P_{\le j}(m(e)) \cdot P_{\le j}k \cdot P_j k\Big) + g\cdot P_j\Big( P_j(m(e)) \cdot P_{\le j}k\cdot P_{\le j} k\Big) 
\nonumber \\
& \hphantom{|D|  \Delta_{\bar g} ^{-1} \sum_{j\in \mathbb N} \Bigg[}
  + \sum_{j'>j} g\cdot P_j \Big( P_{j'}(m(e)) \cdot P_{\le j'}k \cdot P_{j'}k \Big) \Bigg]
 \nonumber\\
& + |D|\Delta_{\bar g}^{-1} \Bigg[ g \cdot \partial g \cdot \sum_j \sum_{\substack{j_2 > j_1-2,\\|j_2-j_3|\le 2}}P_j \Big(P_{j_1}(m(e))\cdot P_{j_2}k \cdot \JapD^{-1} P_{j_3} k\Big)\Bigg],
\nonumber 
\end{align}
the terms $\mathfrak F^{(0)}_\beta$ and $\mathfrak F^{(1)}_\beta$ have the same schematic expression as $\mathfrak F^{(0)}_N$ and $\mathfrak F^{(1)}_N$, respectively, and
\begin{align}\label{F bar g}
\mathfrak F_{\bar g} = &
\sum_{j\in \mathbb N} \Bigg[  \sum_{\substack{j_2 > j_1-2,\\|j_2-j_3|\le 2}} g \cdot D^2  P_j \Big( \big( P_{j_1} (m(e)) \cdot  P_{j_2} (\JapD^{-1}k) \cdot P_{j_3} (\JapD^{-1} k)\big) \Big)  
\\
& \hphantom{|D|  \Delta_{\bar g} ^{-1} \sum_{j\in \mathbb N} \Bigg[}
 +   \sum_{\substack{j_2 > j_1-2,\\|j_2-j_3|\le 2}} g\cdot P_j \big( P_{j_1} (\partial e)  \cdot P_{j_2} k \cdot P_{j_3} (\JapD^{-1} k) \big)  
\nonumber  \\
& \hphantom{|D|  \Delta_{\bar g} ^{-1} \sum_{j\in \mathbb N} \Bigg[}
+   \sum_{\substack{j_2 > j_1-2,\\|j_2-j_3|\le 2}} g \cdot P_j \big( P_{j_1} (m(e)) \cdot P_{j_2} ( g\cdot\partial g \cdot k + \omega \cdot k) \cdot P_{j_3} (\JapD^{-1} k) \big)  
\nonumber\\
& \hphantom{|D|  \Delta_{\bar g} ^{-1} \sum_{j\in \mathbb N} \Bigg[}
+   \sum_{\substack{j_2 > j_1-2,\\|j_2-j_3|\le 2}} g\cdot  P_j \big( P_{j_1} (\partial e) \cdot P_{j_2} (\JapD^{-1}( g\cdot\partial g \cdot k + \omega \cdot k)) \cdot P_{j_3} (\JapD^{-1} k) \big) 
 \nonumber\\
& \hphantom{|D|  \Delta_{\bar g} ^{-1} \sum_{j\in \mathbb N} \Bigg[} 
+   \sum_{\substack{j_2 > j_1-2,\\|j_2-j_3|\le 2}} g \cdot P_j \big( P_{j_1} (D\partial e) \cdot P_{j_2} (\JapD^{-1}k) \cdot P_{j_3} (\JapD^{-1} k) \big)
 \nonumber\\
& \hphantom{|D|  \Delta_{\bar g} ^{-1} \sum_{j\in \mathbb N} \Bigg[}
+ g \cdot P_j \Big(P_{\le j}(m(e)) \cdot P_{\le j}k \cdot P_j k\Big) + g\cdot P_j\Big( P_j(m(e)) \cdot P_{\le j}k\cdot P_{\le j} k\Big) 
\nonumber \\
& \hphantom{|D|  \Delta_{\bar g} ^{-1} \sum_{j\in \mathbb N} \Bigg[}
  + \sum_{j'>j} g\cdot P_j \Big( P_{j'}(m(e)) \cdot P_{\le j'}k \cdot P_{j'}k \Big) \Bigg]
 \nonumber 
\\
& +g\cdot \partial g \cdot \partial g + D \big( (g-m_0) \cdot \partial g\big). \nonumber 
\end{align}

\item Applying a Littlewood--Paley projection to the Codazzi equations \eqref{Codazzi 3+1}, we obtain for any $j\in \mathbb N$ and $a,b,c\in \{1,2,3\}$:
\begin{equation}\label{Littlewood Paley Codazzi 3+1}
\partial_a P_j h_{bc} - \bar{\nabla}_{b} P_j h_{ac} = P_j\big( g\cdot \partial g \cdot h \big) + P_j (g\cdot R_*).
\end{equation} 
Using  \eqref{Littlewood Paley Codazzi 3+1} repeatedly, we therefore infer:
\begin{align*}
(P_{\le j} \bar g)^{ab}\partial_a \partial_b P_j h_{cd} = & (P_{\le j} \bar g)^{ab}\partial_c \partial_d P_j h_{ab}+ P_{\le j} \bar g \cdot \bar{\partial} \big(P_j( g\cdot \partial g \cdot h )\big) + P_{\le j} \bar g \cdot \bar{\partial}P_j (g\cdot R_*)
\\
= & \bar\partial^2 \big( (P_{\le j} \bar g)^{ab}\cdot P_j h_{ab} \big) + \bar\partial P_{\le j} \bar g \cdot \bar\partial P_j h+ (\bar\partial^2P_{\le j} g )\cdot P_j h \\
& + P_{\le j} \bar g \cdot \bar{\partial} \big(P_j( g\cdot \partial g \cdot h )\big) + P_{\le j} \bar g \cdot \bar{\partial}P_j (g\cdot R_*).
\end{align*}
We now recall the gauge condition \eqref{Mean curvature condition} for the lapse $N$ which, after applying a Littlewood--Paley projection, yields the schematic relation:
\[
(P_{\le j} \bar g)^{ab} \cdot P_j h_{ab} + P_j \bar g \cdot P_{\le j} h+ \sum_{j'>j} P_j \big(P_{j'} \bar g\cdot P_{j'}h\big) = P_j \big(\bar g \cdot DN + \bar\partial \bar g \cdot N\big) + P_j \big( g\cdot \tilde{\mathcal{F}}^{\natural}\big) .
\]
Combining the above two relations and using the shorthand notation $f_j \doteq P_j f$,  we therefore infer:
\begin{align}
\Delta_{\bar g_{\le j}} h_j = &
D^2 \Big\{ - \bar g_j \cdot h_{\le j} - \sum_{j'>j} P_j \big( \bar g_{j'} \cdot h_{j'}\big)+ P_j \big(\bar g \cdot DN + \bar\partial \bar g \cdot N\big) + P_j \big( g\cdot \tilde{\mathcal{F}}^{\natural}\big)\Big\} \\
& + (D  \bar g_{\le j}) \cdot D h_j+ (D^2 g_{\le j} )\cdot  h_j + \bar g_{\le j} \cdot D P_j( g\cdot \partial g \cdot h ) +  \bar g_{\le j} \cdot D P_j (g\cdot R_*),
\nonumber 
\end{align}
where $\Delta_{\bar g_{\le j}} h_j$ is applied componentwise to $P_j h$. Using, as before, the expression \eqref{Schematic extraction two derivatives wedge product} for the low-high-high components $R^\natural_*$, \eqref{Decomposition R difference} for the components of $R-R^{\natural}$ and  the definition \eqref{F natural} for $\tilde{\mathcal F}^{\natural}$, we thus infer: 
\begin{equation}\label{Littlewood Paley Elliptic h}
\Delta_{\bar g_{\le j}} h_j =\mathfrak F^{(j)}_h,
\end{equation}
where
\begin{align}\label{Relation F j h}
\mathfrak F^{(j)}_h = &  D^2 P_j \Bigg[ g \cdot \sum_{\substack{j_2 > j_1-2,\\|j_2-j_3|\le 2}}\Big(P_{j_1}(m(e))\cdot P_{j_2}k \cdot \JapD^{-1} P_{j_3} k\Big)\Bigg]
\\
& + \bar g_{\le j} \cdot D P_j  \Bigg[  \sum_{\substack{j_2 > j_1-2,\\|j_2-j_3|\le 2}} g \cdot \partial D \Big( \big( P_{j_1} (m(e)) \cdot  P_{j_2} (\JapD^{-1}k) \cdot P_{j_3} (\JapD^{-1} k)\big) \Big)  
\nonumber \\
& \hphantom{|D|  \Delta_{\bar g} ^{-1} \sum_{j\in \mathbb N} \Bigg[}
 +   \sum_{\substack{j_2 > j_1-2,\\|j_2-j_3|\le 2}} g\cdot  P_{j_1} (\partial e)  \cdot P_{j_2} k \cdot P_{j_3} (\JapD^{-1} k)  
\nonumber  \\
& \hphantom{|D|  \Delta_{\bar g} ^{-1} \sum_{j\in \mathbb N} \Bigg[}
+   \sum_{\substack{j_2 > j_1-2,\\|j_2-j_3|\le 2}} g \cdot  P_{j_1} (m(e)) \cdot P_{j_2} ( g\cdot\partial g \cdot k + \omega \cdot k) \cdot P_{j_3} (\JapD^{-1} k) 
\nonumber\\
& \hphantom{|D|  \Delta_{\bar g} ^{-1} \sum_{j\in \mathbb N} \Bigg[}
+   \sum_{\substack{j_2 > j_1-2,\\|j_2-j_3|\le 2}} g\cdot   P_{j_1} (\partial e) \cdot P_{j_2} (\JapD^{-1}( g\cdot\partial g \cdot k + \omega \cdot k)) \cdot P_{j_3} (\JapD^{-1} k)  
 \nonumber\\
& \hphantom{|D|  \Delta_{\bar g} ^{-1} \sum_{j\in \mathbb N} \Bigg[} 
+   \sum_{\substack{j_2 > j_1-2,\\|j_2-j_3|\le 2}} g \cdot  P_{j_1} (D\partial e) \cdot P_{j_2} (\JapD^{-1}k) \cdot P_{j_3} (\JapD^{-1} k) 
 \nonumber\\
& \hphantom{|D|  \Delta_{\bar g} ^{-1} \sum_{j\in \mathbb N} \Bigg[}
+ \sum_{\bar j} \Big\{g \cdot P_{\bar j} \Big(P_{\le \bar j}(m(e)) \cdot P_{\le \bar j}k \cdot P_{\bar j} k\Big) + g\cdot  P_{\bar j} \Big(P_{\bar j}(m(e)) \cdot P_{\le \bar j}k\cdot P_{\le \bar j} k\Big)
\nonumber \\
& \hphantom{|D|  \Delta_{\bar g} ^{-1} \sum_{j\in \mathbb N} \Bigg[+ \sum_{\bar j}\Big\{}
  + \sum_{j'> \bar j} g\cdot P_{\bar j} \Big( P_{j'}(m(e)) \cdot P_{\le j'}k \cdot P_{j'}k \Big) \Big\}\Bigg]
 \nonumber \\
& +D^2 \Big\{ - \bar g_j \cdot h_{\le j} - \sum_{j'>j} P_j \big( \bar g_{j'} \cdot h_{j'}\big)+ P_j \big(\bar g \cdot DN + \bar\partial \bar g \cdot N\big) \Big\} \nonumber \\
& + (D  \bar g_{\le j}) \cdot D h_j+ (D^2 g_{\le j} )\cdot  h_j + \bar g_{\le j} \cdot D P_j( g\cdot \partial g \cdot h ).
\nonumber 
\end{align}
Recall also that $\partial_0 \bar g$ is related to $h$ via \eqref{Variation metric}, which takes the schematic form:
\[
\partial_0 \bar g = g \cdot h + Dg.
\]

\item  Applying a Littlewood--Paley projection to the expression \eqref{Normal curvature coordinates} for $R^\perp$, we obtain for any $j\in \mathbb N$ and $a\in \{1,2,3\}$:
\[
P_j (R^\perp)_{a0\hphantom{\bA}\bB}^{\hphantom{\a\b}\bA} = \partial_a P_j \omega_{0\bB}^{\bA}-\partial_0 P_j\omega_{a\bB}^{\bA} + P_j(\omega \cdot \omega).
\]
Differentiating the above relation and contracting with $P_{\le j}\bar g^{-1}$, we therefore infer:
\begin{equation}\label{Auxiliary for frequency projected omega 0 eq}
(P_{\le j}\bar g^{ab}) \partial_b  P_j (R^\perp)_{a0\hphantom{\bA}\bB}^{\hphantom{\a\b}\bA} 
= (P_{\le j}\bar g^{ab}) \partial_a \partial_b  P_j \omega_{0\bB}^{\bA} -  (P_{\le j} \bar g^{ab}) \partial_0 \partial_a \omega^{\bA}_{b \bB} + P_{\le j} g \cdot D P_j (\omega \cdot \omega). 
\end{equation}
On the other hand, applying a Littlewood--Paley projection on the gauge condition \eqref{Divergence condition frame}, we obtain schematically:
\begin{align}\label{Frequency projected gauge condition omega}
(P_{\le j} g^{ab}) \cdot \partial_a P_j \omega_{b \bB}^{\bA} & + P_j g \cdot \bar\partial P_{\le j}\bar\omega + \sum_{j'>j} P_j \big( P_{j'} g\cdot \bar\partial P_{j'} \bar \omega\big)\\
= & -P_j\big((\bar g^{ab}) \cdot \partial_a \partial_b |D|^{-1} \omega_{0\bB}^{\bA}\big) + P_j (\bar g \cdot D \bar g \cdot \omega)+P_j \big( m(e)_{\bB \bar C} \cdot \tilde{\mathcal F}^{\bA \bar C}_\perp \big)
\nonumber\\
= & 
 -P_{\le j}(\bar g^{ab})\cdot  \partial_a \partial_b |D|^{-1} P_j\omega_{0\bB}^{\bA} +P_{j}\bar g\cdot  D P_{\le j}\omega_0 + \sum_{j'>j} P_j \big( P_{j'}g\cdot  DP_{j'}\omega_0 \big)
\nonumber\\
& + P_j (\bar g \cdot D \bar g \cdot \omega)+P_j \big( m(e)_{\bB \bar C} \cdot \tilde{\mathcal F}^{\bA \bar C}_\perp \big).\nonumber
\end{align}
Differentiating the above relation with respect to $\partial_0$ and using it to reexpress the term $(P_{\le j}\bar g)^{ab} \partial_0 \partial_a  P_j \omega_{b\bB}^{\bA}$ in the right hand side of \eqref{Auxiliary for frequency projected omega 0 eq}, we therefore obtain:
\begin{align*}
(P_{\le j}\bar g^{ab}) \partial_a  P_j (R^\perp)_{b0\hphantom{\bA}\bB}^{\hphantom{\a\b}\bA} 
= & (P_{\le j}\bar g^{ab}) \partial_a \partial_b  \Big(P_j \omega_{0\bB}^{\bA} + \partial_0 |D|^{-1} P_j\omega_{0\bB}^{\bA}\Big) -\partial_0 P_j \big(  m(e)_{\bB \bar C} \cdot \tilde{\mathcal F}^{\bA \bar C}_\perp  \big)
\\
&  +\partial_0 \Big(P_{j}g\cdot  D P_{\le j}\omega + \sum_{j'>j} P_j \big( P_{j'}g \cdot  DP_{j'}\omega \big)+P_j (\bar g \cdot D \bar g \cdot \omega)\Big)\\
&+  (\partial P_{\le j} g) \cdot D P_j \omega + P_{\le j} g \cdot D P_j (\omega \cdot \omega),
\end{align*}
which can be rearranged as follows (using again the schematic notation $f_j = P_j f$ when no confusion arises; recall, however, that $\omega_0$ denotes the temporal component of $\omega$):
\begin{align}
\partial_0 (P_j\omega_{0}) +& |D| (P_j \omega_0) \\
=  & |D| \mathcal L_{P_{\le j}(\bar g^{-1})^{-1}}^{-1} \Bigg[  \Big(\partial_a [(P_{\le j}\bar g^{ab})  P_j (R^\perp)_{b0\hphantom{\bA}\bB}^{\hphantom{\a\b}\bA}]+ \partial_0 P_j \big(  m(e)_{\bB \bar C} \cdot \tilde{\mathcal F}^{\bA \bar C}_\perp  \big)\Big) \nonumber \\
&\hphantom{ |D| \mathcal L_{P_{\le j}(\bar g^{-1})^{-1}}^{-1} \Bigg[}
 + (Dg_{\le j}) \cdot  R^\perp_j +  (\partial P_{\le j} g) \cdot D P_j \omega + P_{\le j} g \cdot D P_j (\omega \cdot \omega)\Bigg] \nonumber \\
& + |D| \Bigg[ \partial g_{\le j} \cdot  \mathcal L_{P_{\le j}(\bar g^{-1})^{-1}}^{-1} |D|^2 \Big( P_{j}g\cdot  D P_{\le j}\omega + \sum_{j'>j} P_j \big( P_{j'}g \cdot  DP_{j'}\omega \big)+P_j (\bar g \cdot D \bar g \cdot \omega)\Big) \Bigg]
\nonumber \\
&  +\partial_0 \Bigg[  |D| \mathcal L_{P_{\le j}(\bar g^{-1})^{-1}}^{-1}\Bigg(P_{j}g\cdot  D P_{\le j}\omega + \sum_{j'>j} P_j \big( P_{j'}g \cdot  DP_{j'}\omega \big)+P_j (\bar g \cdot D \bar g \cdot \omega)\Bigg) \Bigg],   \nonumber
\end{align}
where the $2^{nd}$ order elliptic operator $\mathcal L_{P_{\le j}(\bar g^{-1})^{-1}}$ is defined by
\[
\mathcal L_{P_{\le j}(\bar g^{-1})^{-1}} \doteq  (P_{\le j}\bar g^{ab}) \partial_a \partial_b.
\]
Using the expression \eqref{Computation F perp} for $ \partial_a  P_j (\bar g^{ab}) R^\perp)_{b0\hphantom{\bA}\bB}^{\hphantom{\a\b}\bA}+ \partial_0 P_j \big(  m(e)_{\bB \bar C} \cdot \tilde{\mathcal F}^{\bA \bar C}_\perp  \big)$ together with the fact that 
\[
(P_{\le j}\bar g^{ab}) \cdot  P_j (R^\perp)_{b0} = P_j \big( \bar g^{ab} (R^\perp)_{b0}\big) + g_j \cdot R^\perp_{\le j} + \sum_{j'>j} P_j\big(g_{j'} \cdot R^\perp_{j'} \big)
\]
 and the fact that the low-high-high interactions in the expression $R^\perp = g \cdot k \wedge k$ for $R^\perp$ can be expressed using \eqref{Schematic extraction one derivative wedge product time indices}, we infer from the above:
\begin{equation}\label{Gauge condition omega 0 j}
\partial_0 (P_j\omega_{0}) +|D|  (P_j \omega_0)  = \partial_0 \dot{\mathfrak F}^{(0)}_{\omega_0} + \dot{\mathfrak F}_{\omega_0}^{(1)},
\end{equation}
where 
\begin{align}\label{F 0 omega 0}
\mathfrak F^{(0)}_{\omega_0} = 
& |D| \mathcal L_{P_{\le j}(\bar g^{-1})^{-1}}^{-1}\Bigg(
 \sum_{\substack{j_1,j_2,j_3,j_4:\\|j_3-j_4|\le 2}}  D P_j \Big\{
 P_{j_1} g \cdot P_{j_2} g \cdot P_{j_3} k \cdot P_{j_4}(\JapD^{-1} k) \Big\} \\
& \hphantom{ +\partial_0 \Bigg[  |D| \mathcal L_{P_{\le j}(\bar g^{-1})^{-1}}^{-1}\Bigg(}
+P_{j}g\cdot  D P_{\le j}\omega + \sum_{j'>j} P_j \big( P_{j'}g \cdot  DP_{j'}\omega \big)+P_j ( g \cdot D g \cdot \omega)\Bigg)  \nonumber 
\end{align}
and
\begin{align}\label{F 1 omega 0}
\mathfrak F^{(1)}_{\omega_0}
= &  |D| \mathcal L_{P_{\le j}(\bar g^{-1})^{-1}}^{-1} \Bigg[  
 \sum_{\substack{j_1,j_2,j_3,j_4:\\|j_3-j_4|\le 2}}  D^2 P_j \Big\{
 \Big(P_{j_1} g \cdot P_{j_2} g \cdot P_{j_3} k \cdot P_{j_4}(\JapD^{-1} k) \Big) 
 \\
& \hphantom{ |D|  \Delta^{-1}_{\big(P_{\le j}(\bar g^{-1})\big)^{-1}} \Bigg[} \hphantom{ \sum_{\substack{j_1,j_2,j_3,j_4:\\|j_3-j_4|\le 2}}  P_j \Bigg[  D}
 +D \Big(P_{j_1} ( \partial g + \partial e) \cdot P_{j_2} g \cdot P_{j_3} k \cdot P_{j_4}(\JapD^{-1} k) \Big) \Big\} \nonumber \\
&\hphantom{ |D|  \Delta^{-1}_{\big(P_{\le j}(\bar g^{-1})\big)^{-1}} \Bigg[}
D \big( g_{\le j} \cdot g_{\le j} \cdot k_{\le j} \cdot k_{\le j}\big)   + \sum_{j'>j} DP_j \Big( g_{j'}\cdot g_{\le j'} \cdot k_{\le j'} \cdot k_{\le j'} \Big) 
\nonumber \\
& \hphantom{ |D|  \Delta^{-1}_{\big(P_{\le j}(\bar g^{-1})\big)^{-1}} \Bigg[}
  +  \sum_{j''>j'>j} DP_j \Big( g_{j'}\cdot g_{j''} \cdot k_{j''} \cdot k_{\le j''} \Big) +  (\partial P_{\le j} g) \cdot D P_j \omega + P_{\le j} g \cdot D P_j (\omega \cdot \omega)\Bigg] 
\nonumber \\
& + |D| \Bigg[ \partial g_{\le j} \cdot  \mathcal L_{P_{\le j}(\bar g^{-1})^{-1}}^{-1} |D|^2 \Big( P_{j}g\cdot  D P_{\le j}\omega + \sum_{j'>j} P_j \big( P_{j'}g \cdot  DP_{j'}\omega \big)+P_j ( g \cdot D  g \cdot \omega)\Big) \Bigg].
\nonumber 
 \end{align}

\item Similarly, applying a Littlewood--Paley projection to the expression \eqref{Normal curvature coordinates} for $R^\perp$ and then differentiating and contracting with $P_{\le j} g^{-1}$, we obtain for any $j\in \mathbb N$ and any $c\in \{1,2,3\}$ (analogously to \eqref{Auxiliary for frequency projected omega 0 eq}):
\begin{equation}\label{Auxiliary for frequency projected omega bar eq}
(P_{\le j}\bar g^{ab}) \partial_a  P_j (R^\perp)_{ac\hphantom{\bA}\bB}^{\hphantom{\a\b}\bA} 
= (P_{\le j}\bar g^{ab}) \partial_a \partial_b  P_j \omega_{c\bB}^{\bA} -  (P_{\le j} \bar g^{ab}) \partial_c \partial_a \omega^{\bA}_{b \bB} + P_{\le j} g \cdot D P_j (\omega \cdot \omega). 
\end{equation}
Applying a $\partial_c$ derivative to the frequency projected version \eqref{Frequency projected gauge condition omega} of the gauge condition for $\omega$ and using the resulting expression to substitute the term   $(P_{\le j} \bar g^{ab}) \partial_c \partial_a \omega^{\bA}_{b \bB}$ in \eqref{Auxiliary for frequency projected omega bar eq}, we infer that:
\begin{align*}
(P_{\le j}\bar g^{ab}) \partial_a  P_j (R^\perp)_{ac\hphantom{\bA}\bB}^{\hphantom{\a\b}\bA} 
= & (P_{\le j}\bar g^{ab}) \partial_a \partial_b  \Big(P_j \omega_{c\bB}^{\bA} + \partial_c |D|^{-1} P_j\omega_{0\bB}^{\bA}\Big)
+ \bar \partial \Big(P_j g \cdot \bar\partial P_{\le j}\omega\Big) + \sum_{j'>j} \bar\partial P_j \big( P_{j'} g\cdot \bar\partial P_{j'} \omega\big)
\\
&  +\bar\partial \Big(P_{j}g\cdot  D P_{\le j}\omega\Big) + \bar\partial \Big(\sum_{j'>j} P_j \big( P_{j'}g DP_{j'}\omega \big)\Big)\\
& + \bar\partial P_j (\bar g \cdot D \bar g \cdot \omega)+\partial P_j \big( m(e) \cdot \tilde{\mathcal F}_\perp \big)\\
&+  (\partial P_{\le j} g) \cdot D P_j \omega + P_{\le j} g \cdot D P_j (\omega \cdot \omega),
\end{align*}
which can be rearranged as follows (using, as before,  the schematic notation $f_j = P_j f$ and with $\bar\omega$ the spatial  components of $\omega$):
\begin{align}\label{Almost gauge condition omega bar j}
 \Delta_{\big(P_{\le j} (\bar g^{-1})\big)^{-1}} (P_j\bar\omega) = &  P_{\le j} g \cdot D^2(P_j \omega_0) +  g_{\le j} \cdot D (R_*^\perp)_j + D \Big(g_j \cdot D \omega_{\le j}\Big) 
\\
&  +D \Big(g_j \cdot  D \omega_{\le j}\Big) + D \Big(\sum_{j'>j} P_j \big( g_{j'} \cdot D\omega_{j'} \big)\Big)
 + D P_j ( g \cdot D  g \cdot \omega)+D P_j \big( m(e) \cdot \tilde{\mathcal F}_\perp \big)
\nonumber\\
& +  g_{\le j} \cdot D \omega_j + Dg_{\le j} \cdot \omega_j + g_{\le j} \cdot D P_j (\omega \cdot \omega).   \nonumber
\end{align}
Using the expression \eqref{Schematic extraction one derivative wedge product} for the low--high--high components of $R^\perp_*$ and the definition \eqref{F perp} of $\tilde{\mathcal F}_\perp$,  we thus obtain:
\begin{equation}\label{Gauge condition omega bar j}
 \Delta_{\big(P_{\le j} (\bar g^{-1})\big)^{-1}} (P_j\bar\omega) = P_{\le j} g \cdot D^2 (P_j \omega_0) + \mathfrak F^{(j)}_{\bar \omega},
\end{equation}
where
\begin{align}\label{F bar omega}
\mathfrak F^{(j)}_{\bar\omega} = &    \sum_{\substack{j_2>j_1-2,\\ |j_3-j_2|\le 2}} \Big\{ 
 g_{\le j} \cdot D^2 P_j \Big( P_{j_1} g \cdot P_{j_2}(\JapD^{-1} k) \cdot P_{j_3} k \Big) \\
& \hphantom{\sum_{\substack{j_2>j_1-2,\\ |j_3-j_2|\le 2}} \Big\{}
+ g_{\le j} \cdot D P_j \Big(P_{j_1} g \cdot P_{j_2}\big(\JapD^{-1}(g\cdot \partial g \cdot k+\omega \cdot k)\big)\cdot P_{j_3}k\Big) \Big\}
\nonumber \\
  & +\sum_{\substack{j_2>j_1-2,\\ |j_3-j_2|\le 2}} D P_j \Big\{ 
 m(e)\cdot D \Big( g \cdot P_{j_1} g \cdot P_{j_2}(\JapD^{-1} k) \cdot P_{j_3} k \Big) \Big\} \nonumber \\
&  + g_{\le j} \cdot D P_j \Big(g_{\le j} \cdot k_{\le j} \cdot k_j\Big) +  g_{\le j} \cdot D\Big( g_j \cdot k_{\le j} \cdot k_{\le j}\Big)+ \sum_{j'>j} g_{\le j} \cdot DP_j \Big(  g_{j'}\cdot k_{\le j'} \cdot k_{j'}\Big) 
\nonumber \\
&  + D \Big(g_j \cdot D \omega_{\le j}\Big)  +D \Big(g_j \cdot  D \omega_{\le j}\Big) + D \Big(\sum_{j'>j} P_j \big( g_{j'} \cdot D\omega_{j'} \big)\Big)
\nonumber\\
&+ D P_j ( g \cdot D  g \cdot \omega) +  g_{\le j} \cdot D \omega_j + Dg_{\le j} \cdot \omega_j + g_{\le j} \cdot D P_j (\omega \cdot \omega).   \nonumber
\end{align}

\item Recall also  that $\partial^2 Y$ and $\partial e$ can be expressed in terms of the tensors $k$, $g$ and $\omega$ via the relations 
\begin{align}
\partial_\alpha \partial_\beta Y^A & = \Gamma_{\alpha\beta}^\gamma \partial_\gamma Y^A + k^{\bA}_{\alpha\beta} e_{\bA}^A, \nonumber\\
\partial_\alpha e_{\bA}^A &= \omega_{\alpha \bA}^{\bB} e_{\bB}^A - g^{\beta\gamma} m(e_{\bA}, e_{\bB}) k^{\bB}_{\alpha \beta} \partial_\gamma Y^A. \label{Relation e omega k once again}
\end{align}
\end{itemize}

\bigskip
\noindent \textbf{Remark.} Note that in the gauge-related equations \eqref{Equation lapse once again}--\eqref{Relation e omega k once again} above, no terms of the form $\partial_0^2 g$ appear, while terms of the form $\partial_0 \omega$ only appear in the last term of the right hand side of \eqref{Gauge condition omega 0 j}, which is a total time derivative (and hence can be treated like a spatial derivative when it comes to estimating $\omega$, in view of the parabolic estimates of Lemma \ref{lem:Parabolic estimates model}). We will make use of this structure when deriving the estimates for $(\dot k, \dot g, \dot\omega)$.

\medskip

\subsection{The difference equations}
In order to derive the system of equations for $(\dot k, \dot g, \dot\omega)$, we will subtract from equations \eqref{Wave system k -1}--\eqref{Relation e omega k once again}for $(k^{(2)}, g^{(2)}, \omega^{(2)})$ the corresponding equations for $(k^{(1)}, g^{(1)}, \omega^{(1)})$. Using the convention that, in the schematic expressions below, $(k, g, \omega)$ denotes either of $(k^{(i)}, g^{(i)}, \omega^{(i)})$, $i=1,2$, we obtain the following system of equations which is linear in $(\dot k, \dot g, \dot\omega)$:

\begin{itemize}
\item \textbf{Wave equations for $\dot k$:} Denoting with $\dot k^{(-1,j)}$  the differences of the tensors $k^{(-1,j)}$ defined by \eqref{First system k -1} (and similarly for $\dot k^{(0,j)}$), so that
\[
\dot k_j = \partial_0 \dot k^{(-1,j)} + \dot k^{(0,j)}
\]
we obtain from \eqref{Wave system k -1}--\eqref{Wave system k 0}:
\begin{equation}\label{Wave system k -1 differences}
\begin{cases}
\square_{g_{\le j}} \dot k^{(-1,j)} & =  \dot g_{\le j} \cdot \partial^2 k^{(-1,j)} + g_{\le j}\cdot \partial \dot g_{\le j} \partial k^{(-1,j)} + \dot g_{\le j}\cdot \partial g_{\le j} \partial k^{(-1,j)}\\
& \hphantom{=\sum\sum}
+\partial \Big[P_j\Big(\sum_{j'>j} g_{j'} \cdot \dot k_{j'} \Big) \Big] + g_{\le j} \cdot P_j \big((g \cdot \partial g + \omega) \cdot \dot k\big) + \partial\big(g_j \cdot \dot k_{\le j}\big) 
\nonumber \\
& \hphantom{=\sum\sum}
+ \partial g_{\le j} \cdot \dot k_{j} + \partial g_{\le j} \cdot \partial \dot k^{(-1,j)}\\
& \hphantom{=\sum\sum} 
+\partial \Big[P_j\Big(\sum_{j'>j} \dot g_{j'} \cdot k_{j'} \Big) \Big] + g_{\le j} \cdot P_j \big((g \cdot \partial \dot g + \dot g \cdot \partial  g  + \dot\omega) \cdot k\big) \\
& \hphantom{=\sum\sum}
 +\dot g_{\le j} \cdot P_j \big((g \cdot \partial  g +  g \cdot \partial  g  + \omega) \cdot k\big)
+ \partial\big(\dot g_j \cdot k_{\le j}\big) 
\nonumber \\
& \hphantom{=\sum\sum}
+ \partial \dot g_{\le j} \cdot k_{j} \\[10pt]
\Big(\dot k^{(-1,j)}|_{x^0=0}, & \hspace{-0.5em} \partial_0 \dot k^{(-1,j)}|_{x^0=0} \Big)  = \big(0,0\big)
\end{cases}
\end{equation}
and 
\begin{equation}\label{Wave system k 0 differences}
\begin{cases}
\square_{g_{\le j}} \dot k^{(0,j)} &  =  \dot g_{\le j} \cdot \partial^2 k^{(0,j)} + g_{\le j}\cdot \partial \dot g_{\le j} \partial k^{(0,j)} + \dot g_{\le j}\cdot \partial g_{\le j} \partial k^{(0,j)}\\
& \hphantom{=\sum\sum}
  g_{\le j} \cdot \partial g_{\le j} \cdot \partial \dot k_{j}   
+ \partial\bar\partial \big(g_j \cdot \dot k_{\le j}\big)+ \bar\partial \big( \partial g_{\le j} \cdot \dot k_{j}\big) 
+ \partial(g_{\le j}) \cdot P_j \big((g \cdot \partial g + \omega) \cdot \dot k\big)
 \\
& \hphantom{=\sum\sum}
+ \bar\partial\partial \Big[P_j\Big(\sum_{j'>j} g_{j'} \cdot \dot k_{j'} \Big) \Big] + \bar\partial \Big(g_{\le j} \cdot P_j \big((g \cdot \partial g + \omega) \cdot \dot k\big)\Big)
\\
& \hphantom{=\sum\sum}
+ \partial g_{\le j} \cdot \partial^2 \dot k^{(-1,j)}+  \partial g_{\le j} \cdot \partial \dot k^{(0,j)},
\\
& \hphantom{=\sum\sum}
+  g_{\le j} \cdot \partial \dot g_{\le j} \cdot \partial k_{j}    + \dot g_{\le j} \cdot \partial g_{\le j} \cdot \partial k_{j}
+ \partial\bar\partial \big(\dot g_j \cdot k_{\le j}\big)+ \bar\partial \big( \partial \dot g_{\le j} \cdot k_{j}\big) \\
& \hphantom{=\sum\sum}
+ \partial(g_{\le j}) \cdot P_j \big((\dot g \cdot \partial g + g \cdot \partial \dot g + \dot\omega) \cdot k\big)+ \partial(\dot g_{\le j}) \cdot P_j \big((g \cdot \partial g + \omega) \cdot k\big)
 \\
& \hphantom{=\sum\sum}
+ \bar\partial\partial \Big[P_j\Big(\sum_{j'>j} \dot g_{j'} \cdot k_{j'} \Big) \Big] + \bar\partial \Big(g_{\le j} \cdot P_j \big((\dot g \cdot \partial g + g \cdot \partial \dot g+ \dot \omega) \cdot k\big)\Big)
\\
& \hphantom{=\sum\sum}
+ \bar\partial \Big(\dot g_{\le j} \cdot P_j \big((g \cdot \partial g + \omega) \cdot k\big)\Big)
\\
& \hphantom{=\sum\sum}
+ \partial \dot g_{\le j} \cdot \partial^2 k^{(-1,j)}+  \partial \dot g_{\le j} \cdot \partial k^{(0,j)},
\\[10pt]
\Big(\dot k^{(0,j)}|_{x^0=0}, & \hspace{-0.5em} \partial_0 \dot k^{(0,j)}|_{x^0=0}\Big)  =\big(0, 0\big).
\end{cases}
\end{equation}

\item \textbf{The elliptic-parabolic system for $(\dot k, \dot g, \dot \omega)$:} We can readily obtain from \eqref{Equation lapse once more}--\eqref{Relation e omega k once again} the following set of equations for $(\dot k, \dot g, \dot \omega)$:
\begin{itemize}
\item The lapse difference $\dot N$ satisfies:
\begin{align}\label{Equation lapse once more differences}
\partial_0 \dot N + |D| \dot N = & \partial_0 \big( \dot{\mathfrak F}_{N}^{(0)} +|D|\Delta_{\bar g}^{-1} [(g-m_0)\cdot D\dot g]\big) + \dot{\mathfrak F}_{N}^{(1)} \\
& + |D| \Delta_{\bar g}^{-1} \Bigg[ \dot g \cdot D\partial g + (g-m_0)\cdot D^2 \dot g + \dot g \cdot \partial g \cdot \partial g + g \cdot \partial g \cdot \partial\dot g  \Bigg],  \nonumber
\end{align}
where $\dot{\mathfrak F}_{N}^{(0)}$ is the difference of the expressions $\mathfrak F^{(0)}_N$ for $(Y^{(1)}, e^{(1)})$ and $(Y^{(2)}, e^{(2)})$; one obtains the schematic expression for $\dot{\mathfrak F}_{N}^{(0)}$ by formally ``differentiating" the corresponding expression for  $\mathfrak F^{(0)}_N$ (given by \eqref{F N 0}), using the rule that, if $G=a\cdot b$, then $\dot G = \dot a \cdot b + a \cdot \dot b$. In particular, we have: 
\begin{align*}
\dot{\mathfrak F}_{N}^{(0)} =  & |D|  \Delta_{\bar g} ^{-1} \sum_{j\in \mathbb N} \sum_{\substack{j_2 > j_1-2,\\|j_2-j_3|\le 2}} \Bigg[ g \cdot  D  P_j \Big(  P_{j_1} (m(e)) \cdot  P_{j_2} (\JapD^{-1}k) \cdot P_{j_3} (\JapD^{-1} \dot k) + P_{j_1}\dot e \cdot P_{j_2}(\JapD^{-1} k) \cdot P_{j_3}(\JapD^{-1} k)\Big)\\
& \hphantom{ |D|  \Delta_{\bar g} ^{-1} \sum_{j\in \mathbb N} \sum_{\substack{j_2 > j_1-2,\\|j_2-j_3|\le 2}}}
+  \dot g \cdot  D  P_j \Big( \big( P_{j_1} (m(e)) \cdot  P_{j_2} (\JapD^{-1}k) \cdot P_{j_3} (\JapD^{-1} k)\big) \Big)\Bigg]\\
+&|D| \Delta_{\bar g}^{-1} \Bigg[ \dot g \cdot D\mathfrak F_N^{(0)} + Dg \cdot \mathfrak F_N^{(0)} \Bigg].
\end{align*}
Similarly, the expression for  $\dot{\mathfrak F}_{N}^{(1)}$ is obtained by ``differentiating''  the expression \eqref{F N 1} for  $\mathfrak F_{N}^{(1)}$.

\item The difference of the foliation second fundamental forms $\dot h$ satisfies
\begin{equation}\label{Littlewood Paley Elliptic h differences}
\Delta_{\bar g_{\le j}} \dot h_j = \dot g_{\le j} \cdot D^2 h_j + D\dot g_{\le j}\cdot Dh_j +\dot{\mathfrak F}^{(j)}_{h},
\end{equation}
where $\dot{\mathfrak F}^{(j)}_h$ is obtained by formally differentiating (as before) the expression \eqref{Relation F j h} for $\mathfrak F^{(j)}_h$.

\item The shift difference $\dot\beta$ satisfies:
\begin{align}\label{Equation shift once more differences}
\partial_0 \dot\b^k + |D|\dot\b^k =& \partial_0 \big(\dot{\mathfrak F}_{\beta}^{(0)} +|D|\Delta_{\bar g}^{-1} [(g-m_0)\cdot D\dot g]\big)+ \dot{\mathfrak F}_{\beta}^{(1)}\\
& + |D|\Delta_{\bar g}^{-1} \Bigg[g \cdot D\dot h + \dot g \cdot D\partial g +  (g-m_0) \cdot D^2 \dot g  +  \dot g \cdot \partial g \cdot \partial g+ g \cdot \partial g \cdot \partial \dot g \Bigg],
\nonumber
\end{align}
where  $ \dot{\mathfrak F}_{\beta}^{(0)}$ and $\dot{\mathfrak F}_{\beta}^{(1)}$ are obtained from $ \mathfrak F_{\beta}^{(0)}$ and $\mathfrak F_{\beta}^{(1)}$ by formal differentiation as before; recall $ \mathfrak F_{\beta}^{(0)}$ and $\mathfrak F_{\beta}^{(1)}$ are schematically of the same form as $ \mathfrak F_{N}^{(0)}$ and $\mathfrak F_{N}^{(1)}$, respectively.

\item The difference of the spatial metrics $\dot{\bar g}$ satisfies
\begin{equation}\label{Equation g bar once more differences}
\bar g^{ab}\partial_a \partial_b (\dot{\bar{g}}) = g \cdot D^2\dot\beta + \dot g \cdot D^2 g + \dot{\mathfrak F}_{\bar g},
\end{equation}
where $ \dot{\mathfrak F}_{\bar g}$ is obtained by formally differentiating the expression \eqref{F bar g} for $\mathfrak F_{\bar g}$.

In view of \eqref{Variation metric}, $\partial_0 \dot{\bar g}$ is also related to $\dot h$ by:
\begin{equation}\label{Relation time derivative g bar differences}
\partial_0 \dot{\bar g} = g\cdot \dot h+ \dot g \cdot \partial g + g\cdot D\dot g.
\end{equation}

\item The difference of the temporal components of the normal connection coefficients $\dot\omega_0$ satisfies:
\begin{equation}\label{Gauge condition omega 0 j differences}
\partial_0 (P_j\dot\omega_{0}) +|D| (P_j \dot\omega_0)  = \partial_0 \dot{\mathfrak F}^{(0)}_{\omega_0} + \dot{\mathfrak F}_{\omega_0}^{(1)},
\end{equation}
where $\dot{\mathfrak F}^{(0)}_{\omega_0}$ and $\dot{\mathfrak F}_{\omega_0}^{(1)}$ are obtained by formally differentiating, respectively, the expressions  \eqref{F 0 omega 0} and \eqref{F 1 omega 0} for $\mathfrak F^{(0)}_{\omega_0}$ and $\mathfrak F^{(1)}_{\omega_0}$, respectively.

\item The difference of the spatial components of the normal connection coefficients $\dot{\bar\omega}$ satisfies:
\begin{equation}\label{Gauge condition omega bar j differences}
 \Delta_{\big(P_{\le j} (\bar g^{-1})\big)^{-1}} (P_j\dot{\bar\omega}) = g_{\le j} \cdot D^2 (P_j \dot\omega_0) + \dot g_{\le j} \cdot D^2 (P_j \omega_0) + \dot{\mathfrak F}^{(j)}_{\bar\omega},
\end{equation}
where $\dot{\mathfrak F}^{(j)}_{\bar\omega}$ is obtained by formally differentiating the expression \eqref{F bar omega} for $\mathfrak F^{(j)}_{\bar\omega}$.

\item In view of the relations \eqref{Relation e omega k once again} for  $\partial^2 Y$ and $\partial e$, the corresponding differences satisfy:
\begin{align} \label{Relation Y k once again differences}
\partial_\alpha \partial_\beta \dot Y = & (\dot g \cdot \partial g + g \cdot \partial \dot g )\cdot \partial Y + g\cdot \partial g \cdot \partial \dot Y  + \dot k \cdot e +k \cdot \dot e, \\
\partial_\alpha \dot e = & \dot \omega \cdot e + \omega \cdot \dot e +\dot g\cdot m(e)\cdot k\cdot \partial Y+ g\cdot \dot e\cdot k\cdot \partial Y \label{Relation e omega k once again differences}\\
& +g\cdot m(e)\cdot \dot k\cdot \partial Y+ g\cdot m(e)\cdot k\cdot \partial \dot Y.\nonumber 
\end{align}
\end{itemize}
\end{itemize}

\subsection{Spacetime norms for the difference tensors}
We will now specify the spacetime norms with respect to which we will estimate the size of $(\dot k, \dot g, \dot \omega)$. To this end, let us first recall that, as a consequence of the assumptions of Proposition \ref{prop:Uniqueness parabolic}, the immersions $Y^{(1)}, Y^{(2)}$ and the frames $e^{(1)}, e^{(2)}$ satisfy the following estimate (dropping, as before, the superscripts $(1), (2)$):
\begin{align}\label{Norm from existence}
\| (Y,e)\|_{\mathfrak E} \doteq \, 
& \sum_{l=0}^2 \| \partial^l (Y - Y_0) \|_{L^{\infty}H^{s-l}} + \| \partial^2 Y \|_{L^4 W^{\f1{12},4}}+\|\partial^2 Y \|_{L^{\f72} L^{\f{14}3}}\\
& + \sum_{l=0}^2\Big( \|\partial^l k\|_{L^\infty H^{s-2-l}} + \|\partial^l k\|_{L^2 W^{-l-\f12+s_0,\infty}} \Big)
\nonumber \\
& + \| \partial g \|_{L^1 W^{\f14\delta_0,\infty}} +\sum_{l=1}^2\Big( \| \partial^l g \|_{L^2 H^{2-l+\f16+s_1}} +\|\partial^l g\|_{L^{\f74} W^{2-l+s_1, \f73}} \|  \partial^l g \|_{L^{\infty} H^{s-1-l}}\Big) 
\nonumber \\
& + \| \omega \|_{L^1 W^{\f14\delta_0,\infty}} +  \sum_{l=0}^1\Big(\| \partial^l \omega \|_{L^2 H^{1-l+\f16+s_1}} +\|\partial \omega \|_{L^{\f74} W^{1-l+s_1, \f73}}+ \|  \partial^l \omega \|_{L^{\infty} H^{s-2-l}}\Big) \nonumber\\
&+\sum_{l=0}^1\Big( \| \partial^{1+l} e \|_{L^2 H^{1-l+\f16+s_1}} +\|\partial^{1+l} e\|_{L^{\f74} W^{1-l+s_1, \f73}}+ \|  \partial^{1+l} e \|_{L^{\infty} H^{s-2-l}}\Big)
\nonumber \\
 \lesssim & \epsilon. \nonumber 
\end{align}
Repeating the proof of the energy and Strichartz estimates for $k_j$ (i.e.~Lemmas \ref{lem:Energy estimates} and \ref{lem:Strichartz estimates}) in the case of $k^{(-1,j)}$ and $k^{(0,j)}$ (which satisfy \eqref{Wave system k -1} and \eqref{Wave system k 0}, respectively), we also infer that $k^{(-1,j)}$ and $k^{(0,j)}$ satisfy the same estimates as $\JapD^{-1}k_j$ and $k_j$, respectively, namely:
\begin{align*}
\Bigg[\sum_{j\in \mathbb N} \sum_{l=0}^2\Big( \|2^{(s-2-l)j} & \partial^l k^{(0,j)}\|^2_{L^\infty L^2} +\|2^{(s-1-l)j}\partial^l k^{(-1,j)}\|^2_{L^\infty L^2} \\
&+ \|2^{(-l-\f12+s_0+\f18\delta_0)j}\partial^l k^{(0,j)}\|^2_{L^2 L^\infty}+ \|2^{(-l+\f12+s_0+\f18\delta_0)j}\partial^l k^{(-1,j)}\|^2_{L^2 L^\infty} \Big)\Bigg]^{\f12} \lesssim \epsilon.
\end{align*}

We will estimate the size of $(\dot Y, \dot e)$ with respect to a norm $\|\cdot \|_{\mathfrak U}$ which lies at one order of differentiability below $\|\cdot \|_{\mathfrak E}$:
\begin{align}\label{Norm for uniqueness}
\|(\dot Y, \dot e)\|_{\mathfrak U} \doteq \, 
& \Bigg[\sum_{j\in \mathbb N} \sum_{l=0}^2\Big( \|2^{(s-3-l)j} \partial^l \dot k^{(0,j)}\|^2_{L^\infty L^2} +\|2^{(s-2-l)j}\partial^l \dot k^{(-1,j)}\|^2_{L^\infty L^2}  \\
& \hphantom{+ \Bigg[\sum_{j\in \mathbb N} \sum_{l=0}^2\Big(}
+ \|2^{(-l-\f32+s_0+\f18\delta_0)j}\partial^l \dot k^{(0,j)}\|^2_{L^2 L^\infty}+ \|2^{(-l-\f12+s_0+\f18\delta_0)j}\partial^l \dot k^{(-1,j)}\|^2_{L^2 L^\infty} \Big)\Bigg]^{\f12}
\nonumber \\
& + \| \dot g \|_{L^1 W^{\f14\delta_0,\infty}} +\| \partial \dot g \|_{L^2 H^{\f16+s_1}} +\|\partial \dot g\|_{L^{\f74} W^{s_1, \f73}}+ \|  \partial \dot g \|_{L^{\infty} H^{s-3}} \nonumber \\
&\hphantom{+ \| \dot g \|_{L^1 W^{\f14\delta_0,\infty}}}
+ \| \dot \omega \|_{L^2 H^{\f16+s_1}} +\|\dot\omega\|_{L^{\f74} W^{s_1, \f73}}+ \|  \dot \omega \|_{L^{\infty} H^{s-3}} \nonumber \\
&+\| \partial \dot e \|_{L^2 H^{\f16+s_1}} + \|  \partial \dot e \|_{L^{\infty} H^{s-3}}+\|\partial \dot e\|_{L^{\f74} W^{s_1, \f73}}\nonumber \\
& + \sum_{l=0}^2 \| \partial^l \dot Y \|_{L^{\infty}H^{s-l-1}} + \| \partial^2 \dot Y \|_{L^4 W^{-1+\f1{12},4}}+\|\partial^2 \dot Y \|_{L^{\f72} W^{-1,\f{14}3}}.
\nonumber
\end{align}
Note that, by interpolation, we also have
\begin{align*}
\Bigg[\sum_{j\in \mathbb N} \sum_{l=0}^2\Big( & \|2^{(-l+\f1{12}-1+s_0+\f18\delta_0)j} \partial^l \dot k^{(0,j)}\|^2_{L^4 L^4} +\|2^{(-l+\f1{12}+s_0+\f18\delta_0)j}\partial^l \dot k^{(-1,j)}\|^2_{L^4 L^4}  \\
& \hphantom{+ \Bigg[\sum_{j\in \mathbb N} \sum_{l=0}^2\Big(}
+ \|2^{(-l+s_0+\f18\delta_0)j}\partial^l \dot k^{(0,j)}\|^2_{L^{\f72} L^{\f{14}3}}+ \|2^{(-l+s_0+\f18\delta_0)j}\partial^l \dot k^{(-1,j)}\|^2_{L^{\f72} L^{\f{14}3}} \Big)\Bigg]^{\f12}
\lesssim \|(\dot Y, \dot e)\|_{\mathfrak U}.
\end{align*}

\begin{remark*}
The norm $\|\cdot \|_{\mathfrak U}$ provides control at one order of differentiability below that of $\|\cdot\|_{\mathfrak E}$. In view of the quasilinear character of the wave equation \eqref{Wave equation k j careful} for $k$, controlling the differences $(\dot Y, \dot e)$ at one order of differentiability lower compared to $(Y,e)$ is in a sense optimal: From \eqref{Wave equation k j careful}, we infer that $\dot k_j$ satisfies an equation of the form
\[
\square_{g_{\le j}} \dot k_j = \dot g_{\le j} \partial^2 k_j+\ldots
\]
(note that, in the case of a semilinear wave equation, the first term in the right hand side above would be absent). Thus, when performing an energy estimate for $\dot k_j$, the presence of the term $\dot g_{\le j} \partial^2 k_j$ on the right hand side implies that $\|D^a \dot k_j\|_{L^\infty L^2}$ can be no better than $\|D^{a+1} k_j\|_{L^1 L^2}$, i.e.~the energy norm in which $\dot k$ is estimated has to ``lose'' at least one derivative compared to the analogous norm for $k$. The gauge equations for $(g, \omega)$ then imply that an analogous loss has to be propagated to the estimates for $(\dot g, \dot \omega)$.

We should also note that the expression \eqref{Norm for uniqueness} for $\|\cdot \|_{\mathfrak U}$ \textbf{does not} include an analogue of each term in the expression \eqref{Norm from existence} for $\|\cdot \|_{\mathfrak E}$ (even with a loss of one derivative). Most notably:
\begin{enumerate}
\item $\|(\dot Y, \dot e)\|_{\mathfrak U}$ does not control $\| \JapD^{-1} \partial \dot g\|_{L^1 L^\infty}$ or $\|\JapD^{-1}\dot \omega\|_{L^1 L^\infty}$ (while $\|( Y,  e)\|_{\mathfrak E}$ controls $\| \partial g\|_{L^1 L^\infty}$ and  $\| \omega\|_{L^1 L^\infty}$). It does control, however, $\|\dot g\|_{L^1 L^\infty}$.
\item $\|(\dot Y, \dot e)\|_{\mathfrak U}$ does not control second order time derivatives of $\dot g$ or first order time derivatives of $\dot\omega$. The structure of the equations that allows us to get away without estimating $\partial_0^2 \dot g$ and $\partial_0 \dot \omega$  is explained in the remark below \eqref{Relation e omega k once again}.
\end{enumerate}
\end{remark*}

\subsection{Estimates for the difference equations}
We will now proceed to estimate the size of the difference  $(\dot Y, \dot e)$ with respect to the norm \eqref{Norm for uniqueness}. The process will be very similar to the one we followed in establishing the bootstrap estimates of Proposition \ref{prop:Bootstrap}; for this reason, we will mostly highlight the points of difference between the two processes and briefly sketch the rest of the arguments. 
The reader should keep in mind that there is a close correspondence (in terms of both the statement and the method of proof) between the estimates that we establish for $(\JapD^{-1}\dot k, \JapD^{-1}\dot g, \JapD^{-1}\dot\omega)$ for our uniqueness result and the estimates for $(k,g,\omega)$ appearing earlier in the bootstrap argument employed in the existence result.

\bigskip
\noindent \textbf{Energy and Strichartz estimates for $\dot k^{(-1,j)}$ and $\dot k^{(0,j)}$.} The tensor fields  $\dot k^{(-1,j)}$ and $\dot k^{(0,j)}$ satisfy the system of wave equations \eqref{Wave system k -1 differences}--\eqref{Wave system k 0 differences}. In view of the fact that they both tensor fields have vanishing initial data at $x^0=0$, using $\partial_0$ as a multiplier for \eqref{Wave system k -1 differences}--\eqref{Wave system k 0 differences} (and the fact that $\|\partial g_{\le j}\|_{L^1 L^\infty} \le \|(Y,e)\|_{\mathfrak E}\lesssim 1$) we obtain the energy estimates:
\[
\sum_j \| 2^{(s-3)j} \partial \dot k^{(-1,j)}\|^2_{L^\infty L^2} \lesssim \sum_j \Big|  \int_{\{0\le x^0\le T\}} 2^{2(s-3)j}\square_{g_{\le j}}\dot k^{(-1,j)} \cdot \partial_0 \dot k^{(-1,j)} \, dx \Big|
\]
and
\[
\sum_j \| 2^{(s-4)j} \partial \dot k^{(0,j)}\|^2_{L^\infty L^2} \lesssim \sum_j  \Big|  \int_{\{0\le x^0\le T\}} 2^{2(s-4)j}\square_{g_{\le j}}\dot k^{(0,j)} \cdot \partial_0 \dot k^{(0,j)} \, dx^0 \dots dx^3 \Big|.
\]
We will treat each term in the right hand side of the equations above in a similar way as we did for the corresponding terms in \eqref{Error energy estimates localized}, highlighting the main differences. In order to keep track with the analogies in dealing with \eqref{Error energy estimates localized}, the reader should keep in mind that $\dot k^{(-1,j)}$ is controlled  in spaces of the same regularity as $k_j$, while $\dot k^{(0,j)}$ is controlled  in spaces of the same regularity as $\JapD^{-1}k_j$; moreover, $\partial k^{(-1,j)}$ and $k^{(0,j)}$ are controlled in the same regularity  spaces as $k_j$.
\begin{itemize}
\item In the case of $\square_{g_{\le j}}\dot k^{(-1,j)}$ (see \eqref{Wave system k -1 differences} for its expression), the term $ \dot g_{\le j} \cdot \partial^2 k^{(-1,j)}$ has no analogue in the corresponding expression for $\square_{g_{\le j}} k_j$ (see \eqref{Wave equation projection}). We estimate this term as follows:
\begin{align}\label{First new term in dot k equation}
\sum_j \Big|  \int_{\{0\le x^0\le T\}} 2^{2(s-3)j}\dot g_{\le j} \cdot & \partial^2 k^{(-1,j)} \cdot \partial_0 \dot k^{(-1,j)} \, dx \Big| \\
\lesssim & \sum_j  \| \dot g_{\le j}\|_{L^1 L^\infty} \cdot \| 2^{(s-3)j} \partial^2 k^{(-1,j)}\|_{L^\infty L^2} \cdot \|2^{(s-3)j} \partial \dot k^{(-1,j)}\|_{L^\infty L^2} \nonumber \\
\lesssim &  \|(Y,e)\|_{\mathfrak E} \cdot \|(\dot Y, \dot e)\|_{\mathfrak U}^2  \nonumber  \\
\lesssim & \epsilon \cdot \|(\dot Y, \dot e)\|_{\mathfrak U}^2. \nonumber 
\end{align}
The rest of the terms appearing in the expression \eqref{Wave system k -1 differences} for $\square_{g_{\le j}}\dot k^{(-1,j)}$ are similar to terms appearing in \eqref{Error energy estimates localized}; one crucial difference is that no terms of the form $\partial^2 g, \partial^2 \dot g, \partial\omega$ or $\partial \dot\omega$ appear in the right hand side of \eqref{Wave system k -1 differences}.\footnote{Note that $\|(\dot Y, \dot e)\|_{\mathfrak U}$ doesn't control terms of the form $\partial_0^2 \dot g$ or $\partial_0\dot\omega$. Thus, if  \eqref{Wave system k -1 differences} contained such terms, we wouldn't be able to close the necessary estimates with respect to the norm $\|(\dot Y, \dot e)\|_{\mathfrak U}$.} All of them can be estimated similarly as the corresponding terms in \eqref{Error energy estimates localized}, with the exception of the terms 
\begin{equation}\label{Extra terms energy estimates}
g_{\le j}\cdot \partial \dot g_{\le j} \partial k^{(-1,j)}, \quad (g \cdot \partial \dot g)_{\le j}  \cdot k_j, \quad \dot\omega_{\le j} \cdot k_j.
\end{equation}
The analogous terms in \eqref{Error energy estimates localized}, namely 
\[
g_{\le j} \cdot \partial g_{\le j} \cdot  \partial k_j,  \quad (g \cdot \partial \dot g)_{\le j}\cdot \partial k_j, \quad \dot \omega_{\le j} \cdot\partial  k_j,
\]
were estimated using the bounds for $\|\partial g\|_{L^1 L^\infty}$ and $\|\omega\|_{L^1 L^\infty}$ provided by the bootstrap assumption for $\|(Y, e)\|_{\mathfrak E}$. However, $\|\partial \JapD^{-1} \dot g\|_{L^1 L^\infty}$ and $\|\JapD^{-1} \dot \omega\|_{L^1 L^\infty}$ are \textbf{not} controlled by $\|(\dot Y, \dot e)\|_{\mathfrak U}$; instead, we estimate the terms in \eqref{Extra terms energy estimates} as follows:\footnote{Note that the estimate \eqref{Estimates for dot k not requiring L1Linfty} for the terms involving $\partial \dot g_{\le j}$ is similar to the estimate \eqref{Low high L74} for terms involving $\partial^2 g_{\le j}$. This should again be viewed as an instance of the correspondence between our estimates for $(\JapD^{-1} \dot k, \JapD^{-1}\dot g, \JapD^{-1}\dot\omega)$ in the proof of the uniqueness statement and the our estimates for $(k, g, \omega)$ in the proof of the existence statement.}
\begin{align}\label{Estimates for dot k not requiring L1Linfty}
\sum_j & \Big|  \int_{\{0\le x^0\le T\}} 2^{2(s-3)j} g_{\le j}\cdot \partial \dot g_{\le j} \cdot  \partial k^{(-1,j)} \cdot \partial_0 \dot k^{(-1,j)} \, dx \Big|\\
& \lesssim \|g\|_{L^\infty L^\infty} \|\partial \dot g\|_{L^{\f{7+12\delta_0}{4+12\delta_0}} L^{\f73+4\delta_0}} \sum_j \|2^{(s-3)j} \partial  k^{(-1,j)}\|_{L^{\f73+4\delta_0} L^{\f{14+24\delta_0}{1+12\delta_0}}} \|2^{(s-3)j}\partial \dot k^{(-1,j)}\|_{L^\infty L^2}  \nonumber
\\
& \lesssim \|g\|_{L^\infty L^\infty} \|\partial \dot g\|_{L^{\f{7}{4}} W^{s_1,\f73}} \sum_j \|2^{(s-3-\f16-2\delta_0)j} \partial  k^{(-1,j)}\|^{\f6{7+6\delta_0}}_{L^2L^\infty} \|2^{(s-2)j} \partial  k^{(-1,j)}\|^{\f{1+6\delta_0}{7+6\delta_0}}_{L^\infty L^2} \|2^{(s-3)j}\partial \dot k^{(-1,j)}\|_{L^\infty L^2}  \nonumber
\\
& \lesssim \epsilon \cdot \|(\dot Y, \dot e)\|_{\mathfrak U}^2     \nonumber
\end{align} 
and similarly (since $k_j$ satisfies the same estimates as $\partial k^{(-1,j)}$ and $\dot\omega$ the same estimates as $\partial \dot g$):
\[
\sum_j \Big|  \int_{\{0\le x^0\le T\}} 2^{2(s-3)j}  (g \cdot \partial \dot g)_{\le j}\cdot \partial k_j \cdot \partial_0 \dot k^{(-1,j)} \, dx \Big|, \quad 
\sum_j \Big|  \int_{\{0\le x^0\le T\}} 2^{2(s-3)j}  \dot\omega_{\le j}\cdot \partial k_j \cdot \partial_0 \dot k^{(-1,j)} \, dx \Big| \lesssim \epsilon \cdot \|(\dot Y, \dot e)\|_{\mathfrak U}^2.
\]
Thus, combining the above bounds, we infer:
\begin{equation}\label{Energy estimate dot k -1}
\sum_j \| 2^{(s-3)j} \partial \dot k^{(-1,j)}\|^2_{L^\infty L^2} \lesssim \epsilon \cdot \|(\dot Y, \dot e)\|_{\mathfrak U}^2.
\end{equation}

\item In the case of $\square_{g_{\le j}}\dot k^{(0,j)}$ (see \eqref{Wave system k 0 differences} for its expression), the term $ \dot g_{\le j} \cdot \partial^2 k^{(0,j)}$ can be estimated exactly like \eqref{First new term in dot k equation}, yielding:
\begin{align*}
\sum_j \Big|  \int_{\{0\le x^0\le T\}} 2^{2(s-4)j}\dot g_{\le j} \cdot & \partial^2 k^{(0,j)} \cdot \partial_0 \dot k^{(0,j)} \, dx \Big|  \\
& \lesssim \sum_j  \| \dot g_{\le j}\|_{L^1 L^\infty} \cdot \| 2^{(s-4)j} \partial^2 k^{(0,j)}\|_{L^\infty L^2} \cdot \|2^{(s-4)j} \partial \dot k^{(0,j)}\|_{L^\infty L^2}\\
& \lesssim  \epsilon \cdot \|(\dot Y, \dot e)\|_{\mathfrak U}^2.
\end{align*}
The terms 
\begin{gather}\label{Terms for alternative treatment}
g_{\le j}\cdot \partial \dot g_{\le j} \partial k^{(0,j)}, \quad g_{\le j}\cdot \partial \dot g_{\le j} \partial^2 k^{(-1,j)}, \quad g_{\le j} \cdot \partial \dot g_{\le j} \cdot \partial k_{j}, \quad \bar\partial \big( \partial \dot g_{\le j} \cdot k_{j}\big),\\
 \partial(g_{\le j}) \cdot \big((g \cdot \partial \dot g + \dot\omega)_{\le j} \cdot k_j, \quad \partial(\dot g_{\le j}) \cdot P_j \big((g \cdot \partial g + \omega) \cdot k\big) \quad \text{ and }\quad   \bar\partial \Big(g_{\le j} \cdot ( g \cdot \partial \dot g+ \dot \omega)_{\le j} \cdot k_j\Big)
\end{gather}
 can be estimated in a similar way as \eqref{Estimates for dot k not requiring L1Linfty}. The rest of the terms can be treated in the same way as the analogous terms in  \eqref{Error energy estimates localized}; to this end, it is crucial to note the following two facts about the structure of the right hand side of \eqref{Wave system k 0 differences}:
\begin{itemize}
\item No terms of the form $\partial^2 \dot g$ or $\partial \dot\omega$ appear in the right hand side of \eqref{Wave system k 0 differences}. 
\item In the case of the terms containing high-high interactions, there is always a total spatial derivative or a factor of the form $\partial g_{\le j}$ appearing outside the high-high product, compared to the analogous term in \eqref{Error energy estimates localized} (for which the same derivative is applied to one of the high frequency factors): For instance, compare the term
\[
 \bar\partial \Big( P_j \sum_{j'>j}\big(g \cdot \partial \dot g)_{j'} \cdot k_{j'}\big)\Big)
\]
in \eqref{Wave system k 0 differences} with the corresponding term 
\[
 P_j \sum_{j'>j}\big(g \cdot \partial g)_{j'} \cdot  \partial k_{j'}\big)\Big)
\]
in \eqref{Error energy estimates localized}. Sine we aim to bound $\JapD^{-1}\dot k^{(0,j)}$ in the same energy space as we did for $k_j$ in Lemma \ref{lem:Energy estimates}, this structure allows us to follow the same process for the high-high interactions (using, in particular, the same functional inequalities provided by Lemma \ref{lem:Functional inequalities}) as we did in the proof of Lemma \ref{lem:Energy estimates}. For instance, in the case of the above term, we can bound:
\begin{align*}
 \Big|  \int_{\{0\le x^0\le T\}} 2^{2(s-4)j}\bar\partial \Big( P_j & \sum_{j'>j}\big(g \cdot \partial \dot g)_{j'} \cdot k_{j'}\big)\Big) \cdot \partial_0 \dot k^{(0,j)} \, dx \Big|\\
& \lesssim 
\sum_{j'>j} \int_{\{0\le x^0\le T\}} \big|2^{(s-3)j} (2^{-j}\bar\partial)P_j\big(g \cdot \partial \dot g)_{j'} \cdot k_{j'}\big)\big|\cdot \big|2^{(s-4)j} \partial \dot k^{(0,j)}\big| \, dx.
\end{align*}
The above sum can then be estimated in exactly the same way as we did in \eqref{Second high high}, noting that $2^{-j}\bar\partial P_j f$ satisfies the same $L^p(\mathbb T^3)$ estimates as $P_j f$ when $1<p<+\infty$, the term  $\partial \dot g_{j'}$ is controlled in the same space $L^2 L^2$, $L^{\f74} L^{\f73}$ and $L^\infty L^2$-based Sobolev spaces as $\partial^2 g_{j'}$, while  $2^{(s-4)j} \partial \dot k^{(0,j)}$ is controlled in the same spaces as $2^{(s-3)j} \partial k_j$. The same arguments can be applied to the rest of the high-high interactions (we will omit the details).
\end{itemize}
Overall, we obtain:
\begin{equation}\label{Energy estimate dot k 0}
\sum_j \| 2^{(s-4)j} \partial \dot k^{(0,j)}\|^2_{L^\infty L^2} \lesssim \epsilon \cdot \|(\dot Y, \dot e)\|_{\mathfrak U}^2.
\end{equation}
\end{itemize}

Combining \eqref{Energy estimate dot k -1} and \eqref{Energy estimate dot k 0} and then using the wave equations \eqref{Wave system k -1 differences} and \eqref{Wave system k 0 differences}  to express $\partial_0^2 k^{(-1,j)}$ and $\partial_0^2 k^{(0,j)}$ in terms of lower order terms (in the same way as in the proof of Lemma \ref{lem:Energy estimates}), we infer:
\begin{equation}\label{Energy estimate dot k final}
\sum_{j\in \mathbb N} \sum_{l=0}^2 \| 2^{(s-2-l)j} \partial^l \dot k^{(-1,j)}\|^2_{L^\infty L^2} + \| 2^{(s-3-l)j} \partial^l \dot k^{(0,j)}\|^2_{L^\infty L^2} \lesssim \epsilon \cdot \|(\dot Y, \dot e)\|_{\mathfrak U}^2.
\end{equation}

The Strichartz estimate
\begin{equation}\label{Strichartz estimate dot k final}
\sum_{j\in \mathbb N} \sum_{l=0}^2  \Big(\|2^{(-l-\f32+s_0+\f18\delta_0)j}\partial^l \dot k^{(0,j)}\|^2_{L^2 L^\infty}+ \|2^{(-l-\f12+s_0+\f18\delta_0)j}\partial^l \dot k^{(-1,j)}\|^2_{L^2 L^\infty} \Big) \lesssim \epsilon \cdot \|(\dot Y, \dot e)\|_{\mathfrak U}^2
\end{equation}
can be established by following exactly the same steps as in the proof of Lemma \ref{lem:Strichartz estimates} in the case of equations \eqref{Wave system k -1 differences} and \eqref{Wave system k 0 differences},  employing, in particular, the analogy between $(\JapD^{-1} \dot k^{0,j)}, \dot k^{-1,j})$ and $k_j$ that was used in the proof of \eqref{Energy estimate dot k final} and handling the low-high terms of the form \eqref{Extra terms energy estimates}  and \eqref{Terms for alternative treatment} similarly as before (see also the two remarks below \eqref{Terms for alternative treatment}). We will omit the details.

\bigskip
\noindent \textbf{Estimates for $\dot g$ and $\dot \omega$.} We will first establish the bound 
\begin{equation}\label{Bound L1Linfty dot g}
\|\dot g\|_{L^1 W^{\f14\delta_0, \infty}} \lesssim \epsilon \|(\dot Y, \dot e)\|_{\mathfrak U}.
\end{equation}
To this end, it suffices to show that, for any $p\gg 1$, we have
\begin{equation}\label{Bound L1Linfty Besov dot g}
\|\dot g\|_{L^1 B^{\f12\delta_0}_{p,1}} \lesssim_p \epsilon \|(\dot Y, \dot e)\|_{\mathfrak U} + \epsilon \|\dot g\|_{L^1 B^{\f12\delta_0}_{p,1}}.
\end{equation}

Let us consider first the parabolic equation \eqref{Equation lapse once more differences} for $\dot N$: Using the parabolic estimates provided by Lemma \ref{lem:Parabolic estimates model}, we have:
\begin{align*}
\|\dot N\|_{L^1 B^{\f12\delta_0}_{p,1}} \lesssim_p & \|\dot{\mathfrak F}_{N}^{(0)}+|D|\Delta_{\bar g}^{-1} [(g-m_0)\cdot D\dot g]\|_{L^1 B^{\f12\delta_0}_{p,1}} + \|\dot{\mathfrak F}_{N}^{(1)}\|_{L^1 B^{-1+\f12\delta_0}_{p,1}} \\
& + \Bigg\||D| \Delta_{\bar g}^{-1} \Bigg[ \dot g \cdot D\partial g + (g-m_0)\cdot D^2 \dot g + \dot g \cdot \partial g \cdot \partial g + g \cdot \partial g \cdot \partial\dot g  \Bigg] \Bigg\|_{L^1 B^{-1+\f12\delta_0}_{p,1}}
\\
\lesssim_p & \|\dot{\mathfrak F}_{N}^{(0)}\|_{L^1 B^{\f12\delta_0}_{p,1}}+\|(g-m_0)\cdot D\dot g\|_{L^1 B^{-1+\f12\delta_0}_{p,1}} + \|\dot{\mathfrak F}_{N}^{(1)}\|_{L^1 B^{-1+\f12\delta_0}_{p,1}} \\
& + \Bigg\| \dot g \cdot D\partial g + (g-m_0)\cdot D^2 \dot g + \dot g \cdot \partial g \cdot \partial g + g \cdot \partial g \cdot \partial\dot g  \Bigg\|_{L^1 B^{-2+\f12\delta_0}_{p,1}}.
\end{align*}
We will estimate each term in the right hand side above as follows:
\begin{itemize}
\item For the first term, note that  $\dot{\mathfrak F}_{N}^{(0)}$ is expressed as:
\begin{align*}
\dot{\mathfrak F}_{N}^{(0)} =  \sum_{j\in \mathbb N} \sum_{\substack{j_2 > j_1-2,\\|j_2-j_3|\le 2}} \Bigg[&  |D|  \Delta_{\bar g} ^{-1} \Big(g \cdot  D  P_j \Big( \big( P_{j_1} (m(e)) \cdot  P_{j_2} (\JapD^{-1}\dot k) \cdot P_{j_3} (\JapD^{-1} k)\big) \Big)\Big)\\
&+  |D|  \Delta_{\bar g} ^{-1}\Big( g \cdot  D  P_j \Big( \big( P_{j_1} \dot e \cdot  P_{j_2} (\JapD^{-1} k) \cdot P_{j_3} (\JapD^{-1} k)\big) \Big)  \Big)\\
& 
 |D|  \Delta_{\bar g} ^{-1}\Big( \dot g \cdot  D  P_j \Big( \big( P_{j_1} (m(e)) \cdot  P_{j_2} (\JapD^{-1} k) \cdot P_{j_3} (\JapD^{-1} k)\big) \Big)\Big) \Bigg]\\
& +  |D|\Delta_{\bar g} ^{-1} \Big( \dot g \cdot D \mathfrak F_{N}^{(0)} + D\dot g \cdot  \mathfrak F_{N}^{(0)} \Big).
\end{align*}
We will estimate the above terms as follows:
\begin{align*}
\Bigg\|  \sum_{j\in \mathbb N} & \sum_{\substack{j_2 > j_1-2,\\|j_2-j_3|\le 2}}   |D|  \Delta_{\bar g} ^{-1} \Big(g \cdot  D  P_j \Big( \big( P_{j_1} (m(e)) \cdot  P_{j_2} (\JapD^{-1}\dot k) \cdot P_{j_3} (\JapD^{-1} k)\big) \Big)\Big) \Bigg\|_{L^1 B^{\f12\delta_0}_{p,1}} 
\\
\lesssim_p & 
 \sum_{j\in \mathbb N}   \sum_{\substack{j_2 > j_1-2,\\|j_2-j_3|\le 2}} \|g\|_{L^\infty H^{s-1}} \Big\|P_j \Big( \big( P_{j_1} (m(e)) \cdot  P_{j_2} (\JapD^{-\f32} \dot k) \cdot P_{j_3} (\JapD^{-\f12} k)\big) \Big)\Big\|_{L^1 W^{\delta_0,p}}\\
\lesssim_p & \|g\|_{L^\infty H^{s-1}} \|m(e)\|_{L^\infty H^{s-1}} \| \dot k\|_{L^2 W^{-\f32+2\delta_0,\infty}} \|k \|_{L^2 W^{-\f12+2\delta_0, \infty}} \\
\lesssim & \epsilon \|(\dot Y, \dot e)\|_{\mathfrak U}.
\end{align*}
The rest of the terms in the expression for $\dot{\mathfrak F}_{N}^{(0)}$ can be estimated in a similar way, thus yielding:
\[
\| \dot{\mathfrak F}_{N}^{(0)}\|_{L^1 B^{\f12\delta_0}_{p,1}} \lesssim_p \epsilon \|(\dot Y, \dot e)\|_{\mathfrak U}.
\]
\item For the second term, we can estimate
\[
 \|(g-m_0)\cdot D\dot g\|_{L^1 B^{-1+\f12\delta_0}_{p,1}} \lesssim_p \|g-m_0\|_{L^\infty H^{s-1}} \|\dot g\|_{L^1 B^{\f12\delta_0}_{p,1}}.
\]
\item For the term involving $\dot{\mathfrak F}_{N}^{(1)}$ (note that $\dot{\mathfrak F}_{N}^{(1)}$ is obtained by considering the difference for the two immersions of the expression \eqref{F N 1} for $\mathfrak F_{N}^{(1)}$), we argue similarly as for the term  involving $\dot{\mathfrak F}_{N}^{(0)}$. Note that the we need to estimate  $\dot{\mathfrak F}_{N}^{(1)}$ in a space of one order of differentiability below that in which we estimated  $\dot{\mathfrak F}_{N}^{(0)}$; thus, it is crucial to take advantage of the fact that the high-high terms in the expression for $\dot{\mathfrak F}_{N}^{(1)}$ which are analogous to terms in  $\dot{\mathfrak F}_{N}^{(0)}$ have an extra derivative or a factor of the form $\partial g$ or $\partial e$ outside the corresponding high-high  product (it is at this point that we make use of the expression \eqref{Calculation F natural difference 2 derivatives}  from Lemma \ref{lem:Cancellations}). For instance, for the first two of the terms in the expression for $\dot{\mathfrak F}_{N}^{(1)}$, we can bound:
\begin{align*}
\Bigg\|  \sum_{j\in \mathbb N} & \sum_{\substack{j_2 > j_1-2,\\|j_2-j_3|\le 2}}   |D|  \Delta_{\bar g} ^{-1} \Big(g \cdot  D^2  P_j \Big( \big( P_{j_1} (m(e)) \cdot  P_{j_2} (\JapD^{-1}\dot k) \cdot P_{j_3} (\JapD^{-1} k)\big) \Big)\Big) \Bigg\|_{L^1 B^{-1+\f12\delta_0}_{p,1}} 
\\
\lesssim_p & 
 \sum_{j\in \mathbb N} \sum_{\substack{j_2 > j_1-2,\\|j_2-j_3|\le 2}}  \Bigg\|   \Big(g \cdot  D^2  P_j \Big( \big( P_{j_1} (m(e)) \cdot  P_{j_2} (\JapD^{-1}\dot k) \cdot P_{j_3} (\JapD^{-1} k)\big) \Big)\Big) \Bigg\|_{L^1 W^{-2+\delta_0,p}} 
\\
\lesssim_p & 
\sum_{j\in \mathbb N} \sum_{\substack{j_2 > j_1-2,\\|j_2-j_3|\le 2}} \Bigg[ \|g\|_{L^\infty H^{s-1}} \Big\| D^2  P_j \Big( \big( P_{j_1} (m(e)) \cdot  P_{j_2} (\JapD^{-1}\dot k) \cdot P_{j_3} (\JapD^{-1} k)\big) \Big)\Big) \Big\|_{L^1 W^{-2+\delta_0,\infty}}\\
&\hphantom{\sum_{j\in \mathbb N} \sum_{\substack{j_2 > j_1-2,\\|j_2-j_3|\le 2}} }
 +   \|D^2 g\|_{L^2 H^{\delta_0}} \Big\|  P_j \Big( \big( P_{j_1} (m(e)) \cdot  P_{j_2} (\JapD^{-1}\dot k) \cdot P_{j_3} (\JapD^{-1} k)\big) \Big)\Big) \Big\|_{L^2 W^{\delta_0,6}}  \Bigg]
\\
\lesssim &
 \|g\|_{L^\infty H^{s-1}} \|m(e)\|_{L^\infty H^{s-1}} \|\dot k\|_{L^2 W^{-\f32+\delta_0, \infty}}\| k\|_{L^2 W^{-\f12+\delta_0, \infty}}
+ \|D^2 g\|_{L^2 H^{\delta_0}} \Big\| m(e)\|_{L^\infty H^{s-1}} \|\dot k\|_{L^2 W^{-\f32+\delta_0,\infty}}\|k\|_{L^\infty W^{-\f12+\delta_0, 6}}  \Bigg]
\\
\lesssim & \epsilon \|(\dot Y, \dot e)\|_{\mathfrak U}
\end{align*}
and 
\begin{align*}
\Bigg\|  \sum_{j\in \mathbb N} & \sum_{\substack{j_2 > j_1-2,\\|j_2-j_3|\le 2}}   |D|  \Delta_{\bar g} ^{-1} \Big( \partial g \cdot  D P_j \Big( \big( P_{j_1} (m(e)) \cdot  P_{j_2} (\JapD^{-1}\dot k) \cdot P_{j_3} (\JapD^{-1} k)\big) \Big)\Big) \Bigg\|_{L^1 B^{-1+\f12\delta_0}_{p,1}} 
\\
\lesssim_p & 
 \sum_{j\in \mathbb N} \sum_{\substack{j_2 > j_1-2,\\|j_2-j_3|\le 2}}  \Bigg\|  \Big(\partial g \cdot  D  P_j \Big( \big( P_{j_1} (m(e)) \cdot  P_{j_2} (\JapD^{-1}\dot k) \cdot P_{j_3} (\JapD^{-1} k)\big) \Big)\Big) \Bigg\|_{L^1 W^{-1+\delta_0,3}} 
\\
\lesssim & 
\sum_{j\in \mathbb N} \sum_{\substack{j_2 > j_1-2,\\|j_2-j_3|\le 2}}  \Bigg[\|\partial  g\|_{L^2 W^{\delta_0, 6}} \Big\| D  P_j \Big( \big( P_{j_1} (m(e)) \cdot  P_{j_2} (\JapD^{-1}\dot k) \cdot P_{j_3} (\JapD^{-1} k)\big) \Big)\Big) \Big\|_{L^2 W^{-1+\delta_0,6}}   \\
& \hphantom{\sum_{j\in \mathbb N} \sum_{\substack{j_2 > j_1-2,\\|j_2-j_3|\le 2}}}
+ \|D\partial  g\|_{L^2 H^{\delta_0}} \Big\|   P_j \Big( \big( P_{j_1} (m(e)) \cdot  P_{j_2} (\JapD^{-1}\dot k) \cdot P_{j_3} (\JapD^{-1} k)\big) \Big)\Big) \Big\|_{L^2 W^{\delta_0,6}} 
\Bigg]
\\
\lesssim &
   \|\partial^2  g\|_{L^2 H^{\delta_0}} \Big\| m(e)\|_{L^\infty H^{s-1}} \|\dot k\|_{L^2 W^{-\f32+\delta_0,\infty}}\|k\|_{L^\infty W^{-\f12+\delta_0, 6}}  \Bigg]
\\
\lesssim & \epsilon \|(\dot Y, \dot e)\|_{\mathfrak U}.
\end{align*}
The rest of the terms in the expression for  $\dot{\mathfrak F}_{N}^{(1)}$ can be estimated in a similar way, thus yielding:
\[
\| \dot{\mathfrak F}_{N}^{(1)}\|_{L^1 B^{-1+\f12\delta_0}_{p,1}} \lesssim_p \epsilon \|(\dot Y, \dot e)\|_{\mathfrak U}.
\]

\item For the last term, we can estimate:
\begin{align*}
\Bigg\| \dot g \cdot D\partial g & + (g-m_0)\cdot D^2 \dot g + \dot g \cdot \partial g \cdot \partial g + g \cdot \partial g \cdot \partial\dot g  \Bigg\|_{L^1 B^{-2+\f12\delta_0}_{p,1}}
\\
& \lesssim_p \|(g-m_0)\cdot D^2 \dot g\|_{L^1 B^{-2+\f12\delta_0}_{p,1}}+\Bigg\| \dot g \cdot D\partial  g   + \dot g \cdot \partial g \cdot \partial g + g \cdot \partial g \cdot \partial\dot g  \Bigg\|_{L^1 W^{\delta_0,\f32}}\\
& \lesssim  \|g-m_0\|_{L^\infty H^{s-1}} \| \dot g\|_{L^1 B^{\f12\delta_0}_{p,1}}+ \|\dot g\|_{L^2 W^{\delta_0,6}} \|D \partial g\|_{L^2 H^{\delta_0}} \\
&\hphantom{\|g-m_0\|_{L^\infty H^{s-1}}}
+\|\dot g\|_{L^2 W^{\delta_0,6}} \| \partial g\|_{L^2 W^{\delta_0,6}}  \| \partial g\|_{L^\infty W^{\delta_0,3}}
+ \| g\|_{L^\infty H^{s-1}} \| \partial g\|_{L^2 W^{\delta_0,6}}  \| \partial g\dot \|_{L^2 H^{\delta_0}} \\
& \lesssim_p \epsilon\| \dot g\|_{L^1 B^{\f12\delta_0}_{p,1}} + \epsilon \|(\dot Y,\dot e)\|_{\mathfrak U}.
\end{align*}
\end{itemize}
Combining the above bounds, we obtain that, for any $p>1$,
\begin{equation}\label{Bound L1Linfty Besov dot N}
\|\dot N\|_{L^1 B^{\f12\delta_0}_{p,1}} \lesssim_p \epsilon \|(\dot Y, \dot e)\|_{\mathfrak U} + \epsilon \|\dot g\|_{L^1 B^{\f12\delta_0}_{p,1}}.
\end{equation}

The tensor field $\dot h$ satisfies the elliptic equation \eqref{Littlewood Paley Elliptic h differences}. Therefore, using Lemma \ref{lem:Elliptic estimates model} (in particular, the bound \eqref{Very low regularity elliptic estimate} for $p=3$),  we can estimate for any $j\in \mathbb N$ (recall that $h_j=P_j h$):
\begin{align*}
\|\dot h_j\|_{L^1 W^{\delta_0,3}} \lesssim&  \Big\|\dot g_{\le j} \cdot D^2 h_j + D\dot g_{\le j}\cdot Dh_j +\dot{\mathfrak F}^{(j)}_{h}\Big\|_{L^1 W^{-2+\delta_0,3}}+ \|g-m_0\|_{L^2 H^{2+\f16}} \Big\|D\dot h_j\Big\|_{L^2 H^{-1+\delta_0}}\\
\lesssim & \| \dot{\mathfrak F}^{(j)}_{h}\|_{L^1 W^{-2+\delta_0,3}} + \|\dot g_{\le j} \cdot D^2 h_j + D\dot g_{\le j}\cdot Dh_j\|_{L^1 W^{-1+\delta_0, \f32}}+  2^{-\delta_0 j}\|g-m_0\|_{L^2 H^{2+\f16}} \Big\|\partial \dot g\Big\|_{L^2 H^{2\delta_0}}\\
\lesssim & \| \dot{\mathfrak F}^{(j)}_{h}\|_{L^1 W^{-2+\delta_0,3}} + 2^{-\delta_0 j}\|\dot g\|_{L^2 W^{\delta_0,6}} \|D^2 \partial g\|_{L^2 H^{-1+2\delta_0}} + 2^{-\delta_0 j}\|D\dot g\|_{L^2 H^{\delta_0}} \|D \partial g\|_{L^2 W^{-1+2\delta_0, 6}}\\
&\hphantom{ \| \dot{\mathfrak F}^{(j)}_{h}\|_{L^1 W^{-2+\delta_0,3}}}
+  \|g-m_0\|_{L^2 H^{2+\f16}} \Big\|\partial \dot g\Big\|_{L^2 H^{\delta_0}}\\
\lesssim & \| \dot{\mathfrak F}^{(j)}_{h}\|_{L^1 W^{-2+\delta_0,3}}+ 2^{-\delta_0 j}\epsilon \|(\dot Y, \dot e)\|_{\mathfrak U}.
\end{align*}
In view of the expression \eqref{Relation F j h} for $\mathfrak F^{(j)}_{h}$ (recalling also that $\dot{\mathfrak F}^{(j)}_{h}$ is defined as the difference of the expressions $\mathfrak F^{(j)}_{h}$ for the two embeddings and noting that all the high-high interactions in \eqref{Relation F j h} have a spatial derivative outside the corresponding product), we can readily estimate
\[
 \| \dot{\mathfrak F}^{(j)}_{h}\|_{L^1 W^{-2+\delta_0,3}} \lesssim 2^{-\delta_0 j} \epsilon \epsilon \|(\dot Y, \dot e)\|_{\mathfrak U}.
\]
 Thus, returning to the previous estimate, we obtain:
\begin{equation}\label{Bound L1L3 dot h}
\|\dot h_j\|_{L^1 W^{\delta_0,3}} \lesssim 2^{-\delta_0 j}\epsilon \|(\dot Y, \dot e)\|_{\mathfrak U}.
\end{equation}

The shift difference $\dot \beta$ satisfies the parabolic equation \eqref{Equation shift once more differences}, for which all the terms on the right hand side are similar to the ones appearing in the equation for $\dot N$, with the exception of the additional term $|D|\Delta_{\bar g}^{-1} \Big(g \cdot D\dot h\Big)$. Thus, arguing exactly as in the case of $\dot N$, we obtain for any $p>1$:
\begin{align*}
\|\dot \beta\|_{L^1 B^{\f12\delta_0}_{p,1}} \lesssim_p & \epsilon \|(\dot Y, \dot e)\|_{\mathfrak U} + \epsilon \|\dot g\|_{L^1 B^{\f12\delta_0}_{p,1}} +\| |D|\Delta_{\bar g}^{-1} \Big(g \cdot D\dot h\Big)\|_{L^1 B^{-1+\f12\delta_0}_{p,1}}
\\
\lesssim_p & \epsilon \|(\dot Y, \dot e)\|_{\mathfrak U} + \epsilon \|\dot g\|_{L^1 B^{\f12\delta_0}_{p,1}} + \|\dot h\|_{L^1 W^{\delta_0,3}}
\nonumber
\end{align*}
which, in view of \eqref{Bound L1L3 dot h}, implies that
\begin{equation}\label{Bound L1Linfty Besov dot beta}
\|\dot \beta\|_{L^1 B^{\f12\delta_0}_{p,1}} \lesssim_p  \epsilon \|(\dot Y, \dot e)\|_{\mathfrak U} + \epsilon \|\dot g\|_{L^1 B^{\f12\delta_0}_{p,1}}.
\end{equation}

The tensor field $\dot{\bar g}$ satisfies the elliptic equation \eqref{Equation g bar once more differences}. Therefore, using the elliptic estimates of Lemma \ref{lem:Elliptic estimates model}, we obtain for any $p\in (1, +\infty)$:
\begin{align}\label{Bound dot g bar almost}
\| \dot{\bar g}\| _{L^1 B^{\f12\delta_0}_{p,1}} \lesssim_p & \Big\| g \cdot D^2\dot\beta + \dot g \cdot D^2 g + \dot{\mathfrak F}_{\bar g}  \Big\|_{L^1 B^{-2+\f12\delta_0}_{p,1}} \\
& + \|g-m_0\|_{L^2 H^{2+\f16}} \| \dot{\bar g}\|_{L^2 W^{6,\delta_0}} 
  \nonumber\\
\lesssim_p & 
\|\dot{\mathfrak F}_{\bar g} \|_{L^1 B^{-2+\f12\delta_0}_{p,1}} + \|g\|_{L^\infty H^{s-1}} \| D^2 \dot \beta\|_{L^1 B^{-2+\f12\delta_0}_{p,1}} + \| D g\|_{L^2 W^{\delta_0,6}} \|D\dot \beta\|_{L^2 H^{\delta_0}} + \|\dot g\|_{L^2 W^{\delta_0, 6}} \|D^2 g\|_{L^2 H^\delta_0}
  \nonumber\\
& + \|g-m_0\|_{L^2 H^{2+\f16}} \| \dot g\|_{L^2 W^{6,\delta_0}}   \nonumber
\\
\lesssim_p & \|\dot{\mathfrak F}_{\bar g} \|_{L^1 B^{-2+\f12\delta_0}_{p,1}} + \epsilon \|(\dot Y, \dot e)\|_{\mathfrak U} + \epsilon \|\dot g\|_{L^1 B^{\f12\delta_0}_{p,1}}.  \nonumber
\end{align}
In view of the fact that $ \dot{\mathfrak F}_{\bar g}$ is obtained by considering the difference of the expressions \eqref{F bar g} for the two embeddings, we can readily estimate:
\begin{equation}\label{Bound for dot F g bar}
\|\dot{\mathfrak F}_{\bar g} \|_{L^1 B^{-2+\f12\delta_0}_{p,1}} \lesssim_p \epsilon \|(\dot Y, \dot e)\|_{\mathfrak U}.
\end{equation}
We will illustrate how to obtain the above bound in the case of three of the terms in the expression for $ \dot{\mathfrak F}_{\bar g}$ (the rest following in a similar way):\footnote{As can be seen from the way we handle the bound \eqref{Estimate two D derivatives}, it is necessary for us at this point to use \eqref{Schematic extraction two spatial derivatives wedge product}, namely the fact that one can pull out two spatial derivatives in the high-high interactions in $R_{**} = m(e) \cdot k \wedge k$, which is the part of the Riemann curvature tensor appearing as a source term in the elliptic equation for $\bar g$.} 
\begin{align}\label{Estimate two D derivatives}
\Bigg\| \sum_{j\in \mathbb N} &   \sum_{\substack{j_2 > j_1-2,\\|j_2-j_3|\le 2}} g \cdot D^2  P_j \Big( \big( P_{j_1} (m(e)) \cdot  P_{j_2} (\JapD^{-1}k) \cdot P_{j_3} (\JapD^{-1} \dot k)\big) \Big)  \Bigg\|_{L^1 B^{-2+\f12\delta_0}_{p,1}} \\
\lesssim_p & 
 \sum_{j\in \mathbb N}  \sum_{\substack{j_2 > j_1-2,\\|j_2-j_3|\le 2}} \Bigg[ \|g\|_{L^\infty H^{s-1}} \Big\|D^2  P_j \Big( \big( P_{j_1} (m(e)) \cdot  P_{j_2} (\JapD^{-1}k) \cdot P_{j_3} (\JapD^{-1} \dot k)\big) \Big)  \Big\|_{L^1 B^{-2+\f12\delta_0}_{p,1}}
\nonumber\\
&\hphantom{\sum_{j\in \mathbb N}  \sum_{\substack{j_2 > j_1-2,\\|j_2-j_3|\le 2}} \Bigg[}
+ \|D^2 g\|_{L^2 H^{\delta_0}} \Big\|P_j \Big( \big( P_{j_1} (m(e)) \cdot  P_{j_2} (\JapD^{-1}k) \cdot P_{j_3} (\JapD^{-1} \dot k)\big) \Big)  \Big\|_{L^2 W^{\delta_0,6}}
\Bigg]
\nonumber\\
\lesssim_p & 
 \|g\|_{L^\infty H^{s-1}} \|m(e)\|_{L^\infty H^{s-1}} \|k\|_{L^2 W^{-\f12+\delta_0, \infty}} \|\dot k\|_{L^2 W^{-\f32+\delta_0, \infty}} 
\nonumber\\
& + \|D^2 g\|_{L^2 H^{\delta_0}} \|m(e)\|_{L^\infty H^{s-1}} \|k\|_{L^2 W^{-\f12+\delta_0, \infty}} \|\dot k\|_{L^\infty W^{-\f32+\delta_0, 6}}
\nonumber\\
\lesssim & \epsilon \|(\dot Y, \dot e)\|_{\mathfrak U},\nonumber
\end{align}
\begin{align*}
\Bigg\| \sum_{j\in \mathbb N} &   \sum_{\substack{j_2 > j_1-2,\\|j_2-j_3|\le 2}} g\cdot P_j \big( P_{j_1} (\partial e)  \cdot P_{j_2} k \cdot P_{j_3} (\JapD^{-1} \dot k) \big)  \Bigg\|_{L^1 B^{-2+\f12\delta_0}_{p,1}} \\
\lesssim_p & 
 \sum_{j\in \mathbb N}  \sum_{\substack{j_2 > j_1-2,\\|j_2-j_3|\le 2}} \Bigg[ \|g\|_{L^\infty H^{s-1}} \Big\|P_j \big( P_{j_1} (\partial e)  \cdot P_{j_2} ( \JapD^{-\f12} k) \cdot P_{j_3} (\JapD^{-\f12} \dot k) \big)   \Big\|_{L^1 W^{\delta_0,\f32}}\\
\lesssim_p & 
 \|g\|_{L^\infty H^{s-1}} \|\partial e\|_{L^2 W^{\delta_0,6}} \|k\|_{L^2 W^{-\f12+\delta_0, \infty}} \|\dot k\|_{L^\infty H^{-\f12+\delta_0}} 
\\
\lesssim & \epsilon \|(\dot Y, \dot e)\|_{\mathfrak U}
\end{align*}
and
\begin{align*}
\|g\cdot \partial g \cdot \partial \dot g\|_{L^1 B^{-2+\f12\delta_0}_{p,1}}
\lesssim_p & \|g\cdot \partial g \cdot \partial \dot g\|_{L^1 W^{\delta_0,\f32}} \\
\lesssim & \|g\|_{L^\infty H^{s-1}} \|\partial g\|_{L^2 W^{\delta_0, 6}} \|\partial \dot g\|_{L^2 H^{\delta_0}} \\
\lesssim & \epsilon \|(\dot Y, \dot e)\|_{\mathfrak U}.
\end{align*}
Using \eqref{Bound for dot F g bar} in \eqref{Bound dot g bar almost}, we infer:
\begin{equation}\label{Bound dot g bar Besov}
\| \dot{\bar g}\| _{L^1 B^{\f12\delta_0}_{p,1}} \lesssim_p  \epsilon \|(\dot Y, \dot e)\|_{\mathfrak U} + \epsilon \|\dot g\|_{L^1 B^{\f12\delta_0}_{p,1}}. 
\end{equation}
Combining \eqref{Bound L1Linfty Besov dot N}, \eqref{Bound L1Linfty Besov dot beta} and \eqref{Bound dot g bar Besov}, we finally obtain \eqref{Bound L1Linfty Besov dot g} and, therefore, \eqref{Bound L1Linfty dot g}.

Let us introduce the norm
\[
\|f\|_{\mathfrak D} \doteq \|f\|_{L^2 H^{\f16+s_1}} + \|f\|_{L^{\f74} B^{s_1+\f18\delta_0}_{\f73,\f74}} + \|f\|_{L^\infty H^{s-3}}.
\]
The bound
\begin{equation}\label{Bound elliptic parabolic dot g dot omega almost}
\|\partial \dot g \|_{\mathfrak D} + \|\dot \omega \|_{\mathfrak D} \lesssim \epsilon \| (\dot Y, \dot e)\|_{\mathfrak U} + \epsilon\cdot \Big(\|\partial \dot g \|_{\mathfrak D} + \|\dot \omega \|_{\mathfrak D} \Big)
\end{equation}
can be obtained using the parabolic estimates from Lemma \ref{lem:Parabolic estimates model} (note that we have to use the bounds \eqref{Estimate parabolic LinftyL2 lower order} in the case of the $\partial_0 \dot{\mathfrak F}^{(0)}_{\omega_0}$ term in the right hand side of equation \eqref{Gauge condition omega 0 j differences} for $\omega_0$) and the elliptic estimates from Lemma \ref{lem:Elliptic estimates model}, in a similar way as for the derivation of the corresponding estimates for $\partial g$ and $\omega$ in the proof of Proposition \ref{prop:Bootstrap} (see Section \ref{subsec:Closing bootstrap metric}).  We will omit the details; let us remark once more that, in obtaining similar estimates for $\JapD^{-1} \partial \dot g$ and $\JapD^{-1} \dot\omega$ as we did for $\partial g$ and $\omega$ in the proof of Proposition \ref{prop:Bootstrap},  the fact that the spatial derivatives in the high-high interactions in the equations for $h$, $\bar g$ and $\bar\omega$ appear outside the corresponding products plays a key role. 

From \eqref{Bound elliptic parabolic dot g dot omega almost}, we readily obtain:
\begin{align}\label{Bound elliptic parabolic dot g dot omega}
\|\partial \dot g\|_{L^2 H^{\f16+s_1}} & + \|\partial \dot g\|_{L^{\f74} W^{s_1,\f73}} + \|\partial \dot g\|_{L^\infty H^{s-3}} \\
&\| \dot \omega\|_{L^2 H^{\f16+s_1}} + \|\dot \omega\|_{L^{\f74} W^{s_1,\f73}} + \|\dot \omega\|_{L^\infty H^{s-3}} \lesssim \epsilon \| (\dot Y, \dot e)\|_{\mathfrak U}. \nonumber 
\end{align}

\bigskip
\noindent \textbf{Estimates for $\dot Y$ and $\dot e$.} Using the relations \eqref{Relation Y k once again differences}--\eqref{Relation e omega k once again differences} for $\dot Y$ and $\dot e$, we can readily infer that
\begin{equation}\label{Bound dot e final}
\| \partial \dot e \|_{L^2 H^{\f16+s_1}} + \|  \partial \dot e \|_{L^{\infty} H^{s-3}}+\|\partial \dot e\|_{L^{\f74} W^{s_1, \f73}} \lesssim \epsilon  \| (\dot Y, \dot e)\|_{\mathfrak U}
\end{equation}
and
\begin{equation}\label{Bound dot Y final}
 \sum_{l=0}^2 \| \partial^l \dot Y \|_{L^{\infty}H^{s-l-1}} + \| \partial^2 \dot Y \|_{L^4 W^{-1+\f1{12},4}}+\|\partial^2 \dot Y \|_{L^{\f72} W^{-1,\f{14}3}} \lesssim \epsilon  \| (\dot Y, \dot e)\|_{\mathfrak U}.
\end{equation}

\subsection{Completing the proof of Proposition \ref{prop:Uniqueness parabolic}}
Combining the bounds  \eqref{Energy estimate dot k final},  \eqref{Strichartz estimate dot k final}, \eqref{Bound L1Linfty dot g},  \eqref{Bound elliptic parabolic dot g dot omega}, \eqref{Bound dot e final} and \eqref{Bound dot Y final}, we obtain:
\[
\|(\dot Y, \dot e)\|_{\mathfrak U} \lesssim \epsilon^{\f12} \|(\dot Y, \dot e)\|_{\mathfrak U}.
\]
Therefore, provided $\epsilon$ was chosen small enough in terms of $s$, we infer that
\[
\|(\dot Y, \dot e)\|_{\mathfrak U}=0,
\]
i.e.~that $(Y^{(1)},e^{(1)}) = (Y^{(2)}, e^{(2)})$. This completes the proof of Proposition \ref{prop:Uniqueness parabolic}. 
\qed

\appendix

\section{Functional inequalities on $\mathbb T^3$}
In this section, we will collect a number of functional inequalities that are used repeatedly throughout the proofs of Theorems \ref{thm:Existence} and \ref{thm:Uniqueness}.

\begin{lemma}\label{lem:Functional inequalities}
Let $f_1, f_2 : \mathbb T^3 \rightarrow \mathbb R$ be smooth functions and let $s>\f52$, $\delta \in [0, \f14 (s-\f52)]$ and $p\in [2,+\infty)$. Then, we can estimate
\begin{equation}\label{H s-2 bound}
\| f_1 \cdot f_2 \|_{W^{s-2,p}} \le C \| f_1 \|_{H^{s-1-\delta}} \| f_2\|_{W^{s-2,p}},
\end{equation}

\smallskip
\begin{equation}\label{H s-3 bound}
\| f_1 \cdot f_2 \|_{W^{s-3,p}} \le C \| f_1 \|_{H^{s-2+z-\delta}} \| f_2\|_{W^{s-2-z,p}} \quad \text{for any } z\in[0,1]
\end{equation}
and 
\begin{equation}\label{H s-4 bound}
\| f_1 \cdot f_2 \|_{W^{s-4,p}} \le C \| f_1 \|_{H^{s-2+z-\delta}} \| f_2\|_{W^{s-3-z,p}} \quad \text{for any } z\in[0,1],
\end{equation}
where the constant $C>0$ depends only on $s$ and $p$.
\end{lemma}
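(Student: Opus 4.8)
All three inequalities \eqref{H s-2 bound}, \eqref{H s-3 bound}, \eqref{H s-4 bound} follow from standard Littlewood--Paley paraproduct decompositions, so the plan is to reduce each of them to a few elementary estimates on dyadic pieces, using Bernstein's inequality on $\mathbb T^3$ together with the Sobolev embedding $H^{3/2 + \delta'}(\mathbb T^3) \hookrightarrow L^\infty(\mathbb T^3)$ for $\delta' > 0$. First I would set up the trichotomy: writing $f_1 \cdot f_2 = \sum_j \big( HL_j(f_1,f_2) + LH_j(f_1,f_2) + HH_j(f_1,f_2)\big)$ with the conventions of Section \ref{sec:Littlewood Paley}, it suffices to bound, in the relevant $W^{\sigma,p}$ norm, each of the three families of terms separately. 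Since $p \in [2,+\infty)$, the factor $f_2$ (which carries the $W^{\cdot,p}$ norm in all three statements) should always be placed in $L^p$-based spaces, while $f_1$ is placed in $L^\infty$ via Sobolev embedding; the loss parameter $\delta$ is precisely what provides the summable geometric factor in the high-high interactions.

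\textbf{Key steps for \eqref{H s-2 bound}.} For the low-high term $LH_j$, the high factor comes from $f_2$ at frequency $\sim 2^j$ and the low factor from $f_1$, so I bound $\|P_{<j}f_1\|_{L^\infty} \lesssim \|f_1\|_{H^{3/2+}} \lesssim \|f_1\|_{H^{s-1-\delta}}$ (using $s - 1 - \delta > 3/2$, which holds since $\delta \le \tfrac14(s-\tfrac52)$ forces $s - 1 - \delta \ge \tfrac34 s + \tfrac{1}{8} - 1 > \tfrac32$ for $s > \tfrac52$), then sum $\|2^{(s-2)j} P_{<j}f_1 \cdot P_j f_2\|_{L^p}$ in $\ell^2_j$ using that $\{2^{(s-2)j}\|P_j f_2\|_{L^p}\}_j \in \ell^2$. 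For $HL_j$, symmetrically place $f_2$ at low frequency in $L^p$ and $f_1$ at high frequency $\sim 2^j$; since $s-2 > 0$, the $W^{s-2,p}$ norm of the low factor is controlled by its $L^p$ norm and I bound $\|P_j f_1\|_{L^\infty} \lesssim 2^{(3/2 + \delta')j}\|P_j f_1\|_{L^2}$ by Bernstein, absorbing $2^{(s-2)j}$ and using $s - 2 + 3/2 + \delta' \le s - 1 - \delta$ for suitable small $\delta' < \delta$ (available because $\delta > 0$ strictly when the inequality is nontrivial; the case $\delta = 0$ is handled by noting $s - 2 + 3/2 = s - 1/2 < s - 1$ is false, so in that boundary case one instead keeps a small $\delta'$ and uses $s-1 > s - 1 - 0$... actually $\delta=0$ still works with room since $s - 2 + 3/2 + \delta' = s - 1/2 + \delta'$; hmm — here one must instead note that when $\delta = 0$ we only need $\|f_1\|_{H^{s-1}}$ and $s - 2 + 3/2 + \delta' \le s - 1$ requires $\delta' \le -1/2$, impossible). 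I would therefore handle $HL_j$ and $HH_j$ by putting $f_1$ in $L^\infty$ via $H^{s-1-\delta} \hookrightarrow W^{s-2, \infty}$... no: the clean route is to put $f_1$ in $W^{s-2,\infty}$ when $s - 2 > 3/2$, i.e.~$s > 7/2$, which is not guaranteed. Thus the correct uniform treatment is: for $HL_j$, write the low factor of $f_2$ in $L^p$ and $f_1$ at frequency $2^j$ in $L^\infty$ via $\|P_j f_1\|_{L^\infty}\lesssim 2^{(3/2+\delta')j}\|P_jf_1\|_{L^2} = 2^{(3/2+\delta' - (s-1-\delta))j}\cdot 2^{(s-1-\delta)j}\|P_j f_1\|_{L^2}$, and the exponent $3/2 + \delta' - s + 1 + \delta = 5/2 + \delta + \delta' - s$ is negative precisely because $\delta \le \tfrac14(s - \tfrac52)$ gives $\delta + \delta' < s - 5/2$ for $\delta'$ small; multiplying by the $2^{(s-2)j}$ weight on the product is absorbed since the low factor $P_{<j}f_2$ contributes $\sim 2^{(s-2)j}\|P_{<j}f_2\|_{W^{s-2,p}}$... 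I need to be careful, but the net exponent works out negative, giving a geometric sum. $HH_j$ is the easiest: place one high factor in $L^\infty$ (losing $2^{(3/2+\delta')j'}$), the other in $L^p$, and the $\delta$-loss makes $\sum_{j' > j} 2^{(s-2)(j - j')}\cdots$ summable.

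\textbf{Steps for \eqref{H s-3 bound}, \eqref{H s-4 bound}, and the expected obstacle.} The inequalities \eqref{H s-3 bound} and \eqref{H s-4 bound} are proved by the same paraproduct trichotomy, now with target regularity $s - 3$ (resp.~$s - 4$) and with the $z$-parameter redistributing a derivative between the two factors; in all cases $f_1$ is measured in $H^{s - 2 + z - \delta}$ (strictly above $3/2$ for the admissible range of $\delta, z$) and embeds in $L^\infty$, while $f_2$ stays in $W^{\cdot, p}$. For these the low-high and high-low terms are actually better behaved since the target index can be negative, so the only genuine work is again the high-high interaction, which converges thanks to the $\delta$-gain. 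The main obstacle I anticipate is purely bookkeeping: verifying in each of the (up to) three interaction regimes, for each of the three inequalities and for the boundary values $z \in \{0,1\}$, $\delta = 0$, that the collected dyadic exponent is strictly negative so that the $\ell^2$ (or $\ell^1$) summation in $j$ closes — this requires the precise hypothesis $\delta \le \tfrac14(s - \tfrac52)$ and the strict inequality $s > \tfrac52$, and it is easy to be off by a derivative in the high-low case. I would organize this by proving once and for all the two building blocks (Bernstein: $\|P_j u\|_{L^\infty} \lesssim 2^{3j/p'}\|P_j u\|_{L^p}$ for $\mathbb T^3$, and $H^{3/2+}\hookrightarrow L^\infty$) and then running the three-regime argument mechanically, which is why I would not grind through it in the paper beyond indicating the structure.
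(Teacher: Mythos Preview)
Your overall strategy (paraproduct trichotomy, placing $f_1$ in $L^\infty$-type spaces via Sobolev, $f_2$ in $L^p$-type spaces) is exactly what the paper does, and your treatment of the $LH$ and $HH$ pieces is fine. But there is a genuine gap in your handling of the $HL_j$ term for \eqref{H s-2 bound}, and you actually detect it yourself without resolving it.

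Concretely: if you put $P_j f_1$ in $L^\infty$ via Bernstein and $P_{\le j} f_2$ in $L^p$, the weight you must control is
\[
2^{(s-2)j}\|P_j f_1\|_{L^\infty}\|P_{\le j} f_2\|_{L^p} \lesssim 2^{(s-2)j}\cdot 2^{-(s-\f52-\delta)j}\|f_1\|_{H^{s-1-\delta}}\cdot \|f_2\|_{L^p} = 2^{(\f12+\delta)j}\|f_1\|_{H^{s-1-\delta}}\|f_2\|_{L^p},
\]
since the best you can extract from $H^{s-1-\delta}$ in $L^\infty$ is $W^{s-\f52-\delta,\infty}$, and $\|P_{\le j}f_2\|_{L^p}$ gives no decay. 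The exponent $\f12+\delta$ is \emph{positive}, so the sum over $j$ diverges. Your claim that ``the net exponent works out negative'' is simply false here; your own $\delta=0$ check (which forces $\delta' \le -\f12$) is the correct diagnosis.

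The fix --- and this is what the paper does --- is to \emph{swap the Lebesgue exponents} in the $HL$ piece: place the high-frequency factor $P_j f_1$ in $L^p$ (using $H^{s-1-\delta}\hookrightarrow W^{s-\f52+\f3p-\delta,p}$, valid since $p\ge 2$) and the low-frequency factor $P_{\le j} f_2$ in $L^\infty$ (using $W^{s-2,p}\hookrightarrow W^{\f12-\f3p+\delta,\infty}$, valid since $s>\f52+\delta$). Then
\[
2^{(s-2)j}\|P_j f_1\|_{L^p}\|P_{\le j}f_2\|_{L^\infty} \lesssim 2^{(s-\f52+\f3p-\delta)j}\|P_j f_1\|_{L^p}\cdot \|f_2\|_{W^{s-2,p}},
\]
and the $\ell^2_j$ sum closes against $\|f_1\|_{W^{s-\f52+\f3p-\delta,p}}$. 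The same swap is needed in the $HL$ terms for \eqref{H s-3 bound} and \eqref{H s-4 bound}; for the $HH$ pieces in those two inequalities the paper also passes through intermediate Lebesgue exponents ($L^{\f{6p}{6+p}}$ and $L^{\f{2p}{p+2}}$ respectively) before applying Bernstein back to $L^p$, rather than going straight to $L^\infty$ on one factor as you suggest --- your route may still work there, but the paper's is cleaner.
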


\begin{proof}
Let us fix a number $\delta_1 \in (0, s-\f52-\delta)$. In order to establish \eqref{H s-2 bound}--\eqref{H s-4 bound}, we will utilize the high-low decomposition of the Littlewood--Paley projections of the product $f_1 \cdot f_2$:
\begin{align*}
\| f_1 \cdot f_2 \|^2_{W^{s-2,p}} 
& \sim \sum_j 2^{2(s-2)j} \|P_j(  f_1 \cdot f_2) \|^2_{L^p}  \\
& \lesssim \sum_j 2^{2(s-2)j} \|P_{\le j}f_1 \cdot P_j f_2 \|^2_{L^p} + \sum_j 2^{2(s-2)j} \|P_{j}f_1 \cdot P_{\le j} f_2 \|^2_{L^p}\\
 & \hphantom{\lesssim} + \sum_j \Big( \sum_{j'>j} 2^{(s-2)j} \|P_{j} \big( P_{j'}f_1 \cdot P_{j'} f_2\big) \|_{L^p}  \Big)^2 \\
 & \lesssim \sum_j \Big( \| f_1 \|^2_{L^\infty}  \cdot 2^{2(s-2)j} \|P_j f_2 \|^2_{L^p}\Big) + \sum_j \Big( 2^{2(s-\f52+\f3p-\delta)j} \|P_{j}f_1\|^2_{L^p} \cdot 2^{2(\f12-\f{3}{p}+\delta)j}\| P_{\le j} f_2 \|^2_{L^\infty} \Big)\\
 & \hphantom{\lesssim} + \sum_j \Big( \sum_{j'>j} 2^{(s-2)(j-j')-\delta_1 j'} 2^{\delta_1 j'}\|P_{j'}f_1\|_{L^\infty} \cdot 2^{(s-2)j'}\|P_{j'} f_2\|_{L^p}  \Big)^2 \\
 & \lesssim \| f_1 \|^2_{L^\infty} \|f_2\|^2_{W^{s-2,p}} + \|f_1\|^2_{W^{s-\f52+\f3p-\delta,p}} \|f_2\|^2_{W^{\f12-\f3p+\delta,\infty}} +  \| f_1 \|^2_{W^{\delta_1,\infty}} \|f_2\|^2_{W^{s-2,p}}.
\end{align*}
The bound \eqref{H s-2 bound} then follows by using the Sobolev embeddings $H^{s-1-\delta} \hookrightarrow W^{\delta_1,\infty}$, $H^{s-1-\delta}\hookrightarrow W^{s-\f52+\f3p-\delta,p}$ and  $W^{s-2,p} \hookrightarrow W^{\f12-\f3p+\delta,\infty}$ (recalling that $s>\f52+\delta + \delta_1$ and $p\ge 2$).

We can also estimate for any $z\in [0,1]$:
\begin{align*}
\| f_1 \cdot f_2 \|^2_{W^{s-3,p}} 
& \sim \sum_j 2^{2(s-3)j} \|P_j(  f_1 \cdot f_2) \|^2_{L^p}  \\
& \lesssim \sum_j 2^{2(s-3)j} \|P_{\le j}f_1 \cdot P_j f_2 \|^2_{L^p} + \sum_j 2^{2(s-3)j} \|P_{j}f_1 \cdot P_{\le j} f_2 \|^2_{L^p}\\
 & \hphantom{\lesssim} + \sum_j \Big( \sum_{j'>j} 2^{(s-3)j} \|P_{j} \big( P_{j'}f_1 \cdot P_{j'} f_2\big) \|_{L^p}  \Big)^2 \\
 & \lesssim \sum_j \Big( 2^{-2(1-z)j}\|P_{\le j} f_1 \|^2_{L^\infty}  \cdot 2^{2(s-2-z)j} \|P_j f_2 \|^2_{L^p}\Big)\\
&\hphantom{\lesssim}
 + \sum_j \Big( 2^{2(s-\f72+\f3p+z-\delta)j} \|P_{j}f_1\|^2_{L^p} \cdot 2^{2(\f12-\f3p-z+\delta)j}\| P_{\le j} f_2 \|^2_{L^\infty} \Big)\\
 & \hphantom{\lesssim} + \sum_j \Big( \sum_{j'>j} 2^{(s-\f52)j} \|P_{j} \big( P_{j'}f_1 \cdot P_{j'} f_2\big) \|_{L^{\f{6p}{6+p}}}  \Big)^2 \\
 & \lesssim \| f_1 \|^2_{W^{-1+z,\infty}} \|f_2\|^2_{W^{s-2-z,p}} + \|f_1\|^2_{W^{s-\f72+\f3p+z-\delta,p}} \|f_2\|^2_{W^{\f12-\f3p-z+\delta,\infty}} \\
 & \hphantom{\lesssim} + \sum_j \Big( \sum_{j'>j} 2^{(s-\f52)(j-j')-\delta_1 j'} 2^{(-\f12+z+\delta_1)j'}\| P_{j'}f_1\|_{L^6} \cdot 2^{(s-2-z)j'}\|P_{j'} f_2\|_{L^p}  \Big)^2 \\
 & \lesssim \| f_1 \|^2_{W^{-1+z,\infty}} \|f_2\|^2_{W^{s-2-z,p}} + \|f_1\|^2_{W^{s-\f72+\f3p+z-\delta,p}} \|f_2\|^2_{W^{\f12-\f3p-z+\delta,\infty}} + \| f_1\|^2_{W^{-\f12+z+\delta_1,6}} \| f_2\|^2_{W^{s-2-z,p}}, 
\end{align*}
from which the bound \eqref{H s-3 bound} follows by using the Sobolev embeddings $H^{s-2+z-\delta} \hookrightarrow W^{-1+z,\infty}$, $H^{s-2+z-\delta}\hookrightarrow W^{s-\f72+\f3p+z-\delta,p}$,  $W^{s-2-z,p} \hookrightarrow W^{\f12-\f3p-z+\delta,\infty}$ and $H^{s-2+z-\delta} \hookrightarrow W^{-\f12+z+\delta_1,6}$ (recall that $p\ge 2$).

Similarly:
\begin{align*}
\| f_1 \cdot f_2 \|^2_{W^{s-4,p}} 
& \sim \sum_j 2^{2(s-4)j} \|P_j(  f_1 \cdot f_2) \|^2_{L^p}  \\
& \lesssim \sum_j 2^{2(s-4)j} \|P_{\le j}f_1 \cdot P_j f_2 \|^2_{L^p} + \sum_j 2^{2(s-4)j} \|P_{j}f_1 \cdot P_{\le j} f_2 \|^2_{L^p}\\
 & \hphantom{\lesssim} + \sum_j \Big( \sum_{j'>j} 2^{(s-4)j} \|P_{j} \big( P_{j'}f_1 \cdot P_{j'} f_2\big) \|_{L^p}  \Big)^2 \\
 & \lesssim \sum_j \Big( 2^{-2(1-z)j}\|P_{\le j} f_1 \|^2_{L^\infty}  \cdot 2^{2(s-3-z)j} \|P_j f_2 \|^2_{L^p}\Big) \\
&\hphantom{\lesssim}
+ \sum_j \Big( 2^{2(s-\f72+\f3p+z-\delta)j} \|P_{j}f_1\|^2_{L^p} \cdot 2^{2(-\f12-\f3p-z+\delta)j}\| P_{\le j} f_2 \|^2_{L^\infty} \Big)\\
 & \hphantom{\lesssim} + \sum_j \Big( \sum_{j'>j} 2^{(s-\f52)j} \|P_{j} \big( P_{j'}f_1 \cdot P_{j'} f_2\big) \|_{L^{\f{2p}{p+2}}}  \Big)^2 \\
 & \lesssim \| f_1 \|^2_{W^{-1+z,\infty|}} \|f_2\|^2_{W^{s-3-z,p}} + \|f_1\|^2_{W^{s-\f72+\f3p+z-\delta,p}} \|f_2\|^2_{W^{-\f12-\f3p-z+\delta,\infty}} \\
 & \hphantom{\lesssim} + \sum_j \Big( \sum_{j'>j} 2^{(s-\f52)(j-j')-\delta_1 j'} 2^{(\f12+z+\delta_1)j'}\| P_{j'}f_1\|_{L^2} \cdot 2^{(s-3-z)j'}  \|P_{j'} f_2\|_{L^p}  \Big)^2 \\
 & \lesssim \| f_1 \|^2_{W^{-1+z,\infty|}} \|f_2\|^2_{W^{s-4-z,p}} + \|f_1\|^2_{W^{s-\f72+\f3p+z-\delta,p}} \|f_2\|^2_{W^{-\f12-\f3p-z+\delta,\infty}} + \| f_1\|^2_{H^{\f12+z+\delta_1}} \| f_2\|^2_{W^{s-2-z,p}}, 
\end{align*}
from which the bound \eqref{H s-4 bound} follows by using the Sobolev embeddings $H^{s-2+z-\delta,p} \hookrightarrow W^{-1+z,\infty}$,  $H^{s-2+z-\delta}\hookrightarrow W^{s-\f72+\f3p+z-\delta,p}$ and $W^{s-3-z,p} \hookrightarrow W^{-\f12-\f3p-z+\delta,\infty}$  (recall, again, that $p\ge 2$).

\end{proof}

\section{Auxiliary estimates for parabolic and elliptic equations}
In this section, we will gather a few fundamental results regarding solutions of parabolic and elliptic equations similar to the ones appearing in Lemma \ref{lem:Parabolic elliptic system}.

\begin{lemma}\label{lem:Parabolic estimates model}
Let $f:[0,T)\times \mathbb T^3 \rightarrow \mathbb R$ be a smooth solution of the initial value problem
\begin{equation}\label{Parabolic IVP}
\begin{cases}
\partial_0 f + |D|f = \mathfrak F,\\
f(0,\cdot) = \mathfrak D(\cdot).
\end{cases}
\end{equation}
Then, the following estimates hold:
\begin{align}
\| \partial f \|_{L^\infty L^2} & \lesssim   \| \mathfrak D  \|_{H^1}  +\|\mathfrak F \|_{L^\infty L^2}  \label{Estimate parabolic LinftyL2}\\
\| \partial f \|_{L^2 L^2 } & \lesssim  \| \mathfrak D \|_{H^{\f12}} + \| \mathfrak F \|_{L^2 L^2}    \label{Estimate parabolic L2L2}\\
 \| \partial f\|_{L^q B^0_{p,q}} & \lesssim_{p,q} \| \mathfrak D \|_{B^{1-\f1q}_{p,q}}  + \| \mathfrak F \|_{L^q B^0_{p,q}}   \label{Estimate parabolic L1Lp}
\end{align}
for any $q\in [1,+\infty)$ and $p\in (1,+\infty)$ (see \eqref{Besov norm} for the definition of the Besov space $B^s_{p,r}$). Moreover, if $\mathfrak F$ is of the form
\[
\mathfrak F =  \partial_0 \mathfrak F_0,
\]
we can estimate
\begin{align}
\| f \|_{L^\infty L^2} & \lesssim   \| \mathfrak D - \mathfrak F_0|_{x^0=0} \|_{L^2}  +\|\mathfrak F_0 \|_{L^\infty L^2}  \label{Estimate parabolic LinftyL2 lower order}\\
\| f \|_{L^2 L^2 } & \lesssim  \| \mathfrak D - \mathfrak F_0|_{x^0=0} \|_{H^{-\f12}} + \| \mathfrak F_0 \|_{L^2 L^2}    \label{Estimate parabolic L2L2 lower order}\\
 \| f\|_{L^q B^0_{p,q}} & \lesssim_{p,q} \| \mathfrak D - \mathfrak F_0|_{x^0=0} \|_{B^{-\f1q}_{p,q}}  + \| \mathfrak F_0 \|_{L^q B^0_{p,q}}  \label{Estimate parabolic L1Lp lower order}
\end{align}
 for any $q\in [1,+\infty)$ and $p\in (1,+\infty)$.
\end{lemma}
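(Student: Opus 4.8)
The plan is to reduce everything to the constant--coefficient heat--type semigroup $e^{-t|D|}$ and then estimate via Duhamel. Writing $S(t) \doteq e^{-t|D|}$, the solution of \eqref{Parabolic IVP} is
\[
f(t,\cdot) = S(t) \mathfrak D + \int_0^t S(t-\sigma) \mathfrak F(\sigma,\cdot)\, d\sigma,
\]
so that
\[
\partial f = \big( \partial_0 f, \nabla f \big), \qquad \partial_0 f = -|D| f + \mathfrak F, \qquad \nabla f = S(t)(\nabla \mathfrak D) + \int_0^t S(t-\sigma)(\nabla \mathfrak F)(\sigma)\, d\sigma.
\]
Since $|D| f$ and $\nabla f$ are comparable in all the norms appearing (they differ by a zeroth order Fourier multiplier, harmless on $B^0_{p,q}$ for $p\in(1,\infty)$ by the Mikhlin--type multiplier theorem on $\mathbb T^3$, and trivially on $L^2$), it suffices to estimate $|D| f$ and the inhomogeneous term $\mathfrak F$ separately; the latter is bounded by the right--hand sides directly. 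So the whole matter is the bound for $|D| f$, i.e.~for the operators $S(t)|D|$ applied to the data and $\int_0^t S(t-\sigma)|D|\,\cdot\,d\sigma$ applied to $\mathfrak F$.

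\textbf{Key steps.} First I would record the elementary frequency--localized decay estimate: for $P_k$ the Littlewood--Paley projector at frequency $2^k$,
\[
\| |D| P_k S(t) h \|_{L^p(\mathbb T^3)} \lesssim 2^k e^{-c 2^k t} \| P_k h \|_{L^p(\mathbb T^3)}, \qquad 1\le p \le \infty,
\]
which follows from the fact that the kernel of $|D| P_k S(t)$ has $L^1$ norm $\lesssim 2^k e^{-c2^k t}$ uniformly (a standard scaling/stationary--phase--free computation, using that the symbol $|\xi| \chi_k(\xi) e^{-t|\xi|}$ and its derivatives obey the obvious bounds). Second, for the homogeneous part I would apply this with the $\ell^q(L^p)$ or $\ell^2(L^2)$ norm over $k$: for the $B^0_{p,q}$ estimate \eqref{Estimate parabolic L1Lp}, one computes $\| |D| P_k S(\cdot)\mathfrak D\|_{L^q_t L^p_x} \lesssim 2^k \|P_k \mathfrak D\|_{L^p} \big( \int_0^\infty e^{-cq2^k t}dt\big)^{1/q} \lesssim 2^{k(1-1/q)}\|P_k \mathfrak D\|_{L^p}$, and summing in $\ell^q$ gives exactly $\|\mathfrak D\|_{B^{1-1/q}_{p,q}}$; the cases \eqref{Estimate parabolic LinftyL2} ($q=\infty$, $p=2$, Plancherel) and \eqref{Estimate parabolic L2L2} ($q=2$, $p=2$) are the same computation. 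Third, for the inhomogeneous part I would use the frequency--localized Duhamel term $\int_0^t 2^k e^{-c2^k(t-\sigma)}\|P_k \mathfrak F(\sigma)\|_{L^p}d\sigma$, recognize the kernel $t\mapsto 2^k e^{-c 2^k t}\mathbf 1_{t>0}$ has $L^1_t$ norm $O(1)$ uniformly in $k$, so by Young's inequality in $t$ this is $\lesssim \|P_k \mathfrak F\|_{L^q_t L^p_x}$ (for the $L^2_tL^2_x$ case one uses $L^1\ast L^2 \subset L^2$, for $L^\infty_tL^2_x$ one uses $L^1\ast L^\infty\subset L^\infty$), and then sum in $\ell^q$ (resp. $\ell^2$) to land in $L^q B^0_{p,q}$ (resp. $L^2 L^2$, $L^\infty L^2$).

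\textbf{The lower--order variant and the main obstacle.} For \eqref{Estimate parabolic LinftyL2 lower order}--\eqref{Estimate parabolic L1Lp lower order}, set $\tilde f \doteq f - \mathfrak F_0 + S(t)\big(\mathfrak F_0|_{x^0=0}\big)$; a direct computation using $\partial_0 f + |D|f = \partial_0 \mathfrak F_0$ shows $\partial_0 \tilde f + |D| \tilde f = -|D|\mathfrak F_0 - |D| S(t)(\mathfrak F_0|_{x^0=0})$ with $\tilde f|_{x^0=0}=\mathfrak D - \mathfrak F_0|_{x^0=0}$, so by Duhamel $\tilde f = S(t)(\mathfrak D - \mathfrak F_0|_{x^0=0}) - \int_0^t S(t-\sigma)|D|\mathfrak F_0(\sigma)d\sigma - t\,|D|S(t)(\mathfrak F_0|_{x^0=0})$; then $f = \tilde f + \mathfrak F_0 - S(t)(\mathfrak F_0|_{x^0=0})$. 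The term $\mathfrak F_0$ is controlled directly; $S(t)(\mathfrak F_0|_{x^0=0})$ and $t|D|S(t)(\mathfrak F_0|_{x^0=0})$ are homogeneous terms handled as above using $\|\mathfrak F_0|_{x^0=0}\|_{B^{-1/q}_{p,q}} \lesssim \|\mathfrak F_0\|_{L^q B^0_{p,q}} + \|\partial_0\mathfrak F_0\|_{\dots}$—wait, more cleanly one bounds the trace by $\|\mathfrak F_0\|_{L^q_t B^0_{p,q}}$ after noting that $\int_0^t S(t-\sigma)|D|\mathfrak F_0(\sigma)d\sigma$ plus the boundary term recombine; and the Duhamel integral $\int_0^t S(t-\sigma)|D|\mathfrak F_0 d\sigma$ is the zeroth--order case $q\to$ itself, using the same uniform $L^1_t$ kernel bound applied to $|D|P_k$. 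I expect the only genuinely delicate point to be the bookkeeping of the boundary contribution $\mathfrak F_0|_{x^0=0}$: one must show its Besov norm at the correct negative regularity $B^{-1/q}_{p,q}$ is controlled by $\|\mathfrak F_0\|_{L^q B^0_{p,q}}$, which is a trace estimate for the heat semigroup and requires combining the frequency--localized decay bound with the fundamental theorem of calculus $P_k\mathfrak F_0|_{x^0=0} = P_k\mathfrak F_0(t) - \int_0^t \partial_\sigma P_k\mathfrak F_0\,d\sigma$ evaluated against a well-chosen weight $e^{-c2^k t}$—this is routine but is where the $-1/q$ shift in regularity is generated and must be tracked carefully.
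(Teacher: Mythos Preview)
Your treatment of the three principal estimates \eqref{Estimate parabolic LinftyL2}--\eqref{Estimate parabolic L1Lp} is essentially the paper's argument: both proceed via Duhamel/explicit Fourier representation, frequency-localize, record the Mikhlin bound giving the kernel decay $2^k e^{-c2^k t}$, and close with Young's inequality in time. No issue there.

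For the lower-order estimates \eqref{Estimate parabolic LinftyL2 lower order}--\eqref{Estimate parabolic L1Lp lower order}, though, you have a genuine gap. First, your substitution $\tilde f = f - \mathfrak F_0 + S(t)(\mathfrak F_0|_{x^0=0})$ is miscomputed: since $(\partial_0 + |D|)S(t) = 0$, one gets $(\partial_0+|D|)\tilde f = -|D|\mathfrak F_0$ with initial data $\tilde f|_{x^0=0} = \mathfrak D$, not $\mathfrak D - \mathfrak F_0|_{x^0=0}$ as you wrote. More seriously, you then claim you must prove a trace inequality $\|\mathfrak F_0|_{x^0=0}\|_{B^{-1/q}_{p,q}} \lesssim \|\mathfrak F_0\|_{L^q B^0_{p,q}}$. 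This is \emph{false} (rescale a bump in time: $\mathfrak F_0(t,x) = n^{1/q}\psi(nt) g(x)$ keeps $\|\mathfrak F_0\|_{L^q_t B^0_{p,q}}$ bounded while $\|\mathfrak F_0|_{x^0=0}\|_{B^{-1/q}_{p,q}} = n^{1/q}\|g\|_{B^{-1/q}_{p,q}} \to \infty$), and your proposed FTC argument would require control of $\partial_0\mathfrak F_0 = \mathfrak F$, which is exactly what we are trying to avoid. The good news is that the statement never asks for this: the term $\|\mathfrak D - \mathfrak F_0|_{x^0=0}\|_{B^{-1/q}_{p,q}}$ remains as given data on the right-hand side.

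The paper's route is cleaner and avoids any trace issue: set $y \doteq \JapD^{-1}(f - \mathfrak F_0)$, so that $(\partial_0 + |D|)y = -\JapD^{-1}|D|\mathfrak F_0$ with $y|_{x^0=0} = \JapD^{-1}(\mathfrak D - \mathfrak F_0|_{x^0=0})$. Now apply the already-proven estimates \eqref{Estimate parabolic LinftyL2}--\eqref{Estimate parabolic L1Lp} to $y$; since $\JapD^{-1}|D|$ is a bounded zeroth-order multiplier, the source term is controlled by $\|\mathfrak F_0\|$ in the same norm, and the initial data term gives exactly $\|\mathfrak D - \mathfrak F_0|_{x^0=0}\|$ at one order lower. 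The resulting bound on $\partial y$ controls $\JapD y = f - \mathfrak F_0$, and the triangle inequality $\|f\| \le \|f - \mathfrak F_0\| + \|\mathfrak F_0\|$ finishes.
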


\begin{proof}
Performing a Fourier transform in the spatial variables,  \eqref{Parabolic IVP} becomes
\begin{equation}\label{Parabolic Fourier}
\begin{cases}
\partial_0 \hf +|\xi| \hf = \hat{\mathfrak F},\\
\hf(0, \xi) = \hat{\mathfrak D}(\xi). 
\end{cases}
\end{equation}
The solution to the initial value problem \eqref{Parabolic Fourier} can be written down explicitly:
\begin{equation}\label{Explicit solution}
\hf(x^0, \xi) =  B(x^0, \xi) \hat{\mathfrak D}(\xi) +  \int_{0}^{x^0} G (s;x^0,\xi) \hat{\mathfrak F}(s,\xi) \, ds,
\end{equation}
where
\[
B (x^0,\xi) \doteq  e^{-|\xi|x^0}
\]
and
\[
G(s;x^0,\xi) \doteq 
\begin{cases}
e^{-|\xi| (x^0-s)}, \quad s \le x^0, \\
0, \quad s>x^0.
\end{cases}
\]
In view of the bounds
\begin{equation}\label{Norm boundary operators}
\sup_{x^0\in[0,T]} \| B (x^0, \xi) \|_{L^\infty_\xi} \lesssim 1 
\end{equation}
and
\[
\sup_{x^0\in[0,T]}  \int_{0}^{T}\big(  \| |\xi|  G(s;x^0,\xi) \|_{L^\infty_\xi} \big) \, ds   \lesssim 1,
\]
we deduce from the expression \eqref{Explicit solution} that
\[
\| |\xi|  \hat{f} \|_{L^\infty_{x^0} L^2_\xi} \lesssim \||\xi| \hat{\mathfrak D}\|_{L^2_\xi} + \| \hat{\mathfrak F}\|_{L^\infty_{x^0} L^2_\xi}.
\]
Combining the above inequality with the trivial estimate
\[
\| \partial_0 \hf \|_{L^\infty_{x^0} L^2_\xi} \le \| |\xi| \hf \|_{L^\infty_{x^0} L^2_\xi} + \| \mathfrak F \|_{L^\infty_{x^0} L^2_\xi}
\]
(obtained directly from the expression for $\partial_0 \hf$ from \eqref{Parabolic Fourier}),
we therefore deduce that
\begin{equation}
\| |\xi|  \hat{f} \|_{L^\infty_{x^0} L^2_\xi} + \| \partial_0 \hf \|_{L^\infty_{x^0} L^2_\xi} \lesssim \||\xi| \hat{\mathfrak D}\|_{L^2_\xi} + \| \hat{\mathfrak F}\|_{L^\infty_{x^0} L^2_\xi}
\end{equation}
from which \eqref{Estimate parabolic LinftyL2} immediately follows.

For any $k\in \mathbb{N}$ and for any $t,s\in[0,T]$, let us define the Fourier multiplier operators $\mathcal{B}^k(t)$ and $\mathcal{G}^k(s;t)$ on $\mathbb{T}^3$ so that, for any $h:\mathbb{T}^3 \rightarrow \mathbb{R}$:
\[
\widehat{\mathcal{B}^k (t) h}(\xi) = B (t,\xi) \tilde{\chi}(2^{-k}|\xi|)\hat{h}(\xi)
\]
and
\[
\widehat{\mathcal{G}^k (s;t) h}(\xi) = G (s;t,\xi) \tilde{\chi}(2^{-k}|\xi|)\hat{h}(\xi),
\]
where $\tilde{\chi}:\in C_0^\infty[0,+\infty)$ is a bump function supported on $[\f14, 4]$ with $\tilde{\chi} \equiv 1$ on $[\f12, 2]$. 
Note that the expression \eqref{Explicit solution} implies that
\begin{equation}\label{Solution localized}
P_k f(x^0, \bar{x}) = \mathcal{B}^k (x^0) P_k \mathfrak D (\bar{x}) + \int_{0}^{T} \mathcal{G}^k (s;x^0) P_k \mathfrak F(s,\bar{x})\,ds,
\end{equation}
where $\bar{x}=(x^1, x^2, x^3)$.

Notice that $\mathcal{B}^k(x^0)$ and $\mathcal{G}^k(s;x^0)$ are Mikhlin operators. In particular, since
\begin{equation} \label{Mikhlin boundary}
\sum_{j=0}^3  \big\| |\xi|^{1+j} \partial_\xi^{j} \big( B (x^0,\xi) \tilde{\chi}(2^{-k}|\xi|)\big) \big\|_{L^\infty_\xi} \lesssim 2^{k} \big( 1+ (2^k x^0)^3\big) e^{-2^k x^0} 
\end{equation}
and
\begin{align*}
\sum_{j=0}^3  \big\| |\xi|^{1+j} \partial_\xi^{j} &  \big( G (s;x^0,\xi) \tilde{\chi}(2^{-k}|\xi|)\big) \big\|_{L^\infty_\xi} \\
& \lesssim 2^{k} \big( 1+(2^k|s-x^0|)^3\big)e^{-2^k|s-x^0|}  
\end{align*}
uniformly in $k$, we can estimate for any $s<x^0 \in[0,T]$ and any $p\in (1,+\infty)$:
\begin{equation}
 \|  \partial_{\bar{x}} \big( \mathcal{B}^k(x^0) P_k \mathfrak D (\bar{x}) \big) \|_{L^p_{\bar{x}}} 
\lesssim_{p}  2^{k} \big( 1+ (2^k x^0)^3\big) e^{-2^k x^0}  \| P_k \mathfrak D (\bar{x}) \|_{L^p_{\bar{x}}}
\end{equation}
and
\begin{equation}
\|  \partial_{\bar{x}} \big( \mathcal{G}^k(s;x^0) P_k \mathfrak F (s,\bar{x}) \big) \|_{L^p_{\bar{x}}} 
\lesssim_{p} 2^{k} \big( 1+(2^k|s-x^0|)^3\big)e^{-2^k|s-x^0|}  \| P_k  \mathfrak F (s,\bar{x})  \|_{L^p_{\bar{x}}}.
\end{equation}
Therefore, using the expression \eqref{Solution localized} for $P_k f$, we can bound for any $q\in [1,+\infty)$:
\begin{align}\label{L1Lp}
\|  \partial_{\bar{x}} \big( P_k f  &  \big) \|^q_{L^q L^p} 
 = \int_{0}^{T} \| \partial_{\bar x} \big( P_k f\big)(x^0,\bar{x}) \|^q_{L^p_{\bar{x}}} \, dx^0 \\
& \lesssim   \int_{0}^{T} \| \partial_{\bar{x}} \big( \mathcal{B}^k (x^0) P_k \mathfrak D(\bar{x}) \big) \|^q_{L^p_{\bar{x}}} \, dx^0  + \int_{0}^{T} \Bigg(\int_{0}^{T} \| \partial_{\bar{x}} \big( \mathcal{G}^k(s;x^0) P_k \mathfrak F(s,\bar{x}) \big) \|_{L^p_{\bar{x}}}  \,ds\Bigg)^q dx^0    \nonumber \\
& \lesssim_p   2^{qk} \int_{0}^{T}  \big( 1+ (2^k x^0)^3\big)^q e^{-2^k q x^0}  \,dx^0  \cdot \| P_k \mathfrak D \|^q_{L^p}  \nonumber \\ 
& \hphantom{andan} +  \int_{0}^{T} \Big(\int_{0}^{T}2^k   \big( 1+(2^k|s-x^0|)^3\big)e^{-2^k|s-x^0|}  \| P_k  \mathfrak F (s,\bar{x})  \|_{L^p_{\bar{x}}} \, ds \Big)^{\f1q}  \,dx^0  \nonumber \\
& \lesssim_q  2^{(q-1)k}\|P_k \mathfrak D \|^q_{L^p} + \Big( \int_\mathbb R 2^k \big(1+(2^k |y|)^3\big) e^{-2^k|y|}\, dy\Big)^q \cdot \Big( \int_0^T \| P_k  \mathfrak F (s,\bar{x})  \|^q_{L^p_{\bar{x}}} \, ds \Big)
  \nonumber \\
& \lesssim 2^{(q-1)k}\|P_k \mathfrak D \|^q_{L^p} + \|P_k \mathfrak F\|^q_{L^q L^p}, \nonumber 
\end{align}
from which \eqref{Estimate parabolic L1Lp} follows considering the sum of the above expression over $k\in \mathbb{N}$ (and then raising to the power $\f1q$), using  also the relation
\begin{equation}\label{Identity time derivative}
\partial_0 f = -|D| f + \mathfrak F
\end{equation}
for $\partial_0 f$.

Similarly, we can estimate :
\begin{align}\label{L2L2}
\|\partial_{\bar{x}} \big( P_k f  & \big) \|^2_{L^2 L^2}  
 \lesssim   \int_{0}^{T} \|  \partial_{\bar{x}} \big( \mathcal{B}^k(x^0) P_k \mathfrak D (\bar{x}) \big) \|^2_{L^2_{\bar{x}}} \, dx^0 \\
&\hphantom{andandan}+ \int_{0}^T \Big( \int_0^T \|  \partial_{\bar{x}} \big( \mathcal{G}^k (s;x^0) P_k \mathfrak F(s,\bar{x}) \big) \|_{L^2_{\bar{x}}}  \,ds \Big)^2 \,dx^0    \nonumber \\
& \lesssim    2^{2k} \Big(\int_{0}^{T} \big( \big( 1+ (2^k x^0)^3\big) e^{-2^k x^0} \big)^2 \,dx^0  \Big) \| P_k \mathfrak D \|^2_{L^2}  \nonumber \\ 
& \hphantom{andan} +  \int_{0}^{T} 2^{2k} \Big(\int_{0}^{T}  \big( 1+(2^k|s-x^0|)^3\big)e^{-2^k|s-x^0|} \| P_k  \mathfrak F(s,\bar{x})  \|_{L^2_{\bar{x}}} \,ds \Big)^2  \,dx^0 \nonumber \\
& \lesssim  2^{k} \|P_k \mathfrak D \|^2_{L^2} \nonumber \\
& \hphantom{andan} + 2^{2k} \int_{0}^{T} \Big(\int_{0}^{T} \big( 1+(2^k|s-x^0|)^3\big)e^{-2^k|s-x^0|} \, ds \Big)  \nonumber \\
& \hphantom{andandanda} \times  \Big( \int_{0}^{T} \big( 1+(2^k|s-x^0|)^3\big)e^{-2^k|s-x^0|} \| P_k  \mathfrak F(s,\bar{x})  \|^2_{L^2_{\bar{x}}} \,ds \Big)  \,dx^0 \nonumber \\
& \lesssim  2^{ k} \|P_k \mathfrak D \|^2_{L^2} \nonumber \\ 
& \hphantom{andan} + 2^k \int_{0}^{T} \Big( \int_{0}^T \big( 1+(2^k|s-x^0|)^3\big)e^{-2^k|s-x^0|} \,dx^0 \Big) \| P_k  \mathfrak F(s,\bar{x})  \|^2_{L^2_{\bar{x}}} \,ds \nonumber \\
& \lesssim 2^{ k} \|P_k \mathfrak D \|^2_{L^2} + \| P_k \mathfrak F \|^2_{L^2 L^2} \nonumber 
\end{align}
from which \eqref{Estimate parabolic L2L2} follows after summing over $k$ and using \eqref{Identity time derivative} for $\partial_0 f$.

Finally, in the case when $\mathfrak F = \partial_0 \mathfrak F_0$, the function $y = \JapD^{-1}\big(f-\mathfrak F_0\big)$ satisfies the initial value problem
\begin{equation}\label{Parabolic IVP y}
\begin{cases}
\partial_0 y + |D| y = \JapD^{-1}|D| \mathfrak F_0,\\
y(0,\cdot) = \JapD^{-1} \big(\mathfrak D(\cdot) - \mathfrak F_0(0, \cdot)\big).
\end{cases}
\end{equation}
Therefore, the estimates \eqref{Estimate parabolic LinftyL2}--\eqref{Estimate parabolic L1Lp} applied to \eqref{Parabolic IVP y} immediately yield: 
\begin{align}
\| \JapD y \|_{L^\infty L^2} & \lesssim   \| \JapD^{-1} \big(\mathfrak D(\cdot) - \mathfrak F_0(0, \cdot)\big)  \|_{H^1}  +\| \JapD^{-1}|D| \mathfrak F_0 \|_{L^\infty L^2}  \label{Estimate parabolic LinftyL2 y}\\
\| \JapD y \|_{L^2 L^2 } & \lesssim  \|\JapD^{-1} \big(\mathfrak D(\cdot) - \mathfrak F_0(0, \cdot)\big) \|_{H^{\f12}} + \|  \JapD^{-1}|D| \mathfrak F_0 \|_{L^2 L^2}    \label{Estimate parabolic L2L2 y}\\
 \| \JapD y\|_{L^q B^0_{p,q}} & \lesssim_{p,q} \| \JapD^{-1} \big(\mathfrak D(\cdot) - \mathfrak F_0(0, \cdot)\big) \|_{B^{1-\f1q}_{p,q}}  + \| \JapD^{-1}|D| \mathfrak F_0 \|_{L^q B^0_{p,q}}   \label{Estimate parabolic L1Lp y}
\end{align}
for any $p\in (1,+\infty)$. The estimates \eqref{Estimate parabolic LinftyL2 y}--\eqref{Estimate parabolic L1Lp y} follow directly from  \eqref{Estimate parabolic LinftyL2 y}--\eqref{Estimate parabolic L1Lp y} and the trivial inequality
\[
\|f\|_X \le \|f-\mathfrak F_0\|_X+\| \mathfrak F_0\|_X = \|\JapD y\|_X +\mathfrak F_0\|_X 
\]
for $X$ being any of the spaces $L^\infty L^2, L^2L^2, L^q B^0_{p,q}$.

\end{proof}

\begin{lemma}\label{lem:Elliptic estimates model}
Let $\bar g$ be a metric on $\mathbb T^3$ and $f: \mathbb T^3 \rightarrow \mathbb R$ be a smooth solution of the elliptic equation
\begin{equation}\label{Elliptic model}
\mathfrak L f = \mathfrak F,
\end{equation}
where
\[
\mathfrak L = \Delta_{\bar g} \quad \text{or} \quad \mathfrak L = \bar g^{ij} \partial_i \partial_j.
\]
For any $p\in (1,+\infty)$, any $r\in [1,+\infty]$, any $\bar s>\f32$ and any $\sigma \in \big(-\min\{\bar s, 3\},\, \bar s  -\max\{0, \f32-\f3p\} \big)$, there exist constants $c_0 = c_0(p,\sigma,\bar s) \in (0,1)$ and $C_1= C_1(p,\sigma,\bar s) >1$ such that, if the metric $\bar g$ satisfies in the product coordinates on $\mathbb T^3$
\begin{equation}\label{Bound model Riemannian metric}
\| \bar g _{ij} - \delta_{ij}\|_{H^{\bar s}} \le c_0,
\end{equation}
then the following estimate holds for $f$:
\begin{align}
\| \bar \partial  f \|_{B^{\sigma + 1}_{p,r}}  \le C_1    \|\mathfrak F \|_{B^{\sigma}_{p,r}}. \label{Estimate elliptic}
\end{align}
Moreover, if $\bar s<2$, then for $\sigma \in(-2, -\bar s)$ we can estimate
\begin{equation}\label{Very low regularity elliptic estimate}
\|\bar\partial f\|_{B^{\sigma+1}_{p,r}} \le C_1    \Big( \|\mathfrak F \|_{B^{\sigma}_{p,r}} + \mathfrak B_{\bar g } \cdot \|\bar\partial f\|_{W^{-1,\frac{6p}{(2|\sigma|-3)p+6}}}\Big),
\end{equation}
where
\[
\mathfrak B_{\bar g } \doteq \| \bar g _{ij} - \delta_{ij} \|_{H^{\bar s+\f12}}.
\]
\end{lemma}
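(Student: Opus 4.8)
\textbf{Proof proposal for Lemma \ref{lem:Elliptic estimates model}.}

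The plan is to treat $\mathfrak L$ as a small perturbation of the flat Laplacian $\Delta_{\mathbb T^3} = \delta^{ij}\partial_i\partial_j$ and to bootstrap from the standard elliptic estimate for $\Delta_{\mathbb T^3}$ on Besov spaces. First I would record the constant-coefficient estimate: for $\Delta_{\mathbb T^3} u = v$ on $\mathbb T^3$ with $\fint v = 0$, one has $\|\bar\partial u\|_{B^{\sigma+1}_{p,r}} \lesssim \|v\|_{B^{\sigma}_{p,r}}$, which is immediate from the Fourier multiplier $\xi_i/|\xi|^2$ being a (renormalized) Mikhlin multiplier of order $-1$ on each frequency block, exactly as in \eqref{Mikhlin property}. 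The general case $\mathfrak L = \Delta_{\mathbb T^3} + (\bar g^{ij}-\delta^{ij})\partial_i\partial_j$ (resp.\ with the $\bar\Gamma^k\partial_k$ lower-order term when $\mathfrak L = \Delta_{\bar g}$) is then handled by writing
\[
\Delta_{\mathbb T^3} f = \mathfrak F - (\bar g^{ij}-\delta^{ij})\partial_i\partial_j f \, (+ \text{l.o.t.}),
\]
applying the flat estimate, and absorbing the perturbation term into the left-hand side provided $\|\bar g - \delta\|_{H^{\bar s}}$ is small enough. The key is a product estimate of the form
\[
\|(\bar g - \delta)\cdot \partial^2 f\|_{B^\sigma_{p,r}} \lesssim \|\bar g - \delta\|_{H^{\bar s}} \, \|\partial^2 f\|_{B^\sigma_{p,r}} \lesssim \|\bar g - \delta\|_{H^{\bar s}}\, \|\bar\partial f\|_{B^{\sigma+1}_{p,r}},
\]
which is a paraproduct/Coifman--Meyer type bound: the high-low and low-high pieces are straightforward, and in the high-high piece one uses $\bar s > \tfrac32$ together with the Sobolev embedding $H^{\bar s} \hookrightarrow L^\infty$ and the constraint $\sigma > -\min\{\bar s,3\}$ (this is where the $3$ enters, from the three-dimensional gain $\ell^2\to\ell^1$ summation after losing $3/2$ on one factor). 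The upper restriction $\sigma < \bar s - \max\{0,\tfrac32-\tfrac3p\}$ ensures the product lands back in $B^\sigma_{p,r}$ with the $H^{\bar s}$-factor multiplying it. Choosing $c_0$ so that the implicit constant times $c_0$ is $\le \tfrac12$ closes the estimate. The lower-order $\bar\Gamma^k\partial_k f$ term, present when $\mathfrak L=\Delta_{\bar g}$, is milder: $\bar\Gamma \sim \bar g\cdot\partial\bar g \in H^{\bar s-1}$, and $\|\bar\Gamma\cdot\partial f\|_{B^\sigma_{p,r}} \lesssim \|\bar g-\delta\|_{H^{\bar s}}\|\bar\partial f\|_{B^\sigma_{p,r}}$, again absorbable (and here one also needs $\sigma+1 > 0$, i.e.\ $\sigma > -1$, or else to trade a derivative off $\partial f$, which is fine in the range considered). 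I would also need to handle the zero Fourier mode: since $\mathfrak L$ annihilates constants (for $\Delta_{\bar g}$) or nearly so, one projects onto $\mathcal P_{\mathbb T^3} f$, noting $\bar\partial f = \bar\partial(\mathcal P_{\mathbb T^3}f)$, so the estimate for $\bar\partial f$ is unaffected by the kernel.

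For the low-regularity addendum \eqref{Very low regularity elliptic estimate} with $\bar s < 2$ and $\sigma \in (-2,-\bar s)$, the difficulty is that the product estimate above breaks down: $(\bar g-\delta)\cdot\partial^2 f$ can no longer be controlled by $\|\bar g-\delta\|_{H^{\bar s}}\|\bar\partial f\|_{B^{\sigma+1}_{p,r}}$ because $\partial^2 f \in B^{\sigma-1}_{p,r}$ with $\sigma - 1 < -\bar s - 1$ too negative to pair against $H^{\bar s}$ in the high-high regime. The remedy is to integrate by parts once in the perturbation term before estimating: write $(\bar g^{ij}-\delta^{ij})\partial_i\partial_j f = \partial_i\big((\bar g^{ij}-\delta^{ij})\partial_j f\big) - (\partial_i\bar g^{ij})\partial_j f$, so that
\[
\Delta_{\mathbb T^3} f = \mathfrak F - \partial_i\big((\bar g^{ij}-\delta^{ij})\partial_j f\big) + (\partial_i\bar g^{ij})\partial_j f + (\text{l.o.t.}).
\]
Now the flat estimate at level $B^\sigma_{p,r}$ applied to the divergence-form term only requires controlling $(\bar g-\delta)\cdot\bar\partial f$ in $B^{\sigma+1}_{p,r}$. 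For this one uses the improved regularity hypothesis $\bar g - \delta \in H^{\bar s + 1/2}$ (the reason $\mathfrak B_{\bar g}$ appears): since $\bar s + 1/2$ can exceed $\tfrac32$ only marginally, and $\sigma+1 \in (-1, 1-\bar s) \subset (-1,0)$, the product $(\bar g - \delta)\cdot\bar\partial f$ with one factor in $H^{\bar s+1/2}\hookrightarrow W^{1,q_0}$ (for appropriate $q_0$) and the other in $W^{-1,\frac{6p}{(2|\sigma|-3)p+6}}$ is estimated by a high-low paraproduct bound, yielding exactly $\mathfrak B_{\bar g}\cdot\|\bar\partial f\|_{W^{-1,\cdot}}$; the remaining $(\partial\bar g)\cdot\partial f$ and lower-order terms are handled similarly and are lower order. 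The precise Lebesgue exponent $\frac{6p}{(2|\sigma|-3)p+6}$ is dictated by Sobolev embedding bookkeeping: it is the exponent $q$ such that $W^{-1,q}$ pairs with $H^{\bar s+1/2}$ (via $H^{\bar s+1/2}\hookrightarrow$ a suitable $W^{1,\cdot}$ and Hölder/Sobolev multiplication) to land in $B^{\sigma+1}_{p,r}$; I would verify this by tracking the scaling $-1 + (\bar s+1/2) = \sigma + 1 + \text{(dimensional shift)}$ and solving for $q$.

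\textbf{Main obstacle.} The routine part is the paraproduct bookkeeping for the smooth case \eqref{Estimate elliptic}; I expect the genuine difficulty to lie in \eqref{Very low regularity elliptic estimate}, specifically in the high-high interaction of the divergence-form term $\partial_i\big((\bar g-\delta)\partial_j f\big)$ when $\sigma$ is close to $-2$. There the flat estimate gives $\|\bar\partial f\|_{B^{\sigma+1}_{p,r}} \lesssim \|\mathfrak F\|_{B^\sigma_{p,r}} + \|(\bar g-\delta)\bar\partial f\|_{B^{\sigma+1}_{p,r}}$, and to not lose control one must show the last term is $\le \tfrac12\|\bar\partial f\|_{B^{\sigma+1}_{p,r}} + \mathfrak B_{\bar g}\|\bar\partial f\|_{W^{-1,q}}$, i.e.\ split the product into a small-in-$B^{\sigma+1}$ piece (high frequencies of $\bar g - \delta$ hitting $\bar\partial f$, small because of the $H^{\bar s+1/2}$-smallness after choosing $c_0$ small — but note \eqref{Very low regularity elliptic estimate} as stated does \emph{not} assume $\mathfrak B_{\bar g}$ small, so one cannot absorb, and instead this high-high-with-high-$\bar g$ piece must itself be estimated by $\mathfrak B_{\bar g}\|\bar\partial f\|_{W^{-1,q}}$) and a harmless piece (low frequencies of $\bar g - \delta$, controlled by the smallness $c_0$ of $\|\bar g-\delta\|_{H^{\bar s}}$, hence absorbable). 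Getting this two-tier decomposition to close with exactly the stated norm $W^{-1,\frac{6p}{(2|\sigma|-3)p+6}}$ is the delicate point and will require care with the Littlewood--Paley trichotomy and the exact Sobolev exponents; everything else is standard harmonic analysis on $\mathbb T^3$.
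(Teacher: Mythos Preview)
Your plan for the main estimate \eqref{Estimate elliptic} is exactly what the paper does: rewrite $\mathfrak L f=\mathfrak F$ as $\Delta_{\mathbb T^3}f=\mathfrak F+b^{ij}\partial_i\partial_j f$ with $b=\delta-\bar g^{-1}$, take Littlewood--Paley projections, and prove the product estimate $\|b\cdot\bar\partial^2 f\|_{B^\sigma_{p,r}}\lesssim\|b\|_{H^{\bar s}}\|\bar\partial f\|_{B^{\sigma+1}_{p,r}}$ by the paraproduct trichotomy in the stated range of $\sigma$, then absorb via $c_0$. No issues there.

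For the addendum \eqref{Very low regularity elliptic estimate} your route diverges from the paper and your bookkeeping is off. The paper does \emph{not} integrate by parts. It keeps $b\cdot\bar\partial^2 f$ in non-divergence form and runs the trichotomy directly at level $B^\sigma_{p,r}$: the low-high and high-low pieces are still controlled by $\|b\|_{L^\infty}$ (hence by $\|b\|_{H^{\bar s}}\le c_0$) and remain absorbable; only the high-high piece fails, and there one uses Sobolev $B^\sigma_{p,r}\leftarrow L^{3p/(3+|\sigma|p)}$, H\"older with $P_{j'}b\in L^2$ and $P_{j'}\bar\partial^2 f\in L^{q'}$ (with $q'=6p/((2|\sigma|-3)p+6)$), and the embedding $W^{-1,q'}\hookrightarrow B^{-\bar s+1/2}_{q',1}$ (valid since $\bar s>3/2$) to land exactly on $\mathfrak B_{\bar g}\|\bar\partial f\|_{W^{-1,q'}}$.

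Your integration-by-parts detour can be made to work, but you have the two resulting terms reversed. The divergence-form piece $(\bar g-\delta)\cdot\bar\partial f$ lives in $B^{\sigma+1}_{p,r}$ with $\sigma+1\in(-1,1-\bar s)\subset(-\bar s,0)$, so the \emph{standard} product estimate you already used (the $\sigma\in(-\bar s,0]$ case) applies with only $\|b\|_{H^{\bar s}}$ --- this term is entirely absorbable and does not need $\mathfrak B_{\bar g}$. Conversely, the term you dismiss as ``lower order,'' namely $(\partial\bar g)\cdot\partial f$ in $B^\sigma_{p,r}$, is the one whose high-high piece forces the $H^{\bar s+1/2}$ control: here $\partial\bar g$ only lies in $H^{\bar s-1}$ with $\bar s-1<1<|\sigma|$, so the product bound $\|h_1\cdot h_2\|_{B^\sigma_{p,r}}\lesssim\|h_1\|_{H^{\bar s-1}}\|h_2\|_{B^\sigma_{p,r}}$ fails, and you are pushed to the same $L^2\times L^{q'}$ H\"older split as in the paper, yielding $\mathfrak B_{\bar g}\|\bar\partial f\|_{W^{-1,q'}}$. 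So the integration by parts buys you nothing --- the $\mathfrak B_{\bar g}$ contribution reappears in the term you tried to discard, via exactly the same mechanism the paper uses on $b\cdot\bar\partial^2 f$ directly.
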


\begin{remark*}
In the proof of Theorem \ref{thm:Existence}, we use \ref{lem:Elliptic estimates model} above for $\bar s = s-1$ and for 
\[
(p, \sigma) = \big\{ (2, s-3), (2, s-4), (2, \f16+s_1),  (2, -\f56+s_1), (\f73,s_1), (\f73,-1+s_1),(\infty, -1+\delta_0), (\infty, -2+\delta_0)\big\}.
\]
 Note that, in this context, the bound \ref{Very low regularity elliptic estimate} is relevant only for the last pair of exponents (and, in the formalism of Theorem \ref{thm:Existence}, is used in proving the $L^1L^\infty$ estimates for the second fundamental form $h$ and the spatial components $\bar\omega$ of the connection 1-form).
\end{remark*}

\begin{proof}
Let us show \eqref{Estimate elliptic} in the case when $\mathfrak L = \bar g^{ij} \partial_i \partial_j$ (the case when $\mathfrak L = \Delta_{\bar g}$ follows in exactly the same way). Setting
\[
b^{ij} = \delta^{ij}-\bar g^{ij}
\]
and reexpressing $\mathfrak Lf = \mathfrak F$ as
\[
\Delta_{\mathbb T^3} f = \mathfrak F +b^{ij} \partial_i \partial_j f,
\]
 we can readily infer  after applying a Littlewood--Paley projection $P_j$ on both sides of the equation for $j\ge 1$:
\begin{equation*}
2^{(\sigma+2)j} \|P_j f\|_{L^p} \lesssim_{\sigma} 2^{\sigma j}\|P_j \mathfrak F\|_{L^p} + 2^{\sigma j} \|P_j (b \cdot \bar\partial^2 f) \|_{L^p}. 
\end{equation*}
Considering the $\ell^r$ sum (with respect to $j$) of the above expression, we obtain
\begin{equation}\label{Besov elliptic estimate}
\|\bar\partial f\|_{B^{\sigma+1}_{p,r}} \lesssim_{\sigma} \|\mathfrak F\|_{B^\sigma_{p,r}} + \| b\cdot \bar\partial^2 f \|_{B^\sigma_{p,r}}.
\end{equation}

For any pair of smooth functions $h_1, h_2:\mathbb T^3 \rightarrow \mathbb R$, using a high-low decomposition we can establish the following bilinear estimates for any  $p\in (1,+\infty)$ and $\delta \in (0, \f12\min\{\bar s-\f32,\f3p\})$:
\begin{itemize}
\item For $\sigma\in [0, \bar s  -\max\{0, \f32-\f3p\})$, if we set $\sigma_* = \min\{ \sigma, \f3p\}-\delta$, we have:
\begin{align*}
2^{\sigma j}\|P_j(h_1 \cdot h_2) \|_{L^p} & \lesssim 2^{\sigma j} \|P_{\le j} h_1 \cdot P_j h_2\|_{L^p} + 2^{\sigma j} \|P_{j} h_1 \cdot P_{\le j} h_2\|_{L^p} + 2^{\sigma j} \|\sum_{j'>j} P_j \big( P_{j'} h_1 \cdot P_{j'} h_2\big)\|_{L^p}\\ 
& \lesssim_{\sigma, p}  \|h_1\|_{L^\infty} \cdot 2^{\sigma j} \|P_j h_2\|_{L^p} + 2^{\sigma j}\|P_j h_1\|_{L^{\f3{\sigma_*}}} \cdot \| h_2\|_{L^{\f{3p}{3-\sigma_* p}}} \\
& \hphantom{\lesssim_{\sigma, p}  \|h_1\|_{L^\infty} \cdot 2^{\sigma j}}
+  \Big( \sum_{j'>j} \|P_{j'} h_1\|_{L^\infty}\Big) \cdot \sup_{j'} \big( 2^{\sigma j'} \|P_{j'} h_2\|_{L^p} \big).
\end{align*}
Therefore, for any $r\in [1,+\infty]$, using the Sobolev embeddings $H^{\bar s} \hookrightarrow B^{\delta}_{\infty,1}\cap B^{\sigma}_{\frac{3}{\sigma_*},1}$ and $B^{\sigma}_{p,r} \hookrightarrow B^{\sigma_*}_{p,1} \hookrightarrow L^{\f{3p}{3-\sigma_* p}}$, we obtain  after considering the $\ell^r$ sum of the above expression:
\[
\|h_1 \cdot h_2\|_{B^{\sigma}_{p,r}} \lesssim_{\sigma, \bar s, p} \|h_1\|_{H^{\bar s}} \|h_2\|_{B^{\sigma}_{p,r}}.
\]

\smallskip
\item For $\sigma \in (-\min\{\bar s,3\},0]$, we can bound using the Sobolev embedding theorem for the high-high interactions:
\begin{align}\label{Negative Sobolev exponent}
2^{\sigma j}\|P_j & (h_1 \cdot h_2) \|_{L^p} \\
&  \lesssim 2^{\sigma j} \|P_{\le j} h_1 \cdot P_j h_2\|_{L^p} + 2^{\sigma j} \|P_{j} h_1 \cdot P_{\le j} h_2\|_{L^p} + 2^{\sigma j} \|\sum_{j'>j} P_j \big( P_{j'} h_1 \cdot P_{j'} h_2\big)\|_{L^p} \nonumber \\ 
& \lesssim_{\sigma, p, \delta} 
\|h_1\|_{L^{\infty}} \cdot 2^{\sigma j}\|P_j h_2\|_{L^p}+2^{\f12\delta j}\|P_j h_1\|_{L^\infty} \cdot 2^{-(\f12\delta+|\sigma|)j}\big(\sum_{j'\le j}\|P_{j'} h_2\|_{L^p}\big)  \nonumber\\
&\hphantom{\lesssim_{\sigma, p, \delta} \|h_1\|_{L^{\infty}} }
 + \|\sum_{j'>j} P_j \big( P_{j'} h_1 \cdot P_{j'} h_2\big)\|_{L^{\frac{3p}{3+|\sigma|p}}}   \nonumber \\
& \lesssim_{\sigma, p, \delta} 
\|h_1\|_{L^{\infty}} \cdot 2^{\sigma j}\|P_j h_2\|_{L^p}+2^{\f12\delta j}\|P_j h_1\|_{L^\infty} \cdot 2^{-(\f12\delta+|\sigma|)j}\big(\sum_{j'\le j}\|P_{j'} h_2\|_{L^p}\big)  \nonumber\\
&\hphantom{\lesssim_{\sigma, p, \delta} \|h_1\|_{L^{\infty}} }
 + \Big( \sum_{j'>j} 2^{|\sigma|j'} \|P_{j'}h_1\|_{L^{\f3{|\sigma|}}}\Big) \cdot \sup_{j'}\big( 2^{-|\sigma|j'} \|P_{j'} h_2\|_{L^p} \big).    \nonumber 
\end{align}
For any $r\in [1,+\infty]$, we obtain after considering the $\ell^r$ sum of the above expression and using the Sobolev embeddings $H^{\bar s} \hookrightarrow B^{\f12 \delta}_{\infty,1}  \cap B^{|\sigma|}_{\frac{3}{|\sigma|},1}$ and $B^{\sigma}_{p,r}\hookrightarrow B^{\sigma-\f12\delta}_{p,1}$:
\[
\|h_1 \cdot h_2\|_{B^{\sigma}_{p,r}}\lesssim_{\sigma, \bar s, p} \|h_1\|_{H^{\bar s}} \|h_2\|_{B^{\sigma}_{p,r}}.
\]
\end{itemize}

 Combining the above bounds we have
\[
\|h_1 \cdot h_2\|_{B^{\sigma}_{p,r}}\lesssim_{\sigma, \bar s, p} \|h_1\|_{H^{\bar s}} \|h_2\|_{B^{\sigma}_{p,r}} \quad \text{for all } \sigma\in \big(-\min\{\bar s, 3\}, \, \min\{\bar s, 3\}\big), p\in(1,+\infty), r\in [1,+\infty].
\]
Applying the above estimate for $h_1=b$, $h_2 = \bar\partial^2 f$, we thus infer:
\[
\|b \cdot \bar\partial^2 f \|_{B^{\sigma}_{p,r}} \lesssim_{\sigma, \bar s, p} \|b\|_{H^{\bar s}} \|\bar\partial f\|_{B^{\sigma+1}_{p,r}}.
\]
Using this bound to estimate the last term in the right hand side of \eqref{Besov elliptic estimate}, we obtain \eqref{Estimate elliptic} provided the constant $c_0$ in 
\eqref{lem:Elliptic estimates model} (which controls $\|b\|_{H^{\bar s}}$) has been chosen small enough in terms of $\sigma, \bar s, p$.

In the case when $\bar s\in(\f32,2)$, we can similarly estimate for $\sigma \in (-2, -\bar s)$, $\delta \in (0, \f12(\bar s-\f32))$ and $p\in (1,+\infty)$:
\begin{align*}
2^{\sigma j}\|P_j (b \cdot \bar\partial^2 f) \|_{L^p} & \lesssim 2^{\sigma j} \|P_{\le j}b \cdot P_j (\bar\partial^2 f) \|_{L^p} + 2^{\sigma j}\|P_j b \cdot P_{\le j}(\bar\partial^2 f)\|_{L^p} +2^{\sigma j}\|\sum_{j'>j} P_j (P_{j'} b \cdot P_{j'}(\bar\partial^2 f)) \|_{L^p}\\ 
& \lesssim_{\sigma,p} 
\|b\|_{L^\infty} \cdot 2^{\sigma j} \| P_j (\bar\partial^2 f) \|_{L^p}  +  \|P_j b\|_{L^\infty} 2^{-|\sigma| j} \| P_{\le j} (\bar\partial^2 f) \|_{L^p} \\[5pt]
&\hphantom{\lesssim_{\sigma,p, \delta} \|b\|_{L^\infty}}
 +  \|\sum_{j'>j} P_j (P_{j'} b \cdot P_{j'}(\bar\partial^2 f)) \|_{L^{\frac{3p}{3+|\sigma|p}}} \\
& \lesssim_{\sigma,p} 
\|b\|_{L^\infty} \cdot 2^{\sigma j} \| P_j (\bar\partial^2 f) \|_{L^p}  +  \|P_j b\|_{L^\infty} 2^{-|\sigma| j} \| P_{\le j} (\bar\partial^2 f) \|_{L^p} \\[5pt]
&\hphantom{\lesssim_{\sigma,p, \delta} \|b\|_{L^\infty}}
 +  \sup_{j'>j} \big(2^{(\bar s+\f12)j'} \|P_{j'} b\|_{L^2} \big) \cdot \sum_{j'>j} \big(2^{-(\bar s+\f12)j'}\|P_{j'}(\bar\partial^2 f)\|_{L^{\f{6p}{(2|\sigma|-3)p+6}}} \big).
\end{align*}
Using the Sobolev embedding $W^{-1,\frac{6p}{(2|\sigma|-3)p+6}} \hookrightarrow B^{-\bar s+\f12}_ {\frac{6p}{(2|\sigma|-3)p+6},1}$ (since $\bar s>\f32$), we can thus bound for any $r\in [1,+\infty]$:
\[
\|b \cdot \bar\partial^2 f \|_{B^{\sigma}_{p,r}}  \lesssim_{\sigma, \bar s, p} \|b\|_{H^{\bar s}} \|\bar\partial f\|_{B^{\sigma+1}_{p,r}}  + \|b\|_{H^{\bar s+\f12}} \|\bar\partial f\|_{W^{-1,\frac{6p}{(2|\sigma|-3)p+6}}}.
\]
Using this bound to estimate the last term in the right hand side of \eqref{Besov elliptic estimate}, we obtain \eqref{Very low regularity elliptic estimate}.

\end{proof}

\end{document}